\newcommand{\labitem}[2]{%
\def\@itemlabel{#1}
\item
\def\@currentlabel{#1}\label{#2}}
\newtheorem{thm}{Theorem}[section]
\newtheorem{prop}[thm]{Proposition}
\newtheorem{lemma}[thm]{Lemma}
\newtheorem{claim}[thm]{Claim}
\newtheorem{fact}[thm]{Fact}
\numberwithin{equation}{subsection}
\numberwithin{thm}{subsection}
\theoremstyle{definition}
\newtheorem{defn}[thm]{Definition}
\newtheorem{assumption}[thm]{Assumption}
\newtheorem{notation}[thm]{Notation}
\newtheorem{terminology}[thm]{Terminology}
\newtheorem{warning}[thm]{Warning}
\theoremstyle{remark}
\newtheorem{rmk}[thm]{Remark}
\newtheorem{example}[thm]{Example}
\newtheorem{convention}[thm]{Convention}
\newtheorem{changeofnotation}[thm]{Change of Notation}
\DeclareMathAlphabet{\mathpzc}{OT1}{pzc}{m}{it}
\newcommand{\C}{\mathbb{C}}
\newcommand{\R}{\mathbb{R}}
\newcommand{\Z}{\mathbb{Z}}
\newcommand{\Q}{\mathbb{Q}}
\newcommand{\N}{\mathbb{N}}
\newcommand{\bdry}{\partial}
\newcommand{\s}{\vskip.1in}
\newcommand{\n}{\noindent}
\newcommand{\F}{\mathcal{F}}
\newcommand{\E}{E}
\newcommand{\EE}{\mathbb{E}}
\newcommand{\V}{V}
\newcommand{\W}{W}
\newcommand{\K}{K}
\newcommand{\Kur}{\mathscr{K}}
\newcommand{\be}{\begin{enumerate}}
\newcommand{\ee}{\end{enumerate}}
\newcommand{\op}{\operatorname}
\newcommand{\bs}{\boldsymbol}
\newcommand{\nl}{\mathfrak{nl}}
\newcommand{\git}{/\!\!/}
\newcommand{\I}{\mathcal{I}_+}
\newcommand{\tp}{\mathsf}
\newcommand{\X}{\mathbb X}
\newcommand{\arr}{\overrightarrow}
\newcommand{\en}{{\op{en}}}
\newcommand{\coblue}{\color{black}}
\newcommand{\coblu}{\color{black}}
\newcommand{\cb}{\color{black}}
\newcommand{\rev}{\dagger}
\newcommand\NoIndent[1]{%
  \begingroup
  \par
  \parshape0
  #1\par
  \endgroup
}
\newcommand{\nom}{\nomenclature}
\begin{document}

\title[Definition of contact homology]
{Semi-global Kuranishi charts and the definition of contact homology}

\author{Erkao Bao}
\address{School of Mathematics, University of Minnesota, Minneapolis, MN 55455}
\email{baoerkao@gmail.com} \urladdr{https://erkaobao.github.io/math/}

\author{Ko Honda}
\address{University of California, Los Angeles, Los Angeles, CA 90095}
\email{honda@math.ucla.edu} \urladdr{http://www.math.ucla.edu/\char126 honda}

\date{This version: December 31, 2022.}

\keywords{contact structure, Reeb dynamics, contact homology, symplectic field theory}

\subjclass[2010]{Primary 53D10; Secondary 53D40.}

\thanks{KH supported by NSF Grants DMS-1406564 and DMS-1549147.}

\begin{abstract}
We define the contact homology algebra for any contact manifold and show that it is an invariant of the contact manifold. More precisely, given a contact manifold $(M,\xi)$ and some auxiliary data $\mathcal{D}$, we define an algebra $HC(\mathcal{D})$.
If $\mathcal{D}_1$ and $\mathcal{D}_2$ are two choices of auxiliary data for $(M,\xi)$, then $HC(\mathcal{D}_1)$ and $HC(\mathcal{D}_2)$ are isomorphic. We use a simplified version of Kuranishi perturbation theory, consisting of {\em semi-global Kuranishi charts}.
\end{abstract}

\maketitle

\setcounter{tocdepth}{2}
\tableofcontents

\newpage  
\printnomenclature[5em] \cb
\newpage

\section{Introduction}

Symplectic field theory (SFT), proposed about 20 years ago by Eliashberg-Givental-Hofer \cite{EGH}, is a package which gives invariants of contact manifolds and symplectic manifolds with boundary as well as gluing formulas for Gromov-Witten invariants on closed symplectic manifolds. The transversality theory in the somewhere injective case and the Fredholm theory were carried out by Dragnev~\cite{Dr} and the SFT compactness was worked out by \cite{BEHWZ}, both very early on in the development of the theory. However, the rigorous foundations of the full theory are still under development, although parts of the theory are gradually becoming more rigorous, thanks to the efforts of many authors, primarily by Hofer-Wysocki-Zehnder~\cite{HWZ3}.  Partial work has been done by Hutchings-Nelson~\cite{HN} and Bao-Honda~\cite{BH} for cylindrical contact homology in dimension three and Bourgeois-Oancea~\cite{BO} on $S^1$-equivariant symplectic homology.  Also the Kuranishi perturbation theory of Fukaya-Ono~\cite{FO} and Fukaya-Oh-Ohta-Ono \cite{FO3} (which is closest in spirit to this work) and the approaches of Liu-Tian~\cite{LT} and \cite{Ru} may be used to define SFT, although this has not been done yet.

The goal of this paper is to make a further contribution to this effort and define the full contact homology differential graded algebra (dga) for any closed contact manifold in any dimension and show that its isomorphism class is an invariant of the contact manifold. More precisely, given a closed cooriented contact $(2n+1)$-dimensional manifold $(M,\xi)$ and some auxiliary data $\mathcal{D}$ which includes a nondegenerate contact form $\alpha$ for $\xi$, we define a direct limit dga $\mathfrak{A}(\mathcal{D})$ whose homology is called the {\em full contact homology algebra} $HC(\mathcal{D})$. We then prove that:

\begin{thm} \label{thm: invariance}
If $\mathcal{D}_1$ and $\mathcal{D}_2$ are two choices of auxiliary data for $(M,\xi)$, then $HC(\mathcal{D}_1)$ and $HC(\mathcal{D}_2)$ are isomorphic. Hence the isomorphism class of the algebra $HC(\mathcal{D})$ is an invariant of $(M,\xi)$.
\end{thm}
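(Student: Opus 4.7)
The strategy I would follow is the standard cobordism argument, adapted to the semi-global Kuranishi chart framework developed earlier in the paper. Given two choices of auxiliary data $\mathcal{D}_1=(\alpha_1,J_1,\dots)$ and $\mathcal{D}_2=(\alpha_2,J_2,\dots)$ for the same $(M,\xi)$, the first step is to construct a smooth family of exact symplectic cobordisms $(W_{12},\Omega,J_{12})$ from $(M,\alpha_2)$ at the positive end to $(M,\alpha_1)$ at the negative end. Concretely, $W_{12}=\R\times M$ equipped with $\Omega=d(e^s f(s)\alpha_2+(1-g(s))\alpha_1)$-type form where $f,g$ are cutoff functions arranged so that the contact forms on the ends are $\alpha_1$ and $\alpha_2$, together with a cobordism-compatible almost complex structure $J_{12}$ restricting to the cylindrical $J_i$ on each end. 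The reversed cobordism $W_{21}$ is built analogously.

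The second step is to use the moduli spaces of $J_{12}$-holomorphic curves in $W_{12}$, with one positive puncture asymptotic to an $\alpha_2$-Reeb orbit and arbitrarily many negative punctures asymptotic to $\alpha_1$-orbits, to define a DGA morphism $\Phi_{12}:\mathfrak{A}(\mathcal{D}_2)\to\mathfrak{A}(\mathcal{D}_1)$. The counts must be carried out using semi-global Kuranishi charts on the cobordism moduli spaces that are compatible, at the boundary strata, with the charts already constructed on the moduli spaces over $(\R\times M,J_i)$ used to define the differentials on $\mathfrak{A}(\mathcal{D}_i)$. The identity $\partial_1\Phi_{12}+\Phi_{12}\partial_2=0$ (as derivations of the free unital graded algebras) follows from an SFT-style analysis of the boundary of the one-dimensional components of the perturbed moduli spaces: the broken configurations that arise are exactly pairs consisting of a cobordism curve and an $\R$-invariant building on one end, yielding the two terms above. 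The same construction with roles reversed gives $\Phi_{21}:\mathfrak{A}(\mathcal{D}_1)\to\mathfrak{A}(\mathcal{D}_2)$.

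The third step is to show $\Phi_{21}\circ\Phi_{12}$ is chain homotopic, through DGA maps, to the identity on $\mathfrak{A}(\mathcal{D}_2)$ (and similarly for the other composition). I would introduce a one-parameter family of cobordisms $W_R$ obtained by stretching the neck: for large $R$, $W_R$ is $W_{21}$ stacked on top of $W_{12}$, while for $R=0$ one can homotope to the trivial cobordism $\R\times M$ with data $\mathcal{D}_2$. Counting curves in the total space $\bigcup_R W_R$, viewed as an $(\R\times W_R)$-moduli problem with semi-global Kuranishi charts compatible with both the breaking at $R\to\infty$ and the end-of-homotopy limit at the trivial cobordism, produces a chain homotopy operator $K:\mathfrak{A}(\mathcal{D}_2)\to\mathfrak{A}(\mathcal{D}_2)$ which is a derivation over $\Phi_{21}\circ\Phi_{12}$ and the identity, satisfying the standard relation. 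Passing to homology yields the required isomorphism $HC(\mathcal{D}_1)\cong HC(\mathcal{D}_2)$. Compatibility with the action filtration used in the direct limit defining $\mathfrak{A}(\mathcal{D})$ is automatic from the fact that holomorphic curves in exact cobordisms decrease total action, so $\Phi_{12}$ and $\Phi_{21}$ are maps of direct systems.

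The main obstacle is not the formal structure of the argument but the construction of the semi-global Kuranishi charts on cobordism and homotopy-of-cobordism moduli spaces with all the requisite compatibilities: one needs charts whose obstruction bundles, local group actions, and transverse perturbations extend coherently across all strata of breaking (into $\R\times M$-buildings on either end, and into $W_{21}\cup W_{12}$ buildings in the middle) and across the parameter $R$. This forces us to reprove, in the cobordism and parametrized-cobordism setting, analogues of the gluing and compatibility results established earlier in the paper for the symplectization, together with a verification that the perturbed boundary strata of the $1$-dimensional moduli spaces algebraically reproduce exactly the derivation identities $\partial\Phi+\Phi\partial=0$ and $\Phi_{21}\Phi_{12}-\op{id}=\partial K+K\partial$.
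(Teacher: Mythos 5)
Your overall strategy (cobordism maps in both directions, chain homotopy under deformation of the cobordism, composition of chain maps) is the same as the paper's, and those are indeed the three propositions the paper establishes (Propositions~\ref{prop: chain homotopy}, \ref{prop: chain homotopy ver 2}, \ref{prop: composition}). However, your third step as stated has a gap that is specific to this paper's framework. Each algebra $\mathfrak{A}^{L_i}$ is generated only by orbits of action $\leq L_i$ of an $L_i$-simple form $\alpha_i=\varphi_i\alpha$, and $HC(\mathcal{D})$ is defined as a \emph{direct limit} over $i$. Consequently the concatenation $W_{21}\cup W_{12}$ is not homotopic to the trivial cobordism for $\mathcal{D}_2$: it is an exact cobordism from $(\alpha_{2,i},L_i)$ to $(\alpha_{2,k},L_k)$ for some $k>i$ (different rescalings $\varphi_i\alpha$ versus $\varphi_k\alpha$, different action cutoffs), so its source and target are \emph{different} finite-level dgas and ``the identity on $\mathfrak{A}(\mathcal{D}_2)$'' is not even a map between them. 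What one actually proves is that the composition is chain homotopic to a continuation cobordism map, i.e.\ to a composition of the structure maps $\Phi_{i*}$ of the direct system.

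The paper closes this gap with an interleaving argument: choosing $j(i)$ minimal with $L_i<L'_{j(i)}$ one gets maps $\Psi_{i*}:HC^{L_i}\to HC^{L'_{j(i)}}$ and $\Theta_{j*}:HC^{L'_j}\to HC^{L_{i(j)}}$ satisfying $\Theta_{j(k)*}\Psi_{k*}=\Phi_{i(j(k))-1*}\cdots\Phi_{k*}$, and only the universal property of the direct limit then yields $\Theta_*\Psi_*=\op{id}$ and $\Psi_*\Theta_*=\op{id}$. Your remark that the maps are ``maps of direct systems'' gestures at this but does not substitute for it: without the identification of the composite with the structure maps (which uses Theorem~\ref{thm: cobordisms} in the form ``the trivial cobordism induces the identity'' only after the limit has absorbed the continuation maps), the argument does not produce an isomorphism. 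The rest of your outline, including the compatibility demands on the semi-global Kuranishi structures across breaking strata and across the deformation parameter, matches the paper's construction.
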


We denote the isomorphism class of $HC(\mathcal D)$ by $HC(\xi)$.  We also prove the following:

\begin{thm} \label{thm: cobordisms}
Let $(W,\alpha)$   be a compact Liouville cobordism which restricts to $(M_+,\xi_+=\ker\alpha|_{M_+})$ at the positive boundary and to $(M_-,\xi_-=\ker\alpha|_{M_-})$ at the negative boundary. \cb
Then there is an algebra homomorphism
$$\Phi_*:HC(\xi_+)\to HC(\xi_-).$$
In the case when $\xi_+=\xi_-$ and $(W,\alpha)$ is the trivial cobordism, we have $\Phi_*=\op{id}$.\footnote{  We will often suppress the $\alpha$ from the notation of a Liouville cobordism.}
\end{thm}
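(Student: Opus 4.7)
The plan is to define $\Phi$ at the chain level by counting $J$-holomorphic curves of genus zero in the symplectic completion $\widehat W$, obtained from $W$ by attaching a positive cylindrical end $[0,\infty)\times M_+$ and a negative cylindrical end $(-\infty,0]\times M_-$. I fix auxiliary data $\mathcal D_\pm$ for $(M_\pm,\xi_\pm)$ and a compatible almost complex structure $J_W$ on $\widehat W$ that matches the cylindrical structures determined by $\mathcal D_\pm$ on the two ends. For each good Reeb orbit $\gamma_+$ of the positive contact form and each monomial $\Gamma_-$ of good Reeb orbits of the negative contact form, I consider the moduli space $\mathcal M_{J_W}(\gamma_+;\Gamma_-)$ of genus-zero curves in $\widehat W$ with one positive puncture asymptotic to $\gamma_+$ and negative punctures asymptotic to $\Gamma_-$, count the rigid elements with appropriate combinatorial weights, and define $\Phi(\gamma_+)$ to be the corresponding rational combination of $\Gamma_-$'s, then extend multiplicatively to the free algebra. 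To make the count rigorous I would build semi-global Kuranishi charts on these cobordism moduli spaces in the same manner as the symplectization charts constructed earlier in the paper, pass to the direct limit over action windows, and use coherent perturbations.

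Chain map. To prove $d_- \circ \Phi = \Phi \circ d_+$, I would enumerate the codimension-one boundary of the one-dimensional components of $\mathcal M_{J_W}(\gamma_+;\Gamma_-)$. By SFT compactness \cite{BEHWZ}, this boundary consists of two-level holomorphic buildings of exactly two types: a symplectization level in $\R\times M_+$ sitting on top of a cobordism curve (contributing $\Phi \circ d_+$), or a cobordism curve sitting on top of a symplectization level in $\R\times M_-$ (contributing $d_- \circ \Phi$). Summing signed contributions under the coherent Kuranishi perturbation scheme yields the chain-level identity. Because $d_\pm$ are derivations and disconnected buildings in the boundary factor as products of counts, $\Phi_*$ descends to an algebra homomorphism on homology.

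Trivial cobordism and invariance. For $W = [-1,1]\times M$ with its standard symplectic form and an $\R$-invariant $J$, energy and the cylindrical structure force each rigid cobordism curve with one positive puncture to be a trivial cylinder over a good Reeb orbit, giving $\Phi = \op{id}$ on generators and therefore $\Phi_* = \op{id}$. To show that different choices of $J_W$ yield chain-homotopic maps, I would interpolate between two admissible structures and count rigid elements of the parametric moduli space to produce a chain homotopy $K$ satisfying $d_- K + K d_+ = \Phi^1 - \Phi^0$, so that $\Phi_*$ is independent of all auxiliary choices.

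The main obstacle is the coherent construction of semi-global Kuranishi charts on all of the relevant moduli spaces — symplectization, cobordism, and parametric — in such a way that the identifications of codimension-one boundary strata with products of charts on lower-dimensional moduli spaces respect the perturbations chosen. One must also arrange that multiply covered cobordism curves and the direct limits over action windows are compatible with $\Phi$, so that the chain-level identities descend to well-defined algebraic identities on $HC(\xi_\pm)$.
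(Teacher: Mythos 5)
Your proposal follows the same overall route as the paper: define $\Phi$ at the chain level by counting rigid cobordism curves with coefficients involving the combinatorial factors $m_{\bs\gamma_-}$, prove the chain-map relation $d_-\circ\Phi=\Phi\circ d_+$ by identifying the codimension-one boundary of $1$-dimensional cobordism moduli with two-level buildings of the two types you list, and establish independence of the almost complex structure by a parametric chain homotopy. The paper's Section~\ref{chain map}, Proposition~\ref{prop: chain homotopy}, and Proposition~\ref{prop: composition} implement exactly this.

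There is, however, one place where your argument as stated does not mesh with the paper's framework: the claim that for the trivial cobordism one may take $J$ to be $\R$-invariant and then observe ``energy and cylindrical structure force rigid curves to be trivial cylinders.'' In the paper's setup, when $(\widehat W,J)$ is viewed in the (Cob) case, one requires $J$ to be $(L_+,L_-)$-end-generic (so that the extra punctures $\mathbf{q}$ used to stabilize cylinders and planes are uniquely determined by Lemma~\ref{tenren2}), and an $\R$-invariant $J$ is never end-generic: the trivial cylinder itself coincides with the trivial half-cylinder near its ends, which is exactly what end-genericity forbids. Moreover, trivial cylinders in (Cob) are not ``ignored'' as in (Symp); obstruction orbibundles are built around them, and the multiply covered ones carry nontrivial automorphism groups that must be reconciled with the $1/m_{\bs\gamma_-}$ weights in the definition of $\Phi$. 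So the direct ``$\Phi=\op{id}$ on generators'' argument needs either (i) a separate justification that near the $\R$-invariant $J$ the perturbed count over each Reeb orbit $\gamma$ equals $m(\gamma)$, cancelling the $m(\gamma)$ in the denominator of the chain-map formula, or (ii) the composition-of-chain-maps argument (Proposition~\ref{prop: composition} together with the direct-limit bookkeeping in the final invariance theorem), which is what the paper implicitly relies on. Flagging this would close the only substantive gap in your sketch; the remaining items you list (coherence of Kuranishi perturbations across strata, direct limits over action windows, orientations and good/bad orbits) are real work but are the same work the paper carries out.
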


The construction of the direct limit dga $\mathfrak{A}(\mathcal{D})$ and the proofs of Theorems~\ref{thm: invariance} and \ref{thm: cobordisms} use a particularly simple version of the Kuranishi multivalued perturbation theory of \cite{FO,FO3} that we call {\em semi-global Kuranishi structures}.  The perturbations that we use are supported near the ends of finite energy $J$-holomorphic curves and are described using the asymptotic eigenfunctions of the asymptotic operators.   Let $\widehat{W}$ be a completed Liouville cobordism,
\nom[What]{$W, \widehat{W}$}{Compact Liouville cobordism $W$ and its completion}
$$\mathcal{M}=\mathcal{M}^{\op{ind}=k}_J(\dot F,\widehat{W};\bs\gamma_+;\bs\gamma_-)$$
be the moduli space of Fredholm index $\op{ind}=k$ $J$-holomorphic maps $\dot F\to \widehat{W}$ that are asymptotic to collections $\bs\gamma_+$ and $\bs\gamma_-$ of Reeb orbits at the positive and negative ends, and $\overline{\mathcal{M}}$ be the space of SFT buildings in $\widehat{W}$ of total Fredholm index $\op{ind}=k$ that are asymptotic to $\bs\gamma_+$ and $\bs\gamma_-$ at the positive and negative ends and whose domains, when glued up, have topological type $\dot F$.  If $\overline{\mathcal{M}}=\mathcal{M}$, \cb then {\em we only need one Kuranishi chart, which we call a semi-global Kuranishi chart}.  

\begin{rmk}
Throughout this paper we will freely use the SFT compactness theorem of \cite{BEHWZ} without explicit mention.
\end{rmk}

\begin{rmk}
We emphasize that our simplification {\em crucially uses the existence of at least one end limiting to a Reeb orbit}   (See Section~\ref{subsubsection: nonnegative Euler char}) \cb and can be used to define SFT in the case when $\widehat{W}$ is an exact symplectic cobordism, although we only deal with contact homology in this paper.  Our simplified methods are not readily applicable to the case when there is a {\em closed} multiply-covered  $J$-holomorphic curve. However, we expect the combination of our methods and those of \cite{FO} and \cite{FO3} to yield a rigorous definition of SFT.
\end{rmk}

\begin{warning}
In this paper we do {\em not} prove that: 
\be
    \item dga morphisms induced by homotopic Liouville cobordisms are dga homotopic (for any definition of dga homotopy);
    \item the set of linearized contact homologies over the set of all augmentations of $\mathfrak{A}(\mathcal{D})$ is an invariant.
\ee

\end{warning}

Finally, during the preparation of this paper, Pardon~\cite{Pa} posted a paper proving the existence and invariance of contact homology in arbitrary dimensions.

\s\n
{\em Outline of the paper.}  We will be using the language of orbifolds and multisections, following Adem-Leida-Ruan~\cite{ALR} and Fukaya-Ono~\cite{FO}; this is reviewed in  Section~\ref{section: orbifolds and multisections}. After some preparation in Sections~\ref{section: almost complex structures and moduli spaces} and \ref{section: Fredholm theory}, we construct the interior semi-global Kuranishi chart in Section~\ref{section: semi-global Kuranishi charts}, with some modifications in Section~\ref{subsection: trimming}.   The semi-global Kuranishi structures are constructed in Section~\ref{section: construction of semi-global}, using the gluing results which are stated in Section~\ref{section: gluing}; they are average specimens of the ``gluing theorem'' type and are proved in Section~\ref{section: details of gluing}. Once the semi-global Kuranishi structures are constructed, the definition of contact homology, the definition of chain maps, and the invariance of contact homology (Theorems~\ref{thm: invariance} and \ref{thm: cobordisms}) follow the usual lines of argument and are carried out in Section~\ref{section: contact homology}. 

We note that the discussion until the end of Section~\ref{section: gluing} is valid for curves of any genus; starting from Section~\ref{section: construction of semi-global} we specialize to contact homology.

\s\n
 
{\em Acknowledgements.} We thank Francis Bonahon, Tobias Ekholm, Michael Hutchings, Eleny Ionel, Gang Liu, Kaoru Ono, Russell Avdek, and Garrett Alston for very helpful discussions.  We also thank Tobias Ekholm, John Pardon, and Russell Avdek for pointing out some errors, Kenji Fukaya for suggesting that we instead use the $\alpha$-action to define $I_\gamma$ in Section~\ref{subsubsection: map I gamma}, and the anonymous referees for extensive lists of comments.
\cb

\section{Orbifolds, orbibundles, and multisections} \label{section: orbifolds and multisections}

In this section we review the basics of (effective) orbifolds, orbibundles, and multisections from \cite{ALR}, \cite{FO}, \cite{FT}, and \cite{FO33}. The definitions in this section, while elementary, are a bit cumbersome to state.  Also, at the end of the day, all we do is replace the words ``manifold", ``vector bundle'', and ``section'' by ``orbifold'', ``orbibundle'', and ``multisection'', and treat them in exactly the same way for our purposes.  For the above reasons the reader may want to skip this section and return to it as the relevant concepts gradually start appearing in the rest of the paper. 

  {\em In this paper all our orbifolds are effective, unless stated otherwise.}

\subsection{Orbifolds and orbibundles}\label{subsection: orbifolds and orbibundles}

We state the definitions in the smooth category but they can easily be adapted to the $C^k$-category.\cb 

\begin{defn} [Orbifold charts] \label{defn: orbifold charts}
Given a topological space $\tp X$, a triple $(V,\Gamma, \phi)$ is an {\em orbifold chart of $\tp X$} if
\begin{enumerate}
\item $V$ is a connected open subset of $\mathbb R^n$;
\item $\Gamma$ is a finite group that acts smoothly and   effectively \cb on $V$; and
\item $\phi:V\to \tp X$ is a $\Gamma$-invariant continuous map such that the quotient map $V/\Gamma \to \tp X$, and  is homeomorphism onto an open subset $U$ of $\tp X$. 
\end{enumerate}
  We will often abuse notation and view $V/\Gamma$ as an open subset of $\tp X$.
If the ``effective" condition is not satisfied in (2), then we have a {\em non-effective} orbifold chart.\cb
\end{defn}


\begin{defn} [Embedding of orbifold charts]
If $(V_i,\Gamma_i,\phi_i)$ and $(V_j,\Gamma_j,\phi_j)$ are orbifold charts of $\tp X$, we say that $(V_i,\Gamma_i,\phi_i)$ {\em embeds into $(V_j,\Gamma_j,\phi_j)$} if there exists a smooth embedding $\psi_{ji}:V_i\hookrightarrow V_j$ such that $\phi_j\circ \psi_{ji}=\phi_i$.
\end{defn}


\coblu
\begin{rmk} \label{orbifold chart group map}
If $(V_i,\Gamma_i,\phi_i)$ embeds into $(V_j,\Gamma_j,\phi_j)$, then there exists \coblu an injective group homomorphism \cb $\theta_{ji}: \Gamma_i\to \Gamma_j$ (see \cite{ALR} for details) such that 
\be 
    \item $\psi_{ji}$ is $\theta_{ji}$-equivariant,
    \item $\theta_{ii} = \op{id}$, and
    \item $\theta_{kj} \circ \theta_{ji} = \theta_{ki}$, if $(V_i,\Gamma_i,\phi_i)$ embeds into $(V_j,\Gamma_j,\phi_j)$ and $(V_j,\Gamma_j,\phi_j)$ embeds into $(V_k,\Gamma_k,\phi_k)$.
\ee 
\end{rmk}
\cb

\begin{defn} [Orbifolds]  \label{def: orbifold}
Let $\tp X$ be a Hausdorff, second countable topological space.  An {\em (smooth) orbifold atlas of $\tp X$} is a family $\mathcal{O}=\{(V_i,\Gamma_i, \phi_i)\}_{i\in\mathcal{I}}$ of orbifold charts such that:
\begin{enumerate}
\item $\{ U_i= V_i/\Gamma_i \}_{i\in\mathcal{I}}$ forms an open cover of $\tp X$; and
\item for any $x\in U_i \cap U_j \subseteq \tp X$, there exist a neighborhood $U_k\subseteq U_i\cap U_j$ of $x$, a chart $(V_k,\Gamma_k,\phi_k)$ for $U_k$, and embeddings of orbifold charts $\psi_{ik}:(V_k,\Gamma_k,\phi_k) \hookrightarrow (V_i,\Gamma_i,\phi_i)$ and $\psi_{jk}:(V_k,\Gamma_k,\phi_k)\hookrightarrow (V_j,\Gamma_j,\phi_j)$.
\end{enumerate}
An orbifold atlas $\mathcal O$ {\em refines} another atlas $\mathcal O'$ if for every chart in $\mathcal O$ there exists an embedding into some chart of $\mathcal O'$. Two orbifold atlases are {\em equivalent} if they have a common refinement.  We denote by $[\mathcal O]$ the equivalence class of $\mathcal O$ and 
 call the pair $\X=(\tp X, [\mathcal{O}])$ a {\em smooth (effective) orbifold}.
\end{defn}

\coblu
\begin{notation}
We sometimes abuse notation and write $x\in \X$ instead of $x\in \tp X$.
\end{notation}

For any $i,j\in \mathcal I$, we denote $i\geq j$, if $(V_i,\Gamma_i,\phi_i)$ embeds into $(V_j,\Gamma_j,\phi_j)$.
For any $x \in \tp X$, the {\em stabilizer} $\Gamma^x$ of $x$ is the inverse limit of $(\{\Gamma_i\}_{i\in \mathcal I_x}, \{\theta_{ij}\}_{i, j \in \mathcal I_x})$, where $\mathcal I_x = \{i \in \mathcal I ~|~ x \in \op{Im}\phi_i\}.$
Then for any $i \in \mathcal I_x$, we have an injective map $\Theta_i^x: \Gamma^x \to \Gamma_i.$
Since the $\Gamma_i$ are finite groups, for sufficiently large $i$, $\Theta_i^x$ are isomorphisms.

\cb 


\begin{defn}[Orbifold maps] \label{def: orbifold maps}
A {\em (smooth) orbifold map}
$$\lambda: \X' = (\tp X', [\mathcal O']) \to \X= (\tp X, [\mathcal O])$$
consists of
\be
\item an underlying continuous map $\tp X'\to \tp X$, usually denoted by $\lambda$ by abuse of notation,
\ee
such that:
\be
\item[(2)] there exist orbifold atlases $\mathcal O'$ and $\mathcal O$ for $\tp X'$ and $\tp X$, so that for any $x'\in \tp X'$, we have  charts $(V',\Gamma',\phi') \in \mathcal O'$ around $x'$ and $(V,\Gamma,\phi)\in \mathcal O$ around $\lambda(x')$ such that $\lambda(V'/\Gamma')\subset V/\Gamma$, a homomorphism $\rho:\Gamma'\to \Gamma$,  and a $\rho$-equivariant smooth lift $\widehat \lambda : V'\to V$ of $\lambda|_{V'/\Gamma'}$ such that $\phi'\circ \widehat \lambda=\lambda \circ \phi$.
\ee
 
\end{defn}

\coblu 
For any $x'\in \tp X'$, the orbifold map $\lambda: \X' \to \X$ induces a homomorphism $\lambda^{x'}: \Gamma'^{x'} \to \Gamma^{\lambda (x')}$ between the stabilizers.

\begin{defn}[Orbifold embedding] \label{def: orbifold embedding}
A (smooth) orbifold map $\lambda: \X'  \to \X$ is a {\em (smooth) embedding} if 
\be 
\item $\lambda : \tp X' \to \tp X$ is a homeomorphism onto its image;
\item $\lambda^{x'}$ is an isomorphism for any $x'\in \tp X'$; and
\item Definition~\ref{def: orbifold maps} (2) holds with the additional requirements
    \be 
        \item $\widehat \lambda$ is an embedding, and
        \item $\Gamma' \simeq \Gamma'^{x'}$ and  $\Gamma \simeq \Gamma^{\lambda(x')}$.
    \ee
\ee 
\end{defn}

\cb 

\begin{defn}[Composition of orbifold maps] \label{def: composition of orbifold maps}
Given orbifold maps
$$\X'' = (\tp X'', [\mathcal O'']) \xrightarrow\mu \X'= (\tp X', [\mathcal O']) \quad \mbox{and} \quad \X' = (\tp X', [\mathcal O']) \xrightarrow\lambda \X= (\tp X, [\mathcal O]),$$
their composition  $\lambda\circ \mu$ is given as follows:
\be
\item the underlying continuous map $\lambda\circ \mu: \tp X''\to \tp X$ is the composition of the underlying continuous maps; and
\item Condition (2) in Definition~\ref{def: orbifold maps} is satisfied for the composition $\lambda\circ \mu$ since Condition (2) holds for both $\lambda$ and $\mu$ and the equivariant smooth lifts $\widehat \lambda$ and $\widehat \mu$ can be composed.
\ee
\end{defn} 

To pull back orbibundles in general, one requires the maps between orbifolds to be strong maps in \cite{MP} or (equivalently) good maps in \cite{CR} instead of just smooth maps as above.

\begin{defn}[Good maps \cite{CR}]
An orbifold map $$\lambda: \X' = (\tp X', [\mathcal O']) \to \X= (\tp X, [\mathcal O])$$
is called a {\em good map} if for any orbifold atlas $\widetilde{\mathcal{O}}$ of $\tp X$ there exist an orbifold atlas $\mathcal O'$ of $\tp X'$, a refinement $\mathcal O$ of $\widetilde{\mathcal{O}}$, and a map $i: \mathcal O' \to \mathcal O$ such that:
\be
\item for any orbifold chart $o'\in \mathcal O'$, $\lambda$ maps $o'$ to $i(o')$ so that Condition (2) in Definition~\ref{def: orbifold maps} is satisfied;
\item for any two orbifold charts $o'_1, o'_2 \in \mathcal O'$, if $o'_1$ embeds into $o'_2$, then $i(o'_1)$ embeds into $i(o'_2)$.
\ee 
\end{defn}

\begin{example}
Let $\X_1$ and $\X_2$ be orbifolds. The product $\X_1 \times \X_2$ has an obvious orbifold structure. The projection map $\op{pr}_i: \X_1 \times \X_2 \to \X_i$ for $i= 1, 2$ is a good map.
\end{example}

\begin{example}
Let $\X$ be an orbifold, and $\tp X'$ be an open subset of $\tp X$. There is an obvious orbifold structure on $\tp X'$ denoted by $\X'$. Then the inclusion map $\X' \to \X$ is a good map.
\end{example}

\begin{defn} [Orbibundle charts]
Let $\pi: \tp E \to \tp X$ be a continuous map between topological spaces. The triple $(V \times \R^n, \Gamma, \widetilde \phi)$ is an {\em orbibundle chart of $\pi:\tp E \to \tp X$ of rank $n$ over $(V,\Gamma,\phi)$,} if:\footnote{We will often suppress $(V,\Gamma,\phi)$ from the notation.}
\begin{enumerate}
\item $(V,\Gamma,\phi)$ is an (effective) orbifold chart of $\tp X$;
\item $\Gamma$ acts diagonally on $V\times \R^n$ with a linear action on $\R^n$;
\item $\widetilde \phi: V \times \R^n \to \tp E$ is a $\Gamma$-invariant continuous map such that the quotient map  is a homeomorphism $(V\times \R^n) /\Gamma \xrightarrow{\sim}  \pi^{-1}(\phi(V)) $; and
\item $\pi \circ \widetilde \phi=\phi \circ \op{pr}_V$, where $\op{pr}_V:V\times \R^n \to V$ is the projection onto the first factor.
\end{enumerate}

A {\em local orbibundle model} is a triple $(V\times \R^n, \Gamma,\op{pr}_V)$, where $\op{pr}_V$ is often suppressed from the notation.
\end{defn}

\begin{defn} [Embedding of orbibundle charts]
If
$$(V_i \times \R^n ,\Gamma_i,\widetilde\phi_i) \quad \mbox{and} \quad (V_j \times \R^n,\Gamma_j,\widetilde\phi_j)$$
are orbibundle charts of $\pi: \tp E \to \tp X$ of rank $n$ over $(V_i, \Gamma_i, \phi_i)$ and $(V_j, \Gamma_j, \phi_j)$, we say that $(V_i \times \R^n ,\Gamma_i,\widetilde\phi_i)$ {\em embeds into $(V_j \times \R^n,\Gamma_j,\widetilde\phi_j)$} if there exist
embeddings of orbifold charts 
\begin{gather*}
    \psi_{ji} : (V_i, \Gamma_i, \phi_i) \to (V_j, \Gamma_j, \phi_j),\\
    \widetilde{\psi}_{ji}: (V_i \times \R^n ,\Gamma_i,\widetilde\phi_i) \to (V_j \times \R^n,\Gamma_j,\widetilde\phi_j),
\end{gather*}
such that  
$$\op{pr}_{V_j}\circ \widetilde \psi_{ji}= \psi_{ji}\circ \op{pr}_{V_i}$$
and $\widetilde{\psi}_{ji}$ is a linear isomorphism on each fiber. 
We often refer to the embedding $\widetilde{\psi}_{ji}$ with the understanding that $\psi_{ji}$ is part of the data.
\end{defn}

\begin{defn} [Orbibundles] \label{defn: orbibundles}
Let $\pi: \tp E \to \tp X$ be a continuous map between Hausdorff, second countable topological spaces.
A {\em (smooth) rank $n$ orbibundle atlas of $ \pi:\tp E\to  \tp X$} is a family $\mathcal Y = \{(V_i\times \R^n,\Gamma_i,\widetilde\phi_i)\}_{i\in\mathcal{I}}$ of rank $n$ orbibundle charts of $E$ over $\mathcal O = \{(V_i, \Gamma_i, \phi_i)\}_{i\in \mathcal I}$ such that:
\begin{enumerate}
\item   $\{ U_i= V_i/\Gamma_i \}_{i\in\mathcal{I}}$ forms an open cover of $\tp X$; and
\item for any $x\in U_i \cap U_j$ where $U_i=V_i/\Gamma_i$ and $U_j=V_j/\Gamma_j$, there exist a neighborhood $U_k\subseteq U_i\cap U_j$ of $x$, an orbibundle chart $(V_k \times \R^n,\Gamma_k,\widetilde\phi_k)$ such that $U_k=V_k/\Gamma_k$, and embeddings of orbibundle charts
\begin{gather*}
    \widetilde \psi_{ik}:(V_k\times \R^n,\Gamma_k,\widetilde\phi_k)\hookrightarrow ( V_i\times \R^n,\Gamma_i,\widetilde\phi_i),\\
    \widetilde \psi_{jk}:(V_k\times \R^n,\Gamma_k,\widetilde\phi_k)\hookrightarrow ( V_j\times \R^n,\Gamma_j,\widetilde\phi_j).
\end{gather*}
\end{enumerate}
Let $[\mathcal Y]$ be the equivalence class of the orbibundle atlas $\mathcal Y$ under refinement.

The triple $(\EE= (\tp E, [\mathcal Y]),\X= (\tp X, [\mathcal O]),\pi: \EE \to \X)$, where $\pi: \EE\to \X$ is a smooth orbifold map and $\mathcal{Y}, \mathcal{O}$ form an orbibundle atlas of the underlying continuous map $\pi: \tp E\to \tp X$, is an {\em orbibundle}.   We refer to $\EE$ as the {\em total space} and $\X$ as the {\em base} of the orbibundle. 
\end{defn}

\begin{rmk}
{\coblu In practice, our orbifolds and orbibundles will be particularly simple and can be described by a single single orbifold or orbibundle chart, i.e., are {\em global quotient} orbifolds or orbibundles.}
\end{rmk}

\begin{defn} [Orbibundle maps] \label{defn: orbibundle maps}
A {\em (smooth) orbibundle map}
$$f = (f^\sharp, f^\flat) : (\EE', \X', \pi': \EE' \to \X') \to (\EE, \X, \pi: \EE \to \X)$$ 
between orbibundles consists of orbifold maps $f^\sharp: \EE' \to \EE$ and $f^\flat : \X' \to \X$  such that 
\be 
    \item the underlying continuous maps $f^\sharp : \tp E' \to \tp E$ and $f^\flat: \tp X' \to \tp X$ satisfy $f^\flat \circ \pi' = \pi \circ f^\sharp$,

    \item there exist orbibundle atlases $\mathcal Y'$ for $\EE'$ and $\mathcal Y$ for $\EE$, orbibundle charts $(V'\times \R^{n'}, \Gamma', \widetilde\phi')\in \mathcal Y'$ and $(V \times \R^n, \Gamma, \widetilde\phi) \in \mathcal Y$ with $x' \in \phi'(V')$ and $f^\flat (x') \in \phi(V)$ for each $x' \in \tp X'$, and equivariant smooth lifts 
    $$\widehat f^\flat: V' \to V, \quad \widehat f^\sharp: V'\times \R^{n'} \to V \times \R^n$$ such that
    $$\op{pr}_V \circ \widehat f^\sharp = \widehat f^\flat \circ \op{pr}_{V'},\quad f^\flat \circ \phi' = \phi \circ \widehat f^\flat,\quad f^\sharp \circ \widetilde \phi' = \widetilde \phi \circ \widehat f^\sharp,$$
    and for any $v'\in V'$ the map $\op{pr}_{\R^n} \circ \widehat f^\sharp (v', \cdot): \R^{n'} \to \R^n$ is linear. (Here $\op{pr}_{\R^n}$ is the projection onto the $\R^n$ factor.)
\ee
 
\end{defn}

\coblu
\begin{defn}
A (smooth) orbibundle map $$f = (f^\sharp, f^\flat): (\EE', \X', \pi': \EE' \to \X') \to (\EE, \X, \pi: \EE \to \X)$$  is a {\em (smooth) orbibundle embedding} if 
    \be 
        \item $f^\sharp$ and $f^\flat$ are homeomorphisms onto their images;
        \item the induced morphisms $(f^\sharp)^{e'}$ and $(f^\flat)^{\pi'(e')}$  between the stabilizers are isomorphisms for any $e'\in \tp E'$; and
        \item Definition~\ref{defn: orbibundle maps}(2) holds with the following additional requirements:
            \be 
                \item $\widehat{f}^\sharp$ and $\widehat{f}^\flat$ are embeddings, and
                \item $\Gamma' \simeq \Gamma'^{x'}$ and $\Gamma \simeq \Gamma^{f^\flat(x')}$.
            \ee
    \ee
\end{defn}
\cb 

\begin{fact}
Given an orbibundle $\pi: \EE\to \X$ and a good map $\lambda: \X'\to \X$, there exists a naturally defined pullback orbibundle $\pi':\lambda^*\EE\to \X'$ and an orbibundle map 
$$ (\pi':\lambda^*\EE\to \X') \to (\pi: \EE\to\X).$$
\end{fact}
\cb

\subsection{Multisections} \label{subsection: multisections}

Given a topological space $W$, we denote its $n$-fold symmetric product by
$$\op{Sym}^n(W):=W^n/S_n.$$
Here $S_n$ is the symmetric group of $n$ elements which acts on $W^n$ by
$$\sigma(x_1,\dots,x_n)=(x_{\sigma(1)},\dots,x_{\sigma(n)}),$$
where $\sigma\in S_n$ and $(x_1,\dots,x_n)\in W^n$.
We denote the equivalence class of $(x_1,\dots,x_n)$ in $\op{Sym}^n(W)$ by $[x_1,\dots,x_n]$.
If a group $\Gamma$ acts on $W$, then $\Gamma$ also acts on $\op{Sym}^n(W)$ diagonally by
$$g[x_1,\dots,x_n]=[g(x_1),\dots,g(x_n)],$$
where $g\in \Gamma$ and $[x_1,\dots,x_n]\in \op{Sym}^n(W)$.
There is an inclusion
$$\iota^{n}:\op{Sym}^{m}(W)\to \op{Sym}^{mn}(W),$$
\[ \label{eqn: splitting}
[w_{1},\dots,w_{m}]\mapsto [\underbrace{w_{1},\dots,w_{1}}_{n\text{ copies}},\dots,\underbrace{w_{m},\dots,w_{m}}_{n\text{ copies}}].
\]

Let $(V\times \R^n, \Gamma)$ be a local orbibundle model of rank $n$.
{\em For convenience, we write $L = V \times \R^n$ and use the notation $(L \to V, \Gamma)$ instead for the local model.}
Let $\op{Sym}^m (L)$ be the fiberwise symmetric product, i.e., $ \op{Sym}^m (L)=\op{Sym}^m(\mathbb R^n) \times V$.
Then $\op{Sym}^m(L)\to V$ is a fiber bundle with an equivariant $\Gamma$-action. 
\cb

\begin{defn}[Multisections of a local model]
A {\em degree $m$ multisection (or $m$-multisection) of the local orbibundle model $(L\to V,\Gamma)$}
is a $\Gamma$-equivariant section of $\op{Sym}^m(L)\to V$.  A {\em section} is a degree $1$ multisection.
\end{defn}

\begin{defn} $\mbox{}$
\be
\item
An $m$-multisection $\mathbf s$ of $(L\to V,\Gamma)$ is {\em liftable} if there exists a section $\widetilde {\mathbf s}=(s_1,s_2,\dots,s_m):V\to L^m$ which is not necessarily $\Gamma$-equivariant, such that $\mathbf s=p \circ \widetilde{\mathbf s}$, where $L^m=\mathbb R^{mn}\times V $ and $p:L^m\to \op{Sym}^m(L)$ is the quotient map. We call each such $s_i$ a {\em branch of $\mathbf s$}. 
\item \coblu A choice of $\widetilde {\mathbf s}$ is a {\em lifting} of $\mathbf s$; a multisection with a choice of lifting is said to be {\em lifted}.  Given $\widetilde{\mathbf s}=(s_1,s_2,\dots,s_m)$, its {\em $n$-fold splitting is} $$\iota^{n}\widetilde{\mathbf s}=(\underbrace{s_{1},\dots,s_{1}}_{n\text{ copies}},\dots,\underbrace{s_{m},\dots,s_{m}}_{n\text{ copies}}).$$
\ee
\end{defn}

We assign the weight $\frac{1}{m}$ to each branch $s_i$ of ${\bf s}$, if ${\bf s}$ is a degree $m$ multisection.

\begin{defn}
A liftable $m$-multisection $\mathbf s$ of $(L\to V,\Gamma)$ and a liftable $n$-multisection $\mathbf t$ of $(L\to V,\Gamma)$ are {\em transverse} (denoted by $\mathbf s \pitchfork \mathbf t$) if $s_i$ is transverse to $t_j$ for all $1\leq i \leq m$ and $1\leq j \leq n$.
\end{defn}

In the above definition, we also refer to $s_i^{-1}(t_j)\subset V$, where $i\in\{1,\dots,m\}$ and $j\in\{1,\dots,n\}$, as a {\em branch}, and assign the weight $\frac{1}{mn}$ to it.
We denote by 
$$\mathbf s^{-1}(\mathbf t):= \cup_{1\leq i \leq m, 1 \leq j \leq n} s_i^{-1}(t_j)$$ 
the weighted space (i.e., a space together with a map to $\Q^{\geq 0}$) where the weight of each point is defined by summing up the weights of all branches containing it. 

\cb

An $m$-multisection of $(L\to V,\Gamma)$ can be mapped to an $\ell m$-multisection of $(L\to V,\Gamma)$ by postcomposing with $\iota^\ell$, which is interpreted as acting on each fiber separately.

\begin{defn}
An $\ell$-multisection $\mathbf s$ and an $m$-multisection $\mathbf s'$ of $(L\to V, \Gamma)$ are said to be {\em equivalent} if $\iota^m \circ \mathbf s=\iota^\ell \circ \mathbf s'$.
\end{defn}

If ${\bf s}$ and ${\bf s}'$ are equivalent, then $\mathbf s^{-1}(\mathbf t)$ and $\mathbf {s'}^{-1}(\mathbf t)$ represent the same weighted space.

\begin{defn}[Multisections of an orbibundle] \label{defn: multisections}
A {\em multisection $\mathbf s$ of an orbibundle $\mathbb E \to \X$} is a family $\{(L_i,\Gamma_i,\widetilde\phi_i,\mathbf s_i)\}_{i\in\mathcal{I}}$ \coblu over an orbibundle atlas indexed by $\mathcal{I}$ \cb such that
\begin{enumerate}
\item $L_i = V_i \times \R^n$ and  $\{(L_i,\Gamma_i,\widetilde\phi_i)\}_{i\in\mathcal{I}}$ is an orbibundle atlas of $\mathbb E \to \X$;
\item $\mathbf s_i$ is an $m_i$-multisection of $(L_i\to V_i,\Gamma_i)$;
\item for any embedding 
$$\widetilde\psi_{ji}:(L_i,\Gamma_i,\widetilde\phi_i)\hookrightarrow (L_j,\Gamma_j,\widetilde\phi_j)$$
of orbibundle charts, the pullback $m_j$-multisection 
$$\mathbf s'_{i}:=({\widetilde\psi_{ji}}^{-1})^{m_j} \circ \mathbf s_j\circ \psi_{ji}$$ and the $m_i$-multisection $\mathbf s_i$ of $(L_i\to V_i,\Gamma_i)$ are equivalent. Here
$$\widetilde\psi_{ji}^{m_j}:\op{Sym}^{m_j}(L_i)\to \op{Sym}^{m_j}(L_j),$$
$$({\widetilde\psi_{ji}}^{-1})^{m_j}:\op{Sym}^{m_j}(L_j)|_{{\widetilde\psi_{ji}}^{m_j}(\op{Sym}^{m_j}(L_i))}\to \op{Sym}^{m_j}(L_i)$$
are the obvious maps induced by
$$\widetilde\psi_{ji}:L_i\to L_j \quad \mbox{and} \quad \widetilde \psi_{ji}^{-1}:L_j|_{\widetilde \psi_{ji}(L_i)}\to L_i.$$
\end{enumerate}
A {\em section $\mathbf s$} is a multisection which has degree $1$ on each local orbibundle model.

The multisection ${\bf s}$ is {\em liftable} if each ${\bf s_i}$ is liftable.

\coblu The multisection ${\bf s}$ is {\em lifted} if there exist an orbibundle atlas (still indexed by $\mathcal{I}$) and a lifting $\widetilde{\bf s}_i$ of each ${\bf s}_i$ such that
$$\widetilde{\mathbf s}'_{i}:={\widetilde\psi_{ji}}^{-1} \circ \widetilde{\mathbf s}_j\circ \psi_{ji}$$
and $\iota^{m_j/m_i}\widetilde{\bf s}_i$ agree as unordered sets with multiplicities.
\end{defn}

{\em In this paper, we restrict ourselves to liftable or lifted multisections.}

To better describe the intersections of multisections of orbifolds and semi-global Kuranishi structures, we introduce the notion of a weighted branched manifold. Roughly speaking, a branched manifold is a complex embedded in a Euclidean space such that at each point there is a unique tangent space. For convenience we also enlarge the category of underlying objects to include images of immersions, and not just embeddings. Intuitively, it is what $\mathbf s^{-1}(\mathbf t)$ should be, if $\mathbf s$ and $\mathbf t$ are multisections of $\EE \to \X$ such that $\mathbf s \pitchfork \mathbf t$.

\begin{defn}
A {\em weighted branched manifold structure of dimension $n$ with weight $w$} on a Hausdorff, second countable topological space $Z$ is a pair 
$$(\mathfrak F = \{(U_i, \mathfrak B_i, \Gamma_i) ~|~ i \in \mathcal I\},w)$$ 
such that:
\begin{enumerate}
    \item $w: Z \to \Q^{\geq 0}$ is a continuous function;
    \item $\{U_i\}_{i \in \mathcal I}$ is an open cover of $Z$;
    \item $\Gamma_i$ is a group which acts effectively on $U_i$; 
    \item $\mathfrak B_i = \{(Y_i^j, \varphi_i^j, w_i^j) ~|~ j \in \mathcal J_i\}$ with $\mathcal J_i$ finite, where:
    \begin{enumerate}[label=(\alph*)]
        \item $Y_i^j$ is a connected, relatively closed subset of $U_i$ and $\cup_{j\in \mathcal J_i} Y_i^j = U_i$;
        \item $\varphi_i^j: Y_i^j \to \R^n$ is a homeomorphism onto some open disk;
        \item $\varphi_i^j \circ (\varphi_{i'}^{j'})^{-1}$, where defined, is smooth for any $i,i' \in \mathcal I$, $j \in \mathcal J_i$, and $j' \in \mathcal J_{i'}$;
        \item $w_i^j \in \Q^{\geq 0}$ and for any $z\in U_i$, $w(z) = \textstyle\sum w_i^j$,
        where the sum is taken over all $j\in \mathcal J_i$ such that $z \in Y_i^j$;
        \item $\Gamma_i$ permutes the set $\{Y_i^j ~|~ j \in \mathcal J_i\}$ and $w_i^j = w_i^{j'}$ if $\Gamma_i$ maps $Y_i^j$ to $Y_i^{j'}$ (in particular, $w|_{U_i}$ is $\Gamma_i$-invariant); and
    \end{enumerate}
    \item $\mathfrak F$ is a maximal collection satisfing (2)--(4).
\end{enumerate}
\end{defn}

On each orbibundle chart $(L_i\to V_i, \Gamma_i, \widetilde \phi_i)$ of $\mathbb E\to \X$ over $(V_i,\Gamma_i,\phi_i)$,
if $\mathbf s_i$ is an $m_i$-multisection and $\mathbf t_i$ is an $n_i$-multisection,
then we define 
$$Y_i^j =\phi_i( s_\ell^{-1}(t_k)), \quad U_i:= \cup _{1\leq \ell \leq m_i, 1 \leq k \leq n_i}Y_i^j,$$
and $w_i^j = \frac{1}{m_i n_i}$, where $j = m_i (k-1) + \ell$.
This gives $\mathbf s^{-1}(\mathbf t)$ the structure of a weighted branched manifold by patching together $\{\mathbf s_i^{-1}(\mathbf t_i)\}_{i\in \mathcal I}$.
\cb

\coblu
\begin{defn}
A sequence of multisections $\{\mathbf s^n\}_{n\in \N}$ of $\mathbb E\to \X$ {\em converges to a multisection $\mathbf s$ of $\mathbb E \to \X$ in the $C^k$-topology} (where $k$ may be $\infty$), if for each fixed orbibundle chart $(L_i,\Gamma_i,\widetilde\phi_i)$ of $\mathbb E\to \X$, which is independent of $n$,
there exist $\ell_i$-multisections $\{\mathbf s'^n_i\}_{n\in \N}$ and $\mathbf s'_i$ of $(L_i,\Gamma_i)$ such that
    \be 
        \item $\ell_i$ is independent of $n$,
        \item $\mathbf s'_i$ is independent of $n$,
        \item $\mathbf s'^n_i$ is equivalent to $\mathbf s^n_i$, 
        \item $\mathbf s'_i$ is equivalent to $\mathbf s_i$, and 
        \item $\mathbf s'^n_i$ converges  to $\mathbf s'_i$ in the $C^k$-topology.
    \ee 
\end{defn}
\cb

The following is proved in \coblu \cite[Prop.\ 2.4.2]{FT}  (also see \cite[Theorem 3.11]{FO}). \cb

\begin{lemma}[Multisection perturbation lemma] \label{lemma: multisection perturbation lemma}
Let $\mathbf s,\mathbf t$ be two multisections of an orbibundle $\mathbb E \to \X$, {\coblu where $\mathbf s$ is lifted.} If the base $\tp X$ is compact, then there exists a sequence $\{\mathbf s^n\}_{n\in \N}$ of {\coblu lifted} multisections of $\mathbb E \to \X$ such that $\{\mathbf s^n\}_{n\in\N}$ converges to $\mathbf s$ in the $C^\infty$-topology and $\mathbf s^n$ is transverse to  $\mathbf t$.
\end{lemma}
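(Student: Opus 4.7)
The plan is to mimic the classical Sard--Smale transversality argument, adapted to the orbifold/multisection setting via a partition-of-unity induction over a finite subcover by local models, following the scheme of \cite{FO}.

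First, using compactness of $X$, extract a finite subfamily of orbibundle charts $\{(E_i\to V_i,\Gamma_i,\widetilde\phi_i)\}_{i=1}^{N}$ whose images $U_i=V_i/\Gamma_i$ cover $X$, together with nested $\Gamma_i$-invariant relatively compact opens $W_i\Subset W_i'\Subset V_i$ for which $\{\phi_i(W_i)\}$ still covers $X$, and $\Gamma_i$-invariant smooth cutoffs $\rho_i$ supported in $W_i'$ with $\rho_i\equiv 1$ on $\overline{W_i}$. Using the liftability hypothesis, express $\mathbf{s}_i$ and $\mathbf{t}_i$ as symmetrizations of branches $\{s_{i,a}\}_{a=1}^{m_i}$ and $\{t_{i,b}\}_{b=1}^{n_i}$. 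After replacing each $\mathbf{s}_i$ by an equivalent multisection via the inclusions $\iota^{\ell}$, one may arrange that all charts carry multisections of a common degree $m$, so that transition embeddings $\widetilde\psi_{ji}$ permute branches in a controlled way; this is the standard device that makes the patching problem tractable.

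Second, on a single local model, consider perturbations of the form $\eta_{i}=(\eta_{i,1},\dots,\eta_{i,m}):V_{i}\to(\mathbb{R}^{n})^{m}$ built by assigning to each branch a vector in a finite-dimensional subspace and then averaging over $\Gamma_{i}$ so as to land in $\Gamma_{i}$-equivariant sections of $\operatorname{Sym}^{m}(E_{i})$. For each pair $(a,b)$ the parametrized map $(v,\eta)\mapsto s_{i,a}(v)+\eta_{i,a}(v)-t_{i,b}(v)$ is a submersion in the $\eta$-direction, so the finite-dimensional Sard theorem yields a residual, hence dense, set of parameters for which every branch of $\mathbf{s}_i+\eta_i$ is transverse to every branch of $\mathbf{t}_i$ on a fixed compact subset.

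Third, run a finite induction on $i=1,\dots,N$. Given $\mathbf{s}^{(i-1)}$ that is $C^{\infty}$-close to $\mathbf{s}$ and transverse to $\mathbf{t}$ on $K_{i-1}:=\overline{\bigcup_{j<i}\phi_j(W_j)}$, choose a generic perturbation from Step 2 on $V_i$, multiplied by $\rho_i$ to localize, and rescaled small enough in $C^{\infty}$ to preserve transversality on $K_{i-1}$ (openness of transversality on compact sets) while achieving transversality on $\phi_i(\overline{W_i})$. After $N$ steps one has $\mathbf{s}^{(N)}\pitchfork\mathbf{t}$; repeating the construction with total perturbation size $1/n$ produces the desired sequence $\mathbf{s}^{n}\to\mathbf{s}$ in $C^{\infty}$. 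Compatibility of the patched perturbations with Definition~\ref{defn: multisections}(3) is automatic once the common-degree reduction has been made, since the localized perturbation is pulled back along $\widetilde\psi_{ji}$ on overlaps. The main obstacle I expect is precisely this cross-chart compatibility: a naive branchwise perturbation on each $V_i$ need not glue to a global multisection because the identification of branches under transition embeddings is defined only up to the composite action of $\Gamma_i$ and $\Gamma_j$, and independently chosen local perturbations may be incompatible after symmetrization. The common-degree reduction via $\iota^{\ell}$, which turns transition maps into explicit equivariant permutations on labelled branches, is what converts the local Sard arguments into a coherent patching procedure, and verifying this step consistently on all overlaps is the technical heart of the proof.
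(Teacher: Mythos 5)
The paper does not actually prove this lemma; it simply cites it to \cite[Theorem~3.11]{FO}, so there is no internal proof to compare yours against. Your sketch does follow the Fukaya--Ono strategy: a finite subcover by compactness, localization by cutoffs, a Sard argument in each chart, and a finite induction using openness of transversality on compacta. That overall architecture is right, and you correctly identify cross-chart compatibility as the real issue.

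Two points in your write-up deserve more care. First, in Step~2 you propose perturbing each branch by an arbitrary $\eta_{i,a}$ and then ``averaging over $\Gamma_i$'' to land in the equivariant sections of $\op{Sym}^{m}(E_i)$. This is not quite the right order of operations: averaging is a linear projection onto a subspace of the parameter space, and a parametrized transversality result proved over the big space does not descend to a projection of it. One should instead take the finite-dimensional parameter space $P$ to consist, from the outset, of $\Gamma_i$-equivariant degree-$m$ multisections (e.g.\ $\Gamma_i$-orbits of compactly supported branch perturbations), and check that the parametrized map $(v,p)\mapsto \mathbf{s}_i(v)+p(v)-\mathbf{t}_i(v)$, restricted branchwise, is still a submersion in $p$ at every $v$; this is where the multisection formalism earns its keep, since an ordinary $\Gamma_i$-equivariant single section cannot be moved freely at a fixed point of $\Gamma_i$, whereas a multisection can, by moving the whole $\Gamma_i$-orbit of a branch. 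Second, the assertion that compatibility with Definition~\ref{defn: multisections}(3) is ``automatic once the common-degree reduction has been made'' is too strong. The transition embeddings identify multisections only up to equivalence under $\iota^{\ell}$, not by a canonical bijection of labelled branches, and after pushing a perturbation supported in $W_i'$ forward along $\widetilde\psi_{ji}$ and re-symmetrizing over $\Gamma_j$ the degree may change again. What actually closes the argument is that the perturbation is compactly supported in a single chart image, extends by zero, and is then compared on overlaps only up to the equivalence relation, not branchwise; the common-degree device is a convenience for bookkeeping, not the mechanism that makes patching work. You flag this as the technical heart, which is the right instinct; the claim of automaticity should be replaced by the verification that the compactly supported, equivariantly symmetrized perturbation induces, via $(\widetilde\psi_{ji}^{-1})^{m_j}\circ(\cdot)\circ\psi_{ji}$, multisections on neighboring charts equivalent to the perturbation there.
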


{\coblu In practice, we will take ${\bf s}$ to be the zero section and ${\bf t}$ to be the linearized $\overline\bdry$-section.}

\section{Almost complex structures and moduli spaces} 
\label{section: almost complex structures and moduli spaces}

\nom[Mzxi]{$(M,\xi)$}{Contact manifold; $(M_+,\xi_+)$ and $(M_-,\xi_-)$ are contact manifolds at the positive and negative ends of a Liouville cobordism $\widehat W$}
\nom[Ralpha]{${\bf R}_\alpha$}{Reeb vector field for the contact form $\alpha$}
\nom[Palpha]{$\mathcal{P}_\alpha^L$}{Set of possibly multiply-covered Reeb orbits of $\alpha$ with action $<L$}
\nom[Palphagood]{$\mathcal{P}_\alpha^{L,\op{good}}$}{Good Reeb orbits of $\mathcal{P}_\alpha^L$}
\nom[l0]{$L>0$}{A large constant which serves as the action cutoff in $\mathcal{P}_\alpha^L$ and the notions of {\em $L$-simple} and {\em $L$-nondegenerate}}

Let $(M^{2n+1},\xi)$ be a contact manifold and $\alpha$ a nondegenerate contact form for $\xi$. \coblu Let $L$ be a positive real number. \cb Denote the Reeb vector field of $\alpha$ by $\mathbf R_\alpha$ and the set of possibly multiply-covered closed orbits $\gamma$ of $\mathbf R_\alpha$ with action $\mathcal A_\alpha(\gamma)\leq L$ by $\mathcal P_\alpha^L$.  Also let $\mathcal P_\alpha^{L,\op{good}}\subset \mathcal P_\alpha^L$ be the subset of good Reeb orbits.

{\em The discussion until the end of Section~\ref{section: gluing} is valid for curves of any genus.}

\subsection{Almost complex structures} \label{subsection: almost complex structures}

We will review some notions from \cite{BH}.

\begin{defn} \label{Lsimple form}
A contact form $\alpha$ is called {\em $L$-simple}\footnote{In \cite{BH} there is a notion called ``$L$-supersimple", the difference between ``$L$-simple" and ``$L$-supersimple" is that ``$L$-simple" allows the existence of elliptic orbits of action $\leq L$.}
if 
\be
\item each simple orbit $\gamma\in \mathcal P_\alpha^L$ of $\mathbf R_\alpha$ is nondegenerate;

\item around $\gamma$ there exists an embedding
$$\Phi_\gamma:[0, \mathcal{A}_\alpha(\gamma)] \times D_\delta^{2\overline m} \times D_\delta ^{2 \underline m}/_{\sim} \to M,$$
such that $[0, \mathcal{A}_\alpha(\gamma)]$ has coordinate $t$, 
$\overline m, \underline m \in \Z^{\geq 0}$ satisfy $\overline m + \underline m = n$,  $\delta>0$ is small, 
\begin{gather*}
    D_\delta^{2\overline m} =\Big\{(\overline x,\overline y)=(\overline x_1,\dots,\overline x_{\overline m}, \overline y_1,\dots, \overline y_{\overline m})\in \R^{2\overline m} ~|~\sum_{i=1}^{\overline m} \overline x_i^2+ \overline y_i^2\leq \delta^2 \Big\},\\
    D_\delta^{2\underline m} =\Big\{(\underline x,\underline y)=(\underline x_1,\dots, \underline x_{\underline m}, \underline y_1,\dots, \underline{y}_{\underline m})\in \R^{2 \underline m} ~|~\sum_{i=1}^{\underline m} \underline x_i^2+ \underline y_i^2\leq \delta^2 \Big\},
\end{gather*}
$(0,\overline x, \overline y, \underline x, \underline y) \sim (\mathcal{A}_\alpha(\gamma), \overline x, \overline y, -\underline x, -\underline y)$, $\gamma=\Phi_\gamma([0, \mathcal{A}_\alpha(\gamma)] \times\{0\} \times\{0\})$, and
$$\Phi_\gamma^*\alpha=(1+\overline Q + \underline Q)dt+\beta; \quad \mbox{and}$$ 

\item the images of $\Phi_\gamma$ are disjoint for all $\gamma\in \mathcal P_\alpha^L$.

\ee

Here $\overline Q$ is quadratic in $(\overline x,\overline y)$
(without constant or linear terms), 
$\underline Q$ is quadratic in $(\underline x, \underline y)$, and
\begin{align} \label{formula for beta}
\beta=\sum_{i=1}^{\overline m} \frac{1}{2}(\overline x_i d\overline y_i - \overline y_i d\overline x_i) + \sum_{i=1}^{\underline m} \frac{1}{2}(\underline x_i d\underline y_i - \underline y_i d\underline x_i).
\end{align}
\end{defn}

We denote 
$$(\mathcal N_0, \alpha_0):=([0,\mathcal{A}_\alpha(\gamma)] \times D_\delta^{2\overline m} \times D_\delta ^{2 \underline m}/_{\sim}, (1+\overline Q + \underline Q)dt +\beta).$$ 
Russell Advek pointed out a mistake in the definition of $(\mathcal N_0, \alpha_0)$ in the previous versions and suggested the current version. 

\begin{lemma} \label{lemma: perturb alpha}
Given an $L$-nondegenerate\footnote{This means all the orbits with action $\leq L$ are nondegenerate} contact form $\alpha'$ on $M$, there exists a perturbation $\alpha$ of $\alpha'$ on small neighborhoods of simple orbits of $\mathcal{P}_{\alpha'}^L$ such that $\ker \alpha = \ker \alpha'$, $\alpha$ is $L$-nondegenerate, $\mathcal{P}_\alpha^L=\mathcal{P}_{\alpha'}^L$, the orbits have the same $\alpha'$ and $\alpha$ actions, and $\alpha$ is $L$-simple.
\end{lemma}

\begin{proof}
For each simple orbit $\gamma\in \mathcal{P}_{\alpha'}^L$, let $d\varphi^t: \xi_{\gamma(0)} \to \xi_{\gamma(t)}$ be the linearized Reeb flow along $\gamma$. Without loss of generality, we assume that $\mathcal{A}_\alpha(\gamma) = 1.$
Writing $p=\gamma(0)$, $d\varphi^1: \xi_p \stackrel\sim \to \xi_p$ is a linear symplectic map with symplectic form $\omega'_p = d\alpha' |_{\xi_p}$. Since $\gamma$ is nondegenerate, $1$ is not an eigenvalue of $d\varphi^1$.
The contact plane $\xi_p$ splits as the direct sum $\overline{\xi}_p \oplus \underline{\xi}_p$ of two $d\varphi^1$-invariant symplectic subspaces such that $d\varphi^1|_{\overline \xi_p }$ has no negative eigenvalue
and $d\varphi^1|_{\underline \xi_p}$ has only negative eigenvalues.

Choose an identification $L_0: \R^{2n} \stackrel\sim \to \xi_p$,
such that 
\be
\item $L_0(\R^{2\overline m} \oplus 0) = \overline \xi_p$,
\item $L_0(0 \oplus \R^{2\underline{m}}) = \underline \xi_p,$ and 
\item $L_0^* \omega'_p = \sum_{i=1}^{\overline m} d\overline x_i \wedge d\overline y_i + \sum_{i=1}^{\underline m} d\underline x_i \wedge d\underline y_i=:\omega.$
\ee 
The symplectic matrix $A = L_0^*(d\varphi^1): \R^{2n} \to \R^{2n}$ splits as $A = \overline A \oplus \underline A$,
where $\overline A \in \op{Sp}(2\overline m, \R)$ and $\underline A \in \op{Sp}(2\underline m, \R).$
Then there exist matrices $\overline X \in \mathfrak{sp}(2\overline m, \R)$ and $\underline X \in \mathfrak{sp}(2\underline m, \R)$ such that $e^{\overline X} = \overline A$ and $e^{\underline X} = - \underline A$: Indeed, let $\overline X, \underline X$ be the principal logarithms of $\overline A$ and $\underline A$ (see \cite[Theorem 1.23]{Hi}). By the standard properties of principal logarithms (see \cite[Theorems 1.13 and 11.2]{Hi}), $\overline X$, $\underline X$ lie in the Lie algebras of the corresponding Lie groups.

Consider the vector field $Y = X \cdot (\overline x,\overline y, \underline x, \underline y)^T$ on $D_\delta^{2\overline m} \times D_\delta ^{2 \underline m}$ with $X = \overline X \oplus \underline X$.
The Lie derivative $\mathcal L_{Y}\omega = 0$, since $\overline X\in \mathfrak{sp}(2\overline m, \R)$ and $\underline X\in \mathfrak{sp}(2 \underline m, \R)$.
This implies that $d (Y\lrcorner~\omega) = 0$. Let $H: D_\delta^{2\overline m} \times D_\delta ^{2 \underline m} \to \R$ be a Hamiltonian function such that $Y \lrcorner ~\omega = dH$. Since $Y$ is linear in $\overline x, \overline y, \underline x, \underline y$, we  may take $H$ to be $\overline Q +\underline Q$, where $\overline Q$ is quadratic in $(\overline x,\overline y)$ and $\underline Q$ is quadratic in $(\underline x, \underline y)$. Consider the $1$-form
$$\alpha_0:=(1+H)dt + \beta \quad \mbox{on} \quad [0,1]\times D_\delta^{2\overline m} \times D_\delta ^{2 \underline m},$$ where $\beta$ is as in Equation~\eqref{formula for beta}. It is contact for $\delta>0$ sufficiently small and its Reeb vector field is directed by $\bdry_t+ Y$.

For $t\in [0,1]$ we extend the definition of $L_0$ to:
$$L_t: \R^{2n} \stackrel\sim\to \xi_{\gamma(t)}, \quad \zeta\mapsto d\varphi^t \circ L_0 \circ (e^{-t\overline X}\oplus e^{-t\underline X}) \zeta.$$
Identifying $T_{(t, 0, 0)}( \{t\} \times D_\delta^{2 \overline m} \times D_\delta ^{2 \underline m})$ with $\R^{2n}$ using the basis $\{\bdry_{\overline x}, \bdry_{\overline{y}}, \bdry_{\underline x}, \bdry_{\underline y} \}$, we can form
\begin{gather*}
    \bold L: \coprod_{t \in [0,1]} T_{(t, 0, 0)} (\{t\} \times D_\delta^{2 \overline m} \times D_\delta ^{2 \underline m})  \to 
    \coprod_{t \in [0,1]} T_{\gamma(t)}M,\quad
    \zeta  \mapsto  L_t\zeta,
\end{gather*}

Now let $\Phi$ be a map from $\mathcal N_0$ to a neighborhood of $\gamma\subset M$ such that its tangent map along $[0,1]\times \{0\} \times \{0\}$ agrees with $\bold L$. On $\mathcal N_0$ we have two contact forms $\alpha_0$ and $\Phi^*\alpha'$ that agree along $[0,1]\times \{0\} \times \{0\}$. By a standard Moser-type argument, there exists a diffeomorphism $\Psi$ of $\mathcal N_0$ (after shrinking $\mathcal{N}_0$ if necessary), such that $\Psi$ maps $\ker \alpha_0$ to $\ker \Phi^*\alpha'$. Therefore, $\Phi \circ \Psi: \mathcal N_0 \to M$ maps $\ker \alpha_0$ to $\ker \alpha'.$ 
Near $\gamma$, we replace the contact form $\alpha'$ by $(\Phi \circ \Psi)_* \alpha_0$, and in an annulus region we interpolate between the two.
After the modification, the contact condition, the Reeb orbit $\gamma$, and the linearized return map are preserved, and we do not create additional orbits of action $\leq L$.
\end{proof}

Let $Y$ be the vector field on $D_\delta^{2\overline m} \times D_\delta^{2\underline m}$ satisfying $$Y \lrcorner ~ (\textstyle\sum_{i=1}^{2\overline m}  d\overline x_i\wedge d\overline y_i + \textstyle\sum_{i=1}^{2\underline m} d\underline x_i \wedge d\underline y_i)= dQ,$$ where $Q = \overline Q + \underline Q$.
Since $Q$ is quadratic, $Y$ is also a vector field on $\mathcal N_0$.
Because $\partial_t+ Y \in \ker d\alpha$, 
there exists a positive function $g$ on $\mathcal N_0$ such that
\begin{itemize}
\item[(g)] $g \mathbf R_\alpha=\partial_t+Y$.
\end{itemize}
Since $Q$ is quadratic in 
$$(x_1,\dots, x_n, y_1,\dots,y_n)= (\overline x_1,\dots, \overline x_{\overline m}, \underline x_1,\dots,\underline x_{\underline m}, \overline y_1,\dots, \overline y_{\overline m}, \underline y_1,\dots, \underline y_{\underline m}),$$
the coefficients of $Y$ with respect to $(\bdry_{x_1},\dots,\bdry_{x_n},\bdry_{y_1},\dots,\bdry_{y_n})$ can be written as a column vector $X \cdot (x_1,\dots, x_n, y_1,\dots,y_n)^T$, where $X$ is a $2n\times 2n$ matrix with constant real coefficients. 
\cb 

\begin{defn}
Given a contact form $\alpha$, an almost complex structure $J$ on $\R \times M$ is {\em $\alpha$-tame} if
\begin{enumerate}
\item $J$ is independent of $s$, where $s$ is the $\R$-coordinate;
\item $J(\partial_s)=g\mathbf R_\alpha$ for some positive function $g$ on $M$; and
\item there exists a $2n$-plane field $\xi'$ on $M$ such that $J\xi'=\xi'$ and $d\alpha(v,Jv)>0$ for all nonzero $v\in \xi'$.
\end{enumerate}
\end{defn}

\nom[gzammas]{$\gamma^s$}{Simple orbit underlying the Reeb orbit $\gamma$}
\nom[mzgamma]{$m(\gamma)$}{Multiplicity of the Reeb orbit $\gamma$}
\nom[xgammas]{$x_{\gamma^s}$}{Point (marker) on the simple orbit $\gamma^s$ underlying $\gamma$}
Given a Reeb orbit $\gamma$ of $\mathbf R_\alpha$, we write $\gamma^s$ for the simple orbit underlying $\gamma$ and $m(\gamma)$ for the multiplicity of $\gamma$ over $\gamma^s$. Choose a point $x_{\gamma^s}$ (a {\em marker}) on each $\gamma^s$.

\begin{defn}[$L$-simple for a symplectization] \label{defn: L-supersimple 1} Let $L>0$ and $\alpha$ be an $L$-simple contact form.

(1) An almost complex structure $J$ on $\R\times M$ is {\em $L$-simple for $\alpha$} if $J$ is $\alpha$-tame and inside a sufficiently small neighborhood $\Phi_\gamma(\mathcal N_0)$ (recall $\mathcal N_0 = [0, \mathcal{A}_\alpha(\gamma)] \times D_\delta^{2\overline m} \times D_\delta ^{2 \underline m}/_{\sim} $) of each simple Reeb orbit $\gamma \in \mathcal P_\alpha^L$ we have
\cb
\begin{enumerate}
\item[(i)] $\xi'=\R\langle\frac{\partial}{\partial x_1},\dots, \frac{\partial}{\partial x_n}, \frac{\partial}{\partial y_1},\dots, \frac{\partial}{\partial y_n}\rangle$;
\item[(ii)] $J: \frac{\partial}{\partial x_i} \mapsto \frac{\partial}{\partial y_i}$ for $1\leq i \leq n$; and
\item[(iii)] the function $g$ satisfying $J(\partial_s)=g\mathbf R_\alpha$ is given by (g) above.
\end{enumerate}
A pair $(\alpha,J)$ is said to be {\em $L$-simple pair}, if $\alpha$ is $L$-simple and $J$ is $L$-simple for $\alpha$.

(2) If $\gamma$ is simple, then the above coordinates
$$(t,x_1,\dots,x_n,y_1,\dots,y_n),$$
subject to the normalization
\begin{itemize}
\item[(N)] $\Phi_\gamma(0,0)=x_{\gamma^s}$.
\end{itemize}
are called {\em simple coordinates for $\gamma$}.

(3) If $\gamma$ is not simple, then, by abuse of notation, the coordinates 
$$(t,x_1,\dots,x_n,y_1,\dots,y_n)$$
on $[0, \mathcal{A}_\alpha(\gamma)] \times D_\delta^{2\overline m} \times D_\delta ^{2 \underline m}/_{\sim}$ are called {\em simple coordinates for $\gamma$} where
$$[0, \mathcal{A}_\alpha(\gamma)] \times D_\delta^{2\overline m} \times D_\delta ^{2 \underline m}/_{\sim} \to [0, \mathcal{A}_\alpha(\gamma^s)] \times D_\delta^{2\overline m} \times D_\delta ^{2 \underline m}/_{\sim}$$
is the $m(\gamma)$-fold covering map.
\end{defn}

Given an $L$-simple $\alpha$, we can construct an $L$-simple $J$ as follows: Let $\xi'\subset TM$ be a $2n$-plane field such that $\xi'=\R\langle\frac{\partial}{\partial x_1},\dots, \frac{\partial}{\partial x_n}, \frac{\partial}{\partial y_1},\dots, \frac{\partial}{\partial y_n}\rangle$ on a small neighborhood $V_\gamma$ of each $\gamma\in \mathcal P_\alpha^L$; $\xi'=\xi$ outside a slightly larger neighborhood $V'_\gamma$ of each $\gamma\in \mathcal P_\alpha^L$; and $\xi'$ interpolates between $\R\langle\frac{\partial}{\partial x_1},\dots, \frac{\partial}{\partial x_n}, \frac{\partial}{\partial y_1},\dots, \frac{\partial}{\partial y_n}\rangle$ and $\xi$ on the remaining annular regions.  Let $J$ be a complex structure of $\xi'$ such that $d\alpha(v,Jv)>0$ for all $0\neq v\in \xi'$ and $J(\frac{\partial}{\partial x_i})= \frac{\partial}{\partial y_i}$ for $1\leq i \leq n$ on $V_\gamma$. We then extend $J$ to $\mathbb R\times M$ by $J(\partial_s)=g\mathbf R_\alpha$, where $g$ is given by (g) on $V_\gamma$, $g=1$ outside $V'_\gamma$, and $g$ interpolates between the two in the remaining annular regions.

The main reason for using $L$-simple $J$ is the following: Let $u$ be a $J$-holomorphic curve in $\R \times M$ and suppose $u$ has an end which converges to $\R \times \gamma$ with coordinates $(s,t)$. Then, with respect to the simple coordinates, this end of $u$ can be written as $(s,t,\eta(s,t))$, where $\eta$ satisfies  
\begin{equation}\label{normal equation}
\frac{\partial \eta }{\partial s}+j_0\frac{\partial \eta}{\partial t}+S\eta=0,
\end{equation}
$j_0$ is the standard complex structure, and $S:=-j_0X$ is a $2n\times 2n$ symmetric matrix.  \cb In other words, {\em the Cauchy-Riemann equations become linear near the ends for curves that are graphical over $\R\times \gamma$.}

\begin{convention}
We are using the convention that when a matrix (e.g., $S$) acts on $\eta$ or its derivatives from the left, $\eta$ or its derivatives are viewed as column vectors.
\end{convention}

More generally, let $(W,\alpha)$ be a $(2n+2)$-dimensional   Liouville cobordism from $(M_+,\alpha_+)$ to $(M_-,\alpha_-)$, i.e., $W$ is compact, \cb $\bdry W= M_+-M_-$, $\alpha$ is a $1$-form on $W$ such that $d\alpha$ is symplectic, and $\alpha_\pm=\alpha|_{M_\pm}$ is a contact form on $M_\pm$.  Let $(\widehat{W},\widehat\alpha)$ be the completion of $(W,\alpha)$, obtained by smoothly attaching the symplectization ends $[1,\infty)\times M_+$ and $(-\infty,-1]\times M_-$.  Let $s$ be the $[1,\infty)$- or $(-\infty,-1]$-coordinate at the positive and negative ends of $\widehat{W}$.

\begin{defn}[$(L_+,L_-)$-simple for a cobordism]
A $1$-form $\widehat \alpha$ on $\widehat W$ is said to be {\em $(L_+,L_-)$-simple} if it restricts to $L_\pm$-simple contact form $\alpha_\pm$ at the positive and negative ends.
Given an $(L_+,L_-)$-simple $\widehat \alpha$, an almost complex structure $J$ on $\widehat{W}$ said to be  {\em $(L_+,L_-)$-simple for $\widehat \alpha$},
if it is $d\widehat{\alpha}$-tame and restricts to $J_\pm$ which is $L_\pm$-simple for $\alpha_\pm$ at the positive and negative ends.
A pair $(\widehat \alpha, J)$ is said to be $(L_+,L_-)$-simple, if $\widehat \alpha$ is $(L_+,L_-)$-simple and $J$ is $(L_+,L_-)$-simple for $\widehat \alpha$.
\end{defn}

{\em In this paper we assume that all almost complex structures in symplectizations (resp. cobordisms) are $L$-simple (resp. $(L_+,L_-)$-simple) for some appropriate $L$ (resp. $(L_+,L_-)$) and some appropriate contact form $\alpha$ (resp. $\widehat \alpha$). }

\begin{rmk}
Strictly speaking, it is not necessary to use simple $J$ in this paper, as most of our discussion carries over to the general case.  Simple almost complex structures allow for better control of the ends and also simplify gluing.
\end{rmk}

\subsection{Riemann surfaces and holomorphic maps}

Let $(F,j)$ be a closed connected Riemann surface.

\nom[p]{${\bf p}$}{Ordered tuple $\mathbf{p}=\mathbf{p}_+ \sqcup \mathbf{p}_-$ of marked points on a closed Riemann surface $(F,j)$}
\nom[r]{${\bf r}$}{Asymptotic markers}

\nom[Fd]{$\dot F$}{The punctured surface $F-{\bf p}$}

\begin{defn} \label{defn: marked Riemann surface}
A {\em marked Riemann surface}
$$\F=(F,j,\mathbf{p},\mathbf{r})$$
is a quadruple which consists of $(F,j)$, an {\em ordered} tuple $\mathbf{p}=\mathbf{p}_+ \sqcup \mathbf{p}_-$, $\mathbf{p}_\pm=(p_{\pm,1},\dots,p_{\pm,l_\pm})$, of distinct points on $F$ and an {\em ordered} tuple $\mathbf{r} = \mathbf{r}_+ \sqcup \mathbf{r}_-$, $\mathbf{r}_\pm=(r_{\pm,1},\dots,r_{\pm,l_\pm})$, of asymptotic markers at $\mathbf{p}$.
Here an {\em asymptotic marker} $r_{\pm,i}$ at a puncture $p_{\pm,i}$ is an element of $(T_{p_{\pm,i}} F-\{0\})/\R^+$.
We denote $\dot F=F-\mathbf{p}$.
\end{defn}

\nom[Simzero]{$\sim_0$}{Equivalence relation between two marked Riemann surfaces given by Definition~\ref{defn: equiv relation sim zero}; also see Change of Notation~\ref{changeofnotation}}

\begin{defn}[Equivalence relation $\sim_0$] \label{defn: equiv relation sim zero}
Given two marked Riemann surfaces $\F=(F,j,\mathbf{p},\mathbf{r})$ and $\F'=(F',j',\mathbf{p}',\mathbf{r}')$,  we write $\F\sim_0\F'$ if there is a diffeomorphism $\phi:F\xrightarrow\sim F'$ satisfying $\phi_*(\F)=\F'$ (that is, $\phi_{*}j=j'$, $\phi(\mathbf{p}_\pm)=\mathbf{p}'_\pm$, and $\phi_{*}(\mathbf{r}_\pm)=\mathbf{r}'_\pm$, where the latter two maps are maps of ordered tuples).
\end{defn}

Let $(W,\alpha)$ be the exact symplectic cobordism from Section~\ref{subsection: almost complex structures}.

\nom[Sim]{$\sim$}{Equivalence relation between two maps from Riemann surfaces given by Definition~\ref{defn: equiv relation sim}; also see Change of Notation~\ref{changeofnotation}}

\begin{defn}[Equivalence relation $\sim$]\label{defn: equiv relation sim}
Given marked Riemann surfaces $\F=(F,j,\mathbf{p},\mathbf{r})$, $\F'=(F',j',\mathbf{p}',\mathbf{r}')$ and maps $u: \dot F\to \widehat{W}$, $u': \dot F'\to \widehat{W}$, we write $(\F,u)\sim (\F',u')$ if there is a diffeomorphism $\phi :  F\xrightarrow\sim F'$ satisfying $\phi_*(\F,u)=(\F',u')$ (that is, $\phi_{*}j=j'$, $\phi(\mathbf{p}_\pm)=\mathbf{p}'_\pm$ and $\phi_{*}(\mathbf{r}_\pm)=\mathbf{r}'_\pm$, again as ordered tuples, and $u' \circ \phi=u$).
\end{defn}

\nom[Fu1]{$[\mathcal{F},u]$}{Equivalence class of $(\mathcal{F},u)$ under the equivalence relation $\sim$ given by Definition~\ref{defn: equiv relation sim}}

\begin{defn}[Equivalence classes] \label{defn: llbracket}
The equivalence class of $(\F,u)$ under $\sim$ will be denoted by $[\F,u]$. If $\widehat{W}=\R\times M$, then the equivalence class of $(\F,u)$ under $\R$-translations   in the target \cb will be denoted by $(\F,u)_{\R}$ and the equivalence class under $\sim$ and $\R$-translations in the target will be denoted by $\llbracket \F,u\rrbracket$.
\end{defn}

\nom[Fu2]{$\llbracket \F,u\rrbracket$}{Equivalence class of $(\mathcal{F},u)$ under $\sim$ and $\R$-translations in the target given by Definition~\ref{defn: llbracket}}

Let $\bs\gamma_+=(\gamma_{+,1},\dots,\gamma_{+,l_+})$ and $\bs\gamma_-=(\gamma_{-,1},\dots,\gamma_{-,l_-})$ be ordered tuples of Reeb orbits for $\alpha_+$ and $\alpha_-$.  Then $\mathcal{M}_J^{\op{ind}=k}(\dot F,\widehat{W};\bs\gamma_+;\bs\gamma_-)$ is the space of equivalence classes $[\F,u]$ of pairs, where $u$ is a $J$-holomorphic map $\dot F \to \widehat{W}$ from $\bs\gamma_+$ to $\bs\gamma_-$ (i.e., $u$ is asymptotic to $\gamma_{+,i}$ at the positive end near $p_{+,i}$ and to $\gamma_{-,i}$ at the negative end near $p_{-,i}$) and $u$ ``maps" the asymptotic markers $r_{\pm,i}$ to the markers $x_{\gamma^s_{\pm,i}}$. \coblu  Also the superscript $\op{ind}=k$ as in $\mathcal{M}_J^{\op{ind}=k}(\dot F,\widehat{W};\bs\gamma_+;\bs\gamma_-)$ indicates the subset consisting of $[\F,u]$ of Fredholm index $\op{ind}(u)=k$. \cb

If $\widehat{W}=\R\times M$, then $\mathcal{M}_J(\dot F,\R\times M;\bs\gamma_+;\bs\gamma_-)/\R$ is the usual quotient of $\mathcal{M}_J(\dot F,\R\times M;\bs\gamma_+;\bs\gamma_-)$ by $\R$-translations in the target.

\begin{defn}
The {\em $\alpha$-energy} of a map $u$ from $\bs\gamma_+$ to $\bs\gamma_-$ is given by $$E_\alpha(u):=\mathcal{A}_\alpha(\bs\gamma_+)-\mathcal{A}_\alpha(\bs\gamma_-),$$
where $\mathcal{A}_\alpha(\bs\gamma_\pm)=\sum_{i=1}^{l_\pm}\mathcal{A}_\alpha(\gamma_{\pm,i})$.
\end{defn}

\nom[A]{$\mathcal{A}_\alpha(\bs\gamma)$}{The $\alpha$-action/energy of the collection of orbits $\bs\gamma$}

\subsection{Sorting}

\coblu
We fix an ordering of $\mathcal P^{L}_\alpha$ which we denote by $\vartheta$. 
In practice, we choose a generic $\alpha$ such that the map $\mathcal{P}^L_\alpha\to \R^+$, $\gamma\mapsto \mathcal{A}_\alpha(\gamma)$, is injective and order the orbits $\mathcal P^{L}_\alpha$ by increasing action.
\cb 
\nom[tZheta]{$\vartheta$}{An ordering of $\mathcal P_\alpha^L$}

\begin{defn}
An ordered tuple of Reeb orbits $\bs \gamma$ is {\em sorted} if
$$\bs \gamma=(\underbrace{\gamma_{1},\dots, \gamma_{1}}_{i_{1} \text{ copies }},\dots, \underbrace{\gamma_{k},\dots, \gamma_{k}}_{i_{k} \text{ copies }}),$$
where $\gamma_{i} \neq \gamma_{j}$ as long as $i\neq j$.  We also allow $k=0$, i.e., $\bs \gamma=\varnothing$.
If $\bs \gamma$ is sorted according to $\vartheta$,   i.e., we have $\vartheta(\gamma_i) < \vartheta(\gamma_j)$ for any $1 \leq i<j \leq k$, \cb then $\bs \gamma$ is said to be {\em $\vartheta$-sorted}.
\end{defn}

{\em From now on we assume that all the positive (and likewise all the negative) ends of all the moduli spaces are $\vartheta$-sorted   using the action, \cb unless stated otherwise.}

\subsection{Puncture reorderings and marker rotations} \label{subsection: puncture reorderings and marker rotations}


\coblu Let $[(F,j,{\bf p},{\bf r}),u]$ be an element of $\mathcal{M}_J(\dot F,\widehat{W};\bs\gamma_+;\bs\gamma_-)$, or more generally of the weighted Sobolev space $W_\delta^{l+1,p}(\dot F, \widehat{W};\bs\gamma_+;\bs\gamma_-)$ given in Definition~\ref{def: weighted Sobolev space}. As before $\mathbf{p}=\mathbf{p}_+ \sqcup \mathbf{p}_-$, $\mathbf{p}_\pm=(p_{\pm,1},\dots,p_{\pm,l_\pm})$, $\mathbf{r} = \mathbf{r}_+ \sqcup \mathbf{r}_-$, and $\mathbf{r}_\pm=(r_{\pm,1},\dots,r_{\pm,l_\pm})$.

\begin{defn} \label{defn: puncture reordering/marker rotation} $\mbox{}$
\be
\item A {\em puncture reordering} takes $[(F,j,{\bf p},{\bf r}),u]$ to 
$$[(F,j,\sigma_+({\bf p}_+)\sqcup \sigma_-({\bf p}_-),\sigma_+({\bf r}_+)\sqcup \sigma_-({\bf r}_-)),u],$$
where $\sigma_\pm\in S_{l_\pm}$,  
\begin{align*}
\sigma_\pm({\bf p}_\pm)& =(p_{\pm, \sigma_\pm(1)},\dots, p_{\pm, \sigma_\pm(l_\pm)})\\
\sigma_\pm({\bf r}_\pm )& =(r_{\pm, \sigma_\pm(1)},\dots, r_{\pm, \sigma_\pm(l_\pm)}),
\end{align*}
and the reordering preserves $\bs\gamma_\pm$ (i.e., $\gamma_{\pm,\sigma_\pm(i)}= \gamma_{\pm,i}$).

\item A {\em marker rotation} takes $[(F,j,{\bf p},{\bf r}),u]$ to $[(F,j,{\bf p}, {\bf r}'),u]$, where we write $\mathbf{r}' = \mathbf{r}'_+ \sqcup \mathbf{r}'_-$ and $\mathbf{r}'_\pm=(r'_{\pm,1},\dots,r'_{\pm,l_\pm})$ and the asymptotic markers $r_{\pm,i}$ and $r'_{\pm,i}$ are mapped to the same marker $x_{\gamma_{\pm,i}^s}$.

\item Let $G(\dot F,\widehat{W};\bs\gamma_+;\bs\gamma_-)$ be the automorphism group of $W_\delta^{l+1,p}(\dot F, \widehat{W};\bs\gamma_+;\bs\gamma_-)$ generated by the puncture reorderings and marker rotations.
\ee
\end{defn}
\cb

\subsection{$\op{(Symp)}$ vs $\op{(Cob)}$}  \label{subsection: symp vs cob}

Consider the moduli space
$$\mathcal{M}=\mathcal{M}_J^{\op{ind}=k}(\dot F,\widehat W; \bs\gamma_+;\bs\gamma_-).$$
There will always be two cases to consider:
\begin{enumerate}
\item[(Symp)] $(\widehat W=\R\times M,J)$ is $\R$-invariant and is viewed as a symplectization.  This is the case when defining the differential $d$ and proving $d^2=0$. We usually take $\mathcal{M}/\R$.
\item[$\op{(Cob)}$] $(\widehat W,J)$ is viewed as a cobordism.  This is the case when defining chain maps.  We allow $(\widehat W,J)$ to be $\R$-invariant (e.g., when $(\widehat W,J)$ is the ``identity cobordism''). In the case of a cobordism there is no quotienting by $\R$, even if there is an $\R$-action.
\end{enumerate}
We will write $(\widehat W,J)\in \op{(Symp)}$ or $(\widehat W,J)\in\op{(Cob)}$ to indicate whether $(\widehat W,J)$ is viewed as a symplectization or a cobordism. There are two reasons for this distinction:
\begin{itemize}
\item We do not quotient by the $\R$-action for $\op{(Cob)}$.
\item Trivial cylinders are unperturbed (i.e., are more or less ignored) when considered in (Symp), but are perturbed (i.e., we construct an obstruction bundle $\E\to \V$ where the trivial cylinder is in $\V$) when considered in $\op{(Cob)}$.
\end{itemize}

For the $\op{(Cob)}$ case, we also require $J$ to be

\begin{defn} [$(L_+,L_-)$-end-generic]
Let $J$ be an almost complex structure on $(\widehat W,\widehat \alpha)$ that is $(L_+,L_-)$-simple for $\widehat \alpha$. We say $J$ is {\em $(L_+,L_-)$-end-generic},
if for all $\bs\gamma_\pm$ with $\mathcal A_{\alpha_\pm}(\bs\gamma_\pm)\leq L_\pm$ and all $k\in \mathbb Z$, there is no $J$-holomorphic curve $[\F, u]\in \mathcal M^{\op{ind}=k}_{J} (\dot F, \widehat W; \bs\gamma_+; \bs\gamma_-)$ that agrees with the trivial half cylinder near some puncture of $\dot F$ in the region $\widehat W_{s\geq T}$  or $ \widehat W_{s\leq -T}$, for some $T>1$.
\end{defn}

\begin{rmk}
We require $J$ to be $(L_+,L_-)$-end-generic, because with it we can add additional marked points to stabilize the domain of a $J$-holomorphic cylinder or plane in a more canonical way.
\end{rmk}

\begin{lemma}\label{end generic}
Given an $(L_+,L_-)$-simple pair $(\widehat \alpha, J)$, we can perturb $J$ to $J'$ such that $(\widehat \alpha, J')$ is still $(L_+,L_-)$-simple and $J'$ is $(L_+,L_-)$-end-generic.
\end{lemma}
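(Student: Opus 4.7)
The plan is a Sard--Smale transversality argument that perturbs $J$ only in the cobordism region $\{-1<s<1\}$, where the $(L_+,L_-)$-simple condition imposes no constraint beyond tameness. The first step is to translate ``$u$ agrees with the trivial half cylinder on $\widehat W_{s\geq T}$ for some $T>1$'' into a local constraint via unique continuation. If $u\in\mathcal M^{\op{ind}=k}_J(\dot F,\widehat W;\bs\gamma_+,\bs\gamma_-)$ satisfies this condition near a positive puncture asymptotic to $\gamma$, then since both $u$ and the trivial cylinder $\R\times\gamma$ are $J$-holomorphic on the $\R$-invariant symplectization end $[1,\infty)\times M_+$, unique continuation for $J$-holomorphic maps forces $u$ to agree with the trivial cylinder on the entire connected component $V$ of $u^{-1}([1,\infty)\times M_+)$ containing the puncture; the negative-puncture case is symmetric.

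Either $V=\dot F$, in which case $u$ factors through $\R\times\gamma$, or $\partial V\cap\dot F\neq\varnothing$ and at each $q\in\partial V\cap\dot F$ continuity of $u$ and $du$ gives $u(q)\in\{1\}\times\gamma$ with $\op{Im}\,du(q)\subseteq T_{u(q)}(\R\times\gamma)$. Both outcomes define codimension-at-least-one conditions on the universal moduli space
\[
\mathcal M^{\op{univ}}:=\{(J',u):J'\in\mathcal J,~u\in\mathcal M^{\op{ind}=k}_{J'}(\dot F,\widehat W;\bs\gamma_+,\bs\gamma_-)\},
\]
where $\mathcal J$ is the separable Banach manifold of $(L_+,L_-)$-simple almost complex structures agreeing with $J$ on the symplectization ends: the former outcome requires $T(\R\times\gamma)$ to be $J'$-invariant along $(\R\times\gamma)\cap\{-1<s<1\}$ (a proper closed condition broken by a generic zeroth-order $J'$-perturbation there), and the latter requires $u$ to meet the $1$-dimensional submanifold $\{1\}\times\gamma\subset\widehat W$ tangentially at $q$ (a condition whose evaluation map is made transverse by $J'$-perturbations in the cobordism region adjacent to $q$).

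Finally, by Sard--Smale applied to the Fredholm projection $\mathcal M^{\op{univ}}\to\mathcal J$, for each fixed data $(\bs\gamma_\pm,k,[\dot F])$ the set of $J'\in\mathcal J$ missing the bad locus is residual; the Baire category theorem applied over the countable collection of such data (finitely many action-bounded orbit tuples, countably many Fredholm indices and domain topologies) yields a dense set of $(L_+,L_-)$-end-generic $J'$, proving the lemma. The main technical obstacle is ensuring $J'$-transversality of the tangential-passage constraint for possibly multiply-covered $u$; this is handled because the constraint is imposed at a smooth point $q$ of $u$ adjacent to the free-perturbation cobordism region, so the linearized universal $\bar\partial$-operator remains surjective in the $J'$-direction.
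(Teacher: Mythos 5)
You take a genuinely different route from the paper. The paper's proof expands the end of any nearby curve over $\gamma_+$ in eigenfunctions $g_i$ of the asymptotic operator, defines an evaluation $ev_+(\F,u)=(c_{-1},\dots,c_{-N})$ on $\overline{\mathcal M}$, uses compactness of $\overline{\mathcal M}$ to choose a finite $N$ so that a specific point of $\R^N$ is missed, and then perturbs $J$ explicitly in $[s_0,s_0+1]\times\gamma_+\times D_\delta$ (inside the symplectization end, with $s_0\gg0$) by a term built from $g_{-N}$, following \cite[Theorem 6.06]{BH}; the universal moduli space and Sard--Smale play no role there.

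The key gap in your argument is the codimension estimate. You reduce the bad condition to a first-order tangency of $u$ with $\R\times\gamma$ at a point of $\{s=1\}$, whose codimension in $\mathcal M^{\op{univ}}$ is a fixed finite number --- roughly $2n-1$ for $u(q)\in\{1\}\times\gamma$ plus $2n$ for $\op{Im}\,du(q)=T_{u(q)}(\R\times\gamma)$. Sard--Smale removes the projected locus over generic $J'$ only when that codimension exceeds the Fredholm index $k$ of $\pi:\mathcal M^{\op{univ}}\to\mathcal J$, and the lemma must cover all $k$ that occur with action-bounded ends; nothing in your proposal prevents $k\geq 4n-1$, so the argument does not close. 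What makes the bad locus truly negligible is that agreeing with the trivial half-cylinder forces $c_{-1}=\cdots=c_{-N}=0$ for \emph{every} $N$, i.e., arbitrarily high codimension, and compactness of the total moduli space lets one truncate at a single $N$ beating every $k$ that occurs; this is precisely what the paper exploits and what the tangency constraint alone misses. Two side remarks: the branch ``$V=\dot F$'' is actually vacuous (a $J$-plane from $\gamma_+$ with image in $[1,\infty)\times\gamma_+$ is impossible by the asymptotic winding number), so the $J'$-invariance of $T(\R\times\gamma)$ never enters; and the multiple-cover worry in your last paragraph can simply be dropped, since unique continuation shows that if $u=v\circ\pi$ agrees with a trivial half-cylinder near a puncture then the underlying simple $v$ does so over the underlying simple orbit, so ruling out simple curves suffices.
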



\begin{proof}
We restrict to the simpler case where $\bs\gamma_+$ consists of only one Reeb orbit $\gamma_+$ of $\alpha_+$ with $\mathcal A_{\alpha_+}(\gamma_+) \leq L_+$. Technically speaking, this suffices for the current paper; the proof of the general case is quite similar to this special case and is left to the reader.

We perturb $J$ inductively, using a triple which we call the {\em complexity}
$$c(\F,u)=(\mathcal{A}_{\alpha_+}(\gamma_+),E_\alpha(u),-\chi(\dot F)),$$
where we are using the lexicographic ordering, to order the moduli spaces of curves. More precisely, we perturb $J$ so that all the curves of
$$\mathcal M(\gamma_+;\bs\gamma_+):=\mathcal M(\dot F, \widehat W;\gamma_+;\bs\gamma_-)$$
have nontrivial positive ends, assuming all curves in $\mathcal{M}(\gamma_+';\bs\gamma_-')$ with complexity less than that of $\mathcal{M}(\gamma_+;\bs\gamma_-)$ have nontrivial positive ends.

Let us write $\mathcal{M}:= \mathcal{M}(\gamma_+;\bs\gamma_-)$ and $\bdry\mathcal M=\overline{\mathcal{M}}-\mathcal{M}$.

\s\n
{\em Step 1.}
We claim that there exists an open neighborhood $V\subset \overline{\mathcal M}$ of $\partial \mathcal M$ such that no holomorphic curve in $V-\bdry \mathcal M$ has trivial positive end. \coblu During the proof of the lemma we call a holomorphic building (See Section 7 of \cite{BEHWZ} for the definition of holomorphic buildings) whose topmost level is a branched cover with at least one branch point over a trivial cylinder a {\em {\bf b}-building}. \cb

Consider a holomorphic building in $\partial \mathcal M$ of the form $u_0\cup u_1$, where $u_0$ maps to $\widehat W$ and $u_1$ maps to $\mathbb R \times M_+$.  (The other types of holomorphic buildings in $\bdry \mathcal M$ can be treated similarly and are left to the reader.) We choose a small neighborhood $U\subset \overline{\mathcal{M}}$ of $u_0\cup u_1$ such that no holomorphic curve $u \in U-\bdry\mathcal{M}$ has a trivial positive end as follows: If $u_0\cup u_1$ is not a {\bf b}-building, then $u_1$ does not have a trivial positive end by unique continuation (here we are assuming that $u_1$ is not a trivial cylinder) and there is a sufficiently small neighborhood $U$ of $u_0\cup u_1$ with the desired property. Next suppose that $u_0\cup u_1$ is a {\bf b}-building and there exists a sequence $\{u^k\}$ of curves in $\mathcal M$ that limit to $u_0\cup u_1$ and have trivial positive ends.   Then there exist constants $s_0$ and $k_0$ such that:
\begin{itemize}
\item $J|_{[s_0,\infty)\times M_+}= J_+$; and
\item for $k\geq k_0$,  $u^k$ restricted to $ [s_0,\infty)\times M_+$ is a branched cover of $[s_0,\infty)\times\gamma^s_+$.
\end{itemize}
If we replace the portion of $u^k$ in $[s_0,\infty)\times M_+$ by a trivial half-cylinder without branch points, we obtain a holomorphic curve with a trivial positive end in $\mathcal{M}(\gamma_+',\bs\gamma_-')$ of lower complexity, which contradicts the inductive assumption.  Since $\partial \mathcal M$ is compact, we can cover $\bdry\mathcal M$ by finitely many open sets $U$ and the claim follows.

\s\n
{\em Step 2.}
Recall from Section~\ref{subsection: almost complex structures} that, for any $[\mathcal F, u]\in \mathcal M$, on a neighborhood of the puncture $p_+$ of $\dot F$ where $u$ converges to $\gamma_+$, we have $u(s,t)=(s,t,\eta(s,t))$, where $\eta$ satisfies Equation~\eqref{normal equation}.

We denote by ${\mathfrak C}$ the space of functions $f = (\overline f, \underline f):[0, \mathcal A_\alpha (\gamma)] \to \R^{2\overline m}\times \R^{2\underline m}$ that satisfies  $\overline f(\mathcal A_\alpha (\gamma)) = \overline f(0)$ and $\underline f(\mathcal A_\alpha (\gamma)) = -\underline f(0).$
Define a self-adjoint operator, called the {\em normal asymptotic operator},\footnote{This is slightly different from the full asymptotic operator which appears in Section~\ref{subsection: asymptotic operator}.}
$$A:=-j_0\frac{\partial}{\partial t}-S:W^{1,2}(\mathfrak C)\to L^2(\mathfrak C),$$ \cb 
where $S=-j_0X$.
Then $\eta$ satisfies
\begin{equation} \label{normal equation bis}
\frac{\partial \eta}{\partial s}=A \eta.
\end{equation}
Let
$$\dots\leq \theta_{-2}\leq \theta_{-1}<0<\theta_1\leq \theta_2\leq \dots$$
be the eigenvalues of $A$ and
$$\dots,g_{-2},g_{-1},g_1,g_2,\dots$$
be the associated complete set of orthonormal eigenfunctions.
We can write
$$\eta(s,t)=\sum_{i \leq -1}c_i e^{\theta_i s}g_i(t),$$
where the $c_i:= c_i^{\gamma_+}(u)$ are constants.  Observe that $u$ has trivial positive end if and only if all the $c_i$ are zero.

\s\n
{\em Step 3.}
Choose a compact subset  $K$ of $\mathcal M$ such that $K\cup V = \overline{\mathcal M}$. Below we explain how to make a  perturbation of $J$ such that all the curves in $K$ have no trivial positive end and yet preserve this property for curves near $V$ and in lower inductive strata.

\s\n
{\em Case I} ($\gamma_+$ is simple).  Since $K$ is compact, there exists $s_1> 0$ such that:
\be
\item $J|_{[s_1,\infty)\times M_+}= J_+$; and
\item for any $[\mathcal F, u] \in K$ the restriction of $u$ to $(s_1,\infty)\times M_+$ has image in $(s_1,\infty)\times \gamma_+\times D_{\delta'}$ and can be expressed as $(s,t,\eta(s,t))$, where $\eta$ satisfies Equation \eqref{normal equation bis}.
\ee
Here $0<\delta'< \frac{\delta}{2}$, where $\delta$ is the constant that appears in Definition~\ref{Lsimple form}.

Next choose $s_1< R$ and $\varrho_1,\dots,\varrho_k>0$ sufficiently small; we assume that $R$ is much larger than the $R$ used when perturbing the $J$ for the lower inductive strata. Let $\mu: \R\to[0,1]$ be a smooth bump function that is supported on $(0,3)$ and satisfies $\mu([1,2])=1$, and let $\mu_T:\R\to[0,1]$ be its translate $\mu_T(s)=\mu(s-T)$.

We perturb $J$ to $J'=J'_{\varrho_1,\dots,\varrho_k}$ on the region $[R, R+3]\times \gamma_+ \times D_{\delta/2} \subset \widehat W$ by setting
$$J'(\partial_s) = g\mathbf R_\alpha -\sum_{j=1}^kj_0 \varrho_j\mu_{R}(s)g_{-j}(t), \quad J'(\bdry_{x_i})=\bdry_{y_i}$$
on $[R,R+3]\times \gamma_+\times D_{\delta'}$. The perturbation $\eta'$ of $\eta$ satisfies the equation
\begin{equation}
\frac{\bdry\eta'} {\bdry s}= A\eta' +\sum_{j=1}^k \varrho_j\mu_{R}(s)g_{-j}(t).
\end{equation}
All but finitely many Fourier coefficients of $\eta$ and $\eta'$ are the same, i.e., those of $g_{-j}(t)$, $j=1,\dots,k$, which we denote by $f_{-j}(s)g_{-j}(t)$.  The coefficient $f_{-j}(s)$ satisfies
$$\frac{df_{-j}}{ds}=\theta_{-j} f_{-j}(s) + \varrho_j\mu_{R}(s),$$
subject to the condition $f_{-j}(s) =c_{-j}e^{\theta_{-j}s}$ for $s< R$. We compute
$$f_{-j}(s)=e^{\theta_{-j}s}\left( c_{-j}+ \int_0^s e^{-\theta_{-j}\sigma}\varrho_j \mu_{R}(\sigma)d\sigma\right).$$
This shows that the Fourier coefficients of all the curves in $K$ are perturbed in the $g_{-j}(t)$ component by a fixed small constant that depends on $\varrho_j$.

We now assume without loss of generality that $J$ regular.  Since $\gamma_+$ is simple, $K$ is part of a transversely cut out moduli space and the map
$$K\to \R^k, \quad[\F,u]\mapsto (c_{-1}^{\gamma_+}(u),\dots,c_{-k}^{\gamma_+}(u))$$
is smooth.  Hence, by Sard's theorem, for $k>0$ sufficiently large, there exist $\varrho_1,\dots,\varrho_k$ arbitrarily small such that no holomorphic curve of $J'_{\varrho_1,\dots,\varrho_k}$ that is close to a curve in $K$ has trivial positive end.

\s\n
{\em Case II} ($\gamma_+$ is not simple). If $\gamma_+$ is a $k$-fold cover of some simple $\gamma^s_+$, then the $k$-fold multiple cover of the $i$th eigenfunction associated to $\gamma^s_+$ is more or less the $ki$th eigenfunction associated to $\gamma_+$. The only modification we need to make is to  perturb $J$ using the eigenfunctions $g_{-j}(t)$ associated to $\gamma^s_+$.

\s
Finally we discuss the effect of perturbing $J$ on the ``end-genericity" of lower inductive strata and curves close to $V$:
\begin{itemize}
\item The perturbation of $J$ does not affect $J_+$ and the ends of curves in the symplectization $\R\times M_+$.
\item Since $\varrho_1,\dots,\rho_k$ are arbitrarily small, for moduli spaces $\mathcal{M}(\gamma_+'; \bs\gamma'_-)$ of lower complexity, the curves that are not close to {\bf b}-buildings still do not have trivial positive ends.
\item The curves of $\mathcal{M}(\gamma_+'; \bs\gamma'_-)$ that are close to {\bf b}-buildings also do not have trivial positive ends since the considerations of Step 1 reduce the issue to the ``end-genericity" of moduli spaces of even lower complexity.
\item Similarly, the curves that are close to $V$ still do not have trivial positive ends.
\end{itemize}
This completes the proof of Lemma~\ref{end generic}. \cb
\end{proof}

{\em In this paper, we always assume $J$ is $(L_+,L_-)$-end-generic for some appropriate $(L_+,L_-)$ when $(\widehat W, J)\in (\op{Cob})$.}

\subsection{Teichm\"uller space and mapping class group} \label{subsection: Teichmuller space and mapping class group}

\coblu In this subsection let us fix $(F,\mathbf{p}, \mathbf{r})$ and assume that $\dot F=F-{\bf p}$ has negative Euler characteristic. \cb For each $p\in \mathbf{p}$ fix an identification  $N_p-\{p\}\simeq (0,\infty)\times S^1$, where $N_p\subset F$ is a small neighborhood of $p$.

\coblu Let $\op{Diff}(F,{\bf p},{\bf r})$ be the group of diffeomorphisms of $F$ that fix ${\bf p}$ and ${\bf r}$ as ordered sets and let $\op{Diff}'(F,{\bf p},{\bf r})$ be the subgroup generated by the identity component together with Dehn twists $\tau_{p}$ along loops around all the $p\in {\bf p}$. 

We make the following {\em slightly nonstandard definition:}

\begin{defn} \label{defn: Teich}
$\op{Teich}(F,{\bf p},{\bf r})$ is the Teichm\"uller space of equivalence classes $[j]$ of complex
\nom[Teich]{$\op{Teich}(F,{\bf p},{\bf r})$}{Teichm\"uller space of $(F,{\bf p},{\bf r})$; see Definition~\ref{defn: Teich} which is slightly nonstandard}
structures on $F$, where two complex structures $j$ and $j'$ are equivalent if and only if there exists $g\in \op{Diff}'(F,{\bf p},{\bf r})$ such that $g_* j = j'$.
\end{defn}

Observe that $\op{Teich}(F,{\bf p},{\bf r})$ is diffeomorphic the product of the usual Teichm\"uller space $\op{Teich}(F,{\bf p})$ with $\times_{p\in {\bf p}} ((T_pF-\{0\})/\R^+)$ (the set of tuples of markers indexed by ${\bf p}$); this is how we will view $\op{Teich}(F,{\bf p},{\bf r})$ from now on.
\cb

Given an open set $\mathcal{U}$ of $\op{Teich}(F,{\bf p},{\bf r})$, we write $\mathcal{U}=\mathcal{U}_{[j]}$ to indicate that $[j]\in \mathcal{U}$. \cb Also let $\widetilde{\mathcal{U}}$ be a {\em Teichm\"uller slice} over $\mathcal{U}$, i.e., a smooth choice of complex structure $j_x$ on $F$ in each equivalence class $x\in \mathcal{U}$   subject to the condition that: 
\be
\item[($\#$)] for each $p\in \mathbf{p}$, $j_x$ agrees with the standard complex structure $j$ on some $[R(x),\infty)\times S^1$, where $R(x)>0$ depends on $x$.
\ee
\nom[Mod]{$\op{Mod}(F,{\bf p},{\bf r})$}{Mapping class group of $F$ fixing ${\bf p}$ and ${\bf r}$ as ordered sets}

\coblu The mapping class group\footnote{Again, slightly nonstandardly defined.}
$$\op{Mod}(F,{\bf p},{\bf r}):=\op{Diff}(F,{\bf p},{\bf r})/\op{Diff}'(F,{\bf p},{\bf r})$$ 
acts properly discontinuously on $\op{Teich}(F,{\bf p},{\bf r})$.  
\nom[Modj]{$\op{Mod}(F,{\bf p},{\bf r})_{[j]}$}{Stabilizer of $[j]$ in $\op{Mod}(F,{\bf p},{\bf r})$ given by \eqref{eqn: Modj}}
We denote the stabilizer of $[j]$ by
\begin{equation}\label{eqn: Modj}
    \op{Mod}(F,{\bf p},{\bf r})_{[j]}=\{ [g]\in \text{Mod}(F,{\bf p},{\bf r})~|~ [g]([j])=[j]\}.
\end{equation}

\begin{lemma} \label{lemma: stabilizer}
The stabilizer $\op{Mod}(F,{\bf p},{\bf r})_{[j]}$ is trivial.
\end{lemma}

\begin{proof}
$\op{Mod}(F,{\bf p},{\bf r})_{[j]}$ is isomorphic to the automorphism group $\op{Aut}(F,j,{\bf p},{\bf r})$ of \nom[Aut]{$\op{Aut}(F,j, {\bf p},{\bf r})$}{Automorphism group of $(F,j)$ fixing ${\bf p}$ and ${\bf r}$ pointwise}
$(F,j)$ which takes ${\bf p}$ to itself and ${\bf r}$ to itself as ordered sets. The automorphism group of $(F,j)$ taking ${\bf p}$ to itself is finite since $\chi(\dot F)<0$ and there are no nontrivial automorphisms of finite order that preserve ${\bf r}$.
\end{proof} 
\cb

\section{Fredholm theory} \label{section: Fredholm theory}

In this section we fix $(F,{\bf p})$ and hence $\dot F$.

\subsection{Canonical cylindrical coordinates} \label{subsection: canonical cylindrical coordinates}

Let $(\F,u)$ be a pair where $u:\dot F\to  \widehat{W}$ is a sufficiently differentiable map which is asymptotic to $\gamma_{+,i}$ at the positive end near $p_{+,i}$ and to $\gamma_{-,i}$ at the negative end near $p_{-,i}$.    Since $J$ is $L$-simple, there exist simple coordinates $(t,x,y)$ on $\R/\mathcal{A}_\alpha(\gamma)\Z\times D$ about every orbit $\gamma\in \mathcal{P}^L_\alpha$. \cb If $u$ can be written as a graph $(s,t)\mapsto (s,t,\eta_{\pm,i}(s,t))$ on $\R\times (\R/\mathcal{A}_\alpha(\gamma_{\pm,i}))\times D$ with respect to simple coordinates near the end corresponding to the puncture $p_{\pm,i}\in \mathbf{p}$, then $(s,t)$ is said to be the {\em canonical cylindrical coordinates on $\dot F$ near $p_{\pm,i}$ with respect to $u$}.

By the simplicity of $J$, if $[\F,u]\in \mathcal{M}_J^{\op{ind}=k}(\dot F,\widehat{W};\bs\gamma_+;\bs\gamma_-)$, then the canonical cylindrical coordinates for a representative $(\F,u)$ near the punctures are holomorphic coordinates on $\dot F$.  On the other hand, $(s,t)$ is not necessarily holomorphic if $u$ is not $J$-holomorphic.

If $\llbracket\F,u\rrbracket\in \mathcal{M}_J^{\op{ind}=k}(\dot F,\R\times M;\bs\gamma_+;\bs\gamma_-)/\R$, the coordinates $(s,t)$ for $(\F,u)$ are canonical up to translations in the $s$-direction.

\subsection{Fredholm setup} \label{subsection: Fredholm setup}

Fix an $L$-simple $J$ on the symplectization $\widehat{W}=\R\times M$ or an $(L_+,L_-)$-simple $J$ on the cobordism $\widehat{W}$ in the class $C^\infty$.

Recall the coordinate $s$ at the ends of $\widehat{W}$. Fix a Riemannian metric $g_0$ on $\widehat{W}$ which is $s$-invariant at the ends. For simplicity we assume that $g_0$ restricts to the standard flat metric
$$g_0=ds^2+dt^2+\sum_idx_i^2+\sum_i dy_i^2$$
on the $L$-simple coordinate charts (and hence $g_0=g_0^L$, i.e., depends on $L$). Let $\exp: T\widehat W \to \widehat{W}$ be the exponential map with respect to $g_0$, $\epsilon>0$ be a constant smaller than the injectivity radius of $g_0$, and
$$D_\epsilon\subset T\widehat W= \{(w,\xi)~|~w\in W, |\xi|_{g_0}< \epsilon\}$$
be the $\epsilon$-disk bundle of $T\widehat{W}$.

{\em For the rest of the paper we assume that $\bs\gamma_+$ satisfies $\mathcal{A}_\alpha(\bs\gamma_+)<L$, unless indicated otherwise.}

\subsubsection{Weighted Sobolev spaces} \label{subsubsection: weighted sobolev spaces}

  As in Section~\ref{subsection: Teichmuller space and mapping class group}, we fix an identification 
$$N_p-\{p\} \simeq (0,\infty)\times S^1$$ 
for each $p\in \mathbf{p}$, where $N_p\subset F$ is a small neighborhood of $p$, and assume that all $j$ on $F$ or $\dot F$ satisfy $(\#)$.\footnote{This identification is different from the canonical cylindrical coordinates from Section~\ref{subsection: canonical cylindrical coordinates}.}  We also choose a Riemannian metric on $\dot F$ that restricts to a product metric on all the cylindrical ends $(0,\infty)\times S^1$. \cb

Let $\delta>0$ be a small positive number.  We define the ``weighted Sobolev space'' $W_\delta ^{k+1,p}( \dot F, \widehat{W};\bs\gamma_+;\bs\gamma_-)$,   which is similar to but slighlty different from Dragnev's~\cite[Definition 2.6]{Dr}:

\begin{defn}[Weighted Sobolev space of maps $\dot F\to\widehat{W}$] \label{def: weighted Sobolev space}
  Choose an auxiliary complex structure $j$ on $F$. Let $\mathcal{C}$ be the space of smooth maps $u:\dot F\to \widehat{W}$ that agree with $(j,J)$-holomorphic maps parametrizing trivial holomorphic half-cylinders $[R,+\infty)\times \gamma_{+,i}$ near $p_{+,i}$ and $(-\infty,-R]\times\gamma_{-,i}$ near $p_{-,i}$ for some $R$ that depends on $u$. \cb
Then the {\em weighted Sobolev space of maps $\dot F\to\widehat{W}$} is
$$W_\delta ^{k+1,p}( \dot F, \widehat{W};\bs\gamma_+;\bs\gamma_-):=\{\exp(u,\xi)~|~u\in \mathcal{C},\xi\in W_\delta^{k+1,p}(\dot F, u^*D_\epsilon)\},$$
where $W_\delta^{k+1,p}(\dot F, u^*D_\epsilon)$ is the usual weighted Sobolev space of sections of $u^*D_\epsilon$, defined using $g_0$, and we are using a smooth weight function $f_\delta:\widehat{W}\to \R^+$ which agrees with $e^{\delta |s|}$ at the ends of $\widehat{W}$.
\end{defn}


\begin{rmk} \label{rmk: indep of choice of j}
The definition of $W_\delta ^{k+1,p}( \dot F, \widehat{W};\bs\gamma_+;\bs\gamma_-)$ is independent of the choice of $j$: Let $\phi: D^2 \to D^2$, where $D^2\subset \C$, be a holomorphic map such that $\phi(0)=0$ and $\phi'(0)\not=0$. Identifying $D^2-\{0\}$ with the holomorphic half-cylinder $[0,\infty)\times S^1$, the independence is a consequence of the fact that $\phi:[0,\infty)\times S^1\to [0,\infty)\times S^1$ exponentially approaches $(s,t)\mapsto (s+a,t+b)$ for some $a,b$ as $s\to \infty$. 
\end{rmk}

Given $u\in W_\delta ^{k+1,p}( \dot F, \widehat{W};\bs\gamma_+;\bs\gamma_-)$, let $\R^{2(l_++l_-)}$ be the vector space spanned by sections $u^*\tilde \bdry_{s,\pm,i}, u^*\tilde \bdry_{t,\pm,i}$ of $C^\infty(\dot F, u^*D_\epsilon)$, where $i=1,\dots,l_\pm$; $R\gg 0$; $f:[R,\infty)\to [0,1]$ is a smooth map satisfying 
$$f(s)=\left\{  
\begin{array}{cl} 1 & \mbox{for $s\geq 2R$}, \\ 
0 & \mbox{for $s\leq 2R-1$};
\end{array}  
\right.$$
$\tilde \bdry_{s,+,i}=f(s) \bdry_s$ and $\tilde \bdry_{t,+,i}=f(s)\bdry_t$ on $[R,\infty)$ times an $L$-simple coordinate chart for $\gamma_{+,i}$; and $\tilde \bdry_{s,-,i}, \tilde \bdry_{t,-,i}$ are defined analogously using $f(-s)$.  The topology on $W_\delta ^{k+1,p}( \dot F, \widehat{W};\bs\gamma_+;\bs\gamma_-)$ comes from the topology on $\R^{2(l_++l_-)}\times W_\delta^{k+1,p}(\dot F, u^*D_\epsilon)$ via the exponential map $\exp(u,\mathfrak{a}+\xi)$, where $(\mathfrak{a},\xi)\in \R^{2(l_++l_-)}\times W_\delta^{k+1,p}(\dot F, u^*D_\epsilon)$.
\cb

Also  note that the Sobolev norm depends on $g_0$ and $f_\delta$, but the topology on this Sobolev space does not depend on the particular choices of $g_0$ and $f_\delta$.

\begin{rmk}[Remark on values of $k$ and $p$] \label{rmk: values of k and p}
By the Sobolev embedding theorem, if $k>r+\frac{m}{p}$, then $W^{k,p}(\Omega)\subset C^r(\Omega)$ for any compact domain $\Omega$ of $\R^m$ with smooth boundary; see Remark~\ref{rmk:sobolev embedding} for obtaining the Sobolev embedding theorem for $\dot F$.  In our case $m=\dim \dot F= 2$ and we take $r\geq 2$, since we want at least two continuous derivatives (we need one continuous derivative for defining $s_{\pm,i}$ in Step 1B in the proof of Theorem~\ref{thm: construction of L-transverse subbundle} and two continuous derivatives to ensure that Lemma~\ref{lemma: I smoothness} holds). Hence from now on we assume that  $k> r+1$, $p= 2$, and $r\geq 2$.
\end{rmk}

The following is well-known:

\begin{rmk} \label{rmk:sobolev embedding}
We can use the Sobolev embedding theorem for compact domains to show that there exists a global constant $C>0$ such that for all
$$\xi\in W_\delta^{k+1,p}(\dot F, u^*D_\epsilon)$$
we have
\begin{equation}
|f_\delta\xi|_{C^r}\leq C\|f_\delta\xi\|_{W^{k+1,p}}=C \| \xi\|_{W^{k+1,p}_\delta}.
\end{equation}
Suppose for simplicity that $\dot F$ has only one end and $\dot F$ is written as the union $\Omega_0\cup\Omega_1\cup\Omega_2\dots$ of compact domains, where each $\Omega_i\cap \Omega_{i+1}$ is a circle, $\Omega_0$ is the ``thick part'', and $\Omega_i$, $i>0$, is biholomorphic to the standard $S^1\times[0,1]$.  Then we apply the Sobolev embedding theorem for each $\Omega_i$, noting that
$$\| \xi\|_{W^{k+1,p}_\delta(\Omega_i)}\leq \| \xi\|_{W^{k+1,p}_\delta(\dot F)}$$
and that the Sobolev spaces $W^{k+1,p}_\delta(\Omega_i)$ and $W^{k+1,p}_\delta(\Omega_{i+1})$ can be identified for $i\gg 0$ because $u^*D_\epsilon$ is asymptotically cylindrical at the ends of $\dot F$.  Hence the same constant $C>0$ can be used when we apply the Sobolev embedding theorem to all $\Omega_i$.
\end{rmk}

Let $W_\delta^{k,p}(\dot F,\wedge^{0,1}u^{*}T\widehat{W})$ be the usual weighted Sobolev space of sections of
$$\wedge^{0,1}u^*T\widehat{W}=\wedge^{0,1}\dot F \otimes_J u^{*}T\widehat W\to\dot F$$
in the class $(k,p)$ with weight $f_\delta$.

\begin{rmk}
In the functional analysis setup we use $W^{k+1,p}_\delta$ and $W^{k,p}_\delta$ spaces for large $k$ to extract a collection of finite-dimensional bundles $\E\to \V$. Once this is done we throw away the Sobolev space setup and work in the category of finite-dimensional manifolds or orbifolds.  (This is analogous to the \cite{FO} and \cite{FO3} setup and differs from that of \cite{HWZ3}.)
\end{rmk}

\subsubsection{$\chi(\dot F)<0$} \label{subsubsection: negative Euler char}

We first treat the case where $\dot F$ has negative Euler characteristic.
Given a Teichm\"uller slice $\widetilde{\mathcal{U}}$ of an open subset \coblu $\mathcal{U}\subset \op{Teich}(F,{\bf p},{\bf r})$, \cb define
\begin{align*}
\mathcal{B}_{\widetilde{\mathcal{U}}}= &\mathcal{B}_{\widetilde{\mathcal{U}}}(\dot F, \widehat{W};\bs\gamma_+;\bs\gamma_-)\\
=&\left\{ (\F,u)=((F,j,\mathbf{p},\mathbf{r}),u) \left|
\begin{array}{cc}
j\in \widetilde{\mathcal{U}},u\in W_\delta ^{k+1,p}( \dot F, \widehat{W};\bs\gamma_+;\bs\gamma_-), \\  u(r_{\pm, i})=x_{\gamma_{\pm,i}^s}
\end{array}
\right.\right\},
\end{align*}
where the pair $(F,\mathbf{p})$ is fixed (as stipulated at the beginning of this section).
We define the bundle
\begin{equation}
\pi_{\widetilde{\mathcal{U}}}:\mathcal{E}_{\widetilde{\mathcal{U}}}=\mathcal{E}_{\widetilde{\mathcal{U}}}(\dot F, \widehat{W};\bs\gamma_+;\bs\gamma_-)\to \mathcal{B}_{\widetilde{\mathcal{U}}},
\end{equation}
whose fiber over $(\F,u)$ is
$$(\mathcal{E}_{\widetilde{\mathcal{U}}})_{(\F,u)} = W_\delta^{k,p}(\dot F,\wedge^{0,1}u^{*}T\widehat{W}).$$
Then $\overline{\partial}_J^{\widetilde{\mathcal{U}}}$ is a section of $\mathcal{E}_{\widetilde{\mathcal{U}}}$ defined by
$$\overline{\partial}_J^{\widetilde{\mathcal{U}}} (\F, u)=(\F,u,\tfrac{1}{2}(du+J(u)du\circ j)).$$
Also let
\begin{equation} \label{eqn: proj to U}
\Pi_{\widetilde{\mathcal{U}}}: \mathcal{B}_{\widetilde{\mathcal{U}}} \to \widetilde{\mathcal{U}}, \quad (\F,u)=((F,j,{\bf p},{\bf r}),u)\mapsto j
\end{equation}
be the projection to $\widetilde{\mathcal{U}}$.
{\em When the Teichm\"uller slice $\widetilde{\mathcal{U}}$ is understood, it will often be omitted from the notation, e.g., $\overline{\partial}_J$, $\mathcal{B}$, $\mathcal{E}$.}

Let $\nabla$ be the Levi-Civita connection on $T\widehat W$ with respect to the metric $g_0$.  Let $L_{(\F,u)}$ be the differential
$$(\overline{\partial}_J)_*: T_{(\F,u)} \mathcal{B}\to T_{(\F,u,\overline\bdry_J u)}\mathcal{E}$$
postcomposed with the projection to $W_\delta^{k,p}(\dot F,\wedge^{0,1}u^{*}T\widehat{W}).$  Then $L_{(\F,u)}$ can be written as:
\begin{equation} \label{eqn: linearization}
L_{(\F,u)}: T\widetilde{\mathcal{U}}(j)\oplus\R^{2(l_++l_-)}\oplus W^{k+1,p}_\delta (\dot F,u^{*}T\widehat{W}) \to W^{k,p}_\delta (\dot F,\wedge ^{0,1} u^{*}T\widehat{W}),
\end{equation}
$$(\mathfrak{j},\mathfrak{a},\xi)\mapsto Y(\mathfrak{j})+D_u(\mathfrak{a}+\xi),$$
where $Y(\mathfrak{j})$ is induced from the variation of $j$ in the direction on $\mathfrak{j}$, $D_u$ is the usual linearized $\overline\bdry_J$-operator   with fixed $j$ \cb (which depends on the choice of $\nabla$ if $u$ is not $J$-holomorphic), and   $\R^{2(l_++l_-)}$ is as given in Section~\ref{subsubsection: weighted sobolev spaces}. \cb
Here $l_++l_-$ is the total number of punctures of $\dot F$.

\subsubsection{Topology for $\mathcal{M}_J(\dot F,\widehat{W};\bs\gamma_+;\bs\gamma_-)$} \label{subsubsection: topology}

The topology for $\mathcal{M}_J(\dot F,\widehat{W};\bs\gamma_+;\bs\gamma_-)$ is generated by the topologies $\mathcal{T}_{\widetilde{\mathcal{U}}}$ from the right-hand side of \coblu
\begin{equation} \label{eqn: topology of moduli space}
    \mathcal{M}_J(\dot F,\widehat{W};\bs\gamma_+;\bs\gamma_-)\cap (\mathcal{B}_{\widetilde{\mathcal{U}}}/\sim ) =(\overline{\partial}_J^{\widetilde{\mathcal{U}}})^{-1}(0),
\end{equation}
where we are ranging over all open sets $\mathcal{U}=\mathcal{U}_{[j]}$; the topology on $(\overline{\partial}_J^{\widetilde{\mathcal{U}}})^{-1}(0)$ is induced from $\mathcal{B}_{\widetilde{\mathcal{U}}}$; and we note that $\op{Mod}(F,{\bf p},{\bf r})_{[j]}=1$ by Lemma~\ref{lemma: stabilizer}. \cb

We claim that $\mathcal{T}_{\widetilde{\mathcal{U}}}$ does not depend on the choice of Teichm\"uller slice: Given Teichm\"uller slices $\widetilde{\mathcal{U}}$ and $\widetilde{\mathcal{U}}'$ over $\mathcal{U}$, there is a smooth family of diffeomorphisms $\phi_x: F\xrightarrow{\sim} F$ isotopic to the identity parametrized by $x\in \mathcal{U}$ such that $(\phi_x)_* j_x= j'_x$, where $j_x\in \widetilde{\mathcal{U}}$ and $j'_x\in \widetilde{\mathcal{U}}'$ are lifts of $x\in \mathcal{U}$. Let $x_0,x_1\in \mathcal{U}$ be close with respect to some metric (e.g., the Teichm\"uller metric) $d$ on $\mathcal{U}$ and $u_0,u_1\in W_\delta ^{k+1,p}( \dot F, \widehat{W};\bs\gamma_+;\bs\gamma_-)$ be $(j'_{x_i}, J)$-holomorphic maps that are close.  By Section~\ref{subsubsection: weighted sobolev spaces}, $u_1=\exp (u_0, \mathfrak{a}+ \xi)$ for $(\mathfrak{a},\xi)$ with small norm $\|\cdot \|$ in $\R^{2(l_++l_-)}\times W_\delta^{k+1,p}(\dot F, u_0^*D_\epsilon)$. 

\begin{lemma} \label{lemma: estimate}
Writing $u_1\circ \phi_{x_1} = \exp (u_0\circ \phi_{x_0}, \mathfrak{a}' + \xi')$, where 
$$(\mathfrak{a}',\xi')\in \R^{2(l_++l_-)}\times W_\delta^{k+1,p}(\dot F, (u_0\circ \phi_{x_0})^*D_\epsilon),$$ 
and $\phi_{x_1}=\exp(\phi_{x_0},\eta)$, we have:
\begin{equation} \label{eqn: estimate for moduli space}
   \|\mathfrak{a}' + \xi'\|< C(d(x_1,x_0) +\|(\mathfrak{a}, \xi)\|)
\end{equation}
for some constant $C>0$ which depends on $C^m$-norms of derivatives of $\phi_{x_0}$, $\phi_{x_0}^{-1}$ and $C^m$-norms of $\mathfrak{a}+\xi$ and $\eta$. 
\end{lemma}

Lemma~\ref{lemma: estimate} then implies the claim.

\begin{proof}[Proof of Lemma~\ref{lemma: estimate} ]
The proof uses Remark~\ref{rmk: indep of choice of j}.  We first do some model calculations. 

\s\n
{\bf Model calculations.}

\s\n 
(1) Given $f\in W^{1,3}(\R)$, we want to compare $\|f\|_{W^{1,3}}=\|f\| + \|f'\|+\|f''\|$, where $\|f\|$ is the $L^2$-norm, with $\|f\circ \phi\|_{W^{1,3}}$, where 
\begin{enumerate}
    \item[(*)] $\phi:\R\to \R$ is a diffeomorphism such that $\phi$ limits to $x\mapsto x+c_\pm$ and $\phi'$ to $1$ at the positive and negative ends.
\end{enumerate} 
(Note that the maps $\phi_x$ have this form.) By the change of variables formula and our requirement on $\phi'$,
\begin{align*}
    \|f\circ \phi\|^2 &= \int |f\circ \phi(x)|^2 dx \leq |\tfrac{1}{\phi'}|_{C^0} \int |f\circ \phi(x)|^2|\phi'(x)| dx\\
    & =|\tfrac{1}{\phi'}|_{C^0} \int |f(y)|^2 dy  \leq C^2 \|f\|^2
\end{align*}
for some constant $C>0$ which depends on $|(\phi^{-1})'|_{C^0}$. Similarly, 
\begin{align*}
    \|(f\circ\phi)'\|^2 &= \int |(f\circ \phi)'(x)|^2 dx = \int |f'\circ \phi(x)|^2 |\phi'(x)| ^2 dx\\
    &\leq |\phi'|_{C^0} \int |f'\circ \phi(x)|^2 |\phi'(x)| dx  \\
    &= |\phi'|_{C^0}\int |f'(y)|^2dy \leq C^2 \|f'\|^2,
\end{align*}
for some constant $C>0$ which depends on $|\phi'|_{C^0}$ and
\begin{align*}
   & \|(f\circ\phi)''\|^2 = \int |(f\circ \phi)''(x)|^2 dx\\
    & = \int (|f''( \phi(x))|^2 |\phi'(x)| ^4 + 2 | f''(\phi(x)) f'(\phi(x))| |\phi'(x)|^2 |\phi''(x)| + |f'(\phi(x))|^2 |\phi''(x)|^2 )dx\\
    &\leq  |\phi'|_{C^0}^3 \int |f''(y)|^2 dy + 2 |\phi''|_{C^0} | \phi'|_{C^0} \int |f''(y) f'(y)| dy + |\phi''|_{C^0} |\tfrac{1}{\phi'}|_{C^0} \int|f'(y)|^2 dy\\
&\leq C(\|f''\|^2 + \|f'\|^2),
\end{align*}
where $C>0$ depends on $C^m$-norms of derivatives of $\phi$ and $\phi^{-1}$.  Hence
$$\|f\circ \phi\|_{W^{1,3}} \leq C \|f\|_{W^{1,3}}.$$

\s\n
(2) We also estimate $\|(f^2\circ \phi)'\|^2$:
\begin{align*}
    (f^2\circ \phi)'&= (2f f')(\phi(x)) \phi'(x),\\
    (f^2\circ \phi)''&= (2ff')(\phi(x)) \phi''(x) + (2ff''+ 2(f')^2)(\phi(x))  \phi'(x),\\
    \|(f^2\circ \phi)'\|^2 & \leq C |f|_{C^1} ^2(\|f''\|^2 + \|f'\|^2),
\end{align*}
where $C>0$ depends on $C^m$-norms of derivatives of $\phi$ and $\phi^{-1}$. 

\s\n
(3) Next consider $W^{1,3}_{g_\delta}(\R)$, where $g_\delta:\R\to \R^+$ is a smooth weight function that has the form $e^{\delta|x|}$ outside of $[-1,1]$ with $\delta>0$. We treat the first derivative:
\begin{align*}
    \|f\circ \phi\|^2_{g_\delta} &= \int |f\circ \phi(x)|^2 |g_\delta(x)|^2 dx \\
    & \leq |\tfrac{1}{\phi'}|_{C^0} |\tfrac{g_\delta}{g_\delta\circ \phi}|^2_{C^0} \int |f\circ \phi(x)|^2 |g_\delta\circ \phi(x)|^2 |\phi'(x)| dx\leq C \|f\|^2_{g_\delta},
\end{align*}
in view of the fact that $\tfrac{g_\delta}{g_\delta\circ \phi}$ is bounded (at the positive end limits to $\tfrac{e^{\delta x}}{e^{\delta (x+c_+)}}=e^{-\delta c_+}$).  The estimates easily generalize to $W^{k,p}_{g_\delta}(\R)$.

\s\n
{\bf Verification of Estimate~\eqref{eqn: estimate for moduli space}.} Since $u_1=\exp (u_0, \mathfrak{a}+ \xi)$, we have
\begin{align*}
    u_1\circ \phi_{x_1} &= \exp (u_0\circ \phi_{x_1}, \mathfrak{a}\circ \phi_{x_1} + \xi\circ \phi_{x_1})\\
    &= \exp (u_0\circ \phi_{x_0}, \mathfrak{a}\circ \phi_{x_0} + \xi\circ \phi_{x_0} + du_0(\phi_{x_0})\eta +Q_{u_0}),
\end{align*}
where $Q_{u_0}$ is quadratic in $\mathfrak{a}, \xi, \eta$ and depends on $u_0$. 
The same argument as in (1) and (3) of the model calculation  --- the key point is that, by Remark~\ref{rmk: indep of choice of j}, at the ends of $\dot F$ with coordinates $(s,t)$, $\phi_{x_0}$ and $\phi_{x_1}$ asymptotically have the form $(s,t)\mapsto (s+c,t+d)$, where $c,d$ are constant --- implies that:
\begin{align*}
    \|\xi\circ \phi_{x_0}\| &\leq C \|\xi\|,
\end{align*}
where $C$ depends on the $C^m$-norms of derivatives of the derivatives of $\phi_{x_0}$ and $\phi^{-1}_{x_0}$. We also have:
\begin{gather*}
    \|\mathfrak{a}\circ \phi_{x_0}\| \leq C \|\mathfrak{a}\|,\\
    \|du_0(\phi_{x_0})\eta\|  \leq C d(x_1,x_0),\\
    \|Q_{u_0}\|  \leq C (d(x_1,x_0) + \|(\mathfrak{a}, \xi)\|),
\end{gather*}
where $C$ in the last estimate also depends on the $C^m$-norms of $\mathfrak{a}+\xi$ and $\eta$. Note that the $C^m$-norm of $\mathfrak{a}+\xi$ is controlled by the $C^0$-norm by elliptic regularity and the exponential decay of the ends.
 Putting the inequalities together yields Estimate~\eqref{eqn: estimate for moduli space}.
\cb
\end{proof}

\subsubsection{The map $I_\gamma$} \label{subsubsection: map I gamma}

Let $(\F_0,u_0)$ be a representative of
$$[\F_0,u_0]\in \mathcal{M}_J(\dot F,\widehat{W};\bs\gamma_+;\bs\gamma_-)$$
and let $\mathcal{B}'_{(\F_0,u_0)}$ be a small neighborhood of $(\F_0,u_0)$ inside
\begin{align*}
\mathcal{B}'(\dot F, \widehat{W};\bs\gamma_+;\bs\gamma_-)=&\left\{ (\F,u)=((F,\mathbf{p},\mathbf{r}),u) \left|
\begin{array}{cc}
u\in W_\delta ^{k+1,p}( \dot F, \widehat{W};\bs\gamma_+;\bs\gamma_-), \\  u(r_{\pm, i})=x_{\gamma_{\pm,i}^s}
\end{array}
\right.\right\}
\end{align*}

Consider the positive end of $(\F,u)\in \mathcal{B}'_{(\F_0,u_0)}$ asymptotic to $\gamma\in \bs\gamma_+$.  Then on this end $u$ is graphical, i.e., can be written in canonical cylindrical coordinates on $\dot F$ and simple coordinates on the target as
$$u(s,t)=(s,t,\eta(s,t)),$$
after lifting to the $m(\gamma)$-fold cover $\R\times (\R/\mathcal{A}_\alpha(\gamma)\Z)\times D$. We assume that the ray $\{s\in [R,\infty), t=0\}\subset \dot F$ corresponds to the asymptotic marker $r_{+}$ which ``maps'' to $\gamma$.

We then define a map
\begin{gather*}
I=I_\gamma: \mathcal B'_{(\mathcal F_0, u_0)} \times [T',+\infty) \to \mathbb R,\\
 ((\mathcal F, u),s')\mapsto \mathcal{A}_\alpha((t,\eta(s,t))|_{s=s'}),
\end{gather*}
where $T'$ is a constant such that we can write $u(s,t)=(s,t,\eta(s,t))$ for any $(\mathcal F, u)\in \mathcal B'_{(\mathcal F_0, u_0)}$ and $s\geq T'$.
Clearly $I((\mathcal F, u),s')$ is independent of the choice of sufficiently close $(\mathcal F_0, u_0)$.

\begin{lemma}\label{shenyang}
$I$ is a $C^1$-map.
\end{lemma}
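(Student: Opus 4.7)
The plan is to verify continuous differentiability of $I$ separately in $s$ and in $(\mathcal F, u)$ by differentiating under the integral sign, with the exponential decay of $\eta$ provided by the weighted Sobolev setup serving as the integrable majorant needed to apply dominated convergence.

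First I would establish a uniform pointwise bound of the form $|\eta(s,t)|\le Ce^{-\delta s}$ for all $(\mathcal F,u)$ in a small neighborhood of $(\mathcal F_0,u_0)$. Writing $u=\exp_{u_0}(\xi)$ with $\xi\in W^{k+1,p}_\delta(\dot F,u_0^*D_\epsilon)$, the component $\eta$ of $u$ in supersimple coordinates on the end is a smooth function of $\xi$; Remark~\ref{rmk:sobolev embedding} applied on a standard cylindrical exhaustion, combined with the weight $f_\delta=e^{\delta|s|}$, gives $|f_\delta\eta|_{C^r}\le C(\|\xi\|_{W^{k+1,p}_\delta}+\|\eta_0\|_{C^r})$, where $\eta_0$ is the analogous component for $u_0$. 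In particular $|\eta|^2$ is dominated by $C'e^{-2\delta s}$, which immediately makes $I$ well-defined and finite.

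Next, for the partial derivative in $s$, the fundamental theorem of calculus applied to the outer integral yields
\[
\partial_s I((\mathcal F,u),s)=-\int_0^{\mathcal A_\alpha(\gamma)}|\eta(s,t')|^2\,dt'.
\]
Sobolev embedding $W^{k+1,p}\hookrightarrow C^r$ ($r\ge 1$) shows that $\eta$ depends continuously on $u$ in $C^0_{\mathrm{loc}}$, so the above integrand is jointly continuous in $((\mathcal F,u),s)$. For the Frechet derivative in $(\mathcal F,u)$, a variation $\delta u$ induces a variation $\delta\eta$ with $|\delta\eta(s',t')|\le Ce^{-\delta s'}\|\delta u\|$ by the same Sobolev estimate. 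The integrand of $I$ has pointwise derivative $2\langle\eta,\delta\eta\rangle$ bounded by $C''e^{-2\delta s'}\|\delta u\|$, a uniform integrable majorant on $[s,\infty)\times\R/\mathcal A_\alpha(\gamma)\Z$. Dominated convergence then legalizes interchange, giving
\[
D_{(\mathcal F,u)}I[\delta u]=\int_s^{+\infty}\!\!\int_0^{\mathcal A_\alpha(\gamma)}2\langle\eta(s',t'),\delta\eta(s',t')\rangle\,dt'\,ds'.
\]
Boundedness of this linear functional and its joint continuity in $((\mathcal F,u),s)$ follow from Cauchy--Schwarz together with continuity of $(\mathcal F,u)\mapsto\eta$ in the weighted $L^2$-norm and the uniform exponential bound.

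The main obstacle will be the bookkeeping needed to realize $u\mapsto\eta$ as a genuine $C^1$ map between Banach spaces: because the statement asks $u(s,t)=(s,t,\eta(s,t))$ in \emph{canonical cylindrical coordinates depending on $u$}, one must first verify (using the implicit function theorem near $u_0$, whose canonical coordinates are already fixed) that a small perturbation of $u_0$ admits a unique reparametrization of the domain on $[T',\infty)\times S^1$ bringing $u$ into this graphical normal form, and that this reparametrization depends smoothly on $u$. Once this smooth map $u\mapsto\eta$ is in place, the dominated convergence argument above is routine and delivers the $C^1$-statement.
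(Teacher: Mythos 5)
Your proposal is correct and follows essentially the same route as the paper: differentiate under the integral, use the weighted-Sobolev exponential decay of $\eta$ as the dominating function, and account for the fact that the canonical cylindrical coordinates $(s,t)$ vary with $u$. The one thing to note is that the paper handles the reparametrization issue you flag not by invoking the implicit function theorem abstractly but by writing the chain-rule term explicitly, obtaining $dI((\mathcal F,u),s_0)(\xi,0)=2\int\!\!\int\big(\langle\xi(z),\eta(z)\rangle-\langle d\eta(z)\,d\phi_u^{-1}(z)\,\xi^{s,t}(z),\eta(z)\rangle\big)\,dt\,ds$ with $z=\phi_u^{-1}(s,t)$, so the term $d\phi_u^{-1}(z)\xi^{s,t}(z)$ is exactly the variation of the normal form you were deferring; the estimate then follows by H\"older and the Sobolev embedding as in your sketch.
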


Lemma~\ref{shenyang} will be proved in the Appendix.

\subsubsection{$\chi(\dot F)\geq 0$} \label{subsubsection: nonnegative Euler char}

We now consider the case where $\chi(\dot F)\geq 0$, namely when $\chi(\dot F)= 0$ or $1$. 

\nom[E1]{$\varepsilon'>0$}{Small positive constant used to define the punctures ${\bf q}$ when $\chi(\dot F)=0$ or $1$}
\nom[s2]{$s'_{\gamma,+}(\mathcal{F},u)$}{The $s$-coordinate defined in Lemma~\ref{tenren} and used in the definition of the punctures ${\bf q}$ when $\chi(\dot F)=0$ or $1$; here $\gamma$ is a limiting orbit of $u$ at the positive end}

\begin{lemma}\label{tenren}
In the $\op{(Symp)}$ case, suppose $\bs\gamma_+\not=\bs\gamma_-$.  Given a compact subset $K\subset  \mathcal{M}_J(\dot F,\widehat{W};\bs\gamma_+;\bs\gamma_-)/\R$, there exists $\varepsilon'>0$ small such that for each representative of $(\F,u)$ of each $[\F,u]\in K$ and  $\gamma\subset \bs\gamma_+$ there exists a value
\begin{gather*}
s'_{\gamma,+}(\F,u) :=  \max \{ s'\in \R~|~ I_\gamma((\F,u),s')=\mathcal{A}_\alpha(\gamma)-\varepsilon'\}.
\end{gather*}
Moreover, if $u_T$ is $u$ translated up by $s=T$ units, then
$$s'_{\gamma,+}(\F,u_T)= s'_{\gamma,+}(\F,u)+T.$$
The same holds for $(\F',u')$ which is $C^1$-close to some representative $(\F,u)$ of $K$.
\end{lemma}
\nom[u_T]{$u_T$}{$u$ translated up by $s=T$ units}

\begin{proof}
The first assertion follows from observing that $(\F,u)$ cannot be a branched cover of a trivial cylinder by Euler characteristic reasons (note that $u$ also cannot be a trivial cylinder since $\bs\gamma_+\not=\bs\gamma_-$) and that $I_\gamma((\mathcal F, u),s)$ is a strictly increasing function of $s$ if $u$ is not a branched cover of a trivial cylinder. The second and third assertions are immediate.
\end{proof}

\begin{lemma} \label{tenren2}
In the $\op{(Cob)}$ case, there exists a small compactly supported perturbation $J'$ of $J$ such that, given a compact subset $K\subset  \mathcal{M}_J(\dot F,\widehat{W};\bs\gamma_+;\bs\gamma_-)$, there exists $\varepsilon'>0$ small such that for each representative of $(\F,u)$ of each $[\F,u]\in \mathcal{M}_{J'}(\dot F,\widehat{W};\bs\gamma_+;\bs\gamma_-)$ and $\gamma\in\bs\gamma_+$ there exists a value
$$s'_{\gamma,+}(\F,u):= \max\{ {s'\in [1,+\infty)}~|~ I_\gamma((\F,u),s')=\mathcal{A}_\alpha(\gamma)-\varepsilon'\}.$$
The same holds for $(\F',u')$ which is $C^1$-close to some representative $(\F,u)$ of $K$.
\end{lemma}

\begin{proof}
We only need to ensure that there are no holomorphic cylinders of $(\widehat{W},J')$ that agree with trivial half-cylinders at the positive end of $\widehat{W}$. This is done by taking an end-generic perturbation $J'$ of $J$ given in Lemma \ref{end generic}.
\end{proof}

\coblu We analogously define $s'_{\gamma_,-}(\F,u)$ for $\gamma\in \bs\gamma_-$ by replacing $\max$ by $\min$ and $\mathcal{A}_\alpha(\gamma)-\varepsilon'$ by $\mathcal{A}_\alpha(\gamma)+\varepsilon'$. \cb 

From now on we assume that in the $\op{(Cob)}$ case $J$ is sufficiently generic so that the conclusion of Lemma~\ref{tenren2} holds.   {\coblu We write $s'_+$ and $s'_-$ instead of $s'_{\gamma,+}$ and $s'_{\gamma,-}$} if $\gamma$ is understood.

\nom[q]{${\bf q}$}{Auxiliary punctures when $\chi(\dot F)=0$ or $1$, chosen systematically so that $\chi(\ddot F)<0$ where $\ddot F= \dot F-{\bf q}$}

We now define the unordered set ${\bf q}:=\cup_{\gamma\in \bs\gamma_+\cup\bs\gamma_-}{\bf q}_\gamma$ of punctures for $(\F,u)$ as follows, {\coblu provided $s'_{\gamma,\pm}(\F,u)$ exist:}
\begin{enumerate} [label = (Q)]
\item \label{Q} Identify $u^{-1}(\{s={\coblu s'_\pm(\F,u)}\})\simeq \R/\mathcal{A}_\alpha(\gamma)\Z$ by projecting to the $t$-coordinate, where we are still using simple coordinates for $\gamma$. Pick the point $t=0\in \R/\mathcal{A}_\alpha(\gamma)\Z$ corresponding to the asymptotic marker and then define
    $${\bf q}_\gamma=\{(s,t)=({\coblu s'_\pm(\F,u)},\tfrac{k}{2m(\gamma)}\mathcal A_\alpha(\gamma))~|~k=1,\dots,2m(\gamma)\}.$$
\end{enumerate}
Let us write $\ddot F=\dot F-{\bf q}$; since the cardinality of $\bf q_\gamma$ is $2m(\gamma)$, we have $\chi(\ddot F)<0$.  (Later on, when we specifically treat contact homology, there is only one $\gamma$.)

\nom[Fdd]{$\ddot F$}{The punctured Riemann surface $F-{\bf p}-{\bf q}$}

\nom[Fu5]{$(\mathcal F,u)$}{Holomorphic map $u$ with domain $\dot F=F-{\bf p}$, where $\mathcal{F}=(F,j,{\bf p},{\bf q},{\bf r})$, $(F,j)$ is a closed Riemann surface, ${\bf p}$ is the set of punctures, ${\bf q}$ are the auxiliary punctures, and ${\bf r}$ are the asymptotic markers}

\nom[ModFpq]{$\op{Mod}(F,{\bf p},{\bf q},{\bf r})$}{Mapping class group of $F$ fixing ${\bf p}$ and ${\bf r}$ pointwise and ${\bf q}$ setwise}

\begin{changeofnotation} \label{changeofnotation}
From now on, we write
$$(\F,u)=((F,j,{\bf p},{\bf q},{\bf r}),u),$$
with the understanding that ${\bf q}=\varnothing$ when $\chi(\dot F)< 0$, ${\bf q}$ is an unordered set, ${\bf p}$ and ${\bf q}$ are disjoint, and ${\bf q}$ are removable punctures.  We modify Definitions~\ref{defn: equiv relation sim zero} and \ref{defn: equiv relation sim} so that the equivalence relations
$$\F=(F,j,{\bf p},{\bf q},{\bf r})\sim_0 \F'=(F,j',{\bf p},{\bf q}',{\bf r}')$$
and $(\F,u)\sim (\F',u')$ require $\phi({\bf q})={\bf q}'$ as   unordered \cb sets in addition. \coblu Also we modify the definitions of $\op{Teich}(F,{\bf p},{\bf r})$, $\op{Mod}(F,{\bf p},{\bf r})$, and $\op{Aut}(F,j,{\bf p},{\bf r})$ to $\op{Teich}(F,{\bf p},{\bf q},{\bf r})$, $\op{Mod}(F,{\bf p},{\bf q},{\bf r})$, and $\op{Aut}(F,j,{\bf p},{\bf q},{\bf r})$ so that diffeomorphisms of $F$ take ${\bf q}$ to ${\bf q}$ setwise in addition.  

Puncture reorderings and marker rotations are assumed to take ${\bf q}$ to ${\bf q}$ setwise and the automorphism group of $W_\delta^{l+1,p}(\dot F, \widehat W; \bs\gamma_+;\bs\gamma_-)$ generated by puncture reorderings and marker rotations is still denoted $G(\dot F,\widehat W; \bs\gamma_+;\bs\gamma_-)$. \cb
\end{changeofnotation}

The above uniform choice of ${\bf q}$ helps stabilize the automorphism group of $(\F,u)$ {\coblu and the following analog of Lemma~\ref{lemma: stabilizer} holds:}

\begin{lemma} \label{lemma: stabilizer2}
\coblu The stabilizer $\op{Mod}(F,{\bf p}, {\bf q},{\bf r})_{[j]}$ is trivial. \cb
\end{lemma}

Fix $(F,j,{\bf p})$.  (This is equivalent to choosing a Teichm\"uller slice of $\op{Teich}(\dot F)$, since $\chi(\dot F)\geq 0$.) Let $K_0$ be a compact subset of $\mathcal{M}_J(\dot F, \widehat W; \bs\gamma_+;\bs\gamma_-)$. The following is a slight variation of the definition that was given for $\chi(\dot F)<0$:

\begin{align*}
\mathcal{B}& =\mathcal{B}_{K_0} =  \mathcal{B}_{K_0}(\dot F, \widehat{W};\bs\gamma_+;\bs\gamma_-)\\
=&\left\{ (\F,u)=((F,j,\mathbf{p},{\bf q}, \mathbf{r}),u) \left|
\begin{array}{cc}
u\in W_\delta ^{k+1,p}( \dot F, \widehat{W};\bs\gamma_+;\bs\gamma_-), \\
u \mbox{ is $\varepsilon''$-close to $u'$ where $[u']\in K_0$},\\
u(r_{\pm, i})=x_{\gamma_{\pm,i}^s}, {\bf q} \mbox{ satisfies \ref{Q}}
\end{array}
\right.\right\},
\end{align*} 
where $\varepsilon''>0$ is a sufficiently small constant that depends on $\varepsilon'$ and $K_0$, and $\varepsilon''$-closeness is measured with respect to the norm on $W_\delta^{k+1,p}(\dot F, (u')^*D_\epsilon)$ (we choose $k,p$ as in Remark~\ref{rmk: values of k and p} to ensure $u$ is $C^1$-close to $u'$ so we can apply Lemma~\ref{tenren} or ~\ref{tenren2}). {\em As before, when the Teichm\"uller slice, i.e., the choice of $(F,j,{\bf p})$, and the compact subset $K_0$ are understood, they will often be omitted from the notation, e.g., $\overline{\partial}_J$, $\mathcal{B}$, $\mathcal{E}$.}\cb

The fibers $\mathcal{E}_{(\F,u)}$ of the bundle $\pi: \mathcal{E}\to \mathcal{B}$ are defined by
$$\mathcal{E}_{(\F,u)}=W^{k,p}_\delta(\dot F, \wedge^{0,1}u^* T\widehat{W})$$
as before and $\overline\bdry_J$ is the same as before.  The linearization $L_{(\F,u)}$ is the same as that of Equation~\eqref{eqn: linearization} with the term $T\widetilde{\mathcal{U}}(j)$ removed (or viewed as the zero vector space).

\subsection{Description of the linearized $\overline\bdry$-operator $D_u$}
By McDuff-Salamon~\cite[Proposition~3.1.1]{MS},
$$D_u: W^{k,p}_{\delta} (\dot F, u^* T\widehat W)\to W^{k-1,p}_{\delta} (\dot F, \wedge^{0,1} u^* T\widehat W)$$
is given by
\begin{equation} \label{Du}
D_u \xi=\frac{1}{2}(\nabla \xi +J(u)\nabla \xi \circ j) -\frac{1}{2}J(u)(\nabla_\xi J)(u)\partial_J(u).
\end{equation}
Here, by abuse of notation, we do not distinguish between sections of $u^*T\widehat W$ and sections of $T\widehat W$ along $u$.

Suppose $(s,t)\in \R^\pm\times \R/\mathcal{A}_\alpha(\gamma)\Z $ is a holomorphic coordinate around a puncture $p\in {\bf p}_\pm$ of $\dot F$, $p$ corresponds to the Reeb orbit $\gamma$, and $u(s,t)=(s,t,\eta (s,t))$, written in simple coordinates for $\gamma$, is $J$-holomorphic. Then, near this puncture,
\begin{equation} \label{Dulocal}
D_u \xi=\frac{1}{2} (\nabla_s \xi +J(u)\nabla_t \xi + \widetilde S^\gamma \xi)\otimes (ds -i dt),
\end{equation}
where
$$J(u)=\left(\begin{array}{ccc}
0 & -1 & 0\\
1 & 0 & 0\\
X\eta & -j_{0}X\eta & j_{0}
\end{array}\right) \quad \text{and  }\quad \widetilde S^\gamma=\left(\begin{array}{ccc}
0 & 0 & 0\\
0 & 0 & 0\\
0 & 0 & -j_0X
\end{array}\right).$$

Note that if $u$ is $J$-holomorphic, then $D_u u_s=D_u u_t=0$.  If, instead of using the basis
$$\{\partial_s, \partial_t, \partial_{x_1},\dots,\partial_{x_n},\partial_{y_1},\dots,\partial_{y_n}\},$$
we use the basis
$$\{ u_s, u_t, \partial_{x_1},\dots,\partial_{x_n},\partial_{y_1},\dots,\partial_{y_n} \},$$
then Equation \eqref{Dulocal} becomes
\begin{equation} \label{Dusimple}
D_u \xi=\frac{1}{2}(\partial_s \xi +J_0 \partial_t \xi+\widetilde S^\gamma\xi)\otimes (ds-idt),
\end{equation}
where
$$J_0=\left(\begin{array}{ccc}
0 & -1 & 0\\
1 & 0 & 0\\
0 & 0 & j_{0}
\end{array}\right)$$
is the standard complex structure on $\R^{2n+2}$.

Note that $\partial_{x_i}, \partial_{y_i}$ is not a continuous vector field on $[0,\mathcal A_\alpha (\gamma) ]\times D_\delta^{2\overline m} \times D_\delta^{2\underline m}/_{\sim}$ due to the identification $\sim$, but it does not affect the matrix representation of $\widetilde S^\gamma$ and $J_0.$
The vector field $\xi = (\xi^\circ, \overline \xi, \underline \xi) \in \R^2\times \R^{2\overline m} \times \R^{2 \underline m}$ in Equation~\eqref{Dusimple} needs to satisfy the $\xi^\circ(s,\mathcal A_\alpha (\gamma)) = \xi^\circ(s,0), \overline \xi(s,\mathcal A_\alpha (\gamma)) = \overline \xi(s,0)$, and $\underline \xi(s, \mathcal A_\alpha (\gamma)) = -\underline \xi(s,0)$.
\cb

If $u$ is immersed, we denote by $N$ the normal bundle of $u$ such that
$$N=\R \langle\partial_{x_1},\dots,\partial_{x_{2n}},\partial_{y_1},\dots,\partial_{y_{2n}}\rangle$$
near the ends.  We define
$$D^N_u:W_{\delta}^{k,p}(\dot F, u^*N)\to W_{\delta}^{k-1,p}(\dot F, \wedge^{0,1}u^*N)$$
by projecting $D_u$ along the splitting $u^*T\widehat W=u^*N\oplus T\dot F$.

\subsection{The asymptotic operator} \label{subsection: asymptotic operator}
 
Let $\gamma$ be a Reeb orbit of $\mathbf{R}_\alpha$.
We denote by $\widetilde{\mathfrak C}$ the space of functions $f = (f^\circ, \overline f, \underline f):[0, \mathcal A_\alpha (\gamma)] \to \R^2 \times \R^{2\overline m}\times \R^{2\underline m}$ that satisfies $f^\circ(\mathcal A_\alpha (\gamma)) = f^\circ(0)$, $\overline f(\mathcal A_\alpha (\gamma)) = \overline f(0)$ and $\underline f(\mathcal A_\alpha (\gamma)) = -\underline f(0).$
Let $W^{1,2}(\widetilde{\mathfrak C})$ and $L^2(\widetilde{\mathfrak C})$ be the completion of $\widetilde{\mathfrak C}$ under $W^{1,2}$ and $L^2$-norms respectively.
Let
$$\widetilde A^\gamma=-J_0\frac{\partial}{\partial t}-\widetilde S^\gamma:W^{1,2}(\widetilde{\mathfrak C})\to L^{2}(\widetilde{\mathfrak C})$$
be the asymptotic operator for $\gamma$. Then $\widetilde A^{\gamma}$ is self-adjoint. 
\cb
Let
$$\dots \leq \lambda_{-2}^\gamma < \lambda_{-1}^\gamma=0=\lambda_1^\gamma < \lambda_2^\gamma\leq \dots$$ be eigenvalues of $\widetilde A^{\gamma}$ and
$$\dots,f_{-2}^\gamma,f_{-1}^\gamma,f_1^\gamma,f_2^\gamma,\dots$$
be the associated complete set of orthonormal eigenfunctions. 
\nom[fi]{$\{f_i^\gamma\}_i$}{Orthonormal eigenfunctions of $\widetilde A^\gamma$}
Here the self-adjoint\-ness and orthonormality are defined with respect to the standard inner product in $\R^{2n+2}$; 
\coblu
the two zero eigenvalues $\lambda_{-1}^\gamma$ and $\lambda_{1}^\gamma$ correspond to the eigenfunctions \coblue $(1,0,0, \dots, 0)$ and $(0, 1, 0, \dots, 0)$; \cb the fact that all other eigenfunctions are non-zero follows from the non-degenerency of $\gamma$ (see \cite{BH} for a calculation of eigenfunctions in dimension three).
\cb

We will write $\widetilde A$ and $\widetilde S$ for $\widetilde A^\gamma$ and $\widetilde S^\gamma$ if $\gamma$ is understood; similarly we may suppress the superscript $\gamma$ in $\lambda_i^\gamma, f_i^\gamma,\dots$ below, when they are understood.

\begin{rmk}
$\widetilde A$ is an ``enlargement'' of the operator $A: W^{1,2}(\mathfrak C)\to L^{2}(\mathfrak C)$ in the proof of Lemma \ref{end generic} which takes into account only the transverse directions (i.e., the $\bdry_{x_i}$- and $\bdry_{y_i}$-directions).
\end{rmk}
\cb 
Summarizing the above discussion, we have:

\begin{lemma} \label{Dukernel}
Suppose $(s,t)\in \R^\pm\times \R/\mathcal{A}_\alpha(\gamma)\Z $ is a holomorphic coordinate around a puncture $p\in {\bf p}_\pm$ of $\dot F$, $p$ corresponds to the Reeb orbit $\gamma$, and near $p$, $u$ admits a lift to $\R\times \R/\mathcal{A}_\alpha(\gamma)\Z\times D$ which can be written as $(s,t)\mapsto (s,t,\eta (s,t))$ with respect to simple coordinates for $\gamma$ and is $J$-holomorphic. Then, near $p$, 
$$D_u \xi =\frac{1}{2}(\partial_s \xi -\widetilde A \xi )\otimes_{\C} (ds-idt),$$
with respect to the basis
$$\mathfrak{B}:=\{ u_s, u_t, \partial_{x_1},\dots,\partial_{x_n},\partial_{y_1},\dots,\partial_{y_n} \}.$$

Hence, for any $\xi \in \ker D_u$, near $p\in {\bf p}_+$ we have
$$\xi(s,t)=\sum_{\lambda_i<0} c_{+,i} e^{\lambda_i s} f_i(t),$$
and near $p\in {\bf p}_-$
we have
$$\xi(s,t)=\sum_{\lambda_i>0} c_{-,i} e^{\lambda_i s} f_i(t),$$
where the $c_{\pm,i}$'s are constant.
\end{lemma}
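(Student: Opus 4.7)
The proof breaks into two essentially independent parts. The plan is to (a) rewrite the local formula \eqref{Dusimple} using the definition of the asymptotic operator, and (b) perform separation of variables against the eigenbasis of $\widetilde A$ and then impose the weighted Sobolev decay condition.

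For part (a), I would simply unwind definitions. Equation \eqref{Dusimple} gives, in the basis $\mathfrak{B}$,
\[
D_u \xi =\tfrac{1}{2}\bigl(\partial_s \xi + J_0\,\partial_t \xi + \widetilde S^\gamma \xi\bigr)\otimes (ds-idt).
\]
By the definition $\widetilde A^{\gamma} = -J_0\,\partial_t - \widetilde S^{\gamma}$ (as operators on $W^{1,2}(S^1,\R^{2n+2})$), we have $J_0\,\partial_t \xi + \widetilde S^{\gamma}\xi = -\widetilde A^{\gamma}\xi$, so the bracket becomes $\partial_s \xi - \widetilde A\,\xi$. This is a one-line computation; I don't anticipate any obstacle here.

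For part (b), the equation $D_u\xi = 0$ on a cylindrical end reduces by the first part to the ODE $\partial_s \xi = \widetilde A\,\xi$ in $t$-space. Self-adjointness of $\widetilde A$ on $L^2(S^1,\R^{2n+2})$ yields the complete orthonormal eigenbasis $\{f_i^\gamma\}$, so I would write $\xi(s,t) = \sum_i c_i(s)\,f_i^\gamma(t)$ (say as an $L^2$-valued function of $s$). Substituting, the ODE decouples into $c_i'(s)=\lambda_i^\gamma c_i(s)$, giving $c_i(s) = c_{\pm,i}\, e^{\lambda_i^\gamma s}$ on the relevant half-cylinder. The remaining step is the weighted decay constraint: $\xi \in W_\delta^{k+1,p}$ means $e^{\delta|s|}\xi$ lies in the unweighted space, so at a positive puncture ($s\to +\infty$) we need $\lambda_i^\gamma + \delta < 0$, and at a negative puncture ($s\to-\infty$) we need $\lambda_i^\gamma - \delta > 0$. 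Choosing $\delta>0$ smaller than the spectral gap around $0$ (recall $\lambda_{-1}^\gamma = 0 = \lambda_1^\gamma$ and there is a strict inequality $\lambda_{-2}^\gamma < \lambda_{-1}^\gamma$ and $\lambda_1^\gamma < \lambda_2^\gamma$), the surviving indices are exactly $\lambda_i^\gamma < 0$ at $p\in \mathbf p_+$ and $\lambda_i^\gamma > 0$ at $p\in \mathbf p_-$.

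The main (mild) obstacle is justifying that the formal eigenfunction series genuinely represents $\xi$ in $W_\delta^{k+1,p}$ on the end, rather than merely in $L^2$; but this is standard elliptic regularity for $\partial_s - \widetilde A$ combined with the fact that truncating above any finite eigenvalue leaves a tail that decays exponentially uniformly in $s$ on the correct half-line, so termwise differentiation is justified and all $W^{k+1,p}_\delta$-norms are finite. Everything else is a direct computation, so I expect the proof to be short.
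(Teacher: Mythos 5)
Your proposal is correct and follows exactly the route the paper intends: the lemma is stated as a direct summary of the preceding computation, and your part (a) is the same one-line substitution of $\widetilde A^\gamma=-J_0\partial_t-\widetilde S^\gamma$ into Equation~\eqref{Dusimple}, while part (b) is the standard eigenfunction separation-of-variables argument, left implicit in the paper, combined with the $W^{k+1,p}_\delta$ decay condition to cut the spectrum at $0$.
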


\subsection{The adjoint operator}

Let
$$D_u^*: W^{k,p}_\delta (\dot F,\wedge ^{0,1} u^{*}T\widehat{W})\to W^{k-1,p}_\delta (\dot F,u^{*}T\widehat{W})$$
be the adjoint operator of $D_u$ defined by
\begin{equation} \label{Du*}
\int_{\dot F} \langle \zeta, D_u \xi \rangle_1 dvol_{\dot F} =\int_{\dot F} \langle D_u^* \zeta, \xi \rangle_0 dvol_{\dot F} ,
\end{equation}
where $\langle, \rangle_1$ is an inner product on $W_\delta^{k,p}(\dot F,\wedge^{0,1}u^{*}T\widehat{W})$, $\langle, \rangle_0$ is an inner product on $ W^{k+1,p}_\delta (\dot F,u^{*}T\widehat{W})$, and $dvol_{\dot F}$ is defined using a metric $g_{\dot F}$ on $\dot F$. We additionally assume that, near $p\in {\bf p}_\pm$:
\begin{itemize}
\item[(i)] $g_{\dot F}$ restricts to $ds^2+dt^2$;
\item[(ii)]$\langle, \rangle_0$ is defined by requiring the basis ${\mathfrak B}$ to be an orthonormal basis; and
\item[(iii)] $\langle, \rangle_1$ is induced by $\langle, \rangle_0$ and $g_{\dot F}$.
\end{itemize}

\begin{rmk}
Recall that the adjoint operator $D_u^*$ involve some choices.  We are not using the Riemannian metric $g_0$ in the definition of $D_u^*$.
\end{rmk}

The following lemma is almost evident:

\begin{lemma} \label{Dukernel-star}
With the assumptions of Lemma~\ref{Dukernel}, if $\zeta\in W^{k,p}_\delta(\dot F,\wedge^{0,1}u^*T\widehat{W})$ and $\zeta=\zeta^0\otimes (ds-idt)$ near $p\in {\bf p}_\pm$, then we have
$$D_u^* (\zeta^0\otimes (ds-idt)) =\frac{1}{2}(-\partial_s \zeta^0 -\widetilde A \zeta^0),$$
with respect to the basis ${\mathfrak B}$. Hence, for any $\zeta\in \ker D_u^*$, near $p\in{\bf p}_+$ we have
$$\zeta^0(s,t)=\sum_{\lambda_i>0} c_{+,i} e^{-\lambda_i s} f_i(t),$$
whereas near $p\in{\bf p}_-$ we have
$$\zeta^0(s,t)=\sum_{\lambda_i<0} c_{-,i} e^{-\lambda_i s} f_i(t),$$
where the $c_{\pm,i}$'s are constant.
\end{lemma}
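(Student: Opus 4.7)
The proof has two essentially independent parts: (a) deriving the local expression for $D_u^*$ near a puncture, and (b) solving the resulting constant-coefficient ODE in the eigenbasis and selecting the summands that lie in the weighted Sobolev space.

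For part (a), the plan is to start from the defining identity
$$\int_{\dot F}\langle \zeta, D_u\xi\rangle_1\, dvol_{\dot F}=\int_{\dot F}\langle D_u^*\zeta,\xi\rangle_0\, dvol_{\dot F}$$
and compute its integrand near a puncture $p\in{\bf p}_\pm$, where by hypotheses (i)--(iii) the metric is $ds^2+dt^2$, the basis $\mathfrak B$ is orthonormal for $\langle\,,\,\rangle_0$, and $\langle\,,\,\rangle_1$ is induced from these. Writing $\zeta=\zeta^0\otimes(ds-idt)$ and using Lemma~\ref{Dukernel} to replace $D_u\xi$ by $\tfrac{1}{2}(\partial_s\xi-\widetilde A\xi)\otimes(ds-idt)$, the pairing of $(0,1)$-forms is proportional to $\langle\zeta^0,\partial_s\xi-\widetilde A\xi\rangle_0$. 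I would then integrate by parts in the $s$-variable to move $\partial_s$ onto $\zeta^0$ (the boundary contributions at $|s|=\infty$ vanish because of the exponential weight $f_\delta$, which forces both $\xi$ and $\zeta^0$ to decay faster than any polynomial against the flat $dsdt$), and invoke the self-adjointness of the asymptotic operator $\widetilde A$ (which is essentially the statement that $\widetilde S$ is symmetric and $J_0\partial_t$ is antisymmetric, combined under the pairing defined by $\mathfrak B$) to move $\widetilde A$ onto $\zeta^0$. Collecting signs gives
$$D_u^*(\zeta^0\otimes(ds-idt))=\tfrac{1}{2}(-\partial_s\zeta^0-\widetilde A\zeta^0),$$
as claimed. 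Since this formula is derived locally and the identity between $D_u^*$ and this expression is enforced by testing against all $\xi$ supported in the end, the identification is unambiguous.

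For part (b), if $\zeta\in\ker D_u^*$, then near a puncture the local formula yields the ODE $\partial_s\zeta^0=-\widetilde A\zeta^0$. Expanding $\zeta^0(s,t)=\sum_i c_i(s)f_i(t)$ in the complete orthonormal eigenbasis of $\widetilde A$ decouples the system into $c_i'(s)=-\lambda_ic_i(s)$, whose solutions are $c_i(s)=c_{\pm,i}e^{-\lambda_is}$ with $c_{\pm,i}$ constant. The remaining task is to determine which eigenvalues contribute at each puncture by imposing membership in $W^{k,p}_\delta$: at a positive puncture ($s\to+\infty$), the weight $e^{\delta s}$ forces $e^{-\lambda_is}e^{\delta s}\in L^p$, so only $\lambda_i>\delta$ survives, and for $\delta>0$ small enough this picks out precisely $\lambda_i>0$; symmetrically at a negative puncture ($s\to-\infty$), the weight $e^{-\delta s}$ picks out $\lambda_i<0$. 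This gives the stated asymptotic expansions.

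The only delicate point to watch is the compatibility between the algebraic pairings on $(0,1)$-forms and on $u^*T\widehat W$ under the normalizations (i)--(iii): one has to verify that the constant factor relating $\langle\zeta,D_u\xi\rangle_1\,dvol_{\dot F}$ and $\langle\zeta^0,\partial_s\xi-\widetilde A\xi\rangle_0\,ds\,dt$ matches on both sides of the integration by parts, which is a bookkeeping exercise that the ``almost evident'' phrasing of the lemma suggests is routine; the self-adjointness of $\widetilde A$ with respect to the $L^2(S^1,\R^{2n+2})$ inner product then does the bulk of the work, with no analytic obstacle beyond justifying the vanishing of boundary terms in the exponentially weighted setting.
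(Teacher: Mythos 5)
Your proposal is correct and is exactly the argument the paper has in mind: the paper offers no proof at all (the lemma is labeled ``almost evident''), and the intended verification is precisely your formal-adjoint computation — substitute the local form of $D_u$ from Lemma~\ref{Dukernel}, integrate by parts in $s$ (boundary terms killed by the exponential weights), use self-adjointness of $\widetilde A$, then solve $\partial_s\zeta^0=-\widetilde A\zeta^0$ in the eigenbasis and select modes by membership in $W^{k,p}_\delta$. The normalization issue you flag at the end is indeed only bookkeeping and is fixed by conventions (i)--(iii) preceding the lemma.
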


\section{Interior semi-global Kuranishi charts} \label{section: semi-global Kuranishi charts}

Consider the moduli space
$$\mathcal{M}=\mathcal{M}_J^{\op{ind}=k}(\dot F,\widehat W; \bs\gamma_+;\bs\gamma_-)$$
\coblu and the finite group $G=G(\dot F, \widehat W; \bs\gamma_+;\bs\gamma_-)$. \cb
As discussed in Section~\ref{subsection: symp vs cob}, there are two slightly different cases to consider: (Symp) and $\op{(Cob)}$.

The goal of this section is to construct a tuple
$$(\K,\pi_{\V}: \E\to \V,\overline\bdry_J, \psi,\coblu G),$$
called an {\em interior semi-global Kuranishi chart}. In the (Symp) case this consists of:
\begin{enumerate}
 
\item[(I1)] \label{chart 1} a compact subset \coblu $\K\subset \mathcal{M}/\R$; \cb
\item[(I2)] \label{chart 2}  a finite rank bundle \coblu $\pi_{\V}: \E\to \V$ over a finite-dimensional manifold, called an {\em obstruction bundle}; \cb
\item[(I3)] \label{chart 3} a section $\overline\bdry_J:\V\to \E$;
\item[(I4)] \label{chart 4} a homeomorphism $\psi: \overline\bdry_J^{-1}(0) \to \coblu \mathcal M/\R \cb$  onto an open subset of \coblu $\mathcal M/\R$ \cb and $\K \subset \op{Im}(\psi)$;
\item[(I5)] $\dim \V - \op{rk} \E = \op{vdim} \mathcal M/\R$; and
\item[(I6)] \coblu $\K$ is invariant under the finite group $G$, the action of $G$ extends to an action on $\pi_{\V}: \E\to \V$ by bundle automorphisms, and the quotient by $G$ gives a global quotient orbibundle (i.e., can be described by a single orbibundle chart).
\end{enumerate}
In the $\op{(Cob)}$ case, $\mathcal{M}/\R$ is replaced by $\mathcal{M}$.

In practice, we take $\K$ to be ``large" in the sense that the only curves that are excluded are very close to breaking.  This will be made more precise later, when we explain the notions of close to breaking and neck length.   In particular, if $\mathcal{M}/\R$ is compact, then we take $\K=\mathcal{M}/\R$. \cb 

We will also construct a multisection $\mathfrak s$ of the bundle $\E\to \V$, called the {\em obstruction multisection}, such that $\mathfrak s \pitchfork \overline \bdry_J$.
\cb

\subsection{Construction of $\overline{\bdry}_J$-transverse subbundles} \label{subsection: construction of transverse subbundles}

{\em   Suppressing the Teichm\"uller slice $\widetilde {\mathcal U}$ or the compact subset $K_0$ from the notation,}
\cb consider the Banach bundle $\pi: \mathcal{E}\to \mathcal{B}$ and the section $\overline{\bdry}_J: \mathcal{B}\to \mathcal{E}$ corresponding to the quadruple $(\dot F,\widehat W,\bs\gamma_+;\bs\gamma_-)$, as defined in
  
Section~\ref{subsubsection: negative Euler char} for the case $\chi(\dot F) < 0$ and Section~\ref{subsubsection: nonnegative Euler char} for the case $\chi(\dot F) \geq 0$. Let $Z=\overline\bdry_J^{-1}(0)$.
\cb

\begin{defn}
Let $\mathcal{V}$ be a subset of $\mathcal{B}$.
A subbundle $E\to \mathcal{V}$ of $\mathcal{E}|_{\mathcal{V}}$ is {\em $\overline{\bdry}_J$-transverse} if $E$ has finite rank and, for each $(\F,u)\in \mathcal{V}$,
\begin{equation} \label{eqn: transverse}
\op{Im} L_{(\F,u)} + E_{(\F,u)} =\mathcal{E}_{(\F,u)},
\end{equation}
where $E_{(\F,u)}$ and $\mathcal{E}_{(\F,u)}$ refer to the fibers of $E$ and $\mathcal{E}$ over $(\F,u)$. Here $+$ in Equation~\eqref{eqn: transverse} means span and {\em does not mean direct sum.}
\end{defn}

The goal of this subsection is to prove the following:

\begin{thm} \label{thm: construction of L-transverse subbundle}
  Given a compact subset $K$ of $Z$, there exist a positive integer $\ell_0$ and a sufficiently small open neighborhood $\mathcal{N}(K)\subset \mathcal{B}$ of $K$ such that for all $\ell \geq \ell_0$, there exists a  $\overline{\bdry}_J$-transverse rank $\ell(l_++l_-)$ subbundle
$$E\to \mathcal{N}(K)$$
\cb
of $\mathcal{E}|_{\mathcal{N}(K)}$ satisfying:
\begin{enumerate}
\item if $(\F,u), (\F',u')\in \mathcal{N}(K)$ and there exists a $C^\infty$-diffeomorphism $\phi: (\F,u)\xrightarrow{\sim} (\F',u')$, then there is a canonical identification $\phi_*E_{(\F,u)}= E_{(\F',u\circ \phi^{-1})}$ which is induced from
$$\phi_*:\wedge^{0,1} T\dot F\otimes u^* T\widehat{W} \xrightarrow\sim  \wedge ^{0,1} T\dot F\otimes (u\circ\phi^{-1})^{*}T\widehat{W};$$
\item  if $(\widehat{W}=\R\times M,J)\in \op{(Symp)}$ and $(\F,u), (\F,u_T)\in \mathcal{N}(K)$, where $u_T$ is $u$ translated up by $s=T$, i.e., $u_T=\Phi_T\circ u$ where
$$\Phi_T: \R\times M\xrightarrow\sim \R\times M, \quad (s,x)\mapsto (s+T,x)$$
is the $s=T$ translation map, then $(\Phi_T)_*E_{(\F,u)}= E_{(\F,u_T)}$.
\end{enumerate}
\end{thm}

\begin{rmk}
When $\op{dim}(M)=3$, it is possible to obtain an effective bound on $\ell_0$ using the positivity of intersections and winding numbers (see \cite{HLS,HWZ2,We}).
\end{rmk}

\begin{rmk} \label{rmk: dependence on ell and varepsilon}
  In addition to $\ell$, the bundle $E\to \mathcal{N}(K)$ also depends on $\varepsilon=\varepsilon(K)>0$ small satisfying (i) and (ii) in Step 1A when $\chi(\dot F)<0$ and (i) and (ii) with $\dot F$ replaced by $\ddot F$ when $\chi(\dot F)\geq 0$. \cb  The dependence on $\epsilon$ and $\ell$ will be discussed in Section~\ref{subsection: stabilization}.
\end{rmk}

\begin{proof}
The proof consists of three steps. Let $g_{(\dot F,j)}$ (resp.\ $g_{(\ddot F,j)}$) be the complete finite volume hyperbolic metric on $\dot F$ (resp.\ $\ddot F=\dot F-{\bf q}$) compatible with $j$, when $\chi(\dot F)<0$ (resp.\ $\chi(\dot F)\geq 0$). 

\nom[g]{$g_{(\dot F,j)}$ or $g_{(\ddot F,j)}$}{Complete hyperbolic metric on $\dot F$ or $\ddot F$ compatible with $j$}

\s\n
{\em Step 1.} (Definition of $s_{\pm,i}$) Let $K$ be a compact subset of $Z$.  
\cb
The goal of this step is to define continuous maps $s_{\pm,i}:K \to \R$
with differentiable extensions $s_{\pm,i}: \mathcal{N}(K)\to \R$, where the differentiability depends on the choice of $W^{k+1,p}$. Here each $s_{\pm,i}$ corresponds to $\gamma_{\pm,i}$   and $\mathcal{N}(K)$ is a small open neighborhood of $K$ which we may shrink as we go along. \cb

The collection $\{s_{\pm,i}\}$ will be invariant under group actions as follows:
\begin{itemize}
\item[(a)] If $(\F,u)=((F,j,{\bf p},{\bf q},{\bf r}),u)\sim (\F',u')=((F,j',{\bf p},{\bf q}',{\bf r}'),u')\in \mathcal{N}(K)$ via the automorphism $\phi:\dot F\xrightarrow{\sim}\dot F$, then  $\{s_{\pm,i}(\F,u)\}=\{s_{\pm,i}(\F',u')\}$.
\item[(b)] If $(\F,u), (\F,u_T)\in \mathcal{N}(K)$, then $\{s_{\pm,i}(\F,u_T)\}=\{s_{\pm,i}(\F,u)+T\}$.
\end{itemize}

Let $\kappa>0$ be a predetermined constant which is smaller than $1/100$ the radii of all the simple coordinate charts of all the orbits used in $(\F,u)$ (and is independent of $K$).

\s\n
{\em Step 1A.} Suppose $\chi(\dot F)<0$.   There exists $\varepsilon=\varepsilon(K)>0$ small such that, for all $(\F,u)\in K$,
\nom[Thin]{$\op{Thin}_\varepsilon(\dot F,g_{(\dot F,j)})$ or $\op{Thin}_\varepsilon(\ddot F,g_{(\ddot F,j)})$}{The $\varepsilon$-thin part of $(\dot F,g_{(\dot F,j)})$ or $(\ddot F,g_{(\ddot F,j)})$}
\be
\item[(i)] the $\varepsilon$-thin part $\op{Thin}_\varepsilon(\dot F,g_{(\dot F,j)})$ of $(\dot F,g_{(\dot F,j)})$, is the disjoint union of cusps 
\begin{equation}\label{equation: cusps}
C_{+,1},\dots,C_{+,l_+},C_{-,1},\dots,C_{-,l_-}
\end{equation}
and possibly some annuli (of moduli $\geq 0$), where each $\bdry C_{\pm,i}$ is a circle which corresponds to $\gamma_{\pm,i}$ and  
\item[(ii)] $\op{Im}(u|_{C_{\pm,i}})$ is $\kappa$-close  to $[s_{+,i},\infty)\times \gamma_{+,i}^s$ or $(-\infty,s_{-,i}]\times \gamma_{-,i}^s$ for some $s_{\pm,i}$.
\ee
Here the distance is measured with respect to the Hausdorff metric derived from the Riemannian metric $g_0$ on $\widehat{W}$ which is $s$-invariant at the ends. The existence of $\varepsilon>0$ satisfying (i) for all $(\F,u)\in K$ is standard hyperbolic geometry and the existence of $\varepsilon>0$ satisfying (ii) for all $(\F,u)\in K$ relies on the exponential decay estimates of the ends of $u$ from \cite{HWZ1}. Observe that (i) and (ii) also hold for all $(\F,u)\in \mathcal{N}(K)$, provided $\mathcal{N}(K)$ is a sufficiently small neighborhood of $K$. \cb

We then choose a specific $s_{\pm,i}=s_{\pm,i}(\F,u)$ as follows: First consider the $m(\gamma_{\pm,i})$-fold covering map
$\R/\mathcal{A}_\alpha(\gamma_{\pm,i})\Z\to \R/\mathcal{A}_\alpha(\gamma_{\pm,i}^s)\Z$.   Using canonical cylindrical coordinates, we write $u$ on $C_{\pm,i}$ as \cb
$$u(s,t)=(s,t,\eta(s,t)),$$
after lifting to the $m(\gamma_{\pm,i})$-fold cover $\R\times (\R/\mathcal{A}_\alpha(\gamma_{\pm,i})\Z)\times D$, subject to the condition that $t=0$ corresponds to the asymptotic marker $r_{\pm,i}$. 

\nom[s]{$s_{\pm,i}(\F,u)$}{The $s$-coordinate indicating the location of the perturbations $\widetilde f_j^{\gamma_{\pm,i}}$, where $\gamma_{\pm,i}$ refers to the $i$th orbit at the positive/negative end}

\begin{defn} [$s_{\pm,i}$ and $c_{\pm,i}$]\label{defn: somewhat informal}
The   average of the $s$-coordinates of the points on $u|_{\bdry C_{\pm,i}}$ with $t= \mathcal{A}_\alpha(\gamma_{\pm,i}^s), 2\mathcal{A}_\alpha(\gamma_{\pm,i}^s),\dots, \mathcal{A}_\alpha(\gamma_{\pm,i})$ \cb will be denoted by $s_{\pm,i}$.  Also
the point on $\bdry C_{\pm,i}$ corresponding to $r_{\pm,i}$ will be denoted by $c_{\pm,i}$.
\end{defn}


\nom[E0]{$\varepsilon>0$}{Small positive constant used to determine the thin part of $(\dot F, g_{(\dot F,j)})$ or $(\ddot F, g_{(\ddot F,j)})$}

\s\n
{\em Step 1B.} Suppose $\chi(\dot F)=0$ or $1$.  In view of the discussion from Section~\ref{subsubsection: nonnegative Euler char}   (with $\varepsilon'>0$ satisfying Lemmas~\ref{tenren} and \ref{tenren2}), \cb  we take $\ddot F= \dot F-{\bf q}$ with $\chi(\ddot F)<0$. Let $g=g_{(\ddot F,j)}$ and apply the considerations from Step 1A with $\dot F$ replaced by $\ddot F$ to obtain $s_{\pm,i}=s_{\pm,i}(\F,u)$.

(a) and (b) are immediate from the construction, both in Steps 1A and 1B.

\begin{rmk}[Trivial cylinders]  
\label{rmk: trivial cylinder}
We do not construct Kuranishi charts for trivial cylinders when $(\widehat W,J)$ is in the (Symp) case. On the other hand, when a symplectization is viewed in $\op{(Cob)}$, our general prescription is to slightly perturb the almost complex structure using Lemma~\ref{end generic} so that the perturbed trivial cylinders no longer have trivial ends. We note that trivial (possibly multiply-covered) cylinders are automatically transverse. \cb
\end{rmk}

\s\n
{\em Step 2.}   (Definition of $E$) \cb
Let $\varepsilon=\varepsilon(K)$ be the constant from Step 1.

Let $\beta:\R\to [0,1]$ be a nondecreasing smooth cutoff function such that $\beta(s)=0$ for $s\leq 0$ and $\beta(s)=1$ for $s\geq 1$ and let $\beta^+_{s_0}(s)=\beta(s-s_0)$ and $\beta^-_{s_0}(s)=-\beta(s_0-s)$, where $s_0>0$ is a constant.

Given $(\F,u)\in \mathcal{N}(K)$ and $s_{\pm,i}=s_{\pm,i}(\F,u)$ from Step 1   (and still using canonical cylindrical coordinates), \cb let
 
\begin{equation} \label{f tilde}
\widetilde{f}^{\gamma_{\pm,i}}_j(s,t):=\frac{\partial \beta^\pm_{s_{\pm,i}}}{\partial s}(s)f^{\gamma_{\pm,i}}_j(t)\otimes \pi_j (ds-idt)\in \Gamma (\dot F ,\wedge^{0,1} u^{*}T\widehat{W}).
\end{equation}
\cb
Here $\pi_j:W^{k,p}(\dot F, \wedge ^1_{\C} T^{*} \dot F)\to W^{k,p}(\dot F, \wedge^{0,1}T^{*} \dot F)$ is the projection with respect to the domain complex structure and $\widetilde{f}^{\gamma_{\pm,i}}_j(s,t)$ has support on a single end of $\dot F$.
We then define
\nom[L]{$\ell$}{Positive integer appearing in the definition of $E^{\ell,\varepsilon}_{(\mathcal{F}, u)}$, which counts the number of asymptotic eigenfunctions at each end}
\begin{equation} \label{equation: def of each component of E}
E_{(\F,u),{\gamma_{\pm,i}}}^{\ell,\varepsilon}:=\R \langle \widetilde{f}^{\gamma_{\pm,i}}_1,\dots,\widetilde{f}^{\gamma_{\pm,i}}_\ell \rangle
\end{equation}
for ${\gamma_{\pm,i}}\in \bs\gamma_\pm$ and define
\nom[E]{$E^{\ell,\varepsilon}_{(\mathcal{F}, u)}$}{Fiber of obstruction bundle over $(\mathcal{F},u)$}
\begin{equation}
E_{(\F,u)}^{\ell,\varepsilon}:=\bigoplus_{{\gamma_{\pm,i}}\in\bs{\gamma}_+\cup\bs\gamma_-} E_{(\F,u),{\gamma_{\pm,i}}}^{\ell,\varepsilon}.
\end{equation}

This gives us a bundle   $E: = E^{\ell,\varepsilon} \to \mathcal{N}(K)$ with fiber $E_{(\F,u)}^{\ell,\varepsilon}$ of dimension $\ell(l_++l_-)$. The following will be proven in the Appendix: \cb

\begin{lemma} \label{lemma: in Wkp and C1}
$E_{(\F,u)}\subset \mathcal{E}_{(\F,u)}$ for all $(\F,u)\in \mathcal{N}(K)$ and $E \to \mathcal{N}(K)$ is of class $C^1$ if $k\geq 3$ and $p>2$.
\end{lemma}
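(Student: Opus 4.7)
I would split the lemma into its two assertions and argue each separately.

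For the pointwise containment $E^{\ell,\varepsilon}_{(\F,u)}\subset \mathcal{E}_{(\F,u)}$: each generator $\widetilde f^{\gamma_{\pm,i}}_p$ from \eqref{f tilde} is, by design, compactly supported in $s$ on a single end of $\dot F$, because $\frac{\partial \beta^\pm_{s_{\pm,i}}}{\partial s}$ is a smooth bump function of $s$ with support on a unit interval. On that end I would use the supersimple frame $\frak B$ to trivialize $u^{*}T\widehat W$ over the cylindrical region. In that trivialization the coefficients of $\widetilde f^{\gamma_{\pm,i}}_p$ are a compactly-supported smooth function of $s$ times a smooth eigenfunction $f^{\gamma_{\pm,i}}_p(t)$ of $\widetilde A^{\gamma_{\pm,i}}$ times the smooth $(0,1)$-form $\pi_j(ds-idt)$, all independent of $u$. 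Since the support is compact in $s$, the weight $f_\delta \sim e^{\delta|s|}$ is bounded above and below, so weighted and unweighted $W^{k,p}$ norms agree on the support, and smoothness immediately gives $\widetilde f^{\gamma_{\pm,i}}_p\in W^{k,p}_\delta(\dot F, \wedge^{0,1} u^{*}T\widehat W)$.

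For the $C^1$-regularity of the subbundle $E^{\ell,\varepsilon}\to\mathcal N(K)$, I would show that the map $(\F,u)\mapsto \widetilde f^{\gamma_{\pm,i}}_p \in W^{k,p}_\delta(\dot F,\wedge^{0,1} u^{*}T\widehat W)$ is $C^1$ (constancy of the rank then upgrades this to a $C^1$-subbundle). The generator depends on $(\F,u)$ through three ingredients: (a) the cutoff location $s_{\pm,i}(\F,u)$, (b) the pullback trivialization of $u^{*}T\widehat W$ via the map $u$, and (c) the projection $\pi_j$ associated to the hyperbolic metric $g_{(\dot F,j)}$. For (c), $\pi_j$ depends smoothly on $j$ in a Teichm\"uller slice, since the hyperbolic metric does. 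For (b), by Sobolev embedding with $k\geq 3$ and $p>2$ one has $k+1-2/p > 3$, hence $W^{k+1,p}_\delta\hookrightarrow C^2$; precomposing a fixed smooth frame on $\widehat W$ with $u$ and applying the standard Banach-space composition estimates then gives a $C^1$-map into $W^{k,p}_\delta$.

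The main obstacle is (a): showing that $s_{\pm,i}$ is itself $C^1$ as a function of $(\F,u)$. It is built from two pieces. The cusp boundary $\partial C_{\pm,i}$ of the $\varepsilon$-thin part of $(\dot F,g_{(\dot F,j)})$ depends smoothly on $j$ within the Teichm\"uller slice (standard Teichm\"uller theory for complete hyperbolic metrics with cusps), and in the $\chi(\dot F)\in\{0,1\}$ case the auxiliary punctures $\bf q$ used to form $\ddot F$ depend $C^1$ on $(\F,u)$ by Lemma~\ref{shenyang} applied to the defining equation (*) via the implicit function theorem, so $g_{(\ddot F,j)}$ inherits the $C^1$-dependence. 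With $u\in C^2$, evaluating the $s$-coordinate at the distinguished point on $\partial C_{\pm,i}$ where $t=0$ (selected by the asymptotic marker $r_{\pm,i}$) becomes a $C^1$-operation by the implicit function theorem applied to the equation $t\circ u|_{\partial C_{\pm,i}} = 0$. Chain rule then yields the $C^1$-dependence of $\widetilde f^{\gamma_{\pm,i}}_p$, hence of the fibers of $E^{\ell,\varepsilon}$, completing the argument.
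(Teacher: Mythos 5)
Your proposal is correct and follows essentially the same route as the paper's Appendix: the containment follows from compact support in $s$ and smoothness of the building blocks, and the $C^1$-regularity is obtained by differentiating in $(j,u)$ with the key input being the $C^1$-dependence of $s_{\pm,i}$, which the paper likewise establishes by implicitly differentiating its defining equation (Lemma~\ref{s_+C1}, where the boundary of the $\varepsilon$-thin cusp is characterized via the injectivity radius). The one point the paper treats more carefully, and which you should not elide, is that the cylindrical coordinates $(s,t)=\phi_u(\sigma,\tau)$ are themselves only of class $W^{k+1,p}$ on the fixed domain, so the ``smooth functions of $(s,t)$'' appearing in $\widetilde f$ are \emph{not} independent of $u$ and must be pushed through the Sobolev product and composition lemmas (Lemmas~\ref{fg} and \ref{g(h)}) before one can conclude membership in $\mathcal{E}_{(\F,u)}$ and bound the derivative.
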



\s\n
{\em Step 3.} ($\overline{\bdry}_J$-transversality)  We first prove the following transversality lemma, which is applicable for a single $(\F,u)\in K$.

\begin{lemma}[Transversality lemma] \label{lemma: transversality lemma}
For each $(\F, u)\in K$, there exist $\ell_0 \in \N$ and $\varepsilon_0>0$ such that
\begin{equation} \label{eqn: trans lemma}
E_{(\F,u)}^{{\ell,\varepsilon}} + \op{Im} D_u=W^{k,p}_\delta (\wedge ^{0,1}\dot F \otimes u^{*}T\widehat W)
\end{equation}
for all $\ell\geq \ell_0$ and $0<\varepsilon\leq \varepsilon_0$.
\end{lemma}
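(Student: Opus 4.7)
The plan is to argue by Fredholm duality. Since $D_u$ is Fredholm on the weighted Sobolev spaces (the asymptotic operators $\widetilde A^\gamma$ are invertible by nondegeneracy of the Reeb orbits together with the choice of small weight $\delta$), $\ker D_u^*$ is finite-dimensional and the identity \eqref{eqn: trans lemma} is equivalent to the assertion that any $\zeta \in \ker D_u^*$ which is $\langle \cdot, \cdot \rangle_1$-orthogonal to every generator of $E_{(\F,u)}^{\ell,\varepsilon}$ must vanish.

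First I would compute the pairing between $\zeta \in \ker D_u^*$ and a generator $\widetilde f_p^{\gamma_{\pm,i}}$. By the adjoint analog of Lemma~\ref{Dukernel}, on the end asymptotic to $\gamma_{\pm,i}$ there is an exact expansion
$$\zeta = \zeta^0 \otimes (ds - i\,dt), \qquad \zeta^0(s,t) = \sum_{\pm \lambda_q > 0} c_{\pm,i,q}\, e^{-\lambda_q s}\, f_q^{\gamma_{\pm,i}}(t),$$
with respect to the basis $\mathfrak B$. Since $\widetilde f_p^{\gamma_{\pm,i}}$ is supported in a unit $s$-interval at $s_{\pm,i}$, where this expansion is valid, and $\{f_q^{\gamma_{\pm,i}}\}$ is $L^2(S^1,\R^{2n+2})$-orthonormal, a direct calculation using \eqref{f tilde} gives
$$\int_{\dot F} \langle \zeta, \widetilde f_p^{\gamma_{\pm,i}} \rangle_1 \, dvol_{\dot F} \;=\; C \cdot c_{\pm,i,p} \int_{\R} \frac{\partial \beta_{s_{\pm,i}}^{\pm}}{\partial s}(s)\, e^{-\lambda_p s}\, ds,$$
for a nonzero constant $C$ coming from the pairing of $(0,1)$-forms. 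The $s$-integral is nonzero because $|\beta'|$ is a nonnegative bump function with unit total integral, so orthogonality to $\widetilde f_p^{\gamma_{\pm,i}}$ forces $c_{\pm,i,p}=0$.

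Hence orthogonality of $\zeta$ to all of $E_{(\F,u)}^{\ell,\varepsilon}$ forces $c_{\pm,i,p} = 0$ for all $1 \leq p \leq \ell$ and every puncture. I would then consider the linear map
$$\Phi: \ker D_u^* \longrightarrow \bigoplus_{\pm,i} \R^{\N}, \qquad \zeta \longmapsto \bigl((c_{\pm,i,p})_{p \geq 1}\bigr)_{\pm,i}.$$
This is injective: if all Fourier coefficients of $\zeta$ vanish at some single end, then $\zeta^0 \equiv 0$ on that end by the expansion above, and elliptic unique continuation applied to $D_u^* \zeta = 0$ then forces $\zeta \equiv 0$ on all of $\dot F$. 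Since $\ker D_u^*$ is finite-dimensional, there exists $\ell_0 = \ell_0(\F,u)$ such that the truncation of $\Phi$ to the first $\ell_0$ Fourier modes at each puncture is already injective on $\ker D_u^*$, which yields the desired surjectivity for every $\ell \geq \ell_0$. The parameter $\varepsilon$ plays no essential role here: any $\varepsilon \leq \varepsilon_0$ works provided $s_{\pm,i}(\varepsilon)$ lies in the asymptotic region where the exact Fourier expansion of $\zeta^0$ holds, which is guaranteed by Step~1 together with the exponential decay estimates at the ends from \cite{HWZ1}.

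The point that needs care is the exactness (not merely asymptotic validity) of the Fourier expansion for $\zeta \in \ker D_u^*$ on the chosen end. This relies crucially on the $L$-simplicity of $J$: in the basis $\mathfrak B$, the operator $D_u^*$ has constant coefficients $-\partial_s - \widetilde A^\gamma$ near each puncture, so honest solutions of $D_u^* \zeta = 0$ on the end decompose exactly as Fourier series in the eigenfunctions $f_q^\gamma$. Once this is in hand, the remainder is a finite-dimensional linear-algebra argument combined with standard elliptic unique continuation for the Cauchy--Riemann-type operator $D_u^*$.
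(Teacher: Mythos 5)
Your proof is correct and follows essentially the same route as the paper: reduce by Fredholm duality to showing that an element of $\ker D_u^*$ orthogonal to all $\widetilde f_p^{\gamma_{\pm,i}}$ has vanishing leading asymptotic Fourier coefficients and hence vanishes by unique continuation, with $\ell_0$ supplied by finite-dimensionality of $\ker D_u^*$. The only cosmetic difference is that you package the finiteness argument via the injective map $\Phi$ into Fourier coefficients rather than via the paper's row-echelon-form choice of basis for $\ker D_u^*$.
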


\begin{rmk}
Observe that we are actually proving a slightly stronger statement than
$$E_{(\F,u)}^{{\ell,\varepsilon}} + \op{Im} L_{(\F,u)}=W^{k,p}_\delta (\wedge ^{0,1}\dot F \otimes u^{*}T\widehat W).$$
\end{rmk}

\begin{proof}
We first claim that, if Equation~\eqref{eqn: trans lemma} does not hold, then there exists a nonzero $\zeta\in \ker D_u^*$ which is $L^2$-orthogonal to $E_{(\F,u)}^{\ell,\varepsilon}$.   To see this, consider the projection map $\pi:W^{k,p}_\delta (\wedge ^{0,1}\dot F \otimes u^{*}T\widehat W)=\ker D_u^*\oplus \op{Im} D_u\to \ker D_u^*$. If Equation~\eqref{eqn: trans lemma} does not hold, then $\pi(E_{(\F,u)}^{\ell,\varepsilon})$ cannot be all of $\ker D_u^*$. Taking $\zeta\in \ker D_u^*$ to be \cb nonzero and orthogonal to $\pi(E_{(\F,u)}^{\ell,\varepsilon})$, we have $\zeta\perp E_{(\F,u)}^{\ell,\varepsilon}$ since $\zeta\perp \op{Im} D_u$.

Let $\varepsilon_0>0$ be sufficiently small so that on the $\varepsilon_0$-thin neighborhood of a puncture of $\dot F$ corresponding to the Reeb orbit $\gamma\in \bs\gamma_+$ ($\gamma\in \bs\gamma_-$ is similar), $\zeta\in \ker D_u^*$ can be written with respect to the basis $\mathfrak{B}$ from Lemmas~\ref{Dukernel}   and ~\ref{Dukernel-star} \cb as
$$\zeta(s,t)=\sum_{\lambda_i>0} c_i e^{-\lambda_i^\gamma s} f_i^\gamma(t) \otimes (ds-idt),$$
and let $0<\varepsilon\leq \varepsilon_0$. The key point is that, when $\varepsilon_0$ is sufficiently small, $\mathfrak{B}$ is arbitrarily close to $\{\bdry_s,\bdry_t,\bdry_{x_1},\dots,\bdry_{x_n},\bdry_{y_1},\dots,\bdry_{y_n}\}$ and, without loss of generality, we may assume $\zeta(s,t)$ is written with respect to the latter.  There exists a basis $\{\zeta_1,\dots, \zeta_k\}$ for $\ker D_u^*$ for which the coefficients $c_{ji}$ (this means $c_i$ for $\zeta_j$) form a matrix in row echelon form; in particular, there are no rows with all $c_{ji}=0$ since the corresponding $\zeta_j$   would be identically zero \cb by unique continuation, which is a contradiction.

Let $\ell_0>0$ be larger than the smallest $i$ for which $c_{ki}\not=0$.  Now observe that if $\zeta\in \ker D_u^*$ is $L^2$-orthogonal to $E_{(\F,u)}^{\ell_0,\varepsilon}$, then its coefficients $c_i=0$ for all $i\leq \ell_0$, since the $L^2$-inner product of $\zeta$ and $\widetilde f_i^\gamma$ is nonzero if $c_i\not=0$.  This implies that $\zeta=0$.  The lemma then follows.
\end{proof}

Lemma~\ref{lemma: transversality lemma} can be improved as follows:

\begin{lemma}[Family transversality lemma] \label{lemma: family transversality lemma}
There exist $\ell_0 \in \N$, $\varepsilon_0>0$,   and a sufficiently small open neighborhood $\mathcal{N}(K)\subset \mathcal{B}$ of $K$, \cb such that
$$E_{(\F,u)}^{{\ell,\varepsilon}} + \op{Im} D_u=W^{k,p}_\delta (\wedge ^{0,1}\dot F \otimes u^{*}T\widehat W).$$
for all $(\F, u)\in \mathcal{N}(K)$, $\ell\geq \ell_0$, and $0<\varepsilon\leq \varepsilon_0$
\end{lemma}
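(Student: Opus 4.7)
The strategy is to upgrade the pointwise surjectivity from Lemma~\ref{lemma: transversality lemma} to a family statement by invoking the standard openness of surjectivity for continuous families of Fredholm operators, together with compactness of $K$.

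First I would fix $(\F_0,u_0)\in K$ and let $\ell^{0},\varepsilon^{0}$ be the parameters supplied by Lemma~\ref{lemma: transversality lemma} at that point. The main claim is that for this pair, the surjectivity
\[
E_{(\F,u)}^{\ell^{0},\varepsilon^{0}} + \op{Im} D_u = W^{k,p}_\delta(\dot F,\wedge^{0,1} u^{*}T\widehat W)
\]
persists on an open $W^{k+1,p}_\delta$-neighborhood $\mathcal{W}_{(\F_0,u_0)} \subset \mathcal{B}$ of $(\F_0,u_0)$. To establish this I would use two continuity facts. The first is that $u\mapsto D_u$ is continuous in the operator-norm topology into the space of bounded Fredholm operators $W^{k+1,p}_\delta(\dot F,u^{*}T\widehat W)\to W^{k,p}_\delta(\dot F,\wedge^{0,1} u^{*}T\widehat W)$, via the explicit formula \eqref{Du}. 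The second is Lemma~\ref{lemma: in Wkp and C1}, which says $E^{\ell^{0},\varepsilon^{0}}\to\mathcal{N}(K)$ is a $C^{1}$-subbundle; hence its fibers vary continuously in the $W^{k,p}_\delta$-norm as subspaces of $\mathcal{E}_{(\F,u)}$. Combining these with the elementary observation that if a bounded Fredholm operator $T_0\colon X\to Y$ satisfies $T_0X+V_0=Y$ for a finite-dimensional $V_0$, then the same holds for $(T,V)$ sufficiently close to $(T_0,V_0)$ (in operator norm and in the $C^0$-Grassmannian topology respectively), proves the claim.

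Next I would cover $K$ by finitely many such neighborhoods $\mathcal{W}_{(\F_{i},u_{i})}$, $i=1,\dots,N$, with parameters $(\ell_i,\varepsilon_i)$; then shrink $\mathcal{N}(K)$ so that $\mathcal{N}(K)\subset \bigcup_i \mathcal{W}_{(\F_i,u_i)}$. Setting $\ell_0=\max_i\ell_i$ and $\varepsilon_0=\min_i\varepsilon_i$, the remaining point is to verify that surjectivity at $(\F,u)\in\mathcal{W}_{(\F_i,u_i)}$ with parameters $(\ell_i,\varepsilon_i)$ continues to hold for any $\ell\geq\ell_0$ and $0<\varepsilon\leq\varepsilon_0$. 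For the $\ell$-direction this is immediate because enlarging $\ell$ only enlarges $E^{\ell,\varepsilon}_{(\F,u)}$. For the $\varepsilon$-direction, shrinking $\varepsilon$ slides the supports of the cutoffs $\beta^{\pm}_{s_{\pm,i}}$ further into the thin parts, but this only strengthens the hypotheses of the single-point argument in Lemma~\ref{lemma: transversality lemma}: any $\zeta\in\ker D_u^{*}$ still admits the eigenfunction expansion in the cutoff region, and $L^2$-orthogonality to $\widetilde f^{\gamma_{\pm,i}}_p$ still forces the leading coefficients of $\zeta$ to vanish, hence $\zeta=0$.

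The main obstacle I anticipate is precisely the dual dependence on $(\F,u)$: not only does $D_u$ change but so does the obstruction subspace $E^{\ell,\varepsilon}_{(\F,u)}$, via the continuous maps $s_{\pm,i}\colon \mathcal{N}(K)\to\R$ built in Step~1 of Theorem~\ref{thm: construction of L-transverse subbundle}. So one cannot treat this as a classical "fixed finite-dimensional correction" of a varying Fredholm operator; the correction moves too. This is exactly what Lemma~\ref{lemma: in Wkp and C1} is used to control, and it is the only ingredient that requires some care beyond soft continuity arguments. Everything else is a bookkeeping exercise in the openness of transversality and a finite subcover.
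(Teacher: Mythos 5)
Your proposal follows the same strategy as the paper: upgrade the pointwise Lemma~\ref{lemma: transversality lemma} to a family statement using compactness of $K$ and the openness of ``surjectivity modulo a finite-dimensional subspace'' for Fredholm operators, together with the $C^1$-dependence of $E^{\ell,\varepsilon}$ from Lemma~\ref{lemma: in Wkp and C1}; you also correctly flag the extra subtlety (not spelled out in the paper) that the finite-dimensional correction $E^{\ell,\varepsilon}_{(\F,u)}$ itself moves with $(\F,u)$, so one needs the openness statement to be stable under small motions of both the operator and the subspace. The one step the paper makes explicit and you do not is that the phrase ``$u\mapsto D_u$ is continuous in operator norm'' only makes sense after the $u$-dependent Banach spaces $W^{k+1,p}_\delta(\dot F,u^*T\widehat W)$ and $W^{k,p}_\delta(\dot F,\wedge^{0,1}u^*T\widehat W)$ have been identified; the paper constructs a concrete bundle isomorphism $\mathcal P:u_0^*T\widehat W\stackrel\sim\to u_1^*T\widehat W$ (parallel transport on a large compact piece of $\dot F$, identification of the adapted bases $\frak B=\{(u_i)_s,(u_i)_t,\partial_{x_j},\partial_{y_j}\}$ on the ends, interpolation in between) chosen so that under $\mathcal P$ both $D_u$ and the obstruction fibers vary continuously -- this choice is what simultaneously tames the two moving objects you were worried about, and writing it down is essentially the whole content of the paper's proof beyond the formal framework you already have.
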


\begin{proof}
This follows from Lemma~\ref{lemma: transversality lemma}, the compactness of $K$, and the following well-known property of Fredholm maps: Let $L,L':V\to W$ be Fredholm maps between Banach spaces. If $L$ and $L'$ are close in the operator norm and $\op{Im}(L)+W'=W$ for a finite-dimensional subspace $W'$, then $\op{Im}(L')+W'=W$.


For $(\F_0,u_0), (\F_1,u_1)\in K$ that are close, we identify the Banach spaces involved in $L_{(\F_i,u_i)}$, $i=0,1$, by defining an isomorphism $\mathcal P: u_0^*T\widehat W \xrightarrow{\sim} u_1^*T\widehat W$ as follows:
\begin{itemize}
\item[(i)] on a large compact region of $\dot F$, $\mathcal P$ is given by parallel translation along the shortest geodesics between $u_0(z)$ and $u_1(z)$;
\item[(ii)] on the ends of $\dot F$ (assumed to be sufficiently small), $\mathcal P$ is given by identifying the bases
$$\{(u_0)_s,(u_0)_t, \partial_{x_1},\dots,\partial_{x_n},\partial_{y_1},\dots,\partial_{y_n}\}$$
$$\{(u_1)_s, (u_1)_t, \partial_{x_1},\dots,\partial_{x_n},\partial_{y_1},\dots,\partial_{y_n} \};$$
\item[(iii)] $\mathcal{P}$ interpolates between (i) and (ii) in the intermediate region.
\end{itemize}
Then $L_{(\F_0,u_0)}$ and $L_{(\F_1,u_1)}$ are close in the operator norm; the details are left to the reader.
\end{proof}

  This concludes the proof of Theorem~\ref{thm: construction of L-transverse subbundle}.
\end{proof}
\cb

\subsection{The obstruction bundles $\pi_V:E|_V\to V$} \label{sec: the bundles}

Let $K$ be a compact subset of $\mathcal{M}/\R$ in the $\op{(Symp)}$ case and $\mathcal{M}$ in the $\op{(Cob)}$ case.

\subsubsection{Definition of $\pi_V:E|_V\to V$} \label{subsubsection: defn of pi sub V}

With the bundle $E=E^{\ell,\varepsilon}\to \mathcal{N}(K)$ constructed in Theorem~\ref{thm: construction of L-transverse subbundle}, we define
\begin{equation} \label{eqn: V and V prime}
V:= \overline\bdry_J^{-1}(E)\subset \mathcal{N}(K). 
\end{equation}
Since $E$ is $\overline\bdry_J$-transverse and of class $C^1$, $V\supset K$ is a finite-dimensional submanifold of $\mathcal{N}(K)$ of class $C^1$.  Once we have extracted $V$, since the elements of $V$ are smooth by elliptic regularity, $V$ can be ``upgraded to" a smooth finite-dimensional manifold of the expected dimension.

Observe that if $(\F,u)\in V$, then $u:\dot F\to \widehat{W}$ is a smooth map whose ends are $J$-holomorphic and asymptotic to $\bs\gamma_+\cup \bs\gamma_-$. Let
\begin{equation}
\pi_V:E|_V\to V
\end{equation}
be the restriction of $E\to \mathcal{N}(K)$ to $V$ and let $\overline\bdry_J:  V\to E|_V$ be the restriction of $\overline\bdry_J$ to $V$. The vector bundle $\pi_V$ is a smooth vector bundle over $V$.

\subsubsection{Patching together the bundles $\pi_V: E|_V\to V$} \label{subsubsection: patching together}

We would like to ``patch together" the obstruction bundles $\pi_V: E|_V\to V$ \coblu so that (I1)--(I6) are satisfied. \cb  To this end we prove Lemmas~\ref{lemma: canonical identification}, \ref{lemma: canonical identification 2}, and \ref{lemma: canonical identification 3} below, which can roughly be stated as:
\begin{enumerate}
\item independence of the choice of Teichm\"uller slice;
\item \coblu equivariance under puncture reorderings and marker rotations; and
\item invariance under $\R$-translations in the $\op{(Symp)}$ case.
\end{enumerate}
\coblu Recall that $\op{Aut}(F,j,{\bf p},{\bf q},{\bf r})=1$ by Lemmas~\ref{lemma: stabilizer} and \ref{lemma: stabilizer2}, so there are no nontrivial domain automorphisms. \cb

We first consider (1). Suppose we are in the $\op{(Cob)}$ case; an analogous result holds in the (Symp) case, but will not be stated explicitly.  \coblu In the $\chi(\dot F)<0$ case let $\widetilde{\mathcal{U}}_0$ and $\widetilde{\mathcal{U}}_1$ be two Teichm\"uller slices of $\mathcal{U}\subset \coblu \op{Teich}(F,{\bf p},{\bf r})$ and let $\phi_x: F\xrightarrow{\sim} F$, $x\in \mathcal{U}$, \cb be the smooth family of diffeomorphisms taking $j_x\in \widetilde{\mathcal{U}}_0$ to $j'_x\in \widetilde{\mathcal{U}}_1$ from Section~\ref{subsubsection: topology}.  We take bundles $\pi_{\widetilde{\mathcal{U}}_i}: \mathcal{E}_{\widetilde{\mathcal{U}}_i}\to \mathcal{B}_{\widetilde{\mathcal{U}}_i}$ and compact subsets $K_i\subset \mathcal{B}_{\widetilde{\mathcal{U}}_i}$ which are \coblu identified with $K$.  In the $\chi(\dot F)\geq 0$ case taking a Teichm\"uller slice of $\op{Teich}(\dot F)$ is equivalent to fixing $(F,j, {\bf p})$. Writing $\widetilde{\mathcal{U}_i}=\{(F,j_i,{\bf p})\}$ for $i=0,1$, we have a diffeomorphism $\phi$ taking $j_0$ to $j_1$, and we take bundles $\pi_{\widetilde{\mathcal{U}_i},K_i}: \mathcal{E}_{\widetilde{\mathcal{U}_i},K_i} \to \mathcal{B}_{\widetilde{\mathcal{U}_i}, K_i}$ where the $K_i$ are identified with $K$. \cb

Let $\pi_{V_i}:E_i|_{V_i}\to V_i$, $i=0,1$, be the bundle $\pi_V:E|_V\to V$ extracted from the Banach bundles $\pi_{\widetilde{\mathcal{U}}_i}$ or \coblu $\pi_{\widetilde{\mathcal{U}_i},K_i}$ \cb as appropriate, using Theorem~\ref{thm: construction of L-transverse subbundle}.

\begin{lemma}[First patching lemma] \label{lemma: canonical identification}
Suppose we are in the $\op{(Cob)}$ case. After possibly shrinking $V_i$ subject to the condition $V_i\supset K_i$, $V_0$ and $V_1$ are diffeomorphic and the bundles $\pi_{V_i}:E_i|_{V_i}\to V_i$ are isomorphic and the identifications only depend on the {\coblu family of diffeomorphisms $\{\phi_x\}_{x\in \mathcal{U}}$ in the $\chi(\dot F)<0$ case or $\phi$ in the $\chi(\dot F)\geq 0$ case.}
\end{lemma}

\begin{proof}
Let $(\F,u)=((F,j,\mathbf p, \mathbf q, \mathbf r), u) \in V_0$, $(\F',u')=((F, j',\mathbf p, \mathbf q', \mathbf r'), u') \in \mathcal{N}(K_1)$, and $\phi$ be the diffeomorphism satisfying
$$\phi_*((F,j,{\bf p},{\bf q},{\bf r}),u)= ((F,j',{\bf p},{\bf q}',{\bf r}'),u').$$
Then $\phi$ induces a natural identification $\phi_* E_{(\F,u)}\simeq E_{(\F',u')}$ provided
$$E_i=E_i^{\ell,\varepsilon}\to \mathcal{N}(K_i), \quad i=0,1$$ 
are constructed as in the proof of Theorem~\ref{thm: construction of L-transverse subbundle} with fixed $\ell$ and $\varepsilon$.   This is due to the fact that, in addition to $\ell$ and $\varepsilon$, $E_{(\F,u)}$ and $\{s_{\pm,i}(\F,u)\}$ (and likewise $E_{(\F',u')}$ and $\{s_{\pm,i}(\F',u')\}$) only depend on the image of $u$ and the hyperbolic geometry (i.e., the thick-thin decomposition) of $g_{(\dot F,j)}$. In particular, \cb $(\F',u')\in V_1$, provided $(\F,u)\in V_0$ is sufficiently close to $K_0$.

The family {\coblu of diffeomorphisms $\{\phi_x\}, x\in \mathcal{U}$, in the $\chi(\dot F)<0$ case or the diffeomorphism $\phi$ in the $\chi(\dot F)\geq 0$ case} gives a smooth map $\Phi$ from $E_0|_{V_0}\to V_0$ to $E_1|_{V_1}\to V_1$, after possibly shrinking $V_0$.  Since $\Phi$ is a diffeomorphism onto its image and the image of $V_0$ contains $K_1$, by the invariance of domain, $\Phi$ is a diffeomorphism $E_0|_{V_0}\to V_0$ to $E_1|_{V_1}\to V_1$, after possibly shrinking $V_1$.
\end{proof}

\begin{rmk}
We pass to $\pi_{V_i}:E_i|_{V_i}\to V_i$ at an early stage to avoid the well-known ``loss of derivative" issue when dealing with Sobolev spaces.  In particular, we avoid identifying $E_0\to \mathcal{N}(K_0)$ and $E_1\to \mathcal{N}(K_1)$.
\end{rmk}

(2) is stated more precisely as follows: Suppose we are in the $\op{(Cob)}$ case.  \coblu Let $\mathcal{U}=\mathcal{U}_{[j]}$ be a neighborhood of $[j]$ and $\widetilde {\mathcal{U}}_0$ be a Teichm\"uller slice of $\mathcal{U}$ containing $j$. Then $\widetilde {\mathcal{U}}_1=g(\widetilde{\mathcal{U}}_0)$ is a Teichm\"uller slice of \coblu $g(\mathcal{U})$, where $g$ is a composition of puncture reorderings and marker rotations. \cb Let $\pi_{V_i}:E_i|_{V_i}\to V_i$, $i=0,1$, be bundles constructed using $\widetilde{\mathcal{U}}_i$ and neighborhoods of $K_i$, where \coblu $K_1=g(K_0)$. \cb

\begin{lemma}[Second patching lemma]\label{lemma: canonical identification 2}
Suppose we are in the $\op{(Cob)}$ case. After possibly shrinking $V_i$ subject to the condition $V_i\supset K_i$, $V_0$ and $V_1$ are diffeomorphic via the isomorphism $g$ and the bundles $\pi_{V_i}: E_i|_{V_i}\to V_i$ are isomorphic.
\end{lemma}

\begin{proof}
Similar to that of Lemma~\ref{lemma: canonical identification} and \coblu relies on the fact that $E_{(\F,u)}$ and $E_{g(\F,u)}$ are naturally isomorphic.
\end{proof}

(3) is stated more precisely as follows:

\begin{lemma}[Third patching lemma] \label{lemma: canonical identification 3}
Suppose we are in the $\op{(Symp)}$ case and $K_i$, $i=0,1$, are   compact subsets of $\mathcal{B}_{\widetilde{\mathcal{U}}_i}$ or $\mathcal{B}_{\widetilde{\mathcal{U}}_i,K_i}$ and \cb $(\F,u)\in \mathcal{N}(K_0)$ if and only if $(\F,u_T)\in \mathcal{N}(K_1)$, where $u_T$ is $u$ translated up by $s=T$. Then, after possibly shrinking $V_i$ subject to the condition $V_i\supset K_i$, the bundles $\pi_{V_i}: E_i|_{V_i}\to V_i$ are canonically isomorphic.
\end{lemma}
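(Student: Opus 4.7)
The plan is to promote the pointwise translation invariance already built into property (2) of Theorem~\ref{thm: construction of L-transverse subbundle} to a global bundle isomorphism. First I would define the target-translation map
$$\Psi_T:\mathcal{B}\to\mathcal{B},\quad ((F,j,{\bf p},{\bf q},{\bf r}),u)\longmapsto ((F,j,{\bf p},{\bf q},{\bf r}),\Phi_T\circ u),$$
noting that $\Psi_T$ fixes the Teichm\"uller slice $\widetilde{\mathcal{U}}$ since the source data $(F,j,{\bf p},{\bf q},{\bf r})$ is unchanged, and that $\Psi_{-T}$ is its inverse. The hypothesis on $K_0$, $K_1$ says precisely that $\Psi_T$ restricts to a diffeomorphism $\mathcal{N}(K_0)\stackrel\sim\to\mathcal{N}(K_1)$ (after shrinking $\mathcal{N}(K_0)$ and $\mathcal{N}(K_1)$ to a pair of mutually $T$-translated neighborhoods if necessary).

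Next I would verify that $\Psi_T$ sends $V_0$ into $V_1$ and lifts to an isomorphism of the subbundles $E_0|_{V_0}\to E_1|_{V_1}$. Since $J$ is $\R$-invariant in the (Symp) case, the $\overline\bdry_J$-operator is equivariant: $\overline\bdry_J(\Psi_T(\F,u))=(\Phi_T)_*\overline\bdry_J(\F,u)$. Combined with property (2) of Theorem~\ref{thm: construction of L-transverse subbundle}, namely $(\Phi_T)_*E^{\ell,\varepsilon}_{0,(\F,u)}=E^{\ell,\varepsilon}_{1,(\F,u_T)}$, this gives
$$\overline\bdry_J(\F,u)\in E^{\ell,\varepsilon}_{0,(\F,u)}\iff \overline\bdry_J(\F,u_T)\in E^{\ell,\varepsilon}_{1,(\F,u_T)},$$
so $\Psi_T$ restricts to a diffeomorphism $V_0\stackrel\sim\to V_1$ (after possibly shrinking $V_i$, still keeping $V_i\supset K_i$). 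The fiberwise maps $(\Phi_T)_*:E^{\ell,\varepsilon}_{0,(\F,u)}\to E^{\ell,\varepsilon}_{1,(\F,u_T)}$ assemble into a smooth vector bundle isomorphism covering $\Psi_T$.

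Canonicity of the isomorphism reduces to checking that every piece of data entering the construction of $E^{\ell,\varepsilon}$ is translation-invariant. The hyperbolic metric $g=g_{(\dot F,j)}$ (or $g_{(\ddot F,j)}$) depends only on $j$, which $\Psi_T$ preserves; the ${\bf q}$ chosen via condition (*) of Section~\ref{subsubsection: nonnegative Euler char} is unchanged because $s'_+(\F,u_T)=s'_+(\F,u)+T$ by Lemma~\ref{tenren} and the supersimple coordinate $t$ is invariant; the cutoff $\beta$ and the asymptotic eigenfunctions $f^{\gamma_{\pm,i}}_p(t)$ are fixed, while the anchor points $s_{\pm,i}$ transform by $s_{\pm,i}(\F,u_T)=s_{\pm,i}(\F,u)+T$ by (b) of Step~1; and $g_0$ is $s$-invariant at the ends. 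Hence
$$(\Phi_T)_*\widetilde f^{\gamma_{\pm,i}}_p\big|_{(\F,u)}=\widetilde f^{\gamma_{\pm,i}}_p\big|_{(\F,u_T)},$$
which is exactly the fiberwise statement.

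The main (only) obstacle is really bookkeeping: one must choose $\mathcal{N}(K_0)$ and $\mathcal{N}(K_1)$ compatibly so that $\Psi_T$ identifies them, and then shrink each $V_i$ to the common image without dropping the requirement $V_i\supset K_i$. This is straightforward because $\Psi_T(K_0)=K_1$ by hypothesis, so any sufficiently small symmetric neighborhood pair works, and the remainder of the argument is a formal consequence of the translation equivariance already proved in Theorem~\ref{thm: construction of L-transverse subbundle}.
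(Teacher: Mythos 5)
Your proof is correct and follows essentially the route the paper intends: the paper's proof is just ``similar to the First patching lemma,'' i.e.\ it rests on the translation equivariance built into the construction of $E^{\ell,\varepsilon}$ (property (2) of Theorem~\ref{thm: construction of L-transverse subbundle}, which follows from ($\#$)), and your argument is exactly that, with the correct supporting checks on $s_{\pm,i}$, ${\bf q}$, and the $\R$-invariance of $J$. If anything, your situation is slightly cleaner than the First patching lemma since $\Psi_T$ is globally invertible, so no invariance-of-domain step is needed.
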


\begin{proof}
Similar to that of Lemma~\ref{lemma: canonical identification}.
\end{proof}

\subsection{Interior semi-global Kuranishi chart} \label{subsection: semi-global Kuranishi chart}

In this subsection we construct an interior semi-global Kuranishi chart 
$$(\K,\pi_{\V}: \E\to \V,\overline\bdry_J,\psi,{\coblu G})$$
and an obstruction multisection ${\mathfrak s}$, where \coblu $\K$ is a compact subset of $\mathcal{M}/\R$ in the $\op{(Symp)}$ case and $\mathcal{M}$ in the $\op{(Cob)}$ case, and $G=G(\dot F,\R\times M;\bs\gamma_+;\bs\gamma_-)$.

\subsubsection{Construction of the chart}

We will treat the $\op{(Symp)}$ case; \coblu the $\op{(Cob)}$ case analogous but simpler since there is no target $\R$-translation. 

Let $\K$ be a $G$-invariant compact subset of $\mathcal{M}/\R$ and let $K_0$ be the lift of $\K$ to a $G$-invariant slice of $\mathcal{M}$ under the $\R$-action. In what follows we will not distinguish between $K$ and $K_0$.  

\begin{prop}
There exists a tuple $(\K,\pi_{\V}: \E\to \V,\overline\bdry_J,\psi,G)$ such that the rank of $\pi_{\V}$ depends on $\ell_0\gg 0$ and (I1)--(I6) hold. 
\end{prop}

\begin{proof}
When $\chi(\dot F)<0$, let $\mathcal{E}_{\widetilde{\mathcal U}}\to \mathcal B_{\widetilde{\mathcal U}}$ be the Banach bundle for some Teichm\"uller slice $\widetilde{\mathcal U}$ and $\overline\bdry_J: \mathcal B_{\widetilde{\mathcal U}} \to \mathcal{E}_{\widetilde{\mathcal U}}$ be the $\overline\bdry_J$-section.  When $\chi(\dot F)\geq 0$, the Banach bundle $\mathcal{E}_{K_0}\to \mathcal B_{K_0}$ is defined using $K_0$.

In either case Theorem~\ref{thm: construction of L-transverse subbundle} gives a $\overline\bdry_J$-transverse subbundle $E_0\to \mathcal{N}(K_0)$. If we set $V_0=\overline\bdry_J^{-1}(E_0)$, then $\pi_{V_0}: E_0|_{V_0}\to V_0$ is a finite rank vector bundle over a finite-dimensional manifold. \coblu  
Note that $\pi_{V_0}: E_0|_{V_0}\to V_0$ is independent of the choice of Teichm\"uller slice by Lemma~\ref{lemma: canonical identification}.  Next we shrink $V_0$ subject to $V_0\supset K_0$ so that $\pi_{V_0}: E_0|_{V_0}\to V_0$ is invariant under $G$.  Let $V$ be the quotient of $V_0$ by $\R$-translations of the target and let $E$ be the quotient of $E_0$ by the induced $\R$-action, i.e., $((\F,u),\zeta)\sim' ((\F,u_T),\zeta_T)$, where $u_T$ is $u$ translated up by $s=T$ and $\zeta_T$ is $\zeta$ translated up by $s=T$. 
The quotient exists by Lemma \ref{lemma: canonical identification 3}.  (I1), (I2), (I3), and (I5) follow immediately.


(I4) The map $\psi: \overline\bdry _J^{-1} (0)\to \mathcal{M}/\R$ induced by $(\mathcal{F},u)\mapsto (\mathcal{F},u)$ is a homeomorphism onto the image in view of the discussion of the topology of $\mathcal{M}$ from Section~\ref{subsubsection: topology}.

(I6) By construction $\K$ is invariant under the finite group $G$.  The action of $G$ extends to an action on $\pi_{\V}: \E\to \V$ by bundle automorphisms by Lemma~\ref{lemma: canonical identification 2}.  The quotient by $G$ gives a global quotient orbibundle after verifying a couple of facts.

(a) If $\ell_0$ is sufficiently large, then the action of $G$ on $V$ is effective: Let $g\not= e\in G$ and suppose $g(\F,u)=(\F,u)$. 
If $\ell_0$ is sufficiently large, then there exists $(\F,u')\in V$ that is close to $(\F,u)$ and such that $\overline\bdry_J(\F,u')=\zeta$ and $\zeta$ is not fixed under $g$: Start with $\ell_1$ sufficiently large so that $E^{\ell_1}_{(\F,u)}$ is $D_u$-transverse. Then there exists $\zeta_0\in E^{\ell_1+1}_{(\F,u)}$ with nontrivial $\widetilde{f}^{\gamma_{\pm,i}}_{\ell_1+1}$ component for which there exists $\xi_0$ satisfying $D_u \xi_0=\zeta_0$. When $\widetilde{f}^{\gamma_{\pm,i}}_{\ell_1+1}$ is not invariant under $g$ (it is easy to find such an $\ell_1$ and $\widetilde{f}^{\gamma_{\pm,i}}_{\ell_1+1}$), $\xi_0$ is also not invariant under $g$.  Passing from infinitesimal to local, there exists $(\F,u')$ near $(\F,u)$ that is not invariant under $g$ when $\ell_0\geq \ell_1+1$.

(a) in turn implies that the action of $G$ on $E$ is effective.  

(b) $V$ is Hausdorff: This is a consequence of the standard fact that $V= V_0/G$, $V_0$ is Hausdorff, and $G$ is a finite group.
\end{proof}

\subsubsection{Construction of ${\mathfrak s}$} \label{subsubsection: obstruction multisection} 

Let $s$ be a generic smooth section of $E\to V$ which is transverse to $\overline\bdry_J$ and is a perturbation of the zero section.  Taking the collection $(g(s))_{g\in G}$ where we assign the weight $1/|G|$ to each branch, we obtain a smooth {\coblu \em lifted} multisection ${\mathfrak s}$ of the global quotient orbifold for $\pi_{\V}: \E\to \V$ which is a perturbation of the zero (multi-)section and such that ${\mathfrak s}\pitchfork \overline\bdry_J$.  
\cb

\subsection{Dependence on $\varepsilon$ and $\ell$} \label{subsection: stabilization}

In this subsection we briefly discuss the effect of increasing $\ell>0$ and shrinking $\varepsilon>0$.

\subsubsection{Stabilizations} \label{subsubsection: stabilizations}

\begin{defn}
Let $E\to \mathcal{N}(K)$ be a $\overline\bdry_J$-transverse subbundle of $\mathcal{E}|_{\mathcal{N}(K)}$. A subbundle $E'\to \mathcal{N}(K)$ of $\mathcal{E}|_{\mathcal{N}(K)}$ such that $E\subset E'$ is a   {\em rank $a$  stabilization of $E$} if for some $a\in \Z^+$ there is a vector bundle isomorphism $E\oplus \R^a\xrightarrow\sim E'$ that takes $(x,0)\mapsto x$. \cb
\end{defn}

An example of a stabilization of $E^{\ell,\varepsilon}\to \mathcal{N}(K)$ is $E^{\ell+1,\varepsilon}\to \mathcal{N}(K)$.

\begin{defn} \label{defn: stab pair}  
The pair $(\pi_{V'}:E'\to V',\overline\bdry_J')$ consisting of a finite rank vector bundle $\pi_{V'}$ over a finite-dimensional manifold and a section $\overline\bdry_J'$ of $\pi_{V'}$ is a {\em rank $a$ stabilization of $(\pi_V:E\to V,\overline\bdry_J)$} if 
\begin{enumerate}
\item there is a splitting $E'\simeq E''\oplus \R^a $ and a bundle map 
\begin{equation} \label{equation: diagram}
\begin{diagram}
E & \rInto^{\tilde \jmath=(j,0)} & E'\simeq E''\oplus \R^a\\
\dTo^{\pi_{V}} &  & \dTo_{\pi_{V'}} \\
V & \rInto^{i} & V'
\end{diagram}
\end{equation}
where $i:V\hookrightarrow V'$ is an inclusion and $j: E\to E''|_{i(V)}$ is an isomorphism of vector bundles; and
\item $i:V\hookrightarrow V'$ extends to a diffeomorphism
\begin{equation}\label{eqn: 2222022} i: V\times {\coblu B^a}\xrightarrow\sim V'\end{equation}
so that $\overline{\bdry}'_J(i(v,x))=   (j(\overline{\bdry}_J(v)),x) \cb$, where \coblu $x$ is viewed both as an element of an open Euclidean ball $B^a=\{x\in \R^a~|~ |x|<1\}$ and as an element of $\R^a$. \cb
\end{enumerate}
\end{defn}

Given a stabilization $E'$ of $E\to\mathcal{N}(K)$, by setting $V=\overline\bdry_J^{-1}(E)$ and $V'=\overline\bdry_J^{-1}(E')$, we obtain a stabilization $(\pi_{V'}:E'|_{V'} \to V', \overline\bdry_J)$ of $(\pi_{V}:E|_{V} \to V, \overline\bdry_J)$. Note that Definition~\ref{defn: stab pair}(2) holds after shrinking $V'$ subject to $V'\supset K$ if necessary.

We now consider triples $(\pi_{\V}: \E\to \V, \overline\bdry_J,G)$ consisting of a global quotient orbibundle $(\pi_{\V}: \E\to \V,G)$ and a section $\overline\bdry_J'$ of the orbibundle. 

\begin{defn}
\coblu The triple $(\pi_{\V'}: \E'\to \V', \overline\bdry_J',G)$ is a {\em rank $a$ stabilization of $(\pi_{\V}: \E\to \V,\overline\bdry_J,G)$}
if 
\be
\item $(\pi_{V'}:E'\to V',\overline\bdry_J')$ is a rank $a$ stabilization of  $(\pi_V:E\to V,\overline\bdry_J)$;
\item all the maps in \eqref{equation: diagram} and \eqref{eqn: 2222022} are $G$-equivariant;
\item the splitting $E'\simeq E''\oplus \R^a $ from Definition~\ref{defn: stab pair}(1) is not necessarily a direct sum decomposition of $G$-bundles; and
\item given $v\in \V$ and $g\in G$, $g$ induces maps
\begin{gather*}
\{v\}\times B^a\to \{g(v)\}\times B^a, \quad (v,x)\mapsto (g(v),L^1_{g,v}(x)),\\
E''_v\oplus \R^a\to E''_{g(v)}\oplus \R^a, \quad (y,x)\mapsto (L^0_{g,v}(y), L^1_{g,v}(x)),
\end{gather*} 
where $L^0_{g,v}$ is a linear map and $L^1_{g,v}$ is the same linear map on $B^a$ and $\R^a$.
Here $E''_v$ is the fiber of $E''$ over $v$ and $E''_v\oplus \R^a$ is the fiber of $E''\oplus \R^a$ over $(v,pt)\in V\times B^a$.
\ee
\end{defn}

Given a stabilization $(\pi_{V'}:E'\to V',\overline\bdry_J',G)$ of $(\pi_V:E\to V,\overline\bdry_J,G)$ and a section $s: V\to E$, we construct the {\em stabilization $s': V'\to E'$ of $s$} as follows: 
{\coblu Let $h:[0,\infty)\to [0,1]$ be a smooth function such that $h(y)=1$ on $0\leq y\leq \frac{1}{3}$ and $h(y)=0$ on $y\geq \frac{2}{3}$.}
We then define 
\begin{equation} \label{eqn: stabilization}
{\coblu s'(i(v,x))= (j(h(|x|)s(v)),0).}
\end{equation}

\begin{defn}
The pair $(E',s')$ consisting of a stabilization $E'$ of $E$ and a stabilization $s'$ of $s$ is called a {\em stabilization of $(E,s)$.}
\end{defn}

\begin{lemma} \label{lemma: peaches}
If $(E',s')$ is a stabilization of $(E,s)$, then $i(\overline\bdry^{-1}_J(s))=(\overline\bdry'_J)^{-1}(s')$.
\end{lemma}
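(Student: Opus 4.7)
The plan is to prove the lemma by reducing to the case $a=1$ and then unwinding the definitions inside the trivialization furnished by property (StV). The reduction to $a=1$ is immediate: a stabilization by $\R^a$ can be built as an iterated stabilization by $\R$, and the recipe \eqref{eqn: stabilization} is compatible with iteration (the cutoff $g$ applied in each factor separately has product $1$ at the origin and is supported in $[-\tfrac{2}{3},\tfrac{2}{3}]^a$). It suffices to work in a single orbibundle chart, since $\overline\bdry_J^{-1}({\frak s})$ is defined chart-by-chart.

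Assume $a=1$. First I would use (StV) to identify $V'$ with $V \times (-1,1)$ via the diffeomorphism $i$. In this trivialization, (StV) reads
$$\overline{\bdry}_J(i((\F,u),x)) = (\overline{\bdry}_J(\F,u),\, x) \in E \oplus \R,$$
while \eqref{eqn: stabilization} reads
$$s'(i((\F,u),x)) = (g(x)\,s(\F,u),\, 0) \in E \oplus \R.$$

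The equation $\overline{\bdry}_J(p') = s'(p')$ now splits into two components. The $\R$-component forces $x = 0$. Once $x=0$ is known, the normalization $g(0)=1$ reduces the $E$-component to $\overline{\bdry}_J(\F,u) = s(\F,u)$, i.e.\ $(\F,u) \in \overline{\bdry}_J^{-1}(s)$. Conversely, any $(\F,u) \in \overline{\bdry}_J^{-1}(s)$ yields the solution $i((\F,u),0) \in V'$. Hence
$$\overline{\bdry}_J^{-1}(s') \;=\; i\bigl(\overline{\bdry}_J^{-1}(s) \times \{0\}\bigr),$$
which is precisely $\overline{\bdry}_J^{-1}(s)$ under the inclusion $V \hookrightarrow V'$ from Diagram~\eqref{equation: diagram}.

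There is no real obstacle here: the lemma is essentially the statement that properties (StV) and \eqref{eqn: stabilization} were designed so that zero sets are preserved. The bump function $g$ enters only through $g(0)=1$; its cutoff behavior for $|x|$ away from $0$ is there to make $s'$ globally defined on $V'$ and to keep $s'$ supported away from $\bdry V'$, but it plays no role in the zero locus calculation above. The same argument applies verbatim when $s$ and $s'$ are liftable multisections, since transversality and zero sets are computed branch-by-branch and each branch satisfies the above formulas.
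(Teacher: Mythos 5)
Your proof is correct and is just a careful unwinding of what the paper records as ``Immediate from the construction of $(E',s')$.'' The two ingredients you isolate are exactly the ones the construction was set up to provide: the $\R^a$-component of $\overline\bdry_J$ in the trivialization (StV) is the coordinate $x$ itself, while the $\R^a$-component of $s'$ vanishes identically, forcing $x=0$; and $g(0)=1$ then collapses the $E$-component to the original equation on $V$. Your reduction to $a=1$ mirrors the paper's parenthetical remark that the $\R^a$ case is handled inductively, so there is no genuine divergence in approach.
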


\begin{proof}
This is immediate from the definition of $(E',s')$:   If $i(v,x)\in (\overline\bdry'_J)^{-1}(s')$, then
\coblu 
$$\overline\bdry'_J (i(v,x)) = s' (i(v,x)),$$
which is equivalent to 
$$(j(\overline\bdry_J(v)),x)=(j(h(|x|)s(v)),0).$$ 
Hence $x=0$, $h(|x|)=1$, and $\overline\bdry_J(v)=s(v)$. \cb 
\end{proof}
 
Analogously, if ${\mathfrak s}$ is a $\overline\bdry_J$-transverse {\coblu lifted} multisection of {\coblu $(\pi_{\V}: \E\to \V,\overline\bdry_J,G)$,} then we can construct the {\em stabilization ${\mathfrak s}'$ of ${\mathfrak s}$} which is a {\coblu lifted} multisection of the stabilization {\coblu $(\pi_{\V'}: \E'\to \V',\overline\bdry_J',G)$} such that ${\mathfrak s}'$ is $\overline\bdry_J'$-transverse and $i(\overline\bdry^{-1}_J({\mathfrak s}))=(\overline\bdry_J')^{-1}({\mathfrak s}')$ as weighted branched manifolds. \coblu This is straightforward since the orbibundles are global quotient orbibundles. \cb

\subsubsection{Changing $\varepsilon$}

The effect of changing $\varepsilon>0$ can be summarized as follows:

\begin{rmk}[Changing $\varepsilon>0$]\label{rmk: shrinking varepsilon}
There exist $\varepsilon_0>0$ and $\ell_0>0$ such that Theorem~\ref{thm: construction of L-transverse subbundle} holds for all $\varepsilon>0$ satisfying $\varepsilon_0\geq\varepsilon$ and $\ell$ satisfying $\ell\geq \ell_0$.  If we fix $\ell$, then for any two $0<\varepsilon_1,\varepsilon_2<\varepsilon_0$, there exists a $1$-parameter family of $\overline\bdry_J$-transverse bundles $E^{\ell,\varepsilon_t}\to \mathcal{N}(K)$, $t\in[1,2]$.  In the process of extracting weighted branched manifolds in later sections, it follows that the weighted branched manifolds corresponding to $\varepsilon_1$ and $\varepsilon_2$ are cobordant.
\end{rmk}

\section{Gluing} \label{section: gluing}

The goal of this section is to define the terms that appear in the gluing theorems and also state the gluing theorems that we need.

Suppose we are considering $\op{(Symp)}$ case; in this case we need to quotient by the $\R$-translations in the target. A key observation is that $E_{(\F,u)}$ is equivariant under $\R$-translations by the construction in Theorem~\ref{thm: construction of L-transverse subbundle}.  The necessary modifications can be made for the $\op{(Cob)}$ case.

\subsection{The setup}

Let
$$\mathcal{M}=\mathcal{M}^{\op{ind}=k}_J(\dot F,\R\times M;\bs\gamma_+;\bs\gamma_-).$$ 

\nom[Mbar]{$\overline{\mathcal{M}/\R}$}{Not quite the compactification of the moduli space $\mathcal{M}/\R$; see Convention~\ref{convention for boundary}}

\begin{convention} \label{convention for boundary}
We use the notation $\overline{\mathcal{M}/\R}$ to mean the space of SFT buildings in $\R\times M$ of total Fredholm index $\op{ind}=k$ which limit to $\bs\gamma_+$ at the positive end and $\bs\gamma_-$ at the negative end, and whose domain, when glued up, has topological type $\dot F$.  We also write
$$\bdry (\mathcal{M}/\R)= \overline{\mathcal{M}/\R}- \mathcal{M}/\R.$$
Note that $\bdry (\mathcal{M}/\R)$ is not necessarily the set-theoretic boundary of $\mathcal{M}/\R$: for example it is possible that $\mathcal{M}/\R=\varnothing$ and $\bdry (\mathcal{M}/\R)\not=\varnothing$.
\end{convention}

We will explain   a slightly simplified situation; the general case is only notationally more involved.

\begin{assumption} \label{assumption: simple}
There exist
$$\mathcal{M}_1=\mathcal{M}^{\op{ind}=k_1}_J(\dot F_1,\R\times M;\bs\gamma;\bs\gamma_-),$$
$$\mathcal{M}_2=\mathcal{M}^{\op{ind}=k_2}_J(\dot F_2,\R\times M;\bs\gamma_+;\bs\gamma),$$
with $k_1+k_2=k$ such that $\dot F_1$ and $\dot F_2$ are connected and  
$$\bdry (\mathcal{M}/\R)  \supset \mathcal{M}_1/\R\times \mathcal{M}_2/\R,$$
where $\mathcal{M}_2/\R$ has higher SFT level (i.e., has larger $s$ coordinate) than $\mathcal{M}_1/\R$.
Also we choose compact subsets \coblu $K_1\subset \mathcal{M}_1/\R$ and $K_2\subset \mathcal{M}_2/\R$. \cb
\end{assumption}

For $i=1,2$, $(F_i,{\bf p}_i,{\bf q}_i)$ and hence $\ddot F_i$ are fixed. Let $\varepsilon_i'=\varepsilon'(K_i)>0$ be the constant that appears in the definition of $s'_+(\F_i,u_i)$ in Section~\ref{subsubsection: nonnegative Euler char}, in case $\chi(\dot F)=0$ or $1$. We take a Teichm\"uller slice $\widetilde{\mathcal{U}}_i$ such that each $[\F_i,u_i]\in K_i$ admits a representative $(\F_i,u_i)=((F_i,j_i,{\bf p}_i,{\bf q}_i,{\bf r}_i),u_i)$ with $j_i\in \widetilde{\mathcal{U}}_i$. Define
$$\overline\bdry^{\widetilde{\mathcal{U}}_i}_J: \mathcal{E}_{\widetilde{\mathcal{U}}_i}\to \mathcal{B}_{\widetilde{\mathcal{U}_i}}$$
as before.  Let $Z_i=(\overline\bdry_J^{\widetilde{\mathcal{U}}_i})^{-1}(0)$, \coblu view $K_i$ as a compact subset of $Z_i$, \cb and let $\mathcal{N}(K_i)$ be a sufficiently small neighborhood of $K_i$ in $\mathcal{B}_{\widetilde{\mathcal{U}_i}}$.

We are also assuming that
\begin{equation} \label{eqn: puncture-preserving}
    {\bf p}=({\bf p}_1\cup {\bf p}_2)-({\bf p}_{+,1}\cup{\bf p}_{-,2}),\quad {\bf r}=({\bf r}_1\cup {\bf r}_2)-({\bf r}_{+,1}\cup{\bf r}_{-,2}),
\end{equation}
and $F=F_1\# F_2$, where $\#$ is a multiple connected sum and the gluing occurs along subsets ${\bf p}_{+,1}\subset {\bf p}_1$, ${\bf p}_{-,2}\subset {\bf p}_2$ that correspond to ${\bf r}_{+,1}\subset {\bf r}_1$, ${\bf r}_{-,2}\subset {\bf r}_2$.

We apply Theorem~\ref{thm: construction of L-transverse subbundle} and Section~\ref{subsubsection: defn of pi sub V} to $K_i\subset Z_i$ to obtain
$$E_i=E_i^{\ell,\varepsilon}\to V_i/\R,$$
where $V_i=(\overline\bdry^{\widetilde{\mathcal{U}}_i}_J)^{-1}(E_i)\subset \mathcal{N}(K_i)$ as in Equation~\eqref{eqn: V and V prime}   and $V_i/\R$ is the quotient by the target $\R$-translation. By slight abuse of notation the total space of the obstruction bundle over $V_i/\R$ will also be denoted by $E_i$. We are also assuming that the same constants $\ell, \varepsilon$ apply to both $E_1$ and $E_2$. 

We fix a slice $V_i/\R\to V_i$ and write $(\F_i,u_i)\in V_i/\R$ to indicate the representative of its equivalence class in $V_i/\R$ in that slice. We also assume that
$$\min_j\{s_{-,j}(\F_2,u_2)\}\gg 0, \quad \max_j \{s_{+,j}(\F_1,u_1)\}\ll 0$$
for all $(\F_i,u_i)\in V_i/\R$.

\subsection{Close to breaking} \label{subsection: close to breaking}

Let $\delta>0$ be a small constant.\footnote{Unrelated to the weight $\delta$ used in the Sobolev spaces earlier.} 
Given a constant $T\in \R$, let $u_{i,T}$ (resp.\ $u_T$) be $u_i$ (resp. $u$) translated up by $s=T$ units.

We define the notion of {\em close to breaking} in the spirit of \cite[Definition 7.1]{HT2} as follows:  

\begin{defn}[Close to breaking] \label{defn: close to breaking}
A map $(\F,u)=((F,j,{\bf p},{\bf q},{\bf r}),u)$ is {\em $\delta$-close to breaking into the building}
$$(\F_1,u_1)\cup (\F_2,u_2),\quad (\F_i,u_i)=((F_i,j_i,{\bf p}_i,{\bf q}_i,{\bf r}_i),u_i)\in V_i/\R,\quad i=1,2,$$
if, after possibly translating $u$, there exist $T_2> T_2'>0$ and $T_1> T_1'>0$ such that the following hold:  
\begin{itemize}
    \item The domain $\dot F_2'$ of $u_{2,T_2}|_{\{s\geq T_2'\}}$ is obtained from $\dot F_2= F_2-{\bf p}_2$ by removing ends $C_{2,\gamma}$ for all $\gamma\in\bs\gamma$. 
    There is a biholomorphism
    $$(C_{2,\gamma},j_2)\simeq ((-\infty,T_2']\times \R/\mathcal{A}_\alpha(\gamma)\Z,j_{\op{std}})$$ 
    with coordinates $(s,t)$ and standard complex structure $j_{\op{std}}:\bdry_s\mapsto \bdry_t$, such that the asymptotic marker corresponds to $t=0$, $u_{2,T_2}|_{C_{2,\gamma}}$ is $\delta$-close in the $C^1$-norm to the map
    $$C_{2,\gamma}\to \R\times \R/  \mathcal{A}_\alpha(\gamma^s) \Z\times D\subset \R\times M,\quad (s,t)\mapsto (s,t,0),$$
    where we are using simple coordinates for $\gamma$, and the postcomposition of $u_{2,T_2}|_{C_{2,\gamma}}$ by the projection onto $\R\times \R/  \mathcal{A}_\alpha(\gamma^s) \Z$ is $(j_2,j_{\op{std}})$-holomorphic.
    \item The domain $\dot F_1'$ of $u_{1,-T_1}|_{\{s\leq -T_1'\}}$ satisfies an analogous condition with ends $C_{1,\gamma}$.
    \item {\coblu $T_2'+T_1'\geq 1/\delta$ and} the domain $\dot F'$ of $u|_{\dot F- \{-T_1'< s< T_2'\}}$ is obtained from $\dot F= F-{\bf p}$ by removing annuli $A_\gamma$ for all $\gamma\in\bs\gamma$. 
    There is a biholomorphism 
    $$(A_\gamma,j)\simeq ([-T_1',T_2'] \times \R/\mathcal{A}_\alpha(\gamma)\Z, j_{\op{std}})$$ 
    such that $u|_{A_\gamma}$ is $\delta$-close in the $C^1$-norm to the map 
    $$A_\gamma\to \R\times \R/  \mathcal{A}_\alpha(\gamma^s)\Z \times D\subset \R\times M,\quad (s,t)\mapsto (s,t,0),$$
    where we are using simple coordinates for $\gamma$, and the postcomposition of $u|_{A_\gamma}$ by the projection onto $\R\times \R/  \mathcal{A}_\alpha(\gamma^s) \Z$ is $(j,j_{\op{std}})$-holomorphic.
    \item ${\bf q}_1,{\bf q}_2$, and ${\bf q}$ are disjoint from $C_{1,\gamma},C_{2,\gamma}$, and $A_\gamma$, respectively.
    \item There is a diffeomorphism
    $$\phi:\dot F = F-{\bf p} \xrightarrow{\sim}\dot F'_1\cup(\sqcup_\gamma A_\gamma)\cup \dot F'_2$$ 
    which takes $\dot F'$ to $\dot F'_1\sqcup \dot F_2'$, takes ${\bf q}'_1$ to ${\bf q}_1$ and ${\bf q}'_2$ to ${\bf q}_2$ where ${\bf q}= {\bf q}'_1\sqcup {\bf q}'_2$, is the identity on $A_\gamma$, and identifies the corresponding ends. Here the right-hand side is obtained by gluing the corresponding boundary components of $A_\gamma$ and $C_{2,\gamma}$ (also $A_\gamma$ and $C_{1,\gamma}$) via their identifications with $\R/\mathcal{A}_\alpha(\gamma)\Z$.  Then $\phi_* j|_{\ddot F}$ is $\delta$-close in the Teichm\"uller metric to $j_1|_{\ddot F'_1}\cup j_{\op{std}}|_{\cup_\gamma A_\gamma} \cup j_2|_{\ddot F'_2}$ where $\ddot F'_i=\dot F'_i-{\bf q}_i$.
    \item $u_{2,T_2}|_{\dot F'_2}$ is $\delta$-close in the $C^1$-norm to $u|_{\dot F'_2}$ and $u_{1,-T_1}|_{\dot F'_1}$ is $\delta$-close in the $C^1$-norm to $u|_{\dot F'_1}$ via the identification $\phi$.
\end{itemize}
We also say that ${\bf q}$ is {\em $\delta$-close to ${\bf q}_1\cup {\bf q}_2$.}
\end{defn}

\nom[G1]{$\mathcal{G}_\delta(V_1/\R,\dots, V_m/\R)$}{Holomorphic curves $\delta$-close to breaking into curves in $V_1,\dots, V_m$}
\nom[G2]{$\widetilde{\mathcal{G}}_\delta(V_1/\R,\dots, V_m/\R)$}{Slice of $\mathcal{G}_\delta(V_1/\R,\dots, V_m/\R)$}
Let $\mathcal{G}_\delta(V_1/\R,V_2/\R)$ be the set of equivalence classes of maps
$$(\F,u)=((F,j,{\bf p},{\bf q}, {\bf r}),u)$$
modulo $\R$-translations which have representatives that are $\delta$-close to breaking into some building $(\F_1,u_1)\cup (\F_2,u_2)$ with $(\F_i,u_i)\in V_i/\R$ and let $\widetilde{\mathcal{G}}_\delta(V_1/\R,V_2/\R)$ be a slice of $\mathcal{G}_\delta(V_1/\R,V_2/\R)$ whose elements are maps which are $\delta$-close to breaking.

\subsection{Removable punctures} \label{subsection: removable punctures} 

Let us write $\F^\varnothing=(F,j,{\bf p},\varnothing,{\bf r})$ when $\F=(F,j,{\bf p},{\bf q},{\bf r})$.
\nom[Fcal]{$\F^\varnothing$}{$\F^\varnothing=(F,j,{\bf p},\varnothing,{\bf r})$ when $\F=(F,j,{\bf p},{\bf q},{\bf r})$}
Given $(\F_i,u_i)=((F_i,j_i,{\bf p}_i, {\bf q}_i, {\bf r}_i),u_i)$, $i=1,2$, and $(\F^\varnothing,u)$ which is $\delta$-close to $(\F_1^\varnothing,u_1) \cup (\F_2^\varnothing,u_2)$, we explain how to select removable punctures ${\bf q}={\bf q}'_1\sqcup {\bf q}'_2$ for $F$ following Section~\ref{subsubsection: nonnegative Euler char}, so that $(\F,u)$ is $\delta$-close to $(\F_1,u_1) \cup (\F_2,u_2)$ for possibly larger $\delta$: \cb
\coblu Let $\gamma\in\bs\gamma$ and $\gamma_\pm\in \bs\gamma_\pm$. Denote the ends and the neck regions of $u$ by $u(s,t)=(s,t,\eta(s,t))$ using simple coordinates such that $s\in [T_2'',\infty)$, $s\in (-\infty, -T_1'']$, and $s\in[-T_1',T_2']$. If $\chi(\dot F_i)<0$, then ${\bf q}_i'=\varnothing$, where $i=1,2$.  If $\chi(\dot F_1)\geq 0$, \coblu then as in Lemma~\ref{tenren} we define
\begin{align} \label{eqn: s prime 1}
	s'_{\gamma,+}(\F,u) & :=  \max \{ s\in [-T_1',T_2']~|~ I_\gamma((\F,u),s')=\mathcal{A}_\alpha(\gamma)-\varepsilon'\},\\
\nonumber	s'_{\gamma_-,-}(\F,u) & :=  \min \{ s\in (-\infty,-T_1'']~|~ I_{\gamma_-}((\F,u),s')=\mathcal{A}_\alpha(\gamma_-)+\varepsilon'\}.
\end{align}
If $\chi(\dot F_2)\geq 0$, then
\begin{align} \label{eqn: s prime 2}
    s'_{\gamma_+,+}(\F,u) & :=  \max \{ s\in[T_2'',\infty) ~|~ I_{\gamma_+}((\F,u),s')=\mathcal{A}_\alpha(\gamma_+)-\varepsilon'\},\\
\nonumber 	s'_{\gamma,-}(\F,u) & :=  \min \{ s\in [-T_1',T_2']~|~ I_\gamma((\F,u),s')=\mathcal{A}_\alpha(\gamma)+\varepsilon'\}.
\end{align}
Then ${\bf q}'_2$ (resp.\  ${\bf q}'_1$) is a list of punctures defined using $s'_{\gamma_+,+}(\F,u)$ and $s'_{\gamma,-}(\F,u)$ (resp.\ $s'_{\gamma,+}(\F,u)$ and $s'_{\gamma_-,-}(\F,u)$)) and \ref{Q} in Section~\ref{subsubsection: map I gamma}. \cb
Hence for $(\F,u)$ close to breaking into $(\F_1,u_1)\cup (\F_2,u_2)$,   the data $(\F_1,u_1)\cup (\F_2,u_2)$ and $(\F^\varnothing,u)$ are sufficient to recover ${\bf q}$, i.e., there exists a {\em puncturing map} 
\begin{equation} \label{eqn: puncturing map}
   \mathfrak{Punc}: (\F^\varnothing,u) \mapsto (\F,u)
\end{equation}
such that ${\bf q}$ satisfies Equations~\eqref{eqn: s prime 1} and \eqref{eqn: s prime 2}.
\cb

\subsection{The bundle $E'$} \label{subsection: the bundle E'}

\nom[E prime]{$E'_{(\F,u)}$}{Fiber of obstruction bundle over $\widetilde{\mathcal{G}}_\delta(V_1/\R,\dots, V_m/\R)$}

We define the bundle
\begin{equation}\label{eqn: bundle E prime two level}
E'\to \widetilde{\mathcal{G}}_\delta(V_1/\R,V_2/\R)
\end{equation}
for $\delta>0$ small as follows:  Given $(\F,u)=((F,j,{\bf p},{\bf q},{\bf r}),u)\in \widetilde{\mathcal{G}}_\delta(V_1/\R,V_2/\R)$, consider   the $\varepsilon$-thin part $\op{Thin}_\varepsilon(\ddot F,g)$ for $g=g_{(\ddot F,j)}$ as in Step 1 of the proof of Theorem~\ref{thm: construction of L-transverse subbundle}: it is a union of cusps (as in Equation~\eqref{equation: cusps}) and annuli.

\nom[s10]{$s_C(\F,u)$}{The $s$-coordinate indicating the location of the perturbations for $(\F,u)\in \widetilde{\mathcal{G}}_\delta(V_1/\R,\dots, V_m/\R)$, where $C$ is a boundary component of $\op{Thin}_\varepsilon(\ddot F,g)$}
Let $C$ be a boundary component of $\op{Thin}_\varepsilon(\ddot F,g)$.  
We then apply Definition~\ref{defn: somewhat informal} to obtain the average $s_C(\F,u)$ of the $s$-coordinates of $u|_C$ over 
$$t=\mathcal{A}_\alpha(\gamma^s), 2\mathcal{A}_\alpha(\gamma^s),\dots, m(\gamma)\mathcal{A}_\alpha(\gamma^s),$$ 
where $\gamma$ is the orbit corresponding to $C$, $\gamma^s$ is the simple orbit underlying $\gamma$, and $m(\gamma)$ is the multiplicity of $\gamma$ over $\gamma^s$. Note that $s_C(\F,u)$ agrees with $s_{\pm,i}(\F,u)$ from the proof of Theorem~\ref{thm: construction of L-transverse subbundle} when $C$ is the boundary of the cusp corresponding to the $i$th $\pm$ end of $(\F,u)$.  We then define $E^{\ell,\varepsilon}_{(\F,u),C}$ as in Equation~\eqref{equation: def of each component of E}, using $s_C(\F,u)$.
We then set \cb
\begin{equation} \label{equation: defn of E prime}
E'_{(\F,u)}:=\bigoplus_C E^{\ell,\varepsilon}_{(\F,u),C},
\end{equation}
where $C$ ranges over all boundary components $C$ of $\op{Thin}_\varepsilon(\ddot F,g)$ corresponding to cusps or glued edges. \cb

\subsection{Statements of gluing theorems} \label{subsection: statements of gluing theorems}

In this subsection we collect the gluing theorems that we need to construct the semi-global Kuranishi structures.  They are standard gluing results and will be proven in the next section.

In general, the $2$-component building condition in Assumption~\ref{assumption: simple} is replaced by an analogous $m$-component condition:   We have moduli spaces $\mathcal{M}$ and $\mathcal{M}_i$, $i=1,\dots, m$,  such that
$$\bdry (\mathcal{M}/\R) \supset \mathcal{M}_1/\R \times \dots \times \mathcal{M}_m/\R,$$
and compact subsets $K_i$ of $\mathcal{M}_i/\R$. Let $Z_i=(\overline\bdry_J^{\widetilde{\mathcal{U}}_i})^{-1}(0)$, \coblu view $K_i$ as a compact subset of $Z_i$, \cb and let $E_i|_{V_i}\to V_i/\R$ be the bundle from Section~\ref{subsubsection: defn of pi sub V} with $V_i$ a small neighborhood of $K_i$. We are gluing $m$ non-trivial (= not trivial cylinder) components, one from each $V_i$, with a prescribed identification of a subset of ends. The notion of a map $(\F,u)$ that is $\delta$-close to breaking into the building
$$(\F_1,u_1)\cup \dots \cup (\F_m,u_m)$$ 
with $(\F_i,u_i)\in V_i/\R$ is defined as in the $2$-component case. Hence we can define $\mathcal{G}_\delta(V_1/\R,\dots, V_m/\R)$ as the set of equivalence classes of maps with representatives that are $\delta$-close to $(\F_1,u_1)\cup \dots \cup (\F_m,u_m)$ and $\widetilde{\mathcal{G}}_\delta(V_1/\R,\dots, V_m/\R)$ as a slice of $\mathcal{G}_\delta(V_1/\R,\dots, V_m/\R)$.  \cb Then we can define
\begin{equation}\label{eqn: bundle E prime multiple level}
E'\to \widetilde{\mathcal{G}}_\delta(V_1/\R,\dots, V_m/\R),
\end{equation}
using Equation~\eqref{equation: defn of E prime} (the definitions of $s_C(\F,u)$ and $E^{\ell,\varepsilon}_{(\F,u),C}$ carry over with no change) and
\begin{equation} \label{eqn: bundle E prime multiple level two}
    \widetilde{\mathcal{G}}^{E'}_\delta(V_1/\R,\dots,V_m/\R):= \overline\bdry_J^{-1}(E')\subset \widetilde{\mathcal{G}}_\delta(V_1/\R,\dots,V_m/\R).
\end{equation}
We also introduce the notation $V_i^{\text{sh}}$ (resp.\ $V_i^{\text{en}}$) to denote a slight $\R$-invariant shrinking (resp.\ enlargement) of $V_i$ which contains $K_i$ and is still open.

\begin{thm}[Gluing] \label{thm: gluing} 
$ $

\begin{enumerate}
    \item [{(A)}] For a sufficiently large   gluing parameter bound $R=R(K_1,\dots, K_m)>0$, \cb there exists a gluing map
\begin{equation}
G_{(1,\dots,m)}:(V_1/\R) \times\dots \times (V_m/\R) \times [R,\infty)^{m-1}\to \widetilde{\mathcal{G}}^{E'}_\delta(V_1/\R,\dots, V_m/\R)
\end{equation}
which is a $C^1$-diffeomorphism onto its image and whose image contains
$$\widetilde{\mathcal{G}}^{E'}_{\delta'}(V_1^{\text{\em sh}}/\R,\dots,V_m^{\text{\em sh}}/\R)$$
for sufficiently small $\delta'>0$.
\coblue
\item[{(B)}] The gluing map $G_{(1,\dots,m)}$ does not depend on the choices of $K_1,\dots,K_m$.
More precisely, for another choice of compact subsets $K_i'\supset K_i$ and neighborhoods $V'_i \supset K'_i$ such that $V'_i \supset V_i$ for $i\in\{1,\dots, m\}$, there exist a gluing parameter bound $R'=R(K'_1,\dots, K'_m) > R = R(K_1,\dots, K_m)>0$ and a gluing map 
$$G'_{(1,\dots,m)}:(V'_1/\R) \times\dots \times (V'_m/\R) \times [R',\infty)^{m-1}\to \widetilde{\mathcal{G}}^{E'}_\delta(V'_1/\R,\dots, V'_m/\R).$$
If we use the same $\ell,\varepsilon$, then $G'_{(1,\dots,m)}= G_{(1,\dots,m)}$ on their common domain 
$$(V_1/\R) \times\dots \times (V_m/\R) \times [R',\infty)^{m-1}.$$
\end{enumerate}
\end{thm}

\begin{rmk} \label{rmk: need to use same ell and varepsilon}
\coblue In Theorem~\ref{thm: gluing}(B), it is important that we use the same $\ell,\varepsilon$ to define $V_i$ and $V_i'$. The use of the orthogonal projection (Equation~\eqref{eqn: orthogonal projection}) in Equation~\eqref{def of I}, which depends on $\ell,\varepsilon$, is the source of slight discrepancies when applying the contraction mapping theorem.
\end{rmk}

Assuming that for $1\leq i < j \leq m$, $V_i,\dots, V_j$ can be glued into a one-component curve and writing $V_{(i,\dots,j)}/\R=\widetilde{\mathcal{G}}^{E'_{(i,\dots,j)}}_{\delta'}(V_i^{\text{sh}}/\R,\dots,V_j^{\text{sh}}/\R)$, where $E'_{(i,\dots,j)}$ is $E'$ for $V_i/\R,\dots$, $V_j/\R$, we also have the following:

\begin{thm}[Iterated gluing] \label{thm: iterated gluing}
For a sufficiently large   gluing parameter bound $R=R(K_1,\dots, K_m)>0$, \cb there exists a gluing map
\begin{align*}
&G_{(1,\dots, (i,\dots,j),\dots m)}:  (V_1/\R)\times\dots\times (V_{i-1}/\R)\times (V_{(i,\dots,j)}/\R) \\
& \times (V_{j+1}/\R)\times  \dots\times (V_m/\R) \times [R,\infty)^{m+i-j-1} \to \widetilde{\mathcal{G}}^{E'_{(1,\dots,m)}}_\delta(V_1/\R,\dots, V_m/\R),
\end{align*}
such that $G_{(1,\dots, (i,\dots,j),\dots m)}\circ (\op{id},\dots, G_{(i,\dots,j)},\dots, \op{id})$ is $C^1$-close to the gluing map $G_{(1,\dots,m)}$   
and the error goes to $0$ as $\min_{i=1}^{m-1}(T_i)\to \infty$, where $(T_1,\dots, T_{m-1})\in [R,\infty)^{m-1}$.  \coblu An analogous statement also holds for higher-order iterated gluing maps.
 
\end{thm}

\section{Details of gluing}\label{section: details of gluing}

We prove Theorem~\ref{thm: gluing} with $m=2$, following the general outline of \cite{HT2} and \cite{BH}.  The case for general $m$ only differs in notation.  The proof of Theorem~\ref{thm: iterated gluing} also uses the same type of estimates as in Theorem~\ref{thm: gluing} and we only provide a sketch.

We are assuming that $\varepsilon>0$ is small, that $(\alpha,J)$ is an $L$-simple pair (see Definition~\ref{defn: L-supersimple 1}), and that $J$ is smooth.

We write $E_+\to V_+$ for $E_2\to V_2$ and $E_-\to V_-$ for $E_1\to V_1$.  Let $(\F_\pm,u_\pm)_{\R} \in V_\pm/\R$, where $\F_\pm=(F_\pm,j_\pm,{\bf p}_\pm,{\bf r}_\pm)$. For simplicity we assume that $(\F_+,u_+)$ is a curve from $\gamma_+$ to $\bs\gamma=(\gamma_1,\dots,\gamma_k)$ and $(\F_-,u_-)$ is a curve from $\gamma_1$ to $\bs\gamma_-=(\gamma_{-,1},\dots,\gamma_{-,l})$, i.e., we are gluing the first negative end of $(\F_+,u_+)$ to the positive end of $(\F_-,u_-)$.  We fix representatives $(\F_\pm,u_\pm)$ of $(\F_\pm,u_\pm)_{\R}$ such that $s_{\mp,1}(\F_\pm,u_\pm)=0$. Here $s_{\pm,i}(\F,u)$ is as defined in the proof of Theorem~\ref{thm: construction of L-transverse subbundle}.  We write ``$(\F_\pm,u_\pm)\in V_\pm/\R$'' to mean $(\F_\pm,u_\pm)$ is the chosen representative in its equivalence class.  Note that the elements of $(E_\pm)_{(\F_\pm,u_\pm)}$ are supported on $-1\leq s\leq 0$ for $(\F_+,u_+)$ and on $0\leq s\leq 1$ for $(\F_-,u_-)$.

\begin{notation}
In general, if we decorate $(\mathcal{F},u)$ with subscripts and superscripts as in $(\mathcal{F}^*_\star,u^*_\star)$, then $\mathcal{F}^*_\star=(F^*_\star, j^*_\star, {\bf p}^*_\star,{\bf r}^*_\star)$ and $\dot F^*_\star=F^*_\star-{\bf p}^*_\star$.
\end{notation}

In the next several subsections we retrace the steps of Sections 8.1--8.4 of \cite{BH}, adapted to the current setting.

\subsection{Pregluing}\label{subsection:pregluing}

Fix a constant $T_0\gg 0$.  Also let $T\gg T_0$, which is allowed to vary.

\begin{notation}
If $\tau_T: \R\times M \xrightarrow{\sim} \R\times M$ is  given by the translation $(s,x)\mapsto (s+T,x)$, then let $u_{\pm,T}=\tau_{\pm (T+T_0)}\circ u_\pm$.  In general a subscript or superscript $+,T$ (resp.\ $-,T$) indicates the result of translating up (resp.\ down) by $s=T+T_0$.
\end{notation}

Recall the simple coordinates $(s,t,x=(x_1,\dots,x_n),y=(y_1,\dots,y_n))$ on a sufficiently small neighborhood $\R\times\R/\mathcal{A}_\alpha(\gamma_1)\Z \times D$ of $\R\times \gamma_1$; here we are slightly abusing notation and passing to a finite cover if $\gamma_1$ is not simple. We assume that $\varepsilon>0$ which appears in the definition of $E^{\ell,\varepsilon}$ is sufficiently small so that $u_-|_{s\geq -T_0}$ (resp.\ $u_+|_{s\leq T_0}$) can be written as a graph $(s,t,\eta_-(s,t))$ (resp.\ $(s,t,\eta_+(s,t))$) \cb over $[0,\infty)\times \gamma_1$ (resp.\ $(-\infty,0]\times\gamma_1$).

Fix constants $0<h<1$ and $r\gg h^{-1}$.  We take $T_0> 5r$.
Choose a cutoff function $\beta: \mathbb{R} \to [0,1]$ such that $\beta(s)=0$ for $s\leq 0$ and $\beta(s)=1$ for $s\geq 1$.  Let $\beta_{-,T}(s)=\beta(\frac{T-s}{ hr})$ and $\beta_{+,T}(s)=\beta(\frac{T+s}{hr})$.

For $T'\leq T+T_0$, let $\mathcal{E}_{+, T'}$ be the component of
$$u_{+,T}^{-1}(\{s< T'\})\simeq u_+^{-1}(\{s< T'-(T+T_0)\})$$
corresponding to $\gamma_1$, let $\mathcal{E}_{-,T'}$ be
$$u_{-,T}^{-1}(\{s> -T'\})\simeq u_-^{-1}(\{s> -T'+(T+T_0)\}),$$
and let $A_{[T_1,T_2]}$ be the annulus $[T_1,T_2]\times \R/\mathcal{A}_\alpha(\gamma_1)\Z$ with coordinates $(s,t)$ and the standard complex structure. 

\begin{figure}[ht]
\begin{center}
\psfragscanon
\psfrag{s}{\tiny $s$}
\psfrag{A}{\tiny $-T$}
\psfrag{B}{\tiny $-T+hr$}
\psfrag{C}{\tiny $T-hr$}
\psfrag{D}{\tiny $T$}
\psfrag{E}{\tiny $\beta_{+,T}$}
\psfrag{F}{\tiny $\beta_{-,T}$}
\psfrag{G}{\tiny $-T-T_0$}
\psfrag{H}{\tiny $T+T_0$}
\includegraphics[width=13cm]{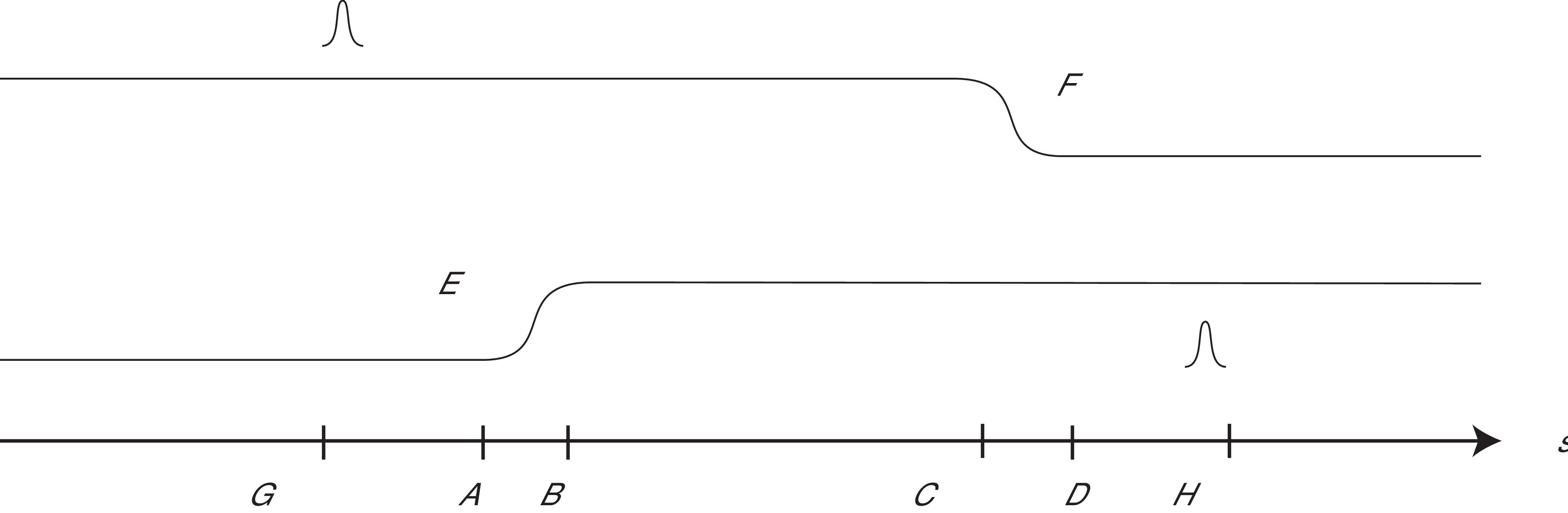}
\end{center}
\caption{The cutoff functions $\beta_{\pm,T}$.  The little bump functions indicate the support of $(E_\pm)_{(\F_\pm,u_{\pm,T})}$ } \label{fig: cutoff functions}
\end{figure}

\begin{defn}[Pregluing] \label{defn: pregluing} $\mbox{}$
\be
\item The {\em pregluing $u_*=u_*((\F_+,u_+)_{\R}, (\F_-,u_-)_{\R},T)$ of $(\F_+,u_+)_{\R} \in V_+/\R$ and $(\F_-,u_-)_{\R} \in V_-/\R$ with gluing parameter $T$} is defined as follows:\footnote{In the case of gluing $(\F_+,u_+)_{\R}$ and $(\F_-,u_-)_{\R}$, we have made a specific choice of $u^*$.  In general the pregluing is unique only up to $\R$-translation.} Let $F^\circ_+:=\dot F_+ -\mathcal{E}_{+,T}$ and $F^\circ_-:=\dot F_- -\mathcal{E}_{-,T}$.  The domain $(\dot F =F-{\bf p},j)$ is then obtained by gluing $(F^\circ_+,j_+|_{F^\circ_+})$, $(F^\circ_-,j_-|_{F^\circ_-})$, and $A_{[-T,T]}$ in the expected way. The map $u^*$ agrees with $u_{+,T}$ on $F^\circ_+$ and with $u_{-,T}$ on $F^\circ_-$, and is given by
\begin{equation}\label{eqn: pregluing}
u_*(s,t)= (s,t, \beta_{+,T}(s)\eta_{+,T}(s,t)+\beta_{-,T}(s)\eta_{-,T}(s,t) )
\end{equation}
on $A_{[-T,T]}$.

\item Given $(\F_i,u_i)\in V_i/\R$, $i=1,\dots,m$, with a prescribed identification of the ends and gluing parameters $T_1,\dots,T_{m-1}$, we can analogously define a pregluing $u^*((\F_1,u_1)_{\R},\dots,  (\F_m,u_m)_{\R},T_1,\dots,T_{m-1})$ which is unique up to $\R$-translation.
\ee
\end{defn}

\cb

\subsection{Gluing of domain complex structures} \label{subsection: domain complex structures}

We consider the following domain gluing, which is close to (but not quite the same as) the gluing of the domains in Section~\ref{subsection:pregluing}.  The reason for the slight discrepancy is that each of $F^\circ_+$ and $F^\circ_-$ is obtained from $\dot F_+$ and $\dot F_-$ by removing a cusp that is close to but not necessarily the same as a $\widetilde \varepsilon_\pm$-thin cusp for some $\widetilde \varepsilon_\pm$.

Fix $\dot F_-$, $\dot F_+$, and $\varepsilon''>0$ small. We are gluing $\dot F_-$ and $\dot F_+$ along one ``positive end'' of $\dot F_-$ and one ``negative end'' of $\dot F_+$. Let $\widetilde{\mathcal{U}}_\pm$ be a Teichm\"uller slice for a small open set $\mathcal{U}_\pm\subset \op{Teich}(\dot F_\pm)$ and let $g(j_\pm)$ be the complete finite-volume hyperbolic metric compatible with $j_\pm\in \widetilde{\mathcal{U}}_\pm$. We also fix $j^0_\pm\in \widetilde{\mathcal{U}}$. Let $\mathcal{E}_\pm^{\varepsilon''}(j_\pm)$ be the (interior of the) $\varepsilon''$-thin component of $g(j_\pm)$ corresponding to the end that is glued and let $F_\pm^\bullet(j_\pm):= \dot F_\pm - \mathcal{E}_\pm^{\varepsilon''}(j_\pm)$.  Also choose $r_\pm(j_\pm)\in \bdry F_\pm^\bullet(j_\pm)$ smoothly with respect to $j_\pm$.

Given $(T,\tau)\in [R,\infty)\times \R$, we glue
$$(F_-^\bullet(j_-),j_-) \cup ([0,T]\times \R/\Z,j_{std}) \cup (F_+^\bullet(j_+),j_+)$$
so that $r_-(j_-)$ is identified with $(0,0)\in [0,T]\times \R/\Z$ and $r_+(j_+)$ is identified with $(T,\tau)\in [0,T]\times \R/\Z$.
We denote the resulting Riemann surface by $(\dot F(j_-,j_+,T,\tau),j(j_-,j_+,T,\tau))$. We also write $\dot F=\dot F(j_-^0,j_+^0,R,0)$; this is our reference surface.  (Note that when $(T,\tau_1), (T,\tau_2)\in [R,\infty)\times \R$ and $\tau_2-\tau_1\in \Z$, the complex structures are diffeomorphic, but differ by a Dehn twist about a separating surface and correspond to distinct points in Teichm\"uller space.)

This defines a map
$$\kappa:\widetilde{\mathcal{U}}_-\times \widetilde{\mathcal{U}}_+\times[R,\infty)\times \R\to \op{Teich}(\dot F),$$
$$(j_-,j_+,T,\tau)\mapsto (\dot F(j_-,j_+,T,\tau),j(j_-,j_+,T,\tau)).$$

\begin{thm} \label{thm: Kazdan-Warner}
For $R\gg 0$, $\kappa$ is a $C^1$-embedding.
\end{thm}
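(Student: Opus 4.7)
The map $\kappa$ furnishes ``plumbing-type'' coordinates on $\op{Teich}(\dot F)$ near the codimension-two stratum of the Deligne--Mumford compactification obtained by pinching the separating curve that corresponds to the glued end. A Riemann--Hurwitz/Euler characteristic computation matches the dimensions: if $\dot F_\pm$ has signature $(g_\pm,n_\pm)$, then the glued surface has signature $(g_++g_-, n_++n_--2)$, so
\[
\dim \op{Teich}(\dot F) \;=\; (6g_+ -6+2n_+)+(6g_- -6+2n_-)+2 \;=\; \dim \widetilde{\mathcal{U}}_-+\dim \widetilde{\mathcal{U}}_+ + 2.
\]
Hence it suffices to prove that $\kappa$ is an immersion and is injective.

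\textbf{$C^1$ regularity via Kazdan--Warner.} For each parameter $\sigma=(j_-,j_+,T,\tau)$ I build a reference metric $g_\sigma$ on $\dot F(\sigma)$ by
(i) taking the hyperbolic metric $g(j_\pm)$ on $F_\pm^\bullet(j_\pm)$, truncated slightly inside the $\varepsilon''$-thin region;
(ii) placing the standard flat cylinder metric on $[0,T]\times \R/\Z$; and
(iii) interpolating smoothly on two collars of fixed width independent of $T$. Then $g_\sigma$ is conformally equivalent to $j(\sigma)$, is hyperbolic outside the collars, and depends smoothly on $\sigma$. Kazdan--Warner's prescribed curvature theorem yields a unique conformal factor $e^{2\phi_\sigma}g_\sigma$ which is the complete finite-volume hyperbolic metric in the conformal class of $j(\sigma)$. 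Using weighted Sobolev spaces adapted to the cusps of $g(j_\pm)$ (and carrying the weight $e^{\delta'|s|}$ on the central collar), the linearization $\Delta_{g_\sigma}-2$ is uniformly invertible in $\sigma$ for $T \geq R \gg 0$, and the implicit function theorem produces $\phi_\sigma$ depending in $C^1$ fashion on $\sigma$, with $\|\phi_\sigma\|_{C^0}=O(e^{-cT})$. Composing with the canonical projection to $\op{Teich}(\dot F)$ gives $\kappa \in C^1$.

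\textbf{Injectivity of $d\kappa$ and of $\kappa$.} The four blocks of tangent directions are nearly orthogonal in the Weil--Petersson sense for $T$ large:
\begin{enumerate}
\item $\partial_T\kappa$ is an infinitesimal Fenchel--Nielsen length deformation of the short geodesic $\gamma_T$ (whose length satisfies $\ell(\gamma_T)\sim \pi^2/T$ by Wolpert's plumbing asymptotics);
\item $\partial_\tau\kappa$ is the associated twist;
\item variations in $\delta j_\pm \in T_{j_\pm}\widetilde{\mathcal{U}}_\pm$ yield Beltrami differentials supported on the compact thick parts $F^\bullet_\pm(j_\pm)$.
\end{enumerate}
The length/twist block is nondegenerate (classical Fenchel--Nielsen coordinates), and the $\widetilde{\mathcal{U}}_\pm$-block is nondegenerate by assumption (these are Teichm\"uller slices). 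The off-diagonal blocks between the thick-part deformations and the collar deformations are of order $O(e^{-cT})$ since the deformations have support in disjoint regions up to exponentially small tails. Hence $d\kappa$ is invertible once $R$ is sufficiently large. For global injectivity, suppose $\kappa(\sigma)=\kappa(\sigma')$ in $\op{Teich}(\dot F)$; then the marking of $\dot F$ selects a distinguished homotopy class of simple closed curve in the neck, whose unique geodesic representative has length monotone in $T$ (again $\ell\sim \pi^2/T$), forcing $T=T'$, then $\tau=\tau'$ via the twist parameter, and finally $j_\pm=j'_\pm$ by cutting along the geodesic and using that the asymptotic cusp normalization together with the slices $\widetilde{\mathcal{U}}_\pm$ pins down the complex structure on each piece.

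\textbf{Main obstacle.} The substantive point is obtaining $\sigma$-uniform estimates for the linearized Kazdan--Warner equation on a family of surfaces whose injectivity radius collapses to $0$ in the neck as $T\to \infty$. This is handled by working in weighted spaces whose weight degenerates at the same rate as the injectivity radius, so that the normalized neck looks uniformly like a fixed model hyperbolic cylinder. Once this uniform Fredholm setup is in place, the $C^1$ statement and the off-diagonal exponential decay in the $d\kappa$ computation both follow from standard elliptic estimates and the Wolpert plumbing formulas; it is essentially this step that earns the theorem its name.
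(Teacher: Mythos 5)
Your proposal matches the paper's proof sketch essentially step for step: interpolate the hyperbolic metrics on $F^\bullet_\pm(j_\pm)$ with a flat neck to produce a negatively curved reference metric conformal to $j(j_-,j_+,T,\tau)$, apply Kazdan--Warner via the inverse/implicit function theorem (with invertibility of the linearization supplied by negative curvature) to obtain the hyperbolic metric in a $C^1$-family, and deduce the embedding property by comparing with Fenchel--Nielsen coordinates. You spell out the weighted-space setup, the Wolpert plumbing asymptotics, and the $d\kappa$ block decomposition more explicitly than the paper's ``Idea of proof,'' but the route is the same.
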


\begin{proof}[Idea of proof.]
This follows from Kazdan-Warner~\cite{KW1,KW2}. For $R\gg 0$, we can down a metric with negative curvature on $\dot F(j_-,j_+,T,\tau)$ which is conformally equivalent to $j(j_-,j_+,T,\tau)$, agrees with $g(j_\pm)$ on $F_\pm^\bullet(j_\pm)$, and whose curvature is very close to $-1$ on $[0,T]\times \R/\Z$. In order to find a hyperbolic metric in the same conformal class we solve an equation of type $\Delta u = 1-he^u$, where $u:\dot F\to \R$, $\Delta$ is a Laplacian, and $h:\dot F\to \R$ is close to $1$ (and approaches $1$ as $R\to \infty$); see \cite[Section 2]{KW1}.  When $h$ is close to $1$, this can be solved using the inverse function theorem; see \cite{KW2}.  We remark that negative curvature implies that the linearized operator is invertible and also that solving the equation can be done in a differentiable family.

The above discussion implies that the Fenchel-Nielsen coordinates for $j_\pm\in \widetilde{\mathcal{U}}_\pm$ are very close to the corresponding Fenchel-Nielsen coordinates for $\kappa(j_-,j_+,T,\tau)\in \op{Teich}(\dot F)$ when $R\gg 0$, implying the theorem.
\end{proof}

\subsection{Banach spaces} \label{subsection: banach spaces}

The function spaces that we use are {\em Morrey spaces}, following \cite[Section 5.5]{HT2}.\footnote{This is rather nonstandard and we chose to adopt it to avoid redoing some work in \cite{HT2} for $W^{k,p}$-spaces.} Let $u:\dot F\to \R\times M$ be a finite energy holomorphic curve. On $\dot F$ we choose a Riemannian metric so that the ends are cylindrical and  we use the $\R$-invariant Riemannian metric on $\R\times M$.

The {\em Morrey space} $\mathcal{H}_0(\dot F, \wedge^{0,1} T^*\dot F\otimes  u^*T(\R\times M))$ is the Banach space which is the completion of the compactly supported sections of $\wedge^{0,1} T^*\dot F\otimes u^*T(\R\times M)$ with respect to the norm
$$\| \xi\|= \left( \int_{\dot F} |\xi|^2  \right)^{1/2} + \left( \sup_{x\in \dot F} \sup_{\rho\in(0,1]} \rho^{-1/2} \int_{B_\rho(x)} |\xi|^2  \right)^{1/2},$$
where $B_\rho(x)\subset \dot F$ is the ball of radius $\rho$ about $x$.  Similarly, $\mathcal{H}_1(\dot F, u^*T(\R\times M))$ is the completion of the compactly supported sections of $u^*T(\R\times M)$ with respect to $$\| \xi \|_*= \|\nabla \xi\| + \|\xi\|.$$

The analog of the usual Sobolev embedding theorem is the following:

\begin{lemma} \label{lemma: sobolev embedding}
There is a bounded linear map
$$\mathcal{H}_1(\dot F, u^*T(\R\times M))\to C^{0,1/4}(\dot F, u^*T(\R\times M))\cap L^\infty(\dot F, u^*T(\R\times M)), \quad \xi\mapsto \xi,$$
where $C^{0,1/4}$ denotes the space of H\"older continuous functions with exponent $\frac{1}{4}$.
\end{lemma}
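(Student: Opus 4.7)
The plan is to prove this as a standard Morrey--Campanato/Dirichlet-growth embedding, adapted to the asymptotically cylindrical domain $\dot F$ and the pulled-back bundle $u^*T(\R\times M)$. The key input, built into the definition of $\mathcal{H}_1$, is the Morrey-type growth
\[
\int_{B_\rho(x)} |\nabla \xi|^2 \;\leq\; \|\nabla \xi\|^2 \,\rho^{1/2}
\qquad \text{for all } x\in \dot F,\; \rho\in(0,1],
\]
which is exactly the Dirichlet growth with exponent $2\alpha = \tfrac12$, and should yield H\"older continuity of exponent $\alpha = \tfrac14$. To make this precise globally, I would first fix a covering of $\dot F$ by coordinate balls of bounded geometry: finitely many charts on a compact piece, together with shifts by integer translations along each cylindrical end (where $u^*T(\R\times M)$ is asymptotically isomorphic to the trivial $\R^{2n+2}$-bundle over the cylinder). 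In each such chart the metric on $\dot F$ and the connection on the bundle are uniformly comparable to the flat models, so all the estimates below carry universal constants independent of the chart.

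Next I would carry out the standard Campanato iteration in a single chart. For $0<\rho\leq 1$ and $x$ in the chart, let $\xi_\rho := \fint_{B_\rho(x)} \xi$. The Poincar\'e inequality on $B_\rho(x)$ gives
\[
\int_{B_\rho(x)} |\xi - \xi_\rho|^2 \;\leq\; C\rho^2 \int_{B_\rho(x)} |\nabla \xi|^2 \;\leq\; C \|\nabla\xi\|^2 \,\rho^{5/2},
\]
and comparing $\xi_\rho$ with $\xi_{\rho/2}$ via Cauchy--Schwarz yields $|\xi_{\rho/2} - \xi_\rho| \leq C\|\nabla\xi\|\,\rho^{1/4}$. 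Summing the resulting geometric series over dyadic radii $\rho = 2^{-k}$ shows that $\xi_{2^{-k}}(x)$ converges to a limit $\xi^*(x)$, and the standard two-ball overlap argument (comparing $\xi^*(x)$ and $\xi^*(y)$ by taking $\rho \sim d(x,y)$) produces
\[
|\xi^*(x) - \xi^*(y)| \;\leq\; C\,\|\nabla\xi\|\,d(x,y)^{1/4}, \qquad d(x,y) \leq \tfrac12.
\]
Lebesgue differentiation identifies $\xi^* = \xi$ almost everywhere, and $\xi^*$ is the desired continuous representative.

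To promote this local H\"older estimate to a bounded map on all of $\dot F$, I need an $L^\infty$ bound, which handles the case $d(x,y) \geq \tfrac12$. Taking $\rho=1$ in the Campanato iteration gives $|\xi^*(x) - \xi_1(x)| \leq C\|\nabla \xi\|$, while Cauchy--Schwarz plus the uniform lower bound $\operatorname{vol}(B_1(x)) \geq c_0 > 0$ (by bounded geometry) gives $|\xi_1(x)| \leq C\|\xi\|_{L^2(\dot F)} \leq C\|\xi\|$. Combining yields $\|\xi^*\|_{L^\infty(\dot F)} \leq C\|\xi\|_*$, and hence the full H\"older bound $|\xi^*(x)-\xi^*(y)| \leq C \|\xi\|_* \, d(x,y)^{1/4}$ for all $x,y$.

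The only step requiring care is the transition from local to global constants, and that is precisely what the cylindrical-end structure delivers: along $[0,\infty)\times S^1$ the metric is translation-invariant and the bundle asymptotically trivial, so the Poincar\'e constant, volume lower bound, and chart diameters can all be chosen uniformly. I expect this bookkeeping (rather than the Campanato estimate itself) to be the main technical obstacle, but it is entirely routine given the asymptotically cylindrical setup already fixed in Section~\ref{subsection: Fredholm setup}.
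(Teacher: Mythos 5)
Your proposal is correct. The paper does not actually supply a proof of this lemma --- it states it as ``the analog of the usual Sobolev embedding theorem'' immediately after introducing the Morrey spaces, effectively deferring to the reference \cite[Section 5.5]{HT2} where these spaces are set up --- and the Campanato/Dirichlet-growth argument you give (Poincar\'e on balls, dyadic telescoping, two-ball overlap, $L^\infty$ bound from the $\rho=1$ case plus bounded geometry of the cylindrical ends) is exactly the standard route to such a Morrey embedding, with the dimension-$2$ exponent bookkeeping $\alpha=(\lambda-n)/2=(5/2-2)/2=1/4$ carried out correctly. The only point worth flagging is that since $\xi$ is a bundle section, the Poincar\'e step should be run in a local trivialization where $d\xi=\nabla\xi-A\xi$ picks up a zeroth-order term $A\xi$; this is absorbed using $\int_{B_\rho}|\xi|^2\leq\|\xi\|^2\rho^{1/2}$ and the uniform bound on $A$ from bounded geometry, which is why the final estimate is stated in terms of $\|\xi\|_*$ rather than $\|\nabla\xi\|$ alone --- you already phrase the conclusion this way, so no correction is needed, just a note that the intermediate display quietly uses this.
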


We also define the weighted Morrey spaces $\mathcal{H}_{i,\delta}(*)=\mathcal{H}_{0,\delta}(\dot F_\pm, \wedge^{0,1} T^*\dot F_\pm\otimes  (u_\pm)^*T(\R\times M))$ and $\mathcal{H}_{1,\delta}(\dot F_\pm, (u_\pm)^*T(\R\times M))$ with weight $\delta>0$ as the space of sections $\psi$ such that $\psi\cdot f_\delta^\pm\in \mathcal{H}_i (*)$. Here $f_\delta^\pm: \R\times M\to \R^+$ is the weight function for $u_\pm$ which agrees with $e^{\delta |s|}$ at the ends and is normalized so that $f_\delta^\pm(s,x)= e^{\delta|s|}$ for $\mp s\geq 0$. Their norms are denoted by $\|\cdot \|_\delta$ and $\|\cdot\|_{*,\delta}$.  We will also write $\|\cdot\|_\delta^{\pm,T}$ and $\|\cdot\|_{*,\delta}^{\pm,T}$ with respect to the translates $f_\delta^{\pm,T}$.

\subsection{The setup}\label{subsection: gluing}

Let $\psi_{\pm}$ be sections of $u_{\pm}^*T(\R\times M)$ and let $\psi_{\pm,T}=\tau_{\pm (T+T_0)}\circ \psi_\pm$ be sections of $u^*_{\pm,T} T(\R\times M)$.  The exponential maps $\exp_{u_{\pm,T}}$ can be chosen such that
$$\exp_{u_{\pm,T}}\psi_{\pm,T}= (s,t,\eta_{\pm,T})+\psi_{\pm,T}$$
on $\mathcal{E}_{\pm,T+T_0}$, by taking the Riemannian metric $g_0$ on $\R\times M$ to be $\R$-invariant and equal to the standard flat metric on $\R\times\R/\mathcal{A}_\alpha(\gamma_1)\Z \times D$.

We then deform $u_*$ to
\begin{equation}\label{equation: deform u*}
u=\exp_{u_*} (\beta_{+,T}\psi_{+,T}+\beta_{-,T}\psi_{-,T}),
\end{equation}
where $\exp_{u_*}$ agrees with $\exp_{u_{\pm,T}}$ on $\dot F_\pm -\mathcal{E}_{\pm,T+T_0}$ and with
$$\exp_{u_*} (\beta_{+,T}\psi_{+,T}+\beta_{-,T}\psi_{-,T})=(s,t,\eta_*)+\beta_{+,T}\psi_{+,T}+\beta_{-,T}\psi_{-,T}$$
on $A_{[-(T+T_0), T+T_0]}$.
Here $\eta_*=\beta_{+,T}\eta_{+,T} +\beta_{-,T}\eta_{-,T}$.  We want to solve for $\psi_+$ and $\psi_-$ in the equation
\begin{equation}
\overline{\partial}u\in E'_{(\F,u)},
\end{equation}
with $(\F,u)=(F,j,{\bf p}, u)$.  Here $J$ is understood and $E'$ is as given in Equation~(6.3.1).


We identify
$$\Phi=\Phi_T:\wedge^{0,1}u^*  T(\mathbb R\times M)\xrightarrow\sim\wedge^{0,1}(u_*)^*T(\R\times M)$$
as follows: away from the neck region $A_{[-(T+2T_0), T+2T_0]}$ we parallel transport using a complex linear connection induced by the Levi-Civita connection of an adapted Riemannian metric (see \cite[p.39]{MS}); on $A_{[-(T+T_0),T+T_0]}$ we map $\eta\otimes (ds-idt)\mapsto \eta\otimes (ds-idt)$; and on $A_{[-(T+2T_0), -(T+T_0)]}$ and $A_{[T+T_0,T+2T_0]}$ we interpolate between the two identifications.
Similarly, we identify
$$\Phi_\pm=\Phi_{\pm}^{\psi_{\pm,T}}: \wedge^{0,1}(\exp_{u_{\pm,T}}\psi_{\pm,T})^{*}T(\R\times M)\xrightarrow\sim\wedge^{0,1}u_{\pm,T}^{*}T(\R\times M).$$

The equation $\overline{\partial} u\in E'_{(\F,u)}$, or more precisely the equation
\begin{equation}\label{d-bar2}
{\Phi} \overline{\partial} u\in {\Phi } E'_{(\F,u)},
\end{equation}
can then be written as
\begin{equation} \label{eqn: d-bar rewritten}
\beta_{-,T} \left(D_{-,T} \psi_{-,T} + e_{-,T} +\mathcal{L}_- +\mathcal{R}_-\right)+\beta_{+,T}\left(D_{+,T}\psi_{+,T} + e_{+,T}+\mathcal{L}_+ +\mathcal{R}_+\right)\in {\Phi} E_{(\mathcal F, u)}',
\end{equation}
where $D_\pm$ (resp.\ $D_{\pm,T}$) is the linearization of $\overline{\partial}$ for $u_{\pm}$ (resp.\ $u_{\pm,T}$), $e_\pm:=\overline{\partial} u_\pm$ (resp.\ $e_{\pm,T}:=\overline{\partial} u_{\pm,T}$), and $\mathcal{L}_\pm$ and $\mathcal{R}_\pm$ are described below. (The descriptions of $\mathcal{L}_\pm$ and $\mathcal{R}_\pm$ on $A_{[-T,T]}$ are obtained by expanding $\tfrac{\bdry u}{\bdry s} +J(u) \tfrac{\bdry u}{\bdry t}$, where we write
$$u(s,t)=\begin{pmatrix} s \\ t \\ \beta_{+,T}\eta_{+,T} +\beta_{-,T}\eta_{-,T}\end{pmatrix}+\beta_{+,T}\psi_{+,T}+\beta_{-,T}\psi_{-,T}$$
and
$$J(u)=
\begin{pmatrix}
0 & -1 & 0\\
1 & 0 & 0\\
X\eta' & -j_0 X\eta'  & j_0
\end{pmatrix},$$
where:
\begin{itemize}
\item we are using simple coordinates $(s,t,(x,y))$ written as column vectors;
\item $\eta'= \beta_{+,T}(\eta_{+,T}+\psi_{+,T}^N) +\beta_{-,T}(\eta_{-,T}+\psi_{-,T}^N)$;
\item $X$ is the constant matrix defined in the paragraph after Definition 3.1.1;
\item $j_0$ is the standard complex structure on $\mathbb R^{2n}$;
\item $\psi^N_{\pm,T}$ is the $(x,y)$-component of $\psi_{\pm,T}$.
\end{itemize}
The details are left to the reader.)


\s\n
1. $\mathcal L_\pm=\mathcal L_\pm(\psi_{-,T},\psi_{+,T})$ has support on $A_{[-T,T]}$ and is given by
\begin{align} \label{equation: mathcal L}
2\mathcal L_\pm= &
\left(
\begin{array}{ccc}
0\\0\\ \frac{\partial \beta_{\mp,T}}{\partial s}\eta_{\mp,T}
\end{array}
\right)+\frac{\partial \beta_{\mp,T}}{\partial s}\psi_{\mp,T}\\
\nonumber &
+\widetilde\beta_{\pm,T}\beta_{\mp,T}\left(
\begin{array}{ccc}
0 & 0 & 0\\
0 & 0 & 0 \\
 X \eta_{\pm,T} &  -j_0 X \eta_{\pm,T} & 0
\end{array}
\right)\frac{\partial \psi_{\mp,T}}{\partial t}
\\
\nonumber &+\widetilde\beta_{\pm,T}\beta_{\mp,T}\left(
\begin{array}{ccc}
0 & 0 & 0\\
0 & 0 & 0 \\
 X \psi^N_{\pm,T} &  -j_0 X \psi^N_{\pm,T} & 0
\end{array}
\right)\frac{\partial \psi_{\mp,T}}{\partial t}\\
\nonumber &+
\widetilde\beta_{\pm,T}(\beta_{\pm,T}-1)\left(
\begin{array}{ccc}
0 & 0 & 0\\
0 & 0 & 0 \\
X\eta_{\pm,T} & -j_0X\eta_{\pm,T} & 0
\end{array}
\right)
\frac{\partial \psi_{\pm,T}}{\partial t}.
\end{align}
Here:
\begin{itemize}
\item the factor $\otimes (ds-idt)$ is omitted from each term;
\item $\widetilde\beta_{-,T}(s)=\beta(\frac{T+hr-s}{hr})$ and $\widetilde\beta_{+,T}(s)=\beta(\frac{T+hr+s}{hr})$;
\end{itemize}

\begin{terminology} $\mbox{}$
\begin{enumerate}
\item We say that a function $F(\psi)$ is ``linear''  if there exists a constant $c>0$ such that $|F(\psi)(x)|<c|\psi(x)|$ at every point $x$ of the domain of $\psi$.  We will use the shorthand ${\mathfrak l}(\psi)$ to denote an unspecified ``linear" map.
\item Following \cite[Definition~5.1]{HT2} we say that $F(\psi)$ is {\em type $1$ quadratic} if it can be written as
$$F(\psi)=P(\psi) + Q(\psi)\cdot \nabla\psi,$$
where there exists a constant $c>0$ such that $|P(\psi)(x)|\leq c|\psi(x)|^2$ and $|Q(\psi)(x)|\leq c|\psi(x)|$ at every point $x$ of the domain of $\psi$.
\end{enumerate}
\end{terminology}

\s\n
2. The remaining terms of ${\Phi} \overline{\partial}u$ are grouped into $\beta_-\mathcal{R}_-$ and $\beta_+\mathcal{R}_+$.
One can see that $\mathcal{R}_{\pm}= \mathcal{R}_{\pm} (\psi_{\pm,T})$ is type $1$ quadratic. On $A_{[-T,T]}$, we can explicitly write
\begin{equation} \label{equation: mathcal R}
\mathcal R_\pm=\frac{1}{2} \beta_{\pm,T}
\left(
\begin{array}{ccc}
0 & 0 & 0\\
0 & 0 & 0\\
 X\psi_{\pm,T}^N & - j_0X\psi_{\pm,T}^N & 0
\end{array}
\right)
\frac{\partial \psi_{\pm,T}}{\partial t}.
\end{equation}

\s

We also write $E'_{(\F,u)}=E'_{\psi_\pm,T}$ and decompose
$$E_{\psi_\pm,T}'=(E'_{\psi_\pm,T})_+\oplus (E'_{\psi_\pm,T})_-,$$
where $(E'_{\psi_\pm,T})_\pm$ corresponds to $u_{\pm,T}$. In order to solve Equation \eqref{eqn: d-bar rewritten} we solve for $(\psi_-,\psi_+)$ in the pair of equations
\begin{equation} \label{theta+-}
D_{\pm,T} \psi_{\pm,T} + e_{\pm,T} + \mathcal{L}_{\pm}(\psi_{-,T},\psi_{+,T}) + \mathcal{R}_{\pm}(\psi_{\pm,T})\in \Phi_{\pm}^{\psi_{\pm,T}} (E'_{\psi_\pm,T})_\pm.
\end{equation}
Let
\begin{equation}\label{eqn: orthogonal projection}
\Pi_\pm^{\psi_\pm,T}:\mathcal{H}_{0,\delta}(\dot F, \wedge^{0,1}u_{\pm,T}^{*}T(\R\times M))\to  (\Phi_\pm^{\psi_{\pm,T}} (E'_{\psi_\pm,T})_\pm)^\perp
\end{equation}
be the projection onto the $L^2$-orthogonal complement of $\Phi_\pm^{\psi_{\pm,T}} (E'_{\psi_\pm,T})_\pm$.
Then Equation~\eqref{theta+-} is equivalent to
\begin{equation} \label{Theta pm}
\Theta_\pm(\psi_-,\psi_+) :=\Pi_\pm^{\psi_\pm,T}(D_{\pm,T} \psi_{\pm,T} + e_{\pm,T} + \mathcal L_\pm(\psi_{-,T},\psi_{+,T}) + \mathcal{R}_\pm(\psi_{\pm,T}))=0.
\end{equation}

Since $D_{\pm,T}$ is transverse to $E_\pm$ and $\Phi_{\pm}^{\psi_{\pm,T}} (E'_{\psi_\pm,T})_\pm$ is close to $E_\pm$ when $T\gg 0$ and $\|\psi_\pm\|_{*,\delta}$ is small, it follows that $D_{\pm,T}$ is transverse to $\Phi_{\pm}^{\psi_{\pm,T}} (E'_{\psi_\pm,T})_\pm$, i.e.,
$$\Pi_\pm^{\psi_\pm,T} D_{\pm,T}: \mathcal{H}_{1,\delta}(\dot F, u_{\pm,T}^{*}T(\R\times M))\to (\Phi_\pm^{\psi_{\pm,T}} (E'_{\psi_\pm,T})_\pm)^\perp$$
is surjective.

\subsection{Definition of gluing map} \label{subsection: defn of gluing map}

In this subsection we make some estimates and define the gluing map at the end.  We use the convention that constants such as $C$ may change from line to line when making estimates.

Let $\lambda= \min \{\lambda_1, |\lambda_{-1}|\}$. Suppose that $\delta$ satisfies $0<100\cdot\delta<\lambda$.  Let $\mathcal{H}_\pm =\mathcal{H}_{1,\delta}(\dot F,u_\pm^{*}T(\R\times M))$, let $\mathcal{H}_\pm^{\perp,\psi_{\pm},T}$ be the $L^2$-orthogonal complement of
$$\ker \left( \Pi_\pm^{\psi_\pm,T}\circ D_{\pm,T}\circ \tau_{\pm(T+T_0)}\right)$$
in $\mathcal{H}_{\pm}$, where $\tau_{\pm(T+T_0)}$ is the map induced by $\tau_{\pm(T+T_0)}$, and let $\mathcal{B}_{\pm}$ be the closed ball of radius $\widetilde\varepsilon$ in $\mathcal{H}_{\pm}$ centered at $0$.


The following lemma is modeled on \cite[Proposition 5.6]{HT2}, but the estimates are slightly different.

\begin{lemma} \label{azalea}
There exist $r\gg 0$ and $C,\widetilde\varepsilon>0$ such that for $T\gg 0$ and $\|e_+\|_{*,\delta}$ small the following holds:
\begin{enumerate}
\item There is a map $P_+:\mathcal{B}_-\to \mathcal{H}_+$ such that $\Theta_+(\psi_-,P_+(\psi_-))=0$ and $\psi_+=P_+(\psi_-)\in \mathcal{H}_+^{\perp,\psi_+,T}$.
\item $\| P_+(\psi_-)\|_{*,\delta}\leq C (\|e_{+}\|_{*,\delta}+ r^{-1/2}e^{-2(\lambda-\delta) T}+(e^{-2\delta T}+r^{-1})\|\psi_-\|_{*,\delta})$.
\item $\| P_+(\psi_-) - P_+(\psi'_-)\|_{*,\delta} \leq C \| \psi_- - \psi_-'\|_{*,\delta} (e^{-2\delta T} + r^{-1}  +\widetilde\varepsilon)$.
\end{enumerate}
\end{lemma}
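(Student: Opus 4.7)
The plan is to treat Equation~\eqref{Theta pm} as an equation in $\psi_+$ with $\psi_-$ a parameter and solve it by a Banach contraction mapping argument in the spirit of \cite[Proposition~5.6]{HT2}; the size bound (2) and the Lipschitz estimate (3) will fall out of the a priori estimates used to verify contraction.

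First I would produce a right inverse
\[
Q_+ \colon (\Phi_+^{\psi_{+,T}}(E'_{\psi_\pm,T})_+)^\perp \to \mathcal{H}_+^{\perp,\psi_+,T}
\]
of $\Pi_+^{\psi_+,T}\circ D_{+,T}\circ \tau_{T+T_0}$ whose operator norm is bounded by a constant $C$ independent of $T\gg 0$ and of $\psi_+\in \mathcal{B}_+$. In the limit $T=\infty$, $\psi_+=0$ this is precisely the $\overline{\bdry}_J$-transversality of $E_+$ given by Theorem~\ref{thm: construction of L-transverse subbundle}. The point is that after the translation $\tau_{T+T_0}$, the subspace $(E'_{\psi_\pm,T})_+$ is, by its definition in \eqref{equation: defn of E prime} and the $\R$-equivariance built into Theorem~\ref{thm: construction of L-transverse subbundle}(2), a small perturbation of $E_+$ supported near $s=0$. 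A standard patching argument then transfers the Fredholm right inverse at $T=\infty$ to finite $T$, uniformly in $\psi_+\in\mathcal{B}_+$.

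Next I would estimate the terms of $\mathcal L_+$ and $\mathcal R_+$ in the $\|\cdot\|_{*,\delta}^{+,T}$-norm. The inhomogeneous piece $(\partial\beta_{-,T}/\partial s)\eta_{-,T}$ in \eqref{equation: mathcal L} is supported on $s\in[T-hr,T]$, where the cutoff derivative has magnitude $(hr)^{-1}$ and $|\eta_{-,T}|\lesssim e^{-\lambda T}$ by Lemma~\ref{Dukernel}; integrating $|\cdot|^2 (f_\delta^{+,T})^2$ over an interval of length $hr$ and using $\lambda>\delta$ yields a contribution of order $r^{-1/2}e^{-2(\lambda-\delta)T}$. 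The term $(\partial\beta_{-,T}/\partial s)\psi_{-,T}$, after converting $\|\psi_{-,T}\|_{*,\delta}^{-,T}$ to $\|\psi_{-,T}\|_{*,\delta}^{+,T}$ (which costs $e^{-2\delta T}$ from the weight shift across the neck) and using Lemma~\ref{lemma: sobolev embedding} combined with the $(hr)^{-1}$ cutoff derivative, contributes $C(e^{-2\delta T}+r^{-1})\|\psi_-\|_{*,\delta}$. The remaining three lines of \eqref{equation: mathcal L} and the term $\mathcal R_+$ are type~1 quadratic in $(\psi_{-,T},\psi_{+,T})$ and so contribute $\lesssim \widetilde\varepsilon\|\psi_\pm\|_{*,\delta}$ with Lipschitz constant $\lesssim (\|\psi\|_{*,\delta}+\|\psi'\|_{*,\delta})$.

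With these ingredients, the equation $\Theta_+(\psi_-,\psi_+)=0$ becomes the fixed-point equation
\[
\psi_+ = \mathcal T_{\psi_-}(\psi_+) := -Q_+\,\Pi_+^{\psi_+,T}\bigl(e_{+,T}+\mathcal L_+(\psi_{-,T},\psi_{+,T})+\mathcal R_+(\psi_{+,T})\bigr)
\]
on $\mathcal H_+^{\perp,\psi_+,T}\cap \mathcal B_+$. For $r\gg 0$, $T\gg 0$, $\widetilde\varepsilon$ small, and $\|e_+\|_{*,\delta}$ small, Steps~1--2 combine to make $\mathcal T_{\psi_-}$ a $\tfrac12$-contraction of $\mathcal B_+$, producing a unique fixed point $\psi_+=P_+(\psi_-)$ and proving (1). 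The bound (2) is $\|P_+(\psi_-)\|_{*,\delta}\leq 2\|\mathcal T_{\psi_-}(0)\|_{*,\delta}$ together with the Step~2 estimates on the three error pieces. For (3) I would subtract the fixed-point equations for $\psi_-$ and $\psi_-'$, noting that only $\mathcal L_+$ depends on the parameter, and invoke the Lipschitz estimate on $\mathcal L_+$ from Step~2 plus the $\tfrac12$-contraction property. The principal obstacle is Step~1: ensuring $\|Q_+\|$ is uniform in $(T,\psi_+)$, which requires controlling the Grassmannian distance between $(E'_{\psi_\pm,T})_+$ and the limit $E_+$ by $\lesssim e^{-\delta T}+\|\psi_+\|_{*,\delta}$; once this is established, the rest of the argument is routine weighted Morrey-space estimation.
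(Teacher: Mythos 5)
Your proposal is correct and follows essentially the same route as the paper: the fixed-point map $\mathcal T_{\psi_-}$ is exactly the paper's $\mathcal I_+(\psi_-,\cdot)$ from Equation~\eqref{def of I}, your term-by-term estimates of $\mathcal L_+$ and $\mathcal R_+$ reproduce the paper's Claim~\ref{claim 1} (with the $r^{-1}$ and $e^{-2\delta T}$ contributions merely bundled), and parts (2) and (3) are obtained the same way, from the fixed-point identity and from subtracting the equations for $\psi_-$ and $\psi_-'$ (the paper's Claims~\ref{claim 2} and \ref{claim 3}). The uniform invertibility of $\Pi_+^{\psi_+,T}D_{+,T}$ that you isolate as the principal obstacle is handled in the paper by the brief observation that $\Phi_+^{\psi_{+,T}}(E'_{\psi_\pm,T})_+$ stays close to $E_+$ for $T\gg 0$ and $\|\psi_+\|_{*,\delta}$ small, which is the same point you make via the Grassmannian distance.
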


Let $\mathcal{I}_+: \mathcal{B}_-\times \mathcal{B}_+\to \mathcal{H}_+$ be the map given by:
\begin{align} \label{def of I}
\mathcal{I}_+(\psi_-,\psi_+) =&  -\tau_{+(T+T_0)}^{-1}(\Pi_+^{\psi_+,T}D_{+,T})^{-1}\Pi_+^{\psi_+,T}(e_{+,T}  + \mathcal L_+(\psi_{-,T},\psi_{+,T}) + \mathcal{R}_+(\psi_{+,T})).
\end{align}
Here $(\Pi_+^{\psi_+,T}D_{+,T})^{-1}$ is the bounded inverse of $\Pi_+^{\psi_+,T}D_{+,T}$.

\begin{proof}
(1) We are trying to solve for $\psi_+$ in
$$\Pi_+^{\psi_+,T} (D_{+,T} \psi_{+,T}+e_{+,T}+\mathcal L_+(\psi_{-,T},\psi_{+,T})+\mathcal R_+(\psi_{+,T}))=0.$$
Using the function $\mathcal{I}_+$ we define $P(\psi_-)$ as the unique fixed point, i.e., $\psi_+$ satisfying
\begin{equation} \label{fixed point}
\mathcal{I}_+(\psi_-,\psi_+)=\psi_+.
\end{equation}

The unique fixed point is guaranteed by the contraction mapping theorem, which follows from Claims~\ref{claim 1} and \ref{claim 2}.

\begin{claim} \label{claim 1}
\begin{align*}
\|\mathcal{I}_+(\psi_-,\psi_+)\|_{*,\delta}  \leq & (\widetilde C_1(\|\psi_+\|_{*,\delta})+\widetilde C_2(T))\|e_{+}\|_{*,\delta} \\
\nonumber & + \widetilde C(\|\psi_+\|_{*,\delta}) ( r^{-1/2}e^{-2(\lambda-\delta) T}+(e^{-2\delta T}+r^{-1})\|\psi_-\|_{*,\delta}\\
\nonumber &\quad  +\|\psi_-\|_{*,\delta}\|\psi_+\|_{*,\delta}  +e^{-2\lambda T}\|\psi_+\|_{*,\delta}+\|\psi_+\|^2_{*,\delta}),
\end{align*}
where $\widetilde C(\sharp)$, $\widetilde C_1(\sharp)$, $\widetilde C_2(\sharp)$ are continuous functions of $\sharp$;  $\widetilde C_1(\sharp)\to 0$ as $\sharp\to 0$; and $\widetilde C_2(\sharp)\to 0$ as $\sharp\to\infty$.
\end{claim}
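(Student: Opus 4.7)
The plan is to split $\mathcal{I}_+(\psi_-,\psi_+)$ according to the three summands appearing inside the parentheses in Equation~\eqref{def of I} and bound the projection-inverse image of each separately. The preliminary step is to establish a uniform operator bound $\|(\Pi_+^{\psi_+,T}D_{+,T})^{-1}\| \leq C$ on the bounded inverse, valid for all $T \gg 0$ and all $\psi_+$ with $\|\psi_+\|_{*,\delta}$ sufficiently small. This follows from the $\overline{\bdry}_J$-transversality of $E_+$ established in Theorem~\ref{thm: construction of L-transverse subbundle}, the closeness of $\Phi_+^{\psi_+,T}(E'_{\psi_\pm,T})_+$ to $(E_+)_{(\F_+,u_{+,T})}$ noted at the end of Section~\ref{subsection: gluing}, and the spectral gap assumption $0 < 100\delta < \lambda$; the template for such a uniform estimate is \cite[Lemma~5.5]{HT2}. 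The translation $\tau_{+(T+T_0)}^{-1}$ is an isometry between the weighted Morrey norms $\|\cdot\|_{*,\delta}^{+,T}$ and $\|\cdot\|_{*,\delta}$ and contributes no extra factor.

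The three projection estimates are then carried out as follows. For $\Pi_+^{\psi_+,T}\mathcal L_+$ the five summands in Equation~\eqref{equation: mathcal L} are treated one at a time: the first, supported on $[T-hr,T]$, combines the cutoff bound $|\partial_s\beta_{-,T}|\sim(hr)^{-1}$ with the exponential decay $|\eta_{-,T}|\lesssim e^{-2\lambda T}$ (since $\eta_-$ decays like $e^{-\lambda s}$ at its positive end and is evaluated at $s\sim 2T$) to yield $r^{-1/2}e^{-2(\lambda-\delta)T}$ after accounting for the weight $e^{\delta|s|}$ and the support length $hr$; the second, via Lemma~\ref{lemma: sobolev embedding}, produces $r^{-1}\|\psi_-\|_{*,\delta}$; the third uses the pointwise bound on $|\eta_{+,T}|$ on the overlap region together with the weight to produce $e^{-2\delta T}\|\psi_-\|_{*,\delta}$; the fourth is quadratic in $(\psi_-,\psi_+)$, giving $\|\psi_-\|_{*,\delta}\|\psi_+\|_{*,\delta}$; and the fifth, supported in $[-T-hr,-T+hr]$ where $|\eta_{+,T}|\lesssim e^{-2\lambda T}$, gives $e^{-2\lambda T}\|\psi_+\|_{*,\delta}$. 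The term $\Pi_+^{\psi_+,T}\mathcal R_+(\psi_{+,T})$ yields $\|\psi_+\|^2_{*,\delta}$ directly from its type~$1$ quadratic structure and Lemma~\ref{lemma: sobolev embedding}.

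The term $\Pi_+^{\psi_+,T} e_{+,T}$ is the most delicate and is responsible for the somewhat unusual coefficient $\widetilde C_1(\|\psi_+\|_{*,\delta}) + \widetilde C_2(T)$ in front of $\|e_+\|_{*,\delta}$. Since $(\F_+,u_+)\in V_+$, we have $e_+=\overline{\bdry}_J u_+\in (E_+)_{(\F_+,u_+)}$; and in the limiting situation $\psi_+ = 0$, $T = \infty$ the subspace $\Phi_+^{0,\infty}(E'_{0,\infty})_+$ coincides with $(E_+)_{(\F_+,u_{+,T})}$ by construction, via property~($\#$) in the proof of Theorem~\ref{thm: construction of L-transverse subbundle}, so the projection vanishes. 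The deviation at finite $T$ and nonzero $\psi_+$ splits into two independent contributions: $\widetilde C_1(\|\psi_+\|_{*,\delta})$ measures the $\psi_+$-dependence of the parallel-transport identification $\Phi_+^{\psi_+,T}$ and of the associated $L^2$-orthogonal projection, while $\widetilde C_2(T)$ measures how much the hyperbolic geometry of the glued surface (and hence the placement of the cutoffs in Equation~\eqref{f tilde} defining $E'$) differs from that of $\dot F_+$; Theorem~\ref{thm: Kazdan-Warner} ensures that the latter tends to zero as $T\to\infty$.

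The main obstacle is precisely this last term: to make the decomposition of the coefficient into $\widetilde C_1(\|\psi_+\|_{*,\delta}) + \widetilde C_2(T)$ rigorous, one must explicitly compare $\Phi_+^{\psi_+,T}(E'_{\psi_\pm,T})_+$ with $(E_+)_{(\F_+,u_{+,T})}$ and quantitatively track both the deformation-parameter dependence and the domain-geometry dependence of the cutoff eigenfunctions that define the two subspaces. The remaining summands reduce to pointwise estimates on cutoffs, exponential decay profiles of $\eta_\pm$, and Sobolev embeddings via Lemma~\ref{lemma: sobolev embedding}, which are routine once the uniform inverse estimate is in hand.
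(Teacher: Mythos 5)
Your proposal follows the paper's own route: bound the five summands of $\mathcal{L}_+$ (giving Estimate~\eqref{one}), bound $\mathcal{R}_+$ by its type~$1$ quadratic structure (giving Estimate~\eqref{two}), control the $L^2$-projection $\Pi_+^{\psi_+,T}$ and the inverse $(\Pi_+^{\psi_+,T}D_{+,T})^{-1}$ (via Estimate~\eqref{three} and Claim~\ref{claim: continuous}), and then treat the $e_{+,T}$ term separately (Estimate~\eqref{four}). The one place where you go beyond the paper is the $e_{+,T}$ term: the paper simply asserts Estimate~\eqref{four}, while you correctly identify that it rests on $e_{+,T}\in (E_+)_{(\F_+,u_{+,T})}$ (by $\R$-equivariance, Theorem~\ref{thm: construction of L-transverse subbundle}(2)) being nearly annihilated by $\Pi_+^{\psi_+,T}$, with $\widetilde C_1$ quantifying the $\psi_+$-deformation of the identification and projection and $\widetilde C_2$ quantifying the drift between the hyperbolic geometries of $\ddot F$ and $\ddot F_+$ as in Theorem~\ref{thm: Kazdan-Warner}; this is a useful elaboration of a step the paper leaves implicit. (One small imprecision: on the support $[T-hr,T]$ of $\partial_s\beta_{-,T}$ the weight $f_\delta^{+,T}$ contributes only a factor $\sim e^{\delta T_0}=O(1)$, not $e^{2\delta T}$, so the first summand of $\mathcal{L}_+$ is actually $\lesssim r^{-1/2}e^{-2\lambda T}$, of which the stated $r^{-1/2}e^{-2(\lambda-\delta)T}$ is a valid but non-sharp upper bound.)
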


\begin{proof}[Proof of Claim~\ref{claim 1}]
We will be using Lemma~\ref{lemma: sobolev embedding} (more precisely $|\zeta|_{C^0}\leq C \|\zeta\|_{*,\delta}$) several times without explicit mention.

We first obtain
\begin{align} \label{one}
\left\| \mathcal L_+(\psi_{-,T},\psi_{+,T}) \right\|_\delta^{+,T}\leq &  C(r^{-1/2}e^{-2(\lambda-\delta) T}+r^{-1}\|\psi_-\|_{*,\delta}+e^{-2\delta T}\|\psi_-\|_{*,\delta}\\
\nonumber & \quad +\|\psi_-\|_{*,\delta}\|\psi_+\|_{*,\delta} +e^{-2\lambda T}\|\psi_+\|_{*,\delta}),
\end{align}
where each term on the right-hand side corresponds to the terms on the right-hand side in Equation~\eqref{equation: mathcal L}.  The first term has bound
\begin{align*}
\| \tfrac{\bdry \beta_{-,T}}{\bdry s}& \eta_{-,T}\|_\delta^{+,T}  = \| \tfrac{\bdry \beta_{-,T}}{\bdry s} \eta_{-,T} f_\delta^{+,T}\|
=\| \tfrac{\bdry \beta_{-,T}}{\bdry s} \eta_{-,T} e^{-\delta(s-(T+T_0))}\|  \\
& \leq C(r^{-1}\cdot e^{-\lambda\cdot 2T}\cdot e^{\delta\cdot 2T_0}\cdot \sqrt{hr}\cdot  + r^{-1}\cdot e^{-\lambda \cdot 2T}\cdot e^{\delta\cdot 2T_0})\\
& \leq C(r^{-1/2}e^{-2(\lambda-\delta) T}),
\end{align*}
where we are using the bound $|\frac{\bdry\beta_-}{\bdry s}|< cr^{-1}$ for some $c>0$ and exponential decay bounds for $\eta_-$. The second term has bound
\begin{align*}
\|\tfrac{\bdry\beta_{-,T}}{\bdry s}\psi_{-,T} \cdot f_\delta^{+,T}\|& = \|\tfrac{\bdry\beta_{-,T}}{\bdry s}\psi_{-,T} \cdot e^{-\delta (s-(T+T_0))} \| \leq\|\tfrac{\bdry\beta_{-,T}}{\bdry s}\psi_{-,T} \cdot e^{2\delta T}\| \\
& \leq Cr^{-1} \|\psi_-\|_\delta\leq Cr^{-1}\|\psi_-\|_{*,\delta}.
\end{align*}
The third term has bound
\begin{align*}
C\|\widetilde\beta_{+,T}\beta_{-,T} & e^{\lambda (s-(T+T_0))}\tfrac{\bdry \psi_{-,T}}{\bdry t} e^{-\delta (s-(T+T_0))}\| \\
& = C \|\widetilde\beta_{+,T}\beta_{-,T} e^{ (\lambda-2\delta)s-\lambda(T+T_0)}  \tfrac{\bdry \psi_{-,T}}{\bdry t}e^{\delta( s+ (T+T_0))}  \| \\
& \leq  C  e^{-2\delta(T+T_0)}\|\psi_-\|_{*,\delta} \leq C e^{-2\delta T}\|\psi_-\|_{*,\delta}.
\end{align*}
The fourth term has bound
\begin{align*}
C\|\widetilde\beta_{+,T}\beta_{-,T} & \psi_{+,T} \psi_{-,T} e^{-\delta(s-(T+T_0))}\| \\
& \leq  C |\psi_{+,T} e^{-\delta(s-(T+T_0))}|_{C^0}\cdot \|\widetilde\beta_{+,T}\beta_{-,T} \psi_{-,T}\|\\
& \leq C \|\psi_+\|_{*,\delta} \|\psi_-\|_{\delta}\leq C \|\psi_+\|_{*,\delta} \|\psi_-\|_{*,\delta}.
\end{align*}
The last term has bound
\begin{align*}
C\| (\widetilde \beta_{+,T})(\beta_{+,T}-1)\eta_{+,T} \tfrac{\bdry\psi_{+,T}}{\bdry t} e^{-\delta(s-(T+T_0))}\| & \leq C e^{-2\lambda T} \|\psi_+\|_{*,\delta}.
\end{align*}

Also we obtain
\begin{align}\label{two}
\| \mathcal{R}_+(\psi_{+,T})\|^{+,T}_\delta \leq C\|\psi_+\|^2_{*,\delta},
\end{align}
since $\mathcal{R}_+(\psi_{+,T})$ is type 1 quadratic.

Next we consider the $L^2$-projection $\Pi_+$ to $(\Phi_+ E'_+)^\perp$, where we are suppressing ${\psi_\pm,T}$ from the notation. Let $e_1(\psi_+,T),\dots,e_k(\psi_+,T)$ be an orthonormal basis for $\Phi_+ E'_+$; it is not hard to see that $e_i(\psi_+,T)$ can be taken to be continuous with respect to $\psi_+\in \mathcal{B}_+$. 
Then
\begin{align*}
\Pi_+(\zeta)=\zeta - \sum_i \langle e_i(\psi_+,T),\zeta\rangle e_i(\psi_+,T),
\end{align*}
where $\langle\cdot,\cdot\rangle$ is the $L^2$-inner product.
This allows us to estimate
\begin{align} \label{three}
\|\Pi_+(\zeta)\|_\delta \leq   \|\zeta\|_\delta +\sum_i\|\zeta\|_\delta\cdot \|e_i(\psi_+,T)\|_\delta\leq \widetilde C(\|\psi_+\|_{*,\delta}) \|\zeta\|_\delta.
\end{align}
The first inequality follows from $|\langle e_i(\psi_+,T),\zeta\rangle|\leq \|e_i(\psi_+)\|_{L^2} \cdot\|\zeta\|_{L^2} \leq \|\zeta\|_\delta$, where $\| \cdot \|_{L^2}$ is the $L^2$-norm, and the second inequality follows from the continuity of $e_i(\psi_+,T)$ with respect to $\psi_+$.

\begin{claim} \label{claim: continuous}
$(\Pi_+ D_+)^{-1}$ depends continuously on $\psi_+\in\mathcal{B}_+$.
\end{claim}

Observe that the domain of $(\Pi_+ D_+)^{-1}$ uses the norm $\|\cdot \|_\delta$ and the range of $(\Pi_+ D_+)^{-1}$ uses $\|\cdot \|_{*,\delta}$.

\begin{proof}[Sketch of proof of Claim~\ref{claim: continuous}]
$\Pi_+$ depends continuously on $\psi_+\in\mathcal{B}_+$ and hence so does $\Pi_+ D_+$.  The inverse hence also depends continuously on $\psi_+$.
\end{proof}

We also obtain
\begin{align} \label{four}
\|\tau_{+(T+T_0)}^{-1}(\Pi_+^{\psi_+,T}D_{+,T})^{-1}\Pi_+^{\psi_+,T}(e_{+,T})\|_\delta \leq (\widetilde C_1(\|\psi_+\|_{*,\delta})+\widetilde C_2(T))\|e_{+}\|_{*,\delta}.
\end{align}
Claim~\ref{claim 1} then follows from combining Estimates~\eqref{one}, \eqref{two}, \eqref{three}, \eqref{four}, and Claim~\ref{claim: continuous}.
\end{proof}

If $r,T\gg 0$, $\widetilde\varepsilon>0$ is small, and $\|e_+\|_{*,\delta}$ is small, then $\|\mathcal I (\psi_-,\psi_+)\|_{*,\delta}<\widetilde\varepsilon$ whenever $\|\psi_-\|_{*,\delta}, \|\psi_+\|_{*,\delta}<\widetilde\varepsilon$. Hence $\mathcal{I}_+(\psi_-,\cdot)$ maps a radius $\widetilde\varepsilon$ ball into itself.

\begin{claim}\label{claim 2}
\begin{align*}
\|\mathcal{I}_+(&\psi_-,\psi_+) -\mathcal{I}_+(\psi_-, \psi_+')\|_{*,\delta}\\
\leq &  C ( \|\psi_-\|_{*,\delta}+e^{-2\lambda T}+\|\psi_+\|_{*,\delta} +\|\psi_+'\|_{*,\delta})\cdot \| \psi_+-\psi'_+\|_{*,\delta}\\
& + C\|\psi_+-\psi_+'\|_{*,\delta}\cdot (\|e_{+}\|_{*,\delta} + r^{-1/2}e^{-2(\lambda-\delta) T} +(e^{-2\delta T}+r^{-1})\|\psi_-\|_{*,\delta}\\
& \qquad\qquad \qquad +\|\psi_-\|_{*,\delta}\|\psi_+\|_{*,\delta}  +e^{-2\lambda T}\|\psi_+\|_{*,\delta}+\|\psi_+\|^2_{*,\delta}).
\end{align*}
\end{claim}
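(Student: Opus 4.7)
The plan is to decompose the difference $\mathcal{I}_+(\psi_-,\psi_+) - \mathcal{I}_+(\psi_-,\psi_+')$ according to where the $\psi_+$-dependence enters the definition~\eqref{def of I}: (a) the integrand $e_{+,T} + \mathcal L_+(\psi_{-,T},\psi_{+,T}) + \mathcal R_+(\psi_{+,T})$, and (b) the operator $(\Pi_+^{\psi_+,T}D_{+,T})^{-1}\Pi_+^{\psi_+,T}$. Accordingly, we write the difference as $\mathcal A + \mathcal B$, where
\[
\mathcal A = -\tau_{+(T+T_0)}^{-1}(\Pi_+^{\psi_+,T}D_{+,T})^{-1}\Pi_+^{\psi_+,T}\bigl(\mathcal L_+(\psi_{-,T},\psi_{+,T}) - \mathcal L_+(\psi_{-,T},\psi'_{+,T}) + \mathcal R_+(\psi_{+,T}) - \mathcal R_+(\psi'_{+,T})\bigr)
\]
keeps the operator fixed at $\psi_+$, and $\mathcal B$ collects the remaining terms in which only the operator varies (applied to $e_{+,T}+\mathcal L_+(\psi_{-,T},\psi'_{+,T})+\mathcal R_+(\psi'_{+,T})$). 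The two halves of the claimed estimate will come from bounding $\mathcal A$ and $\mathcal B$ separately.

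For $\mathcal A$, inspect the $\psi_+$-dependence of $\mathcal L_+$ in Equation~\eqref{equation: mathcal L}. Only two terms contribute: the fourth term, which is \emph{linear} in $\psi_{+,T}$ with coefficient involving $\psi_{-,T}$, and the fifth term, which is linear in $\partial_t\psi_{+,T}$ with coefficient involving $\eta_{+,T}$ (and hence has exponential decay $e^{-2\lambda T}$ by the exponential convergence of $\eta_+$ to the Reeb orbit). This yields
\[
\|\mathcal L_+(\psi_{-,T},\psi_{+,T}) - \mathcal L_+(\psi_{-,T},\psi'_{+,T})\|_\delta^{+,T}\leq C\bigl(\|\psi_-\|_{*,\delta}+e^{-2\lambda T}\bigr)\|\psi_+-\psi'_+\|_{*,\delta},
\]
by mimicking the estimates of the fourth and fifth lines in the proof of Claim~\ref{claim 1}. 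Since $\mathcal R_+$ is type~1 quadratic in $\psi_{+,T}$, writing $\mathcal R_+(\psi_{+,T})-\mathcal R_+(\psi'_{+,T})$ as a bilinear expression (using the explicit formula right after \eqref{equation: mathcal L} displaying $\mathcal{R}_\pm$) yields
\[
\|\mathcal R_+(\psi_{+,T})-\mathcal R_+(\psi'_{+,T})\|_\delta^{+,T}\leq C(\|\psi_+\|_{*,\delta}+\|\psi'_+\|_{*,\delta})\|\psi_+-\psi'_+\|_{*,\delta}.
\]
Applying $\|\Pi_+^{\psi_+,T}\|\leq \widetilde C(\|\psi_+\|_{*,\delta})$ (Estimate~\eqref{three}) and the uniform bound on $(\Pi_+^{\psi_+,T}D_{+,T})^{-1}$, one obtains the first line of Claim~\ref{claim 2}.

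For $\mathcal B$, the key point is to upgrade the continuity of Claim~\ref{claim: continuous} to a Lipschitz estimate
\[
\bigl\|(\Pi_+^{\psi_+,T}D_{+,T})^{-1}\Pi_+^{\psi_+,T} - (\Pi_+^{\psi'_+,T}D_{+,T})^{-1}\Pi_+^{\psi'_+,T}\bigr\|_{\rm op} \leq C\|\psi_+-\psi'_+\|_{*,\delta}.
\]
This follows once we show $\|e_i(\psi_+,T)-e_i(\psi'_+,T)\|_{*,\delta}\leq C\|\psi_+-\psi'_+\|_{*,\delta}$ for the orthonormal basis of $\Phi_+^{\psi_{+,T}}(E'_{\psi_\pm,T})_+$ appearing in the proof of Claim~\ref{claim 1}; this is a consequence of the smooth ($C^1$) dependence of $\Phi_+^{\psi_{+,T}}$ on $\psi_+$ via the parallel transport/linear identification used to construct it in Section~\ref{subsection: gluing}, together with the fact that the fibers $E'$ themselves have fixed dimension and a $\psi_+$-independent underlying bundle $E^{\ell,\varepsilon}$. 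Granted this, the usual resolvent identity $A^{-1}-B^{-1}=A^{-1}(B-A)B^{-1}$ transfers the Lipschitz dependence to the inverse. Applying this operator norm bound to $e_{+,T}+\mathcal L_+(\psi_{-,T},\psi'_{+,T})+\mathcal R_+(\psi'_{+,T})$, whose $\|\cdot\|_\delta^{+,T}$-norm is bounded by exactly the sum of terms on the right-hand side of Claim~\ref{claim 1} (with $\psi_+$ replaced by $\psi'_+$), gives the second line.

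The main obstacle is the Lipschitz dependence of the projector $\Pi_+^{\psi_+,T}$ (equivalently of the basis $\{e_i(\psi_+,T)\}$) on $\psi_+$, which the excerpt only asserts at the level of continuity. This requires verifying that the identification $\Phi_+^{\psi_{+,T}}$ and the resulting family $\Phi_+^{\psi_{+,T}}(E'_{\psi_\pm,T})_+$ of finite-dimensional subspaces is $C^1$ in $\psi_+\in\mathcal B_+$, after which the Gram--Schmidt process and the resolvent identity produce the required Lipschitz estimates. All other steps reduce to repackaging the bookkeeping already developed in the proof of Claim~\ref{claim 1}.
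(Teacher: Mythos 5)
Your proof is correct and follows the same decomposition the paper's terse proof alludes to: the two integrand-difference estimates for $\mathcal{L}_+$ and $\mathcal{R}_+$, combined with the variation of the solution operator $(\Pi_+^{\psi_+,T}D_{+,T})^{-1}\Pi_+^{\psi_+,T}$, reproduce the two lines of the claimed bound. You are right to flag that the second half requires Lipschitz (not merely continuous) dependence of this operator on $\psi_+$; Claim~\ref{claim: continuous} as written only asserts continuity, but the needed $C^1$-dependence is exactly what the orthonormal-basis and parallel-transport arguments of Lemma~\ref{lemma: sufficiently differentiable} supply, so the gap is one of exposition rather than substance.
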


\begin{proof}[Proof of Claim~\ref{claim 2}]
This follows from the estimates
\begin{align*}
\| \mathcal{L}_+(\psi_{-,T},\psi_{+,T})-\mathcal{L}_+(\psi_{-,T},\psi_{+,T}')\|_\delta^{+,T} & \leq C (\|\psi_-\|_{*,\delta}+e^{-2\lambda T})\|\psi_+-\psi_+'\|_{*,\delta},\\
\|\mathcal{R}_+(\psi_{+,T})-\mathcal{R}_+(\psi_{+,T}')\|_\delta^{+,T} & \leq C( \|\psi_+\|_{*,\delta} +\|\psi_+'\|_{*,\delta}) \|\psi_+ - \psi_+'\|_{*,\delta},
\end{align*}
as well as the calculations from Claim~\ref{claim 1}.
\end{proof}

Hence if $r,T\gg 0$, $\widetilde\varepsilon>0$ is small, and $\|e_+\|_{*,\delta}$ is small, then $\mathcal{I}_+(\psi_-,\cdot)$ gives a contraction mapping, provided $\|\psi_+\|_{*,\delta}, \|\psi_+'\|_{*,\delta}\leq \widetilde\varepsilon$. This proves (1).  We also prove the following, which is used in (3):

\begin{claim}\label{claim 3}
\begin{align*}
\|\mathcal{I}_+(\psi_-,\psi_+') &-\mathcal{I}_+(\psi_-',\psi_+')\|_{*,\delta} \\
& \leq   \widetilde C(\|\psi_+'\|_{*,\delta} ) (e^{-2\delta T} + r^{-1} + \|\psi_+'\|_{*,\delta})\|\psi_- - \psi_-'\|_{*,\delta}.
\end{align*}
\end{claim}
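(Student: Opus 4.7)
The plan is to observe that $\mathcal{I}_+(\psi_-,\psi_+') - \mathcal{I}_+(\psi_-',\psi_+')$ simplifies dramatically because $\psi_+'$ is held fixed on both sides. Inspecting the definition of $\mathcal{I}_+$ in Equation~\eqref{def of I}, the only piece that depends on $\psi_-$ at all is $\mathcal{L}_+(\psi_{-,T},\psi_{+,T}')$ (the terms $e_{+,T}$ and $\mathcal{R}_+(\psi_{+,T}')$, as well as the projection $\Pi_+^{\psi_+',T}$ and the inverse $(\Pi_+^{\psi_+',T}D_{+,T})^{-1}$, are independent of $\psi_-$). Therefore
\[
\mathcal{I}_+(\psi_-,\psi_+') - \mathcal{I}_+(\psi_-',\psi_+') = -\tau_{+(T+T_0)}^{-1}(\Pi_+^{\psi_+',T}D_{+,T})^{-1}\Pi_+^{\psi_+',T}\bigl(\mathcal{L}_+(\psi_{-,T},\psi_{+,T}') - \mathcal{L}_+(\psi_{-,T}',\psi_{+,T}')\bigr),
\]
so the task reduces to controlling the difference of $\mathcal{L}_+$ terms in the $\|\cdot\|_\delta^{+,T}$ norm.

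Next, I would go through the five terms on the right-hand side of Equation~\eqref{equation: mathcal L}. The first and fifth terms do not depend on $\psi_-$ at all and drop out. The second term contributes $\tfrac{\partial\beta_{-,T}}{\partial s}(\psi_{-,T}-\psi_{-,T}')$, which by the same bound $|\tfrac{\partial\beta_{-,T}}{\partial s}|\leq cr^{-1}$ used in Claim~\ref{claim 1} yields a bound of $Cr^{-1}\|\psi_- - \psi_-'\|_{*,\delta}$. The third term produces an expression involving $\widetilde\beta_{+,T}\beta_{-,T}X\eta_{+,T}\cdot \partial_t(\psi_{-,T}-\psi_{-,T}')$; the exponential decay of $\eta_+$ together with the weight calculation carried out for the third term in Claim~\ref{claim 1} gives $Ce^{-2\delta T}\|\psi_- - \psi_-'\|_{*,\delta}$. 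The fourth term produces $\widetilde\beta_{+,T}\beta_{-,T}X\psi^{N\prime}_{+,T}\cdot \partial_t(\psi_{-,T}-\psi_{-,T}')$, and applying Lemma~\ref{lemma: sobolev embedding} to bound $|\psi^{N\prime}_{+,T}|_{C^0}$ by $C\|\psi_+'\|_{*,\delta}$ (and proceeding as in the fourth-term estimate of Claim~\ref{claim 1}) yields $C\|\psi_+'\|_{*,\delta}\|\psi_- - \psi_-'\|_{*,\delta}$. Summing,
\[
\|\mathcal{L}_+(\psi_{-,T},\psi_{+,T}') - \mathcal{L}_+(\psi_{-,T}',\psi_{+,T}')\|_\delta^{+,T} \leq C(e^{-2\delta T} + r^{-1} + \|\psi_+'\|_{*,\delta})\|\psi_- - \psi_-'\|_{*,\delta}.
\]

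Finally, I would apply the projection $\Pi_+^{\psi_+',T}$ and the inverse $(\Pi_+^{\psi_+',T}D_{+,T})^{-1}$ to this quantity, whose operator norms are bounded by some continuous function $\widetilde C(\|\psi_+'\|_{*,\delta})$ by Estimate~\eqref{three} and Claim~\ref{claim: continuous} from the proof of Lemma~\ref{azalea}. The $\tau_{+(T+T_0)}^{-1}$ is an isometry on the weighted spaces (up to the built-in relabeling of the weight) and so contributes no extra factor. Combining these bounds yields the desired inequality. The main obstacle is purely bookkeeping: identifying precisely which terms in $\mathcal{L}_+$ depend on $\psi_-$ and tracking the three decay/smallness parameters $e^{-2\delta T}$, $r^{-1}$, and $\|\psi_+'\|_{*,\delta}$ through each term; no new analytical input beyond what was used for Claims~\ref{claim 1} and \ref{claim 2} is required.
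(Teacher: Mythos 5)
Your proposal is correct and follows essentially the same route as the paper: the paper's proof of Claim~\ref{claim 3} likewise reduces everything to the single estimate $\| \mathcal{L}_+(\psi_{-,T},\psi_{+,T}')-\mathcal{L}_+(\psi_{-,T}',\psi_{+,T}')\|^{+,T}_\delta \leq C (e^{-2\delta T}+ r^{-1} + \|\psi_+'\|_{*,\delta})\|\psi_- - \psi_-'\|_{*,\delta}$ (noting that only $\mathcal{L}_+$ depends on $\psi_-$) and then invokes the calculations from Claim~\ref{claim 1} for the projection and inverse. You have merely filled in the term-by-term verification of that $\mathcal{L}_+$ estimate, which the paper leaves to the reader, and your accounting of which of the five terms in Equation~\eqref{equation: mathcal L} depend on $\psi_-$ and which decay factor each contributes is accurate.
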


\begin{proof}[Proof of Claim~\ref{claim 3}]
This follows from the estimate
\begin{align*}
\| \mathcal{L}_+(\psi_{-,T},\psi_{+,T}')-\mathcal{L}_+(\psi_{-,T}',\psi_{+,T}')\|^{+,T}_\delta & \leq C (e^{-2\delta T}+ r^{-1} + \|\psi_+'\|_{*,\delta})\|\psi_- - \psi_-'\|_{*,\delta}.
\end{align*}
as well as the calculations from Claim~\ref{claim 1}.
\end{proof}

\s\n
Now we continue the proof of Lemma~\ref{azalea}.

(2)
Since $\psi_+=P_+(\psi_-)$ satisfies $\mathcal{I}_+(\psi_-,\psi_+)=\psi_+$, Claim~\ref{claim 1} gives:
\begin{align*}
\|\psi_+\|_{*,\delta}  \leq & \widetilde C(\| \psi_+\|_{*,\delta}) (\|e_{+}\|_{*,\delta} + r^{-1/2}e^{-2(\lambda-\delta) T} +(e^{-2\delta T}+r^{-1})\|\psi_-\|_{*,\delta}\\
&+\|\psi_-\|_{*,\delta}\|\psi_+\|_{*,\delta} +e^{-2\lambda T}\|\psi_+\|_{*,\delta}+\|\psi_+\|^2_{*,\delta}).
\end{align*}
By moving the last three terms on the right-hand side to the left, we obtain (2) since $\widetilde C(\| \psi_+\|_{*,\delta})(\|\psi_-\|_{*,\delta}+e^{-2\lambda T}+ \|\psi_+\|_{*,\delta})<1 $ for $\widetilde\varepsilon>0$ small and $r>0$ large.

\s
(3) Letting $\psi_+=P_+(\psi_-)$ and $\psi_+'= P_+(\psi_-')$,
\begin{align*}
\| P_+(\psi_-)- & P_+(\psi_-')\|_{*,\delta} =  \| \I (\psi_-,\psi_+)- \I (\psi_-',\psi_+')\|_{*,\delta}\\
\leq & \| \I (\psi_-,\psi_+)- \I (\psi_-,\psi'_+)\|_{*,\delta} + \| \I (\psi_-,\psi'_+) - \I (\psi_-',\psi_+')  \|_{*,\delta},
\end{align*}
and the two terms on the right-hand side are bounded using Claims~\ref{claim 2} and \ref{claim 3}.  The terms from the right-hand side of the inequality in Claim~\ref{claim 2} can be moved to the left.  $\|\psi_-\|_{*,\delta}\leq \widetilde \varepsilon$ by assumption. If $r,T\gg 0$, $\widetilde\varepsilon>0$ is small, and $\|e_{+}\|_{*,\delta}$ is small, then $\|\psi_+\|_{*,\delta}=\|P_+(\psi_-)\|_{*,\delta}\leq \widetilde \varepsilon$ by (2). (3) then follows.
\end{proof}

Analogously, we have the following for $P_-$:

\begin{lemma} \label{Tsutsuji}
There exist $r\gg 0$ and $C, \widetilde\varepsilon>0$ such that for $T\gg 0$ and $\|e_-\|_{*,\delta}$ small the following holds:
\begin{enumerate}
\item There is a map $P_-:\mathcal{B}_+\to \mathcal{H}_-$ such that $\Theta_-(P_-(\psi_+),\psi_+))=0$ and $\psi_-=P_-(\psi_+)\in \mathcal{H}_-^{\perp,\psi_-,T}$.
\item $\| P_-(\psi_+)\|_{*,\delta}\leq C (\|e_{-}\|_{*,\delta}+ r^{-1/2}e^{-2(\lambda-\delta) T}+(e^{-2\delta T}+r^{-1})\|\psi_+\|_{*,\delta})$.
\item $\| P_-(\psi_+) - P_-(\psi'_+)\|_{*,\delta} \leq C \| \psi_+ - \psi_+'\|_{*,\delta} (e^{-2\delta T} + r^{-1} +\widetilde\varepsilon)$.
\end{enumerate}
\end{lemma}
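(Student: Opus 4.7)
The plan is to mirror the proof of Lemma~\ref{azalea}, swapping the roles of $+$ and $-$ throughout. Define the analog of $\mathcal{I}_+$ from Equation~\eqref{def of I}, namely
$$\mathcal{I}_-(\psi_-,\psi_+) := -\tau_{-(T+T_0)}^{-1}(\Pi_-^{\psi_-,T}D_{-,T})^{-1}\Pi_-^{\psi_-,T}\bigl(e_{-,T}+\mathcal{L}_-(\psi_{-,T},\psi_{+,T})+\mathcal{R}_-(\psi_{-,T})\bigr),$$
where $(\Pi_-^{\psi_-,T}D_{-,T})^{-1}$ is the bounded right inverse of $\Pi_-^{\psi_-,T}D_{-,T}$, which exists because $D_-$ is transverse to the finite-dimensional subspace $\Phi_-^{\psi_{-,T}}(E'_{\psi_\pm,T})_-$ (which is close to $E_-$ for $T\gg 0$ and $\|\psi_-\|_{*,\delta}$ small). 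A fixed point $\psi_-=\mathcal{I}_-(\psi_-,\psi_+)$ is equivalent to solving $\Theta_-(\psi_-,\psi_+)=0$ in the form of Equation~\eqref{Theta pm}.

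The next step is to establish analogs of Claims~\ref{claim 1}, \ref{claim 2}, and \ref{claim 3} for $\mathcal{I}_-$. These are the direct symmetric counterparts, with the weight function $f_\delta^{+,T}$ replaced by $f_\delta^{-,T}$ and the translations $\tau_{+(T+T_0)}$ replaced by $\tau_{-(T+T_0)}$. The bounds on the five summands of $\mathcal{L}_-(\psi_{-,T},\psi_{+,T})$ (obtained from Equation~\eqref{equation: mathcal L} with $+$ and $-$ swapped) are computed in exactly the same way: the first summand has the cutoff derivative $\tfrac{\bdry \beta_{+,T}}{\bdry s}$ which contributes a factor $r^{-1}$ on a segment of length $\lesssim hr$, combined with exponential decay $e^{-\lambda\cdot 2T}$ of $\eta_{+,T}$ evaluated near $s=-T$ and weighted by $e^{-\delta(-s-(T+T_0))}$ at the $-$ end. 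The remaining four summands yield terms of the form $r^{-1}\|\psi_+\|_{*,\delta}$, $e^{-2\delta T}\|\psi_+\|_{*,\delta}$, $\|\psi_+\|_{*,\delta}\|\psi_-\|_{*,\delta}$, and $e^{-2\lambda T}\|\psi_-\|_{*,\delta}$ by the same calculation as in Claim~\ref{claim 1}. The term $\mathcal{R}_-(\psi_{-,T})$ is type~$1$ quadratic in $\psi_{-,T}$, giving the bound $C\|\psi_-\|^2_{*,\delta}$. Projecting via $\Pi_-^{\psi_-,T}$ and inverting $D_{-,T}$ behave just as in the $+$ case, using the continuous dependence argument of Claim~\ref{claim: continuous}.

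Given these estimates, picking $r\gg 0$, $T\gg 0$, $\widetilde\varepsilon>0$ small, and $\|e_-\|_{*,\delta}$ small ensures that $\mathcal{I}_-(\cdot,\psi_+)$ maps $\mathcal{B}_-$ into itself and is a contraction; the Banach fixed point theorem then produces the unique $\psi_-=P_-(\psi_+)\in \mathcal{H}_-^{\perp,\psi_-,T}$ satisfying $\Theta_-(P_-(\psi_+),\psi_+)=0$, proving (1). Parts (2) and (3) are then obtained exactly as in the proof of Lemma~\ref{azalea}: (2) follows by plugging the fixed point equation into the analog of Claim~\ref{claim 1} and absorbing the terms quadratic or higher-order in $\|\psi_-\|_{*,\delta}$ onto the left-hand side; (3) follows by writing
$$\|P_-(\psi_+)-P_-(\psi'_+)\|_{*,\delta}\leq \|\mathcal{I}_-(\psi_-,\psi_+)-\mathcal{I}_-(\psi_-,\psi'_+)\|_{*,\delta}+\|\mathcal{I}_-(\psi_-,\psi'_+)-\mathcal{I}_-(\psi'_-,\psi'_+)\|_{*,\delta}$$
and applying the analogs of Claims~\ref{claim 2} and \ref{claim 3}.

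There is no real obstacle beyond bookkeeping: the entire argument is symmetric under the interchange of $+$ and $-$, provided one is careful that the weight $f_\delta^{-,T}$ grows like $e^{-\delta(s+(T+T_0))}$ at the negative end (so the decay factor $e^{-\lambda\cdot 2T}$ of $\eta_{+,T}$ now appears near $s=-T$, where $e^{-\delta(s+T_0+T)}$ is $O(1)$) and that the cutoff functions $\beta_{\pm,T}$, $\widetilde\beta_{\pm,T}$ have the same structure near either end of the neck. The constants $C$, $\widetilde\varepsilon$, and the threshold $r$ and $T$ may differ from those in Lemma~\ref{azalea}, but can be taken simultaneously large/small so that both lemmas hold at once.
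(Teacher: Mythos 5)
Your proposal is correct and takes exactly the approach the paper intends: the paper presents Lemma~\ref{Tsutsuji} with no written proof, introduced only by the phrase ``Analogously, we have the following for $P_-$,'' so the intended argument is precisely the $+\leftrightarrow-$ mirror of Lemma~\ref{azalea} that you carried out. Your bookkeeping of the five summands of $\mathcal{L}_-$, the roles of the weight functions $f_\delta^{\pm,T}$, the location of the exponential decay of $\eta_{+,T}$ near $s=-T$, and the absorption step for part~(2) all match the paper's proof of Lemma~\ref{azalea} under the swap.
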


\begin{lemma}\label{estimate for psi and tau}
There exist $r\gg 0$, $\widetilde\varepsilon>0$, and $\widetilde\varepsilon_0>0$ such that for $T\gg 0$ there is a unique solution $(\psi_-,\psi_+)\in \mathcal{B}_-\times \mathcal{B}_+$ to the equations $\Theta_+(\psi_-,\psi_+)=0$ and $\Theta_-(\psi_-,\psi_+)=0$ subject to the constraints $\psi_\pm\in \mathcal{H}_\pm^{\perp,\psi_\pm,T}$.
\end{lemma}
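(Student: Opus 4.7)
The plan is to combine the two ``partial'' solution maps $P_+$ and $P_-$ produced by Lemmas~\ref{azalea} and \ref{Tsutsuji} into a single contraction on one of the balls $\mathcal{B}_\pm$, and read off the desired simultaneous solution from its unique fixed point. Concretely, consider the composition
\[
F:=P_-\circ P_+ : \mathcal{B}_-\to \mathcal{H}_-,
\]
and note that the output automatically satisfies $F(\psi_-)\in\mathcal{H}_-^{\perp,\psi_-,T}$ by the construction of $P_-$.

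First I would verify that $F$ stabilizes $\mathcal{B}_-$. Applying Lemma~\ref{azalea}(2) to $\psi_-\in\mathcal{B}_-$ yields
\[
\|P_+(\psi_-)\|_{*,\delta}\leq C\bigl(\|e_+\|_{*,\delta}+r^{-1/2}e^{-2(\lambda-\delta)T}+(e^{-2\delta T}+r^{-1})\widetilde\varepsilon\bigr),
\]
which is $\leq \widetilde\varepsilon$ provided $r,T\gg 0$ and $\|e_+\|_{*,\delta}$ is small, so that $P_+(\psi_-)\in\mathcal{B}_+$. Feeding the output into Lemma~\ref{Tsutsuji}(2) gives
\[
\|F(\psi_-)\|_{*,\delta}\leq C\bigl(\|e_-\|_{*,\delta}+r^{-1/2}e^{-2(\lambda-\delta)T}+(e^{-2\delta T}+r^{-1})\|P_+(\psi_-)\|_{*,\delta}\bigr),
\]
and the right-hand side is again $\leq\widetilde\varepsilon$ after shrinking $\widetilde\varepsilon_0$ (which bounds $\|e_\pm\|_{*,\delta}$) and enlarging $r$ and $T$.

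Next I would show that $F$ is a contraction. By Lemma~\ref{azalea}(3) and Lemma~\ref{Tsutsuji}(3),
\[
\|F(\psi_-)-F(\psi_-')\|_{*,\delta}\leq C^{2}\bigl(e^{-2\delta T}+r^{-1}+\widetilde\varepsilon\bigr)^{2}\,\|\psi_--\psi_-'\|_{*,\delta}.
\]
For $T,r$ sufficiently large and $\widetilde\varepsilon>0$ sufficiently small, the Lipschitz factor is $<1$, so the Banach fixed point theorem supplies a unique $\psi_-\in\mathcal{B}_-$ with $F(\psi_-)=\psi_-$. Setting $\psi_+:=P_+(\psi_-)$ gives $\Theta_+(\psi_-,\psi_+)=0$ by the defining property of $P_+$, and $\psi_-=P_-(\psi_+)$ gives $\Theta_-(\psi_-,\psi_+)=0$; the orthogonality constraints $\psi_\pm\in\mathcal{H}_\pm^{\perp,\psi_\pm,T}$ hold by the construction of $P_\pm$.

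For uniqueness in $\mathcal{B}_-\times\mathcal{B}_+$ (not just as a fixed point of $F$), suppose $(\psi_-,\psi_+)$ and $(\psi_-',\psi_+')$ both solve $\Theta_\pm=0$ with the orthogonality constraints. The uniqueness clauses in Lemmas~\ref{azalea}(1) and \ref{Tsutsuji}(1) force $\psi_+=P_+(\psi_-)$, $\psi_+'=P_+(\psi_-')$, $\psi_-=P_-(\psi_+)$, $\psi_-'=P_-(\psi_+')$, so both $\psi_-,\psi_-'$ are fixed points of $F$ and hence coincide, after which $\psi_+=\psi_+'$ follows. The main technical obstacle is purely bookkeeping: choosing $r$, $T$, $\widetilde\varepsilon$, and $\widetilde\varepsilon_0$ in a compatible order so that all the smallness hypotheses of Lemmas~\ref{azalea} and \ref{Tsutsuji} hold simultaneously and the composed Lipschitz constant is genuinely $<1$; this is done by fixing $r$ and $T$ large enough that $C^2(e^{-2\delta T}+r^{-1})^2<\tfrac12$, then choosing $\widetilde\varepsilon$ small enough to keep the last term under control, and finally choosing $\widetilde\varepsilon_0$ to make the $\|e_\pm\|_{*,\delta}$ contributions to $F(\mathcal{B}_-)$ fit inside $\mathcal{B}_-$.
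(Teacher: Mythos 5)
Your proposal is correct and follows the same route as the paper: the paper also obtains the solution as the unique fixed point of $P_-\circ P_+$ on $\mathcal{B}_-$ via the contraction mapping principle, invoking Lemmas~\ref{azalea} and \ref{Tsutsuji}. Your write-up simply spells out the self-mapping, Lipschitz, and uniqueness steps that the paper leaves implicit.
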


Alternatively, we can view the desired $(\psi_-,\psi_+)$ as the unique fixed point of the map
$$\mathcal{I}= (\mathcal{I}_-,\mathcal{I}_+): \mathcal{B}_-\times\mathcal{B}_+ \to \mathcal{H}_-\times\mathcal{H}_+,$$
$$(\psi_-,\psi_+)\mapsto (\mathcal{I}_-(\psi_-,\psi_+),\mathcal{I}_+(\psi_-,\psi_+)),$$
subject to the constraints $\psi_\pm\in \mathcal{H}_\pm^{\perp,\psi_\pm,T}$.

\begin{proof}
We are looking for the unique solution $\psi_-$ to $P_-\circ P_+(\psi_-)=\psi_-$ subject to the constraints.  The existence follows from the contraction mapping principle and Lemmas~\ref{azalea} and \ref{Tsutsuji}. Then $(\psi_-,\psi_+=P_+(\psi_-))$ is the unique solution to $\Theta_\pm(\psi_-,\psi_+)=0$.
\end{proof}

\begin{rmk} \label{rmk: iteration}
In more practical terms, the fixed point can be obtained by starting with
$$(\psi_-^{(0)},\psi_+^{(0)})=(0,0),$$
applying the iteration
$$(\psi_-^{(i+1)},\psi_+^{(i+1)})=\mathcal{I}(\psi_-^{(i)},\psi_+^{(i)}),$$
and taking the limit $i\to\infty$. By Claim~\ref{claim 1} and its analog for $\mathcal{I}_-$, $\|\psi_\pm^{(1)}\|_{*,\delta}\to 0$ as $T\to\infty$ and similarly $\|\psi_\pm^{(i)}\|_{*,\delta}\to 0$ as $T\to\infty$.
\end{rmk}

The above remark implies the following lemma:

\begin{lemma} \label{lemma: T to infty}
If $(\psi_-,\psi_+)\in \mathcal{B}_-\times \mathcal{B}_+$ is the unique solution to $\Theta_+(\psi_-,\psi_+)=0$ and $\Theta_-(\psi_-,\psi_+)=0$ subject to the constraints $\psi_\pm\in \mathcal{H}_\pm^{\perp,\psi_\pm,T}$, then $\|\psi_\pm\|_{*,\delta}\to 0$ as $T\to \infty$.
\end{lemma}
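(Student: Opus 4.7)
The plan is to exploit the iterative construction described in Remark~\ref{rmk: iteration} combined with a $T$-independent contraction estimate, so that pointwise-in-$i$ control of $\|\psi_\pm^{(i)}\|_{*,\delta}$ as $T\to\infty$ upgrades to control of the fixed point $\psi_\pm=\lim_{i\to\infty}\psi_\pm^{(i)}$.

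First I would verify the base step $\|\psi_\pm^{(1)}\|_{*,\delta}\to 0$ as $T\to\infty$. Starting from $(\psi_-^{(0)},\psi_+^{(0)})=(0,0)$ and applying Claim~\ref{claim 1} with $\psi_\pm=0$ gives
\[
\|\psi_+^{(1)}\|_{*,\delta}=\|\mathcal{I}_+(0,0)\|_{*,\delta}\leq \widetilde C_2(T)\|e_+\|_{*,\delta}+\widetilde C(0)\,r^{-1/2}e^{-2(\lambda-\delta)T},
\]
where I have used $\widetilde C_1(0)=0$; both terms on the right tend to $0$ as $T\to\infty$, since $\widetilde C_2(T)\to 0$. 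The analogous estimate for $\mathcal{I}_-$ handles $\psi_-^{(1)}$. An induction based on the same two claims then shows $\|\psi_\pm^{(i)}\|_{*,\delta}\to 0$ as $T\to\infty$ for every fixed $i$, since each occurrence of $\psi_\pm^{(i)}$ on the right-hand side of the Claim~\ref{claim 1} estimate for $\psi_\pm^{(i+1)}$ is already known to vanish in the limit.

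Next I would observe that Claims~\ref{claim 2} and \ref{claim 3} (together with their $\mathcal{I}_-$-analogues) give, for $r,T\gg 0$ and $\widetilde\varepsilon>0$ sufficiently small, a constant $k<1$ \emph{independent of $T$} such that
\[
\|\mathcal{I}(\psi_-,\psi_+)-\mathcal{I}(\psi_-',\psi_+')\|_{*,\delta}\leq k\,\|(\psi_--\psi_-',\psi_+-\psi_+')\|_{*,\delta}
\]
on $\mathcal{B}_-\times\mathcal{B}_+$ (using the max of the two $\|\cdot\|_{*,\delta}$ norms). Indeed, inspecting the right-hand sides of Claims~\ref{claim 2} and \ref{claim 3}, every coefficient can be made arbitrarily small by enlarging $r,T$ and shrinking $\widetilde\varepsilon$, and these coefficients do not involve $\|e_\pm\|_{*,\delta}$. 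Standard Picard iteration then yields
\[
\|\psi_\pm-\psi_\pm^{(i)}\|_{*,\delta}\leq \frac{k^i}{1-k}\,\|\psi_\pm^{(1)}-\psi_\pm^{(0)}\|_{*,\delta}\leq \frac{2\widetilde\varepsilon\,k^i}{1-k},
\]
a bound that is uniform in $T$ and tends to $0$ as $i\to\infty$.

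Finally I combine the two steps by a standard $\varepsilon/2$ argument: given $\varepsilon>0$, choose $i_0$ with $2\widetilde\varepsilon k^{i_0}/(1-k)<\varepsilon/2$, then choose $T_0$ (depending on $i_0$) such that $\|\psi_\pm^{(i_0)}\|_{*,\delta}<\varepsilon/2$ for all $T\geq T_0$; the triangle inequality yields $\|\psi_\pm\|_{*,\delta}<\varepsilon$. The only nontrivial point is extracting the $T$-independent contraction constant $k$, but this is immediate from Claims~\ref{claim 2} and \ref{claim 3} once $r,T$ are large and $\widetilde\varepsilon$ is small; everything else is bookkeeping with the estimates already proved.
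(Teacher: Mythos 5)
Your proof is correct and follows essentially the same route as the paper, which derives the lemma directly from Remark~\ref{rmk: iteration}: set up the Picard iteration from $(\psi_-^{(0)},\psi_+^{(0)})=(0,0)$, use Claim~\ref{claim 1} to show each iterate $\|\psi_\pm^{(i)}\|_{*,\delta}\to 0$ as $T\to\infty$, and pass to the fixed point. Your contribution is to make explicit the step the paper leaves implicit — namely, that the passage from ``each iterate vanishes in the $T\to\infty$ limit'' to ``the fixed point vanishes'' relies on a contraction constant $k<1$ that is \emph{uniform in} $T$, which you correctly extract from Claims~\ref{claim 2} and~\ref{claim 3}; as a small remark, once you have that uniform $k$, the $\varepsilon/2$ argument at the end is unnecessary, since the tail estimate with $i=0$ already gives $\|\psi_\pm\|_{*,\delta}\leq (1-k)^{-1}\|\psi_\pm^{(1)}\|_{*,\delta}\to 0$ directly.
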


We can finally define the gluing map  as
$$G:V_-/\mathbb R \times V_+/\mathbb R \times [T_0,\infty)\to \widetilde{\mathcal{G}}^{E'}_\delta(V_-/\R, V_+/\R)$$
$$(u_-,u_+,T)\mapsto u= \exp_{u_*}(\beta_{+,T}\psi_{+,T} +\beta_{-,T}\psi_{-,T}),$$
where $(\psi_-,\psi_+)$ is the solution from Lemma~\ref{estimate for psi and tau}.

\subsection{$C^1$-smoothness and injectivity of the gluing map} \label{subsection: C1 smoothness}

The goal of this subsection is to prove:

\begin{thm} \label{thm: gluing map smooth}
The gluing map $G$ is a $C^1$-smooth embedding.
\end{thm}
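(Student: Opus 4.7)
The plan is to deduce $C^1$-smoothness via the implicit function theorem applied to the fixed-point equation $\mathcal{I}(\psi_-,\psi_+)=(\psi_-,\psi_+)$ from Section~\ref{subsection: defn of gluing map}, and to establish global injectivity by reading off the gluing parameters directly from the target.

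\emph{$C^1$-regularity.} First I would put a $C^1$ Banach-bundle structure on the fibration $\mathcal H_-\times\mathcal H_+\to \mathcal P:=(V_-/\R)\times(V_+/\R)\times[R,\infty)$; the nontrivial point is the $C^1$-dependence on $T$, which is supplied by Theorem~\ref{thm: Kazdan-Warner} for the underlying domain complex structure together with the Morrey-space framework of Section~\ref{subsection: banach spaces} (the Morrey norms were introduced precisely to avoid the usual ``loss of derivative''). In this setup the map
\[
\Psi(u_-,u_+,T,\psi_-,\psi_+) := (\psi_-,\psi_+)-\mathcal I(\psi_-,\psi_+)
\]
is $C^1$ in all variables. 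The estimates behind Claims~\ref{claim 2} and \ref{claim 3} give $\|D_{(\psi_-,\psi_+)}\mathcal I\|<1$ at the solution, so $D_{(\psi_-,\psi_+)}\Psi=I-D_{(\psi_-,\psi_+)}\mathcal I$ is invertible. The implicit function theorem then returns a $C^1$ map $\mathcal P\to \mathcal B_-\times\mathcal B_+$ whose composition with pregluing and $\exp_{u_*}$ is precisely $G$. Verifying genuine $C^1$-dependence in $T$ (and not merely Lipschitz dependence) is the main obstacle in this step.

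\emph{Injective differential.} With $dG$ at hand, I would check it is injective at each point by a dimension-count argument. Variations of $u_\pm$ deform $u=G(u_-,u_+,T)$ essentially in the top and bottom halves of $\dot F$ (supported away from the neck), while a variation of $T$ changes the neck length, which is detected by the strictly monotone functional $I((\mathcal F,u),\cdot)$ from Section~\ref{subsubsection: nonnegative Euler char} (equivalently, by the location of the unordered puncture set $\mathbf q$). These classes of infinitesimal deformations are linearly independent, and a dimension count against $\widetilde{\mathcal G}^{E'}_\delta(V_-/\R,V_+/\R)$ (whose tangent space at $u$ has dimension $\dim(V_-/\R)+\dim(V_+/\R)+1$ by $\overline\bdry_J$-transversality of $E'$) shows that $dG$ is a linear isomorphism.

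\emph{Global injectivity and embedding.} Finally I would prove that $G$ is globally injective. If $u=G(u_-,u_+,T)=G(u_-',u_+',T')$, the data $\{s_{\pm,i}\}$ and $s'_\pm$ recovered from $u$ alone (via Step 1B of Theorem~\ref{thm: construction of L-transverse subbundle} and the construction of $\mathbf q$ in Section~\ref{subsection: close to breaking}) determine the neck-length and truncation points unambiguously; this yields $T=T'$ and representatives $(\mathcal F_\pm,u_\pm)=(\mathcal F_\pm',u_\pm')\in V_\pm/\R$. The uniqueness of the fixed point in Lemma~\ref{estimate for psi and tau} then forces the perturbations $\psi_\pm$ to agree as well. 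Combined with the local $C^1$-diffeomorphism property supplied by the first two steps, global injectivity promotes $G$ to a $C^1$-embedding onto $\widetilde{\mathcal G}^{E'}_\delta(V_-/\R,V_+/\R)$ in the range in question.
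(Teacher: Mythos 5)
Your high-level plan is natural, but the implicit function theorem step has a genuine gap, and it is precisely the gap that the paper's iterative scheme is designed to avoid.  The map $\mathcal{I}_+$ involves $B(\psi_+,T)=(\Pi_+^{\psi_+,T}D_{+,T})^{-1}\Pi_+^{\psi_+,T}$, and differentiating the projection $\Pi_+^{\psi_+,T}$ with respect to $\psi_+$ or $T$ requires differentiating the orthonormal frame $e_i(\psi_+,T)$ of $\Phi_+^{\psi_+,T}E'_+$; this costs one derivative.  As stated in Lemma~\ref{lemma: sufficiently differentiable}, $B$ is differentiable only when the argument satisfies the higher-regularity condition $(\#_k)$, whereas a generic element of $\mathcal{H}_\pm$ is merely in Morrey class $\mathcal{H}_1$ (so roughly $C^{0,1/4}$).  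Thus $\Psi(u_-,u_+,T,\psi_-,\psi_+)=(\psi_-,\psi_+)-\mathcal{I}(\psi_-,\psi_+)$ is \emph{not} $C^1$ on an open neighborhood in $\mathcal{P}\times\mathcal{H}_-\times\mathcal{H}_+$, and you cannot invoke the implicit function theorem as stated.  Your attribution of the fix to the Morrey-space framework is not right either: the Morrey norms are chosen only to reuse \cite{HT2}-type estimates (see the footnote in Section~\ref{subsection: banach spaces}) and do nothing for this loss of derivative.  The paper's actual route is the telescoping iteration $\mathcal{J}_0=0$, $\mathcal{J}_i=\mathcal{I}^{\sum_{j<i}\mathcal{J}_j}$: at each stage the image of $\mathcal{J}_i$ consists of solutions of elliptic equations, hence by bootstrapping satisfies $(\#_k)$, so Lemma~\ref{lemma: sufficiently differentiable} applies and $D\mathcal{J}_i$ is controlled; the geometric bound $\|D\mathcal{J}_i\|\leq C^i$ with $0<C\ll 1$ then yields a $C^1$ limit.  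You would need to reproduce this mechanism (or a genuine substitute for it) before the IFT-style reasoning is available.

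Two smaller remarks.  First, your dimension-count argument for injectivity of $dG$ is workable in principle, but the paper's argument is more direct: it shows $D\widetilde{G}|_{T\mathcal{S}_-\times T[T_0,\infty)}$ is close to $(\zeta_-+\mathfrak{a}_-,\mathfrak{t})\mapsto \zeta_{-,T}+\mathfrak{a}_{-,T}-\mathfrak{t}\,\partial_s$ on $\{s\leq -(T+T_0)\}$, and these are manifestly independent; the dimension count alone does not immediately give injectivity of the full $dG$ unless you also control the $\psi_\pm$-contribution, which again requires the bounds $\|D\mathcal{J}_i\|\leq C^i$.  Second, your global-injectivity sketch---recovering $T$ and the truncation data from $u$---is essentially what the paper alludes to in its parenthetical remark, and is sound once local injectivity is in hand.
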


We start by observing that:

\begin{lemma} \label{lemma: psi pm smooth}
The solution $(\psi_-,\psi_+)$ from Lemma~\ref{estimate for psi and tau} is $C^\infty$-smooth.
\end{lemma}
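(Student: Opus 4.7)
The plan is to upgrade the fixed point $(\psi_-,\psi_+)$ from weighted Morrey regularity to $C^\infty$ by elliptic bootstrapping on the nonlinear equation
$$\overline{\partial}_J u \in E'_{(\mathcal{F},u)},$$
where $u = \exp_{u_*}(\beta_{+,T}\psi_{+,T} + \beta_{-,T}\psi_{-,T})$. The starting observation is that every element of the finite-dimensional fiber $E'_{(\mathcal{F},u)}$ is $C^\infty$-smooth: by Equations~\eqref{f tilde}--\eqref{equation: defn of E prime} it is built from a smooth cut-off $\beta_{s_0}^{\pm}$ and the eigenfunctions $f_i^\gamma$ of the self-adjoint asymptotic operator $\widetilde A^\gamma$, and the latter are $C^\infty$ on $S^1$ because the coefficients of $\widetilde A^\gamma$ are smooth (since $\alpha$ and $J$ are smooth). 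Hence we may rewrite the equation as $\overline{\partial}_J u = \sigma$ with $\sigma$ a smooth section, where the finitely many scalar coefficients of $\sigma$ in a fixed smooth basis depend continuously on $(\psi_-,\psi_+)$.

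Next, I would run interior elliptic regularity. By Lemma~\ref{lemma: sobolev embedding}, $\psi_\pm\in C^{0,1/4}$, so $u$ is a priori H\"older continuous. In local holomorphic coordinates on $\dot F$ and local coordinates on $\widehat W$, the equation takes the quasi-linear form $\partial_{\bar z}u = N(u)\partial_z u + \sigma_0$, with $N$ and $\sigma_0$ smooth in their arguments. Standard bootstrapping for the perturbed Cauchy--Riemann operator (e.g., \cite[App.~B.4]{MS}) raises $u$ successively to $C^{1,\alpha}$, $C^{2,\alpha}$, and so on, yielding $u\in C^\infty$ on every relatively compact subset of $\dot F$. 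On the cylindrical ends, simple coordinates reduce the equation to a linear inhomogeneous system of the form $\partial_s\eta + j_0\partial_t\eta + S\eta = \sigma_0'$ with smooth right-hand side, so the same bootstrap applies there, uniformly in $s$.

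Finally, to recover the smoothness of $\psi_+$ and $\psi_-$ individually: on $\dot F_+\setminus\mathcal{E}_{+,T+T_0}$ we have $u = \exp_{u_{+,T}}\psi_{+,T}$ and $\psi_{-,T}$ does not enter, so $\psi_+$ is smooth there; analogously for $\psi_-$. On the neck $A_{[-T,T]}$, the cut-offs $\beta_{+,T}$ and $\beta_{-,T}$ and the translates $u_{\pm,T}$ are smooth, and $\beta_{+,T}+\beta_{-,T}\equiv 1$ on the support of both variations; together with the orthogonality constraints $\psi_\pm\in \mathcal{H}_\pm^{\perp,\psi_\pm,T}$ (which are linear conditions with smooth coefficients), this determines $\psi_+$ and $\psi_-$ smoothly from the now-smooth section $\exp_{u_*}^{-1}(u)$. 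The main obstacle I expect is keeping the bootstrap estimates uniform across the ``transition'' regions of $A_{[-T,T]}$ where both $\beta_{\pm,T}$ vary, and verifying that the projections $\Pi_\pm^{\psi_\pm,T}$ preserve smoothness; both are handled by the fact that the obstruction space is finite-dimensional and consists of smooth sections, so the projections are given by smooth integral kernels and pose no essential regularity issue.
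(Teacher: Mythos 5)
Your high-level idea — elliptic bootstrapping — is the same as the paper's, but your execution via the glued curve $u$ rather than the pair $(\psi_-,\psi_+)$ on $\dot F_\pm$ has a genuine gap in the recovery step. The paper bootstraps the coupled equations $\Theta_\pm(\psi_-,\psi_+)=0$ directly: viewing them as a nonlinear Cauchy--Riemann-type system, differentiating with respect to $s$ and $t$, and applying the Morrey-space elliptic estimates of \cite{Mo}. This produces the smoothness of $\psi_\pm$ everywhere on $\dot F_\pm$ at once.

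Your route — bootstrap $u$ on $\dot F$, then peel off $\psi_+$ and $\psi_-$ — breaks down on the neck $A_{[-T,T]}$ and on the cut-off ends of $\dot F_\pm$. On the neck you only learn that the sum $\beta_{+,T}\psi_{+,T}+\beta_{-,T}\psi_{-,T}=\exp_{u_*}^{-1}(u)$ is smooth; that does not pointwise determine the summands. The constraints $\psi_\pm\in\mathcal{H}_\pm^{\perp,\psi_\pm,T}$ are global $L^2$-orthogonality conditions (orthogonal to a finite-dimensional kernel), not pointwise ones, so they cannot split a smooth function into two smooth summands. Also, your claim that $\beta_{+,T}+\beta_{-,T}\equiv 1$ on the overlap is incorrect: from $\beta_{-,T}(s)=\beta(\tfrac{T-s}{hr})$ and $\beta_{+,T}(s)=\beta(\tfrac{T+s}{hr})$, the sum equals $2$ on $-T+hr\leq s\leq T-hr$ (the entire middle of the neck), not $1$. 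Finally, $\psi_\pm$ lives on all of $\dot F_\pm$, including the end $\mathcal{E}_{\pm,T}$ that is discarded in the pregluing; $u$ carries no information there, and that region is not covered by the pieces $\dot F_\pm\setminus\mathcal{E}_{\pm,T+T_0}$ and $A_{[-T,T]}$ that you treat. The correct fix is precisely what the paper does: run the bootstrap for $(\psi_-,\psi_+)$ directly from $\Theta_\pm=0$ on $\dot F_\pm$, treating the coupling term $\mathcal{L}_\pm$ (compactly supported in the neck) as a smooth-coefficient perturbation at each stage.
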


\begin{proof}[Sketch of proof]
The solution $(\psi_-,\psi_+)$ is in Morrey class $\mathcal{H}_{1,\delta}$ and hence is in Sobolev class $W^{1,2}_\delta$ and in $C^0$.  We apply the usual elliptic bootstrapping technique, where the necessary estimates for Morrey spaces are given in \cite{Mo}: First observe that $(\psi_-,\psi_+)$ satisfies a nonlinear Cauchy-Riemann type operator. We can then view $(\psi_-,\psi_+)$ as satisfying a {\em linear} Cauchy-Riemann type operator whose coefficients depend on $(\psi_-,\psi_+)$. Differentiating Equation~\eqref{theta+-} with respect to $s$, we also see that $(\frac{\bdry\psi_-}{\bdry s},\frac{\bdry \psi_+}{\bdry s})$ satisfies a linear Cauchy-Riemann type operator such that the coefficients of the first order part are in $\mathcal{H}_{1,\delta}$ and the coefficients of the zeroth order part are in $\mathcal{H}_{0,\delta}$; the same holds for $\frac{\bdry \psi_\pm}{\bdry t}$.  With the above conditions on the coefficients, the relevant elliptic estimates are given by the last two inequalities on \cite[p.~145]{Mo} to improve $\nabla \psi_\pm$ from Sobolev class $W^{0,2}_\delta$ to $W^{1,2}_\delta$ and \cite[Theorem 5.4.1]{Mo} to improve $\nabla\psi_\pm$ from $W^{1,2}_\delta$ to $\mathcal{H}_{1,\delta}$.
\end{proof}

Let $\mathcal{S}_\pm\subset V_\pm/\R$ be small neighborhoods of $u_\pm$ in $V_\pm/\R$. Each point of $\mathcal{S}_\pm$ is given by the pair $(\exp_{u_\pm } \phi_\pm ,j_\pm')$, where $\phi_\pm=(\phi^\circ_\pm,\mathfrak{a}_\pm)\in \mathcal{H}_\pm\oplus \R^{2l^\pm}$, $j_\pm'\in \widetilde{\mathcal{U}}_\pm$, $l^\pm$ is the total number of punctures of $\dot F_\pm$ (see Section 4.2.1), and $\widetilde{\mathcal{U}}_\pm$ is a Teichm\"uller slice containing $j_\pm$ (the complex structure for $u_\pm$). We will usually write $(\phi_\pm,j_\pm')\in \mathcal{S}_\pm$.

{\em By abuse of notation we write $\|\phi_\pm^\circ+{\mathfrak a}_\pm\|_{*,\delta}$ or $\| \phi_\pm \|_{*,\delta}$ for the sum of $\|\phi_\pm^\circ\|_{*,\delta}$ on $\mathcal{H}_\pm$ and the standard norm $|{\mathfrak a}_\pm|$ on $\R^{2l^\pm}$.}

We assume that:
\begin{itemize}
\item[(T$_1$)] the slice $\widetilde{\mathcal{U}}_\pm$ is smooth; and
\item[(T$_2$)] there exists a small disk $D_\pm\subset \dot F_\pm- \mathcal{E}_{\pm,T+T_0}$ such that all $j'_\pm\in \widetilde{\mathcal{U}}_\pm$ agree on $\dot F_\pm -D_\pm$.
\end{itemize}
We may additionally assume that, for any ${\mathfrak a}$ in the summand $\R^2$ of $\R^{2l^+}$ (resp.\ $\R^{2l^-}$) corresponding to the end that is glued, ${\mathfrak a}$ is a linear combination of $\bdry_s$ and $\bdry_t$ on $s\leq 0$ for $u_+$ (resp.\ on $s\geq 0$ for $u_-$).

We say that $\phi_\pm=(\phi_\pm^\circ,{\mathfrak a}_\pm)\in \mathcal{H}_\pm\oplus \R^{2l^\pm}$ or $\phi_\pm^\circ\in \mathcal{H}_\pm$ {\em satisfies} ($\#_k$) if the following holds:
\begin{enumerate}
\item[($\#_k$)] $\phi_\pm^\circ$ is in class $C^k$ and $\|\nabla^i \phi_\pm^\circ|_{-2(T+T_0)-1<\pm s<1}\|_\delta$, $\forall i\leq k$, is bounded above by $C\|\phi_\pm^\circ\|_{*,\delta}$.
\end{enumerate}
Here $C$ is a constant that does not depend on $\phi_\pm^\circ$. Note that ($\#_k$) for $\phi_\pm^\circ$ implies ($\#_k$) for $\phi_\pm^\circ+{\mathfrak a}_\pm$, since ${\mathfrak a}_\pm$ lives in the finite-dimensional vector space $\R^{2l^\pm}$. In what follows we assume that $k\gg 0$.

Our gluing setup is slightly more complicated than one initially expects: this is to get around the loss of one derivative as explained in Lemma~\ref{lemma: sufficiently differentiable}.  Let
$$\mathcal{J}=(\mathcal{J}_-,\mathcal{J}_+):\mathcal{S}_-\times \mathcal{S}_+ \times [T_0,\infty) \to\mathcal{B}_-\times\mathcal{B}_+$$
be a $C^1$-smooth function such that $\op{Im}(\mathcal{J})$ is bounded and each $(\psi_-,\psi_+)\in \op{Im}(\mathcal{J})$ satisfies ($\#_k$).  We then consider the function
$$\mathcal{I}^{\mathcal{J}}=(\mathcal{I}^{\mathcal{J}}_-,\mathcal{I}^{\mathcal{J}}_+): \mathcal{S}_-\times \mathcal{S}_+ \times [T_0,\infty) \to \mathcal{H}_-\times \mathcal{H}_+,$$
\begin{align*}
\mathcal{I}^{\mathcal{J}}_+&((\phi_-,j_-'),(\phi_+,j_+'),T)\\
=& -\tau^{-1}_{\pm(T+T_0)}(\Pi_+^{\psi_+ + \phi_+,j_+', T}D_{+,T}^{j_+'})^{-1}\Pi_+^{\psi_+ + \phi_+,j_+',T} [D_{+,T}^{j_+'}(\psi_{+,T} + \phi_{+,T}) +  e_{+,T} \\
& \qquad \qquad +\mathcal L_+(\psi_{-,T} +\phi_{-,T},\psi_{+,T}+\phi_{+,T}) +\mathcal{R}_+(\psi_{+,T}+\phi_{+,T})],
\end{align*}
where $\psi_\pm=\mathcal{J}_\pm((\phi_-,j_-'),(\phi_+,j_+'),T)$, $D_{\pm,T}^{j_\pm'}$ is $D_{\pm,T}$ with respect to $j_\pm'$, $j'_+$ in $\Pi_+^{\psi_+ + \phi_+,j_+',T}$ indicates the dependence on $j_+'$, and $e_{+,T}=\overline\bdry u_{\pm,T}$.
$\mathcal{I}^{\mathcal{J}}_-$ is defined analogously.  We will later apply an iteration scheme similar to the one described in Remark~\ref{rmk: iteration}; see the proof of Theorem~\ref{thm: gluing map smooth} at the end of this subsection.

\begin{rmk}
Observe that $\mathcal{I}^{\mathcal{J}}_+$ has the term $D_{+,T}^{j_\pm'}(\psi_{+,T} + \phi_{+,T})$ unlike the expression for $\mathcal{I}_+$ which does not have $D_{+,T}\psi_{+,T}$.
\end{rmk}

The expression inside the brackets $[\cdot]$ can be written as
\begin{align*}
D_{+,T}^{j_\pm'}\psi_{+,T} + e_{+,T}(\phi_+) +   \mathcal L_+(\psi_{-,T} +\phi_{-,T},\psi_{+,T}+\phi_{+,T}) +\mathcal{R}'_+(\psi_{+,T}+\phi_{+,T}),
\end{align*}
where $\mathcal{R}'_+(\psi_{+,T}+\phi_{+,T})= \mathcal{R}_+(\psi_{+,T}+\phi_{+,T})- \mathcal{R}_+(\phi_{+,T})$
and $e_{+,T}(\phi_+):= \overline\bdry \exp_{u_{+,T}}(\phi_{+,T})$.

\begin{rmk}
In order to avoid the (already cumbersome) notation, in the rest of the subsection we omit $j'_\pm$ from the notation and also will not explicitly treat variations of $j_\pm$ in the proof of Theorem~\ref{thm: gluing map smooth}.  The derivatives of the gluing map with respect to $j'_\pm$ are straightforward to control in view of (T$_1$) and (T$_2$).
\end{rmk}

\begin{lemma} \label{lemma: sharp}
Assuming $\mathcal{S}_\pm$ are sufficiently small, all $\phi_\pm\in \mathcal{S}_\pm$ satisfy $(\#_k)$.
\end{lemma}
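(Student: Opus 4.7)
The plan is to exploit the fact that $V_\pm$ is a smooth, \emph{finite-dimensional} manifold, so $\mathcal{S}_\pm$ is parametrized by finitely many real parameters and all norms on the tangent space $T_{u_\pm}V_\pm$ are equivalent. First I would check that $\phi_\pm^\circ$ is smooth (hence $C^k$ for any $k$): since $V_\pm=\overline\bdry_J^{-1}(E_\pm)$ and the fibers of $E_\pm$ are spanned by the smooth, compactly (in $s$) supported sections $\widetilde f^{\gamma_{\pm,i}}_p$ from Equation~\eqref{equation: def of each component of E}, each $\exp_{u_\pm}\phi_\pm\in V_\pm$ solves a nonlinear elliptic equation $\overline\bdry_J v=\sigma$ with $\sigma$ smooth, so elliptic regularity together with the smoothness of $u_\pm$ yields $\phi_\pm^\circ\in C^\infty$.

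Next I would pick a smooth chart $\mathbf{c}=(c_1,\dots,c_N)\mapsto\phi_\pm^\circ(\mathbf{c})$ on $V_\pm/\R$ near $u_\pm$ (with $\mathbf{c}=0$ corresponding to $u_\pm$) and Taylor expand
$$\phi_\pm^\circ(\mathbf{c}) \,=\, \sum_{j=1}^N c_j\, v_j \,+\, R(\mathbf{c}),$$
where $\{v_j\}_{j=1}^N$ is a basis of $T_{u_\pm}V_\pm$ and $R$ is a quadratic remainder. Each $v_j$ satisfies $L_{u_\pm}v_j\in E_{u_\pm}$, and since the spanning sections $\widetilde f^\gamma_p$ of $E_{u_\pm}$ have compact support in $s$, outside that support $v_j$ lies in $\ker D_{u_\pm}$. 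By Lemma~\ref{Dukernel}, elliptic regularity, and the standard eigenfunction decomposition at cylindrical ends, each $v_j$ is smooth and, together with all its derivatives, decays strictly faster than $e^{-\delta|s|}$ at the ends of $\dot F_\pm$. Consequently
$$M_i \,:=\, \max_{1\le j\le N}\|\nabla^i v_j\|_\delta \,<\, \infty$$
for every $i\le k$, where the norm is taken over all of $\dot F_\pm$; in particular $M_i$ is a $T$-independent bound on the subregion $\{-2(T+T_0)-1<\pm s<1\}$.

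Finally I would combine these with the equivalence of norms on the finite-dimensional space $T_{u_\pm}V_\pm$: there exists $C_0$ with $|\mathbf{c}|\le C_0\|\phi_\pm^\circ(\mathbf{c})\|_{*,\delta}$ for small $\mathbf{c}$, and a standard Taylor estimate gives $\|\nabla^i R(\mathbf{c})\|_\delta=O(|\mathbf{c}|^2)$ for $i\le k$. These together yield
$$\big\|\nabla^i\phi_\pm^\circ\big|_{-2(T+T_0)-1<\pm s<1}\big\|_\delta \,\le\, N C_0 M_i\,\|\phi_\pm^\circ\|_{*,\delta} + O(\|\phi_\pm^\circ\|_{*,\delta}^2),$$
so shrinking $\mathcal{S}_\pm$ enough to make the quadratic term subordinate to the linear one proves $(\#_k)$.

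The main obstacle is the \emph{uniformity in $T$} of the bound, which reduces to the uniform exponential decay of the basis elements $v_j$ and their derivatives at the ends. The decisive point here is the compact support in $s$ of the spanning sections of $E_{u_\pm}$: it forces each $v_j$ to coincide, outside that support, with a genuine element of $\ker D_{u_\pm}$, to which the asymptotic eigenfunction analysis of Section~\ref{section: Fredholm theory} applies and produces the required decay independently of $T$.
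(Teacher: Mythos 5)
Your argument is correct in outline but takes a genuinely different route from the paper's. The paper disposes of this lemma in one line by invoking the elliptic bootstrapping of Lemma~\ref{lemma: psi pm smooth}: one regards $\phi_\pm^\circ$ as solving a nonlinear Cauchy--Riemann-type equation (namely $\overline\bdry\exp_{u_\pm}\phi_\pm\in\Phi_\pm E_{\phi_\pm}$), differentiates, and applies the Morrey-space interior estimates of \cite{Mo} directly to $\phi_\pm^\circ$; the constants are $T$-independent because the geometry is translation-invariant on the cylindrical ends. You instead exploit the finite-dimensionality of $V_\pm/\R$, Taylor-expand in a chart, and reduce everything to uniform weighted bounds on $\nabla^i v_j$ for a fixed basis of the tangent space. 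Your key observation --- that the spanning sections of $E_{u_\pm}$ have compact $s$-support, so each $v_j$ agrees near the ends with an element of $\ker D_{u_\pm}$ in the weighted space and hence decays, with all derivatives, strictly faster than $e^{-\delta|s|}$ (Lemma~\ref{Dukernel} together with the choice of $\delta$ below the spectral gap) --- is exactly the right explanation of why the bound is uniform in $T$, and it is more transparent on this point than the paper.

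Two steps need attention. First, the remainder estimate $\|\nabla^i R(\mathbf{c})\|_\delta=O(|\mathbf{c}|^2)$ presupposes that the chart $\mathbf{c}\mapsto\phi_\pm^\circ(\mathbf{c})$ is $C^2$ as a map into a space controlling $i$ weighted derivatives; establishing this is precisely the family version of the elliptic bootstrapping of Lemma~\ref{lemma: psi pm smooth}, so your argument relocates rather than removes the PDE input. Second, and more substantively, the inequality $|\mathbf{c}|\le C_0\|\phi_\pm^\circ(\mathbf{c})\|_{*,\delta}$ is not guaranteed: $T_{u_\pm}\mathcal{S}_\pm$ contains the ${\frak a}_\pm$- and $j'_\pm$-directions, and a tangent vector $({\frak j},{\frak a},0)$ with $Y({\frak j})+D_{u_\pm}{\frak a}\in E_{u_\pm}$ would have $\phi_\pm^\circ(\mathbf{c})=O(|\mathbf{c}|^2)$ while $|\mathbf{c}|$ is of first order, so the finite-dimensional ``equivalence of norms'' does not apply as stated. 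To repair this, either run the Taylor argument on a slice transverse to such degenerate directions and treat them separately (along them both $\nabla^i\phi_\pm^\circ$ and $\phi_\pm^\circ$ are quadratic in $\mathbf{c}$, so their ratio is still controlled by second-derivative bounds on the chart), or estimate $\|\nabla^i\phi_\pm^\circ\|_\delta$ directly by the elliptic method, which sidesteps the issue entirely.
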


\begin{proof}
This follows from the elliptic bootstrapping estimates used in the proof of Lemma~\ref{lemma: psi pm smooth}.
\end{proof}

Next let us write $B(\psi_++\phi_+,T) = (\Pi_+^{\psi_+ + \phi_+,T} D_+)^{-1}\Pi_+^{\psi_++\phi_+,T}.$

\begin{lemma} \label{lemma: sufficiently differentiable}
With the above conditions on $\mathcal{J}$, $B(\psi_++\phi_+,T)$ is sufficiently differentiable with respect to $\phi_+\in \mathcal{S}_+$ and $T\in [T_0,\infty)$.
\end{lemma}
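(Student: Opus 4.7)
The plan is to write $B(\psi_+ + \phi_+, T) = (\Pi_+^{\psi_+ + \phi_+, T} D_+)^{-1} \circ \Pi_+^{\psi_+ + \phi_+, T}$ as a composition of three operator-valued maps---construction of the projection, pre-composition with the fixed linearization $D_+$, and operator inversion---and verify that each factor is $C^1$ in $(\phi_+, T) \in \mathcal{S}_+ \times [T_0,\infty)$.

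First I would write the projection explicitly as
$$\Pi_+^{\psi_+ + \phi_+, T}(\xi) = \xi - \sum_i \langle e_i(\psi_+ + \phi_+, T), \xi\rangle\, e_i(\psi_+ + \phi_+, T),$$
where $\{e_i\}$ is an orthonormal basis of the finite-dimensional subspace $\Phi_+^{\psi_{+,T}+\phi_{+,T}} (E'_{\psi_\pm,T})_+$ obtained by Gram--Schmidt from the image under $\Phi_+^{\psi_{+,T}+\phi_{+,T}}$ of the fixed smooth basis $\{\widetilde f_p^{\gamma}\}$ of $E'_+$ defined by Equation~\eqref{f tilde}. It therefore suffices to show that $\Phi_+^{\psi_{+,T}+\phi_{+,T}}$, restricted to the bounded support region of $E'_+$, depends $C^1$-smoothly on $(\phi_+, T)$ in the operator norm on $\mathcal{H}_{0,\delta}$, and that the basepoints $s_{\pm, i}$ that enter into the construction of $E'$ also vary $C^1$ in $(\phi_+, T)$ (which is a standard consequence of the implicit function theorem applied to Step 1 of the proof of Theorem~\ref{thm: construction of L-transverse subbundle}).

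The isomorphism $\Phi_+$ is defined via parallel transport with respect to the Levi-Civita connection along the exponential map, and its coefficients involve not only $\psi_{+,T}+\phi_{+,T}$ but also its first derivatives. A naive variation in $(\phi_+, T)$ would cost a derivative and fail to land in a bounded operator on the chosen spaces---this is the standard loss-of-derivative issue alluded to just before the statement. The hypothesis that $\operatorname{Im}(\mathcal{J})$ satisfies $(\#_k)$, combined with Lemma~\ref{lemma: sharp} for $\phi_+$, provides uniform pointwise $C^k$ control on $\psi_++\phi_+$ on the support region in question, and this is precisely what converts the ``lost'' derivative into a bounded factor. Variation in $T$ only enters through the smooth translate $\tau_{\pm(T+T_0)}$ and the cutoff $\beta_{\pm,T}$, and is handled directly.

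Having established $C^1$-regularity of $\Pi_+^{\psi_+ + \phi_+, T}$ as a bounded operator on $\mathcal{H}_{0,\delta}$, the composition $\Pi_+^{\psi_+ + \phi_+, T}\, D_+ \colon \mathcal{H}_{1,\delta} \to (\Phi_+ E'_+)^\perp \subset \mathcal{H}_{0,\delta}$ is also $C^1$. Claim~\ref{claim: continuous} shows that the unperturbed version is invertible, and nearby members of the family remain so by continuity. Since inversion is a smooth map on the open set of invertibles inside the space of bounded operators, $(\Pi_+^{\psi_+ + \phi_+, T} D_+)^{-1}$ is $C^1$ in $(\phi_+, T)$; composing once more with $\Pi_+^{\psi_+ + \phi_+, T}$ yields the claimed differentiability of $B$. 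The main obstacle throughout is the loss-of-derivative when differentiating $\Phi_+^{\psi_{+,T}+\phi_{+,T}}$, and conditions $(\#_k)$ together with Lemma~\ref{lemma: sharp} are designed exactly to defeat it.
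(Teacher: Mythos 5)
Your decomposition of $B$ into projection, pre-composition with $D_+$, and operator inversion, with $(\#_k)$ and Lemma~\ref{lemma: sharp} supplying the regularity needed to differentiate the orthonormal basis $e_i$, does match the paper's (very terse) argument, and the appeal to smoothness of inversion on the open set of invertible operators is standard and correct. However, you misattribute the source of the derivative loss. You assert that $\Phi_+^{\psi_{+,T}+\phi_{+,T}}$ ``involves not only $\psi_{+,T}+\phi_{+,T}$ but also its first derivatives''; this is not the case. At each $z\in\dot F$, $\Phi_+$ is Levi-Civita parallel transport along the geodesic from $u_{+,T}(z)$ to $\exp_{u_{+,T}(z)}(\psi_{+,T}(z)+\phi_{+,T}(z))$, a pointwise, fiberwise operation that depends smoothly on the pointwise value of $\psi_{+,T}+\phi_{+,T}$ but not on its derivatives, so differentiating $\Phi_+$ with respect to $\phi_+$ or $T$ does not cost a derivative. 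The genuine source of the loss is the dependence of the obstruction subspace $(E'_{\psi_\pm,T})_+$ on the curve through the basepoints $s_{\pm,i}$: as the Appendix makes explicit, $ds_{\pm,i}$ involves $(d\phi_u)^{-1}$ and hence $\nabla u$ evaluated at a point, which requires pointwise control on $\nabla(\psi_++\phi_+)$; that control is precisely what $(\#_k)$ and Lemma~\ref{lemma: sharp} furnish. You relegate the $s_{\pm,i}$ dependence to a parenthetical appeal to the implicit function theorem, which underestimates it --- this is exactly where $(\#_k)$ is indispensable. The logical structure of your argument is otherwise sound and reaches the same conclusion, so this is a misstatement in the exposition rather than a gap.
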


\begin{proof}
This uses the arguments used in Claim~\ref{claim: continuous} as well as Lemma~\ref{lemma: sharp}. The reason for requiring ($\#_k$) is that when we differentiate $e_i(\psi_+ + \phi_+,T)$ with respect to $\phi_+$ or $T$, we lose one derivative, which must be recovered using ($\#_k$).
\end{proof}

\begin{lemma}\label{lemma: I smoothness}
If $\mathcal{S}_\pm$ and $\widetilde\varepsilon>0$ are sufficiently small, $T,r\gg 0$, $\mathcal{J}$ is $C^1$-smooth, and $\op{Im}(\mathcal{J})$ is bounded, then $\mathcal{I}^{\mathcal{J}}$ is $C^1$-smooth. Moreover, if $\op{Im}(\mathcal{J})$ lies in a sufficiently small ball about the origin and the derivative $D\mathcal{J}$ is small, then the derivative $D\mathcal{I}^{\mathcal{J}}$ is small.
\end{lemma}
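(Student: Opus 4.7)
The plan is to view $\mathcal{I}^{\mathcal{J}}$ as a composition of four ingredients and check each separately: (i) the $C^1$-smooth assignment $\mathcal{J}$ producing $(\psi_-,\psi_+)$ from $((\phi_-,j_-'),(\phi_+,j_+'),T)$; (ii) the operator $B(\psi_++\phi_+,T)=(\Pi_+^{\psi_++\phi_+,T}D_+)^{-1}\Pi_+^{\psi_++\phi_+,T}$, which is $C^1$ in $(\phi_+,T)$ by Lemma~\ref{lemma: sufficiently differentiable} and is uniformly bounded in operator norm by the family transversality lemma (Lemma~\ref{lemma: family transversality lemma}); (iii) the translations $\tau_{\pm(T+T_0)}^{\pm1}$, smooth in $T$; and (iv) the bracketed expression
$$D_{+,T}\psi_{+,T}+e_{+,T}(\phi_+)+\mathcal{L}_+(\psi_{-,T}+\phi_{-,T},\psi_{+,T}+\phi_{+,T})+\mathcal{R}'_+(\psi_{+,T}+\phi_{+,T}).$$
Since compositions of $C^1$-maps between Banach spaces are $C^1$, it suffices to analyze the four summands of (iv) as maps into $\mathcal{H}_{0,\delta}(\dot F,\wedge^{0,1}u_{+,T}^*T(\R\times M))$.

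First I would dispose of the purely algebraic summands. The term $e_{+,T}(\phi_+)=\overline{\bdry}\exp_{u_{+,T}}(\phi_{+,T})$ is a smooth nonlinear first-order operator in $\phi_+$ whose Taylor expansion in local coordinates exhibits $C^1$-dependence on $\phi_+$ and $T$, using the embedding $\mathcal{H}_{1,\delta}\hookrightarrow C^{0,1/4}$ of Lemma~\ref{lemma: sobolev embedding} to control pointwise terms. The explicit bilinear expression $\mathcal{L}_+$ and the type-$1$ quadratic expression $\mathcal{R}'_+$ are polynomial in their arguments and in first spatial derivatives, so they are $C^1$ on bounded subsets of $\mathcal{H}_{1,\delta}$ by the same embedding.

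The main obstacle is the apparent loss of one derivative that appears simultaneously in three places: differentiating $\tau_T$ applied to $\psi_\pm$ (which produces $\bdry_s\psi_\pm$), differentiating $D_{+,T}\psi_{+,T}$ in $T$ (which produces $\bdry_s^2\psi_+$ and $\bdry_t\bdry_s\psi_+$ on the neck), and differentiating the orthonormal basis $e_i(\psi_++\phi_+,T)$ hidden inside $\Pi_+^{\psi_++\phi_+,T}$ (which involves another $\nabla$ on $\psi_+$). This is precisely why condition $(\#_k)$ was built into the setup and why $\op{Im}(\mathcal{J})$ is required to satisfy $(\#_k)$: together with Lemma~\ref{lemma: sharp} for the $\phi_\pm$-direction and interior elliptic regularity off the neck, $(\#_k)$ converts each of these formal derivatives into a genuine bounded element of the appropriate Morrey space, with norm bounded by $C\|\psi_\pm\|_{*,\delta}+C\|\phi_\pm\|_{*,\delta}$. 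With this in hand the product rule closes up and $\mathcal{I}^{\mathcal{J}}$ is $C^1$.

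For the \emph{moreover} clause I would expand $D\mathcal{I}^{\mathcal{J}}$ via the chain rule into (a) pieces obtained by differentiating the explicit $T$- and $\phi_\pm$-dependence (through the cutoffs $\beta_{\pm,T}$, the translations, $e_{+,T}(\phi_+)$, $\mathcal{L}_+$, $\mathcal{R}'_+$, and $B$), and (b) pieces passing through $\mathcal{J}$ via the chain rule. For (a), essentially the same line-by-line estimates as in Claims~\ref{claim 2} and \ref{claim 3} of the proof of Lemma~\ref{azalea}, upgraded from Lipschitz to Fr\'echet-derivative form, bound the contribution by sums of terms of the type $\|e_\pm\|_{*,\delta}+\|\psi_\pm+\phi_\pm\|_{*,\delta}+r^{-1/2}e^{-2(\lambda-\delta)T}+e^{-2\delta T}+r^{-1}$, each of which can be made arbitrarily small by shrinking $\op{Im}(\mathcal{J})$, $\mathcal{S}_\pm$, and $\widetilde\varepsilon$ and by taking $r,T$ large. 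For (b), the operator norm is bounded by $\|D\mathcal{J}\|$ times the operator norm of $B$ (uniformly controlled) times the Lipschitz constants of the bracketed expression in $(\psi_-,\psi_+)$ (also controlled by the same Claims), so the hypothesis $\|D\mathcal{J}\|\ll1$ makes (b) small as well. Combining (a) and (b) yields $\|D\mathcal{I}^{\mathcal{J}}\|\ll1$.
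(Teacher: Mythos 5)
Your proposal is correct and follows essentially the same route as the paper's proof: the paper likewise computes $D_1,D_2,D_3$ of $\mathcal{I}_+^{\mathcal{J}}$ term by term (first for constant $\mathcal{J}$, then adding the chain-rule correction through $D\mathcal{J}$), resolves the same loss-of-derivative issue via $(\#_k)$, Lemma~\ref{lemma: sharp}, and Lemma~\ref{lemma: sufficiently differentiable}, and obtains smallness of $D\mathcal{I}^{\mathcal{J}}$ from the same Claim~\ref{claim 1}--type estimates together with the smallness of $\|D\mathcal{J}\|$. The only difference is presentational: you package the argument as a composition of $C^1$ maps, whereas the paper writes out the explicit derivative formulas and bounds each summand.
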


\begin{proof}
We write $\psi_\pm=\mathcal{J}_\pm(\phi_-,\phi_+,T)$.   Suppose that $\mathcal{J}_\pm$ is a constant function; we later explain how to modify the proof in the general case.
%

We first consider the partial derivative $D^\flat_1\mathcal{I}_+^{\mathcal{J}}$, where the superscript $\flat$ indicates that we are assuming that $\mathcal{J}_\pm$ is constant. Let $\zeta_\pm +{\mathfrak a}_\pm \in T_{u_\pm}\mathcal{S}_\pm$, where $\zeta_\pm\in\mathcal{H}_\pm$ and ${\mathfrak a}_\pm\in \R^{2l^\pm}$. We compute
\begin{align} \label{I}
D^\flat_1 & \mathcal{I}_+^{\mathcal{J}} (\phi_-,\phi_+,T)(\zeta_-+{\mathfrak a}_-) \\
\nonumber & = \tfrac{d}{d\tau}|_{\tau=0}B(\psi_+ +\phi_+,T) (\mathcal{L}_+(\psi_{-,T}+\phi_{-,T}+\tau(\zeta_{-,T}+{\mathfrak a}_{-,T}),\psi_{+,T}+\phi_{+,T}))\\
\nonumber &= B(\psi_+ +\phi_+,T)  (\tfrac{\bdry\beta_{-,T}}{\bdry s}(\zeta_{-,T}+{\mathfrak a}_{-,T}) + {\mathfrak l}(\eta_{+,T})\tfrac{\bdry\zeta_{-,T}}{\bdry t} + {\mathfrak l}(\psi_{+,T}+\phi_{+,T})\tfrac{\bdry\zeta_{-,T}}{\bdry t}),
\end{align}
using the assumption that ${\mathfrak a}_-$ is constant for $s\geq 0$.
Estimates similar to those of Claim~\ref{claim 1} imply that
\begin{align} \label{result of I}
\|D^\flat_1 \mathcal{I}_+^{\mathcal{J}} (\phi_-,\phi_+,T)(\zeta_-+{\mathfrak a}_-) \|_{*,\delta} \leq c(r^{-1} + \widetilde{C}(T) + \|\psi_+\|_{*,\delta} +\|\phi_+\|_{*,\delta})\cdot \|\zeta_{-}+{\mathfrak a}_-\|_{*,\delta},
\end{align}
where $\widetilde{C}(T)\to 0$ as $T\to\infty$. Hence the partial derivative $D^\flat_1$ exists. We are assuming that $\|\phi_+\|_{*,\delta}$ is sufficiently small and $T,r\gg 0$. If we assume that $\|\psi_+\|_{*,\delta}$ in addition, then there exists $0<C\ll 1$ such that
$$\|D^\flat_1 \mathcal{I}_+^{\mathcal{J}} (\phi_-,\phi_+,T)(\zeta_-+{\mathfrak a}_-) \|_{*,\delta} \leq C\cdot \|\zeta_{-}+{\mathfrak a}_-\|_{*,\delta}.$$

Similarly,
\begin{align*}
(D^\flat_1 &\mathcal{I}_+^{\mathcal{J}}(\phi_-,\phi_+,T)- D^\flat_1 \mathcal{I}_+^{\mathcal{J}}(\phi_-',\phi_+',T')) (\zeta_-+{\mathfrak a}_-) \\
= & B(\psi_+ +\phi_+,T) (\tfrac{\bdry\beta_{-,T}}{\bdry s}(\zeta_{-,T}+{\mathfrak a}_{-,T}) + {\mathfrak l}(\eta_{+,T})\tfrac{\bdry\zeta_{-,T}}{\bdry t} + {\mathfrak l}(\psi_{+,T}+\phi_{+,T})\tfrac{\bdry\zeta_{-,T}}{\bdry t}) \\
& - B(\psi_+ +\phi_+', T') (\tfrac{\bdry\beta_{-,T'}}{\bdry s}(\zeta_{-,T'}+{\mathfrak a}_{-,T'}) + {\mathfrak l}(\eta_{+,T'})\tfrac{\bdry\zeta_{-,T'}}{\bdry t} + {\mathfrak l}(\psi_{+,T'}+\phi'_{+,T'})\tfrac{\bdry\zeta_{-,T'}}{\bdry t}),
\end{align*}
and in view of Lemma~\ref{lemma: sufficiently differentiable} and the ($\#_k$)-condition applied to $\zeta_-$ there exists a constant $C>0$ such that:
\begin{align*}
\|((D^\flat_1 &\mathcal{I}_+^{\mathcal{J}}(\phi_-,\phi_+,T)- D^\flat_1 \mathcal{I}_+^{\mathcal{J}}(\phi_-',\phi_+',T')) (\zeta_-+{\mathfrak a}_-)\|_{*,\delta} \\
& \leq C (\|\phi_+-\phi_+'\|_{*}^c + |T-T'|) \cdot \|\zeta_{-}+{\mathfrak a}_-\|_{*,\delta}.
\end{align*}
(Moreover, $0<C\ll 1$.) Here the superscript $c$ means restriction of the function to
$$-(\max(T,T')+T_0) \leq s\leq (\max(T,T') + T_0).$$
This proves that $D^\flat_1\mathcal{I}_+^{\mathcal{J}}$ is in $C^0$.

Next we compute
\begin{align} \label{main}
D^\flat_2  \mathcal{I}_+^{\mathcal{J}} (\phi_-,&\phi_+,T)(\zeta_++{\mathfrak a}_+) =   \tfrac{dB(\psi_++\phi_++\tau(\zeta_++{\mathfrak a}_+),T)}{d\tau}|_{\tau=0}(v (\psi_-+\phi_-,\psi_++\phi_+,T)) \\
\nonumber &  +B(\psi_+ +\phi_+,T) (D^\flat_{+,T}(\zeta_{+,T}+{\mathfrak a}_{+,T}) \\
\nonumber & \quad + D^\flat_2 \mathcal{L}_+(\psi_{-,T}+\phi_{-,T},\psi_{+,T}+\phi_{+,T})(\zeta_{+,T}+{\mathfrak a}_{+,T}) \\
\nonumber & \quad +\tfrac{d}{d\tau}|_{\tau=0} e_{+,T}(\phi_+ +\tau(\zeta_++{\mathfrak a}_+)) + D^\flat\mathcal{R}'_+(\psi_{+,T}+\phi_{+,T}) (\zeta_{+,T}+{\mathfrak a}_{+,T})),
\end{align}
where we are writing
\begin{align*}
v(\psi_-+\phi_-,\psi_++\phi_+,T)& =D^\flat_{+,T}\psi_{+,T} +e_{+,T}(\phi_+) + \mathcal{L}_+(\psi_{-,T}+\phi_{-,T},\psi_{+,T}+\phi_{+,T}) \\
& \qquad\qquad+ \mathcal{R}'_+(\psi_{+,T}+\phi_{+,T}).
\end{align*}
We compute
\begin{align} \label{L}
D^\flat_2 \mathcal{L}_+(&\psi_{-,T}+\phi_{-,T}, \psi_{+,T}+\phi_{+,T})(\zeta_{+,T}+{\mathfrak a}_{+,T}) \\
\nonumber & = {\mathfrak l}(\tfrac{\bdry(\psi_{-,T}+\phi_{-,T})}{\bdry t}) (\zeta_{+,T}+{\mathfrak a}_{+,T})  + {\mathfrak l}(\eta_{+,T})\tfrac{\bdry\zeta_{+,T}}{\bdry t},
\end{align}
\begin{align}\label{R}
D^\flat\mathcal{R}'_+  (\psi_{+,T}&+\phi_{+,T})  (\zeta_{+,T}+{\mathfrak a}_{+,T}) \\
\nonumber = & {\mathfrak l}(\psi_{+,T}+\phi_{+,T}) (\zeta_{+,T}+{\mathfrak a}_{+,T}) + {\mathfrak l}(\psi_{+,T}+\phi_{+,T})(\nabla \zeta_{+,T}) \\
\nonumber &  + {\mathfrak l}(\nabla (\psi_{+,T}+\phi_{+,T}))(\zeta_{+,T}+{\mathfrak a}_{+,T}).
\end{align}
Combining Equations~\eqref{main}, \eqref{L}, and \eqref{R} we obtain
\begin{align}\label{II}
D^\flat_2 & \mathcal{I}_+^{\mathcal{J}} (\phi_-,\phi_+,T)(\zeta_++{\mathfrak a}_+) \\
\nonumber =&  \tfrac{dB(\psi_++\phi_++\tau(\zeta_++{\mathfrak a}_+),T)}{d\tau}|_{\tau=0}(v(\psi_- +\phi_-,\psi_+ + \phi_+,T)) \\
\nonumber & + B(\psi_+ +\phi_+,T)  [\tfrac{d}{d\tau}|_{\tau=0} e_{+,T}(\phi_+ +\tau(\zeta_++{\mathfrak a}_+)) \\
\nonumber & \quad +{\mathfrak l}(\tfrac{\bdry(\psi_{-,T}+\phi_{-,T})}{\bdry t}) (\zeta_{+,T}+{\mathfrak a}_{+,T})  + {\mathfrak l}(\eta_{+,T})\tfrac{\bdry\zeta_{+,T}}{\bdry t} + {\mathfrak l}(\psi_{+,T}+\phi_{+,T})(\zeta_{+,T}+{\mathfrak a}_{+,T}) \\
\nonumber & \quad + {\mathfrak l}(\psi_{+,T}+\phi_{+,T})(\nabla \zeta_{+,T}) + {\mathfrak l}(\nabla (\psi_{+,T}+\phi_{+,T}))(\zeta_{+,T}+{\mathfrak a}_{+,T})].
\end{align}

Let $[\cdot]$ denote the expression inside the brackets. We first bound
$$\| \tfrac{d}{d\tau}|_{\tau=0} e_{+,T}(\phi_+ +\tau(\zeta_++{\mathfrak a}_+))\|_\delta^{+,T}\leq c \|e_+\|_{*,\delta}\cdot \|\zeta_++{\mathfrak a}_+\|_{*,\delta},$$
where $c>0$.  The remainder of the terms can be estimated as in Claim~\ref{claim 1} and:
\begin{align}\label{result of II}
\|[\cdot]\|_{\delta}^{+,T} & \leq   c(\|e_+\|_{*,\delta}+\widetilde C(T)+\|\psi_+\|_{*,\delta}+\|\psi_-\|_{*,\delta}\\
\nonumber & \qquad\qquad +\|\phi_+\|_{*,\delta} + \|\phi_-\|_{*,\delta} )\cdot \|\zeta_++{\mathfrak a}_+\|_{*,\delta},
\end{align}
where $\widetilde{C}(T)\to 0$ as $T\to \infty$.

We also compute
\begin{align*}
\tfrac{d}{d\tau}|_{\tau=0} &\Pi_+^{\psi_+ + \phi_+ + \tau (\zeta_++{\mathfrak a}_+),T}(v) \\
= &  \sum_i\langle \tfrac{d}{d\tau}|_{\tau=0}e_i(\psi_++\phi_+ + \tau (\zeta_++{\mathfrak a}_+),T),v\rangle e_i(\psi_++\phi_+,T)  \\
& + \sum_i \langle e_i(\psi_++\phi_+,T),v \rangle \tfrac{d}{d\tau}|_{\tau=0} e_i(\psi_++\phi_+ + \tau (\zeta_++{\mathfrak a}_+),T),
\end{align*}
where $\tfrac{d}{d\tau}|_{\tau=0}e_i(\psi_+ +\phi_+ + \tau (\zeta_++{\mathfrak a}_+),T)$ has terms of the form
$$\langle \nabla e_i(\psi_++\phi_+,T)\cdot  (\zeta_++{\mathfrak a}_+), e_j(\psi_+ +\phi_+,T)\rangle.$$ 
We then bound
\begin{align} \label{result of II prime}
\|\tfrac{d}{d\tau}|_{\tau=0}& \Pi_+^{\psi_+ + \phi_+ + \tau (\zeta_++{\mathfrak a}_+),T}(v)\|_\delta^T \leq c \|v\|_\delta^T \cdot \|\zeta_++{\mathfrak a}_+\|_{*,\delta}\\
\nonumber &\leq c(\|\psi_+\|_{*,\delta}+\|\psi_-\|_{*,\delta} + \|\phi_+\|_{*,\delta} + \|\phi_-\|_{*,\delta})\|\zeta_++{\mathfrak a}_+\|_{*,\delta}.
\end{align}
Combining Equations~\eqref{result of II} and \eqref{result of II prime} we obtain
\begin{align} \label{result of II main}
\| D^\flat_2\mathcal{I}_+^{\mathcal{J}} (\phi_-,\phi_+,T) (\zeta_++{\mathfrak a}_+)\|_{*,\delta} &\leq c(\|e_+\|_{*,\delta}+\widetilde C(T)+\|\psi_+\|_{*,\delta}+\|\psi_-\|_{*,\delta}\\
\nonumber &\qquad +\|\phi_+\|_{*,\delta} + \|\phi_-\|_{*,\delta} )\cdot\|\zeta_++{\mathfrak a}_+\|_{*,\delta}.
\end{align}
Hence the partial derivative $D^\flat_2\mathcal{I}_+$ exists.  Assuming that $\|\psi_\pm\|_{*,\delta}$, $\|\phi_\pm\|_{*,\delta}$, and $\|e_+\|_{*,\delta}$ are sufficiently small and $T\gg 0$, it follows that there exists $0<C\ll 1$ such that
\begin{align*}
\| D^\flat_2\mathcal{I}_+^{\mathcal{J}} (\phi_-,\phi_+,T) (\zeta_++{\mathfrak a}_+)\|_{*,\delta} & \leq C \|\zeta_++{\mathfrak a}_+\|_{*,\delta}.
\end{align*}

Similarly,
\begin{align*}
(D^\flat_2  &\mathcal{I}_+^{\mathcal{J}} (\phi_-,\phi_+,T)  -  D^\flat_2  \mathcal{I}_+^{\mathcal{J}} (\phi'_-,\phi'_+,T')) (\zeta_++{\mathfrak a}_+)  \\
=&\tfrac{dB(\psi_++\phi_++\tau (\zeta_++{\mathfrak a}_+),T)}{d\tau}|_{\tau=0}(v(\psi_-+\phi_-,\psi_++\phi_+,T))\\
&- \tfrac{dB(\psi_+ +\phi_+' + \tau (\zeta_++{\mathfrak a}_+),T')}{d\tau}|_{\tau=0}(v(\psi_-+\phi_-',\psi_++\phi_+',T'))\\
&+  B(\psi_+ +\phi_+,T)  ({\mathfrak l}(\tfrac{\bdry(\psi_{-,T}+\phi_{-,T})}{\bdry t}) (\zeta_{+,T}+{\mathfrak a}_{+,T}) + {\mathfrak l}(\eta_{+,T})\tfrac{\bdry\zeta_{+,T}}{\bdry t} \\
& \quad + {\mathfrak l}(\psi_{+,T}+\phi_{+,T}) (\zeta_{+,T}+{\mathfrak a}_{+,T})  + {\mathfrak l}(\psi_{+,T}+\phi_{+,T})(\nabla \zeta_{+,T}) \\
& \quad + {\mathfrak l}(\nabla (\psi_{+,T}+\phi_{+,T})) (\zeta_{+,T}+{\mathfrak a}_{+,T}) )\\
&- B(\psi_+ +\phi'_+,T')  ({\mathfrak l}(\tfrac{\bdry(\psi_{-,T'} +\phi'_{-,T'})}{\bdry t}) (\zeta_{+,T'}+{\mathfrak a}_{+,T'}) + {\mathfrak l}(\eta_{+,T'})\tfrac{\bdry\zeta_{+,T'}}{\bdry t}\\
&\quad + {\mathfrak l}(\psi_{+,T'}+\phi'_{+,T'}) (\zeta_{+,T'}+{\mathfrak a}_{+,T'})  + {\mathfrak l}(\psi_{+,T'}+\phi_{+,T'}')(\nabla \zeta_{+,T'})\\
&\quad + {\mathfrak l}(\nabla (\psi_{+,T'}+\phi'_{+,T'})) (\zeta_{+,T'}+{\mathfrak a}_{+,T'}) ).
\end{align*}
We have bounds
\begin{align*}
\|{\mathfrak l}(\tfrac{\bdry(\psi_{-,T}+\phi_{-,T})}{\bdry t})& (\zeta_{+,T}+{\mathfrak a}_{+,T})  - {\mathfrak l}(\tfrac{\bdry(\psi_{-,T'} +\phi'_{-,T'})}{\bdry t}) (\zeta_{+,T'}+{\mathfrak a}_{+,T'}) \|_\delta \\
& \leq C (\| \phi_- - \phi_-'\|_{*,\delta}^c +|T-T'|) \cdot \|\zeta_++{\mathfrak a}_+\|_{*,\delta},
\end{align*}
\begin{align*}
\| {\mathfrak l}(\eta_{+,T})\tfrac{\bdry\zeta_{+,T}}{\bdry t} -  {\mathfrak l}(\eta_{+,T'})\tfrac{\bdry\zeta_{+,T'}}{\bdry t}\|_\delta & \leq  C|T-T'| \cdot \|\zeta_+\|_{*,\delta},
\end{align*}
\begin{align*}
\|{\mathfrak l}(\psi_{+,T}+\phi_{+,T})  & (\zeta_{+,T}+{\mathfrak a}_{+,T})  -{\mathfrak l}(\psi_{+,T'}+\phi'_{+,T'})  (\zeta_{+,T'}+{\mathfrak a}_{+,T'})  \|_\delta \\
& \leq C (\| \phi_+ - \phi_+'\|_{*,\delta}^c +|T-T'|) \cdot \|\zeta_++{\mathfrak a}_+\|_{*,\delta},
\end{align*}
\begin{align*}
\|{\mathfrak l}(\psi_{+,T}+\phi_{+,T})& (\nabla \zeta_{+,T})  -{\mathfrak l}(\psi_{+,T'}+\phi_{+,T'}')(\nabla \zeta_{+,T'})\|_\delta \\
& \leq C (\| \phi_+ - \phi_+'\|_{*,\delta}^c +|T-T'|) \cdot \|\zeta_+\|_{*,\delta},
\end{align*}
\begin{align*}
\| {\mathfrak l}(\nabla (\psi_{+,T}+\phi_{+,T}))& (\zeta_{+,T}+{\mathfrak a}_{+,T}) - {\mathfrak l}(\nabla (\psi_{+,T'}+\phi'_{+,T'})) (\zeta_{+,T'}+{\mathfrak a}_{+,T'}) )\|_\delta  \\
& \leq C (\| \phi_+ - \phi_+'\|_{*,\delta}^c +|T-T'|) \cdot  \|\zeta_++{\mathfrak a}_+\|_{*,\delta}.
\end{align*}
Together with the bounds for
$$B(\psi_++\phi_+,T)-B(\psi_++\phi_+,T') ~ \mbox{ and } ~ \tfrac{dB(\psi_++\phi_++\tau(\zeta_++{\mathfrak a}_+),T)}{d\tau}|_{\tau=0},$$
we obtain
\begin{align*}
\|(D^\flat_2  &\mathcal{I}_+^{\mathcal{J}} (\phi_-,\phi_+,T)  -  D^\flat_2  \mathcal{I}_+^{\mathcal{J}} (\phi'_-,\phi'_+,T'))(\zeta_++{\mathfrak a}_+)\|_{*,\delta} \\
&\leq C(\| \phi_+ - \phi_+'\|_{*,\delta}^c   + \| \phi_- - \phi_-'\|_{*,\delta}^c + |T-T'|)\cdot  \|\zeta_++{\mathfrak a}_+\|_{*,\delta},
\end{align*}
where $C>0$. Hence $D^\flat_2\mathcal{I}_+^{\mathcal{J}}$ is in $C^0$.

Next consider
\begin{align} \label{III}
D^\flat_3\mathcal{I}_+^{\mathcal{J}}(\phi_-,\phi_+,T)(\mathfrak t)= & \tfrac{d}{d\tau}|_{\tau=0}B(\psi_+ + \phi_+, T+\tau{\mathfrak t})(v(\psi_- + \phi_-, \psi_+ +\phi_+,T))\\
\nonumber &+ B(\psi_+ +\phi_+,T) \tfrac{d}{d\tau}|_{\tau=0} v(\psi_- + \phi_-, \psi_+ +\phi_+,T+\tau {\mathfrak t}).
\end{align}
The bound
\begin{align} \label{result of III}
\|D^\flat_3\mathcal{I}_+^{\mathcal{J}}(\phi_-,\phi_+,T)(\mathfrak t)\|_{*,\delta} \leq c (\|\psi_+\|_{*,\delta}+\|\psi_-\|_{*,\delta} +\|\phi_+\|_{*,\delta} + \|\phi_-\|_{*,\delta} )\cdot |{\mathfrak t}|,
\end{align}
follows from applying Lemma~\ref{lemma: sufficiently differentiable} to the first line and ($\#_k$) to the second.  Its differentiability also follows in a similar manner and is left to the reader.

Consider the general case where $\mathcal{J}_\pm$. One can then verify that
\begin{align} \label{difference}
D_1&\mathcal{I}_+^{\mathcal{J}}(\phi_-,\phi_+,T)(\zeta_- +{\mathfrak a}_-) - D_1^\flat\mathcal{I}_+^{\mathcal{J}}(\phi_-,\phi_+,T)(\zeta_- +{\mathfrak a}_-)\\
\nonumber &= D_1^\flat\mathcal{I}_+^{\mathcal{J}}(\phi_-,\phi_+,T)\circ D\mathcal{J}_-(\phi_-,\phi_+,T)(\zeta_-+{\mathfrak a}_-).
\end{align}
The second row of Equation~\eqref{difference} can be bounded above by $C\cdot\|\zeta_- +{\mathfrak a}_-\|_{*,\delta}$ with $0<C\ll 1$ if  $\|\psi_\pm\|_{*,\delta}$ and $D\mathcal{J}$ are sufficiently small.  The situation for $D_2\mathcal{I}_+^{\mathcal{J}}$ and $D_3\mathcal{I}_+^{\mathcal{J}}$ are similar.

This completes the proof that $\mathcal{I}_+^{\mathcal{J}}$ and hence $\mathcal{I}^{\mathcal{J}}$ are in $C^1$.  Moreover, if $\op{Im}(\mathcal{J})$ lies in a sufficiently small ball about the origin and the derivative $D\mathcal{J}$ is small, then $D\mathcal{I}^{\mathcal{J}}$ is small.
\end{proof}

We are now in a position to prove Theorem~\ref{thm: gluing map smooth}.

\begin{proof}[Proof of Theorem~\ref{thm: gluing map smooth}]$\mbox{}$

\s\n
{\em $C^1$-smoothness.} Recall that we are assuming that $\mathcal{S}_\pm$ are sufficiently small neighborhoods of $u_\pm$. By Lemmas~\ref{azalea}--\ref{estimate for psi and tau} we see that the unique solution $(\psi_-,\psi_+)$ to $\Theta_\pm(\psi_-,\psi_+)=0$ for $(\phi_-,\phi_+,T)\in \mathcal{S}_-\times\mathcal{S}_+\times[T_0,\infty)$ is obtained as follows:  Let $\mathcal{J}_0=0$, $\mathcal{J}_1=\mathcal{I}^{\mathcal{J}_0}$, and $\mathcal{J}_i=\mathcal{I}^{\sum_{j=1}^{i-1} \mathcal{J}_j}$.  Also let us write $(\mathcal{J}_i)_\pm$ as the $\mathcal{H}_\pm$ component of $\mathcal{J}_i$. Then
\begin{equation}\label{eqn: psi pm}
(\psi_-,\psi_+)=\sum_{j=1}^\infty \mathcal{J}_j(\phi_-,\phi_+,T),
\end{equation}
and
\begin{equation}\label{eqn: defn of G}
G(\psi_-,\psi_+,T):= \exp_{u_*}(\beta_{-,T}(\phi_{-,T}+\psi_{-,T}) +\beta_{+,T}(\phi_{+,T}+\psi_{+,T})).
\end{equation}
Each $\mathcal{J}_j(\phi_-,\phi_+,T)$ is in $C^1$ by Lemma~\ref{lemma: I smoothness} and it remains to show a bound of type $\|D\mathcal{J}_i\|\leq C^i$, where $0<C\ll 1$.

We will check the easiest case $\mathcal{J}_2=\mathcal{I}^{\mathcal{J}_1}$, leaving the higher $\mathcal{J}_i$ to the reader. Let $\psi^1_\pm=(\mathcal{J}_1)_\pm(\phi_-,\phi_+,T)$. Then
\begin{align} \label{uno}
\Pi_+^{\phi_{+},T}D_{+,T}(\psi_{+,T}^1) =-\Pi_+^{\phi_{+},T}(&D_{+,T}(\phi_{+,T}) +  e_{+,T}   \\
\nonumber & +\mathcal L_+(\phi_{-,T},\phi_{+,T}) +\mathcal{R}_+(\phi_{+,T})).
\end{align}
Using Equation~\eqref{uno} we can write:
\begin{align} \label{eqn: J 2}
(\mathcal{J}_2)_\pm & (\phi_-,\phi_+,T) \\ \nonumber=&(\Pi_+^{\psi_+^1+\phi_+,T}D_{+,T})^{-1}(\Pi_+^{\psi_+^1+\phi_+,T}-\Pi_+^{\phi_+,T})(D_{+,T}(\psi_{+,T}^1+\phi_{+,T})+e_{+,T}\\
\nonumber &\qquad \qquad +\mathcal L_+(\psi_{-,T}^1 +\phi_{-,T},\psi_{+,T}^1+\phi_{+,T}) + \mathcal{R}_+(\psi_{+,T}^1+\phi_{+,T}) )\\
\nonumber & +(\Pi_+^{\psi_+^1+\phi_+,T}D_{+,T})^{-1}\Pi_+^{\phi_+,T} ( \mathcal L_+(\psi_{-,T}^1 +\phi_{-,T},\psi_{+,T}^1+\phi_{+,T})- \mathcal L_+(\phi_{-,T},\phi_{+,T})\\
\nonumber & \qquad \qquad +\mathcal{R}_+(\psi_{+,T}^1+\phi_{+,T}) -\mathcal{R}_+(\phi_{+,T})).
\end{align}

Calculations similar to those of Lemma~\ref{lemma: I smoothness} imply that:
\begin{align*}
\| D \mathcal{J}_2&(\phi_-,\phi_+,T) (\zeta_-+{\mathfrak a}_-,\zeta_++{\mathfrak a}_+,{\mathfrak t})\|_{*,\delta}\\
& \leq c (\|\psi_-^1\|_{*,\delta}+\|\psi_+^1\|_{*,\delta})(\|\zeta_-+{\mathfrak a}_-\|_{*,\delta}+ \|\zeta_++{\mathfrak a}_+\|_{*,\delta}+|{\mathfrak t}|).
\end{align*}
[Sketch of calculation: The derivative of Equation~\eqref{eqn: J 2} consists of six terms since Equation~\eqref{eqn: J 2} is the sum of two terms, each of which is a product of three terms. The six terms can all be estimated in the same manner and we calculate some representative terms. We estimate
\begin{align}
\label{temp1} \tfrac{d}{d\tau}|_{\tau=0}( \mathcal L_+(\psi_{-,T}^1 +\phi_{-,T}+&\tau \zeta_{-,T},\psi_{+,T}^1+\phi_{+,T}+\tau\zeta_{+,T})\\
\nonumber &- \mathcal L_+(\phi_{-,T}+\tau\zeta_{-,T},\phi_{+,T}+\tau\zeta_{+,T}))
\end{align}
as follows.  Observe that $\mathcal L_+$ is bilinear in each variable.  Hence we have an expression of the form
\begin{align*}
(\psi_- + \phi_-+\tau\zeta_-)(\psi_+ + \phi_+ + \tau\zeta_+)-(\phi_-+\tau\zeta_-)(\phi_+ +\tau\zeta_+)\\
= (\psi_-+\phi_-)(\psi_+ + \phi_+) -\phi_-\phi_+ + \tau(\zeta_-\psi_+ + \zeta_+\psi_-) +\mbox{h.o.}
\end{align*}
Hence Equation~\eqref{temp1} is bounded above by
$$ c(\|\psi_-^1\|_{*,\delta}+\|\psi_+^1\|_{*,\delta})(\|\zeta_-\|_{*,\delta}+\|\zeta_+\|_{*,\delta}).$$
Similarly,
\begin{align}
\mathcal L_+(\psi_{-,T}^1 +\phi_{-,T},\psi_{+,T}^1+\phi_{+,T})- \mathcal L_+(\phi_{-,T},\phi_{+,T})
\end{align}
is bounded above by
$$c(\|\psi_-^1\|_{*,\delta}+\|\psi_+^1\|_{*,\delta})(\|\phi_-\|_{*,\delta}+\|\phi_+\|_{*,\delta}).$$
This concludes the sketch.]
We also calculate:
\begin{align*}
\|(D \mathcal{J}_2&(\phi_-,\phi_+,T)-  D \mathcal{J}_2(\phi_-',\phi_+',T'))(\zeta_-+{\mathfrak a}_-,\zeta_++{\mathfrak a}_+,{\mathfrak t})\|_{*,\delta}\\
& \leq c (\|\psi_-^1\|_{*,\delta}+\|\psi_+^1\|_{*,\delta})(\|\phi_+-\phi'_+\|^c_* + \|\phi_--\phi_-'\|^c_*+|T-T'|)\\
& \qquad \cdot (\|\zeta_-+{\mathfrak a}_-\|_{*,\delta} + \|\zeta_++{\mathfrak a}_+\|_{*,\delta}+|{\mathfrak t}|).
\end{align*}
This completes the proof of Theorem~\ref{thm: gluing map smooth}.

\s\n
{\em Local embedding.} It suffices to show that $G$ is a local $C^1$-embedding, i.e., $DG$ is an isomorphism at any $(\phi_-,\phi_+,T)$, provided $T_0\gg 0$. (It is not hard to see that, for any $d>0$, there exists $T_0\gg 0$ such that if $(\phi_-,\phi_-,T)$ and $(\phi_-',\phi_+',T')$ are a distance $d$ apart, then $G(\phi_-,\phi_+,T)$ and $G(\phi_-',\phi_+',T')$ cannot be equal.)

First consider the map
$$H:(\phi_-,\phi_+,T)\mapsto (\phi_-,\phi_+,T)+\sum_{j=1}^\infty(\mathcal{J}_j(\phi_-,\phi_+,T),0).$$
Since its leading term is the identity map and the subsequent terms $\mathcal{J}_j$  have derivatives bounded by a constant $C^j$ with $0<C\ll 1$, it follows that $DH$ is invertible at any $(\phi_-,\phi_+,T)$, provided $\widetilde\varepsilon>0$ is sufficiently small and $T_0>0$ is sufficiently large.

Next, instead of $G$ we consider the map
$$\widetilde G: (\phi_-,\phi_+,T)\mapsto \beta_{-,T}(\phi_{-,T}+\psi_{-,T})+\beta_{+,T}(\phi_{+,T}+\psi_{+,T})$$
without the $\exp$.  If $T_0\gg 0$, then from the considerations of the previous paragraph $D\widetilde G(\phi_-,\phi_+,T)|_{T_{\phi_-}\mathcal{S}_-\times T_T[T_0,\infty)}$ is very close to
$$(\zeta_-+{\mathfrak a}_-,{\mathfrak t})\mapsto \zeta_{-,T}+{\mathfrak a}_{-,T}-{\mathfrak t}\tfrac{\bdry }{\bdry s}$$
when the right-hand side is restricted to $s\leq -(T+T_0)$.  Since $T_{\phi_-}\mathcal{S}_-$ and $\tfrac{\bdry}{\bdry s}$ are independent, we see that $D\widetilde G$ is injective, which in turn implies that $G$ is a local $C^1$-embedding.
\end{proof}

\subsection{Surjectivity of the gluing map} \label{subsection: surjectivity}

\begin{thm} \label{thm: surjectivity}
Given compact subsets $\mathcal{K}_\pm \subset \mathcal{S}_\pm$, there exist $T_0\gg 0$ and $\widetilde \delta>0$ such that, for all curves $u_0$ that are $\widetilde\delta$-close to breaking into $(u'_-,u'_+,T')\in \mathcal{K}_-\times \mathcal{K}_+\times[2T_0,\infty)$, there exists a triple $(\phi_-,\phi_+,T)\in \mathcal{S}_-\times\mathcal{S}_+\times[T_0,\infty)$ such that $G(\phi_-,\phi_+,T)=u_0$.
\end{thm}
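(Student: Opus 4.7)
The plan is to extract a triple $(\phi_-,\phi_+,T)\in\mathcal{S}_-\times\mathcal{S}_+\times[T_0,\infty)$ canonically from $u_0$ and then to invoke the uniqueness of the fixed point in Lemma~\ref{estimate for psi and tau} to conclude $G(\phi_-,\phi_+,T)=u_0$. First I would use Definition~\ref{defn: close to breaking} to adapt neck coordinates to $u_0$ and to read off a natural gluing length $T$. The identification of $(\ddot F_0,j_0)$ with a glued surface $(\ddot F'_-,j_-')\cup A_\gamma\cup(\ddot F'_+,j_+')$ comes with an annulus $A_\gamma$ of length $\approx 2T'$; fixing the $\R$-translation ambiguity (for instance by requiring the midpoint $s=0$ of $A_\gamma$ to coincide with the point where $\int_0^{\mathcal{A}_\alpha(\gamma)}|\eta(s,t)|^2\,dt$ is minimised) produces a well-defined gluing parameter $T=T(u_0)$ close to $T'$, together with a decomposition $u_0=\exp_{u_*}(\xi_0)$, where $u_*=u_*(u'_-,u'_+,T)$ is the pregluing from Section~\ref{subsection:pregluing} and $\xi_0$ is a small section.

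Next I would solve for $(\phi_-,\phi_+)\in\mathcal{S}_-\times\mathcal{S}_+$ and perpendicular remainders $(\widetilde\psi_-,\widetilde\psi_+)\in\mathcal{H}_-^{\perp,\widetilde\psi_-,T}\times\mathcal{H}_+^{\perp,\widetilde\psi_+,T}$ in the ansatz
\begin{equation*}
\xi_0=\beta_{-,T}(\phi_{-,T}+\widetilde\psi_{-,T})+\beta_{+,T}(\phi_{+,T}+\widetilde\psi_{+,T}).
\end{equation*}
This system is set up so that its linearisation at the origin is the splitting of $\xi_0$ into components tangent to $V_\pm/\R$ (which yield $\phi_\pm$) and components orthogonal to them (which yield $\widetilde\psi_\pm$), and hence is an isomorphism once $T\gg 0$. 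The nonlinear dependence of the orthogonality condition on $\widetilde\psi_\pm$ itself is mild and contracts under iteration, so the implicit function theorem (or equivalently a Banach contraction argument in the spirit of Lemma~\ref{azalea}) produces a unique small solution $(\phi_-,\phi_+,\widetilde\psi_-,\widetilde\psi_+)$.

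With $(\phi_-,\phi_+,T)$ and $(\widetilde\psi_-,\widetilde\psi_+)$ in hand, the hypothesis $\overline\bdry_J u_0\in E'_{(\F_0,u_0)}$ (implicit in $u_0$ being in the target $\widetilde{\mathcal{G}}^{E'}_{\widetilde\delta}$ of the gluing map) unwinds exactly as in Section~\ref{subsection: gluing} to the pair of equations $\Theta_\pm(\widetilde\psi_-,\widetilde\psi_+)=0$ together with the orthogonality constraints. But Lemma~\ref{estimate for psi and tau} guarantees that this system has a unique solution in a small ball, and that solution is by construction the pair $(\psi_-,\psi_+)$ used to define $G(\phi_-,\phi_+,T)$. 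Therefore $\widetilde\psi_\pm=\psi_\pm$ and $G(\phi_-,\phi_+,T)=u_0$.

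The main obstacle is the orthogonal decomposition step: one must simultaneously arrange that the tangent part $\phi_\pm$ lies in the obstruction-bundle slice $\mathcal{S}_\pm$ rather than being an arbitrary small deformation of $u_\pm$ in $\mathcal{H}_\pm$, and that the normal part $\widetilde\psi_\pm$ satisfies the \emph{nonlinear} orthogonality constraint $\widetilde\psi_\pm\in\mathcal{H}_\pm^{\perp,\widetilde\psi_\pm,T}$. Both demands are controlled by the same type of estimates used to prove Lemmas~\ref{azalea}--\ref{estimate for psi and tau}, but making them interact correctly while preserving the $\R$-translation normalisation used to define $T$ requires the care developed in Section~\ref{subsection: C1 smoothness}, in particular the treatment of the Teichm\"uller slices $\widetilde{\mathcal{U}}_\pm$.
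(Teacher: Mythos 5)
Your overall strategy --- extract $(\phi_-,\phi_+,T)$ from $u_0$ by decomposing it relative to a pregluing and then invoke uniqueness of the fixed point in Lemma~\ref{estimate for psi and tau} to identify $u_0$ with $G(\phi_-,\phi_+,T)$ --- is a legitimate alternative to what the paper does, and the final ``uniqueness closes the loop'' step is sound. But there is a genuine gap at the decisive point. You assert that the ansatz
$$\xi_0=\beta_{-,T}(\phi_{-,T}+\widetilde\psi_{-,T})+\beta_{+,T}(\phi_{+,T}+\widetilde\psi_{+,T})$$
can be solved because ``its linearisation at the origin is the splitting of $\xi_0$ into components tangent to $V_\pm/\R$ and components orthogonal to them, and hence is an isomorphism once $T\gg0$.'' This is not a splitting of a single function space: the left side lives on $\dot F_0$, while the four unknowns live on $\dot F_-$ and $\dot F_+$, and on the middle of the neck (where $\beta_{-,T}=\beta_{+,T}=1$) the map $(\phi_-+\widetilde\psi_-,\phi_++\widetilde\psi_+)\mapsto$ sum has a large kernel --- one may shift mass from the minus piece to the plus piece without changing $\xi_0$. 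The constraints $\phi_\pm\in\mathcal{S}_\pm$ and $\widetilde\psi_\pm\in\mathcal{H}_\pm^{\perp,\widetilde\psi_\pm,T}$ are what is supposed to kill this ambiguity, but proving that with these (nonlinear, $T$-dependent) constraints the linearisation is invertible with an inverse bounded uniformly in $T$ is precisely the linear gluing estimate that constitutes the whole content of the theorem; it cannot be waved through as ``controlled by the same type of estimates.'' You correctly identify this as ``the main obstacle,'' but you do not resolve it. A secondary issue: your $\xi_0$ is measured from the pregluing of $(u'_-,u'_+)$, whereas $G(\phi_-,\phi_+,T)$ exponentiates $\widetilde\psi_\pm$ from the pregluing of $(\exp_{u_-}\phi_-,\exp_{u_+}\phi_+)$; identifying the two requires a comparison of exponential maps that you do not address.

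For comparison, the paper avoids inverting this combined linearisation altogether by running an alternating iteration. At stage $i$ it (1) solves the gluing equation at the current approximate pair $(u^{(i)}_-,u^{(i)}_+,T^{(i)})$ to get $\psi^{(i)}_\pm$, (2) measures the remaining discrepancy $\phi^{(i)}$ between $u_0$ and the resulting glued curve and splits it into $\phi^{(i)}_\pm$ by a fixed cutoff (thereby \emph{choosing} a resolution of the neck ambiguity rather than proving one is canonical), and (3) corrects $\phi^{(i)}_\pm$ back into the slices $V_\pm$ by solving $\overline\bdry\exp_{u^{(i)}_\pm}(\phi^{(i)}_\pm+(\phi'_\pm)^{(i)})\in\Phi_\pm E$, with Claim~\ref{first} and Estimate~\eqref{third} giving geometric decay $\|(\phi'_\pm)^{(i)}\|,\|\psi^{(i+1)}_\pm\|\leq C\|\phi^{(i)}_\pm\|$ with $0<C\ll1$. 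Each step uses only estimates already established in Sections~\ref{subsection: defn of gluing map}--\ref{subsection: C1 smoothness}, and convergence of the iteration produces the preimage. If you want to pursue your one-shot version, you would need to prove the uniform invertibility of the linearised decomposition (essentially a linear gluing theorem for $D_{u_0}$ relative to $E'$), which is more work than the paper's route, not less.
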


\begin{proof}
Let $T_0>0$ be sufficiently large and $\widetilde\delta>0$ be sufficiently small. Let $u_0$ be $\widetilde\delta$-close to breaking into $(u'_-,u'_+,T')$. After possibly translating $u_0$ in the $\R$-direction and slightly modifying $T'$, there exists a decomposition of the domain $(\dot F_0,j_0)$ of $u_0$ as:
$$\dot F_0 = (F_0)^\circ_+ \cup (F_0)^\circ_- \cup A_{[-T',T']},$$
where
\begin{itemize}
\item $\mathcal{A}_\varepsilon$ is the $\varepsilon$-thin annular part of $\dot F_0$ (with respect to the complete finite-volume hyperbolic metric compatible with $j_0$) corresponding to the neck that is being stretched;
\item $A_{[-T',T']}= u_0^{-1}(\{-T'\leq s\leq T'\})\cap \mathcal{A}_\varepsilon$, $\op{int} ((F_0)^\circ_+)\cup \op{int}( (F_0)^\circ_-)=\dot F_0- A_{[-T',T']}$, and  $(F_0)^\circ_+$, $(F_0)^\circ_-$ correspond to the top and bottom levels;
\item writing $\bdry \mathcal{A}_\varepsilon= C_+\sqcup C_-$ corresponding to the top and bottom levels, $s_{C_+}(u_0)$ and $s_{C_-}(u_0)$ are as defined in Section~\ref{subsection: the bundle E'} and we assume that $s_{C\pm}(u_0)= \pm (T'+T_0)$.
\end{itemize}

Next consider
$$(\dot F'_+,j'_+):= ((F_0)^\circ_+,j_0)\cup (A_{(-\infty,T']},j_{std}),$$
$$(\dot F'_-,j'_-):= ((F_0)^\circ_-,j_0)\cup (A_{[-T',\infty)},j_{std}),$$
such that $\dot F'_\pm$ extends $(F_0)^\circ_\pm\cup A_{[-T',T']}$.
For $\widetilde \delta>0$ small and $T_0\gg 0$, $j'_\pm$ is close to the domain complex structure of $u'_\pm$.  We may replace $u'_\pm$ by $u''_\pm\in \mathcal{S}_\pm$ with domain $(\dot F'_\pm, j'_\pm)$ which is close to $u'_\pm$ and such that $u_0$ is $\widetilde\delta$-close to breaking into $(u''_-,u''_+,T')$, after possibly slightly enlarging $\widetilde \delta$.

Let $u^{(1)}_*$ be the pregluing of $u^{(1)}_-=u''_-$ and $u^{(1)}_+=u''_+$ with gluing parameter $T^{(1)}=T'$, as defined in Section~\ref{subsection:pregluing}, and let
$$\dot F^{(1)}=(F^{(1)})^\circ _+\cup (F^{(1)})^\circ_-\cup A_{[-T^{(1)},T^{(1)}]}$$
be the domain of $u^{(1)}_*$, defined analogously.

We first solve for $\psi_\pm^{(1)}$ in
\begin{align} \label{eqn: first step}
\Phi \overline\bdry \exp_{u^{(1)}_*}(\beta_{-,T^{(1)}}\psi_{-,T^{(1)}}^{(1)} +\beta_{+,T^{(1)}}\psi_{+,T^{(1)}}^{(1)})\in \Phi E'_{u_0},
\end{align}
as in Section~\ref{subsection: defn of gluing map}, where $\Phi$ is the parallel transport to $\wedge^{0,1}(u^{(1)}_*)^* T(\R\times M)$.
Let us write $u^{(1)}= \exp_{u^{(1)}_*}(\beta_{-,T^{(1)}}\psi^{(1)}_{-,T^{(1)}} +\beta_{+,T^{(1)}}\psi^{(1)}_{+,T^{(1)}})$.  Here the superscript $(1)$ indicates that we are in the first round of an iterative scheme. Note that $\|\psi_\pm^{(1)}\|_{*,\delta}\to 0$ as $T_0\to\infty$ by Lemma~\ref{lemma: T to infty}.

Next suppose $\phi^{(1)}$ satisfies
$$u_0=\exp_{u^{(1)}_*}(\beta_{-,T^{(1)}}\psi_{-,T^{(1)}}^{(1)} +\beta_{+,T^{(1)}}\psi_{+,T^{(1)}}^{(1)} +\phi^{(1)}).$$
We decompose $\phi^{(1)}=\phi_{-,T^{(1)}}^{(1)} + \phi_{+,T^{(1)}}^{(1)}$, such that
\begin{itemize}
\item $\phi_{+,T^{(1)}}^{(1)}=\phi^{(1)}$ on $(F')^\circ_+$, $\phi_{+,T^{(1)}}^{(1)}=0$ on $(F')^\circ_-$, and $$\phi_{+,T^{(1)}}^{(1)}(s,t)=\phi^{(1)}(s,t)\beta(\tfrac{s+hr}{2hr})$$ on $A_{[-T^{(1)},T^{(1)}]}$;
\item $\phi_{-,T^{(1)}}^{(1)}=0$ for $(F')^\circ_+$, $\phi_{-,T^{(1)}}^{(1)}=\phi$ on $(F')^\circ_-$, and $$\phi_{-,T^{(1)}}^{(1)}(s,t)=\phi^{(1)}(s,t)(1-\beta(\tfrac{s+hr}{2hr}))$$ on $ A_{[-T^{(1)},T^{(1)}]}$.
\end{itemize}
Here $\phi_\pm^{(1)}\in \mathcal{H}_\pm\oplus \R^{2l^\pm}$ and we are writing $\|\phi_\pm^{(1)}\|_{*,\delta}$ for the sum of $\|\cdot\|_{*,\delta}$ on $\mathcal{H}_\pm$ and the standard norm on $\R^{2l^\pm}$ as before.

We then solve for $(\phi_\pm')^{(1)}$ in
$$\overline\bdry\exp_{u_\pm^{(1)}}(\phi_\pm^{(1)} +(\phi_\pm')^{(1)})\in\Phi_\pm^{\phi_\pm^{(1)}+(\phi_\pm')^{(1)}}E_{\phi_\pm^{(1)}+(\phi'_\pm)^{(1)}},$$
where $(\phi_\pm')^{(1)}\in \mathcal{H}_\pm$, $\Phi_\pm^{\phi_\pm^{(1)}+(\phi_\pm')^{(1)}}$ is the parallel transport
$$\wedge^{0,1}(\exp_{u_\pm^{(1)}}(\phi_\pm^{(1)} +(\phi_\pm')^{(1)}))^* T(\R\times M)\xrightarrow\sim \wedge^{0,1}(u_\pm^{(1)})^* T(\R\times M)$$
and $E_{\phi_\pm^{(1)} +(\phi'_\pm)^{(1)}}$ is the obstruction bundle for $\exp_{u_\pm^{(1)}}(\phi_\pm^{(1)} +(\phi_\pm')^{(1)})$.

\begin{claim} \label{first}
$\|(\phi_\pm')^{(1)}\|_{*,\delta}\leq C \|\phi_\pm^{(1)}\|_{*,\delta}$, where $0<C\ll 1$.
\end{claim}

\begin{proof}[Proof of Claim~\ref{first}.]
This is proved using the contraction mapping theorem as in Section~\ref{subsection: defn of gluing map} and relies on the fact that $\overline\bdry\exp_{u_\pm^{(1)}}(\phi_\pm^{(1)})$ is close to an element in $\Phi_\pm^{\phi_\pm^{(1)}}E_{\phi_\pm^{(1)}}$.  We will indicate a proof of this fact under the simplified assumption that $E_{\phi_\pm^{(1)}}=0$; the general case is only more complicated in notation.

First observe that $\overline\bdry \exp_{u_+^{(1)}}(\phi_+^{(1)})|_{s\geq -T_0}$ is the $s=-(T^{(1)}+T_0)$ translate of $\overline\bdry \exp_{u_*^{(1)}}(\phi_{+,T^{(1)}}^{(1)})|_{s\geq T^{(1)}}$ and that
\begin{equation} \label{eqn: dbar}
\overline\bdry \exp_{u_*^{(1)}}(\phi_{+,T^{(1)}}^{(1)}+\psi_{+,T^{(1)}}^{(1)})=0,\quad \overline\bdry \exp_{u_*^{(1)}}(\psi_{+,T^{(1)}}^{(1)})=0,  \quad \overline\bdry \exp_{u_*^{(1)}}(0)=0.
\end{equation}
We claim that
\begin{equation} \label{eqn: bandaid}
\|\overline\bdry \exp_{u_*^{(1)}}(\phi_{+,T^{(1)}}^{(1)})|_{s\geq -T_0}\|_\delta^{+,T^{(1)}}\leq c\|\psi_{+,T^{(1)}}^{(1)}\|_{*,\delta}^{+,T^{(1)}} \cdot \|\phi_{+,T^{(1)}}^{(1)}\|_{*,\delta}^{+,T^{(1)}}
\end{equation}
for some constant $c>0$. We write $u,\psi,\phi$ for $u_*^{(1)},\psi_{+,T^{(1)}}^{(1)},  \phi_{+,T^{(1)}}^{(1)}$ and use local coordinates $(\sigma,\tau)$ for the domain.  Assume that
\begin{enumerate}
\item[($\dagger$)] $\exp_u(\phi)=u+\phi$, $\exp_u(\psi)=u+\psi$, $\exp_u(\phi+\psi)=u+\phi+\psi$,
\end{enumerate}
e.g., the target metric is flat.
Using Equation~\eqref{eqn: dbar} and writing $v=u+\psi$ we obtain:
\begin{align}
\label{mouse1} \overline\bdry (u+\phi) &= \tfrac{\bdry\phi}{\bdry \sigma} +J(u)\tfrac{\bdry \phi}{\bdry \tau} + \nabla J(u)(\phi)\tfrac{\bdry(u+\phi)}{\bdry \tau} + Q(\phi)\tfrac{\bdry(u+\phi)}{\bdry \tau},\\
\label{mouse2} 0=\overline\bdry (v+\phi) &= \tfrac{\bdry\phi}{\bdry \sigma} +J(v)\tfrac{\bdry \phi}{\bdry \tau} + \nabla J(v)(\phi)\tfrac{\bdry(v+\phi)}{\bdry \tau} + Q(\phi)\tfrac{\bdry(v+\phi)}{\bdry \tau},
\end{align}
where we are writing $J(u+\phi)=J(u) + \nabla J(u)(\phi) + Q(\phi)$ and $Q(\phi)$ is a quadratic term. Taking the difference between the two, we can locally bound
$\|\overline\bdry (u+\phi)\|$ by terms of the form $c \|\psi\|_* \|\phi\|_*$. In general, when ($\dagger$) does not hold, we may take $\exp_u(\phi+\psi)=u+\phi+\psi + Q(\phi,\psi)$, where $Q(\phi,\psi)$ is a pointwise function of $\phi$ and $\psi$ times a pointwise bilinear function of $\phi$ and $\psi$, and we obtain the local bound $\|\overline\bdry (u+\phi)\|\leq c \|\psi\|_* \|\phi\|_*$.  The standard exponential decay estimates then yield Equation~\eqref{eqn: bandaid}.

Next we bound $\overline\bdry \exp_{u_+^{(1)}}(\phi_+^{(1)})$ on $-(T^{(1)}+T_0)-hr\leq s\leq -T_0$, which is the $s=-(T^{(1)}+T_0)$ translate of $\overline\bdry \exp_{u_*^{(1)}}(\phi_{+,T^{(1)}}^{(1)})|_{-hr\leq s\leq T^{(1)}}$.  Applying the same procedure as above\footnote{Note that we are dealing with a flat metric here, so ($\dagger$) holds.} with
$$u=u_*^{(1)}, \quad v= u_*^{(1)}+ \beta_{-,T^{(1)}}\psi_{-,T^{(1)}}^{(1)} +\beta_{+,T^{(1)}}\psi_{+,T^{(1)}}^{(1)},$$
$$\psi=\beta_{-,T^{(1)}}\psi_{-,T^{(1)}}^{(1)} +\beta_{+,T^{(1)}}\psi_{+,T^{(1)}}^{(1)}, \quad \phi=\phi_{+,T^{(1)}}^{(1)}$$
as well as exponential bounds on annuli from \cite[Lemma 2.3]{HT2} we obtain
\begin{equation} \label{eqn: bandaid2}
\|\overline\bdry \exp_{u_+^{(1)}}(\phi_+^{(1)})|_{-(T^{(1)}+T_0)-hr\leq s\leq -T_0}\|_\delta^{+,T^{(1)}}\leq C(T^{(1)})\cdot \|\phi_{+,T^{(1)}}^{(1)}\|_{*,\delta}^{+,T^{(1)}},
\end{equation}
where $C(T^{(1)})\to 0$ as $T^{(1)}\to\infty$.  Note that $\phi_+^{(1)}|_{s\leq -(T^{(1)}+T_0) -hr}=0$ by definition.

We then invert the error term $\overline\bdry \exp_{u_+^{(1)}}(\phi_+^{(1)})$ using Estimates~\eqref{eqn: bandaid} and \eqref{eqn: bandaid2} and the contraction mapping theorem.  This proves the claim.
\end{proof}

Also observe that $\|\phi_\pm^{(1)}\|_{*,\delta}$ is bounded above by a fixed constant times $\widetilde \delta$; this follows from elliptic bootstrapping as in Lemma~\ref{lemma: psi pm smooth}.

We iterate the above procedure with $(u^{(1)}_-,u^{(1)}_+,T^{(1)})$ replaced by
$$(u^{(2)}_-,u^{(2)}_+,T^{(2)})=(\exp_{u^{(1)}_-}^\circ(\phi_-^{(1)}+(\phi_-')^{(1)}), \exp_{u^{(1)}_+}^\circ(\phi_+^{(1)} +(\phi_+')^{(1)}),T^{(1)}+T_-+T_+).$$
Here $\exp_{u^{(1)}_\pm}^\circ(\phi_\pm^{(1)}+(\phi_\pm')^{(1)})$ is $\exp_{u^{(1)}_\pm}(\phi_\pm^{(1)}+(\phi_\pm')^{(1)})$ normalized by shifting $T_\pm$ units in the $\pm s$-direction; the amount that we shift is determined by the choice of slice (i.e., the choice of representative $\exp_{u^{(1)}_\pm}^\circ(\phi_\pm^{(1)}+(\phi_\pm')^{(1)})$ of $\V_\pm/\R$). Let $u_*^{(2)}$ be the resulting pregluing.

We can similarly verify that, if we replace $u_*^{(1)}$ and $\psi_\pm^{(1)}$ by $u_*^{(2)}$ and $\psi_\pm^{(1)}+\psi_\pm^{(2)}$ in Equation~\eqref{eqn: first step}, then:
\begin{equation} \label{third}
\|\psi_\pm^{(2)}\|_{*,\delta}\leq C \|\phi_\pm^{(1)}\|_{*,\delta},
\end{equation}
where $0<C\ll 1$.
The bounds from Claim~\ref{first} and Estimate \eqref{third} imply that the error between $u_0$ and $u^{(i)}$ is of order $C^i$ after the $i$th iteration.  The proof then follows.
\end{proof}

This concludes the proof of Theorem~\ref{thm: gluing}(A).

\subsection{Sketch of proof of Theorem~\ref{thm: gluing}(B)}\label{subsection: proof of theorem gluing part b}

 Theorem~\ref{thm: gluing}(B) is a consequence of the following observations:  By Section~\ref{subsection:pregluing}, the pregluing only depends on the constants $h,r,T_0$, the cutoff function $\beta$, and the values of $s_{\pm,i}(\F_j,u_j)$, where $\{(\F_j,u_j)~|~j=1,\dots,m\}$, is the collection of curves we are trying to glue. In the gluing setup from Section~\ref{subsection: gluing}, $\Phi$ only depends on the Riemannian metric on $\R\times M$; $e_{\pm,\tau}$ is simply $\overline \bdry u_{\pm T}$; and $\mathcal{L}_\pm$ and $\mathcal{R}_\pm $ are given by formulas Equation~\eqref{equation: mathcal L} and \eqref{equation: mathcal R} and hence do not depend on the choice of compact subset $K_j'$.  Finally, in Section~\ref{subsection: defn of gluing map}, the definitions of $\mathcal{H}_\pm^{\perp, \psi_\pm, T}$, $\Pi_\pm^{\psi_\pm,T}$, $(\Pi_\pm^{\psi_\pm,T}D_{\pm,T})^{-1}$, and hence of $\mathcal{I}_\pm$ also do not depend on the choice of $K_j'$.

\subsection{Sketch of proof of Theorem~\ref{thm: iterated gluing}} \label{subsection: multiple gluing}

In this subsection we sketch the proof of Theorem~\ref{thm: iterated gluing}.  Without loss of generality assume that $m=3$.  We are comparing $G_{(1,2,3)}$ and $G_{((1,2),3)}\circ (G_{(1,2)},\op{id})$. We are gluing $u_1\cup u_2\cup u_3$, where $(\mathcal{F}_i,u_i)$, $i=1,2,3$, is a representative of $V_i/\R$. For ease of notation we assume that $u_2$ and $u_3$ only have one negative end.

\subsubsection{Description of $G_{(1,2,3)}$} \label{subsubsection: description of G123}

Let $u_*^{(123)}$ be the pregluing of $u_1,u_2,u_3$ with gluing parameters $T_{12}, T_{23}$ (see Definition~\ref{defn: pregluing}). \cb We solve for $\psi_1^{(123)},\psi_2^{(123)},\psi_3^{(123)}$ which are analogs of $\psi_-,\psi_+$ in the equations
$$\Theta_1^{(123)}(\psi_1^{(123)},\psi_2^{(123)})=0,$$
$$\Theta_2^{(123)}(\psi_1^{(123)},\psi_2^{(123)},\psi_3^{(123)})=0,$$
$$\Theta_3^{(123)}(\psi_2^{(123)},\psi_3^{(123)})=0,$$
which are analogs of $\Theta_\pm(\psi_-,\psi_+)=0$. This yields
$$u_{(123)}=G_{(1,2,3)}(u_1,u_2,u_3,T_{12},T_{23}).$$

We also write $\psi_i^{(123),\dagger}$, $i=1,2,3$, for the first approximation of $\psi_i^{(123)}$ when applying the contraction mapping principle; this corresponds to $\mathcal{J}_1$ in the proof of Theorem~\ref{thm: gluing map smooth} and is obtained by using the linearized $\overline\bdry$-operator to invert the errors that arise from the pregluing of $u_1,u_2,u_3$. Estimates similar to those of Sections~\ref{subsection: defn of gluing map} imply the existence of functions $\widetilde C(T)$ and $\widetilde D(T)$, $T:=\min(T_{12},T_{23})$, such that
\begin{align} \label{alpha}
\|\psi_i^{(123)}-\psi_i^{(123),\dagger}\|_{*,\delta}& \leq \widetilde C(T) \|\psi_i^{(123),\dagger}\|_{*,\delta},\\
\nonumber \|\psi_i^{(123),\dagger}\|_{*,\delta} & \leq \widetilde D(T),
\end{align}
and $\widetilde C(T),\widetilde D(T)\to 0$ as $T\to\infty$.

\subsubsection{Description of $G_{((1,2),3)}\circ (G_{(1,2)},\op{id})$}

Let $u_*^{(12)}$ be the pregluing of $u_1,u_2$ with gluing parameter $T_{12}$.  We solve for $\psi_1^{(12)},\psi_2^{(12)}$ in
$$\Theta_1^{(12)}(\psi_1^{(12)},\psi_2^{(12)})=0, \quad \Theta_2^{(12)}(\psi_1^{(12)},\psi_2^{(12)})=0,$$
which yields $u_{(12)}=G_{(1,2)}(u_1,u_2,T_{12})$. Similarly we define $\psi_i^{(12),\dagger}$, $i=1,2$.  We have
\begin{align}\label{beta}
\|\psi_i^{(12)}-\psi_i^{(12),\dagger}\|_{*,\delta}& \leq \widetilde C(T_{12}) \|\psi_i^{(12),\dagger}\|_{*,\delta},\\
\nonumber \|\psi_i^{(12),\dagger}\|_{*,\delta} &\leq \widetilde D(T_{12}),
\end{align}
where $\widetilde C(T_{12}), \widetilde D(T_{12})\to 0$ as $T_{12}\to\infty$.

Next let $u_*^{((12)3)}$ be the pregluing of $u_{(12)},u_3$ with gluing parameter $T_{23}$. We solve for $\psi_{(12)}^{((12)3)}, \psi_3^{((12)3)}$ in
$$\Theta_{(12)}^{((12)3)}(\psi_{(12)}^{((12)3)},\psi_3^{((12)3)})=0,\quad \Theta_{3}^{((12)3)}(\psi_{(12)}^{((12)3)},\psi_3^{((12)3)})=0,$$
which yields
$$u_{((12)3)}=G_{((1,2),3)}(u_{(12)},u_3,T_{23}).$$
Similarly we define $\psi_{(12)}^{((12)3),\dagger}, \psi_3^{((12)3),\dagger}$. For $i=(12)$ and $i=3$ we have
\begin{align}
\label{delta} \|\psi_i^{((12)3)}-\psi_i^{((12)3),\dagger}\|_{*,\delta}& \leq \widetilde C(T) \|\psi_i^{((12)3),\dagger}\|_{*,\delta}, \\
\nonumber \|\psi_i^{((12)3),\dagger}\|_{*,\delta} &\leq \widetilde D(T),
\end{align}
where $\widetilde C(T), \widetilde D(T) \to 0$ as $T\to \infty$.

\subsubsection{Conclusion}

The $C^0$-closeness follows from Estimates~\eqref{alpha}, \eqref{beta}, and \eqref{delta}. Roughly speaking, the errors from the pregluing go to zero as $T_{12},T_{23}\to \infty$.  The $C^1$-closeness is left to the reader and follows from Estimates~\eqref{result of I}, \eqref{result of II main}, and \eqref{result of III} from Section~\ref{subsection: C1 smoothness}. In words, the derivatives of the errors from the pregluing go to zero as $T_{12},T_{23}\to \infty$ as well.

\section{Construction of semi-global Kuranishi structures} \label{section: construction of semi-global}

{\em Starting from this section we specialize to contact homology.} 

\nom[Mi]{$\mathcal{M}_i$}{Moduli spaces $\mathcal{M}_{J}^{\op{ind}=k_i}(\dot F_i,\R\times M;\gamma_{i,+};\bs\gamma_{i,-})$ indexed using the action $\vartheta$-sorting}
Let $\mathcal{M}_1,\mathcal{M}_2,\dots,\mathcal{M}_\rho$ be a sequence of \coblu distinct \cb moduli spaces
$$\mathcal{M}_i=\mathcal{M}_{J}^{\op{ind}=k_i}(\dot F_i,\R\times M;\gamma_{i,+};\bs\gamma_{i,-}),$$
such that $\dot F_i$ is a planar surface and each component of each level of a building in $\bdry\mathcal{M}_i$ is either a trivial cylinder or in $\mathcal{M}_j$ with $j<i$.  (A specific choice will be given in Section~\ref{subsubsection: complexity}.)   The ends of the moduli spaces $\mathcal{M}_i$ are $\vartheta$-sorted using the action. Also let $\mathcal{S}=\{\mathcal{M}_1,\mathcal{M}_2,\dots,\mathcal{M}_\rho\}$ and \coblu $G_i=G(\dot F_i,\R\times M;\gamma_{i,+};\bs\gamma_{i,-})$. \cb
\nom[Gi]{$G_i$}{Automorphisms group $G(\dot F_i,\R\times M;\gamma_{i,+};\bs\gamma_{i,-})$ generated by puncture reorderings and marker rotations}

\begin{convention}\label{convention for moduli spaces}
In view of Convention~\ref{convention for boundary}, we include moduli spaces $\mathcal{M}_i$ in the list even when they are empty, provided $\bdry\mathcal{M}_i\not=\varnothing$.
\end{convention}

  This section is organized as follows: In Section~\ref{subsection: CH trees} we introduce combinatorial objects called {\em contact homology (or CH) trees of $\op{(Symp)}$ type} that encode the {\coblu gluing} data for the various strata of the SFT-compactified moduli spaces that appear in the definition of $\bdry^2=0$ and in Section~\ref{subsection: symmetries of moduli spaces} we describe automorphisms of the moduli spaces and their effect on the various strata.  Section~\ref{subsection: definition of semi-global Kuranishi structures} is devoted to the definition of semi-global Kuranishi structures and multisections of semi-global Kuranishi structures, and culminates in Proposition~\ref{prop: tomatoes}, which describes the weighted branched manifold extracted from a semi-global Kuranishi structure and a multisection. Sections~\ref{subsection: trimming} and \ref{subsection: universal choice of gluing parameter} prove some preparatory lemmas and the semi-global Kuranishi structures and multisections for $\bdry^2=0$ are constructed in Sections~\ref{subsection: overview of construction}--\ref{subsection: verification}.  In Sections~\ref{subsection: cob case} and \ref{subsection: chain homotopy case} we explain the modifications needed for chain maps and chain homotopy.

\subsection{CH trees} \label{subsection: CH trees}

\subsubsection{Definition of CH trees}  \label{subsubsection: defn of CH trees}


\nom[T]{$T=(V(T),E(T)=G(T)\sqcup F(T),O(T))$}{Contact homology (CH) tree; $V(T)$ are the vertices, $E(T)$ are the edges, $G(T)$ are the glued edges, $F(T)$ are the free edges, and $O(T)$ is the reordering function}
\nom[VT]{$V(T)$}{Vertices of tree $T$}
\nom[ET]{$E(T)$}{Edges of tree $T$}
\nom[GT]{$G(T)$}{Glued edges of tree $T$}
\nom[FT]{$F(T)$}{Free edges of tree $T$}

\begin{defn} [Pre-CH tree of $\op{(Symp)}$ type]\label{defn: pre-CH trees} 
A {\em pre-CH tree of $\op{(Symp)}$ type} (or simply a {\em pre-CH tree}) is a directed tree\footnote{Trees are assumed to be connected. A collection of trees will be called a {\em forest}.} \footnote{A directed tree $T$ has a vertex, called the {\em root}, and all the edges of $T$ point away from the root. The root corresponds to the topmost level of an SFT building, so the edges point down in the SFT sense.} 
$$T=(V(T), E(T)=G(T)\sqcup F(T)),$$
where the sets of vertices $V(T)$ and edges $E(T)$ are finite and the following hold:
\begin{enumerate}
\item $V(T)$ is labeled using the {\em vertex labeling function}
$$l_{V(T)}:V(T)\to\{1,\dots,\rho\}.$$
\NoIndent{\noindent The vertex labeling function assigns a moduli space $\mathcal{M}_{l_{V(T)}(v)}$ to each vertex $v\in V(T)$ and is not necessarily injective or surjective.}
\item $E(T)$ consists of the {\em glued edges} $G(T)$ and the {\em free edges} $F(T)$. 
The edges of $G(T)$ have initial and terminal points but the edges of $F(T)$ only have an initial point.
\NoIndent{\noindent Given an edge $e\in E(T)$, we write $i(e)$ for the initial point of $e$ and $t(e)$ for the terminal point of $e$ (if it exists).  Given a vertex $v\in V(T)$, we write $E(T)_{v}$ for the set of all edges $e$ with $i(e)=v$, $G(T)_{v}=E(T)_v\cap G(T)$, and $F(T)_v=E(T)_v\cap F(T)$.}
\item There exists an {\em orbit assignment map} $\op{Orb}_T: E(T) \to \mathcal P_\alpha$ 
such that:

\be
\item for each $v\in V(T)$, $\op{Orb}_T({E(T)_v})$, counted with multiplicity and modulo the ordering, agrees with $\bs \gamma_-$ where $\mathcal M_{l_{V(T)}(v)} = \mathcal M(\gamma_+; \bs\gamma_-)$;
\item for each $e\in G(T)$, $\op{Orb}_T(e)$ agrees with the positive end $\gamma_+$ of $\mathcal M_{l_{V(T)}(t(e))}=\mathcal{M}(\gamma_+;\bs\gamma_-)$.
\ee
\NoIndent{\noindent In other words, each edge $e\in G(T)$ corresponds to a single gluing of an interior semi-global Kuranishi chart $\V_{l_{V(T)}(i(e))}$ on the upper level with $\V_{l_{V(T)}(t(e))}$ on the lower level.  

We also define the {\em root orbit} $\mathfrak{r}(T)$ as the positive end $\gamma_+$ of $\mathcal M_{l_{V(T)}(v)}=\mathcal{M}(\gamma_+;\bs\gamma_-)$ where $v$ is the root vertex.}
\end{enumerate}
\end{defn}

\nom[root orbit]{$\mathfrak{r}(T)$}{the positive end $\gamma_+$ of $\mathcal M_{l_{V(T)}(v)}=\mathcal{M}(\gamma_+;\bs\gamma_-)$ where $v$ is the root vertex.}

\begin{defn} [CH tree of $\op{(Symp)}$ type]\label{defn: CH trees} 
A {\em CH tree of $\op{(Symp)}$ type} (or simply a {\em CH tree}) is a pre-CH tree $T$ together with a {\em reordering function}, i.e., a bijective function 
$$O(T):F(T)\to\{1,\dots,|F(T)|\}$$  
that {\em respects the ordering $\vartheta$,} i.e., for any $e_1,e_2\in F(T)$, if $\vartheta(\op{Orb}_T(e_1))< \vartheta(\op{Orb}_T(e_2))$, then $O(T)(e_1)< O(T)(e_2)$.
\end{defn}

\begin{rmk} \label{rmk: unique reordering function}  If $\op{Orb}_T|_{F(T)}$ is injective, then the reordering function $O(T)$ is completely determined by $\vartheta$.
\end{rmk}

{\coblu A CH tree $T$ gives a prescription for gluing elements of $\mathcal{M}_{l_{V(T)}(v)}$ to give an element of the moduli space $\mathcal{M}_{T\git T}$, where $T\git T$ is the contraction given in Section~\ref{subsubsection: contraction of CH trees}.}

\subsubsection{Isomorphisms of CH trees}

\begin{defn} \label{def: isomorphism of CH trees}
An {\em isomorphism $\theta:T\to T'$ of pre-CH trees} is an isomorphism of directed trees which
\begin{enumerate}[label = (\arabic*)]
\item \label{CH1} sends $V(T), G(T), F(T)$ to $V(T'),G(T'), F(T')$; and
\item \label{CH2} preserves the labels of vertices, i.e., $l_{V(T)}=l_{V(T')}\circ \theta|_{V(T)}$.
\end{enumerate}
An {\em isomorphism $\theta:T\to T'$ of CH trees} is an isomorphism of pre-CH trees which
\begin{enumerate}
\item[(3)] \label{CH3} maps $O(T)$ to $O(T')$, i.e., $O(T)=O(T')\circ\theta|_{F(T)}.$
\end{enumerate}
We denote the isomorphism class of $T$ of CH trees by $[T]$.
\end{defn}

\begin{rmk} \label{rmk: loquats} $\mbox{}$
\be
\item[(i)] Given an isomorphism $\theta: T\to T'$ of CH trees, we have
$\op{Orb}_T= \op{Orb}_{T'} \circ \theta|_{E(T)}$ by Definition~\ref{def: isomorphism of CH trees}; Definition~\ref{defn: pre-CH trees}(3)(b) for glued edges; and the $\vartheta$-respecting property in Definition~\ref{defn: CH trees} and Definition~\ref{defn: pre-CH trees}(3)(a) for free edges.
\item[(ii)] There is a unique isomorphism class of $1$-vertex CH trees whose vertex is labeled $i$.
\ee
\end{rmk}
 
\subsubsection{Canonical representatives} \label{subsubsection: canonical rep}

In this subsection we explain how to obtain a canonical planar representative (modulo isotopies of the plane) of an isomorphism class $[T]$ of CH trees.\footnote{Hopefully this puts to rest some doubts that may arise from using isomorphism classes $[T]$ in the definition of semi-global Kuranishi structures (Definition~\ref{defn: semi-global Kuranishi structures Symp}) below.}

We first discuss pre-CH trees. 
The representatives will be planar, with the root at the top, say along $y=N$.  We say that a planar tree $T$ is {\em $\vartheta$-sorted} if the edges $e_{v,1},\dots, e_{v,k(v)}$ leaving each vertex $v$, go from some $y=N'$ to $y=N'-1$, where $N'$ depends only on $v$, and are ordered from left to right, and $(\op{Orb}_T(e_{v,1}), \dots, \op{Orb}_T(e_{v,k(v)}))$ is $\vartheta$-sorted.  

Let $T$ be a $\vartheta$-sorted planar pre-CH tree and the edges leaving the root go to $y=N-1$ with $\vartheta$-sorted orbit assignments of the form 
$$(\underbrace{\gamma_{1},\dots, \gamma_{1}}_{i_{1} \text{ copies }},\dots, \underbrace{\gamma_{k},\dots, \gamma_{k}}_{i_{k} \text{ copies }}).$$ 

We will inductively construct:
\be
\item canonical $\vartheta$-sorted planar representatives for all pre-CH trees $T$; and
\item a total ordering $\prec$ of $\mathcal{P}_\alpha \sqcup \{\mbox{isomorphism classes of pre-CH trees}\}$ that extends the ordering on $\mathcal{P}_\alpha$ from $\vartheta$.
\ee

If we restrict to $1$-vertex pre-CH trees $T$, then (1) is immediate and (2) follows from  lexicographically comparing 
$$(\mathfrak{r}(T), \gamma_1,\dots,\gamma_1,\dots,\gamma_k,\dots, \gamma_k).$$
If $\mathfrak{r}(T)=\gamma$, then $\gamma\prec [T]$; this includes the case where $T$ has no edges. 

Suppose we have inductively constructed (1) and (2) when restricted to all pre-CH trees $T'$ with $\mathfrak{r}(T')\prec \mathfrak{r}(T)$.  Then for each grouping $(\gamma_j,\dots,\gamma_j)$, $j=1,\dots, k$, 
\be
\item[(a)] if the edge is not free, we replace the edge labeled $\gamma_j$ by the pre-CH tree below it in the planar representation to obtain $(\gamma_{j1},\dots, \gamma_{ji_j})$, where $\gamma_{j\ell}$ is either $\gamma_j$ or a pre-CH tree; and
\item[(b)] we permute $(\gamma_{j1},\dots, \gamma_{ji_j})$ (and the edges that go to $\gamma_{j\ell}$ and the subtrees below if they exist) so that it is in lexicographic order; in particular, the free edges come first.
\ee
Hence (1) and (2) for $T$ (more precisely, the extension of the ordering $\prec$ to include $T$) can be constructed using 
$$(\mathfrak{r}(T), \gamma_{11},\dots,\gamma_{1i_1},\dots,\gamma_{k1},\dots, \gamma_{ki_k}).$$

Next we discuss CH trees, i.e., add the reordering function $O(T)$ to the data.
For a $1$-vertex CH tree $T$, we can normalize the reordering function $O(T)$ by Remark~\ref{rmk: loquats}(ii) so the labels are $1,\dots, i_1+\dots+ i_k$ from left to right, since we can permute all the free edges that go to $\gamma_j$, $j=1,\dots, k$. In general, if $\gamma_{jj'}=\gamma_{jj'+1}$ are orbits, then we permute the corresponding free edges $e',e''$ if necessary so that $O(T)(e')< O(T)(e'')$. If $\gamma_{jj'}=\gamma_{jj'+1}$ are trees, then we switch their positions if $\op{min}O(T)(F(\gamma_{jj'}))> \op{min}O(T)(F(\gamma_{jj'+1}))$.  This yields a canonical $\vartheta$-sorted planar representative of $T$ together with canonical normalized $O(T)$; see Figure~\ref{fig: canonical} for example.

\begin{figure}[ht]
	\begin{overpic}[scale=1.5]{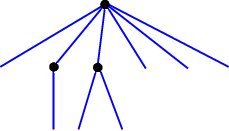}
		\put(2,34){\small{$\gamma_1$}} \put(21,34){\small{$\gamma_1$}}
		\put(36.3,34){\small {$\gamma_1$}}
		\put(59,24){\small $\gamma_2$}
		\put(77,24){\small $\gamma_2$}
		\put(96,24){\small $\gamma_2$}
		\put(23,-5){\small $T_1$}
		\put(32,-5){\small $\prec$}
		\put(40,-5){\small $T_2$}
	\end{overpic}
	\caption{Example of a canonical representative of a CH tree. The black dots represent vertices. The first $\gamma_1$ edge and all the $\gamma_2$ edges are free.  The subtrees $T_1$ and $T_2$ lie below the second and third $\gamma_1$ edges and are assumed to satisfy $T_1\prec T_2$. We may take the reordering function $O(T)$ to label the free edges by $4,1,2,3,5,6,7$ from left to right.}
	\label{fig: canonical}
\end{figure}

\subsubsection{Contraction of CH trees} \label{subsubsection: contraction of CH trees}

\begin{defn}[Good subtree/subforest]  
A {\em good subtree} $S$ of a CH tree $T$ is a subtree with no free edges and at least one glued edge. A possibly empty disjoint union $S=\sqcup_i S_i$ of good subtrees of $T$ is a {\em good subforest} of $T$. 
\end{defn}


\begin{defn}
If $S, S'$ are good subforests of CH trees $T, T'$, respectively, then the pair $(T,S)$ is {\em isomorphic to} $(T',S')$ if there exists a CH tree isomorphism $\theta: T \to T'$ that maps $S$ to $S'$. We denote the isomorphism class of $(T,S)$ by $[(T,S)]$ .
\end{defn}

\begin{defn}[Contraction] \label{defn: contraction}
Let $T$ be a CH tree and $S=\sqcup_i S_i$ be a good subforest of $T$. 
\nom[TS]{$T\git S$}{Contraction of CH tree $T$ by good subforest $S$}
Then the {\em contraction $T'=T\git S$ of $T$ along $S$} is the CH tree given as follows:
\begin{enumerate}
\item Each subtree $S_i$ is replaced by a vertex labeled by $\tau(S_i)\in\{1,\dots, \rho\}$, such that, if we glue elements of $\overline{\mathcal{M}_j/\R}$, $j\in V(S_i)$, according to the gluing prescription given by $S_i$, we obtain elements of $\overline{\mathcal{M}_{\tau(S_i)}/\R}$. For the vertices not in $S_i$, the vertex labeling function $l_{V(T)}$ remains the same.
\item The glued edges of $T$ not in $S$ remain glued edges of $T'$ and the free edges of $T$ remain free edges of $T$. Hence there exists a natural bijection 
$$c:E(T)-E(S)\xrightarrow{\sim} E(T').$$ 
\item $\op{Orb}_{T'}$ and $O(T')$ are defined by 
$$\op{Orb}_{T'}\circ c=\op{Orb}_T|_{E(T)-E(S)}\quad \mbox{and} \quad O(T')\circ c|_{F(T)}=O(T)|_{F(T)}.$$
\end{enumerate}
\end{defn}

{\em We write $T\git T$ for $T\git S$ when $S$ is the largest good subforest of T.}

\nom[TT]{$T'<T$}{Partial order on CH trees; holds if $T'=T\git S$}

\begin{defn}
We define a (strict) partial order on CH trees by $T' < T$ if $T' = T\git S$ for some non-empty good subforest $S$. 
The partial order also descends to isomorphism classes of CH trees: $[T']<[T]$ if there exist $T'\in [T']$ and $T\in[T]$ such that $T'<T$.
\end{defn}

\subsubsection{Degeneration of CH trees}

Let $S$ be a good subforest of a CH tree $T$.  A {\em degeneration of $T$ along $S$} is roughly a forest $T_1\sqcup\dots\sqcup T_m$ of CH trees with some data indicating how to glue them. It is obtained from $T$ as follows:  First replace each edge $e\in S$ by a free edge $e'$ such that $i(e') = i(e)$ and the orbit assignments of $e'$ and $e$ remain the same.  This yields $m = |E(S)|+1$ connected components $T_1,\dots, T_m$, where each $T_i$ has an induced CH tree structure with the possible exception of the reordering function. We then choose a reordering function $O(T_i)$ for each $T_i$ that is consistent with the reordering function induced from $O(T)$ (i.e., we decide the ordering of the newly added free edges relative to the existing free edges). {\em Observe that, in general, the reordering function $O(T_i)$ is not uniquely determined by the above requirement.}  

The above degeneration procedure is {\coblu formally given} as follows:
\nom[Tarrow]{$\arr{T}=\arr{T}(S)$}{Degeneration of $T$ along $S$}

\begin{defn}\label{def: degeneration}
{\em A degeneration $\arr{T}(S)$ (or simply $\arr{T}$ if $S$ is understood) of $T$ along $S$} is a tuple $(T_1 \sqcup \dots \sqcup T_m, \iota^{E(T)}, \iota^{V(T)}, \theta)$ such that:
\begin{enumerate} [label = (D\arabic*)]
    \item \label{D1} $T_i$ is a CH tree for $i\in \{1,\dots , m\}.$
    \item \label{D2} $\iota^{E(T)}: E(S) \to F(T_1 \sqcup \dots \sqcup T_m)$ is an injective map.
    \item \label{D3} $\iota^{V(T)}: E(S) \to V(T_1 \sqcup \dots \sqcup T_m)$ is an injective map.
    \item \label{D4} $T_1 \sqcup \dots \sqcup T_m$ can be glued into a pre-CH tree $\widehat T$ by replacing the free edge $\iota^{E(T)}(e)$ with a glued edge that goes from $i(\iota^{E(T)}(e))$ to $\iota^{V(T)}(e)$ for all $e\in E(S)$. 
    \item \label{D5} $\theta: \widehat T \to T$ an isomorphism of pre-CH trees that identifies the glued edge corresponding to $\iota^{E(T)}(e)$ with $e$ for all $e\in E(S)$ and the bijection 
    $$\theta: F(T_1\sqcup \dots\sqcup T_m) - \iota^{E(T)}(E(S))\to F(T)$$ 
    is compatible with the orderings $O(T_i)$ and $O(T)$ for each $i \in \{1,\dots , m\}$, i.e., for $e_1,e_2\in F(T_i) - \iota^{E(T)}(E(S))$, $O(T_i)(e_1) > O(T_i)(e_2)$ if and only if $O(T)(\theta(e_1))> O(T)(\theta(e_2))$.
\end{enumerate}
\end{defn}

If the pair $(T,S)$ is isomorphic to the pair $(T',S')$ and $\arr T$ is a degeneration of $T$ along $S$, then it is clear that $\arr T$ is also a degeneration of $T'$ along $S'$. Hence, it makes sense to refer to a degeneration of $[(T,S)].$

\begin{defn}
Let $\arr T$ and $(\arr T)'$ be degenerations of $T$ along $S$. We say that {\em $\arr T$ is isomorphic to $(\arr T)'$} if there exist CH tree isomorphisms $\theta_i: T_i \to T_i'$ for $i \in\{1, \dots, m\}$ such that they commute with \ref{D2}--\ref{D5} in the obvious sense.
We denote the isomorphism class by $[\arr T(S)]$ or simply $[\arr T].$
\end{defn}

\begin{example} \label{example: degeneration}
Let $T$ be a CH tree with two vertices $v_1, v_2$ and $S$ be the subforest that contains the only glued edge $e$ from $v_2$ to $v_1$. Suppose that $\mathcal M_{l_{V(T)}(v_2)} = \mathcal M(\gamma_+; \bs\gamma_-)$ and $\op{Orb}_T(e) = \gamma_{-,1}$, where 
$$\bs\gamma_- = (\underbrace{\gamma_{-,1},\dots, \gamma_{-,1}}_{i_{1} \text{ copies }},\dots, \underbrace{\gamma_{-,k},\dots, \gamma_{-,k}}_{i_{k} \text{ copies }})$$
is $\vartheta$-sorted. Let $\arr T = (T_1 \sqcup T_2, \iota^{E(T)}, \iota^{V(T)}, \theta)$ be a degeneration of $T$ along $S$. Then $T_1$ and $T_2$ are one-vertex trees; we take $V(T_i)=\{v_i\}$, $i=1,2$.  Let us denote $E(T_2) = \{ e_2^1, \dots, e_2^n \}$ with $n= i_1 + \dots + i_k$, where for convenience we are taking the superscript of $e_2^j$ to agree with the reordering function $O(T_2)$.  Then $\iota^{E(T)}(e) = e_2^q$ for some $q\in \{1,\dots , i_1\}$. We denote this degeneration by 
$$\arr T_q = (T_{1,q} \sqcup T_{2,q}, \iota_q^{E(T)}, \iota_q^{V(T)}, \theta_q).$$

We claim that the $\arr T_q$ are non-isomorphic for different $q$. Indeed, suppose $\arr T_q$ is isomorphic to $\arr T_{q'}$ and $\theta_i: T_{i,q} \to T_{i,q'}$ is the CH tree isomorphism for $i \in \{1,2\}.$
Since $\iota^{V(T)}_{q'} (e) = \theta_1 (\iota^{V(T)}_q(e))$ and $\theta_1$ maps $O(T_{1,q})$ to $O(T_{1,q'})$, it follows that $q = q'$. On the other hand, as long as $q = q'$ one can always find $\theta_1$ and $\theta_2.$  Hence there are $i_1$ non-isomorphic degenerations of $T$ along $S$.
\end{example}

\begin{rmk} \label{rmk: special degeneration}
When $\op{Orb}_T|_{E(S)}$ is injective, we can define a special degeneration $\arr T^s$ by choosing a special reordering function $O(T_i)$ as follows: 
\be
\item if $F(T_i) \cap \op{Im}\iota^{E(T)} = \varnothing$, then $O(T_i)$ is uniquely determined by $O(T)$; 
\item if there exists $e \in F(T_i) \cap \op{Im}\iota^{E(T)}$, then we choose $O(T_i)$ such that $O(T_i) (e) \geq O(T_i)(e')$ for all $e'\in F(T_i)$ and $\op{Orb}_{T_i}(e') = \op{Orb}_{T_i}(e)$. 
\ee
In Example~\ref{example: degeneration}, this means choosing $q = i_1$.
\end{rmk}

\subsection{Symmetries of moduli spaces} \label{subsection: symmetries of moduli spaces}

\coblu The group $G_i=G(\dot F_i, \R\times M;\gamma_{i,+};\bs\gamma_{i,-})$ generated by marker rotations and puncture reorderings can be viewed as a group of automorphisms acting on $\overline{\mathcal{M}_i/\R}$. The boundary strata of $\overline{\mathcal{M}_i/\R}$ are described by CH trees $T$ such that $T\git T=i$. 

Given a CH tree $T$ such that $T\git T=i$ and $\sigma\in G_i$, we can define a CH tree $T_\sigma$ such that:
\begin{enumerate}[label = ($\sigma$\arabic*)]
\item $V(T_\sigma) = V(T)$,\footnote{Note that we have an equality of sets, not just a bijection.}
\item $E(T_\sigma) = E(T)$, 
\item $l_{V(T_\sigma)} = l_{V(T)}$, 
\item $\op{Orb}_{T_\sigma} = \op{Orb}_T$, and
\item $O(T_\sigma) = \sigma \circ O(T)$,
\end{enumerate}
where $\sigma$ is sometimes viewed as a permutation of the negative ends (e.g., in $\sigma\circ O(T)$). 
There is an isomorphism $\sigma_*: T \to T_\sigma$ of pre-CH trees (but not necessarily an isomorphism of CH trees). Moreover $G_i$ permutes the set of CH trees with underlying pre-CH tree $T$.

\begin{defn}[Subgroups of $G_i$] \label{defn: subgroups of Gi} $\mbox{}$

\be
\item Let $G_i(T)\subset G_i$ be the stabilizer of the CH tree $T$ such that $T\git T=i$; by this we mean $G_i(T)$ contains all the marker rotations and the puncture reorderings $\sigma$ such that $T_\sigma=T$. 

\item The superscript $-$ (resp.\ $+$) as in $G_i^-$ indicates the subgroup generated by marker rotations and puncture reorderings of the negative ends (resp.\ marker rotations of the positive ends) and the subscript $P$ (resp.\ $M$) as in $G_{i,P}$ indicates the subgroup generated by puncture reorderings (resp.\ marker rotations).  
\item We write $G_{i,M,k}^-$ to indicate the marker rotations of the $k$th negative end.
\ee
\end{defn}
   
\nom[Gi2]{$G_i(T), G_i^\pm, G_{i,P},G_{i,M}, G_{i,M,k}^-$}{Subgroups of $G_i$}

By the constructions in the later subsections, elements of $G_i$ will give isomorphisms of semi-global Kuranishi charts, which are labelled by isomorphism classes of CH trees. A marker rotation does not change the isomorphism class of the tree and hence maps a chart to itself, whilst a puncture reordering can map a chart to another one. \cb

Let $(T,S)$ be a pair consisting of a CH tree $T$ and a good subtree $S$, $\arr{T}=(T_1 \sqcup \dots \sqcup T_m, \iota^{E(T)}, \iota^{V(T)}, \theta)$ a degeneration of $T$ along $S$, and {\coblu $\sigma \in G_i$.} Then $S_\sigma: = \sigma_* (S)$ is a good subforest of $T_\sigma$ and we choose a specific degeneration 
$$\arr{T_\sigma} = (T_{1,\sigma} \sqcup \dots \sqcup T_{m,\sigma}, \iota^{E(T_\sigma)}, \iota^{V(T_\sigma)}, \theta_\sigma)$$ 
of $T_\sigma$ along $S_\sigma$ as follows: Each $T_{i,\sigma}$ is given by the tree $T_i$ with a (possibly) different reordering function $O(T_{i,\sigma})$.
The reordering function $O(T_{\sigma})$ orders $F(T_\sigma)$, which contains all the free edges of $T_i$ except those that come from $S$, which are denoted by $F_{S,i}$. 
The reordering function $O(T_{i,\sigma})$ is defined by 
\be
\item for $e \in F_{S,i}$, $O(T_{i,\sigma})(e) = O(T_i)(e)$,
\item for $e,e' \in F(T_\sigma)$, $O(T_{i,\sigma})(e) < O(T_{i,\sigma})(e')$ if $\sigma (O(T_i) (e))< \sigma (O(T_i) (e')).$
\ee 
It is easy to see that maps $\iota^{E(T_\sigma)}, \iota^{V(T_\sigma)}$ and $\theta_\sigma$ are also determined canonically.
One can verify that $\arr{T_\sigma}$ is a degeneration of $T_\sigma$ along $S_\sigma$ and that {\coblu $[\arr{T_\sigma}]$ is independent of the choice of representative of $[(T,S)]$.}

Since the free edges of $T_i$ and $T_{i,\sigma}$ are canonically identified, comparing the reordering functions $O(T_{i})$ and $O(T_{i,\sigma})$ gives us a permutation {\coblu $\sigma_i \in G^-_{T_i\git T_i,P}$} such that $(\sigma_i)_*: T_i \to T_{i,\sigma}$. The map $\sigma \mapsto (\sigma_1, \dots, \sigma_m)$ induces an injective group homomorphism denoted by {\coblu $\delta_{\arr{T}}: G^-_{T\git T,P} \to \prod_{i=1}^m G^-_{T_i\git T_i,P}$.}

\subsection{Definition of semi-global Kuranishi structures} \label{subsection: definition of semi-global Kuranishi structures}

{\em To simplify notation, we write $T$ instead of $[T]$ for the isomorphism class unless otherwise stated.}

\nom[Kur]{$\Kur=\Kur^L(\alpha,J)$}{Semi-global Kuranishi structure in $\op{(Symp)}$ case; $L$ is the action threshold, $\alpha$ is the contact form, and $J$ is the almost complex structure}
\nom[KMi]{$\Kur(\mathcal{M}_i)$}{The full subcategory of $\Kur$ with objects $\mathcal{C}_T$ such that $T\git T=i$}
\coblu We will work in the $\op{(Symp)}$ case.  

\begin{defn} [Semi-global Kuranishi structures $\op{(Symp)}$] \label{defn: semi-global Kuranishi structures Symp}
A {\em semi-global Kuranishi structure $\Kur$ for $(\overline{\mathcal M_1/\R}, \dots, \overline{\mathcal M_\rho/\R})$} is the disjoint union of the categories  $\Kur (\mathcal{M}_i)$ over all $i=1,\dots,\rho$, together with the {\em strata compatibility conditions}, given by Definition~\ref{defn: strata compatibility}.
\end{defn}

\begin{defn}[Definition of $\Kur(\mathcal{M}_i)$] $\Kur(\mathcal{M}_i)$ is a category with data (K1)--(K5) and the action of $G_i$ on it given by (K6)--(K8):
\begin{enumerate}[label = (K\arabic*)]
\item \label{K1} The objects are semi-global Kuranishi charts 
$$\mathcal C_T = (\pi_T: \E_T \to \V_T, \overline{\partial}_T, \psi_T,{\coblu G_{i}(T)})$$ 
indexed by isomorphism classes $T$ of CH trees {\coblu $T$ such that $T\git T=i$}, together with their {\em slight enlargements}
$$\mathcal C_T^\en = (\pi_T^\en: \E_T^\en \to \V_T^\en, \overline{\partial}_T^\en, \psi_T^\en, {\coblu G_{i}(T)} ),$$ 
such that:
\nom[CT]{$\mathcal C_T=(\pi_T: \E_T \to \V_T, \overline{\partial}_T, \psi_T,{\coblu G_{i}(T)} )$}{Semi-global Kuranishi chart corresponding to the CH tree $T$}
\nom[CTen]{$\mathcal C_T^\en=(\pi_T^\en: \E_T^\en \to \V_T^\en, \overline{\partial}_T^\en, \psi_T^\en,{\coblu G_{i}(T)} )$}{Slight enlargement of the semi-global Kuranishi chart $\mathcal C_T=(\pi_T: \E_T \to \V_T, \overline{\partial}_T, \psi_T )$}
\begin{itemize}
    \item[(a)] {\coblu $(\pi_T: \E_T \to \V_T,{\coblu G_{i}(T)})$ and $(\pi_T^{\en}:\E_T^{\en}\to \V_T^{\en},{\coblu G_{i}(T)})$ are global orbibundle charts,}  $\overline{\partial}_T^{\en}: \V_T^{\en} \to \E_T^{\en}$ is a section of the global quotient orbibundle, and $\psi_T^{\en}: (\overline{\partial}_T^{\en})^{-1}(0) \to \mathcal M_{i}/\R$ is a homeomorphism onto an open subset such that
    $$\dim \V_T^{\en} - \op{rank} \E_T^{\en} = \op{vdim} (\mathcal M_{i}/\R).$$
    \item[(b)] $\V_T^{\en}$ is a {\coblu manifold} with a compatible metric $d_T$, $\V_T$ is an open subset of $\V_T^{\en}$ whose closure $\overline\V_T$ in $\V_T^{\en}$ is complete with respect to $d_T$, $\pi_T=\pi_T^\en|_{\V_T}$ (by this we mean the restriction of $\pi_T^\en$ to $(\pi_T^\en)^{-1}(\V_T)$), $\overline{\partial}_T= \overline{\partial}_T^{\en}|_{\V_T}$, and $\psi_T=\psi_T^{\en}|_{\overline{\partial}^{-1}_T(0)}$.  
\end{itemize}

\NoIndent{The slight enlargements $\V_T^{\en}$ of $\V_T$ exist for technical reasons which can be summarized in Example~\ref{example: topologies not the same} below; they can be viewed as replacements for {\coblu manifolds with corners}. We denote $\bdry \V_T= \overline \V_T- \V_T$. When $T = (i)$, we also write $T = i$.}
\begin{itemize}
    \item[(c)]   $\mathcal C_i = (\pi_i: \E_i \to \V_i, \overline{\partial}_i, \psi_i, {\coblu G_i})$ and its slight enlargement $\mathcal C_i^\en = (\pi_i^\en: \E_i^\en \to \V_i^\en, \overline{\partial}_i^\en, \psi_i^\en, {\coblu G_i})$ are interior semi-global Kuranishi charts for a compact subset $\K_i$ of $\mathcal M_i/\R$ and $\V_i$ is bounded with respect to the metric $d_i$. 
    \end{itemize}
\item \label{K2} For each $T' < T$, there is a unique morphism, called the ``restriction-inclusion" morphism, consisting of
\be
\item[(i)] $\phi_{T',T}: \mathcal C_{T'} \to \mathcal C_T$ encoded by $(\V_{T',T}, \phi_{T',T}^\sharp, \phi_{T', T}^\flat)$ and
\item[(ii)] enlargement $\phi_{T',T}^\en: \mathcal C_{T'}^\en \to \mathcal C_T^\en$ encoded by $(\V_{T',T}^\en,(\phi_{T',T}^\sharp)^\en, (\phi_{T', T}^\flat)^\en)$
\ee
such that:
\be
\item
$\V_{T',T}^{\en}$ is an open subset of $\V_{T'}^{\en}$, $\E_{T',T}^{\en}:=\E_{T'}^{\en}\vert_{\V_{T',T}^{\en}}$, and there is a $G_{i}(T)\cap G_{i}(T')$-equivariant bundle embedding given by the commutative diagram
\begin{equation} \label{diagram: tikz diagram}
\begin{tikzcd}
\E_{T',T}^{\en}  \arrow[r,"(\phi_{T',T}^\sharp)^{\en}"] \arrow[d] & \E_T^{\en} \arrow[d]\\
\V_{T',T} ^{\en}  \arrow[r,"(\phi_{T',T}^\flat)^{\en}"] &  \V_T^{\en}.
\end{tikzcd}
\end{equation}
\item $\V_{T',T}$ is an open subset of $\V_{T',T}^\en\cap \V_{T'}$ whose closure $\overline{\V}_{T',T}$ is complete with respect to $d_{T'}$, $\E_{T',T}= \E_{T',T}^\en|_{\V_{T',T}}$, $\phi_{T',T}^\sharp:=(\phi_{T',T}^\sharp)^\en|_{\E_{T',T}}$ has image in $\E_T$, $\phi_{T', T}^\flat:=(\phi_{T', T}^\flat)^\en|_{\V_{T',T}}$ has image in $\V_T$, and the analog of Diagram~\eqref{diagram: tikz diagram} with $^{\en}$ removed is also $G_{i}(T)\cap G_{i}(T')$-equivariant.
\item[(c)] Writing $\V^\en_{T',T} - \V_{T',T}= \V^1_{T',T}\sqcup \V^2_{T',T}$, where $\V^1_{T',T}\subset \V^\en_{T'}-\V_{T'}$ and $\V^2_{T',T}\subset \V_{T'}$, we have: $$(\phi_{T',T}^\flat)^{\en}(\V^2_{T',T})\subset \V_T^\en - \V_T \quad \mbox{and} \quad (\phi_{T',T}^\flat)^{\en}(\V^1_{T',T})\subset \V_T.$$
\item[(d)] $(\phi_{T',T}^\sharp)^{\en}\circ\overline\bdry_{T',T}^{\en}=\overline\bdry_T^{\en}\circ (\phi_{T',T}^\flat)^{\en}$,
where $\overline\bdry_{T',T}^{\en}:=\overline\bdry_{T'}^{\en}|_{\V_{T',T}^{\en}}$.
\item[(e)] $\psi_{T}^\en\circ (\phi_{T',T}^\flat)^\en=\psi_{T'}^\en$ on $(\overline\bdry^\en_{T'})^{-1}(0)\cap \V_{T',T}^\en$.
\item[(f)] Over $(\phi_{T',T}^\flat)^{\en}(\V_{T',T}^{\en})$, $(\overline\bdry_T^{\en})_*:T\V_T^{\en}\to \E_T^{\en}$ descends to an isomorphism
$$T\V_T^{\en}/ (\phi_{T',T}^\flat)^{\en}_* (T\V_{T',T}^{\en})\xrightarrow\sim \E_T^{\en}/(\phi_{T',T}^\sharp)^{\en}_*\E_{T',T}^{\en}.$$
\ee
\NoIndent{(f) is the abstract version of the requirement on $\overline\bdry'_J$ and $\overline\bdry_J$ from Definition~\ref{defn: stab pair}(2).}

\item \label{K3} The identity morphism consisting of $\phi_{T,T}: \mathcal C_T\to \mathcal C_T$ and  $\phi_{T,T}^\en: \mathcal C_T^\en\to \mathcal C_T^\en$ is given by the data 
\begin{gather*}
    (\V_{T,T}, \phi_{T,T}^\sharp=\op{id}|_{\E_T}, \phi_{T, T}^\flat=\op{id}|_{\V_T})\\
    (\V_{T,T}^\en, (\phi_{T,T}^\en)^\sharp=\op{id}|_{\E_T^\en}, (\phi_{T, T}^\en)^\flat=\op{id}|_{\V_T^\en}).
\end{gather*}

\item \label{K4} The composition of 
\[
    (\phi_{T',T}: \mathcal C_{T'}\to \mathcal C_{T},\phi_{T',T}^\en: \mathcal C_{T'}^\en\to \mathcal C_{T}^\en)  
\]
and 
\[ 
(\phi_{T'',T'}: \mathcal C_{T''}\to \mathcal C_{T'}, \phi_{T'',T'}^\en: \mathcal C_{T''}^\en\to \mathcal C_{T'}^\en)
\]
given by 
\[
((\V_{T',T},\phi_{T',T}^\sharp,\phi_{T',T}^\flat),(\V_{T',T}^\en,(\phi_{T',T}^\sharp)^\en,(\phi_{T',T}^\flat)^\en))
\]
and
\[
((\V_{T'',T'},\phi_{T'',T'}^\sharp,\phi_{T'',T'}^\flat),(\V_{T'',T'}^\en,(\phi_{T'',T'}^\sharp)^\en,(\phi_{T'',T'}^\flat)^\en))
\]
is 
$$(\phi_{T'',T}: \mathcal C_{T''}\to \mathcal C_{T},\phi_{T'',T}^\en: \mathcal C_{T''}^\en\to \mathcal C_{T}^\en)$$ 
given by 
$$((\V_{T'',T},\phi_{T'',T}^\sharp,\phi_{T'',T}^\flat),(\V_{T'',T}^\en,(\phi_{T'',T}^\sharp)^\en,(\phi_{T'',T}^\flat)^\en))$$ 
such that 
\begin{gather*}
 \V_{T'',T}= (\phi_{T'',T'}^\flat)^{-1}(\V_{T',T}), \quad \V_{T'',T}^\en= ((\phi_{T'',T'}^\flat)^\en)^{-1}(\V_{T',T}^\en), \\
    \phi_{T'',T}^\flat= \phi_{T',T}^\flat \circ \phi_{T'',T'}^\flat, \quad \phi_{T'',T}^\sharp= \phi_{T',T}^\sharp\circ \phi_{T'',T'}^\sharp,\\
    (\phi_{T'',T}^\flat)^\en= (\phi_{T',T}^\flat)^\en \circ (\phi_{T'',T'}^\flat)^\en, \quad  (\phi_{T'',T}^\sharp)^\en= (\phi_{T',T}^\sharp)^\en \circ (\phi_{T'',T'}^\sharp)^\en.
\end{gather*}
Composition is clearly associative.

\item \label{K5}
$\mathcal M_i /\R = \cup_{T\git T = i} \psi_T (\overline{\bdry}_T^{-1}(0))$.

\NoIndent{Given CH trees $T,T'$ {\coblu with $T\git T=T'\git T=i$,} an {\em isomorphism of charts} is a pair
$$\phi=(\phi^\sharp,\phi^\flat):\mathcal{C}_{T'}\to \mathcal{C}_{T},  \quad \phi^\en=((\phi^\sharp)^\en,(\phi^\flat)^\en):\mathcal{C}^\en_{T'}\to \mathcal{C}^\en_{T},$$ 
where $(\E_{T'}^\en\to \V_{T'}^\en) \xrightarrow{((\phi^\sharp)^\en,(\phi^\flat)^\en)} (\E_T^\en\to \V_{T}^\en)$ is {\coblu a bundle} isomorphism such that $(\phi^\sharp)^\en\circ\overline\bdry_{T'}^\en=\overline\bdry_T^\en\circ (\phi^\flat)^\en$ and $\psi_{T}^\en\circ (\phi^\flat)^\en=\psi_{T'}^\en$, and which restricts to {\coblu a bundle} isomorphism  $(\E_{T'}\to \V_{T'})\xrightarrow{(\phi^\sharp, \phi^\flat)} (\E_T\to \V_T)$.  We will usually denote an isomorphism of charts by $\phi^\en=((\phi^\sharp)^\en,(\phi^\flat)^\en):\mathcal{C}^\en_{T'}\to \mathcal{C}^\en_{T}$.}

\item \label{K6} \coblu For any $T$ and $r\in G^+_{i,M}$ or $G^-_{i,M}$,\footnote{Recall the notation from Definition~\ref{defn: subgroups of Gi}.}  there exists an automorphism 
$$r^\en=((r^\sharp)^\en, (r^\flat)^\en):\mathcal C_T^\en\to \mathcal C_T^\en$$ such that $\psi_T^\en \circ (r^\flat)^\en \circ (\psi_T^\en)^{-1}$ is the same map as rotating the asymptotic markers of the domain Riemann surfaces by $r$.

\item \label{K7} For any $T$ and $\sigma \in G^-_{i,P}$, \cb there exists an an isomorphism 
$$\sigma^\en = ((\sigma^\sharp)^\en, (\sigma^\flat)^\en): \mathcal C_T^\en \to \mathcal C_{T_\sigma}^\en$$ 
such that $\psi_T^\en \circ (\sigma^\flat)^\en \circ (\psi_T^\en)^{-1}$ is given by
permuting the labels of the negative ends of the domain Riemann surfaces by $\sigma$.

\item \label{K8} \coblu The collection of restrictions of $\E_{T'}^\en\to \V_{T'}^\en$ to $\V_{T',T}^\en$ (resp.\ $\E_{T'}\to \V_{T'}$ to $\V_{T',T}$) and inclusions $\E_{T',T}^\en\to \V_{T',T}^\en$ to $\E_T^\en\to \V_T^\en$ (resp.\ $\E_{T',T}\to \V_{T',T}$ to $\E_T\to \V_T$) given by the commutative diagram from \ref{K2} commute with $G_i$.

\NoIndent{In other words, $G_i$ acts on $\{\mathcal C_T\}_{T\git T=i}$ with stabilizer $G_i(T)$ for $T$ and on $\{\mathcal{C}_{T'}\to \mathcal{C}_T\}_{T'<T}$ with stabilizer $G_i(T)\cap G_i(T')$ for the pair $T'<T$.}


\end{enumerate}
\end{defn}

{\coblu Note that $\Kur(\mathcal{M}_i)$ admits a fully faithful functor from the category $\mathcal{T}_i$ whose objects are CH trees $T$ with $T\git T=i$ and morphisms which are given by $T'<T$.}

Let $S$ be a good subforest of $T$ and $\arr{T}=\arr{T}(S) = (T_1 \sqcup \dots \sqcup T_m, \iota^{E(T)}, \iota^{V(T)}, \theta)$ be a degeneration with $m=m(S)$.  We write $\V_{\arr{T}}^{\en} = \V_{T_1}^{\en} \times \dots \times \V_{T_m}^{\en}$  and  $\E_{\arr{T}}^{\en} = \op{pr}_{T_1}^* \E_{T_1}^{\en}\oplus \dots \oplus\op{pr}_{T_m}^* \E_{T_m}^{\en}$, where $\op{pr}_{T_k}: \V_{\arr{T}}^{\en}\times (R-\epsilon_0, \infty)^{m-1} \to \V_{T_k}^{\en}$ is the $k$th projection map.

For each $e\in E(S)$, suppose $t(e)$ is the root of $T_{a(e)}$ and $e$ becomes the $k(e)$th free edge of $T_{b(e)}$.
There is a canonical isomorphism \coblu
$$\theta_e: G^+_{T_{a(e)}\git T_{a(e)},M}\stackrel\sim\longrightarrow G^-_{T_{b(e)}\git T_{b(e)},M,k(e)}.$$ 
We then set $G_{\arr{T},M} = \textstyle (\prod_{j=1}^m G_{T_j\git T_j,M}) /G^{\op{diag}}_{\arr{T},M},$ where
\begin{gather*}
G^{\op{diag}}_{\arr{T},M} =\textstyle \prod_{e \in E(S)} \{ (r, \theta_e(r)) ~|~ r\in G^+_{T_{a(e)}\git T_{a(e)},M}\}.
\end{gather*}
There is a natural inclusion $\tau_{\arr{T}}: G_{T\git T,M} \to G_{\arr{T},M}$; also recall $\delta_{\arr{T}}: G^-_{T\git T,P}\to \prod_{i=1}^m G^-_{T_i\git T_i,P}$ from Section~\ref{subsection: symmetries of moduli spaces}: \cb

\begin{defn}[Stata compatibility conditions] \label{defn: strata compatibility}
The {\em strata compatibility conditions for $\Kur$} are given as follows:
\be[label = (SC\arabic*)]
\item \label{SC1} 
There exist global constants $R>0$ and $\epsilon_0>0$ small, ``intermediate'' {\coblu $G_{T\git T}(T)$-invariant} open subsets $\V_T^{\en/2}$ for all $T$ satisfying 
$$\V_T\subset \V_T^{\en/2}\subset \V_T^\en,  \quad \overline\V_T\subset \V_T^{\en/2}, \quad \overline\V_T^{\en/2}\subset \V_T^{\en},$$ 
and analogously defined {\coblu bundles} $\E_{\arr{T}}^{\en/2}\to \V_{\arr{T}}^{\en/2}$ such that, for any CH tree $T$ with $|V(T)|\geq 2$ and a good subforest $S$ of $T$, there exists a $C^1$-bundle map $(\widetilde{\mathfrak G}_{\arr{T}}^{\en}, \mathfrak G_{\arr{T}}^{\en})$ {\coblu between bundles of the same rank}
$$
\begin{tikzcd}
\E_{\arr{T}}^{\en/2} \arrow[r,"\widetilde{\mathfrak{G}}_{\arr{T}}^{\en}"] \arrow[d] & \E_T^{\en} \arrow[d]\\
\V_{\arr{T}}^{\en/2} \times (R-\epsilon_0, \infty)^{m-1}   \arrow[r, "\mathfrak{G}_{\arr{T}}^{\en}"] &  \V_T^{\en},
\end{tikzcd}
$$
called the {\em gluing map} and which satisfies the following:
\be
\item $\mathfrak{G}_{\arr{T}}^{\en}$ is a $C^1$-map which is a homeomorphism onto its image.
\item $\widetilde{\mathfrak G}_{\arr{T}}^{\en} \circ (\overline\partial_{T_1}^{\en}, \dots,\overline \partial_{T_m}^{\en})|_{\V_{\arr{T}}^{\en/2} \times (R-\epsilon_0, \infty)^{m-1}}$ and $\overline \partial_T^{\en} \circ \mathfrak G_{\arr{T}}^{\en}$
are $C^1$-close and their difference goes to zero as $\op{min}_{j=1}^{m-1} I_j \to \infty$, where $I_j$ is the coordinate for the $j$th $(R-\epsilon_0, \infty)$ factor corresponding to an edge $e_j\in E(S)$. 
\item Let $S_j$ be a good subforest of $T_j$ which we view as a subforest of $T$. 
The simultaneous and iterated gluing maps
\begin{gather*}
    \mathfrak G_{\arr{T} (S\cup S_j)}^{\en}(v_1,\dots,v_{j-1},\mathbf v_{\arr{T_j}(S_j)},v_{j+1}, \dots, v_m,\mathbf I_{\arr{T_j}(S_j)}, \mathbf I_{\arr{T}(S)}),\\
    \mathfrak G_{\arr{T}(S)}^{\en} (v_1,\dots,v_{j-1}, \mathfrak G_{\arr{T_j}(S_j)}^{\en}(\mathbf{v}_{\arr{T_j}(S_j) }, \mathbf{I}_{\arr{T_j} (S_j)}), v_{j+1},\dots , v_m, \mathbf{I}_{\arr{T}(S)} )
\end{gather*}
where $v_i \in \V_{T_i}^{\en/2}$, $\mathbf{v}_{\arr{T_j}(S_j) }\in \V_{\arr{T_j}(S_j)}^{\en/2}$, $\mathbf{I}_{\arr{T_j} (S_j)}\in (R-\epsilon_0,\infty)^{m(S_j)-1}$, and $\mathbf I_{\arr{T}(S)}\in (R-\epsilon_0, \infty)^{m(S)-1}$, are $C^1$-close with error $\to 0$ as the minimum of all the components of $\mathbf I_{\arr{T} (S\cup S_j)} \to \infty$.
\ee

\NoIndent{Let us denote 
$$\mathfrak G^{R_0, T} :=  \mathfrak G_{\arr{T}(T_{\op{max}})}^\en(\V_{\arr{T}(T_{\op{max}})} ^{\en/2}\times ((R-\epsilon_0, \infty)^{|G(T)|}-(R-\epsilon_0, R_0)^{|G(T)|})),$$
where $T_{\op{max}}$ is the maximal good subforest of $T$ consisting of all of $V(T)$ and $G(T)$. }

\be
\item[(d)] $\mathfrak G^{R-\epsilon_0, T}\supset \overline\V_T$ and $\overline \V_T -  \mathfrak G^{R_0, T}$ is compact for any $R_0 \geq R-\epsilon_0$.
\ee

\NoIndent{In view of (c), the sets $\mathfrak G^{R_0, T}$ and 
$$(\mathfrak G^{R_0, T})' := \bigcup_{S'\subset T} \mathfrak G_{\arr{T}(S')}^\en(\V_{\arr{T}(S')}^{\en/2} \times (R_0, \infty)^{m(S')-1}),$$ 
where the union is taken over all good subforests $S'$, can be used interchangeably, i.e., (d) holds with $(\mathfrak G^{R_0, T})'$ instead of $\mathfrak G^{R_0, T}$.}

\item \label{SC2} {\coblu The maps $\mathfrak G_{\arr{T}}^\en$ and $\widetilde{\mathfrak G}_{\arr{T}}^\en$ are $G^{\op{diag}}_{\arr{T},M}$-invariant and for any $\sigma \in  G_{T\git T}$ the following diagrams commute:}
\[
\begin{tikzcd}
\V_{\arr{T}}^{\en/2} \times (R-\epsilon_0, \infty)^{m-1} \arrow[r,"\mathfrak{G}^\en_{\arr{T}}"] \arrow[d, "(\delta_{\arr{T}}(\sigma)^\sharp)^\en\times \op{id}"] & \V_T ^\en \arrow[d, "(\sigma^\sharp)^\en" ]\\
\V_{\arr{T_\sigma}}^{\en/2} \times (R-\epsilon_0, \infty)^{m-1}   \arrow[r, "\mathfrak{G}^\en_{\arr{T_\sigma}}"] &  \V_{T_\sigma}^\en
\end{tikzcd}
\hspace{1cm}
\begin{tikzcd}
\E^{\en/2}_{\arr{T}} \arrow[r,"\widetilde{\mathfrak{G}}^\en_{\arr{T}}"] \arrow[d, "(\delta_{\arr{T}}(\sigma)^\flat)^\en"] & \E_T^\en \arrow[d, "(\sigma^\flat)^\en" ]\\
\E^{\en/2}_{\arr{T_\sigma}} \arrow[r, "\widetilde{\mathfrak{G}}^\en_{\arr{T_\sigma}}"] &  \E_{T_\sigma}^\en
\end{tikzcd}
\]
\ee
\end{defn}

{\em For the rest of the paper we will usually omit ${\en}$ from our notation, namely when we refer to bundle maps and multisections, we assume they admit slight enlargements with superscripts $^{\en}$.}

Let $\sim_{\Kur}$ be the equivalence relation on $\coprod_{T\git T=i} \V_T^\en$ that is induced by (K2)(a), i.e., 
$$(\phi_{T',T}^\flat)^\en(x)\sim_{\Kur} x, \quad \forall T'< T ~\mbox{ and }~ x\in \V_{T',T}^\en.$$
We define the topology on 
$$\mathcal{V}_i:= \left(\textstyle\coprod_{T\git T=i} \V_T \right)/\sim_{\Kur}$$
as follows: First take the quotient topology on $(\coprod_{T\git T=i} \overline{\V}_T)/\sim_{\Kur}$ and then take the subset topology on $\mathcal{V}_i$. This slightly roundabout procedure is justified by the following example which describes the pathology of taking the quotient topology with respect to an open relation. 

\begin{example}[Issues with taking the quotient topology] \label{example: topologies not the same}
  Consider the subsets $V_1=\{x<0\}$ and $V_2=\{y=0\}$ of standard $\R^2_{(x,y)}$ with the subset topologies.  Then the quotient topology $\mathcal{T}_1$ on $(V_1\sqcup V_2)/\sim$, where $\sim$ identifies $(x,0)\in V_1$ with $(x,0)\in V_2$, is not the same as the subset topology $\mathcal{T}_2$ on $V=\{x<0 \mbox{ or } y=0\}$ induced from $\R^2$. This is because for example 
$$\{(x,y)~|~ (x+1)^2 + y^2< 1  \}  \cup \{ (x,y) ~|~ x\in (-1,1), y=0\}$$ 
is in $\mathcal{T}_1$ but not in $\mathcal{T}_2$. To circumvent the issue, as explained in \cite[Section 2]{FO32}, we first take the quotient topology on $(\overline V_1\sqcup \overline V_2)/\sim$, where $\overline V_1=\{x\leq 0\}$ and $\overline V_2=V_2$, and then take the subset topology on $(V_1\sqcup V_2)/\sim$.
\end{example}

\begin{lemma} \label{lemma: Hausdorff}
$\mathcal{V}_i$ is Hausdorff.
\end{lemma}

\begin{proof}  
Each $\overline{\V}_T$ is Hausdorff.  We then use the fact that if, for $i=1,2$, $X_i$ is a Hausdorff topological space and $A_i$ a closed subset of $X_i$ and $\phi:A_1\xrightarrow{\sim} A_2$ is a homeomorphism, then $(X_1\coprod X_2)/ (x\sim \phi(x))$, $\forall x\in A_1$, with the quotient topology is also Hausdorff. Hence $(\coprod_{T\git T=i} \overline{\V}_T)/\sim_{\Kur}$ is Hausdorff and the restriction to $\mathcal{V}_i$ is also Hausdorff.
\end{proof}

Since we will use $\overline{\bdry}^{-1}(\mathfrak s)$ as a replacement of $\overline{\bdry}^{-1}(0)$, we define: 

\begin{defn}[Multisection of a semi-global Kuranishi structure] \label{def: multisection of Kur}
A {\em multisection of $\Kur$} is a collection $\mathfrak{S} = \{{\mathfrak s}_T^\en: \V_T^\en\to \E_T^\en\}_T$ of slight enlargements of multisections such that:
\begin{enumerate}
\item for any $T' < T$, $(\phi_{T',T}^\sharp)^\en \circ \mathfrak{s}_{T',T}^\en= \mathfrak{s}_T^\en \circ (\phi_{T',T}^\flat)^\en$, where $\mathfrak{s}_{T',T}^\en := \mathfrak{s}_{T}^\en|_{\V_{T',T}^\en}$;
\item there exists $R_0\gg 0$ such that for each good subforest $S$ of $T$ with $S\not=\varnothing$, if we write $\arr{T}=\arr{T}(S)=(T_1\sqcup \dots \sqcup T_{m(S)}, \iota^{E(T)}, \iota^{V(T)}, \theta)$, then on $ \mathfrak G_{\arr{T}}^\en(\V_{\arr{T}}^{\en/2} \times (R_0, \infty)^{m(S)-1})$  
\nom[Sigma]{$\mathfrak S=\{\mathfrak s_T^\en\}_T$}{Multisection of a semi-global Kuranishi structure $\Kur$}
$$\widetilde{\mathfrak G}_{\arr{T}}^\en \circ (\mathfrak s_{T_1}^\en, \dots, \mathfrak s_{T_m}^\en) \circ (\mathfrak G^\en_{\arr{T}})^{-1}\quad \mbox{and} \quad \mathfrak s_T^\en$$ 
are $C^1$-close and their difference goes to zero as $\min_{j=1}^{m(S)-1} I_j \to \infty$, where $(I_1,\dots, I_{m(S)-1})$ are the $(R_0,\infty)^{m(S)-1}$-coordinates; and 
\item {\coblu $\mathfrak S_i:=\{{\mathfrak s}_T^\en~|~ T\git T=i\}\subset \mathfrak S$ is preserved under $G_i$.}
\end{enumerate} 
A multisection $\mathfrak{S}=\{\mathfrak s_T^\en\}_T$ is {\em $C^1$-close to the $0$-section} if $\mathfrak{s}_T^\en$ is $C^1$-close to the $0$-section for each $T$.  We often write $\mathfrak{S}=\{\mathfrak s_T\}_T$ when the slight enlargements are understood.
\end{defn}

Definition~\ref{def: multisection of Kur}(1) implies that $(\overline{\bdry}_T^\en)^{-1}({\mathfrak s}_{T}^\en)$ and $(\overline{\bdry}_{T'}^\en)^{-1}(\mathfrak{s}_{T'}^\en)$ agree on the ``overlap" of semi-global Kuranishi charts $\V_T$ and $\V_{T'}$. Hence $\overline{\bdry}^{-1} (\mathfrak s)$ can be obtained by patching $\{\overline{\bdry}^{-1}_T (\mathfrak s_T)\}_T$ together.   In view of \ref{SC1}(d) and Definition~\ref{def: multisection of Kur}(2), we say that $\mathfrak{s}^\en_T$ is {\em product-like} outside a compact subset of $\overline{\V}_T$.   

Given a multisection $\mathfrak{S}_i=\{\mathfrak{s}_T\}_{T\git T=i}$ of $\Kur (\mathcal{M}_i)$, we define
\begin{equation} \label{eqn: defn of Z sub i}
\mathcal Z_i = \mathcal{Z}(\Kur(\mathcal{M}_i),\mathfrak{S}_i)=\left(\coprod_{T\git T=i} \overline\bdry^{-1}_T ({\mathfrak s}_T)\right)/\sim_{\Kur},
\end{equation}
where $\sim_{\Kur}$ is   as above and the topology is the subset topology of $\mathcal{V}_i$. \cb 
\nom[Z]{$\mathcal Z_i = \mathcal{Z}(\Kur(\mathcal{M}_i),\mathfrak{S})$}{Weighted branched manifold which is the intersection of the multisection $\mathfrak S$ and $\overline\bdry$, corresponding to the moduli space $\mathcal{M}_i$}
\nom[Z2]{$\mathcal{Z}(\Kur^L(\alpha, J;\gamma_+; \bs \gamma_-;A),\mathfrak{S})$}{$\mathcal{Z}(\Kur(\mathcal{M}_i),\mathfrak{S})$, where $
\Kur(\mathcal{M}_i)=\Kur^L(\alpha, J;\gamma_+; \bs \gamma_-;A)$}
\nom[K2]{$\Kur^L(\alpha, J;\gamma_+; \bs \gamma_-;A)$}{$\Kur(\mathcal{M}_i)$, where for $\mathcal{M}_i$ we have specified the orbits $\gamma_+, \bs\gamma_-$ at the positive and negative ends, the homology class $A\in H_2(M;\Z)$, and the fact that $\mathcal{A}_\alpha(\gamma_+)<L$}
When we want to specify the orbit $\gamma_+$ at the positive end of the topmost level, the orbits $\bs\gamma_-$ at the negative end of the bottommost level, and possibly the homology class $A\in H_2(M;\Z)$ (see Section~\ref{subsubsection: grading} for an explanation of how to assign a homology class $A\in H_2(M;\Z)$ to $[\F,u]$), and that fact that $\mathcal{A}_\alpha(\gamma_+)<L$, we write  $\mathcal{Z}(\Kur^L(\alpha, J;\gamma_+; \bs \gamma_-),\mathfrak{S})$ or $\mathcal{Z}(\Kur^L(\alpha, J;\gamma_+; \bs \gamma_-;A),\mathfrak{S})$.

\begin{prop} \label{prop: tomatoes}
If the multisection $\mathfrak{S} = \{{\mathfrak s}_T\}_T$ of $\Kur$ is $C^1$-close to the $0$-section and $\overline{\bdry}_T$ is transverse to $\mathfrak s_T$ for all $T$, then:
\be
\item The space $\mathcal Z_i$ is a weighted branched manifold of dimension $\op{vdim} \mathcal M_i/\R$.
\item If $\op{vdim} \mathcal M_i/\R = 0$, then $\mathcal Z_i$ consists of finitely many points with weights.
\item   If $\op{vdim} \mathcal M_i/\R = 1$, then there exists $\mathcal L_0 \in \R$ such that for any generic $\mathcal L > \mathcal L_0$, the {\em $\mathcal L$-boundary} of $\mathcal Z_i$ defined by 
\nom[bdryL]{$\partial^{\mathcal L} \mathcal Z_i$}{The $\mathcal L$-boundary of the zero set $\mathcal Z_i$ of $\mathfrak S$}
$$\partial^{\mathcal L} \mathcal Z_i := \bdry \coprod_{T\git T = i} \left[ \overline{\bdry}^{-1}_T(\mathfrak s_T) - \mathfrak G^{\mathcal{L},T} \right] /\sim_{\Kur}$$
is in bijection with the union 
$$\coprod_{T\git T = i,~ |V(T)| = 2} \left( \mathcal Z_{j(T)} \times \mathcal Z_{k(T)}\right)/{\coblu G^{\op{diag}}_{\arr T^s,M}}$$ 
of product weighted branched submanifolds, where $$\arr T^s = (j(T) \sqcup k(T), \iota^{E(T)}, \iota^{V(T)}, \theta)$$ is the special degeneration of $T$ as in Remark~\ref{rmk: special degeneration}.
\ee
\end{prop}

\begin{proof} 
For convenience, for any $T'<T$ we often omit the morphism $\phi_{T',T}$ from the notation and view $\V_{T',T}$ as a subspace of $\V_T$; we also often omit the map $\psi_T$ and view $\op{Im}(\psi_T) \subset \mathcal M_{T\git T}$ as a subspace of $\V_T$.  We write $\mathcal Z=\mathcal{Z}_i$.

(1) Lemma~\ref{lemma: Hausdorff} implies that $\mathcal{Z}$ is Hausdorff.  Then (1) follows from the transversality of $\mathfrak{S}$ and $\overline\bdry_J$ and {\coblu Condition \ref{K2}(f).}    

(2)  This is an immediate consequence of the compactness of $\mathcal{Z}$. To show the compactness  we start with the following:

\begin{claim} \label{claim: existence of Rzero}
There exists $R_0$ such that $\mathcal Z \cap \mathfrak G^{R', T} = \varnothing$ for all  $T$ such that $T\git T = i$ and $R' > R_0.$
\end{claim}

\begin{proof}[Proof of Claim~\ref{claim: existence of Rzero}]
Arguing by contradiction, suppose there exist:
\begin{itemize}
    \item[(i)] a CH tree $T$ such that $T\git T = i$ and $k=|V(T)| \geq 2$, 
    \item[(ii)] a good subtree $S$ and a degeneration $\arr{T}=\arr{T}(S) = T_{j_1} \sqcup \dots \sqcup T_{j_k}$ along $S$, and
    \item[(iii)] a sequence of curves $\{z_j\}_{j=j'}^\infty$ in $\mathcal Z\cap \mathfrak G_{\arr{T}}^\en(\V_{\arr{T}}^{\en/2} \times (j, \infty)^{m(S)-1})$, 
    with Fredholm index $\op{ind} z_j = 1$ and {\coblu $j'$ a large integer.}
\end{itemize}  
By (iii), the curve $z_j$ satisfies the equation $\overline{\partial}_T z_j = \mathfrak s_T(z_j)$. 
By Condition~\ref{SC1}(b) and Definition~\ref{def: multisection of Kur}(2), for $j\gg 0$ there exists $z_j' \in \V_T$ satisfying $\overline{\partial}_T' z_j' = \mathfrak s'_T(z_j')$, where 
\begin{gather*}
\overline{\partial}_T' = \widetilde{\mathfrak G}_{\arr{T}} ^\en\circ (\overline{\partial}_{T_{j_1}}^\en, \dots, \overline{\partial}_{T_{j_k}}^\en) \circ ({\mathfrak G}_{\arr{T}}^\en)^{-1},\\
\mathfrak s_T' = \widetilde{\mathfrak G}_{\arr{T}}^\en \circ (\mathfrak s_{T_{j_1}}^\en, \dots, \mathfrak s_{T_{j_k}}^\en) \circ ({\mathfrak G}_{\arr{T}}^\en)^{-1}.
\end{gather*}

Writing $(\mathfrak G_{\arr{T}}^\en)^{-1}(z_j')=(z_{j 1}, \dots, z_{jk}, I_1,\dots, I_{k-1})$,  we obtain $\overline{\partial}^\en_{T_{j_i}} z_{ji} = \mathfrak s_{T_{j_i}}^\en(z_{ji})$ for each $j,i$ with $j\gg 0$. Hence $\op{ind} z_{ji}\geq 1$, contradicting the fact that $\sum_{i=1}^k\op{ind} z_{ji}=\op{ind}z_j' =1$.
\end{proof}

For $R'\gg 0$, $((\overline{\bdry}^\en_T)^{-1}(0)\cap \overline\V_T) - \mathfrak G^{R', T}$ is compact by Condition \ref{SC1}(d) and hence is bounded since $\overline\V_T$ is complete. 
By Claim~\ref{claim: existence of Rzero}, $\overline{\bdry}^{-1}_T(\mathfrak s_T)  \cap \mathfrak G^{R', T} = \varnothing$. Let $U$  be a relatively compact open neighborhood of $((\overline{\bdry}^\en_T)^{-1}(0)\cap \overline\V_T) - \mathfrak G^{R', T}$ in $\overline\V_T$. Since $\mathfrak{S}=\{\mathfrak{s}_T\}_T$ is $C^1$-close to the $0$-section, $(\overline{\bdry}^\en_T)^{-1}(\mathfrak s_T^\en)\cap \overline\V_T \subset U$.  Hence $(\overline{\bdry}^\en_T)^{-1}(\mathfrak s_T^\en)\cap \overline\V_T$ is compact and so is $\mathcal Z$.

(3) The key point is the following claim which roughly states that $\mathcal Z$ has no ``interior boundary":

\begin{claim} \label{claim: caseAB}
If $\mathfrak s_T$ is $C^1$-close to the $0$-section for all $T$ such that $T\git T = i$, then the closure $\overline{\mathcal Z}$, obtained by gluing the closures of $\overline\bdry_T^{-1}(\mathfrak s_T)\subset \overline{\V}_T$ over all $T\git T=i$, is covered by $\{\V_T~|~T\git T=i\}$ and the ends of $\mathcal{Z}$ are close to a half-open interval times $\prod_{i=1}^2\overline{\partial}_{T_{j_i}}^{-1}(\mathfrak s_{T_{j_i}})$, where we are ranging over degenerations $T_{j_1}\sqcup T_{j_2}$ of $T$ along a single edge.
\end{claim}

\begin{proof}[Proof of Claim~\ref{claim: caseAB}]
Let $u \in (\overline{\bdry}_T^{\en})^{-1}(0) \cap \bdry \V_T$ such that $T\git T=i$. We will show that there exists a neighborhood $U$ of $u$ in $\V_T^\en$ such that if $\mathfrak s_T^{\en}$ is $C^1$-close to the $0$-section, then $(\overline{\bdry}_T^{\en})^{-1}(\mathfrak s_T^{\en}) \cap U \subset \V_{T''}$ for some $T''$ with $T'' \git T'' = i$. By (K1)(a), $\psi_T^{\en}(u)\in \mathcal{M}_i/\R$. Then by \ref{K5} there exists $T'$ such that $u \in \V_{T'}$.  We consider two cases: (A) $T'<T$ or (B) $T<T'$.

(A) By Conditions~\ref{K2}(a) (the embedding property and in particular the equivalence of the topology of $\V^\en_{T',T}$ and the induced topology), (c), (d), and (f) there exist a neighborhood $U$ of $u$ in $\V_T^\en$ such that $U \cap \V_{T',T} ^\en\subset \V_{T'}$ and coordinates $(v_{T',T}, x)$ of $U$, where $v_{T',T} \in \V_{T',T}^\en$ and $x$ is the coordinate in the normal direction, such that
\begin{gather*}\overline{\bdry}_T^{\en}(v_{T',T}, x) = (\overline{\bdry}_{T'}^{\en}(v_{T',T}), x + \op{h.o.}),\\
\mathfrak s_T^{\en}(v_{T',T}, x) = (\mathfrak s_{T',T}^{\en}(v_{T',T}), h(v_{T',T},x)),
\end{gather*}
where $h(v_{T',T},0)=0$ and has small $x$-derivative. If $U$ is small and hence $|x|$ is small, the equation 
\[\overline{\bdry}_T^{\en}(v_{T',T}, x) = \mathfrak s_T^{\en}(v_{T',T}, x)\]
implies $x = 0$. Hence 
$(\overline{\bdry}_T^{\en})^{-1}(\mathfrak s_T^{\en}) \cap U = (\overline{\bdry}_{T',T}^{\en})^{-1}(\mathfrak s_{T',T}^{\en}) \cap U \subset \V_{T'}.$

(B) This is straightforward: By Condition~\ref{K2}(c) there exists a neighborhood $U$ of $u$ in $\V_{T,T'}^\en$ such that $U \subset \V_{T'}$, so $(\overline{\bdry}^\en_T)^{-1}(\mathfrak s_T^\en) \cap U \subset \V_{T'}$.

This proves the existence of the neighborhood $U$ of $u$ and tree $T''$.

We now observe that $\mathfrak s_T$ is product-like outside of a compact subset $K_T$ of $\overline{\V}_T$.  Let $\mathscr{U}_T$ be a small neighborhood of the closure of  $\overline{\bdry}^{-1}_T(0)\cap K_T$ in $\overline{\V}_T$; by compactness we may assume that the closure of $\overline{\bdry}^{-1}_T(\mathfrak s_T)\cap K_T$ in $\overline{\V}_T$ (= $(\overline\bdry^\en)^{-1}(\mathfrak s_T^\en) \cap K_T$) is a subset of $\mathscr{U}_T$ by requiring $\mathfrak s_T$ to be sufficiently $C^1$-close to the $0$-section. 
By the existence of $U$ and $T''$ for each $u\in (\overline{\bdry}_T^{\en})^{-1}(0) \cap \bdry \V_T$ as above, 
$(\overline\bdry^\en)^{-1}(\mathfrak s_T^\en) \cap K_T\subset \cup_{T'\git T' = i} \V_{T'}$.

The elements of the ends of $\mathcal{Z}$ are close to elements $(z_1,\dots, z_k)$ of the product $\prod_{i=1}^k\overline{\partial}_{T_{j_i}}^{-1}(\mathfrak s_{T_{j_i}})$, together with some gluing parameters.  By the index count as in the proof of Claim~\ref{claim: existence of Rzero}, $\sum_{i=1}^k\op{ind} z_i=2$ and $\op{ind} z_i\geq 1$. Hence $k$ must be exactly two and the claim follows.
\end{proof}

In view of Claim~\ref{claim: caseAB}, the bijection in (3) is given by the gluing map $\mathfrak G_{\arr{T}}$ in \ref{SC1}, modulo the symmetry given by \ref{SC2}, and the special degeneration is chosen to avoid ``over-counting". 
\end{proof}

\subsection{Trimming} \label{subsection: trimming}

In Sections~\ref{subsection: trimming}--\ref{subsection: multisections on products} we assume the following: 
\begin{enumerate}
\item[(H)] \hypertarget{H}{} $\chi(\dot F_{i})<0$ for all the domains $\dot F_i$ of $\mathcal{M}_i$, where $\mathcal{M}_i\in \mathcal{S}$.
\end{enumerate}
Here (H) stands for ``hyperbolic". In particular, whenever there is a reference to ${\bf q}$, we assume ${\bf q}=\varnothing$. In Section~\ref{subsection: H does not hold} we make the necessary modifications when \hyperlink{H}{(H)} does not hold.  

\nom[delta]{$\delta>0$}{Small positive constant used in the definitions of $\delta$-close to breaking and $\mathcal G_\delta(V_1/\R,\dots, V_m/\R)$, etc.}
Choose $\delta>0$ small such that $(\alpha,J)$ is $L$-simple (see Definitions~\ref{Lsimple form} and \ref{defn: L-supersimple 1}) with parameter $\delta$.

We make the following definition:
\nom[nl]{$\nl(A)$}{Neck length of an annular component $A$ of $\op{Thin}_\varepsilon(\ddot F,g_{\ddot F})$}

\begin{defn}[Neck length]
Given $$(\F,u)=((F,j,{\bf p},{\bf q},{\bf r}),u)\in \widetilde{\mathcal{G}}_\delta(V_1/\R,\dots,V_m/\R),$$
recall that $g_{\ddot F}$ is the complete finite volume hyperbolic metric on $\ddot F=F-{\bf p}-{\bf q}$ that is compatible with $j$. If there is an annular component $A$ of $\op{Thin}_\varepsilon(\ddot F,g_{\ddot F})$ \cb such that $u|_A$ is close to a trivial cylinder over $\gamma$, then its {\em neck length} $\nl(A)$ with respect to $\gamma$ is the value $C$ such that $A$ is conformally equivalent to $(\R/\mathcal A(\gamma)\Z)\times [0,C]$ with the standard complex structure.
\end{defn}

Choose $\mathcal{L}\gg 0$ and  $\varepsilon''\gg \varepsilon''' >0$. 
Roughly speaking, $\mathcal{L}$ will determine the size in terms of neck lengths of the interior semi-global Kuranishi charts $\V_i$, $\varepsilon''$ will measure the widths in terms of neck lengths of the overlapping regions when including $\V_{T\git S}$ into $\V_T$, and $\varepsilon'''$ will measure the widths of the slight enlargements $\V_T^\en$ of $\V_T$.
\nom[E2]{$\varepsilon''>0$}{Small positive constant which measures the widths (in terms of neck lengths) of the overlapping regions when including $\V_{T\git S}$ into $\V_T$}
\nom[E3]{$\varepsilon'''>0$}{Small positive constant $0<\varepsilon'''\ll \varepsilon''$ which measures the widths (in terms of neck lengths) of the slight enlargements $\V_T^\en$ of $\V_T$}

Let $\K$ be a sufficiently large {\coblu $G$-invariant} compact subset of $\mathcal{M}/\R$ in the sense that (NL$_{\mathcal{L}+\varepsilon'''}$) holds, where for $\mathcal{L}'>0$ we define:  
\be
\item[(NL$_{\mathcal{L}'}$)] $[(\F,u)]\in \K$ whenever there is no annular component of $\op{Thin}_\varepsilon(\ddot F,g_{\ddot F})$ or all the annular components of $\op{Thin}_\varepsilon(\ddot F,g_{\ddot F})$ have neck length $\leq \mathcal{L}'$.
\ee
Consider an interior semi-global Kuranishi chart {\coblu $(\K,\pi_{\V}: \E\to \V,\overline\bdry_J,\psi,G)$} as constructed in Section~\ref{subsection: semi-global Kuranishi chart}.
{\em We will sometimes suppress $\overline\bdry_J$, $\psi$, {\coblu and $G$} from the notation.} 

By slightly shrinking $\V$ and $\K$ if necessary subject to (NL$_{\mathcal{L}+\varepsilon'''}$), and without changing notation, {\coblu we may assume that $\V$ and $\K$ are $G$-invariant.}

Next we explain how to further trim $\pi_{\V}: \E\to \V$ and $\K$.   Given $\mathcal{L}'$ such that $\mathcal{L}'\leq \mathcal{L}+\varepsilon'''$, let $\V_{\geq \mathcal{L}'}$ (resp.\ $\V_{> \mathcal{L}'}$, $\V_{= \mathcal{L}'}$) be the subset of $\V$ consisting of $[(\F,u)]$ for which there exists an annular component $A$ in $\op{Thin}_\varepsilon(\ddot F,g_{(\ddot F,j)})$ with $\nl(A)\geq \mathcal{L}'$ (resp.\ $\nl(A)>\mathcal{L}'$, $\nl(A)=\mathcal{L}'$) and let $\V_{<\mathcal{L}'}=\V-\V_{\geq \mathcal{L}'}$ and $\V_{\leq \mathcal{L}'}=\V-\V_{> \mathcal{L}'}$. 
Note that $\V_{< \mathcal{L}'}$ and $\K\cap \V_{\leq\mathcal{L}'-\varepsilon'''}$ are {\coblu still $G$-invariant.}

\begin{defn} [Interior charts] \label{defn: choice of interior chart}
The interior semi-global Kuranishi chart for $\mathcal{M}/\R$ that we use in \ref{K1}(c) is as follows:
\be 
\item[($\mathcal{I}$1)] the bundle is $\pi_{\V}|_{\V_{< \mathcal{L}}}:\E|_{\V_{< \mathcal{L}}}\to \V_{< \mathcal{L}}$, the compact set is $\K\cap \V_{\leq\mathcal{L}-\varepsilon'''}$, and the slight enlargement is $\pi_{\V}|_{\V_{< \mathcal{L}+\varepsilon'''}}:\E|_{\V_{< \mathcal{L}+\varepsilon'''}}\to \V_{< \mathcal{L}+\varepsilon'''}$;
\item[($\mathcal{I}$2)] the compact set $\K\cap \V_{\leq\mathcal{L}-\varepsilon'''}$ satisfies (NL$_{\mathcal{L}-\varepsilon'''}$); and
\item[($\mathcal{I}$3)] $\V_{< \mathcal{L}}$, $\V_{< \mathcal{L}+\varepsilon'''}$, and $\K\cap \V_{\leq\mathcal{L}-\varepsilon'''}$ {\coblu are all $G$-invariant;} and
\item[($\mathcal{I}$4)] $\overline\bdry$ (resp.\ $\overline\bdry^\en$) and $\psi$ (resp.\ $\psi^\en$) are restrictions of $\overline\bdry_J$ and $\psi$ to $\V_{<\mathcal{L}}$ (resp.\ $\V_{< \mathcal{L}+\varepsilon'''}$) and $\overline\bdry^{-1}(0)$ (resp.\ $(\overline\bdry^\en)^{-1}(0)$). 
\ee
{\em Resetting notation, we write $\pi:\E\to\V$, $\K$, and $\pi^\en: \E^\en\to \V^\en$ for the bundle, compact set, and slight enlargement satisfying ($\mathcal{I}$1)--($\mathcal{I}$3); the constant $\mathcal{L}$ is chosen in Section~\ref{subsubsection: interior charts and multisections}.}
\end{defn}

The boundary $\bdry \V$ can be decomposed into two parts, the {\em vertical boundary} $\bdry_v \V=\V_{=\mathcal{L}}$ and the {\em horizontal boundary} $\bdry_h \V$ which is the closure in $\bdry \V$ of $\bdry\V-\bdry_h\V$.

\cb

\subsection{Universal choices of $R,\varepsilon,\ell$} \label{subsection: universal choice of gluing parameter}

Recall $\mathcal{S}=\{\mathcal{M}_1,\dots,\mathcal{M}_\rho\}$. We explain how to choose the universal constants $R, \varepsilon$, and $\ell$.

\begin{rmk} \label{rmk: almost equivalence of gluing parameters}
Observe that, in view of the description of the pregluing from Section~\ref{subsection:pregluing}, the gluing parameter $T_e\in [R,\infty)$ corresponding to an edge $e$ of a CH tree is approximately equal to $C \nl(A_e) + D$, where $C>0$ and $D\in \R$ are fixed constants and $A_e$ is the $\varepsilon$-thin annulus of the glued domain corresponding to the edge $e$. Hence the neck length bound and the gluing parameter bound can be used interchangeably in Theorems~\ref{thm: gluing} and \ref{thm: iterated gluing}.  {\em In this subsection we take $R$ to be the neck length bound.}
\end{rmk}

\begin{lemma} \label{lemma: universal}
There exist a ``large'' {\coblu $G_i$-invariant} compact subset $\K_i$ of $\mathcal{M}_i/\R$ for each $\mathcal{M}_i\in \mathcal{S}$, a gluing parameter bound $R$, and constants $\varepsilon$ and $\ell$, such that (i$_{\mathcal{S}}$) and (ii$_{\mathcal{S}}$) hold for $R,\varepsilon,\ell$, where for any subset $\mathcal{S}'$ of $\mathcal{S}$ such that each component of each level of a building in $\bdry\mathcal{M}_i$ with $\mathcal{M}_i\in \mathcal{S}'$ is either a trivial cylinder or is in $\mathcal{S}'$ we define:
\be
\item[(i$_{\mathcal{S}'}$)] \coblue Theorems~\ref{thm: gluing}(A) \cb and \ref{thm: iterated gluing} hold for all possible gluings \coblu and iterated gluings of all orders \cb of {\coblue all the} $G_i$-invariant neighborhoods $V_i$ of $K_i$, possibly with multiplicities, such that $\mathcal{M}_i\in \mathcal{S}'$ with the {\em same} gluing parameter bound $R$.
Here 
all the $V_i$ depend on the {\em same} parameters $\varepsilon,\ell$.
\item[(ii$_{\mathcal{S}'}$)] For each $\mathcal{M}_i\in \mathcal{S}'$, the compact subset $\K_i$ contains all the elements of $\mathcal{M}_i/\R$ that are not \coblu in $\cup_T\mathcal{V}_T$, where $T$ ranges over all CH trees representing boundary strata of $\overline{\mathcal{M}_i/\R}$. Here if $T$  corresponds to the stratum $\times_{v\in V(T)} (\mathcal{M}_{l_{V(T)}(v)}/\R)$, then $\mathcal{V}_T$ is the intersection of all the images of the gluing maps and iterated gluing maps of all orders of Theorem~\ref{thm: gluing}(A) and \ref{thm: iterated gluing}, obtained by gluing the $V_{l_{V(T)}(v)}$ with gluing parameter bound $R$ (and parameters $\varepsilon,\ell$). 
\ee
\end{lemma}

\begin{proof}
If we want to indicate the dependence of $V_i$ on $\ell^i$ and $\varepsilon^i$, then we write $V_i=V_i^{\ell^i,\varepsilon^i}$.
The following is immediate:

\begin{claim}
If $(\ell^i)'\geq \ell^i$ and Theorems~\ref{thm: gluing} and \ref{thm: iterated gluing} hold for $V_i=V_i^{\ell^i,\varepsilon^i}\supset K_i$, $i=1,\dots,m$, and gluing parameter bound $R$, then Theorems~\ref{thm: gluing} and \ref{thm: iterated gluing} also hold for $V_i^{(\ell^i)',\varepsilon^i}$ and the same gluing parameter bound $R$, provided $V_i^{(\ell^i)',\varepsilon^i}$ is a sufficiently small neighborhood of $K_i$.
\end{claim}

The claim states that $\ell$ can be increased simply by shrinking the neighborhoods of $K_i$.  Hence $\ell$ can be adjusted at the very end without affecting $R$, $\varepsilon$, and the $K_i$.

\s\n
{\bf Step $0$.}  Let $\mathcal{S}_0$ be the set of all $\mathcal{M}_i$ such that $\mathcal{M}_i/\R=\overline{\mathcal{M}_i/\R}$.  If $\mathcal{M}_i\in \mathcal{S}_0$, then take $\K_i=\mathcal{M}_i/\R$. Choose $\varepsilon_0, \ell_0$ that works for all the $\K_i$ with $\mathcal{M}_i\in\mathcal{S}_0$ and let $R_0$ be a constant larger than the gluing parameter bounds in Theorems~\ref{thm: gluing}(A) and \ref{thm: iterated gluing} for all possible gluings of the $V_i$, possibly with multiplicities, such that $\mathcal{M}_i\in \mathcal{S}_0$.  Note that action considerations imply that there are only finitely many \coblu gluing types. \cb Hence (i$_{\mathcal{S}_{0}}$) and (ii$_{\mathcal{S}_{0}}$) hold with $R=R_0$, $\varepsilon=\varepsilon_0$, $\ell=\ell_0$.

\s\n
{\bf Step $j-1$.}  Suppose we have inductively chosen:
\begin{enumerate}
    \item $\mathcal{S}_0\subsetneq \dots \subsetneq \mathcal{S}_{j-1}\subset \{\mathcal{M}_1,\dots,\mathcal{M}_\rho\}$;
    \item a compact subset $\K_i$ of $\mathcal{M}_i/\R$ whenever $\mathcal{M}_i\in \mathcal{S}_{j-1}$; and
    \item a gluing parameter bound $R_{j-1}$ and constants $\varepsilon_{j-1}, \ell_{j-1}$; 
\end{enumerate} 
satisfying (i$_{\mathcal{S}_{j-1}}$) and (ii$_{\mathcal{S}_{j-1}}$) with $R=R_{j-1}$, $\varepsilon=\varepsilon_{j-1}$, and $\ell=\ell_{j-1}$.

We now come to the key point: If we increase $R_{j-1}$ to $R_{j-1}'>R_{j-1}$ with $\varepsilon_{j-1}$ and $\ell_{j-1}$ fixed, then we must enlarge $\K_i$ to some $\K_i'$ so that (ii$_{\mathcal{S}_{j-1}}$) still holds.  At first glance we then need to increase $R_{j-1}'$ since it depends on $\K_i'$, leading to a circular argument.  Upon closer inspection, however, we can enlarge $\K_i$ to $\K_i'$ without increasing $R_{j-1}$ so that (i$_{\mathcal{S}_{j-1}}$) and (ii$_{\mathcal{S}_{j-1}}$) hold for $\K_i'$, \coblue as we now explain: First (ii$_{\mathcal{S}_{j-1}}$) still holds with $\K_i$ replaced by $\K_i'\supset \K_i$, as making the compact set larger just makes (ii$_{\mathcal{S}_{j-1}}$) easier to attain. Let $V_i'$ be a small neighborhood of $\K_i'$. Next, by (ii$_{\mathcal{S}_{j-1}}$) for $\K_i$, all curves in $V_i'-V_i$ are obtained from a simultaneous boundary stratum gluing of $\overline{\mathcal{M}_i/\R}$ with gluing parameter bound $R_{j-1}$. Hence the gluing of curves $(u_{i_1},\dots, u_{i_m})\in V_{i_1}'\times\dots \times \V_{i_m}'$ from a possibly repeated collection of the $\K_i'$ corresponding to $\mathcal{M}_i\in \mathcal{S}_{j-1}$ can be performed with gluing parameter bound $R_{j-1}$, since the gluing can be viewed as the result of an iterated gluing where we first obtain the elements $u_{i_a}\in V_{i_a}' - V_{i_a}$ by a simultaneous boundary stratum gluing of $\overline{\mathcal{M}_{i_a}/\R}$ with gluing parameter bound $R_{j-1}$ and we then simultaneously glue the resulting curves $u_{i_a}\in V_{i_a}' - V_{i_a}$ and those $u_{i_a}$ that were in $V_{i_a}$. This is possible since Condition (i$_{\mathcal{S}_{j-1}}$) allows all possible gluings and iterated gluings. The consistency of the gluings defined above follows from Theorem~\ref{thm: gluing}(B).  
\cb   Hence for any $R_{j-1}'\geq R_{j-1}$ we can enlarge $\K_i$ to $\K_i'$ such that (i$_{\mathcal{S}_{j-1}}$) and (ii$_{\mathcal{S}_{j-1}}$) hold with $\K_i'$ and $R=R_{j-1}'$ instead.

On the other hand, in order to shrink $\varepsilon_{j-1}$ to $\varepsilon'_{j-1}$ subject to $0< \varepsilon'_{j-1}<\varepsilon_{j-1}$, we must first increase $R_{j-1}$. This also means we enlarge $\K_i$ for all $\mathcal{M}_i\in \mathcal{S}_{j-1}$ as in the previous paragraph. Then for $R_{j-1}$ and $\K_i$ sufficiently large, we may substitute $V_i^{\ell_{j-1},\varepsilon'_{j-1}}$ for $V_i^{\ell_{j-1},\varepsilon_{j-1}}$ since the gluing maps of Theorems~\ref{thm: gluing}(A) and \ref{thm: iterated gluing} can be made to hold simultaneously with the same $R_{j-1}$.

\s\n
{\bf Step $j$.} Let $\mathcal{M}_{i_j}$ be the moduli space with smallest subscript which is not in $\mathcal{S}_{j-1}$. Choose a compact subset $\K_{i_j}\subset \mathcal{M}_{i_j}/\R$ such that 
\begin{enumerate}
    \item[($\dagger_j$)] $\K_{i_j}$ contains all the curves that are not in the images of the gluing maps of Theorem~\ref{thm: gluing}(A) with gluing parameter bound $R_{j-1}$ for all the boundary strata of $\overline{\mathcal{M}_{i_j}/\R}$.
\end{enumerate} 
Let $\mathcal{S}_j=\mathcal{S}_{j-1}\cup \{\mathcal{M}_{i_j}\}$.  We will use $\varepsilon_j,\ell_j$ for $\K_{i_j}$ such that $0<\varepsilon_j\leq \varepsilon_{j-1}$ and $\varepsilon_{j-1},\ell_{j-1}$ for all $\K_i$ with $\mathcal{M}_i\in \mathcal{S}_{j-1}$.   

\s\n
{\bf Step $j1$.} Let $R_{j1}\geq R_{j-1}$ be a constant larger than the gluing parameter bounds in Theorems~\ref{thm: gluing}(A) and \ref{thm: iterated gluing} for all possible gluings of the $V_i$ (possibly with multiplicities) such that $\mathcal{M}_i\in \mathcal{S}_j$, subject to the condition that {\em $V_{i_j}$ be used at most once.}  If we increase $R_{j-1}$ so that it equals $R_{j1}$, then we must enlarge $\K_{i_j}$ to $\K_{i_j}'$ so that ($\dagger_j$) holds with gluing parameter bound $R_{j1}$. Using Step $j-1$, since gluing an element of $\K_{i_j}'-\K_{i_j}$ with elements of $\mathcal{S}_{j-1}$ can be done with gluing parameter bound $R_{j-1}$, we can enlarge $\K_{i_j}$ to $\K'_{i_j}$ without increasing $R_{j1}$, take $R_{j-1}=R_{j1}$, and then enlarge $\K_i$ to $\K_i'$ whenever $\mathcal{M}_i\in \mathcal{S}_{j-1}$. 

If we take $R_{j1}$ to be sufficiently large, then we may substitute $V_i^{\ell_{j-1},\varepsilon_j}$ for $V_i^{\ell_{j-1},\varepsilon_{j-1}}$ for all $\mathcal{M}_i\in \mathcal{S}_{j-1}$ as in Step $j-1$. We then reset notation and refer to the enlarged $\K'_i$ as $\K_i$.  

\s\n
{\bf Step $j2$.}
Next let $R_{j2}$ be a constant larger than the gluing parameter bounds in Theorems~\ref{thm: gluing}(A) and \ref{thm: iterated gluing} for all possible gluings of the $V_i$ (possibly with multiplicities) such that $\mathcal{M}_i\in \mathcal{S}_{j}$, subject to the condition that {\em $V_{i_j}$ be used at most twice}; we also require $R_{j2}\geq R_{j1}$. If we increase $R_{j-1}=R_{j1}$ so that it equals $R_{j2}$, then we must enlarge $\K_{i_j}$ to $\K_{i_j}'$ so that ($\dagger_j$) holds with gluing parameter bound $R_{j2}$. As before, since gluing an element of $\K_{i_j}'-\K_{i_j}$ with elements of $\mathcal{S}_j$, subject to the condition that $V_{i_j}$ be used only one additional time, can be done with gluing parameter bound $R_{j1}$, we can enlarge $\K_{i_j}$ to $\K'_{i_j}$ without increasing $R_{j2}$, take $R_{j1}=R_{j2}$, and then enlarge $\K_i$ to $\K_i'$ whenever $\mathcal{M}_i\in \mathcal{S}_{j-1}$. We then reset notation and refer to the enlarged $\K'_i$ as $\K_i$.  

\s\n
{\bf Step $jk$.}
We can repeat this process, constructing $R_{j3},R_{j4},\dots$ (the process terminates) and enlarging $\K_{i_j}$ and increasing the gluing parameter bounds such that $R_{j-1}=R_{j1}=R_{j2}=\dots = R_j$. 

\s\n
{\coblu The $G_i$-invariance of $V_i$ and $K_i$ can be additionally imposed during the above construction.}
\end{proof}

\cb

\subsection{Construction of the semi-global Kuranishi charts} \label{subsection: overview of construction}

\subsubsection{Complexity} \label{subsubsection: complexity}

\nom[CFu]{$c(\F,u)$}{Complexity which orders the moduli spaces}
The semi-global Kuranishi structure $\Kur^L(\alpha,J)$ is constructed by induction on a triple which we call the {\em complexity}
\begin{equation}\label{eqn: definition of complexity}
c(\F,u)=(c_1(\F,u),c_2(\F,u),c_3(\F,u))=(\mathcal{A}_\alpha(\gamma_+),E_\alpha(u),-\chi(\dot F)),
\end{equation}
where we are using lexicographic ordering. Here $u$ is a map from $\gamma_+$ to $\bs\gamma_-$. The complexity $c(T)$ of a CH tree $T$ is the complexity of $\overline{\mathcal{M}_{\tau(T)}/\R}$. 
\nom[cTZ]{$c(T)$}{The complexity of a CH tree $T$}

We choose $L\gg 0$ and only consider $\gamma_+$ satisfying $\mathcal{A}_\alpha(\gamma_+)< L$.  (Eventually we will take direct limits as $L\to \infty$.) 
We then choose a finite sequence $\mathcal{M}_1,\mathcal{M}_2,\dots,\mathcal{M}_\rho$ of {\coblu distinct} moduli spaces
$$\mathcal{M}_i=\mathcal{M}_{J}^{\op{ind}=k_i}(\dot F_i,\R\times M;\gamma_{i,+},\bs\gamma_{i,-}),$$
where the moduli spaces are ordered according to nondecreasing complexity. 

We make some observations regarding the complexity:  For a fixed $c_1$, the moduli space $\mathcal{M}_i$ with the smallest $c_2$ consist of branched covers of trivial cylinders, i.e., satisfy $E_\alpha(u)=0$. {\em We will not construct Kuranishi charts about trivial cylinders and hence trivial cylinders will not be included in the list.}  There are finitely many $\gamma_+$ and when $\gamma_+$ is fixed and the genus is zero, there is an upper bound on the number of punctures, which in turn gives an upper bound on the number of branch points. We list the $\mathcal{M}_i$ with the fewest number of branch points first.  Similarly, for $E_\alpha(u)>0$, we list the $\mathcal{M}_i$ with the smallest $E_\alpha(u)$ first and use the fact that there is an upper bound on the number of punctures. 

The following is immediate:

\begin{lemma}
Each component of each level of a building of $\bdry \mathcal{M}_i$ is either a trivial cylinder or in $\mathcal{M}_j$ with $j<i$.
\end{lemma}



\subsubsection{Interior charts and multisections} \label{subsubsection: interior charts and multisections}

By Lemma~\ref{lemma: universal} there exist a ``large" compact subset $\K_i$ of $\mathcal{M}_i/\R$ for all $i\in \{1,\dots,\rho\}$, a ``universal" gluing parameter bound $R>0$, and constants $\varepsilon$ and $\ell$, such that (i$_{\mathcal{S}}$) and (ii$_{\mathcal{S}}$) hold. In view of Remark~\ref{rmk: almost equivalence of gluing parameters}, we may take the large neck length bound $\mathcal{L}\gg 0$ from Section~\ref{subsection: trimming} to be the universal gluing parameter bound $R\gg 0$. 
\nom[Lgreaterthan0]{$\mathcal{L}\gg 0$}{Large neck length bound which serves as the universal gluing bound}

With this choice of $\mathcal{L}$, we take the interior semi-global Kuranishi charts $\mathcal{C}_i=(\pi_i: \E_i\to \V_i, \overline \bdry_i=\overline\bdry_J, \psi_i,{\coblu G_i)}$ for $\K_i\subset \mathcal{M}_i/\R$ and their slight enlargements $\mathcal{C}_i^\en=(\pi_i^\en: \E_i^\en\to \V_i^\en, \overline \bdry_i^\en=\overline\bdry_J, \psi_i^\en, {\coblu G_i)}$ as in Definition~\ref{defn: choice of interior chart}. 

Let $d_i$ be a metric on $\V_i^\en$ which is compatible with the topology on $\V_i^\en$; such a metric exists since manifolds are metrizable.  By construction, $\overline \V_i$ is compact and hence bounded with respect to $d_i$. 

We also choose an obstruction multisection ${\mathfrak s}_i$ and its enlargement $\mathfrak{s}^\en_i$, such that the collection $\{ \mathfrak s_i\}_i$ is invariant under marker rotations and puncture reorderings.\cb

\subsubsection{Boundary strata} \label{subsubsection: boundary strata}

We now explain how to construct charts corresponding to the boundary strata.

Recall Conventions~\ref{convention for boundary} and \ref{convention for moduli spaces}. Given $\mathcal{M}_{i_0}/\R$, its boundary $\bdry (\mathcal{M}_{i_0}/\R)$ is described as follows:
\begin{equation}
\bdry (\mathcal{M}_{i_0}/\R) = \left(\coprod_{T} \mathcal{M}_T'\right)/\sim, \quad \mathcal{M}_T': =\times_{v\in V(T)}(\mathcal{M}_{l_{V(T)}(v)}/\R),
\end{equation}
where the disjoint union is over all CH trees $T$ not equal to the single vertex tree $T_0=i_0$ and such that $T\git T=i_0$ and the equivalence relation $\sim$ is induced by isomorphisms $\theta: T\xrightarrow{\sim} T'$ of CH trees. (To simplify notation, we will often write ``$i\in V(T)$'' with $i\in \{1,\dots, \rho\}$ to mean that there is $v\in V(T)$ with $l_{V(T)}(v)=i$.)

  Let $T$ be a CH tree such that $T\git T=i_0$. We have already constructed 
$$\mathcal{C}_{i}=(\pi_{i}:\E_{i}\to \V_{i}, \overline\bdry_i=\overline\bdry_J, \psi_i, {\coblu G_i)} \quad \mbox{and}\quad \K_{i},  \quad \forall i\in V(T),$$
and the slight enlargement $\mathcal{C}_i^\en$.  {\em In general, a slight enlargement will be denoted by adding  a superscript $^\en$.} In this subsection we construct 
$$\mathcal{C}_T=(\pi_T: \E_T\to \V_T, \overline\bdry_T,\psi_T, {\coblu G_{T\git T}(T)}), \quad \K_T\subset  \mathcal{M}_{i_0}/\R,$$
and the slight enlargement, such that $\V_T \supset \K_T$ and $\{\K_T~|~T\git T=i_0\}$  covers all of $\mathcal{M}_{i_0}/\R$.

\begin{rmk} 
{\em Note that $\K_T$ is not necessarily compact,} although it admits a compactification inside $\overline{\mathcal{M}_{i_0}/\R}$.
\end{rmk}
\cb

\begin{warning}
Even if $\mathcal{M}_{i_0}/\R=\varnothing$ and the semi-global chart $\mathcal{C}_{i_0}$ is empty, we need to construct $\mathcal{C}_T$ if there exists a CH tree $T$ such that
\begin{itemize}
\item $T\git T=i_0$ and $\times_{i\in V(T)}(\mathcal{M}_{i}/\R)$ is nonempty.
\end{itemize}
There are examples that the authors learned from Michael Hutchings, where omitting $\mathcal{C}_T$ leads to some inconsistencies.
\end{warning}

\begin{rmk}
For our purposes it is not necessary to cover the actual boundary of the \cb SFT compactification $\overline{\mathcal{M}_{i_0}/\R}$ of $\mathcal{M}_{i_0}/\R$   with Kuranishi charts. 
\end{rmk}

\nom[GG]{$\mathcal{G}_\delta(\{\V_i^{\text{en}}\}_{i\in V(T)})$}{Set of equivalence classes of maps that are $\delta$-close to breaking into representatives of $\V_i^{\text{en}}$}
\nom[EE]{$\E'$}{Orbibundle over $\mathcal{G}_\delta(\{\V_i^{\text{en}}\}_{i\in V(T)})$}
\nom[GGtilde]{$\mathcal{G}_\delta^{\E'}(\{\V_i^{\text{en}}\}_{i\in V(T)})$}{Subset of $\mathcal{G}_\delta(\{\V_i^{\text{en}}\}_{i\in V(T)})$ such that $\overline\bdry_J u\in \E'$}
Let $\mathcal{G}_{\delta,T}=\mathcal{G}_\delta(\{\V_i^{\text{en}}\}_{i\in V(T)})$ be the set of equivalence classes of maps that are $\delta$-close to breaking into representatives of $\V_i^{\text{en}}$, where the ``gluing'' is prescribed by the set of glued edges $G(T)$.  Let $\E'$ be the bundle over $\mathcal{G}_{\delta,T}$, whose fiber $E'_{(\F,u)}$ over $(\F,u)$ is given by Equation~\eqref{equation: defn of E prime} {\coblu (note that the definition descends to the fiber $E'_{[\F,u]}$ over the equivalence class $[\F,u]$)} and let 
$$\mathcal{G}_{\delta,T}^{\E'}=\mathcal{G}_\delta^{\E'}(\{\V_i^{\text{en}}\}_{i\in V(T)})=\{[\F,u]\in  \mathcal{G}_{\delta,T}~|~ \overline\bdry_J u\in \E'\}.$$ 

\begin{defn}[Neck length functions]
We define the {\em neck length functions}
$$\nl_e([\F,u]):{\mathcal{G}}_{\delta,T}^{\E'} \to \R^+, \quad e\in G(T),$$
where $\nl_e([\F,u])$ is the neck length of the annular component $A_e$ 
of the $\varepsilon$-thin part of $g_{(\ddot F,j)}$ corresponding to $e$; if there is no such annular component $A_e$, we set $\nl_e([\F,u])=0$.  We also write $\nl_e(u)$ when $\F$ is implicit.
\end{defn}

\begin{defn}[$\V_T$, $\V_T^{\en/2}$, $\V_T^\en$, $\K_T$, $\pi_T:\E_T\to \V_T$, $\pi_T^\en:\E_T^\en\to \V_T^\en$, $d_T$] \label{defn: bundles ET over VT} $\mbox{}$
\be
\item $\V_T$ (resp.\ $\V_T^{\en/2}$, $\V_T^\en$) is the subset of ${\mathcal{G}}_{\delta,T}^{\E'}$ consisting of $[\F,u]$ such that $\nl_e(u)> \mathcal{L}-\varepsilon''$ (resp.\ $> \mathcal{L}-\varepsilon''-\tfrac{\varepsilon'''}{2}$, $> \mathcal{L}-\varepsilon''-\varepsilon'''$) for all $e\in G(T)$ and $\nl<\mathcal{L}$ (resp.\ $< \mathcal{L}+ \tfrac{\varepsilon'''}{2}$,  $<\mathcal{L}+\varepsilon'''$) for all other $\varepsilon$-thin annuli of $g_{(\ddot F,j)}$.

\item $\K_T$ is the subset of $(\mathcal{M}_{i_0}/\R)\cap {\mathcal{G}}_{\delta,T}^{\E'}$ consisting of $[\F,u]$ such that $\nl_e(u)\geq \mathcal{L}-\varepsilon'' + \tfrac{\varepsilon'''}{2}$ for all $e\in G(T)$ and $\nl\leq \mathcal{L}-\tfrac{\varepsilon'''}{2}$ for all other $\varepsilon$-thin annuli of $g_{(\ddot F,j)}$.

\item $\pi_T: \E_T\to \V_T$ and $\pi_T^\en: \E_T^\en\to \V_T^\en$ are restrictions of $\E'\to \mathcal{G}_{\delta,T}^{\E'}$ to $\V_T$ and $\V_T^\en$.

\item $d_T$ is a metric on $\V_T^\en$ satisfying
$$d_T([\F,u],[\F',u'])\geq \max_{e\in V(T)}|\nl_e(u)-\nl_e(u')|.$$
\ee
{\coblu We take $\V_T$, $\V_T^{\en/2}$, $\V_T^\en$, $\K_T$, $\pi_T:\E_T\to \V_T$, and $\pi_T^\en:\E_T^\en\to \V_T^\en$ to all be $G_{T\git T}(T)$-invariant.}
If there is an isomorphism $\theta:T\xrightarrow{\sim} T'$ of CH trees, then we identify $\pi_T: \E_T\to \V_T$ and $\pi_{T'}:\E_{T'}\to \V_{T'}$ via $\theta$.
\end{defn}

We briefly discuss the construction of the metric $d_T$ on $\V_T^\en$.  Since manifolds are metrizable, there exists a metric $d_T'$ on $\V_T^\en$.  We then take $d_T=d_T'+\phi$, where $\phi:\V_T^\en \times\V_T^\en \to \R^{\geq 0}$ is the continuous function 
$$\phi([\F,u],[\F',u'])= \max_{e\in V(T)}|\nl_e(u)-\nl_e(u')|.$$

\begin{lemma}\label{lemma: covers}
$\mathcal{M}_{i_0}/\R=\cup _{T\git T=i_0} \K_T$.
\end{lemma}

\begin{proof}
Let $[\F,u]\in \mathcal{M}_{i_0}/\R$. 
By Definition~\ref{defn: choice of interior chart}($\mathcal{I}$2), if there is no annular component of $\op{Thin}_\epsilon(\ddot F, g_{\ddot F})$ or all the annular components of $\op{Thin}_\epsilon(\ddot F, g_{\ddot F})$ have neck length $\leq \mathcal{L}-\tfrac{\varepsilon'''}{2}$, then $[\F,u]\in\K_{i_0}$. On the other hand, if there are annular components with neck length $> \mathcal{L}-\tfrac{\varepsilon'''}{2}$, then the set of all such determines a CH tree $T$ such that $T\git T=i_0$ and $[\F,u]\in \K_T$ by Definition~\ref{defn: bundles ET over VT}(2). 
\end{proof}

\subsubsection{Morphisms} \label{subsubsection: morphisms}

Let $T$ be a CH tree, $S=\sqcup_{j=1}^b S_j$ be a good subforest of $T$, and $T'=T\git S$.
We define the morphism
$$\phi_{T', T}: \mathcal{C}_{T'}\to \mathcal{C}_T, \quad \phi_{T', T}^\en: \mathcal{C}_{T'}^\en\to \mathcal{C}_T^\en$$
as follows ({\coblu we may take the morphism to be $G_{T\git T}(T)\cap G_{T'\git T'}(T')$-invariant by construction}): first restrict $\pi_{T'}: \E_{T'}\to \V_{T'}$ to 
$$\V_{T',T}:=\V_{T'}\cap\{\mathcal{L}>\nl_e > \mathcal{L}-\varepsilon''~|~e\in G(S)\},$$ 
then apply stabilization (see Section~\ref{subsubsection: stabilizations}), and finally include into $\pi_T:\E_T\to \V_T$. The composition of the second and third steps is clearly an embedding of vector bundles and is denoted by $(\phi^\sharp_{T', T}, \phi^\flat_{T', T})$. 

The slight enlargement $\V_{T',T}^\en$ is given by
$$\V_{T',T}^\en:=\V_{T'}^\en\cap\{\mathcal{L}+\varepsilon'''>\nl_e > \mathcal{L}-\varepsilon''-\varepsilon'''~|~e\in G(S)\},$$
and the embedding $((\phi^\sharp_{T', T})^\en, (\phi^\flat_{T', T})^\en)$ of the slightly enlarged bundles is defined in the same way as $(\phi^\sharp_{T', T}, \phi^\flat_{T', T})$.\cb

Given good subforests $S',S''\subset S$, one can easily verify the commutativity of the diagram:
\begin{equation} \label{eqn: four charts first version}
\begin{diagram}
\mathcal{C}_{T\git S} & \rTo & \mathcal{C}_{T\git S'} \\
\dTo & & \dTo\\
\mathcal{C}_{T\git S''} & \rTo & \mathcal{C}_{T},
\end{diagram}
\end{equation}
as well as the analogs for the slight enlargements.

\subsubsection{Examples} \label{subsubsection: examples} $\mbox{}$

\s\n
(A)  Suppose that $V(T)=\{i_1,i_2\}$ and $G(T)=\{e=(i_2,i_1)\}$.  Then there are two charts $\mathcal{C}_{T\git T}$ and $\mathcal{C}_{T}$ and a morphism $\mathcal{C}_{T\git T}\to \mathcal{C}_T$.

\s\n
(B) Suppose that $V(T)=\{i_1,i_2,i_3\}$ and $G(T)=\{e_1=(i_2,i_1),e_2=(i_3,i_2)\}$. Let
\begin{equation}\label{eqn: triple2}
S'=(\{i_1,i_2\},\{e_1\}), \quad S''=(\{i_2,i_3\},\{e_2\})
\end{equation}
be good subtrees of $T$.  There are four charts 
$$\mathcal{C}_{T\git T}, \quad \mathcal{C}_{T\git S'}, \quad \mathcal{C}_{T\git S''}, \quad \mathcal{C}_{T}$$
for $\Kur(\mathcal{M}_{T\git T})$, given schematically in Figure~\ref{fig: corner}. The morphisms satisfy \eqref{eqn: four charts first version} with $S=T$.

\begin{figure}[ht]
\begin{center}
\psfragscanon
\psfrag{A}{\tiny $\nl_{e_1}$}
\psfrag{B}{\tiny $\nl_{e_2}$}
\psfrag{E}{\tiny $\V_{T\git T}$}
\psfrag{F}{\tiny $\V_{T\git S'}$}
\psfrag{G}{\tiny $\V_{T\git S''}$}
\psfrag{H}{\tiny $\V_{T}$}
\psfrag{c}{\tiny $\mathcal{L}-\varepsilon''$}
\psfrag{d}{\tiny $\mathcal{L}$}
\psfrag{e}{\tiny $\mathcal{L}+\varepsilon''$}
\psfrag{f}{\tiny $\mathcal{L}-\varepsilon''$}
\psfrag{g}{\tiny $\mathcal{L}$}
\psfrag{h}{\tiny $\mathcal{L}+\varepsilon''$}
\psfrag{i}{\tiny $\infty$}
\includegraphics[width=5cm]{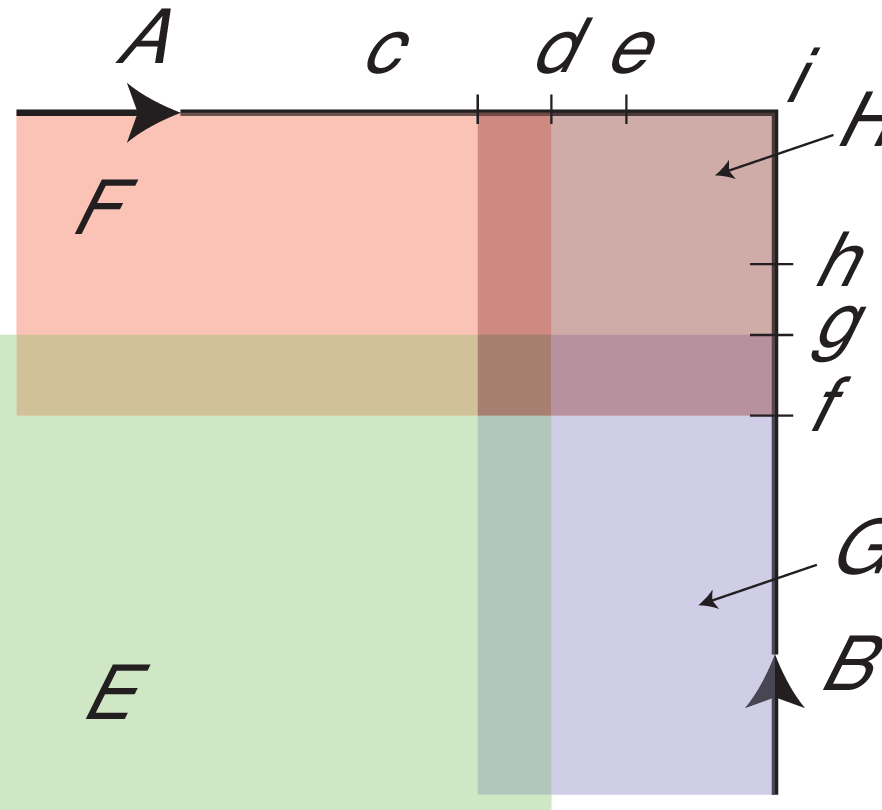}
\end{center}
\caption{Schematic diagram of $\V_{T\git T}$, $\V_{T\git S'}$, $\V_{T\git S''}$, and $\V_{T}$. The horizontal and vertical lines represent the $\nl_{e_1}$- and $\nl_{e_2}$-coordinate axes and we are considering the projections of the charts to the $(\nl_{e_1},\nl_{e_2})$-coordinate plane.} \label{fig: corner}
\end{figure}

\s\n
(C) Suppose that $V(T)=\{i_1,i_2,i_3\}$ and $G(T)=\{e_1=(i_3,i_1),e_2=(i_3,i_2)\}$.
Let
\begin{equation}\label{eqn: triple 4}
S'=(\{i_1,i_3\},\{e_1\}), \quad S''=(\{i_2,i_3\},\{e_2\})
\end{equation}
be good subtrees of $T$. Case (C) is similar to Case (B) except that $\mathcal{M}_{i_1}$ may equal $\mathcal{M}_{i_2}$, in which case $\op{Aut}(T)$ is nontrivial.

\subsubsection{Gluing maps} \label{subsubsection: gluing maps}

Let $S$ be a good subforest of $T$ and $\arr T= \arr T(S)= T_1\sqcup \dots \sqcup T_m$. We construct $\E_{\arr T}^{\en/2}\to \V_{\arr T}^{\en/2}\times (\mathcal{L}-\varepsilon''-\tfrac{\varepsilon'''}{2},\infty)^{m-1}$ from $\pi_{T_i}^\en: \E_{T_i}^\en\to \V_{T_i}^\en$, $i=1,\dots, m$, as {\coblu in Section~\ref{subsection: definition of semi-global Kuranishi structures} (two paragraphs above Definition~\ref{defn: strata compatibility}).}

We define the gluing map $(\widetilde{\mathfrak{G}}_{\arr T}^\en,\mathfrak{G}_{\arr T}^\en)$ as follows: The map
$$\mathfrak{G}_{\arr T}^\en:  \V_{\arr T}^{\en/2}\times (\mathcal{L}-\varepsilon''-\tfrac{\varepsilon'''}{2},\infty)^{m-1} \to {\mathcal{G}}_{\delta,T}^{\E'},$$
is obtained from Theorem~\ref{thm: gluing}. The map $\widetilde{\mathfrak{G}}_{\arr T}^\en: \E_{\arr T}^{\en/2}\to \E^\en_{T}$ is given by $\widetilde{\mathfrak{G}}_{\arr T}^\en(x,\xi)= (\mathfrak{G}_{\arr T}^\en(x), \phi_x (\xi))$, where $\phi_x$ canonically identifies the fibers over $x$ and $\mathfrak{G}_{\arr T}^\en(x)$.

By Theorem~\ref{thm: gluing} and our choice of gluing parameter bound $\mathcal{L}-\varepsilon''-\tfrac{\varepsilon'''}{2}$ (see Definition~\ref{defn: bundles ET over VT}):

\begin{thm} \label{thm: gluing, new version}
\ref{SC1}(a) holds for $\mathfrak{G}_{\arr T}^\en$ and 
$\op{Im}\mathfrak{G}_{\arr T}^\en\supset \V_T$.
\end{thm}

Theorem~\ref{thm: iterated gluing} becomes:

\begin{thm} \label{thm: iterated gluing, new version}
\ref{SC1}(c) holds for the collection of gluing maps $\{\mathfrak{G}_{\arr T}^\en\}_{\arr T}$.
\end{thm}

\subsection{Multisections on products} \label{subsection: multisections on products}

Given a CH tree $T$, the goal of this subsection is to inductively define a multisection ${\mathfrak s}_T$ of $\E_T\to \V_T$; the construction of its slight enlargement $\mathfrak s_T^\en$ will be omitted.

\subsubsection{Melding} \label{subsubsection: melding}

Let $X$ be a smooth manifold, 
$C$ be a submanifold of $\R^m$ of dimension $m$ with corners (e.g., $[0,1]^m$, $[0,1)^m$, $[c,\infty)^m$) and $\pi:X\to C$ be a smooth map. \cb  
Let $R_1,\dots,R_a$ be submanifolds of $C$ of dimension $m$ with corners such that $\cup R_i=C$. 
For $\varepsilon_0>0$ small, let $R_i'\subset C$ be an open set that contains the $\varepsilon_0$-neighborhood of $R_i$. 

\nom[Ri]{$R_i$}{Submanifold of $C$ of dimension $m$ with corners}

A {\em melding of a collection of functions $\{g_i:\pi^{-1}(R_i')\to \R^b\}_{i=1}^a$ with respect to $\pi:X\to C$, $\{R_1,\dots, R_a\}$, and $\{R_1',\dots, R_a'\}$} \cb is a function $g:X \to \R^b$ defined as follows:  For each $i$, choose a smooth function $\phi_i: C\to [0,1]$ which is equal to $1$ on $R_i$ and $0$ outside of $R_i'$ and such that $\sum_i \phi_i >0$. (In other words, we are taking a specific partition of unity.)  We then set
\begin{equation}
    g=\frac{ \sum_i \pi^*(\phi_i) g_i}{\sum_i \pi^*(\phi_i)}.
\end{equation}
{\coblu A melding of a collection of sections can be defined similarly.}

\begin{rmk}
\coblu When melding of a collection $\{\mathfrak{s}_i=(\mathfrak{s}_{i1},\dots,\mathfrak{s}_{in_i})\}_{i=1}^a$ of lifted multisections, we do not meld all possible $a$-tuples $(\mathfrak{s}_{1j_1},\dots, \mathfrak{s}_{aj_a})$ of branches; instead we specify which $a$-tuples of branches are to be melded, based upon which branches are close to each \cb other.
\end{rmk}

\subsubsection{Some more gluing maps} \label{subsubsection: some more gluing maps}

In this subsection we slightly reformulate the gluing maps from Section~\ref{subsubsection: gluing maps}. 

Given a good subtree $S$ of a CH tree $T$ (in particular $S\not=\varnothing$ by definition), we write $S^\star$ for the CH tree 
\nom[Sstar]{$S^\star$}{CH obtained from a good subtree $S$ of $T$ by adding all $e\in E(T)$ such that $i(e)\in V(S)$}
obtained from $S$ by adding all $e\in E(T)$ such that $i(e)\in V(S)$ and viewing them as free edges.  We similarly define $S^\star$ for a good subforest $S$ of $T$. 

Let $S=\sqcup_{j=1}^b S_j$ be a good subforest of $T$. Consider the bundle 
$$\E_{T,S}^\en:=(\times_j \E_{S_j^\star}^\en) \times ( \times_{i\in V(T)-V(S)}\E_i^\en) \to \V_{T,S}^\en:=(\times_j \V_{S_j^\star}^\en) \times ( \times_{i\in V(T)-V(S)}\V_i^{\text{en}}).$$
As in Section~\ref{subsubsection: boundary strata}, let $\mathcal{G}_{\delta,T,S}=\mathcal{G}_\delta(\{\V_{S_j^\star}^\en\}_j,\{\V_i^\en\}_{i\in V(T)-V(S)})$ be the set of equivalence classes of maps that are $\delta$-close to breaking into representatives of $\V_{S_j^\star}^\en$ and $\V_i^\en$, where the ``gluing" is prescribed by the set of glued edges $G(T)$.  Let $\E'$ be the bundle over $\mathcal{G}_{\delta,T,S}$, whose fiber $E'_{(\F,u)}$ over $(\F,u)$ {\coblu is given by Equation~\eqref{equation: defn of E prime} (again the definition descends to the fiber $E'_{[\F,u]}$ over the equivalence class $[\F,u]$),} and let 
\begin{equation} \label{eqn: G T S}
    \mathcal{G}_{\delta,T,S}^{\E'}=\mathcal{G}_\delta^{\E'}(\{\V_{S_j^\star}^\en\}_j,\{\V_i^\en\}_{i\in V(T)-V(S)})=\{[\F,u]\in  \mathcal{G}_{\delta,T,S}~|~ \overline\bdry_J u\in \E'\}.
\end{equation}
There exists a gluing map 
$$\mathfrak{G}_{T,S}^\en: \V^{\en/2}_{T,S}\times (\mathcal{L}-\varepsilon''-\tfrac{\varepsilon'''}{2},\infty)^{|G(T)|-|G(S)|}\to \mathcal{G}_{\delta,T,S}^{\E'},$$
obtained from Theorem~\ref{thm: gluing}, as well as $\widetilde{\mathfrak{G}}^\en_{T,S}: \E_{T,S}^{\en/2}\to \E_T^\en$ as in Section~\ref{subsubsection: gluing maps}. 


\begin{rmk} \label{rmk: relating two gluing maps}
The gluing map $\mathfrak G^\en_{\arr T(S)}$ from Section~\ref{subsubsection: gluing maps} agrees with $\mathfrak G^\en_{T,S^c}$, where $S^c$ is a good subforest of $T$ and $G(S^c)=G(T)-G(S)$.
\end{rmk}

Theorem~\ref{thm: iterated gluing} can be rephrased as follows:

\begin{thm} \label{thm: comparison of two gluing maps}
Let $S=\sqcup_{j=1}^a S_j$ and $S'=\sqcup_{j=1}^a S_j'$ be good subforests of $T$ such that $S_j'$ is a good subforest of $S_j^\star$. 
Then the gluing maps $\mathfrak G_{T,S'}^\en$ and $\mathfrak G_{T,S}^\en\circ (\times_{j=1}^a \mathfrak G_{S_j^\star,S'_j}^\en,\op{id})$
are $C^1$-close with error $\to 0$ as all the components of $(\mathcal{L}-\varepsilon''-\tfrac{\varepsilon'''}{2},\infty)^{|G(T)-G(S')|}$ go to $\infty$. (Here if $S_j=S_j'$, then we take $\mathfrak G_{S_j^\star,S'_j}^\en=\op{id}$.)
\end{thm}

\subsubsection{Definition of the multisection ${\mathfrak s}_T^\en$} \label{subsubsection: defn of sT}

We have already constructed {\coblu lifted} multisections $\mathfrak s_i^\en$ of $\E_i^\en\to \V_i^\en$ that are $C^1$-close to the $0$-section and invariant under {\coblu $G_i$.} Assuming that we have already constructed all the $\mathfrak s_{S^\star}^\en$, where $S$ ranges over all good subtrees of $T$ such that $S^\star\not=T$, {\coblu and all the $\mathfrak s_{T'}^\en$, where $T'<T$, and} we have invariance under marker rotations and puncture reorderings, we inductively construct the multisection $\mathfrak s_T^\en$ by defining multisections on subsets of $\V_T^\en$ and applying the melding procedure.  
By construction it will be clear that, starting with $\mathfrak s_i^\en$ that are $C^1$-close to the $0$-section, $\mathfrak s_T^\en$ can also be made $C^1$-close to the $0$-section.

We are still using $0<\varepsilon'''\ll \varepsilon''$ from Section~\ref{subsection: trimming}.
We first define ``rectangular" regions of $[\mathcal{L}-\varepsilon'',\infty)^{|G(T)|}$ of type $R_i$ (appearing in the definition of melding in Section~\ref{subsubsection: melding}): Let
$$R_{T,S,1}:=\{\mathcal{L}+\varepsilon'''\geq \nl_e\geq \mathcal{L}-\varepsilon''~|~ e\in G(S)\}\cap \{\nl_e\geq \mathcal{L}-\varepsilon''~|~e\in G(T)-G(S) \},$$
where $S\not=\varnothing$ is a good subforest of $T$,\footnote{Note that we are using $\mathcal{L}+\varepsilon'''$ instead of the more natural $\mathcal{L}$.  This is to give us some space so that Definition~\ref{def: multisection of Kur} holds.} and
$$R_{T,S,2}:=\{\mathcal{L}+\varepsilon''\geq\nl_e\geq \mathcal{L}+\varepsilon'''~|~ e\in G(S)\}\cap \{\nl_e\geq \mathcal{L}+\varepsilon''~|~e\in G(T)-G(S)\},$$
where $S$ is a good subforest of $T$.  Also let $R'_{T,S,j}$, $j=1,2$, be rectangular regions of type $R_i'$ obtained from $R_{T,S,j}$ by extending each side by $\varepsilon_0=\varepsilon'''$. 
See Figure~\ref{fig: newcorner} for the corner corresponding to the tree $T$ from Example (B) in Section~\ref{subsubsection: examples}.

\begin{figure}[ht]
\vskip.15in
	\begin{overpic}[scale=1]{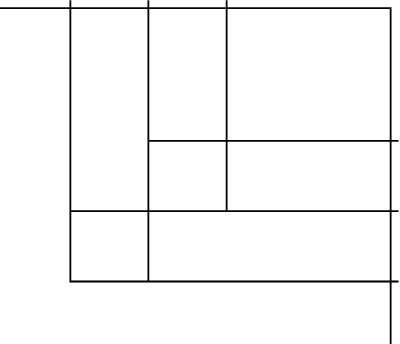}
		\put(-2,88){\small{$\nl_{e_1}$}} \put(10.5,88){\small{$\mathcal{L}-\varepsilon''$}}
		\put(30,88){\small $\mathcal{L}+\varepsilon'''$} \put(50,88){\small $\mathcal{L}+\varepsilon''$}
		\put(100,85){\small $\infty$}
		\put(100,0){\small $\nl_{e_2}$}
		\put(102,14){\small $\mathcal{L}-\varepsilon''$} \put(102,32){\small $\mathcal{L}+\varepsilon'''$}
		\put(102,49){\small $\mathcal{L}+\varepsilon''$}
		\put(70,63){\small $R_{T,\varnothing,2}$} \put(70,37){\small $R_{T,S'',2}$}
		\put(65,73){\tiny $\mathfrak s_{i_1}\times \mathfrak s_{i_2} \times \mathfrak s_{i_3}$}
		\put(70,44.5){\tiny $\mathfrak s_{i_1}\times \mathfrak s_{S''}$}
		\put(39,63){\small $R_{T,S',2}$}\put(39,37){\small $R_{T,T,2}$}
		\put(40.3,44.5){\tiny arbitrary}
		\put(39,73){\tiny $\mathfrak s_{S'}\times \mathfrak s_{i_3}$}
		\put(22,73){\tiny $\mathfrak{s}_{T\git S'}$}
		\put(19,53){\small $R_{T,S',1}$}\put(20,19){\small $R_{T,T,1}$} \put(60,19){\small $R_{T,S'',1}$}
		\put(22.6,27){\tiny $\mathfrak s_{T\git T}$}
		\put(63,27){\tiny $\mathfrak s_{T\git S''}$}
	\end{overpic}
	\caption{The rectangular regions and their multisections for the corner in Example (B) of Section~\ref{subsubsection: examples}, before melding. We are omitting $\widetilde {\mathfrak G}$ and $\op{stab}$ from the notation for the multisections.}
	\label{fig: newcorner}
\end{figure}

Next we extend $\mathfrak s_{T\git S}^\en$ to $\nl_T^{-1}(R'_{T,S,1})$ such that:
\begin{enumerate}
    \item if $S'\subset S$ is a good subforest, then $\op{stab}_{T\git S,T}({\mathfrak s}^\en_{T\git S})$ and $\op{stab}_{T\git S',T}({\mathfrak s}^\en_{T\git S'})$ agree on the overlap of $\nl_T^{-1}(R'_{T,S,1})$ and $\nl_T^{-1}(R'_{T,S',1})$;
    \item the collection $\{\mathfrak s_{T\git S}^\en\}_S$ is invariant under marker rotations and puncture reorderings.
\end{enumerate}
Here $\op{stab}_{T\git S,T}({\mathfrak s}^\en_{T\git S})$ is the restriction of $\mathfrak s_{T\git S}^\en$ to $\V_{T\git S, T}^\en$, followed by a stabilization given in Section~\ref{subsubsection: stabilizations} and inclusion of $\V_{T\git S,T}^\en$ into $\V_T^\en$. 
(1) is made possible by the commutativity of Diagram~\eqref{eqn: four charts first version}. 
\nom[stablization]{$\op{stab}_{T\git S,T}({\mathfrak s}^\en_{T\git S})$}{the restriction of $\mathfrak s_{T\git S}^\en$ to $\V_{T\git S, T}^\en$, followed by a stabilization given in Section~\ref{subsubsection: stabilizations} and inclusion of $\V_{T\git S,T}^\en$ into $\V_T^\en$}

\begin{defn}[Multisection $\mathfrak s_T^\en$]\label{defn: multisection sT}
The multisection ${\mathfrak s}_T^\en$ on $\E_T^\en\to \V_T^\en$ is the melding of the following multisections with respect to 
$$\nl_T=(\nl_e)_{\{e\in G(T)\}}: \V_T^\en\to (\mathcal L - \varepsilon''-\varepsilon''', \infty)^{|G(T)|}$$
and the collections $\{R_{T,S,j}\}$ and $\{R'_{T,S,j}\}$:
\be
\item[(R1)] on $\nl_T^{-1}(R'_{T,S,1})$, take the stabilization $\op{stab}_{T\git S,T}({\mathfrak s}_{T\git S}^\en)$ of ${\mathfrak s}_{T\git S}^\en$;
\item[(R2)] on $\nl_T^{-1}(R'_{T,S,2})$ such that $T\git S$ is a not a one-vertex tree, take the restriction of $(\widetilde {\mathfrak G}_{T,S}^\en)_* \mathfrak s^\en_{T,S}:=  \widetilde{\mathfrak G}_{T,S}^\en \circ\mathfrak s^\en_{T,S}\circ (\mathfrak G_{T,S}^\en)^{-1}$, where 
$$\mathfrak s^\en_{T,S}=(\mathfrak s_{S_j^\star}^\en)_j\times (\mathfrak s_{i}^\en)_{i\in V(T)-V(S)};$$
\item[(R3)] on $\nl_T^{-1}(R'_{T,S,2})$ such that $T\git S$ is a one-vertex tree, take an arbitrary multisection subject to invariance under marker rotations and puncture reorderings;
\ee
{\coblu where the collections of branches to be melded must satisfy the following {\em consistency condition}:
\be
\item[(CC)] the branches must agree on the overlaps as we approach the ``boundary".
\ee}
\end{defn}

Melding smoothes out the multisections defined on overlapping rectangular regions.  We assume that the smooth functions $\phi_{T,S,j}: R'_{T,S,j}\to [0,1]$ that appear in the definition of melding satisfy the following:
\begin{enumerate}
\item $\phi_{T,S,1}=0$ on $R'_{T,S,1}\cap \{\nl_e\geq \mathcal{L}+ \varepsilon'' +\varepsilon^{(4)} \mbox{ for some } e\in G(S)\}$, 
where $0<\varepsilon^{(4)}\ll \varepsilon'''$;
\item the collection $\{\phi_{T,S,j}\}$ is invariant under {\coblu marker rotations and puncture reorderings}.
\end{enumerate} 

{\coblu We say that two multisections $\mathfrak{s}$ and $\mathfrak{s}'$ on a bundle $E\to V$ {\em are $C^1$-close} if there exists a large integer $m$ such that both can be locally viewed as sections of $E^m\to V$ which are $C^1$-close modulo reordering.} 
The following lemma roughly states that whenever the regions of type (R1) and/or (R2) overlap, the relevant multisections are sufficiently close for ``long neck lengths'' and that melding does not modify things much for ``long neck lengths". 

\begin{lemma} \label{lemma: overlaps} $\mbox{}$
\be
\item If $S'\subset S$ is a good subforest, then $\op{stab}_{T\git S,T}({\mathfrak s}^\en_{T\git S})$ and $\op{stab}_{T\git S',T}({\mathfrak s}^\en_{T\git S'})$ agree on the overlap of $\nl_T^{-1}(R'_{T,S,1})$ and $\nl_T^{-1}(R'_{T,S',1})$.
\item If $S'\subsetneq S$ is a good subforest, then $(\widetilde{\mathfrak G}_{T,S}^\en)_*\mathfrak s^\en_{T,S}$ and $(\widetilde{\mathfrak G}_{T,S'}^\en)_*\mathfrak s^\en_{T,S'}$ are $C^1$-close on the overlap of $\nl_T^{-1}(R'_{T,S,2})$ and $\nl_T^{-1}(R'_{T,S',2})$ when all $\nl_e $, $e\in G(T)-G(S)$, are sufficiently large.
\item $(\widetilde {\mathfrak G}_{T,S}^\en)_*\mathfrak s^\en_{T,S}$ and $\op{stab}_{T\git S,T}({\mathfrak s}^\en_{T\git S})$ are $C^1$-close on the overlap of $\nl_T^{-1}(R'_{T,S,1})$ and $\nl_T^{-1}(R'_{T,S,2})$ when all $\nl_e $, $e\in G(T)-G(S)$, are sufficiently large.
\ee
\end{lemma}

\begin{proof}
(1) has already been arranged.

(2) is a consequence of Theorem~\ref{thm: comparison of two gluing maps} and the inductive step: Let $S=\sqcup_{j=1}^a S_j$ be a good subforest of $T$, let 
$S_j'$ be a good subforest of $S_j^\star$, and let $S'= \sqcup_j S_j'$.
By definition and Theorem~\ref{thm: comparison of two gluing maps},
\begin{align*}
(\widetilde{\mathfrak G}_{T,S}^\en)_*\mathfrak s^\en_{T,S}&=(\widetilde {\mathfrak G}_{T,S}^\en)_* ((\mathfrak s^\en_{S^\star_j})_{j=1}^a \times (\mathfrak s^\en_{i})_{i\in V(T)-V(S)} ),\\
    (\widetilde{\mathfrak G}_{T,S'}^\en)_*\mathfrak s^\en_{T,S'}& \approx (\widetilde {\mathfrak G}_{T,S}^\en)_*(((\widetilde{\mathfrak G}_{S_j^\star,S_j'}^\en)_*  \mathfrak s^\en_{S_j^\star,S_j'})_{j=1}^a \times (\mathfrak s^\en_{i})_{i\in V(T)-V(S)} ),
\end{align*}
where $\approx$ means $C^1$-close on their overlap. Since by induction we are assuming that $\mathfrak{s}^\en_{S_j^\star}$ and $(\widetilde{\mathfrak G}_{S_j^\star,S_j'}^\en)_*   \mathfrak s^\en_{S_j^\star,S_j'}$ are $C^1$-close on their relevant overlaps, with improved accuracy as $\nl_e \to \infty$ for all $e\in G(S_j^\star)-G(S_j')$, (2) follows.

(3) By induction we are assuming that 
$$(\widetilde {\mathfrak G}_{S_j^\star,S_j}^\en)_* \mathfrak s^\en_{S_j^\star,S_j}=  \mathfrak s^\en_{S_j^\star} = \op{stab}_{S_j^\star\git S_j^\star,S_j^\star}({\mathfrak s}^\en_{S_j^\star\git S_j^\star})$$
on the overlap $\nl_{S_j^\star}^{-1}(R'_{S_j^\star,S_j,2}) \cap \nl_{S_j^\star}^{-1}(R_{S_j^\star,S_j,1})$.
Then (3) is a consequence of the commutativity of
\begin{equation} \label{eqn: commut of stab and gluing}
\begin{diagram}
(\times_{j=1}^a\V_{{S_j^\star}\git S_j}^\en)\times (\times_{i\in V(T)-V(S)} \V_i^\en) & \rTo & (\times_{j=1}^a\V_{S_j^\star}^\en)\times  (\times_{i\in V(T)-V(S)} \V_i^\en) \\
\dTo & & \dTo\\
\V_{T\git S,T}^{\op{\tiny en}} & \rTo & \V_T^{\op{\tiny en}},
\end{diagram}
\end{equation}
where the horizontal arrows are inclusions and the vertical arrows are gluing maps. 
\end{proof}

\begin{lemma}\label{lemma: strata compatibility}
If $S=\sqcup_{j=1}^a S_j$ is a good subforest of $T$, then on all of 
$$R_{T,S}:=\{\nl_e> \mathcal{L}-\varepsilon''~|~e\in G(S)\}\cap \{\nl_e\gg 0~|~ e\in G(T)-G(S)\}$$
we have
\begin{equation} \label{eqn: approx}
\mathfrak{s}_T^\en \approx (\widetilde{\mathfrak G}_{T,S}^\en)_* ((\mathfrak s_{S_j^\star}^\en)_j \times (\mathfrak s_i^\en)_{i\in V(T)-V(S)}).
\end{equation}
\end{lemma}

\begin{proof}
The lemma is proved by induction.
\eqref{eqn: approx} holds on $R_{T,S,2}\cap R_{T,S}$ by (R2) of Definition~\ref{defn: multisection sT} and on $R_{T,S,1}\cap R_{T,S}$ by (R1) and 
$$\op{stab}_{T\git S,T}({\mathfrak s}_{T\git S}^\en)\approx  (\widetilde {\mathfrak G}_{T,S}^\en)_* ((\op{stab}_{S_j^\star\git S_j,S_j^\star}({\mathfrak s}_{S_j^\star\git S_j}^\en))_j, (\mathfrak s_i^{\en})_{i\in V(T)-V(S)}).$$
For any $S'=\sqcup_j S_j'$ as in the proof of Lemma~\ref{lemma: overlaps}(2), \eqref{eqn: approx} holds on $R_{T,S',2}\cap R_{T,S}$ since
\begin{align*}
    (\widetilde {\mathfrak G}_{T,S'}^\en)_*& ((\mathfrak s_{(S'_j)^\star}^\en)_j, (\mathfrak s_i^\en)_{i\in V(T)-V(S')})\\
    \approx & (\widetilde {\mathfrak G}_{T,S}^\en)_* (((\widetilde {\mathfrak G}^\en _{S_j^\star, S_j'})_*(\mathfrak s_{(S'_j)^\star}^\en, (\mathfrak s_i^\en)_{i\in V(S_j)-V(S'_j)}))_j,  (\mathfrak s_i^\en)_{i\in V(T)-V(S)}).
\end{align*}
by Theorem~\ref{thm: comparison of two gluing maps} and $\mathfrak{s}_{S_j^\star}\approx (\widetilde {\mathfrak G}^\en _{S_j^\star, S_j'})_*(\mathfrak s_{(S'_j)^\star}^\en, (\mathfrak s_i^\en)_{i\in V(S_j)-V(S'_j)})$ by induction on $R_{S_j^\star,S_j'}.$ The lemma follows by observing that the regions of the above three types cover $R_{T,S}$.
\end{proof}

\begin{lemma} \label{lemma: verify conditions of multisection}
The collection $\{\mathfrak s_T^\en\}_T$ satisfies the conditions of Definition~\ref{def: multisection of Kur}.
\end{lemma}

\begin{proof}
(1) is immediate from (R1). 
(2) follows from Remark~\ref{rmk: relating two gluing maps} and Lemma~\ref{lemma: strata compatibility}. (3) follows from the symmetry built into the construction.
\end{proof}
\cb

\subsection{Modifications when (H) does not hold} \label{subsection: H does not hold}
 
When \hyperlink{H}{(H)} is not satisfied, we generally need to add or remove additional punctures to the domain Riemann surface when we define morphisms $\phi_{T',T}: \mathcal{C}_{T'}\to \mathcal{C}_T$.
In this subsection we describe how to modify the definition of the interior semi-global Kuranishi charts to accommodate the additional punctures. 

Let $\delta>0$ be a small constant as before, used when referring to curves that are close to breaking. Recall that we are taking the moduli spaces $\mathcal{M}_1,\dots, \mathcal{M}_\rho$ to have domains of the form $\dot F_i= F_i -{\bf p}_i$, i.e., {\em we have not removed the punctures ${\bf q}_i$ yet.}

\s\n
{\em Initial step.} Suppose $\bdry(\mathcal{M}_{i_0}/\R)=\varnothing$. Then there exists a small positive constant $\varepsilon'=\varepsilon'(\mathcal{M}_{i_0})>0$ such that for each $[\F,u]=[(F,j,{\bf p}, {\bf r}),u]\in \mathcal{M}_{i_0}$ there exists a set of removable punctures ${\bf q}$ as in Section~\ref{subsubsection: nonnegative Euler char} when $\chi(\dot F)=0$ or $1$. Let $\K_{i_0}= \mathcal{M}_{i_0}/\R$. 
The only difference is that in the definition of $\V_{i_0}$
we use $s_{\pm,i}$ that depends on $((F,j,{\bf p}, {\bf q},{\bf r}),u)$ instead of $(\F,u)$.

\nom[FcalT]{$\F^T$}{$\F^T=(F,j,{\bf p},{\bf q}^T,{\bf r})$, where $(\F,u)=((F,j,{\bf p},{\bf q},{\bf r}),u)$ is close to breaking into a building described by a CH tree $T$ and ${\bf q}^T$ is $\delta$-close to the union of removable punctures of the levels}
\nom[Lprime]{$\mathcal{L}'\gg \mathcal{L}''\gg 0$}{Large parameters $\ll \mathcal{L}$ involved in the construction of the Kuranishi charts when (H) does not hold}
\s\n
{\em Puncturing map.} 
{\coblu Let $[\F,u]=[(F,j,{\bf p},{\bf r}),u]\in \mathcal{M}_{i_0}$ be such} that $(\F,u)$ is $\delta'$-close to breaking into the building $\times_{i\in V(T)}(\F_i,u_i)$, as prescribed by a CH tree $T$ such that $T\git T=i_0$, and that each $\F_i=(F_i,j_i,{\bf p}_i, {\bf r}_i)$ has been assigned a set ${\bf q}_i$ of removable punctures; we write $\F^i_i=(F_i,j_i,{\bf p}_i,{\bf q}_i, {\bf r}_i)$ by analogy with $\F^T$ below. {\coblu If $\delta'>0$ is sufficiently small, then there exists a set ${\bf q}^T= \sqcup_{i\in V(T)} {\bf q}_i'$ of removable punctures of $\F$ that is defined using the analogs of Equations~\eqref{eqn: s prime 1} and \eqref{eqn: s prime 2} and is close to $\sqcup_{i\in V(T)} {\bf q}_i$ and we can define the following {\em puncturing map}:}
$$\mathfrak{Punc}^T: (\F,u)\mapsto (\F^T=(F,j,{\bf p},{\bf q}^T,{\bf r}),u).$$
{\coblu We further shrink $\delta'>0$ if necessary so that $(\F^T,u)$ is $\delta$-close to $\times_{i\in V(T)}(\F_i^i,u)$.} 

\s
Choose $\mathcal{L}'',\mathcal{L}',\mathcal{L}$ such that $0\ll \mathcal{L}''\ll \mathcal{L}'\ll \mathcal{L}$.

\s\n
{\em Warm-up.}   We consider the simplest nontrivial case where 
$$V(T)=\{i_1,i_2\}, \quad G(T)=\{e=(i_2,i_1)\},$$
$$\bdry(\mathcal{M}_{i_0}/\R)= (\mathcal{M}_{i_1}/\R)\times (\mathcal{M}_{i_2}/\R).$$

Let $\mathcal{B}_{i_0}$ be the set of $[\F,u]=[(F,j,{\bf p}, {\bf r}),u]\in \mathcal{M}_{i_0}$ that are $\delta'$-close to breaking into
$$[\F_{i_1},u_{i_1}]\times [\F_{i_2},u_{i_2}]\in \mathcal{M}_{i_1}\times \mathcal{M}_{i_2}, ~~ \F_{i_j}=(F_{i_j},j_{i_j},{\bf p}_{i_j}, {\bf r}_{i_j}),~~ j=1,2,$$
where any of $\chi(\dot F), \chi(\dot F_{i_1}), \chi(\dot F_{i_2})$ may be nonnegative. Given $[\F,u]\in \mathcal{B}_{i_0}$, let $(\F^T,u)=\mathfrak{Punc}^T (\F,u )$ so that $(\F^T,u)$ is $\delta$-close to breaking into $(\F_{i_1}^{i_1},u_1)\cup (\F_{i_2}^{i_2}, u_2)$. We also choose a sufficiently large compact set $\K_{i_0}\subset \mathcal{M}_{i_0}$ such that $\K_{i_0}\cup\mathcal{B}_{i_0}=\mathcal{M}_{i_0}$ and a small constant $\varepsilon'(\K_{i_0})>0$ to define the removable punctures ${\bf q}$ for all $[\F,u]\in \K_{i_0}$. 

Consider the two hyperbolic metrics
$$g^{T\git T}:=g_{(\dot F -{\bf q} ,j)}, \quad g^T:=g_{(\dot F  - {\bf q}^T,j)}$$
where they are defined. 
Let $\nl_e^T(\F ,u )$ be the neck length function defined using $g^T$ and let $s_{\pm,j}^{T\git T}(\F,u)$ be $s_{\pm,j}(\F,u)$ defined using $g^{T\git T}$. 

For each $a<b$, choose a smooth nondecreasing function $\lambda_{a,b}:\R\to [0,1]$ such that $\lambda_{a,b}(a)=0$ and $\lambda_{a,b}(b)=1$ and then set $\lambda=  \lambda_{\mathcal{L}'',\mathcal{L}'}$. 

We now state the modifications (Mod$_1$) and (Mod$_2$) in the processes of trimming and constructing the obstruction bundle: 

\begin{enumerate}
    \item[(Mod$_1$)] In the definitions of $\K_{i_0}$ and $\V_{i_0}$ (see Definition~\ref{defn: choice of interior chart}), the neck length $\nl_e(\F,u)$ is taken to be $\nl_e^T(\F,u)$ if $(\F^T,u)$ is $\delta$-close to breaking and is taken to be zero otherwise. (In particular, $\K_{i_0}$ is chosen sufficiently large so that all the curves $(\F,u)$ with $\nl_e^T(\F,u)\leq \mathcal{L}-\varepsilon'''$ are contained.)
\end{enumerate}


\begin{enumerate}
    \item[(Mod$_2$)] $E_{(\F ,u )}$ is defined as in Step 2 of Theorem~\ref{thm: construction of L-transverse subbundle} but using $\widetilde{s}_{\pm,j}(\F ,u )$, which in turn is defined as:
\begin{enumerate}
\item $s_{\pm,j}^{T\git T}(\F ,u )$ on $\{\nl_e^T\leq \mathcal{L}''\}$ and  $s_{\pm,j}^{T}(\F ,u )$ on $\{\nl_e^T\geq \mathcal{L}'\}$; and
\item the interpolation $(1-\lambda(\nl_{e}^T))s_{\pm,j}^{T\git T}(\F ,u )+\lambda(\nl_{e}^T) s_{\pm,j}^T(\F ,u ),$ on $\{\mathcal{L}''\leq \nl_e^T\leq \mathcal{L}'\}$.
\end{enumerate}

\end{enumerate}

On $\V_{i_0}$ the bundle $\E_{i_0}$ is obtained from the bundle with fibers $E_{(\F ,u )}$; on $\V_{i_0,T}$, still defined using the parameter $\mathcal{L}$, the bundle $\E_T$ agrees with the stabilization $E'_{(\F ,u )}$ of $E_{(\F ,u )}$, constructed as in Equation~\eqref{equation: defn of E prime} using $g^T$.

\s\n
{\em General case.} We will describe a neighborhood of the corner of $\V_{i_0}$ given by the CH tree $T$ such that $i_0=T\git T$. Let $(\F,u)$ be as in the ``Puncturing map" paragraph and 
and let $(\F^T,u)=\mathfrak{Punc}^T (\F,u ).$  As before, if $\mathcal{B}_{i_0}$ is the set of curves that are $\delta'$-close to breaking, then we choose a sufficiently large compact set $\K_{i_0}\subset \mathcal{M}_{i_0}$ such that $\K_{i_0}\cup\mathcal{B}_{i_0}=\mathcal{M}_{i_0}$ and a small constant $\varepsilon'(\K_{i_0})>0$ to define the removable punctures ${\bf q}$ for all $[\F,u]\in \K_{i_0}$. 

Let $g^{T\git T}$ be the hyperbolic metric $g_{(\dot F-{\bf q},j)}$ and, for a good subforest $S$ of $T$ such that $T\git S\not=T\git T$, let $g^{T\git S}$ be the hyperbolic metric $g_{(\dot F-{\bf q}^{T\git S},j)}$, where {\coblu the set of removable punctures ${\bf q}^{T\git S}$ is as in the definition of $\mathfrak{Punc}^{T\git S}$.} Let $\nl^{T\git S}_e(\F,u)$ be the neck length function defined using $g^{T\git S}$ for $e\not\in E(S)$ and let $s_{\pm,j}^{T\git S}(\F,u)$ be $s_{\pm,j}(\F,u)$ defined using $g^{T\git S}$. Also let $e^C$ (``the complement of $e$") be the good subforest of $T$ consisting of glued edges $\not=e$.

\begin{enumerate}
    \item[(Mod$_1$)] In the definitions of $\K_{i_0}$ and $\V_{i_0}$  (see Definition~\ref{defn: choice of interior chart}), the neck length $\nl_e(\F,u)$ is taken to be $\nl_e^{T\git e^C}(\F,u)$ if $(\F^{T\git e^C},u)$ is $\delta$-close to breaking into a building with a long neck corresponding to $e$ and is taken to be zero otherwise.
\item[(Mod$_2$)] $E_{(\F,u)}$ is defined as in Step 2 of Theorem~\ref{thm: construction of L-transverse subbundle} but using $\widetilde{s}_{\pm,j}(\F,u)$, which is given as follows: 
for good subforests $S'\subset S$ of $T$, the interpolation 
$$\sum_{S'\subset S''\subset S} \left(\prod_{e\in E(S'')-E(S')} (1-\lambda(\nl^{T\git e^C}_{e})) \prod_{e\in E(S)-E(S'')} \lambda(\nl^{T\git e^C}_{e}) \right) s_{\pm,j}^{T\git S''}(\F,u),$$ 
where the summation is over all good subforests $S''$ such that $S'\subset S''\subset S$,
on
\begin{gather*}
    \{\mathcal{L}''\leq \nl_e^{T\git e^C}\leq \mathcal{L}'~|~e\in E(S)-E(S')\}\cap \{ \nl^{T\git e^C}_e> \mathcal{L}'~|~e\not\in E(S)\}\\
     \cap \{ \nl^{T\git e^C}_e\leq \mathcal{L}''~|~e\in E(S')\}.
\end{gather*}

\end{enumerate}

Near each corner/edge corresponding to $T$ with $T\git T=i_0$ on $\V_{i_0}$ the bundle $\E_{i_0}$ is obtained from the bundle with fibers $E_{(\F,u)}$ as above after quotienting by domain automorphisms.  The definitions of the edges and corners of $\E_{i_0}$ and $\V_{i_0}$ corresponding to the various $T$ with $T\git T=i_0$ agree by construction.  This gives the interior obstruction bundles $\pi_{i_0}:\E_{i_0}\to \V_{i_0}$ and hence the interior semi-global Kuranishi charts.  The slight enlargements $\pi_{i_0}^\en:\E_{i_0}^\en\to \V_{i_0}^\en$ can be defined in the same manner.

\s\n
{\em Example (B).} Figure~\ref{fig: corner2} corresponds to Example (B) in Section~\ref{subsubsection: examples}.
The numbers indicate the regions corresponding to the following: (1) $s^{T\git T}_{\pm,j}(\F,u)$, (2) $s^{T\git S'}_{\pm, j}(\F,u)$, (3) $s^{T\git S''}_{\pm,j}(\F,u)$, (4) $s^{T}_{\pm,j}(\F,u)$, and (5) and (6) are the interpolation regions. (6) is given by:
\begin{gather*}
    (1-\lambda(\nl^{T\git e_1^C}_{e_1}))(1-\lambda(\nl^{T\git e_2^C}_{e_2}))s^{T\git T} + \lambda(\nl^{T\git e_1^C}_{e_1})(1-\lambda(\nl^{T\git e_2^C}_{e_2}))s^{T\git S''} \\
    + (1-\lambda(\nl^{T\git e_1^C}_{e_1}))\lambda(\nl^{T\git e_2^C}_{e_2})s^{T\git S'} +
    \lambda(\nl^{T\git e_1^C}_{e_1})\lambda(\nl^{T\git e_2^C}_{e_2})s^{T}. 
\end{gather*}

\begin{figure}[ht]
\begin{center}
\psfragscanon
\psfrag{A}{\tiny $\nl_{e_1}^{T\git e_1^C}$}
\psfrag{B}{\tiny $\nl_{e_2}^{T\git e_2^C}$}
\psfrag{E}{\tiny (1)}
\psfrag{F}{\tiny (2)}
\psfrag{G}{\tiny (3)}
\psfrag{H}{\tiny (4)}
\psfrag{I}{\tiny (5)}
\psfrag{J}{\tiny (6)}
\psfrag{c}{\tiny $\quad\quad \mathcal{L}''$}
\psfrag{d}{\tiny $\mathcal{L}'$}
\psfrag{e}{\tiny $\mathcal{L}'+\varepsilon'''$}
\psfrag{f}{\tiny $\mathcal{L}''$}
\psfrag{g}{\tiny $\mathcal{L}'$}
\psfrag{h}{\tiny $\mathcal{L}'+\varepsilon'''$}
\psfrag{i}{\tiny $\infty$}
\psfrag{j}{\tiny $\mathcal{L}$}
\psfrag{k}{\tiny $\mathcal{L}$}
\includegraphics[width=5cm]{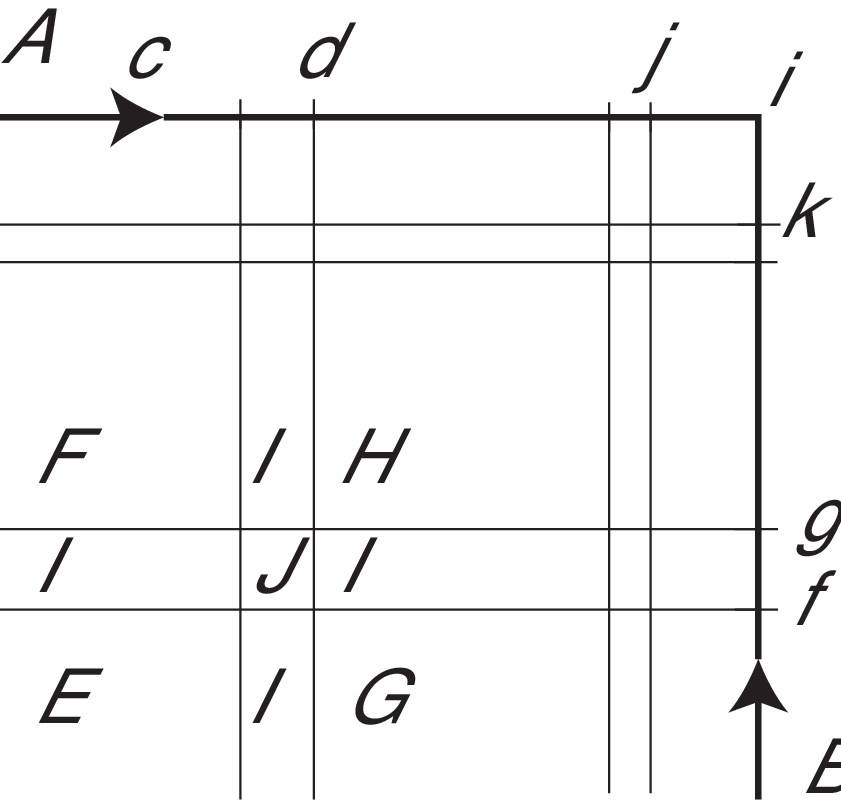}
\end{center}
\caption{The horizontal and vertical lines represent the $\nl_{e_1}^{T\git e_1^C}$- and $\nl_{e_2}^{T\git e_2^C}$-coordinate axes and we are considering the projections to the $(\nl_{e_1}^{T\git e_1^C},\nl_{e_2}^{T\git e_2^C})$-coordinate plane. } \label{fig: corner2}
\end{figure}

The bundles $\pi_T: \E_T\to\V_T$ and their enlargements are defined in the same way as in Section~\ref{subsubsection: boundary strata}, the morphisms $\phi_{T\git S, T}: \mathcal{C}_{T\git S}\to \mathcal{C}_T$ as in Section~\ref{subsubsection: morphisms}, the gluing maps as in Section~\ref{subsubsection: gluing maps}, and the multisections as in Section~\ref{subsection: multisections on products}, where all the occurrences of $\nl_e(\F,u)$ are replaced by $\nl_e^{T\git e^C}(\F,u)$. \cb

\subsection{Verification of \ref{K1}--\ref{K8} and \ref{SC1},\ref{SC2}} \label{subsection: verification}

We now verify that {\coblu \ref{K1}--\ref{K8} and \ref{SC1},\ref{SC2}} hold for our semi-global Kuranishi structure that we constructed in Sections~\ref{subsection: overview of construction}--\ref{subsection: H does not hold}. 

\ref{K1} The Kuranishi charts were constructed in Sections~\ref{subsubsection: interior charts and multisections} (defined in Definition~\ref{defn: choice of interior chart}) and \ref{subsubsection: boundary strata} (defined in Definition~\ref{defn: bundles ET over VT}). \ref{K1}(a) and (c) are immediate from the construction and the first line of 
\ref{K1}(b) is a consequence of Definition~\ref{defn: bundles ET over VT}(4). 

\ref{K2} The morphisms were defined in Section~\ref{subsubsection: morphisms}. \ref{K2}(a)--(f) are easy consequences of the definition. The unique morphism property is a consequence of the commutativity of \eqref{eqn: four charts first version}.

\ref{K3} is immediate. 

\ref{K4} is immediate from the definition of the morphisms.  

\ref{K5} follows from Lemma~\ref{lemma: covers}.

\ref{K6}--\ref{K8} The symmetries are built into the construction.  The interior charts satisfy Definition~\ref{defn: choice of interior chart}($\mathcal{I}$3). The boundary charts are defined using the interior charts and the neck length function, and the latter is geometrically defined (i.e., defined using hyperbolic geometry) and commutes with marker rotations and puncture reorderings.

\ref{SC1} The gluing maps $\mathfrak{G}_{\arr T}^\en$ and $\widetilde{\mathfrak{G}}_{\arr T}^\en$ were defined in Section~\ref{subsubsection: gluing maps}. \ref{SC1}(a) follows from Theorem~\ref{thm: gluing, new version} and \ref{SC1}(c) follows from Theorem~\ref{thm: iterated gluing, new version}. The first statement of \ref{SC1}(d) follows from the second statement of Theorem~\ref{thm: gluing}, and the second statement  of \ref{SC1}(d) follows from Gromov-Hofer compactness. Finally, \ref{SC1}(b) is a consequence of the proof of gluing in Section~\ref{subsection: defn of gluing map} and in particular the fact that the error (given by the left-hand side of \eqref{Theta pm} in the case of a two-level gluing) goes to zero as the gluing parameters go to $\infty$. 

{\coblu \ref{SC2} The symmetries are built into the construction.}

\subsection{Cobordisms} \label{subsection: cob case}

In this subsection we describe the modifications needed for the cobordism maps in Section~\ref{chain map}. 

Let $(\widehat W= W\cup ([1,\infty)\times M_+)\cup ((-\infty,-1]\times M_-),\widehat \alpha)$ be the completion of the compact Liouville cobordism $(W,\alpha)$ from $(M_+,\alpha_+)$ to $(M_-,\alpha_-)$ as in Section \ref{subsection: almost complex structures}. Given $L_+\leq L_-$, let $J$ be an almost complex structure on $\widehat W$ which restricts to $J_+$ at the positive end and to $J_-$ at the negative end. Suppose $(\widehat \alpha, J)$ is an $(L_+,L_-)$-simple pair and $J$ is $(L_+,L_-)$-end-generic.

\subsubsection{Mixed curves} \label{subsubsection: mixed curves}

We first define a slightly unusual, infinite-dimensional, space $\mathcal{M}_J^{\op{ind}=k}(\dot F', \widehat{W}; \gamma; \bs\gamma_+\sqcup \bs\gamma_-)$ of $J$-holomorphic maps $u$.  Strictly speaking $u$ does not need to be $J$-holomorphic, provided (iii) and (iv) below hold; in other words, it suffices to keep track of the data of the ends of $u$ and the Fredholm index of the completion $\tilde u$, defined below.

Let $\gamma$ be a Reeb orbit for $\alpha_+$ and let $\bs\gamma_+=(\gamma_{+,1},\dots,\gamma_{+,l_+})$ and $\bs\gamma_-=(\gamma_{-,1},\dots,\gamma_{-,l_-})$ be ordered tuples of Reeb orbits for $\alpha_+$ and $\alpha_-$.
Let 
\begin{align*}
    \mathcal{M}_J^{\op{ind}=k}(\dot F', \widehat{W}; \gamma; \bs\gamma_+\sqcup \bs\gamma_-)& = \{\mbox{$J$-holomorphic maps $u: (\dot F',j)\to (\widehat{W},J)$}\\
    & \qquad \qquad \mbox{of ``Fredholm index'' $k$ satisfying (i)--(iv)}\}, 
\end{align*}
where: 
\be
\item[(i)] $\dot F=F-\{p\}-\bf p_+ - \bf p_-$, where $(F,j)$ is a closed Riemann surface and $\{p\}$, ${\bf p}_+=\{p_{+,1},\dots, p_{+,l_+}\}$, and ${\bf p}_-=\{p_{-,1},\dots, p_{-,l_-}\}$ are disjoint sets of punctures;
\item[(ii)] $\dot F'$ is $\dot F$ minus the union of closed disk neighborhoods $N(p_{+,i})$ of $p_{+,i}$ that are mutually disjoint and disjoint from the other punctures;
\item[(iii)] $u$ is asymptotic to $\gamma$ at the positive end near $p$ and to $\bs\gamma_-$ at the negative end near $\bf p_-$; 
\item[(iv)] there exists an oriented parametrization $\R/\mathcal{A}_{\alpha_+}(\gamma_{+,i})\Z$ of $\bdry N(p_{+,i})$ (here we are using the orientation as the boundary of $N(p_{+,i})$ and simple coordinates for $\gamma_{+,i}$) such that 
$$u|_{\bdry N(p_{+,i})}: \R/\mathcal{A}_{\alpha_+}(\gamma_{+,i})\Z\to [1,\infty)\times \R/\mathcal{A}_{\alpha_+}(\gamma_{+,i})\Z \times D_\delta$$ 
is $\delta$-close to the map $t\mapsto (s_i,t,0)$ for some $s_i\in [2,\infty)$,
\ee
and the ``Fredholm index'' of $u$ is the Fredholm index of the completion $\tilde u$, obtained from $u$ by attaching cylindrical ends that are close to $(-\infty,s_i]\times \gamma_{+,i}\subset (-\infty,s_i]\times M_+$ and asymptotic to $\gamma_{+,i}$ as $s\to -\infty$. {\em The curve $\tilde u$ does not exist in $\widehat W$.}

As before, we choose an ordering $\mathcal{M}_1,\mathcal{M}_2,\dots,\mathcal{M}_\rho$ of moduli spaces where each $\mathcal{M}_i$ is one of
\begin{align*}
\mathcal{M}_i^J&=\mathcal{M}_J^{\op{ind}=k_i}(\dot F_i, \widehat{W}; \gamma_{i,+};\bs\gamma_{i,-}),\\ \mathcal{M}_i^{J_\pm}&=\mathcal{M}_{J_\pm}^{\op{ind}=k_i}(\dot F_i,\R\times M_\pm;\gamma_{i,+};\bs\gamma_{i,-}),\\
\mathcal{M}_i^{J,\op{Mixed}}&=\mathcal{M}_J^{\op{ind}=k_i}(\dot F_i', \widehat{W}; \gamma_i; \bs\gamma_{i,+}\sqcup \bs\gamma_{i,-}).
\end{align*}
Here $\dot F_i$ is a connected planar surface, $\mathcal{A}_{\alpha_\pm}(\gamma_{i,+})\leq L_\pm$, and each component of each level of a building of $\bdry \mathcal{M}_i$ is either a trivial cylinder or in $\mathcal{M}_j$ with $j<i$ (for the moment we take the definition of $\bdry \mathcal{M}_i$ to be the set of curves that are close to breaking into curves in $\mathcal{M}_i^J, \mathcal{M}_i^{J_\pm},\mathcal{M}_i^{J,\op{Mixed}}$). The moduli spaces $\mathcal{M}_i^J, \mathcal{M}_i^{J_\pm},\mathcal{M}_i^{J,\op{Mixed}}$ are said to be {\em of $\op{(Cob)}$ type, $\op{(Symp)}$ type, and $\op{(Mixed)}$ type,} respectively.

Let us assume that Condition (H) holds.

\subsubsection{CH trees of $\op{(Cob)}$ type}

\begin{defn} [CH tree of $\op{(Cob)}$ type] \label{defn: cob type}
A {\em CH tree $T$ of $\op{(Cob)}$ type} is defined in the same way as a CH tree of $\op{(Symp)}$ type, with the following extra data:
\begin{enumerate}
\item There is a splitting $V(T)=V^{\op{Cob}}(T) \sqcup V^{\op{Symp}}(T)\sqcup V^{\op{Mixed}}(T)$ and the elements of $V^{\op{Cob}}(T), V^{\op{Symp}}(T), V^{\op{Mixed}}(T)$ are called {\em $\op{(Cob)}$ vertices, $\op{(Symp)}$ vertices, and $\op{(Mixed)}$ vertices}, respectively.
\item Each maximal directed path of the direct tree passes through precisely one $\op{(Cob)}$ vertex or at least one $\op{(Mixed)}$ vertex.  Every $\op{(Mixed)}$ vertex has a directed path to some $\op{(Cob)}$ vertex.
\item A $\op{(Cob)}$ vertex (resp.\ a $\op{(Symp)}$ vertex sitting above some $\op{(Cob)}$ or $\op{(Mixed)}$ vertex, a $\op{(Symp)}$ which does not sit above any $\op{(Cob)}$ or $\op{(Mixed)}$ vertex, a $\op{(Mixed)}$ vertex) is labeled by an index corresponding to a moduli space of type $\mathcal{M}_i^J$ (resp\, $\mathcal{M}_i^{J_+}$, $\mathcal{M}_i^{J_-}$, $\mathcal{M}_i^{J,\op{Mixed}}$).
\end{enumerate}
\end{defn}


\begin{defn}
Let $T$ be a CH tree of {\em $\op{(Cob)}$ type} and let $S$ be a subtree of $T$.
\be
\item $S$ is a {\em good subtree of $T$} if it has no free edges and is either a $1$-vertex tree with a $\op{(Mixed)}$ vertex or has at least one glued edge.
\item $S$ is {\em of $\op{(Symp)}$ type} if $S$ has only $\op{(Symp)}$ vertices.   
\item $S$ is {\em of $\op{(Cob)}$ type} if $S$ has at least one $\op{(Cob)}$ vertex and every $\op{(Cob)}$ vertex of $T$ that is reached by a directed path from a vertex in $S$ is also a vertex of $S$. (Note that it is possible for the root of $S$ to be a $\op{(Mixed)}$ vertex.)
\item $S$ is {\em incomplete} if it is neither of $\op{(Symp)}$ type nor of $\op{(Cob)}$ type. 
\ee
\nom[ClS]{$Cl(S)$}{Closure of a good subtree $S$ of $T$}
A disjoint union $S=\sqcup_i S_i$ of good subtrees of $T$ is a {\em good subforest of $T$}. It is of {$\op{(Symp})$ (resp.\ $\op{(Cob})$) type} if all the $S_i$ are of $\op{(Symp)}$ (resp.\ $\op{(Cob)}$) type. The {\em closure $Cl(S)$} of a good subforest $S$ is the smallest good subforest in $T$ containing $S$ such that all of its trees are subtrees of $\op{(Symp)}$ or $\op{(Cob)}$ type. 
\end{defn}

\begin{defn}
An {\em isomorphism} of CH trees of $\op{(Cob)}$ type is defined in the same way as an isomorphism of CH trees of $\op{(Symp)}$ type, except that we additionally require that $\op{(Symp)}$ vertices be taken to $\op{(Symp)}$ vertices, $\op{(Cob)}$ vertices to $\op{(Cob)}$ vertices, and $\op{(Mixed)}$ vertices to $\op{(Mixed)}$ vertices.
\end{defn}

\n
{\em Main difference between the $\op{(Symp)}$ and $\op{(Cob)}$ cases.}  In the $\op{(Symp)}$ case, $\nl_e$ for all the glued edges $e\in G(T)$ are independent.  The main difference with the $\op{(Cob)}$ case is that for any two directed paths $\gamma_1$, $\gamma_2$ from the top vertex to a $\op{(Cob)}$ vertex, $\nl_{\gamma_1}$ and $\nl_{\gamma_2}$ are dependent, where
\begin{equation}
\nl_\gamma:= \sum_{e\in G(\gamma)} \nl_e.
\end{equation}
This motivates the definitions in the following subsections.

\subsubsection{Decomposition of $\mathcal{M}_i$ of $\op{(Cob)}$ type}

For $\mathcal{M}_i$ of $\op{(Cob)}$ type, we define the subset $\mathcal{M}_i^\dagger$ consisting of $[\F,u]$ for which
\be
\item[($\dagger$)] there exists a path from the positive puncture of $\ddot F$ to a negative puncture which does not pass through an annular component $A$ of $\op{Thin}_\varepsilon(\ddot F, g_{(\ddot F,j)})$ with $\nl\geq \mathcal{L}$. (By Assumption (H), $\ddot F= \dot F$.) 
\ee 
Note that this does not preclude all annular components $A$ with $\nl\geq \mathcal{L}$.  In fact, for $[\F,u]\in \mathcal{M}_i^\dagger$, let $\{A_i\}_i$ be the set of annular components of $\op{Thin}_\varepsilon(\ddot F, g_{(\ddot F,j)})$ with $\nl\geq \mathcal{L}$. Then the restriction of $u$ to the component of $\ddot F- \sqcup_i A_i$ containing the positive puncture can be viewed as an element of some $\mathcal{M}_i^{J,\op{Mixed}}$ by ($\dagger$). 

We then view $\mathcal{M}_i^\dagger$ as the union $\cup_{T'} \mathcal{M}_{T'}^\dagger$, where the tree $T'$ encodes some topological data of $[\F,u]$ and is constructed as follows: For each component $C$ of $\ddot F- \sqcup_i A_i$, view the restriction $u|_C$ as an element of type $\mathcal{M}_{j(C)}^J$, $\mathcal{M}_{j(C)}^{J_\pm}$, or $\mathcal{M}_{j(C)}^{J,\op{Mixed}}$ and convert $C$ into a vertex with label $j(C)$.  Also convert each $A_i$ into a glued edge.  The negative ends of $u$ become the free edges. By the previous paragraph, the root of $T'$ must be a $\op{(Mixed)}$ vertex. 


We now consider the contraction $T\git S$  of a CH tree $T$ of $\op{(Cob)}$ type along a good subforest $S=\sqcup_i S_i$ of $T$.  For each $S_i$ we replace $S_i$ by a vertex which is labeled $\tau(S_i)$.  There are two cases to consider: 
\begin{enumerate}
\item If $S_i$ is of $\op{(Symp)}$ (resp.\ $\op{(Cob)}$) type, then $\tau(S_i)$ is a label for a $\op{(Symp)}$ (resp.\ $\op{(Cob)}$) vertex as in Definition~\ref{defn: contraction}(1).
\item If $S_i$ is incomplete, then $\tau(S_i)$ is a label for a $\op{(Mixed)}$ vertex corresponding to $\mathcal{M}_{T'}^\dagger$ whose topological type is that of a curve obtained from pregluing the curves $[\F_v,u_v]$, $v\in V(S_i)$, according to the prescription given by $G(S_i)$.

\end{enumerate}
The rest of Definition~\ref{defn: contraction} carries over.

\subsubsection{Kuranishi charts} \label{subsubsection: Kuranishi charts in cob case}

Definition~\ref{defn: semi-global Kuranishi structures Symp} carries over to the definition of a semi-global Kuranishi structure of $\op{(Cob)}$ type after modifying \ref{SC1} and \ref{SC2} in Section~\ref{subsubsection: semi-global Kuranishi str Cob case}.  In this subsection we describe the Kuranishi charts and morphisms of the semi-global Kuranishi structure $\mathscr{K}^{(L_+,L_-)} (\widehat \alpha,J)$ of $\op{(Cob)}$ type, leaving the analogs of \ref{SC1} and \ref{SC2} to Sections~\ref{subsubsection: gluing maps, cob case} and \ref{subsubsection: semi-global Kuranishi str Cob case}.
\nom[Kurcob]{$\mathscr K^{(L_+,L_-)}(\widehat \alpha,J)$}{Semi-global Kuranishi structure in the $\op{(Cob)}$ case; $L_+$ and $L_-$ are the action thresholds for the positive and negative ends $\alpha_+,\alpha_-$, $\widehat \alpha$ is the completed Liouville form, and $J$ is the almost complex structure}

When $\mathcal{M}_i$ is of $\op{(Symp)}$ type, the interior semi-global Kuranishi chart $\mathcal{C}_i= (\pi_i: \E_i\to \V_i, \overline\bdry_i,\psi_i,{\coblu G_i})$ for $\K_i\subset \mathcal{M}_i/\R$ satisfies Definition~\ref{defn: choice of interior chart} from Section~\ref{subsection: trimming}; in particular $\K_i$ (resp.\ $\V_i$) consists of $[\F,u]$ in $\mathcal{M}_i/\R$ (resp.\ in a neighborhood of $\mathcal{M}_i/\R$) for which there are no annular components in $\op{Thin}_\varepsilon(\ddot F, g_{(\ddot F,j)})$ or all annular components have $\nl\leq \mathcal{L}-\varepsilon'''$ (resp.\ $\nl < \mathcal{L}$).  

When $\mathcal{M}_i$ is of $\op{(Cob)}$ type, then we simply replace $\mathcal{M}_i/\R$ by $\mathcal{M}_i$ in Definition~\ref{defn: choice of interior chart} to obtain a Kuranishi chart $\mathcal{C}_i= (\pi_i: \E_i\to \V_i, \overline\bdry_i,\psi_i,{\coblu G_i})$ for $\K_i\subset \mathcal{M}_i$.

If $T$ is a CH tree of $\op{(Symp)}$ type, then the corresponding charts were given in Definition~\ref{defn: bundles ET over VT}.  Now let $T$ be a CH tree of $\op{(Cob)}$ type. Then the following definition is the analog of Definition~\ref{defn: bundles ET over VT}:

\begin{defn}[$\V_{T}$, $\V_{T}^{\en/2}$, $\V_{T}^\en$, $\K_{T}$, $\pi_{T}: \E_{T} \to \V_{T}$, $\pi_{T}^\en: \E_{T}^\en \to \V_{T}^\en$]
First suppose that the root vertex $v$ of $T$ is a $\op{(Mixed)}$ vertex and $Cl(\{v\})^\star=T$. Then:
\be
\item $\K_{T}$ is the compact subset of $\mathcal{M}_{T\git T}$ consisting of $[\F,u]$ such that $\nl_e(u)\geq \mathcal{L}-\varepsilon''+\tfrac{\varepsilon'''}{2}$ for all $e\in G(T)$ and $\nl\leq \mathcal{L}-\tfrac{\varepsilon'''}{2}$ for all other $\varepsilon$-thin annuli of $g_{(\ddot F,j)}$.  

\item $\V_{T}$ (resp.\ $\V_{T}^{\en/2}$, $\V_{T}^\en$) is the set of equivalence classes of maps $[\F,u]$ in a neighborhood of $\mathcal{M}_{T\git T}$ such that $\nl_e(u)> \mathcal{L}-\varepsilon''$ (resp.\ $>\mathcal{L}-\varepsilon''-\tfrac{\varepsilon'''}{2}$, $>\mathcal{L}-\varepsilon''-\varepsilon'''$) for all $e\in G(T)$, $\nl(u)<\mathcal{L}$ (resp.\ $<\mathcal{L}+\tfrac{\varepsilon'''}{2}$, $< \mathcal{L}+\varepsilon'''$) for all other $\varepsilon$-thin annuli of $g_{(\ddot F,j)}$, and $\overline\bdry_J u\in E'_{(\F,u)}$. 

\item For $[\F,u]\in \V_{T}^\en$, as in Equation~\eqref{equation: defn of E prime} we are setting
$$E'_{(\F,u)}:=\oplus_C E^{\ell,\varepsilon}_{(\mathcal{F},u),C},$$
where $C$ ranges over the boundary components of the $\varepsilon$-thin annuli corresponding to the edges of $G(T)$ and the $\varepsilon$-thin cusps of $g_{(\ddot F,j)}$, and $E^{\ell,\varepsilon}_{(\mathcal{F},u),C}$ is as in Section~\ref{subsection: the bundle E'}.  The bundles $\pi_{T}:\E_{T}\to \V_{T}$ and $\pi_{T}^\en: \E_{T}^\en \to \V_{T}^\en$ have fibers $E'_{(\F,u)}$.
\ee

In general view $V^{\op{Mixed}}(T)$ as a subforest $S$ of $T$ and write $Cl(S)=\sqcup_j \widetilde S_j$. Recalling $\widetilde S_j^\star$ from Section~\ref{subsubsection: some more gluing maps}, let 
$$\mathcal{G}_{\delta,T}:=\mathcal{G}_\delta(\{\V_{\widetilde S_j^\star}^{\text{en}}\}_{j},\{\V_i^{\text{en}}\}_{i\in V(T)-V(Cl(S))})$$ 
be the set of equivalence classes of maps that are $\delta$-close to breaking into representatives of $\V_{\widetilde S_j^\star}^{\text{en}}$ and $\V_i^{\text{en}}$, where the gluing is prescribed by $G(T)$, let $\E'$ be the bundle over $\mathcal{G}_{\delta,T}$ whose fiber $E'_{(\F,u)}$ over $(\F,u)$ is given by (3) above, and let 
$$\mathcal{G}^{\E'}_{\delta,T}:=\mathcal{G}_\delta^{\E'}(\{\V_{\widetilde S_j^\star}^{\text{en}}\}_{j},\{\V_i^{\text{en}}\}_{i\in V(T)-V(Cl(S))})=\{[\F,u]\in \mathcal{G}_{\delta,T}~|~ \overline\bdry_J u \in \E'\}.$$
\be
\item[(4)]  $\V_{T}$ is the set of $[\F,u]\in \mathcal{G}^{\E'}_{\delta,T}$ such that $\nl_e(u)> \mathcal{L}-\varepsilon''$ for all $e\in G(T)$ and $\nl(u)<\mathcal{L}$ for all other $\varepsilon$-thin annuli of $g_{(\ddot F,j)}$. $V_T^{\en/2}$ and $V_T^\en$ are defined analogously.
\item[(5)] $\pi_{T}: \E_{T} \to \V_{T}$ and $\pi_{T}^\en: \E_{T}^\en \to \V_{T}^\en$ are restrictions of $\E'\to \mathcal{G}^{\E'}_{\delta,T}$ to $\V_{T}$ and $\V_{T}^\en$.
\ee
{\coblu We take $\V_T$, $\V_T^{\en/2}$, $\V_T^\en$, $\K_T$, $\pi_T:\E_T\to \V_T$, and $\pi_T^\en:\E_T^\en\to \V_T^\en$ to all be $G_{T\git T}(T)$-invariant.}
\end{defn}

The metric $d_T$ is as in Definition~\ref{defn: bundles ET over VT} and the morphisms are defined in the same way as in the $\op{(Symp)}$ case.

\subsubsection{Examples}

We give some examples which illustrate the corners (see Figure~\ref{fig: trees}).  Let $T$ be a CH tree of $\op{(Cob)}$ type. 
Given a collection of glued edges $e_1,\dots,e_k$ of $T$, let
\begin{equation}
\nl_{e_1\dots e_k}:=\min(\nl_{e_1},\dots,\nl_{e_k}).
\end{equation}

\begin{figure}[ht]
\begin{center}
\psfragscanon
\psfrag{a}{\tiny $\op{(A)}$}
\psfrag{b}{\tiny $\op{(B)}$}
\psfrag{c}{\tiny $\op{(C)}$}
\psfrag{A}{\tiny $e_1$}
\psfrag{B}{\tiny $e_2$}
\psfrag{C}{\tiny $e_3$}
\psfrag{D}{\tiny $e_4$}
\includegraphics[width=9cm]{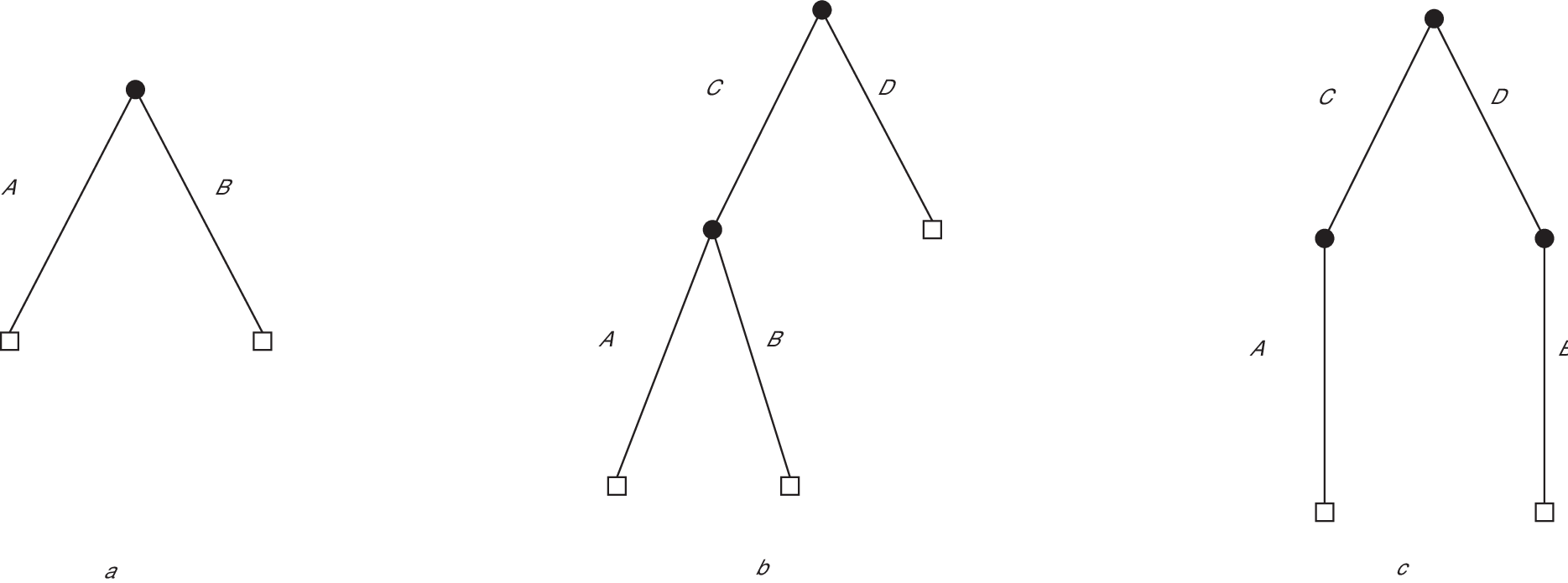}
\end{center}
\caption{The $\op{(Symp)}$ vertices are given by solid dots and the $\op{(Cob)}$ vertices by open squares.} \label{fig: trees}
\end{figure}

\s\n (A) Let $T$ be CH tree given in Figure~\ref{fig: trees}(A), i.e.,
$$V^{\op{Symp}}(T)=\{i_3\},~~ V^{\op{Cob}}(T)=\{i_1,i_2\},$$
$$G(T)=\{e_1=(i_3,i_1), e_2=(i_3,i_2)\}.$$
Let $S'=(\{i_1,i_2\}, \{e_1\})$ and $S''=(\{i_2,i_3\}, \{e_2\})$ be incomplete subtrees. See Figure~\ref{fig: corner3bis} for the image of $\mathcal{M}_{T\git T}$ in the $(\nl_{e_1},\nl_{e_2})$-coordinate plane.
\begin{figure}[ht]
\begin{center}
\psfragscanon
\psfrag{A}{\tiny $\nl_{e_1}$}
\psfrag{B}{\tiny $\nl_{e_2}$}
\psfrag{d}{\tiny $\nl_{e_1}=\mathcal L-\varepsilon''$}
\psfrag{g}{\tiny $\nl_{e_2}=\mathcal L-\varepsilon''$}
\psfrag{E}{\tiny $\varnothing$}
\psfrag{F}{\tiny $e_2$}
\psfrag{G}{\tiny $e_1$}
\psfrag{H}{\tiny $e_1,e_2$}
\psfrag{i}{\tiny $\infty$}
\includegraphics[width=4.5cm]{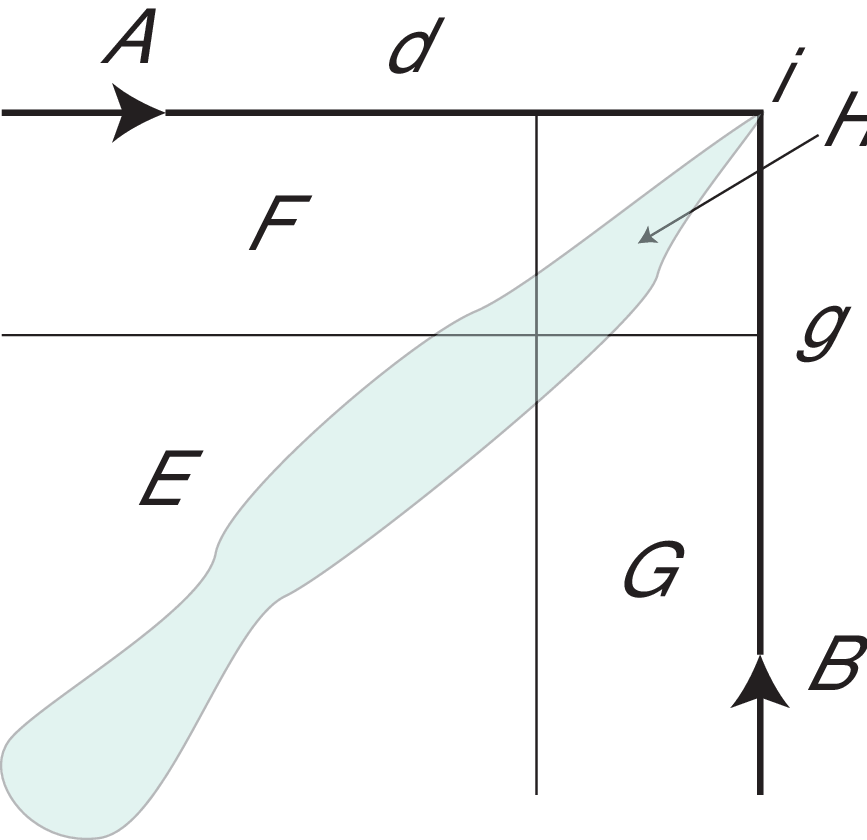}
\end{center}
\caption{The shaded region is the image of $\mathcal{M}_{T\git T}$, which is at a bounded distance from $\nl_{e_1}=\nl_{e_2}$. The labels for the regions indicate that we have stabilized using the asymptotic eigenfunctions corresponding to the given edges. The regions labeled $e_1$ and $e_2$ correspond to $T\git S''$ and $T\git S'$, respectively.} \label{fig: corner3bis}
\end{figure}
Note that $\nl_{e_1}=\nl_{e_2}$ ``in the large''. 

\s\n 
(B) Let $T$ be the CH tree given in Figure~\ref{fig: trees}(B), i.e.,
$$V^{\op{Symp}}(T)=\{i_4,i_5\},~~ V^{\op{Cob}}(T)=\{i_1,i_2,i_3\},$$
$$G(T)=\{e_1=(i_4,i_1), e_2=(i_4,i_2), e_3=(i_5,i_4), e_4=(i_5,i_3)\}.$$
We first map $\mathcal{M}_{T\git T}$ to the $(\nl_{e_1},\dots,\nl_{e_4})$-coordinate plane.  Note that, ``in the large'' we have relations $\nl_{e_1}=\nl_{e_2}$ and $\nl_{e_1}+\nl_{e_3}=\nl_{e_4}$. Hence $\nl_{e_3}$ and any one of $\nl_{e_1}, \nl_{e_2},\nl_{e_4}$ determine the other neck length functions. Note that $\nl_{e_4}> \nl_{e_1e_2}$ when we are sufficiently close to the corner and hence $\nl_{e_1e_2e_4}=\nl_{e_1e_2}$ ``in the large".

\s\n
(C) Let $T$ be the CH tree given in Figure~\ref{fig: trees}(C), i.e.,
$$V^{\op{Symp}}(T)=\{i_3,i_4,i_5\},~~ V^{\op{Cob}}(T)=\{i_1,i_2\},$$
$$G(T)=\{e_1=(i_3,i_1), e_2=(i_4,i_2), e_3=(i_5,i_3), e_4=(i_5,i_4)\}.$$
When we map $\mathcal{M}_{T\git T}$ to the $(\nl_{e_1},\dots,\nl_{e_4})$-coordinate plane, then the image of $\mathcal{M}_{T\git T}$ is a bounded distance away from $\nl_{e_1}+\nl_{e_3}=\nl_{e_2}+\nl_{e_4}$.

\subsubsection{Gluing maps} \label{subsubsection: gluing maps, cob case}

We describe the gluing maps in the $\op{(Cob)}$ case.  

Let $T$ be a CH tree of $\op{(Cob)}$ type with $|V(T)|\geq 2$. Suppose $S$ is a good subforest which is the nonempty union of glued edges such that 
$$\arr T= \arr T(S)=(T_1 \sqcup \dots \sqcup T_{m(S)}, \iota^{E(T)}, \iota^{V(T)}, \theta), \quad m(S)=|G(S)|+1,$$ 
is a degeneration such that, for each $T_i$, $Cl(T_i^{\op{trim}})=T_i^{\op{trim}}$, where $T_i^{\op{trim}}$ is the subtree of $T$ consisting of the vertices and glued edges of $T_i$, and $\iota^{E(T)}, \iota^{V(T)},$ and $\theta$ are the same maps as defined in Definition~\ref{def: degeneration}. 
Let $$E_{T}:=\{ e\in G(S)~|~ \mbox{$t(e)$ is the root of $T_i$ of $(\op{Cob})$ type}\}$$ 
and
\begin{gather*}
    \widetilde{\W}_{\arr T}^{\en/2}:= \V_{T_1}^{\en/2} \times \dots \times \V_{T_{m(S)}}^{\en/2}\times (R-\epsilon_0,\infty)^{m(S)-1},
\end{gather*}
where $R>0$ is large and $\epsilon_0>0$ is small.
We will set $R=\mathcal{L}$ and $\epsilon_0= \varepsilon''+\tfrac{\varepsilon'''}{2}$ as before.  Also let $$\E^{\en/2}_{\arr T}= \op{pr}^*_{T_1} \E^{\en/2}_{T_1}\oplus \dots \oplus\op{pr}^*_{T_{m(S)}} \E^{\en/2}_{T_{m(S)}},$$ where $\op{pr}_{T_k}:\widetilde{\W}^{\en/2}_{\arr T}\to \V^{\en/2}_{T_k}$ is the $k$th projection map. 

We then define $\W_{\arr T}^{\en/2}\subset \widetilde{\W}_{\arr T}^{\en/2}$ as follows: 
Let ${\bf v}\in \V_{T_1}^{\en/2} \times \dots \times \V_{T_{m(S)}}^{\en/2}$ and 
\begin{gather*}
({\bf I}',{\bf I}):=(I_1',\dots,I'_{|E_{T}|},I_1,\dots,I_{|G(S)-E_T|})\in (R-\epsilon_0,\infty)^{m(S)-1}.
\end{gather*}
Here the $I_i'$ are the gluing parameters corresponding to the edges in $E_{T}$ and the $I_i$ are the gluing parameters corresponding to the edges in $G(S)-E_T$.  Then $\W_{\arr T}^{\en/2}$ is the set of ``preglueable" $({\bf v},{\bf I}', {\bf I})\in \widetilde{\W}_{\arr T}^{\en/2}$, i.e., such that the pregluing $u_*({\bf v},{\bf I}',{\bf I})$ as in Definition~\ref{defn: pregluing} exists.  Observe that if $({\bf v},{\bf I}', {\bf I})\in \W_{\arr T}^{\en/2}$, then for any $r\in \{1,\dots, |E_{T}|\}$ the gluing parameters $I_r',{\bf I}$ determine the remaining gluing parameters $I'_j$, keeping in mind that the positions of curves of $\op{(Cob)}$ type cannot be translated up/down in the $\R$-direction.  Summarizing, we have the following:

\begin{lemma} \label{lemma: properties of W arr T}
The manifold $\W_{\arr T}^{\en/2}$ admits an embedding $\W^{\en/2}_{\arr T}\to \widetilde{\W}^{\en/2}_{\arr T}$ and is covered by open subsets of the form $\mathcal{U}\times (R-\epsilon_0,\infty)^{1+|G(S)-E_T|}$, where $\mathcal{U}\subset \times_i \V_{T_i}^{\en/2}$ and the coordinates on $(R-\epsilon_0,\infty)^{1+|G(S)-E_T|}$ are $(R_r',{\bf R})$ for some $r$.
\end{lemma}

We then have the gluing map
\begin{align}
\label{equation: gluing map in cob case} 
\mathfrak G_{\arr T}^{\en}: \W_{\arr T}^{\en/2} \to \mathcal{G}_{\delta,T}^{\E'},
\end{align}
obtained by the constructing the pregluing $u_*({\bf u},{\bf R}', {\bf R})$ and inverting all the errors simultaneously at each step of the contraction mapping as in Section~\ref{subsubsection: description of G123}.

\subsubsection{Semi-global Kuranishi structure in the $\op{(Cob)}$ case}  \label{subsubsection: semi-global Kuranishi str Cob case}

The definition of the semi-global Kuranishi structure in the $\op{(Symp)}$ case (Definition~\ref{defn: semi-global Kuranishi structures Symp}) carries over to the $\op{(Cob)}$ case after replacing \ref{SC1} by \ref{SC1prime} and {\coblu the gluing maps $\mathfrak G_{\arr T}^{\en}$ and $\widetilde{\mathfrak G}_{\arr T}^{\en}$ that are used in \ref{SC2} by those in \ref{SC1prime}.}

Let $T$, $S$, and $\E_{\arr T}^{\en/2}\to \widetilde{\W}^{\en/2}_{\arr T}$ be as defined in Section~\ref{subsubsection: gluing maps, cob case}. 

\be
\labitem{(SC1')}{SC1prime}
There exist global constants $R>0$ and $\epsilon_0>0$ small, ``intermediate'' {\coblu $G_{T\git T}(T)$-invariant} open subsets $\V_T^{\en/2}$ for all $T$ satisfying 
$$\V_T\subset \V_T^{\en/2}\subset \V_T^\en,  \quad \overline\V_T\subset \V_T^{\en/2}, \quad \overline\V_T^{\en/2}\subset \V_T^{\en},$$ 
and embeddings $\W^{\en/2}_{\arr T}\to \widetilde{\W}^{\en/2}_{\arr T}$ satisfying Lemma~\ref{lemma: properties of W arr T}, such that, for any CH tree $T$ with $|V(T)|\geq 2$ and a good subforest $S$ of $T$ such that $G(S)\not=\varnothing$, there exists a $C^1$-bundle map $(\widetilde{\mathfrak G}_{\arr{T}}^{\en}, \mathfrak G_{\arr{T}}^{\en})$ {\coblu between bundles of the same rank}
$$
\begin{tikzcd}
\E_{\arr{T}}^{\en/2} \arrow[r,"\widetilde{\mathfrak{G}}_{\arr{T}}^{\en}"] \arrow[d] & \E_T^{\en} \arrow[d]\\
\W_{\arr T}^{\en/2}   \arrow[r, "\mathfrak{G}_{\arr{T}}^{\en}"] &  \V_T^{\en},
\end{tikzcd}
$$
called the {\em gluing map} and which satisfies the following:
\begin{enumerate}
\item $\mathfrak{G}_{\arr{T}}^{\en}$ is a $C^1$-map which is a homeomorphism onto its image.
\item $\widetilde{\mathfrak G}_{\arr{T}}^{\en} \circ (\overline\partial_{T_1}^{\en}, \dots,\overline \partial_{T_m}^{\en})|_{\W_{\arr T}^{\en/2} }$ and $\overline \partial_T^{\en} \circ \mathfrak G_{\arr{T}}^{\en}$
are $C^1$-close and their difference goes to zero as $\op{min}_{j=1}^{m-1} I_j \to \infty$, where $I_j$ is the coordinate for the $j$th $(R-\epsilon_0, \infty)$ factor corresponding to an edge $e_j\in E(S)$. 
\item Let $S_j$ be a good subforest of $T_j$ with $G(S_j)\not=\varnothing$ which we view as a subforest of $T$. 
If $v_i \in \V_{T_i}^{\en/2}$, $(\mathbf{v}_{\arr{T_j}(S_j) },\mathbf{I}_{\arr{T_j} (S_j)})\in \W_{\arr{T_j}(S_j)}^{\en/2}$, and $\mathbf I_{\arr{T}(S)}\in (R-\epsilon_0, \infty)^{m(S)-1}$, such that 
$$(v_1,\dots,v_{j-1},\mathbf v_{\arr{T_j}(S_j)},v_{j+1}, \dots, v_m,\mathbf I_{\arr{T_j}(S_j)}, \mathbf I_{\arr{T}(S)})\in \W_{\arr T(S\cup S_j)}^{\en/2},$$
then there exists 
$$\mathbf I_{\arr{T}(S)}'=\mathbf I_{\arr{T}(S)}'(v_1,\dots,v_{j-1},\mathbf v_{\arr{T_j}(S_j)},v_{j+1}, \dots, v_m,\mathbf I_{\arr{T_j}(S_j)},\mathbf I_{\arr{T}(S)}) \in (R-\epsilon_0, \infty)^{m(S)-1}$$ 
such that the simultaneous and iterated gluing maps 
\begin{gather*}
    \mathfrak G_{\arr{T} (S\cup S_j)}^{\en}(v_1,\dots,v_{j-1},\mathbf v_{\arr{T_j}(S_j)},v_{j+1}, \dots, v_m,\mathbf I_{\arr{T_j}(S_j)}, \mathbf I_{\arr{T}(S)}),\\
    \mathfrak G_{\arr{T}(S)}^{\en} (v_1,\dots,v_{j-1}, \mathfrak G_{\arr{T_j}(S_j)}^{\en}(\mathbf{v}_{\arr{T_j}(S_j) }, \mathbf{I}_{\arr{T_j} (S_j)}), v_{j+1},\dots , v_m, \mathbf{I}'_{\arr{T}(S)} )
\end{gather*}
are $C^1$-close with error $\to 0$ and $\mathbf I_{\arr{T}(S)}'$ and $\mathbf I_{\arr{T}(S)}$ are $C^1$-close with error $\to 0$ as the minimum of all the components of $\mathbf I_{\arr{T_j}(S_j)}, \mathbf I_{\arr{T}(S)} \to \infty$. 
\NoIndent{Let $\arr T= \arr T(T_{\op{max}})$, where $T_{\op{max}}$ is the maximal good subforest of $T$ such that if $\arr T(T_{\op{max}})=T_1\sqcup \dots \sqcup T_m = $, then $Cl(T_i)=T_i$ (in $T$) for each $T_i$. Then we denote
$$\mathfrak G^{R_0, T} :=  \mathfrak G_{\arr{T}}^\en( \{({\bf v}, {\mathbf I}_{\arr T})\in \W_{\arr{T}}^{\en/2} ~|~ {\mathbf I}_{\arr T}\in (R-\epsilon_0, \infty)^{|G(T_{\op{max}})|}-(R-\epsilon_0, R_0)^{|G(T_{\op{max}})|}).$$}
\item $\mathfrak G^{R-\epsilon_0, T}\supset \overline\V_T$ and $\overline \V_T -  \mathfrak G^{R_0, T}$ is compact for any $R_0 \geq R-\epsilon_0$.
\ee
\end{enumerate}

The following is analogous to Theorem~\ref{thm: gluing map smooth} and is proved in the same way:

\begin{thm} \label{thm: gluing Cob version}
\ref{SC1prime}(a) holds for $\mathfrak G_{\arr T}^{\en}$ and $\op{Im} \mathfrak{G}_{\arr T}^{\en}\supset \V_{T}$.
\end{thm}

The analog of Theorem~\ref{thm: iterated gluing} becomes:

\begin{thm} \label{thm: iterated gluing Cob version}
\ref{SC1prime}(c) holds for the collection of gluing maps $\{\mathfrak G_{\arr T}^\en\}_{\arr T}$.
\end{thm}

In \ref{SC2}, $\mathfrak{G}_{\arr{T}}^{\en}$ is replaced by a map $\W_{\arr T}^{\en/2} \to \V_T^{\en}$.

\s\n

\subsubsection{Definition of the multisection ${\mathfrak s}^\en_T$} \label{subsubsection: defn of multisection Cob type}

The goal of this subsection is to give the definition of a multisection $\mathfrak{S}=\{\mathfrak{s}^\en_T: \V^\en_T\to \E^\en_T\}_T$ of a semi-global Kuranishi structure $\mathscr{K}$ of $\op{(Cob)}$ type and to give a construction of a specific $\mathfrak{S}$, following Section~\ref{subsubsection: defn of sT}.

\begin{defn}[Multisection of $\mathscr{K}$ of $\op{(Cob)}$ type] \label{defn: multisection of Kur Cob type}
A {\em multisection of $\mathscr{K}$ of $\op{(Cob)}$ type} is a collection $\mathfrak{S}=\{\mathfrak{s}^\en_T: \V^\en_T\to \E^\en_T\}_T$ of slight enlargements of multisections satisfying (1) and (3) of Definition~\ref{def: multisection of Kur} and 
\begin{enumerate}
    \item[(2')] there exists $R_0\gg 0$ such that for each good subforest $S$ of $T$ with $S\not=\varnothing$, if we write $\arr{T}=\arr{T}(S)=T_1\sqcup \dots \sqcup T_{m(S)}$, then on 
    $$\mathfrak G_{\arr{T}}^\en(\W_{\arr{T}}^{\en/2}\cap \{({\bf v},{\bf I})~|~ {\bf I}\in (R_0,\infty)^{m(S)-1}\}),$$
$$\widetilde{\mathfrak G}_{\arr{T}}^\en \circ (\mathfrak s_{T_1}^\en, \dots, \mathfrak s_{T_{m(S)}}^\en) \circ (\mathfrak G^\en_{\arr{T}})^{-1}\quad \mbox{and} \quad \mathfrak s_T^\en$$ 
are $C^1$-close and their difference goes to zero as $\min_{j=1}^{m(S)-1} I_j \to \infty$, where $(I_1,\dots, I_{m(S)-1})$ are the $(R_0,\infty)^{m(S)-1}$-coordinates.
\end{enumerate}
\end{defn}

We assume that \hyperlink{H}{Condition (H)} from Section~\ref{subsubsection: boundary strata} holds; the modifications used in the $\chi\geq 0$ case are analogous.  Let $0<\varepsilon'''\ll \varepsilon''$ as in Section~\ref{subsection: trimming}. Let $T$ be a CH tree of $\op{(Cob)}$ type.

We define the gluing maps $\mathfrak{G}^\en_{T,S}$ as in Section~\ref{subsubsection: some more gluing maps}, where $S$ is a good subforest of $T$ and $Cl(S)=\sqcup_j \widetilde S_j$. Let
\begin{gather*}
    \V^\en_{T,S}=(\times_j \V^\en_{\widetilde S_j^\star})\times (\times_{i\in V(T)-V(Cl(S))} \V^\en_i),\\ \mathcal{G}_{\delta,T,S}^{\E'}=\mathcal{G}_\delta^{\E'}(\{\V_{\widetilde S_j^\star}^\en\}_j,\{\V_i^\en\}_{i\in V(T)-V(Cl(S))}),
\end{gather*} 
and $\W^{\en/2}_{T,S}$ be the subset of preglueable $({\bf v}, {\bf I})\in \V^{\en/2}_{T,S}\times(\mathcal{L}-\varepsilon''-\tfrac{\varepsilon'''}{2},\infty)^{|G(T)|-|G(Cl(S))|}$,
There is a gluing map 
$$\mathfrak{G}_{T,S}^\en: \W^{\en/2}_{T,S}\to \mathcal{G}_{\delta,T,S}^{\E'}, \quad \widetilde{\mathfrak{G}}^\en_{T,S}: \E_{T,S}^{\en/2}\to \E_T^\en$$
defined in the same way as in Definition~\ref{equation: gluing map in cob case} and Section~\ref{subsubsection: gluing maps}.

\begin{defn}[Multisection ${\mathfrak s}^\en_T$, $\op{(Cob)}$ case] \label{defn: s for cobordisms} 
Suppose $T$ satisfies:
\be
\item[($\dagger\dagger$)] Either $T$ is a $1$-vertex tree or the root vertex $v$ of $T$ is a $\op{(Mixed)}$ vertex and $Cl(\{v\})^\star=T$.
\ee
The multisection $\mathfrak{s}_{T}^\en$ on $\V_T^\en$ is arbitrary, subject to consistency with $\mathfrak{s}_{T'}^\en$ on adjacent regions satisfying ($\dagger\dagger$) and invariance under $G_{T\git T}$.  

Assume that ${\mathfrak s}_{T'}^\en$ has been defined for all $T'$ with $c(T')< c(T)$.  Then the multisection ${\mathfrak s}_T^\en$ on $\E_T^\en\to \V_T^\en$ is the melding of the following multisections with respect to
$$\nl_T= (\nl_e)_{\{e\in G(T)\}}: \V_T^\en\to [\mathcal{L}-\varepsilon''-\varepsilon''',\infty)^{|G(T)|}$$
and the collections $\{R_{T,S,j}\}$ and $\{R'_{T,S,j}\}$, $j=1,2$, from Section~\ref{subsubsection: defn of sT}:
\begin{enumerate}
    \labitem{(R1)}{R1} On $\nl_T^{-1}(R'_{T,S,1})$, take the stabilization $\op{stab}_{T\git S,T}({\mathfrak s}_{T\git S}^\en)$ of ${\mathfrak s}_{T\git S}^\en$.
    \labitem{(R2')}{R2'} On $\nl_T^{-1}(R'_{T,S,2})$, where $T\git S$ does not satisfy ($\dagger \dagger$), take the restriction of $(\widetilde {\mathfrak G}_{T,S}^\en)_* \mathfrak s^\en_{T,S}:=  \widetilde{\mathfrak G}_{T,S}^\en \circ\mathfrak s^\en_{T,S}\circ (\mathfrak G_{T,S}^\en)^{-1}$, where 
$$\mathfrak s^\en_{T,S}=(\mathfrak s_{\widetilde S_j^\star}^\en)_j\times (\mathfrak s_{i}^\en)_{i\in V(T)-V(S)}.$$
    \labitem{(R3')}{R3'}  On $\nl_T^{-1}(R'_{T,S,2})$, where $T\git S$ satisfies ($\dagger \dagger$), take an arbitrary multisection satisfying invariance under marker rotations and puncture reorderings.
\end{enumerate}
{\coblu When melding the multisections, we need to choose a consistent collection of branches that agree on the overlaps as we approach the ``boundary".}
\end{defn}

Observe that, in \ref{R2'}, we have $\mathcal{L}+\varepsilon''+\varepsilon'''>\nl_e > \mathcal{L}$ for all $e\in G(S)$, which in turn implies that $\nl_e$ is bounded above for all $e\in G(cl(S))$. 

The analog of Lemma~\ref{lemma: overlaps} also holds in the $\op{(Cob)}$ case.

Finally, the definition of $\mathcal{Z}(\mathscr{K}(\mathcal{M}_i),\mathfrak S)$ is as given in Equation~\eqref{eqn: defn of Z sub i} and Proposition~\ref{prop: tomatoes} holds for $\mathcal{M}_i$ of $\op{(Cob)}$ type after replacing $\mathcal{M}_i/\R$ and $\op{vdim} \mathcal{M}_i/\R$ by $\mathcal{M}_i$ and $\op{vdim}\mathcal{M}_i$ and taking one of  $\mathcal{Z}_{j(T)}$ or $\mathcal{Z}_{k(T)}$ to be of $\op{(Cob)}$ type and the other to be of $\op{(Symp)}$ type.


\cb





\subsection{Chain homotopy} \label{subsection: chain homotopy case}

Let $X$ be a compact manifold with corners and let $\{(\widehat W^\tau=\widehat W,\widehat \alpha^\tau)\}_{\tau\in X}$ be a family of completed exact symplectic cobordisms from $(M_+,\alpha_+)$ to $(M_-,\alpha_-)$. For each $\tau \in X$, let $J^\tau$ be an almost complex structure on $\widehat W^\tau$ that restricts to $J_\pm$ at the positive/negative ends of $\widehat W^\tau$. We assume that $(\widehat \alpha^\tau, J^\tau)$ is an $(L_+,L_-)$-simple pair, $J^\tau$ is $(L_+,L_-)$-end-generic. In the special case of interest $X=[0,1]$, we additionally assume that $(\widehat \alpha^\tau, J^\tau)$ is independent of $\tau$ on each of $\tau\in[0,\epsilon]$ and $\tau\in[1-\epsilon,1]$, where $\epsilon>0$ is small.

We use the ordering of moduli spaces from Section~\ref{subsubsection: mixed curves}.   Let $\mathcal{M}_i^\tau$ be $\mathcal{M}_i$ with respect to $J^\tau$, let
$$\mathcal{M}_i^{X}=\{[\F,u] \in  \mathcal{M}_i^{\tau}~|~\tau\in X\},$$
and \cb let $\Pi_i:\mathcal{M}_i^{X}\to X$ \cb be the projection that sends $\mathcal{M}_i^\tau$ to $\tau$.  We use $\varepsilon>0$ sufficiently small and $\ell>0$ sufficiently large so that Theorem~\ref{thm: construction of L-transverse subbundle} holds simultaneously for all $J^\tau$, $\tau\in X$.

\subsubsection{Semi-global Kuranishi structure in the $\op{(Par)}$ case} \label{subsubsection: semi-global Kuranishi Par case}

\nom[Kurnom]{$\mathscr{K}^{(L_+,L_-)}(\{(\widehat \alpha^\tau,J^\tau)\}_{\tau\in X})$}{Semi-global Kuranishi structure in $\op{(Par)}$ case, parametrized by $X$}
The semi-global Kuranishi structure $\mathscr{K}^X=\mathscr{K}^{(L_+,L_-)}(\{(\widehat \alpha^\tau,J^\tau)\}_{\tau\in X})$ {\em of para\-me\-trized $\op{(Par)}$ type} is the parame\-trized version of a semi-global Kuranishi structure of $\op{(Cob)}$ type.
If $T$ is a CH tree of $\op{(Cob)}$ type, then let
$$\pi_T^{X}: \E_T^{X}\to \V_T^{X}$$
be the parametric version of the obstruction bundle $\pi_T^{\tau}:\E_T^{\tau}\to \V_T^{\tau}$ corresponding to $\tau$; the rest of the definition including the slight enlargements as given in Sections~\ref{subsubsection: Kuranishi charts in cob case}, \ref{subsubsection: gluing maps, cob case}, and \ref{subsubsection: semi-global Kuranishi str Cob case} can be done parametrically. In particular, the $\overline\bdry$-section is the parametric version of $\overline\bdry_{J^\tau}$. When $X=[0,1]$, we assume that $\pi_T^{[0,1]}$ satisfies the following conditions:
\begin{enumerate}
\item[($\tau_1$)] it is independent of $\tau$ on $[0,\epsilon]$ and on $[1-\epsilon,1]$; and
\item[($\tau_2$)] it agrees with $\pi_T^{\tau=0}$ and $\pi_T^{\tau=1}$, which we assume have already been constructed for the chain maps corresponding to $J^{\tau=0}$ and $J^{\tau=1}$.

\end{enumerate}

For the boundary structure we have the parametrized version of \ref{SC1prime},
with the modification of replacing the product of charts by fiber product.
More precisely, we explain the modification using the case when $T$ is a CH tree of $\op{(Cob)}$ type with one $\op{(Symp)}$ vertex $i_3$, two $\op{(Cob)}$ vertices $i_1$ and $i_2$, and two gluing edges $e_1$ and $e_2.$
Let $\arr T = i_1 \sqcup i_2 \sqcup i_3$ be a degeneration of $T$.
The ``pregluable" set $\W_{\arr T}^{[0,1]}$ is diffeomorphic to 
$$\V^{[0,1]}_{\{i_1,i_2\}} \times \V_{i_3} \times [R,\infty),$$
where $$\V^{[0,1]}_{\{i_1,i_2\}} = \V_{i_1}^{[0,1]} {}_{\pi_{i_1}^{[0,1]}}\times_{\pi_{i_2}^{[0,1]}} \V_{i_2}^{[0,1]},$$
and $R> 0$ is some constant. 
We denote the diffeomorphism by $\Psi.$
Note that by Theorem~\ref{thm: construction of L-transverse subbundle}, the projection $\pi_{i_k}^{[0,1]}: \V_{i_k}^{[0,1]} \to [0,1]$ is submersive for $k\in \{1,2\},$ and hence $\V^{[0,1]}_{\{i_1,i_2\}}$ is smooth.

\cb

\subsubsection{Definition of the multisection $\mathfrak{S}$} \label{subsubsection: defn of multisection Par case}

The definition of the multisection $\mathfrak{S}=\{ \mathfrak s_T^{[0,1]}: \V_T^{[0,1]}\to \E_T^{[0,1]} \}$ of  $\mathscr{K}^{[0,1]}$
will follow a parametric version of Definition~\ref{defn: s for cobordisms} but will require a slight twist in order to achieve sufficient transversality. We continue using the example of Section~\ref{subsubsection: semi-global Kuranishi Par case} to explain the twist.
Consider the gluing maps $\widetilde{\mathfrak G'}_{\arr T}^{[0,1]}, \mathfrak {G'}_{\arr T}^{[0,1]}$ obtained from precomposing the parametrized version of $\widetilde{\mathfrak G}_{\arr T}^\en, \mathfrak G_{\arr T}^\en$ in \ref{SC1prime} with $\Psi^{-1}$ and dropping the notation $\en.$
We have the following commutative diagram
$$
\begin{tikzcd}
\E^{[0,1]}_{\{i_1,i_2\}} \times \E_{i_3} \arrow[r,"\widetilde{\mathfrak{G}'}^{[0,1]}_{\arr{T}}"] \arrow[d] & \E_T \arrow[d]\\
\V^{[0,1]}_{\{i_1,i_2\}} \times \V_{i_3} \times [R,\infty)   \arrow[r, "\mathfrak{G'}_{\arr{T}}^{[0,1]}"] &  \V_T.
\end{tikzcd}
$$
We denote by $\mathfrak s_{\{i_1,i_2\}}^{[0,1]}$ the multisection of $\E_{\{i_1,i_2\}}^{[0,1]} \to \V_{\{i_1,i_2\}}^{[0,1]}.$
As usual, we require 
\be
    \item $$\widetilde{\mathfrak G'}_{\arr{T}} \circ (\mathfrak s_{\{i_1,i_2\}}^{[0,1]}, \mathfrak s_{i_3}) \circ (\mathfrak G'_{\arr{T}})^{-1}\quad \mbox{and} \quad \mathfrak s_T$$ 
are $C^1$-close and their difference goes to zero as $I \in [R,\infty) \to \infty$,
    \item multisection $\mathfrak s_{\{i_1,i_2\}}^{[0,1]} = \mathfrak s_{i_1}^{[0,1]} \times \mathfrak s_{i_2}^{[0,1]} |_{\V^{[0,1]}_{\{i_1,i_2\}}}$ 
when $\tau \in [0,\epsilon]\cup [1-\epsilon, 1],$
and 
    \item $\mathfrak s_{\{i_1,i_2\}}^{[0,1]}$ is a transversal (to $\overline \partial$) section (not necessarily of product type).

\ee 

Note that one can choose a generic pair $(\mathfrak s_{i_1}^{[0,1]}, \mathfrak s_{i_2}^{[0,1]})$ so that $\mathfrak s_{\{i_1,i_2\}}^{[0,1]}$ is of product-type, i.e., $\mathfrak s_{\{i_1,i_2\}}^{[0,1]} = \mathfrak s_{i_1}^{[0,1]} \times \mathfrak s_{i_2}^{[0,1]} |_{\V^{[0,1]}_{\{i_1,i_2\}}}$ and $\mathfrak s_{\{i_1,i_2\}}^{[0,1]}$ is transversal. But then the pair $(\mathfrak s_{i_1}^{[0,1]}, \mathfrak s_{\{i_1,i_2\}}^{[0,1]})$ is not a generic pair, which results in trouble in the construction of semi-global Kuranishi structures in higher energy strata.
Our choice of relaxing the multisection to be non-product type makes the construction of multisection in the $\op{(Cob)}$ case carry over to the $\op{(Par)}$ case straightforwardly,
but it is also  the reason why the chain homotopy map $K$ defined later is only a linear map.

\cb
The definition of the weighted branched submanifold $\mathcal{Z}^{[0,1]}(\mathscr{K}(\mathcal{M}_i),\mathfrak S)$ is as given in Equation~\eqref{eqn: defn of Z sub i} and Proposition~\ref{prop: tomatoes} holds for $\mathcal{M}_i^{[0,1]}$ of $\op{(Cob)}$ type after replacing $\mathcal{M}_i/\R$ and $\op{vdim} \mathcal{M}_i/\R$ by $\mathcal{M}_i^{[0,1]}$ and $\op{vdim}\mathcal{M}_i^{[0,1]}= \op{vdim} \mathcal{M}_i +1$  and taking one of  $\mathcal{Z}_{j(T)}$ or $\mathcal{Z}_{k(T)}$ to be of $\op{(Par)}$ type and the other to be of $\op{(Symp)}$ type.

\section{Contact homology} \label{section: contact homology}

\subsection{The contact homology dga} \label{chain complex}

\subsubsection{Definitions} \label{subsection: contact homology dga definitions}

Let $(M^{2n+1},\xi)$ be a closed contact manifold. Given $L>0$, let $\alpha$ be a contact form for $\xi$ and $J$ be an almost complex structure on $\mathbb R \times M$ such that $(\alpha,J)$ is an $L$-simple pair.  For the above data we constructed a semi-global Kuranishi structure 
$$\Kur^L(\alpha,J)=\{ \Kur^L(\alpha, J; \gamma_+; \bs \gamma_-; A)~|~ \mathcal A_\alpha ( \gamma_+),\mathcal A_\alpha(\bs \gamma_-)\leq L, A\in H_2(M;\Z)  \},$$
in Section~\ref{section: construction of semi-global}, together with a multisection $\mathfrak S = \{{\mathfrak s}_T\}_T$ that is $C^1$-close to the $0$-section and such that $\overline\bdry_T$ is transverse to $\mathfrak s_T$ for all $T$; see Section~\ref{subsubsection: grading} for how to assign $A\in H_2(M;\Z)$ to $[\F,u]$. Let $\mathcal Z(\Kur^L(\alpha, J;\gamma_+; \bs \gamma_-;A), \mathfrak S)$ be the weighted branched manifold for the full subcategory $\Kur^L(\alpha, J;\gamma_+; \bs \gamma_-;A)$ of $\Kur^L(\alpha,J)$ and the multisection $\mathfrak S$.

Let
$$\mathfrak A^L=\mathfrak A^L(M,\alpha, J, \mathscr{K}^L(\alpha,J),\mathfrak S,  \mathcal{S}^{L}_{\alpha})$$
be the unital graded commutative algebra freely generated by $\mathcal P_\alpha^{L,\op{good}}$ over the group algebra $\Q[H_2(M;\Z)]$.  The generators of the group algebra $\Q[H_2(M;\Z)]$ will be written as $e^A$, where $A\in H_2(M;\Z)$.  In Section~\ref{subsubsection: grading} we explain how to assign a $\Q$-grading $|\cdot |$ to each Reeb orbit in $\mathcal P_\alpha^{L}$ and to each $e^A$, $A\in H_2(M;\Z)$, and give a definition of $\mathcal{S}^L_\alpha$.

\nom[A]{$\mathfrak A^L=\mathfrak A^L(M,\alpha, J, \mathscr{K}^L(\alpha,J),\mathfrak S,   \mathcal{S}^{L}_{\alpha})$}{Contact homology unital graded commutative differential algebra}

We also define a $\Z/2$-grading $|\cdot|_0$ on $\mathcal{P}_\alpha^L$:  Given $\gamma\in  \mathcal{P}_\alpha^L$, we set
$$|\gamma|_0\equiv \mu_\tau(\gamma)+n-3 \mbox{ mod } 2,$$
where $\mu_\tau(\gamma)$ is the Conley-Zehnder index with respect to the trivialization $\tau$.  Observe that the parity of $\mu_\tau(\gamma)$ does not depend on the choice of $\tau$.

\begin{defn}
An ordered tuple of Reeb orbits $\bs \gamma$ is {\em good} if all of its entries are good Reeb orbits. Otherwise, $\bs\gamma$ is {\em bad}.
\end{defn}

\nom[HC]{$HC(M,\alpha, J, \Kur^L(\alpha,J), \mathfrak S, \mathcal{S}^L_\alpha)$}{Contact homology algebra for the data $(M,\alpha, J, \Kur^L(\alpha,J), \mathfrak S, \mathcal{S}^L_\alpha)$}
Next we define the differential $\partial: \mathfrak A^L \to \mathfrak A^L$. This makes $\mathfrak A^L$ into a dga and we denote its homology by $HC(M,\alpha, J, \Kur^L(\alpha,J), \mathfrak S, \mathcal{S}^L_\alpha)$ or $HC^L$. Fix an ordering $\vartheta$ of $\mathcal P^{L}_\alpha$ by the action as usual.  We then set
 
\begin{equation} \label{eqn: partial}
\partial \gamma_+=\sum_{\bs\gamma_-}\sum_{A\in H_2(M;\Z)} \frac{d_{\gamma_+,\bs \gamma_-,A}}{m_{\bs \gamma_-}} e^A  \gamma_{-,k}^{i_k} \dots \gamma_{-,1}^{i_1},
\end{equation}
\cb
where the sums are taken over all good $\vartheta$-sorted $\bs \gamma_-$ and $A\in H_2(M;\Z)$ with  $|\gamma_+|-(|e^A|+|\bs \gamma_-|)=1$.  A sorted $\bs\gamma_-$ is written as
\begin{equation}\label{eqn: gamma-}
\bs \gamma_-=(\underbrace{\gamma_{-,1},\dots, \gamma_{-,1}}_{i_{1} \text{ copies }},\dots, \underbrace{\gamma_{-,k},\dots, \gamma_{-,k}}_{i_{k} \text{ copies }}),
\end{equation}
$d_{\gamma_+,\bs \gamma_-, A}$ is the weighted signed count of elements in $\mathcal Z(\Kur^L(\alpha, J;\gamma_+; \bs \gamma_-;A), \mathfrak S)$, \cb and
$$m_{\bs \gamma_-}:=\Pi_{j=1}^k ( i_j ! m(\gamma_{-,j})^{i_j}).$$
(If $|\gamma_+|-(|e^A|+|\bs \gamma_-|)\neq 1$, then we set $d_{\gamma_+,\bs \gamma_-,A}=0$.) In Section~\ref{subsubsection: grading} we explain how to define the class $A$ corresponding to an approximate $J$-holomorphic map $u$ from $\gamma_+$ to $\bs\gamma_-$ and in Section~\ref{subsubsection: orientations} we explain how to orient the moduli spaces. Then we extend $\partial$ to $\mathfrak A^L$ using the graded Leibniz rule, namely,
$$\partial (\gamma \gamma')=(\partial \gamma) \gamma'+(-1)^{|\gamma|_0}\gamma (\partial  \gamma').$$

{\em Sometimes we write an ordered tuple of Reeb orbits multiplicatively. Moreover, when there is no confusion, we do not distinguish an ordered tuple of Reeb orbits from the monomial associated to it in $\mathfrak A^L$.}  

If $\bs\gamma_- = \gamma_{-,1}^{i_1} \dots  \gamma_{-,k}^{i_k}$, then we denote the order-reversed monomial by $\bs\gamma_- ^\rev = \gamma_{-,k}^{i_k} \dots \gamma_{-,1}^{i_1}.$

\begin{rmk}
In \eqref{eqn: partial}, we use $\bs\gamma_-^\rev$ instead of $\bs\gamma_-$. This is due to the fact that we use the coherent orientation from \cite{BM}; see the proof of Proposition~\ref{dga}. \footnote{\coblu We later find out that this is not necessary if one uses the choice of coherent orientation from \cite{HT2}. \cb }
\end{rmk} 
\cb

\subsubsection{Grading and $\Q[H_2(M;\Z)]$-coefficients} \label{subsubsection: grading}

We follow the discussion in \cite{Bo}.  Let us write $H_1(M;\Z)= F\oplus T$, where $F$ is the free part and $T$ is the torsion part.  Pick representatives $C_1,\dots, C_a$ of a basis for $F$ and representatives $D_1,\dots, D_b$ of a minimal generating set for $T$, and fix a trivialization $\tau$ for $\xi$ along each $C_i$ and $D_i$. Given $\gamma\in \mathcal{P}^L_\alpha$, let us write $[\gamma]=[\sum c_i C_i +\sum d_i D_i]\in H_1(M;\Z)$, where $d_i$ are the smallest nonnegative coefficients for the $D_i$.  Then choose a surface $S_\gamma$ such that $\bdry S_\gamma=\gamma-\sum_i c_i C_i -\sum_i d_i D_i$ and use $S_\gamma$ to extend the trivializations $\tau$ along ${C_i}$ and ${D_i}$ to all of $S_\gamma$ and hence to $\gamma$.  If $[D_i]$ has order $m_i$, then choose a spanning surface $S_{m_i D_i}$ for $m_i D_i$. This gives a trivialization $\tau'$ along $m_i D_i$ and we let $w_i$ be the rotation number of $\tau$ with respect to $\tau'$ along $m_i D_i$.  We then set
$$|\gamma|= \mu_\tau(\gamma)+2\sum_{i=1}^b\frac{w_i d_i}{m_i} + n-3\in \Q ,$$
where $\mu_\tau$ is the Conley-Zehnder index with respect to $\tau$.  We also set
$$|e^A|=-2\langle c_1(\xi),A\rangle.$$

The collection
$$\mathcal{S}^L_\alpha=\{S_\gamma\}_{\gamma\in \mathcal{P}^L_\alpha}\cup \{S_{m_i D_i}\}_{i=1,\dots, b}$$
will be called a {\em complete set of trivializing surfaces for $\mathcal{P}^L_\alpha$}.
\nom[Surface]{$\mathcal{S}^L_\alpha$}{Complete set of trivializing surfaces}

Given a map $u$ from $\gamma$ to $\bs\gamma_-$, we can cap off the projection $\pi_M\circ u$ of $u$ to $M$ along $\gamma$ by $S_\gamma$ and along $\gamma_{-,j}$ by $S_{\gamma_{-,j}}$ and further cap off any extraneous $m_i D_i$ by $S_{m_i D_i}$, which gives a closed surface $A$. Note that if
$$\bdry S_\gamma=\gamma-\sum_i c_i C_i -\sum_i d_i D_i,$$
where the $d_i$ are the smallest nonnegative coefficients, then
$$\sum_j \bdry S_{\gamma_{-,j}}=\sum_j\gamma_{-,j} -\sum_i c_i C_i -\sum_i d_i D_i + \zeta,$$
where $\zeta$ is a sum of $m_i D_i$.


\subsubsection{Orientations} \label{subsubsection: orientations}

Following the orientation conventions of \cite{BM}, we explain how to assign a sign to
$$\llbracket \F, u \rrbracket\in \mathcal Z(\Kur^L(\alpha, J;\gamma_+; \bs \gamma_-;A), \mathfrak S).$$ 

\s\n
{\em Step 1.}  For any $(\F, u)\in \mathcal{B}_{\widetilde{\mathcal U}}$, let $L_{(\F,u)}$ be the full linearized $\overline\bdry_J$-operator and
$$\op{det}L_{(\F, u)}=\wedge^{\op{top}}\ker L_{(\F, u)} \otimes \wedge^{\op{top}}(\op{coker}L_{(\F, u)})^*$$
be its determinant line. 

An {\em orientation} $\mathfrak o(\F, u)$ of $(\F,u)$ is an orientation $\mathfrak o (\op{det}L_{(\F, u)})$ of $\op{det}L_{(\F, u)}$, i.e., an equivalence class of nonzero vectors of $\op{det}L_{(\F, u)}$, where the equivalence relation is $v\sim v'$ if there exists $c>0$ such that $v=cv'$.

According to \cite{BM}, an orientation $\mathfrak o(\F,u)$ can be chosen in a continuous and coherent way for all maps $(\F,u)$ in all the $\mathcal{B}_{\widetilde{\mathcal U}}$, where ``coherent" means:
\begin{enumerate} [label = (O\arabic*)]

\item \label{O1} if we exchange the $i$th and $(i+1)$st negative punctures of $(\F,u)$, then the sign changes by $(-1)^{|\gamma_{-,i}|_0\cdot|\gamma_{-,i+1}|_0}$, where $u$ is asymptotic to $\gamma_{-,j}$ near the $j$th negative puncture $p_{-,j}$ (\cite[Theorem 2]{BM});
\item \label{O2} the gluing/pregluing and disjoint union maps preserve orientations up to a specific sign change which arises from reordering the punctures; in particular, if $(\mathcal{F}_2,u_2)$ above is glued/preglued to $(\mathcal{F}_1,u_1)$ below along the last puncture of $u_2$, and the negative ends of the (pre-)glued curve are ordered by using the ordering for $u_2$ first, followed by the ordering of $u_1$, then there is no sign correction (\cite[Corollary 10]{BM});
\item \label{O3} if $u$ is asymptotic to $\gamma$ near the  puncture $p$ and $m(\gamma)>1$, then cyclically rotating the asymptotic marker at $p$ through an angle of $\frac{2\pi}{m(\gamma)}$ preserves the orientation if and only if $\gamma$ is good (\cite[Theorem 3]{BM});
\item \label{O4} precomposing with an automorphism of the domain which preserves the punctures and asymptotic markers preserves the orientation  (\cite[Proposition 11]{BM}).
\end{enumerate}

\begin{rmk}
A coherent orientation exists for {\em all maps $(\mathcal{F},u)$ in all the $\mathcal{B}_{\widetilde{\mathcal{U}}}$} and results of \cite{BM}, namely Theorems 2 and 3 and Corollaries 10 and 11, while stated only for moduli spaces of $J$-holomorphic maps, easily generalize to the case of semi-global Kuranishi structures.
\end{rmk}

\s\n  
{\em Step 2.} 
We explain how the above choices of $\mathfrak{o}(\F,u)$ induce an orientation on the total space of $\pi:\E\to \V$ of a semi-global Kuranishi chart. We treat the slightly easier $\op{(Cob)}$ case; the modifications in the $\op{(Symp)}$ case can be done using \eqref{eqn: R factor} below. Given $(\F, u)\in  V$, let $L_{(\mathcal{F},u)}$ be the full linearized $\overline \partial_J$-operator at $(\mathcal{F},u)$ as before and $L_{(\mathcal{F},u)}^E : T_{(\mathcal{F},u)}V\to E_{(\mathcal{F},u)}$ its restriction to $T_{(\F,u)}V=L^{-1}_{(\F,u)}(E_{(\F,u)})$. By \cite[Section 8.1.1]{FO3}, the orientation $\mathfrak o(\op{det} L_{(\F,u)})$ induces an orientation $\mathfrak o(\op{det} L_{(\F, u)}^E)$ and, equivalently, an orientation ${\mathfrak o}(E)$ on the total space of $E$.  The group action $\Gamma$ on $E$ preserves ${\mathfrak o}(E)$ by (O4).  Hence the orientation descends to the total space $\E$.

\s\n 
{\em Step 3.} We explain how to orient $\mathcal Z = \mathcal{Z}(\Kur^L(\alpha,J; \gamma_+; \bs\gamma_-;A), \mathfrak S)$: Let us write $\R\times \mathcal{Z}$ for the set of $[\F,u]$ such that $\llbracket F,u\rrbracket\in \mathcal{Z}$.  Let ${\mathfrak s}=[s_1,\dots,s_m]$ be a liftable multisection of $(E\to V,\Gamma)$ that is transverse to $\overline \partial _J$.  For $(\F, u)\in \overline \partial_J^{-1}(s_i)$, $i \in \{1,\dots,m\}$, the tangent space $T_{(\F,u)} (\R\times\mathcal{Z})$, before quotienting by domain automorphisms, can be identified with the kernel of the map 
$$L_{(\F,u)}^E - Ds_i(\F,u): T_{(\mathcal{F},u)}V\to E_{(\mathcal{F},u)},$$ 
where $Ds_i(\F,u)$ is the derivative of $s_i$ in the fiber direction (projected using some connection) and $L_{(\F,u)}^E - Ds_i(\F,u)$ is surjective by the tranversality of $\overline\bdry_J$ and $s_i$. Hence we orient $\R\times\mathcal{Z}$ using 
$$\mathfrak{o}(L_{(\F,u)}^E - Ds_i(\F,u))\simeq \mathfrak{o}(L_{(\F,u)}^E).$$

Finally we orient $\mathcal Z$ at $\llbracket \F,u\rrbracket \in \mathcal Z$ using the canonical isomorphism 
\begin{equation} \label{eqn: R factor}
    \R \otimes \wedge^{\text{top}} (T_{\llbracket \F,u\rrbracket} \mathcal Z)  \cong  \wedge^{\text{top}} (T_{(\mathcal F, u)}(\R\times \mathcal{Z}) ) \cong  \wedge^{\text{top}} (T_{(\mathcal F, u)}\overline \partial_J^{-1}(s_i))
\end{equation}
and a choice of orientation of $\R$.
This definition of $\mathfrak o(\mathcal Z)$ is well-defined by \ref{O4}, and \ref{O1} and \ref{O3} translate to $\mathfrak o(\mathcal Z)$ in a straightforward manner. The following is the reformulation of \ref{O2} in this setting:
\begin{enumerate} [label = (O2')]
    \item \label{O2'}  Consider the gluing map $\mathcal Z_2 \times \mathcal Z_1 \to \bdry^{\mathcal L}\mathcal Z_3$ as in Proposition~\ref{prop: tomatoes}, where $\mathcal{Z}_1$ and $\mathcal{Z}_2$ are $0$-dimensional, representatives $(\F_2,u_2)$ and $(\F_1,u_1)$ of elements of $\mathcal Z_2$ above and $\mathcal Z_1$ below are glued along the last negative puncture of $(\F_2,u_2)$, and the ends of the glued curve are ordered by using the ordering for $u_2$ first, followed by the ordering of $u_1$.  Then the gluing map maps the product orientation of $\mathcal Z_2 \times \mathcal Z_1$ to the boundary orientation of $\bdry^{\mathcal L}\mathcal Z_3$.
\end{enumerate}
This can be seen by observing that before quotienting out the $\R$-translation, if we translate representatives $(\F_2,u_2)$ and $(\F_1,u_1)$ of $\mathcal Z_2$ and $\mathcal Z_1$ in the same direction, then the glued curve also translates in that direction, whereas if we translate the representatives in the opposite directions, say towards each other, then the glued curve moves 
toward the interior of $\mathcal Z_3$; see \cite[Section 9.1.2]{BH} for a similar calculation.

\cb

\subsubsection{Examples}

To illustrate the definition of the differential, we consider two examples (here we use $\Q$-coefficients): 

1. Suppose that $\gamma_+, \gamma_{-,1}, \gamma_{-,2}\in \mathcal P_\alpha^{L,\op{good}}$ and that $\gamma_{-,1}, \gamma_{-,2}$ are simple.
Also suppose that the $\vartheta$-sorted moduli space
$$\mathcal M:= \mathcal M (\gamma_+;\underbrace{\gamma_{-,1},\dots,\gamma_{-,1}}_{3 \text{ copies}}, \underbrace{\gamma_{-,2},\dots,\gamma_{-,2}}_{5 \text{ copies}})$$
is regular and that $\llbracket\F,u\rrbracket\in \mathcal M/\mathbb R$ is simple and is counted as $+1$ with respect to the coherent orientation.
Then by relabeling the punctures in the domain of $u$ we get $3!5!$ elements in $ \mathcal M /\mathbb R$ and $3!5!$ elements in
$$\mathcal M(\gamma_+;\underbrace{\gamma_{-,2},\dots,\gamma_{-,2}}_{5 \text{ copies}}, \underbrace{\gamma_{-,1},\dots,\gamma_{-,1}}_{3 \text{ copies}})/\mathbb R;$$
the latter moduli space will not be counted since it is not $\vartheta$-sorted. 
Depending on whether $|\gamma_{-,1}|_0$ and $|\gamma_{-,2}|_0$ are both odd, the total contribution to $\partial \gamma_+$ from the image of $u$ (modulo $\R$-translations) is either zero or $\gamma_{-,1}^3\gamma_{-,2}^5$.

2. Suppose that $\gamma_+,\gamma'_+,\gamma_- \in \mathcal P_\alpha^{L,\op{good}}$ and that $\gamma_-$ is simple. Also suppose that $\mathcal M(\gamma_+;\gamma_-,\gamma_-)$ is regular and that $[\F, u]\in \mathcal M(\gamma_+;\gamma_-,\gamma_-)$ is a  branched double cover of a simple curve $[\F', u']\in \mathcal M(\gamma'_+;\gamma_-)$. Since reordering the negative punctures of $[\F,u]$ gives the same curve but changes the sign by $(-1)^{|\gamma_-|_0\cdot |\gamma_-|_0}$, the existence of the coherent orientation system implies that $|\gamma_-|_0$ must be even and that $\llbracket\F, u\rrbracket$ contributes $\pm 1$ (depending on the coherent orientation) to $\mathcal M(\gamma_+;\gamma_-,\gamma_-)/\mathbb R$ and contributes $\pm \frac{1}{2} \gamma_-^2$ to $\partial \gamma_+$.


\subsubsection{$\bdry^2=0$} \label{subsubsection: d squared is zero}

\begin{prop} \label{dga}
$\partial$ is well-defined and $\mathfrak A^L $ is a dga with differential given by $\partial$.
\end{prop}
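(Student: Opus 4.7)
The proof has three components: (i) well-definedness of $\partial$ on generators, (ii) the extension to $\mathfrak A^L$ via the graded Leibniz rule (which amounts to vanishing on bad orbits), and (iii) the chain equation $\partial^2=0$.

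For well-definedness I would first note that the action bound $\mathcal{A}_\alpha(\bs\gamma_-)\leq \mathcal{A}_\alpha(\gamma_+)\leq L$, nondegeneracy of $\alpha$, and integrality of $\langle c_1(\xi),\cdot\rangle$ together restrict the sum in Equation~\eqref{eqn: partial} to finitely many triples $(\bs\gamma_-,A)$. For each such triple the realization $\mathcal Z(\Kur^L(\alpha,J;\gamma_+;\bs\gamma_-))_A$ is a compact weighted branched $0$-manifold: compactness follows because $\overline\bdry_J^{-1}(\frak s_T)$ stays away from $\bdry\V_T$ (Section~\ref{subsection: semi-global Kuranishi chart}), transversality is built into the multisection perturbation (Lemma~\ref{lemma: multisection perturbation lemma}), and coherent orientations are supplied by Section~\ref{subsubsection: orientations}. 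Hence $d_{\gamma_+,\bs\gamma_-,A}\in\Q$ is well-defined.

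To justify restricting the sum to good $\bs\gamma_-$: if some $\gamma_{-,i}$ is bad, property (O3) of Section~\ref{subsubsection: orientations} implies that the $\Z/m(\gamma_{-,i})$-action by cyclic rotation of the asymptotic marker at the $i$th negative puncture is orientation-reversing and fixes the underlying curves, forcing the signed weighted count to vanish. Property (O1) then supplies exactly the Koszul sign required for the graded Leibniz extension to be consistent with the $\Z/2$-grading $|\cdot|_0$, so $\partial$ extends unambiguously to a derivation on $\mathfrak A^L$.

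The main work is $\partial^2=0$. Fix $\gamma_+$, a sorted good tuple $\bs\gamma_-$, and $A\in H_2(M;\Z)$ with $|\gamma_+|-|e^A|-|\bs\gamma_-|=2$, and consider the weighted branched oriented $1$-manifold $\mathcal Z(\Kur^L(\alpha,J;\gamma_+;\bs\gamma_-))_A$ coming from the index-$2$ semi-global Kuranishi structure. It is compact by SFT compactness, Gromov-Hofer convergence, and the Kuranishi construction, and its boundary receives contributions only from $2$-vertex CH trees $T=(\{i_1,i_2\},\{e=(i_2,i_1)\})$, corresponding to the strata where an intermediate sorted tuple $\bs\gamma$ and a splitting $A=A_1+A_2$ appear. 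The product formula of Equation~\eqref{bilochun}, combined with the gluing diffeomorphism of Theorem~\ref{thm: gluing} and the pushforward identification of Diagram~\eqref{diagram for pushing forward}, identifies the boundary stratum $\bdry_T\mathcal Z$ (reached as $\nl_e\to\infty$) with the oriented product
\[
\mathcal Z(\Kur^L(\gamma_+;\bs\gamma))_{A_1}\times \mathcal Z(\Kur^L(\bs\gamma;\bs\gamma_-))_{A_2}
\]
up to a combinatorial factor. The cobordism principle for oriented weighted branched $1$-manifolds then yields $\sum_T \#\bdry_T\mathcal Z=0$, which upon reading off the coefficient of $\gamma_{-,1}^{i_1}\cdots\gamma_{-,k}^{i_k}$ is exactly the $\partial^2\gamma_+=0$ statement.

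The main obstacle is the combinatorial bookkeeping. Summing over CH trees $T$ (equivalently over intermediate $\bs\gamma$), over reorderings of the negative punctures of the upper level into the $\vartheta$-sorted order for the glued curve, over the automorphisms in $\op{Aut}(T)$, and over the multiplicities $m(\gamma)$ of intermediate good orbits (tied to the $m(\gamma)$-fold choice of asymptotic marker via (O3)) must exactly reproduce the coefficient $\frac{d_{\gamma_+,\bs\gamma,A_1}}{m_{\bs\gamma}}\cdot\frac{d_{\bs\gamma,\bs\gamma_-,A_2}}{m_{\bs\gamma_-}/m_{\bs\gamma}}$ predicted by applying the graded Leibniz rule to $\partial\partial\gamma_+$, together with the correct Koszul sign. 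This is the standard SFT bookkeeping carried out in detail by Bourgeois~\cite{Bo}; I would invoke that computation together with the gluing sign of \cite[Corollary~10]{BM} and properties (O1)--(O4) to complete the identification, finishing the proof.
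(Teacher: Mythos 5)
Your overall strategy is the same as the paper's: well-definedness from compactness and orientability of the $0$-dimensional realizations $\mathcal Z(\Kur^L(\alpha,J;\gamma_+;\bs\gamma_-))$, vanishing on bad orbits via (O3), and $\partial^2=0$ from counting boundary points of the $1$-dimensional realizations, with the index argument forcing exactly two nontrivial components in each boundary building. That skeleton matches the paper's proof step for step.

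The gap is that the combinatorial bookkeeping you defer to Bourgeois~\cite{Bo} is not a citable black box here --- it is the actual content of the paper's proof. The paper writes out the precise algebraic identity for $m_{\bs\gamma}\langle\partial^2\gamma_+,\bs\gamma\rangle$ as a sum over sorted subtuples $\bs\beta\subseteq\bs\gamma$, an index $k$, and an insertion position $q$, and then constructs explicit maps
$G_{\bs\beta,k,q}:\mathcal Z(\Kur(\gamma_k;\bs\beta))\times\mathcal Z(\Kur(\gamma_+;(\bs\gamma-\bs\beta)\sqcup\gamma_k))\times O\to\bdry T(\mathcal Z(\Kur(\gamma_+;\bs\gamma)))$
whose source carries a set $O$ of admissible reorderings with $|O|=\binom{c_1}{b_1}\cdots\binom{c_m}{b_m}$. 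Three cancellations specific to this setup must then be verified: (i) the images $\op{Im}G_{\bs\beta,k,q}$ coincide for all $q\in\{0,\dots,c_k-b_k\}$, which cancels the factor $c_k-b_k+1$ hidden in $m_{(\bs\gamma-\bs\beta)\sqcup\gamma_k}$; (ii) the $m(\gamma_k)$ simultaneous rotations of the asymptotic markers on the two glued ends produce the same boundary point, canceling the $m(\gamma_k)$ in the denominator; and (iii) the sign of $(\F'_o,u')$ versus $(\F',u')$ is exactly $(-1)^{a+b}$ by (O1)--(O2), with the degenerate case (two punctures on the same odd orbit swapped) killed by supercommutativity. You name ingredients (i i i) and a version of (ii), but you omit (i) entirely, and the normalization conventions here (weighted branched counts, $m_{\bs\gamma_-}=\prod_j i_j!\,m(\gamma_{-,j})^{i_j}$, $\vartheta$-sorting, the set $O$) differ enough from~\cite{Bo} that the identity has to be checked in this framework rather than imported. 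Without that verification the claimed matching between $\sum_T\#\bdry_T\mathcal Z=0$ and the coefficient of $\bs\gamma$ in $\partial^2\gamma_+$ is asserted rather than proved.
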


\begin{proof}
By the construction of the semi-global Kuranishi structure, Proposition~\ref{prop: tomatoes}, and the above discussion on coherent orientations, if $|\gamma_+|-(|e^A|+|\bs\gamma_-|)=1$ and $\gamma_+, \bs\gamma_-$ are good, then $\mathcal{Z}(\Kur^L(\alpha, J;\gamma_+; \bs \gamma_-;A),\mathfrak S)$ is a compact \cb oriented weighted branched $0$-dimensional manifold.  Hence $\partial$ is well-defined. We will abbreviate
$$\mathcal Z(\Kur(\gamma_+;\bs\gamma_-)):=  \mathcal Z(\Kur^L(\alpha, J;\gamma_+;\bs \gamma_-;A),\mathfrak S),$$
assume that the homology classes have been chosen appropriately, and suppress homology classes from the notation. We will also write $\mathcal Z^*(\Kur(\gamma_+;\bs\gamma_-))$, where $*$ is a modifier such as ``$\op{ind}=k$''. \cb

To show $\partial^2=0$, as usual we identify the terms in $\partial^2$ as the signed weighted count of the boundary components of certain oriented compact weighted branched $1$-dimensional manifolds.

Let us write $\mathcal P_{\alpha}^{L,\op{good}} = \{\gamma_1, \dots ,\gamma_m\}$, written in $\vartheta$-sorted order. 
For any $\vartheta$-sorted $\bs \gamma=\gamma_1^{c_1}\dots\gamma_m^{c_m}$ with $c_1,\dots,c_m\geq 0$, applying the definition of $\bdry$ from Equation~\eqref{eqn: partial} we obtain:
\begin{gather}
\label{eqn: coefficient-1}  m_{\bs\gamma}\langle \partial^2 \gamma_+, \bs\gamma^\rev  \rangle = \sum_{A_1, A_2 \in H_2(M;\Z)} \sum_{\bs \beta\subseteq \bs\gamma}  \sum_{k=1}^m\sum_{q=0}^{c_k-b_k}(-1)^{a+b} \frac{d_{\gamma_+;(\bs\gamma-\bs\beta)\sqcup \gamma_k}}{m_{(\bs\gamma-\bs\beta)\sqcup \gamma_k}}\frac{d_{\gamma_k;\bs \beta}}{m_{\bs \beta}} e^{A_1 + A_2} m_{\bs\gamma}\\
\nonumber = \sum_{A_1, A_2 \in H_2(M;\Z)} \sum_{\bs \beta\subseteq \bs\gamma}\sum_{k=1}^m\sum_{q=0}^{c_k-b_k}(-1)^{a+b}
{c_1\choose b_1}\dots{c_m \choose b_m}\frac{d_{\gamma_+;(\bs\gamma-\bs\beta)\sqcup \gamma_k}d_{\gamma_k;\bs \beta}}{m(\gamma_k)(c_k-b_k+1)} e^{A_1 + A_2},
\end{gather}
where $\bs\beta=\gamma_1^{b_1}\dots\gamma_m^{b_m}$ is a $\vartheta$-sorted subtuple of $\bs \gamma$ with $0\leq b_i\leq c_i$ for $i=1,\dots,m$, and {\em the summands depend on the variable $q$ only through $a$ and $b$, defined below.}
Now we explain the notation appearing in the summands of the above formula:
\begin{gather*}
\bs \gamma^\rev =\gamma_m^{c_m}\dots \gamma_1^{c_1}, \qquad
\bs\beta^\rev= \gamma_m^{b_m} \dots \gamma_1^{b_1},\\
(\bs \gamma-\bs\beta)\sqcup \gamma_k:=\gamma_1^{c_1-b_1}\dots \gamma_{k}^{c_k-b_{k}+1}\dots\gamma_{m}^{c_m-b_m},\\
(\bs \gamma^\rev -\bs\beta^\rev)\sqcup \gamma_k:= \gamma_{m}^{c_m-b_m}\dots \gamma_{k}^{c_k-b_{k}+1}\dots \gamma_1^{c_1-b_1},\\
(\bs\gamma-\bs\beta) \sqcup_{\gamma_k,q}\bs\beta:= \gamma_1^{c_1-b_1}\dots\gamma_k^q(\gamma_1^{b_1}\dots\gamma_m^{b_m}) \gamma_k^{c_k-b_k-q}\dots\gamma_m^{c_m-b_m},\\
(\bs\gamma^\rev-\bs\beta^\rev) \sqcup_{\gamma_k,q}\bs\beta^\rev:= \gamma_m^{c_m-b_m} \dots \gamma_k^{c_k-b_k-q} (\gamma_m^{b_m} \dots \gamma_1^{b_1}) \gamma_k^q \dots \gamma_1^{c_1-b_1},
\end{gather*}
and then $(-1)^b$ is given by the equation $$\bs\gamma^\rev=(-1)^b(\bs\gamma^\rev-\bs\beta^\rev) \sqcup_{\gamma_k,q}\bs\beta^\rev$$ as supercommutative monomials and $a=|\gamma_m^{c_m-b_m} \dots \gamma_k^{c_k-b_k-q}|_0$.

We claim that, for each $\bs\beta\subseteq\bs\gamma$ and $k\in \{1,\dots,m\}$ with $|\gamma_k|-(|e^{A_1}|+|\bs\beta|)=1$ and $|\gamma_+|-(|e^{A_2}|+|(\bs \gamma-\bs\beta)\sqcup \gamma_k|)=1$ and $q\in \{0,1,\dots,c_k-b_k\}$, there exist $\mathcal L \gg 0$ and a map 
$$G_{\bs\beta,k,q}:  \mathcal Z(\Kur(\gamma_k;\bs\beta)) \times  \mathcal Z(\Kur(\gamma_+;(\bs\gamma-\bs\beta)\sqcup \gamma_k))  \times O \to  \bdry^\mathcal L \mathcal Z(\Kur(\gamma_+;\bs \gamma)), $$
where
$O$ is defined below and 
$\bdry^\mathcal L$ is the $\mathcal L$-boundary defined in Proposition~\ref{prop: tomatoes}.

The proof of the claim essentially follows from Proposition~\ref{prop: tomatoes}, but we work out the combinatorics more carefully with signs.  The set $O$ is the set of all isomorphism classes of two-vertex CH trees that contract to the one-vertex tree corresponding to the moduli space $\mathcal M(\gamma_+; \bs\gamma),$ 
but we give a more explicit equivalent description.
For any $k\in\{1,\dots,m\}$, $q\in \{0,1,\dots,c_m-b_m\}$, and
$$(\llbracket\F_1,u_1\rrbracket,\llbracket\F_2, u_2\rrbracket)\in \mathcal Z(\Kur(\gamma_k;\bs \beta))\times \mathcal Z(\Kur(\gamma_+; (\bs\gamma-\bs \beta) \sqcup \gamma_k)),$$
we glue $(\F_1,u_1)$ and $(\F_2, u_2)$ so that the positive puncture of $\F_1$ is glued to the $(q+1)$th  negative puncture of $\F_2$ amongst those that converge to $\gamma_k$ and denote the glued curve by $(\F', u')$. Note that the negative punctures of $\F'$ have a natural ordering  $\tilde o$ coming from the gluing so that $\llbracket\F'_{\tilde o}, u'\rrbracket\in  \mathcal Z(\Kur(\gamma_+;(\bs\gamma-\bs\beta) \sqcup_{\gamma_k,q} \bs\beta))$. Next let $o$ be an ordering of the negative punctures of $\mathcal{F}'$ such that:
\begin{enumerate}
\item the induced orderings of the negative punctures of $\mathcal{F}_2$ (minus the negative end that is glued) and $\mathcal{F}_1$ agree with the initial orderings of the negative punctures of $\mathcal{F}_1$ and $\mathcal{F}_2$; and
\item the surface $\mathcal{F}'$ with the ordering $o$, denoted by $\mathcal{F}'_o$, has $\vartheta$-sorted negative punctures;
\end{enumerate}
and let $O$ be the set of such orderings $o$. It is immediate from the definition that $|O|={c_1 \choose b_1}\dots{c_m \choose b_m}$.  We then define
$$G_{\bs\beta,k,q}(\llbracket\F_1,u_1\rrbracket,\llbracket\F_2, u_2\rrbracket, o):=\llbracket\F'_o, u=u'\rrbracket.$$

We now claim that the weighted signed count of the boundary components of the $1$-dimensional branched manifold $\bdry^{\mathcal L}\mathcal Z(\Kur(\gamma_+;\bs \gamma))$ is given by the second line of Equation~\eqref{eqn: coefficient-1}. 
This is a consequence of the following observations:
\begin{enumerate}
\item By \ref{O3}, the components of $ \mathcal Z(\Kur(\gamma_+;\bs \gamma))$ that involve bad orbits do not contribute to the count.

\item To apply \ref{O2'}, we first reorder the negative punctures of $\F_2$ so that the $(q+1)$st puncture that converges to $\gamma_k$ becomes the last puncture. More precisely, we apply the following three steps:
\be
\item First consecutively swap this puncture with the negative puncture that is right after it. By \ref{O1}, this results in a sign change of  $$(-1)^{|\gamma_k|_0 \cdot |\gamma_k^{c_k-b_k-q} \cdots \gamma_m ^{c_m - b_m}|_0}=(-1)^{a\cdot |\gamma_k|_0}.$$
\item Then glue $(\F_1, u_1)$ and $(\F_2, u_2)$. By \ref{O2'} there is no additional sign correction from the gluing.
\item Finally, consecutively swap the block of negative punctures that came from $\F_1$ with the negative puncture right before it until the block takes the place of the $(q+1)$st negative puncture of $\F_2$. This step incurs a sign change of 
$(-1)^{a\cdot|\gamma_m^{b_m} \dots \gamma_1^{b_1}|_0}.$
\ee 
Hence the total sign change from the gluing is 
$$(-1)^{a\cdot (|\gamma_k|_0 + |\gamma_m^{b_m} \dots \gamma_1^{b_1}|_0)  }  = (-1)^a,$$
since $\partial$ changes the degree by $-1$. The glued curve is denoted by $(\F'_{\tilde o}, u')$, where $\tilde o$ is the ordering of the negative ends of $\F'$ coming from gluing.

\item We reorder the negative punctures of $\F'_{\tilde o}$ to $\F'_o$ for some $o \in O,$ and then we have $\llbracket\F'_o, u'\rrbracket\in  \mathcal Z(\Kur(\gamma_+;\bs\gamma-))$. Note that the exact element of $o \in O$ does not matter in terms of orientations. This is because in Equation~\eqref{eqn: coefficient-1} we only need to consider the case when there is no $k\in \{1, \cdots, m\}$ such that $c_k > 1$ and $|\gamma_k|_0 = 1.$ Then swapping two adjacent negative punctures that converge to the same orbit does not change the sign. Therefore, the reordering produces a sign, which is the same as the sign obtained from changing $(\bs\gamma-\bs\beta) \sqcup_{\gamma_k,q}\bs\beta$ to $\bs\gamma$, and which in turn is the same as the sign $(-1)^b$ obtained from changing $(\bs\gamma^\rev-\bs\beta^\rev) \sqcup_{\gamma_k,q}\bs\beta^\rev$ to $\bs\gamma^\rev$.

\item $\op{Im}G_{\bs\beta,k,q}=\op{Im}G_{\bs\beta,k,q'}$ for any $q,q'\in \{0,1,\dots,c_m-b_m\}$ and this cancels the term $c_k-b_k+1$ in the denominator of the second line of Equation~\eqref{eqn: coefficient-1}.
\item There are $m(\gamma_k)$ ways to simultaneously change the asymptotic markers of $\F_1$ and $\F_2$ such that $G_{\bs\beta,k,q}(\llbracket\F_1,u_1\rrbracket,\llbracket\F_2,u_2\rrbracket,o)$ does not change; this cancels the term $m(\gamma_k)$ in the denominator of the second line of Equation~\eqref{eqn: coefficient-1}.
\end{enumerate} \vskip-.2in
\end{proof}

\subsection{dga morphisms induced by cobordisms} \label{chain map}

Let $(\widehat W,\widehat \alpha)$ be the completion of the compact Liouville cobordism $(W,\alpha)$ from $(M_+,\alpha_+)$ to $(M_-,\alpha_-)$ as in Section \ref{subsection: cob case}. Given $L_+\leq L_-$, let $J$ be an almost complex structure on $\widehat W$ which restricts to $J_+$ at the positive end and to $J_-$ at the negative end.
Suppose $(\widehat \alpha, J)$ is an $(L_+,L_-)$-simple pair and $J$ is $(L_+,L_-)$-end-generic.

Using the semi-global Kuranishi structures $\Kur^{L_\pm}:=\Kur^{L_\pm}(\alpha_\pm,J_\pm)$, multisections $\mathfrak S_\pm = \{{\mathfrak s}_{T_\pm}\}_{T_\pm}$, and complete sets $\mathcal{S}^{L_\pm}_{\alpha_\pm}$ of trivializing surfaces for $\mathcal{P}^{L_\pm}_{\alpha_\pm}$, we can construct the dga's
$$\mathfrak A^{L_\pm}:=\mathfrak A^{L_\pm}(M_\pm,\alpha_\pm,J_\pm,\Kur^{L_\pm}, \mathfrak S_\pm, \mathcal{S}^{L_\pm}_{\alpha_\pm})$$
as in Section \ref{chain complex}.  Here we are also using orderings $\vartheta_\pm$ of $\mathcal{P}^{L_\pm}_{\alpha_\pm}$.
We can also construct a semi-global Kuranishi structure
$$\Kur^{(L_+,L_-)}:=\Kur^{(L_+,L_-)}(\widehat \alpha,J)$$ 
that is compatible with $\Kur^{L_\pm}$, together with a collection of multisections $\mathfrak S = \{{\mathfrak s}_T\}_T$. We denote by the above collection of data by:
$$\mathcal C:=(M_\pm, L_\pm, \alpha_\pm, J_\pm, \Kur^{L_\pm}, \mathfrak S_\pm,  \mathcal{S}^{L_\pm}_{\alpha_\pm}, \widehat W, \widehat \alpha, J, \Kur^{(L_+,L_-)}, \mathfrak S).$$

We now define the (unital) dga morphism
\nom[Phi]{$\Phi$}{Contact homology dga morphism from $\mathfrak A^{L_+}$ to $\mathfrak A^{L_-}$, after tensoring with an appropriate coefficient system}
$$\Phi:\mathfrak A^{L_+} \otimes_{\Q[H_2(M_+;\Z)]}\Q[H_2(W;\Z)]\to \mathfrak A^{L_-}\otimes_{\Q[H_2(M_-;\Z)]}\Q[H_2(W;\Z)].$$
Here we are viewing $\mathfrak{A}^{L_\pm}$ as $\Q[H_2(W;\Z)]$-modules via the algebra maps
$$\Q[H_2(M_\pm;\Z)]\to \Q[H_2(W;\Z)]$$
induced by $H_2(M_\pm;\Z)\to H_2(W;\Z)$.   
For any $\gamma_+ \in \mathcal P_{\alpha_+}^{L_+}$, let
\begin{equation}\label{eqn: chain map}
\Phi(\gamma_+)=\sum_{\bs \gamma_-}\sum_{A\in H_2(W;\Z)} \frac{p_{\gamma_+,\bs \gamma_-,A}}{m_{\bs\gamma_-}} e^A \gamma_{-,k}^{i_k} \dots \gamma_{-,1}^{i_1},
\end{equation} 
where \cb the sums are taken over all good $\vartheta_-$-sorted $\bs\gamma_-$ written in the form of Equation \eqref{eqn: gamma-} and $A\in H_2(W;\Z)$ with $|\gamma_+|-(|e^A|+|\bs\gamma_-|)=0$, and $p_{\gamma_+,\bs\gamma_-,A}$ is the weighted signed count of elements in $\mathcal{Z}(\Kur^{(L_+,L_-)}(\widehat \alpha, J; \gamma_+;\bs \gamma_-;A),\mathfrak S)$. (If $|\gamma_+|\neq |e^A|+|\bs\gamma_-|$, \cb then we set $p_{\gamma_+,\bs\gamma_-,A}=0$.) The homology class $A$ can be obtained from $(\mathcal{F},u)\in \mathcal{Z}(\Kur^{(L_+,L_-)}(\widehat \alpha, J; \gamma_+;\bs \gamma_-),\mathfrak S)$ by capping off using $\mathcal{S}^{L_\pm}_{\alpha_\pm}$ as in Section~\ref{subsubsection: grading}.  $\Phi$ can then be extended to $\mathfrak A^{L_+}$ as an algebra homomorphism.

\begin{prop}
$\Phi$ is a well-defined (unital) dga morphism.
\end{prop}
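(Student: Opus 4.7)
The plan is to mimic the proof of Proposition~\ref{dga}, adapted to a cobordism situation in which the boundary of a one-dimensional moduli space now has two types of broken configurations (upper and lower breakings) instead of being self-paired.

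First I would verify well-definedness. For each fixed $\gamma_+\in\mathcal{P}^{L_+}_{\alpha_+}$ the action $\mathcal{A}_{\alpha_+}(\gamma_+)<L_+$ together with Stokes' theorem for the exact cobordism bounds $\mathcal{A}_{\alpha_-}(\bs\gamma_-)$, so only finitely many good $\vartheta_-$-sorted tuples $\bs\gamma_-$ with $|\gamma_+|-(|e^A|+|\bs\gamma_-|)=0$ can contribute. For each such $(\gamma_+,\bs\gamma_-,A)$, the semi-global Kuranishi chart $\Kur^{(L_+,L_-)}(\widehat\alpha,J;\gamma_+;\bs\gamma_-)$ produces a compact weighted branched $0$-dimensional manifold (oriented via the coherent orientation system of Section~\ref{subsubsection: orientations}), whose signed weighted count is finite. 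Grading: the chosen complete sets $\mathcal{S}^{L_\pm}_{\alpha_\pm}$ cap off every curve $u\in\mathcal{Z}(\Kur^{(L_+,L_-)}(\widehat\alpha,J;\gamma_+;\bs\gamma_-))$ into a closed surface representing a well-defined class $A\in H_2(W;\Z)$, exactly as in Section~\ref{subsubsection: grading}. That $\Phi$ respects the $\Q[H_2(W;\Z)]$-module structure and extends as a unital algebra homomorphism is automatic from its definition on generators.

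Next I would prove the chain-map identity $\Phi\circ \partial_+ = \partial_-\circ \Phi$, evaluated on each generator $\gamma_+$, by identifying both sides as the weighted signed count of boundary components of certain one-dimensional oriented weighted branched manifolds. Fix a good $\vartheta_-$-sorted tuple $\bs\gamma_-$ and $A\in H_2(W;\Z)$ with $|\gamma_+|-(|e^A|+|\bs\gamma_-|)=1$; consider the compact oriented weighted branched $1$-manifold (after truncating its ends) $T(\mathcal{Z}(\Kur^{(L_+,L_-)}(\widehat\alpha,J;\gamma_+;\bs\gamma_-;A)))$. By the construction of $\Kur^{(L_+,L_-)}$ in Section~\ref{subsection: cob case} together with the gluing Theorems~\ref{thm: gluing} and \ref{thm: iterated gluing}, each end of this $1$-manifold corresponds to a two-vertex CH tree of (Cob) type. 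Since every branch of a (Cob)-type CH tree contains exactly one (Cob) vertex and each non-trivial component contributes dimension at least $0$, the two-vertex configurations are exactly:
\begin{enumerate}
\item[(U)] an upper breaking: a symplectization curve in $\mathcal{Z}(\Kur^{L_+}(\alpha_+,J_+;\gamma_+;\bs\mu))/\R$ of index $1$, glued to a cobordism curve in $\mathcal{Z}(\Kur^{(L_+,L_-)}(\widehat\alpha,J;\bs\mu;\bs\gamma_-))$ of index $0$;
\item[(L)] a lower breaking: a cobordism curve in $\mathcal{Z}(\Kur^{(L_+,L_-)}(\widehat\alpha,J;\gamma_+;\bs\nu))$ of index $0$, glued to a symplectization curve in $\mathcal{Z}(\Kur^{L_-}(\alpha_-,J_-;\nu_i;\bs\beta))/\R$ of index $1$ glued to one of the ends $\nu_i$ of $\bs\nu$.
\end{enumerate}
The gluing map associated to each such configuration (Theorem~\ref{thm: gluing} applied in the (Cob) setting) produces precisely the ends of the $1$-manifold, up to the combinatorial permutations of asymptotic markers and of punctures converging to equal Reeb orbits, counted as in the proof of Proposition~\ref{dga}.

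Summing over $\bs\gamma_-$ and $A$, the signed weighted count of (U)-ends yields exactly the coefficient of $e^A\gamma_{-,1}^{i_1}\cdots\gamma_{-,k}^{i_k}$ in $\Phi(\partial_+\gamma_+)$, while the (L)-ends yield the corresponding coefficient in $\partial_-\Phi(\gamma_+)$. The relative signs of the two types of ends come from the orientation properties (O1)--(O4) and, crucially, from the fact that the two ends of an oriented $1$-manifold have opposite boundary orientations; the combinatorial factors $m_{\bs\gamma_-}$ and $m_{\bs\beta}$ are absorbed by the counts of equivalent relabelings of asymptotic markers and of negative punctures exactly as in equations \eqref{eqn: coefficient}--\eqref{eqn: coefficient 2} of Proposition~\ref{dga}. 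Bad orbits again contribute zero by (O3). The main obstacle will be bookkeeping: organizing the gluing/pregluing constructions of Section~\ref{subsection: cob case} so that every end of the truncated $1$-manifold is hit exactly once by one of the two gluing maps, and verifying that the sign and combinatorial factors match on the nose; once this is done the identity $\Phi\circ\partial_+=\partial_-\circ\Phi$ follows, completing the proof that $\Phi$ is a dga morphism.
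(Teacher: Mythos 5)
Your proposal is correct and takes essentially the same approach that the paper intends; the paper's own proof is the one-line remark that the argument is analogous to that of Proposition~\ref{dga}, and your write-up fleshes out exactly the adaptation one would make, identifying the ends of the $1$-dimensional truncated moduli space as upper breakings (contributing to $\Phi\circ\partial_+$) and lower breakings (contributing to $\partial_-\circ\Phi$) and using the orientation and combinatorial bookkeeping from Proposition~\ref{dga}.
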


\begin{proof}
The proof is similar to that of Proposition \ref{dga}.
\end{proof}

Let  $\Phi_*: HC^{L_+}\to HC^{L_-}$ be the induced map on homology.

\begin{prop}
Let $(\widehat{W}, \widehat \alpha)$ be a trivial symplectic cobordism.
Given $L_+ \leq L_-$, let $J$ be an $\R$-translation invariant almost complex structure on $\widehat W$ such that $(\widehat \alpha, J)$ is an $(L_+, L_-)$-simple pair.  Then $\Phi_* = \op{id}$ with respect to semi-global Kuranishi structures defined using $J$, multisections, and complete sets of trivializing surfaces. 
\end{prop}

\begin{proof}
The $\R$-invariant almost complex structure $J$ is automatically $(L_+, L_-)$-end-generic, with the exception of possibly branched covers of trivial cylinders. There are three types of moduli spaces $\overline{\mathcal{M}}_J(\gamma_+;\bs\gamma_-;A)$ in the cobordism $\widehat W$: (i) $\mathcal{A}(\gamma_+)> \mathcal{A}(\bs\gamma_-)$, (ii) $\mathcal{A}(\gamma_+)= \mathcal{A}(\bs\gamma_-)$ and the curves are (possibly multiple levels of) branched covers of trivial cylinders, and (iii) trivial cylinders. 

In order to construct Kuranishi structures $\Kur^{(L_+,L_-)}(\widehat \alpha, J; \gamma_+;\bs \gamma_-;A)$ with respect to $(\widehat W,J)$ using our prescription, it suffices to puncture (iii) trivial cylinders in view of Remark~\ref{rmk: trivial cylinder}, since curves of type (ii) have $\chi < 0$ and do not need to be punctured. Let $J'$ be a sufficiently small perturbation of $J$ such that $J'$ is $(L_+, L_-)$-simple, $(L_+,L_-)$-end-generic, and compatible with $J=J_\pm$ at the positive and negative ends. Since (iii) is regular and a sufficiently small perturbation of $J$ to $J'$ preserves regularity, there exists a bijection between trivial cylinders $u:\R\times S^1\to (\widehat W,J)$ and $J'$-holomorphic cylinders $u': \R\times S^1 \to (\widehat W,J')$ with the same ends for $J'$ sufficiently close to $J$.  Hence may take the punctures $\mathbf{q}$ for $u$ and $\mathbf{q}'$ for $u'$ to be at the same $s$-height.\footnote{Roughly speaking, $\Kur^{(L_+,L_-)}(\widehat \alpha, J; \gamma_+;\bs \gamma_-;A)$ and  $\Kur^{(L_+,L_-)}(\widehat \alpha, J'; \gamma_+;\bs \gamma_-;A)$, as well as their multisections, can be taken to be ``close", so we can use either to compute $\Phi_*$.}

Let $\mathcal Z = \mathcal{Z}(\Kur^{(L_+,L_-)}(\widehat \alpha, J; \gamma_+;\bs \gamma_-;A),\{{\mathfrak s'_T}\}_T)$.  The key point is to take all the multisections $\mathfrak s'_T$  --- besides the multisection corresponding to (iii), which we can take to be $0$ --- to be perturbations of $s$-invariant multisections. Then in Cases (i) and (ii) $\mathcal Z$ is empty when $\op{vdim}=0$, since we are effectively computing $\mathcal Z(\Kur^L(\alpha,J;\gamma_+,\bs\gamma_-;A),\{{\mathfrak s_T}\}_T)$ in $\op{(Symp)}$ when $\op{vdim}=0$.  In Case (iii), $\mathcal Z$ consists of only one element for each $\gamma_+$, the trivial cylinder.  

After a sign count that is left to the reader, we obtain $\Phi_* = \op{id}$.
\end{proof}

\cb
\subsection{Definition of contact homology}

We are now in a position to define the contact homology algebra $HC(\mathcal D)$.  Given a closed cooriented contact manifold $(M,\xi)$,  let
\begin{align*}
\mathcal D=&(\alpha, \{  L_i, \varphi_i, J_i, \Kur^{L_i}(\alpha_i,J_i), \mathfrak S_i, \mathcal{S}^{L_i}_{\alpha_i}, \widehat \alpha_{i,i+1},J_{i,i+1}, \\
& \qquad  \Kur^{(L_i,L_{i+1})}(\widehat \alpha_{i,i+1},J_{i,i+1}), \mathfrak S_{i,i+1}\}_{i\in \mathbb N})
\end{align*}
be a collection that consists of the following data:
\begin{itemize}
\item a nondegenerate contact form $\alpha$ for $\xi$;
\item an increasing sequence $\{L_i\}_{i\in \mathbb N}$ such that $L_i\to \infty$;
\end{itemize}
and, for each $i\in \mathbb N$,
\begin{itemize}
\item a positive function $\varphi_i$ on $M$ which is close to $1$, such that $\alpha_i:=\varphi_i \alpha$ is an $L_i$-simple contact form and $\varphi_i<\varphi_{i+1}$;
\item an almost complex structure $J_i$ on $\mathbb R\times M$ such that $(\alpha_i,J_i)$ is an $L_i$-simple pair;
\item a semi-global Kuranishi structure $\Kur^{L_i}(\alpha_i,J_i)$ and multisections $\mathfrak S_i$;
\item a complete set $\mathcal{S}^{L_i}_{\alpha_i}$ of trivializing surfaces for $\mathcal{P}^{L_i}_{\alpha_i}$;
\item a completion $(\widehat W_{i,i+1}=\R\times M,\widehat \alpha_{i,i+1})$ of a compact Liouville cobordism  from $(M,\alpha_i)$ to $(M,\alpha_{i+1})$; here $\widehat \alpha_{i,i+1}:= \varphi_{i,i+1}\alpha$, where $\varphi_{i,i+1}$ interpolates from a positive multiple of $\varphi_i$ to $\varphi_{i+1}$;
\item an almost complex structure $J_{i,i+1}$ on $\widehat W_{i,i+1}$ that tames $d\widehat \alpha_{i,i+1}$, restricts to $J_i$ at the positive end and to $J_{i+1}$ at the negative end, and is $(L_i,L_{i+1})$-end-generic;
\item a semi-global Kuranishi structure $\Kur^{(L_i,L_{i+1})}(\widehat \alpha_{i,i+1},J_{i,i+1})$ that is compatible with $\Kur^{L_i}(\alpha_i,J_i)$ and $\Kur^{L_{i+1}}(\alpha_{i+1},J_{i+1})$ and multisections $\mathfrak S_{i,i+1}$.
\end{itemize}

Denote the map induced by the cobordism $(\widehat W_{i,i+1},\widehat \alpha_{i,i+1}, J_{i,i+1})$ and the semi-global Kuranishi structure $\Kur^{(L_i,L_{i+1})}(\widehat \alpha_{i,i+1}, J_{i,i+1}))$ and multisections $\mathfrak S_{i,i+1}$ by
\begin{align*}
\Phi_{i*}: HC^{L_i}&(\alpha_i,J_i; \Kur^{L_i}(\alpha_i,J_i), \mathfrak S_i) \to \\
&HC^{L_{i+1}}(\alpha_{i+1},J_{i+1};\Kur^{L_{i+1}}(\alpha_{i+1},J_{i+1}), \mathfrak S_{i+1}).
\end{align*}
Finally, we define the contact homology algebra by
$$H(\mathcal D):=\lim_{\longrightarrow} HC^{L_i}(\alpha_i,J_i; \Kur^{L_i}(\alpha_i,J_i), \mathfrak S_i),$$
where the directed system is constructed using $\Phi_{i*}$. From now on we will often suppress the multisections from the notation.

\subsection{Invariance of contact homology} \label{chain homotopy}

In this section we prove Propositions~\ref{prop: chain homotopy} and \ref{prop: composition} and use them to prove that $HC(\mathcal D)$ is an invariant of the contact manifold $(M,\xi)$ in Section~\ref{subsubsection: proof of invariance}.  

Let $\{(\widehat W^\tau=\widehat W,\widehat \alpha^\tau)\}_{0\leq \tau \leq1}$ be a family of completed compact Liouville cobordisms from $(M_+,\alpha_+)$ to $(M_-,\alpha_-)$. For each $\tau \in[0,1]$, let $J^\tau$ be an almost complex structure on $\widehat W^\tau$ that restricts to $J_\pm$ at the positive/negative ends of $\widehat W^\tau$. We assume that $(\widehat \alpha^\tau, J^\tau)$ is an $(L_+,L_-)$-simple pair, $J^\tau$ is $(L_+,L_-)$-end-generic, and $(\widehat \alpha^\tau, J^\tau)$ is independent of $\tau$ on each of $\tau\in[0,\epsilon]$ and $\tau\in[1-\epsilon,1]$, where $\epsilon>0$ is small.

Let $\Kur^{L_\pm}:=\Kur^{L_\pm}(\alpha_\pm,J_\pm)$ be the semi-global Kuranishi structures constructed for $(M_\pm,\alpha_\pm, J_\pm)$, let $\mathfrak S_\pm$ be the multisections on $\Kur^{L_\pm}$, and let 
$$\mathfrak A^{L_\pm}=\mathfrak A^{L_\pm}(M_\pm,\alpha_\pm,J_\pm,\Kur^{L_\pm}, \mathfrak {S}_\pm, \mathcal{S}^{L_\pm}_{\alpha_\pm})$$
as before.

\subsubsection{Chain homotopy}

For $\tau=0,1$, we have two collections of data
$$\mathcal C^\tau=(M_\pm, L_\pm, \alpha_\pm, J_\pm, \Kur^{L_\pm},  \mathcal{S}^{L_\pm}_{\alpha_\pm},\widehat W^\tau, \widehat \alpha^\tau, J^\tau, \Kur^{(L_+,L_-)})$$
as in Section \ref{chain map} and chain maps
$$\Phi^\tau:\mathfrak A^{L_+} \otimes_{\Q[H_2(M_+;\Z)]}\Q[H_2(W;\Z)]\to \mathfrak A^{L_-}\otimes_{\Q[H_2(M_-;\Z)]}\Q[H_2(W;\Z)],$$
defined using the data $\mathcal C^\tau$.

\begin{prop}[Chain homotopy] \label{prop: chain homotopy}
There is a degree $1$ $\Q[H_2(W;\Z)]$-module map
$$\Omega_K: \mathfrak A^{L_+} \otimes_{\Q[H_2(M_+;\Z)]}\Q[H_2(W;\Z)]\to \mathfrak A^{L_-}\otimes_{\Q[H_2(M_-;\Z)]}\Q[H_2(W;\Z)]$$
such that $\Phi^1-\Phi^0=\partial_- \circ\Omega_K - \Omega_K\circ\partial_+.$ In particular, $\Phi^0_*=\Phi^1_*$.
\end{prop}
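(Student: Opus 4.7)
The plan is to construct $\Omega_K$ by counting $0$-dimensional pieces of the parametrized moduli spaces from Section~\ref{subsection: chain homotopy case} and to derive the chain homotopy identity as the usual boundary identity for their $1$-dimensional analogues. Concretely, fix the $1$-parametric semi-global Kuranishi structures $\Kur^{\{J^\tau\}}=\Kur^{(L_+,L_-)}(\widehat\alpha^\tau,\{J^\tau\})$ on $\mathcal{V}^{\{J^\tau\}}_T$ and their fibered-product extensions $\mathcal{V}^{\{J^\tau\}}_{T_1\dots T_k}$ built with the basic perturbation scheme of Section~\ref{subsubsection: basic}; by Conditions $(\tau_1)$ and $(\tau_2)$ these restrict to the Kuranishi structures used to define $\Phi^0$ and $\Phi^1$ over the collars $\tau\in[0,\epsilon]$ and $\tau\in[1-\epsilon,1]$. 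Writing $\mathcal{Z}(\Kur^{\{J^\tau\}}(\gamma_+;\bs\gamma_-))$ for the associated weighted branched manifold, it is an oriented weighted branched manifold of dimension $|\gamma_+|-|e^A|-|\bs\gamma_-|+1$ in the class $A\in H_2(W;\Z)$.

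For each good $\gamma_+\in\mathcal{P}^{L_+}_{\alpha_+}$ and each good $\vartheta_-$-sorted $\bs\gamma_-$ with $|\gamma_+|-(|e^A|+|\bs\gamma_-|)=-1$, let $q_{\gamma_+,\bs\gamma_-,A}$ be the weighted signed count of points in $\mathcal{Z}(\Kur^{\{J^\tau\}}(\widehat\alpha^{\{\tau\}},\{J^\tau\};\gamma_+;\bs\gamma_-))$ in class $A$, oriented as in Section~\ref{subsubsection: orientations} with the additional $\tau$-factor placed first. Define
$$\Omega_K(\gamma_+)=\sum_{\bs\gamma_-}\sum_{A} \frac{q_{\gamma_+,\bs\gamma_-,A}}{m_{\bs\gamma_-}}\, e^A\,\gamma_{-,1}^{i_1}\cdots\gamma_{-,k}^{i_k},$$
and extend to monomials as a $(\Phi^0,\Phi^1)$-graded derivation:
$$\Omega_K(\gamma_1\cdots\gamma_k)=\sum_{j=1}^k(-1)^{\epsilon_j}\Phi^0(\gamma_1)\cdots\Phi^0(\gamma_{j-1})\,\Omega_K(\gamma_j)\,\Phi^1(\gamma_{j+1})\cdots\Phi^1(\gamma_k),$$
with signs $\epsilon_j$ dictated by the Koszul rule; this is the map naturally obtained by counting, in the disconnected Kuranishi structure $\Kur^{\{J^\tau\}}_{T_1\dots T_k}$, configurations in which one component is parametrized and index $-1$ while the rest are rigid.

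To prove $\Phi^0-\Phi^1=\Omega_K\circ\partial+\partial\circ\Omega_K$, inspect the boundary of the $1$-dimensional weighted branched manifold $\mathcal{Z}(\Kur^{\{J^\tau\}}(\gamma_+;\bs\gamma))$ for $|\gamma_+|-(|e^A|+|\bs\gamma|)=0$. By the $\tau$-parametric SFT compactification, the boundary splits into three types: (i) fibers over $\tau=0$ and $\tau=1$, which by $(\tau_2)$ contribute precisely the coefficients of $\Phi^0$ and $\Phi^1$; (ii) interior SFT breakings at some $\tau_0\in(\epsilon,1-\epsilon)$ in which one level is an $(\alpha_+,J_+)$-symplectization curve sitting above an index $-1$ parametrized cobordism curve, yielding $\Omega_K\circ\partial$; and (iii) analogous interior breakings with an $(\alpha_-,J_-)$-symplectization level below the parametrized cobordism curve, yielding $\partial\circ\Omega_K$. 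The combinatorial bookkeeping (weights $m_{\bs\gamma}$, choice of ordering of multiply covered negative ends, asymptotic markers) is identical to the analysis in Proposition~\ref{dga}, with the orientation conventions (O1)--(O4) and the coherent extension to parametrized families from Bourgeois-Mohnke controlling signs.

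The main obstacle is the bookkeeping at the corners of $\mathcal{Z}(\Kur^{\{J^\tau\}}(\gamma_+;\bs\gamma))$, where the parametrized cobordism curve may itself break simultaneously with a symplectization level. One must verify that the perturbations chosen by the basic scheme of Section~\ref{subsubsection: basic} are sufficiently generic that all such corners cancel in pairs (as in Section~\ref{subsubsection: d squared is zero}), and that the only uncancelled boundary contributions are those listed above; this is where Condition $(*)$ in Section~\ref{subsubsection: basic}, requiring distinct mutually generic perturbations for every factor in the fibered products $\mathcal{V}^{\{J^\tau\}}_{T_1\dots T_k}$, is used. Once this verification is complete, assembling the coefficients yields the chain homotopy formula, and passing to homology gives $\Phi^0_*=\Phi^1_*$.
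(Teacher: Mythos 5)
Your overall strategy---construct $\Omega_K$ from the $1$-parametric moduli spaces $\V_T^{\{J_\tau\}}$ and read the chain homotopy identity off the boundary of the corresponding $1$-dimensional branched manifolds---is the same as the paper's, and the boundary dichotomy (i)--(iii) that you describe matches the paper. However, there is a genuine gap in how you extend $\Omega_K$ from generators to monomials, and it is not cosmetic.

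You define $\Omega_K$ on a monomial $\gamma_1\cdots\gamma_k$ by the $(\Phi^0,\Phi^1)$-twisted derivation formula
$\Omega_K(\gamma_1\cdots\gamma_k)=\sum_j\pm\Phi^0(\gamma_1)\cdots\Phi^0(\gamma_{j-1})\Omega_K(\gamma_j)\Phi^1(\gamma_{j+1})\cdots\Phi^1(\gamma_k)$.
Algebraically this is the standard formula that makes the homotopy identity formal on products, but it does not match what the basic perturbation scheme of Section~\ref{subsubsection: basic} actually counts. When a rigid point of $\mathcal{Z}^{[0,1]}(\Kur(\bs\gamma_+;\bs\gamma_-))$ occurs for a disconnected $k$-component configuration, all $k$ components sit over the \emph{same} interior value $\tau_0\in(\epsilon,1-\epsilon)$: one of them has parametrized index $-1$ and the others have parametrized index $0$, and the index-$0$ companions are counted in the cobordism $(\widehat W,\widehat\alpha^{\tau_0},J^{\tau_0})$, not at $\tau=0$ or $\tau=1$. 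Your formula, by contrast, pretends that these companions contribute $\Phi^0(\gamma_i)$ or $\Phi^1(\gamma_i)$, i.e.\ are pinned at the endpoints of the interval. Since the cobordism counts at a generic $\tau_0$ need not agree with those at $\tau=0$ or $\tau=1$ (indeed, if they did there would be no need for a homotopy), the twisted-derivation extension is a different map from the one the moduli spaces define, and there is no reason it satisfies the homotopy identity with the basic perturbation scheme.

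The paper avoids this by defining $\Omega_K$ \emph{directly} on monomials: for $\bs\gamma_+=(\gamma_{+,1},\dots,\gamma_{+,k})$, the coefficient $\kappa_{\bs\gamma_+,\bs\gamma_-}$ is the weighted signed count of the fibered-product space $\mathcal{Z}^{[0,1]}(\Kur(\bs\gamma_+;\bs\gamma_-))$ built from $\V^{\{J_\tau\}}_{T_1\dots T_k}$ in Section~\ref{subsection: chain homotopy case}, which enforces that all components share a single $\tau$. The map $\Omega_K$ is then declared to be a $\Q[H_2(W;\Z)]$-module map (not a derivation or twisted derivation). Boundary type (ii), an index-$1$ symplectization level over an index-$(-1)$ \emph{disconnected} parametrized level, is literally $\Omega_K\circ\partial$ evaluated on that monomial, with no need to split the companions between $\Phi^0$ and $\Phi^1$; likewise type (iii) gives $\partial\circ\Omega_K$. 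If you want a chain homotopy in which the companion components are ``split'' between the two cobordism ends, that is exactly the Ekholm--Oancea refined perturbation scheme of Section~\ref{subsubsection: refined}, and it gives the multiplicative identity of Proposition~\ref{prop: chain homotopy ver 2}, not the additive identity stated here. To repair the proposal, replace your twisted-derivation extension by the direct count on monomials using the fibered-product moduli, and drop the claim that $\Omega_K$ is built from products of $\Phi^0$, $\Omega_K$ on generators, and $\Phi^1$.
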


\begin{proof}
The discussion of Section~\ref{subsection: chain homotopy case} yields a semi-global Kuranishi structure
$$\Kur^{(L_+,L_-)}(\{(\widehat \alpha^\tau,J^\tau)\}_{\tau\in[0,1]})$$
so that it is independent of $\tau$ on each of $\tau\in[0,\epsilon]$ and $\tau\in[1-\epsilon,1]$ and agrees with $\Kur^{(L_+,L_-)}(\widehat \alpha^\tau,J^\tau)$ for $\tau=0,1$, as well as a collection $\mathfrak S^{[0,1]} = \{{\mathfrak s}_T^{[0,1]}\}_T$ of multisections. We are using orderings $\vartheta_\pm$ for $\mathcal{P}^{L_\pm}_{\alpha_\pm}$.

We define $\Omega_K$ as follows:  First let $\Omega_K(1)=0$.  For any $k$-tuple $\bs\gamma_+$ of good Reeb orbits, we set  
\begin{equation}
\Omega_K(\bs\gamma_+)=\sum_{\bs \gamma_-}\sum_{A\in H_2(W;\Z) } \frac{\kappa_{\bs\gamma_+,\bs \gamma_-,A}}{m_{\bs\gamma_{-,1}}  \dots m_{\bs\gamma_{-,k}}} e^A  \bs\gamma_{-,k}^\rev \dots \bs\gamma_{-,1}^\rev,
\end{equation} 
where the sums are taken over all $\bs\gamma_-$ which is a $k$-tuple of $\vartheta_-$-sorted tuples of good Reeb orbits, i.e.,
$$\bs\gamma_-=(\bs\gamma_{-,1},\dots,\bs\gamma_{-,k})$$ and
\begin{equation}
\bs\gamma_{-,j}^\rev=( \underbrace{\gamma_{-,j,r_j},\dots, \gamma_{-,j,r_j}}_{i_{j,r_j} \text{ copies }},\dots, \underbrace{\gamma_{-,j,1},\dots, \gamma_{-,j,1}}_{i_{j,1} \text{ copies }}),
\end{equation} \cb
$A\in H_2(W;\Z)$ with $|\bs\gamma_+|-(|e^A|+|\bs\gamma_-|)=-1$, and $\kappa_{\bs\gamma_+,\bs\gamma_-,A}$ is the weighted signed count of the elements in
$$\mathcal{Z}^{[0,1]}(\Kur(\bs\gamma_+,\bs\gamma_-,A)):= \mathcal Z( \Kur^{(L_+,L_-)}(\{(\widehat \alpha^\tau, J^\tau)\}_{\tau\in[0,1]};\bs\gamma_+;\bs\gamma_-;A), \mathfrak S^{[0,1]}).$$
In what follows we will usually suppress $A$ from the notation.

Let $\Phi^0,\Phi^1$ be two dga morphisms
$$\mathfrak A^{L_+} \otimes_{\Q[H_2(M_+;\Z)]}\Q[H_2(W;\Z)]\to \mathfrak A^{L_-}\otimes_{\Q[H_2(M_-;\Z)]}\Q[H_2(W;\Z)],$$
induced by the cobordism $(\widehat W^\tau, \widehat \alpha^\tau, J^\tau)$, $\tau=0,1$, using the semi-global Kuranishi structure $\Kur^{(L_+,L_-)}( \widehat \alpha^\tau,J^\tau)$. We can verify that $\Omega_K$ is well-defined and satisfies
\begin{equation}\label{chain homotopy equation}
\Phi^1-\Phi^0=\partial_-\circ \Omega_K - \Omega_K\circ\partial_+
\end{equation}
as in the proof of Proposition \ref{dga}. The key point is to observe that, with the perturbation from Section~\ref{subsection: chain homotopy case}, each end of a $1$-dimensional (and hence $\op{ind}=0$) branched manifold $\mathcal{Z}^{[0,1]}(\Kur(\gamma_+,\bs\gamma_-))$ corresponds to:
\be
\item[(i)] a term contributing to $\Phi^0$ or $\Phi^1$;
\item[(ii)] the gluing of an element of some $\mathcal{Z}^{[0,1]}(\Kur(\gamma_+,\bs\gamma_-'))$ of index $-1$ corresponding to $\Omega_K$ and an element of some $\mathcal{Z}(\Kur(\bs\gamma_-',\bs\gamma_-))$ corresponding to $\bdry_-$ where all but one of the components is a trivial cylinder; or
\item[(iii)] the gluing of an element of some $\mathcal{Z}(\Kur(\gamma_+,\bs\gamma_+'))$ of index $1$ corresponding to $\bdry_+$ and an element of some $\mathcal{Z}^{[0,1]}(\Kur(\bs\gamma_+',\bs\gamma_-))$ of index $-1$ corresponding to $\Omega_K$. (In particular, if $\op{ind}< -1$, then $\mathcal{Z}^{[0,1]}(\Kur(\bs\gamma_+',\bs\gamma_-))=\varnothing$ and $\mathcal{Z}^{[0,1]}(\Kur(\gamma_+,\bs\gamma_-'))=\varnothing$.)
\ee

The signs follow from the proof of Proposition \ref{dga} and \cite[Section 9.2.2]{BH}. In particular, the sign difference of the two terms on right-hand side of Equation~\eqref{chain homotopy equation} comes from the fact that for the Case (ii) gluing, translating a representative of an element in $\mathcal{Z}(\Kur(\gamma_+,\bs\gamma_+'))$ along the positive $\R$-direction moves the glued curve towards boundary; for the Case (iii) gluing, translating a representative of an element in $\mathcal{Z}(\Kur(\bs\gamma_-',\bs\gamma_-))$ along the positive $\R$-direction moves the glued curve away from the boundary.  \cb
\end{proof}

\subsubsection{Proof of invariance} \label{subsubsection: proof of invariance}

For $(i,j)=(1,2), (i,j)=(2,3)$, or $(i,j)=(1,3)$, let $(\widehat W_{ij}, \widehat \alpha_{ij})$ be the completion of a Liouville cobordism $(W_{ij}, \alpha_{ij})$ from $(M_i,\alpha_i)$ to $(M_j,\alpha_j)$ such that $(W_{12},\alpha_{12},J_{12})$ and $(W_{23},\alpha_{23},J_{23})$ can be glued to give $(W_{13},\alpha_{13},J_{13})$ and let
$$\mathcal C_{ij}=(M_i,M_j, L_i,L_j, \alpha_i,\alpha_j, J_i,J_j, \Kur^{L_i},\Kur^{L_j}, \mathfrak{S}_i,\mathfrak{S}_j,  \mathcal{S}^{L_i}_{\alpha_i}, \mathcal{S}^{L_j}_{\alpha_j}, \widehat W_{ij}, \widehat \alpha_{ij}, J_{ij}, \Kur^{(L_+, L_-)}_{ij},\mathfrak{S}_{ij} )$$
be a collection of data as in Section \ref{chain map} for $(\widehat W_{ij}, \widehat \alpha_{ij})$.  Using the data $\mathcal C_{ij}$ we obtain a chain map
$$\Phi_{ij}: \mathfrak A^{L_i}\otimes_{\Q[H_2(M_i;\Z)]}\Q[H_2(W_{ij};\Z)]\to \mathfrak A^{L_j}\otimes_{\Q[H_2(M_j;\Z)]}\Q[H_2(W_{ij};\Z)].$$

\begin{prop}[Composition of chain maps] \label{prop: composition}
There exists a degree 1 map
$$\Omega_K:\mathfrak A^{L_1}\otimes_{\Q[H_2(M_1;\Z)]}\Q[H_2(W_{13};\Z)]\to \mathfrak A^{L_3} \otimes_{\Q[H_2(M_3;\Z)]}\Q[H_2(W_{13};\Z)]$$
such that, after the appropriate base changes (but keeping the same notation),
$$\Phi_{13}-\Phi_{23}\Phi_{12}=\partial\circ \Omega_K - \Omega_K\circ\partial.$$
In particular, $\Phi_{23*}\Phi_{12*}=\Phi_{13*}.$
\end{prop}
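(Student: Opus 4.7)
The plan is to promote the standard ``neck-stretching'' cobordism argument to the semi-global Kuranishi setting. I will consider a one-parameter family of completed exact symplectic cobordisms $\{(\widehat W^\tau,\widehat\alpha^\tau,J^\tau)\}_{\tau\in[0,1]}$ from $(M_1,\alpha_1)$ to $(M_3,\alpha_3)$ such that $(\widehat W^0,\widehat\alpha^0,J^0)=(\widehat W_{13},\widehat\alpha_{13},J_{13})$, while $\widehat W^\tau$ for $\tau\in[1-\epsilon,1]$ inserts an increasingly long neck $[-R_\tau,R_\tau]\times M_2$ with $\widehat\alpha^\tau=e^s\alpha_2$ and $J^\tau$ an $L_2$-simple almost complex structure on the neck, so that in the limit $\tau\to 1$ the cobordism SFT-breaks into $(\widehat W_{12},\widehat\alpha_{12},J_{12})$ on top and $(\widehat W_{23},\widehat\alpha_{23},J_{23})$ on the bottom. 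We arrange $(\widehat\alpha^\tau,J^\tau)$ to be $\tau$-independent on $[0,\epsilon]$ and $[1-\epsilon,1]$ and $(L_1,L_3)$-end-generic.

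\textbf{Semi-global Kuranishi data over $[0,1]$.} Following Section~\ref{subsection: chain homotopy case}, I will construct semi-global Kuranishi charts $\pi_T^{\{J^\tau\}}\colon\E_T^{\{J^\tau\}}\to \V_T^{\{J^\tau\}}$ and multisections ${\frak s}_T^{\{J^\tau\}}$ over $\tau\in[0,1]$. The novelty is that the relevant CH trees must be enlarged to account for the $\tau\to 1$ limit: in addition to the (Cob) vertices used in Section~\ref{subsection: cob case}, each maximal oriented path in the ``limit'' CH tree now carries exactly \emph{two} (Cob) vertices, an upper one labeled by a moduli space in $\widehat W_{12}$ and a lower one labeled by a moduli space in $\widehat W_{23}$, connected by a new ``$M_2$-edge'' corresponding to a Reeb orbit in $\mathcal{P}^{L_2}_{\alpha_2}$. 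On $[0,\epsilon]$ the data agrees with $\Kur^{(L_1,L_3)}_{13}$; on $[1-\epsilon,1]$ the charts are products (matched via the gluing maps of Section~\ref{section: gluing}) of the previously chosen $\Kur^{(L_1,L_2)}_{12}$ and $\Kur^{(L_2,L_3)}_{23}$ and their $M_2$-end semi-global structures from $\Kur^{L_2}$. The orbibundle stabilizations near the $M_2$-edge use the asymptotic eigenfunctions along the stretched neck exactly as in Section~\ref{subsection: H does not hold}. The multisections are constructed inductively on complexity using the basic perturbation scheme of Section~\ref{subsubsection: basic}, made generic on the open set $\tau\in(\epsilon,1-\epsilon)$.

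\textbf{Definition of $\Omega_K$ and boundary count.} For $\bs\gamma_+\in\mathcal{P}^{L_1}_{\alpha_1}$ a $\vartheta_+$-sorted tuple, I will set
$$\Omega_K(\bs\gamma_+)=\sum_{\bs\gamma_-}\sum_{A\in H_2(W_{13};\Z)}\frac{\kappa_{\bs\gamma_+,\bs\gamma_-}}{m_{\bs\gamma_{-,1}}\cdots m_{\bs\gamma_{-,k}}}\,e^A\,\bs\gamma_{-,1}\cdots\bs\gamma_{-,k},$$
where $\kappa_{\bs\gamma_+,\bs\gamma_-}$ is the weighted signed count of the $\op{ind}=-1$ part of $\mathcal{Z}\!\left(\Kur^{(L_1,L_3)}(\{(\widehat\alpha^\tau,J^\tau)\}_{\tau\in[0,1]};\bs\gamma_+;\bs\gamma_-)\right)$ in class $A$, and extend by the module structure. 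The identity $\Phi_{23}\Phi_{12}-\Phi_{13}=\bdry\circ\Omega_K+\Omega_K\circ\bdry$ will be read off the signed weighted count of the boundary of the compact oriented weighted branched $1$-manifolds furnished by the $\op{ind}=0$ family moduli spaces. These boundary points partition into four classes: (i) $\tau=0$ ends, contributing $\Phi_{13}$; (ii) $\tau=1$ ends, contributing $\Phi_{23}\circ\Phi_{12}$ by the gluing theorems of Section~\ref{section: gluing} and our choice of product structure near $\tau=1$; (iii) SFT breakings at $\tau\in(\epsilon,1-\epsilon)$ with a symplectization level from $\Kur^{L_1}$ above a cobordism level, contributing $\Omega_K\circ\bdry$; and (iv) SFT breakings at $\tau\in(\epsilon,1-\epsilon)$ with a symplectization level from $\Kur^{L_3}$ below a cobordism level, contributing $\bdry\circ\Omega_K$. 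Coherent orientations (O1)--(O4) and the combinatorial factors $m_{\bs\gamma}$ are bookkept exactly as in the proofs of Propositions~\ref{dga} and \ref{prop: chain homotopy}. Passing to homology gives $\Phi_{23*}\Phi_{12*}=\Phi_{13*}$.

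\textbf{Main obstacle.} The principal technical work is not the boundary-count bookkeeping, which is formal, but the construction of a single semi-global Kuranishi structure over $[0,1]$ that simultaneously: matches $\Kur^{(L_1,L_3)}_{13}$ at $\tau=0$; matches the appropriate gluing-theoretic product of $\Kur^{(L_1,L_2)}_{12}$ and $\Kur^{(L_2,L_3)}_{23}$ over $\R\times M_2$ at $\tau=1$; and is consistent with all previously constructed boundary charts indexed by the enlarged family of ``two-(Cob)-vertex'' CH trees. Concretely, this requires extending the combinatorics of Section~\ref{subsection: cob case} (polytopes $\zeta(T)$ governing the corner geometry at $\nl_e\to\infty$) to accommodate the additional $M_2$-edge, and iterating the stabilization and interpolation procedures of Sections~\ref{subsection: stabilization}, \ref{subsection: multisections on higher products}, and \ref{subsection: H does not hold} to guarantee compatibility of the obstruction orbibundles and multisections across all strata. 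Once these compatible choices are made, the remainder of the argument follows the same template as Proposition~\ref{prop: chain homotopy}.
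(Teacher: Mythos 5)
Your proposal is correct and follows what the paper intends: the paper's own proof is just the one-line remark that the argument is ``almost the same as that of Proposition~\ref{dga}'', and the neck-stretching family $\{(\widehat W^\tau,\widehat\alpha^\tau,J^\tau)\}$ interpolating between $W_{13}$ and the broken building $W_{12}\cup W_{23}$, handled with the parametrized Kuranishi machinery of Section~\ref{subsection: chain homotopy case} and the boundary bookkeeping of Propositions~\ref{dga} and \ref{prop: chain homotopy}, is exactly the intended realization. You in fact supply more detail than the paper does, and you correctly isolate the genuine work (the compatible Kuranishi structure over $[0,1]$ with the enlarged two-(Cob)-vertex CH trees) from the formal boundary count.
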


\begin{proof}
The proof is almost the same as that of Proposition \ref{dga}.
\end{proof}

Finally we are ready to prove the following:

\begin{thm}
The contact homology algebra $HC(\mathcal D)$ is independent of the choices $\mathcal D$ and hence is an invariant of $(M,\xi)$.
\end{thm}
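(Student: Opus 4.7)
The plan is to construct mutually inverse isomorphisms between $HC(\mathcal{D}_1)$ and $HC(\mathcal{D}_2)$ for any two choices of data by building a common interleaved data set and invoking the cobordism machinery of Sections~\ref{chain map}--\ref{chain homotopy}. Write $\mathcal{D}_j$ with superscript $(j)$; in particular, each $\mathcal{D}_j$ provides an increasing sequence $L_i^{(j)} \to \infty$, contact forms $\alpha_i^{(j)} = \varphi_i^{(j)} \alpha^{(j)}$, almost complex structures, semi-global Kuranishi structures, trivializing surfaces, and interpolating cobordism data on $\R\times M$.

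First, I would construct a single admissible data set $\widetilde{\mathcal{D}}$ that refines both $\mathcal{D}_1$ and $\mathcal{D}_2$. Writing $\alpha^{(2)} = f \alpha^{(1)}$ for a positive function $f$, each $\alpha_i^{(2)}$ is the rescaling $(\varphi_i^{(2)} f)\,\alpha^{(1)}$ of the reference form $\alpha^{(1)}$. After replacing one sequence by a rescaled cofinal subsequence (which preserves the isomorphism type of the direct limit), one may arrange an increasing sequence $\widetilde L_1 < \widetilde L_2 < \dots \to \infty$ that interleaves cofinal subsequences of $\{L_i^{(1)}\}$ and $\{L_j^{(2)}\}$, and at each $\widetilde L_k$ select the corresponding $L$-simple pair and Kuranishi structure from either $\mathcal{D}_1$ or $\mathcal{D}_2$. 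Between consecutive levels, pick an exact symplectic cobordism on $\R \times M$ with a compatible $(L_+,L_-)$-simple, $(L_+,L_-)$-end-generic pair $(\widehat{\alpha}, J)$ and a compatible semi-global Kuranishi structure (which exist by Section~\ref{subsection: cob case}). The result is an admissible $\widetilde{\mathcal{D}}$ with direct limit $HC(\widetilde{\mathcal{D}})$.

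Second, since the $\mathcal{D}_j$-subsequences are cofinal in $\widetilde{\mathcal{D}}$, there are natural algebra maps $\Psi_j: HC(\mathcal{D}_j) \to HC(\widetilde{\mathcal{D}})$, and it suffices to show each $\Psi_j$ is an isomorphism. Applying Proposition~\ref{prop: composition} iteratively, the composition of the $\widetilde{\mathcal{D}}$-cobordism maps from a level $L_i^{(j)}$ to a later level $L_{i'}^{(j)}$ agrees up to chain homotopy with the chain map induced by a single glued exact symplectic cobordism between the two $\mathcal{D}_j$-levels. By Proposition~\ref{prop: chain homotopy} (or its refinement Proposition~\ref{prop: chain homotopy ver 2}), this glued chain map is chain homotopic to the connecting map $\Phi_{*}$ of $\mathcal{D}_j$, since the two cobordisms share the same pair of contact ends and can be connected by a homotopy of cobordism data. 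Hence the cofinal subsystem in $\widetilde{\mathcal{D}}$ coincides, at the level of the direct limit, with the direct system defining $HC(\mathcal{D}_j)$, so $\Psi_j$ is an isomorphism, yielding $\Psi_2^{-1} \circ \Psi_1: HC(\mathcal{D}_1) \stackrel{\sim}{\to} HC(\mathcal{D}_2)$. Independence of the choice of trivializing surfaces $\mathcal{S}^{L_i}_{\alpha_i}$ is a separate, elementary check: a change of trivializing surfaces induces an explicit isomorphism of the generators (absorbing the discrepancy into $e^A$-factors as in Section~\ref{subsubsection: grading}) which intertwines differentials and chain maps.

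The main obstacle is the homotopy statement underlying Step~2: that any two exact symplectic cobordisms between the same pair of contact ends, each equipped with compatible $(L_+,L_-)$-simple, end-generic pairs and semi-global Kuranishi structures, are connected by a $1$-parameter family $\{(\widehat{\alpha}^\tau, J^\tau, \Kur^\tau)\}_{\tau\in[0,1]}$ of the type required by Proposition~\ref{prop: chain homotopy}. This reduces to (i) showing the space of $(L_+,L_-)$-simple, end-generic pairs with fixed ends is path-connected, via a parametric version of the perturbation argument of Lemma~\ref{end generic}, and (ii) constructing a $1$-parameter family of semi-global Kuranishi structures with prescribed boundary behavior, using the same inductive scheme of Section~\ref{section: construction of semi-global} carried out in families over $[0,1]$. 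Once these parametric statements are established, the rest of the argument is formal manipulation of direct limits and chain-homotopy equivalences.
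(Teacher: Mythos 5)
Your proposal rests on the same core machinery as the paper's proof: exact symplectic cobordism maps between levels of the two data sets, the chain homotopy result (Proposition~\ref{prop: chain homotopy}), the composition result (Proposition~\ref{prop: composition}), and formal direct-limit manipulation. The difference is organizational. The paper never forms a third data set $\widetilde{\mathcal{D}}$: it directly defines $\Psi_{i*}: HC^{L_i}\to HC^{L'_{j(i)}}$ and $\Theta_{j*}: HC^{L'_j}\to HC^{L_{i(j)}}$ as the maps induced by cobordisms connecting $\mathcal{D}$-levels to $\mathcal{D}'$-levels, verifies the commutation relations
$\Psi_{i+1*}\Phi_{i*}=\Phi'_{j(i+1)-1*}\cdots\Phi'_{j(i)*}\Psi_{i*}$
(and the symmetric one) so that $\Psi_*$ and $\Theta_*$ are well defined on the limits, and then checks directly that $\Theta_{j(k)*}\Psi_{k*}=\Phi_{i(j(k))-1*}\cdots\Phi_{k*}$, which forces $\Theta_*\Psi_*=\op{id}$ and $\Psi_*\Theta_*=\op{id}$. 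Your version instead routes both direct limits into the auxiliary $HC(\widetilde{\mathcal{D}})$ and argues cofinality.

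The detour is where you accumulate a genuine risk. For $\widetilde{\mathcal{D}}$ to be admissible in the sense the paper requires, you need a single reference form $\alpha$ and a monotone sequence of positive functions $\varphi_k$, each close to $1$, with $\varphi_k\alpha$ being $\widetilde L_k$-simple. If $\alpha^{(2)}=f\alpha^{(1)}$ with $f$ not $C^0$-close to $1$, the interleaved functions $\varphi_j^{(2)}f$ need not be close to $1$, and the monotonicity $\varphi_k<\varphi_{k+1}$ may fail across the interleaving unless you rescale — and rescaling by constants changes the action thresholds $\widetilde L_k$, so "replacing by a rescaled cofinal subsequence" needs to be done carefully to keep both cofinality and monotonicity. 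None of this is impossible, but it is a real technical issue the paper simply does not face, since its proof only requires an exact symplectic cobordism with each pair $(\alpha_i,\alpha'_{j(i)})$ (resp.\ $(\alpha'_j,\alpha_{i(j)})$) at the ends, which is a much weaker demand than packaging everything into one admissible $\mathcal{D}$.

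On your acknowledged "main obstacle": the needed one-parameter family of $(L_+,L_-)$-simple, end-generic pairs together with a compatible one-parameter family of semi-global Kuranishi structures is exactly the content of Section~\ref{subsection: chain homotopy case} and Propositions~\ref{prop: chain homotopy} and~\ref{prop: chain homotopy ver 2}, so that is not an actual gap in the paper — it is already available. So the substantive issue with your proposal is not the parametric construction (already done) but the admissibility of the interleaved data set. If you replace Step~1 by the paper's direct construction of $\Psi_*$ and $\Theta_*$, the remaining Step~2 collapses to the paper's verification and the proof goes through cleanly.
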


\begin{proof}
Given two choices of data $\mathcal D$ and $\mathcal D'$, there exist two direct limits
\begin{align*}
HC(\mathcal D)&=\lim_{\longrightarrow} HC^{L_i}(\alpha_i,J_i, \Kur^{L_i}(\alpha_i,J_i),\mathfrak{S}_i),\\
HC(\mathcal D')&=\lim_{\longrightarrow} HC^{L'_i}(\alpha'_i,J'_i, \Kur^{L'_i}(\alpha'_i,J'_i),\mathfrak{S}_i'),
\end{align*}
where the maps for $\mathcal{D}$ are written as $\Phi_{i*}$ and the maps for $\mathcal{D}'$ are written as $\Phi_{i*}'$.
We abbreviate:
\begin{align*}
HC^{L_i}&:=HC^{L_i}(\alpha_i,J_i, \Kur^{L_i}(\alpha_i,J_i),\mathfrak{S}_i),\\
HC^{L'_i}&:=HC^{L'_i}(\alpha'_i,J'_i, \Kur^{L'_i}(\alpha'_i,J'_i),\mathfrak{S}_i').
\end{align*}
For each $i$, let $j(i)$ be the smallest $j$ such that $L_i<L'_j$.
Given a Liouville cobordism from $(M,\alpha_i,J_i)$ to $(M,\alpha'_{j(i)},J'_{j(i)})$ with Kuranishi data $\Kur^{L_i}(\alpha_i, J_i), \mathfrak{S}_i$ and $\Kur^{L_{j(i)}'}(\alpha_{j(i)}',J_{j(i)}'),\mathfrak{S}'_{j(i)}$, one can define a map $\Psi_{i*}: HC^{L_i} \to HC^{L'_{j(i)}}$ which satisfies
$$\Psi_{i+1*}\Phi_{i*}=\Phi_{j(i+1)-1*}'\dots \Phi_{j(i)+1*}' \Phi_{j(i)*}' \Psi_{i*}.$$
Similarly, we obtain maps $\Theta_{j*}:HC^{L'_j}\to HC^{L_{i(j)}}$ for each $j\in \mathbb N$ that satisfy $$\Theta_{j+1*}\Phi'_{j*}=\Phi_{i(j+1)-1*}\dots\Phi_{i(j)+1*}\Phi_{i(j)*}\Theta_{j*}.$$
Hence we obtain two direct limit maps
$$\Psi_*:  HC(\mathcal D)\to HC(\mathcal D'), \quad \Theta_*: HC(\mathcal D') \to HC(\mathcal D).$$
Note that, for each $k\in \mathbb N$, the two maps
$$\Theta_{j(k)*}\Psi_{k*} \text{ and } \Phi_{i(j(k))-1*}\dots \Phi_{k+1*} \Phi_{k*}: HC^{L_k}\to HC^{L_{i(j(k))}}$$
are the same. Therefore $\Theta_* \Psi_*=\op{id}$; similarly $\Psi_* \Theta_*=\op{id}$.
\end{proof}

\section*{Appendix}

In the Appendix we prove Lemma \ref{shenyang} and Lemma \ref{lemma: in Wkp and C1}. 
We first start with some basic facts about Sobolev spaces. 

\begin{lemma} \label{fg}
Suppose that $p=2$ and $k\geq3$.  If $g,h\in W^{k,p}_{\op{loc}}(\R^2)$, then $g\cdot h \in W^{k,p}_{\op{loc}}(\R^2)$.
\end{lemma}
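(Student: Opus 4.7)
The plan is to reduce the claim to a finite number of pointwise products of derivatives via the Leibniz rule, and then to bound each product in $L^p_{\op{loc}}$ by a Sobolev embedding together with H\"older's inequality. The pivotal numerology is that in dimension $m=2$ with $p=2$ we have $W^{2,2}_{\op{loc}}\hookrightarrow C^0_{\op{loc}}$ (so $W^{2,2}_{\op{loc}}\subset L^\infty_{\op{loc}}$) and $W^{1,2}_{\op{loc}}\hookrightarrow L^q_{\op{loc}}$ for every $q<\infty$; since $k\geq 3$ this will leave enough ``budget'' of derivatives on one factor of any Leibniz term to put it into $L^\infty_{\op{loc}}$.

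Fix a multi-index $\alpha$ with $|\alpha|\leq k$ and a compact set $\Omega\Subset\R^2$. By the Leibniz rule (applied distributionally, then justified by approximating $g,h$ in $W^{k,p}(\Omega')$ by smooth functions on a slightly larger relatively compact $\Omega'$),
\[
\partial^\alpha(gh)=\sum_{\beta\leq\alpha}\binom{\alpha}{\beta}\,\partial^\beta g\cdot\partial^{\alpha-\beta}h,
\]
so it suffices to show each term lies in $L^p(\Omega)$. I split the index set into cases according to $|\beta|$ (by symmetry one may assume $|\beta|\leq |\alpha-\beta|$, so $|\alpha-\beta|\geq \lceil|\alpha|/2\rceil$). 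Since $k\geq 3$ and $|\alpha|\leq k$, at least one of $|\beta|$, $|\alpha-\beta|$ is $\leq k-2$.

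In the case $|\beta|\leq k-2$: then $\partial^\beta g\in W^{2,2}_{\op{loc}}\hookrightarrow L^\infty_{\op{loc}}$, while $\partial^{\alpha-\beta}h\in L^2_{\op{loc}}$ (since $|\alpha-\beta|\leq k$ and $h\in W^{k,2}_{\op{loc}}$); H\"older gives $\partial^\beta g\cdot \partial^{\alpha-\beta}h\in L^2(\Omega)$ with
\[
\|\partial^\beta g\cdot \partial^{\alpha-\beta}h\|_{L^2(\Omega)}\leq \|\partial^\beta g\|_{L^\infty(\Omega)}\|\partial^{\alpha-\beta}h\|_{L^2(\Omega)}\leq C\,\|g\|_{W^{k,2}(\Omega')}\|h\|_{W^{k,2}(\Omega')}.
\]
The symmetric case $|\alpha-\beta|\leq k-2$ is handled identically. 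Summing over $\beta$ and over $|\alpha|\leq k$ yields $gh\in W^{k,2}(\Omega)$ with the algebra-type estimate $\|gh\|_{W^{k,2}(\Omega)}\leq C\|g\|_{W^{k,2}(\Omega')}\|h\|_{W^{k,2}(\Omega')}$. Since $\Omega$ was arbitrary, $gh\in W^{k,p}_{\op{loc}}(\R^2)$.

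The only real obstacle is checking that one of the two factors in each Leibniz term always has $\leq k-2$ derivatives on it, which is precisely the point where the hypothesis $k\geq 3$ is used; if $k=2$ the middle term $|\beta|=|\alpha-\beta|=1$ with $|\alpha|=2$ would give $W^{1,2}\cdot W^{1,2}$, and $W^{1,2}\not\hookrightarrow L^\infty$ in dimension $2$, so the elementary argument above would fail.
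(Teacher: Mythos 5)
Your proof is correct and follows essentially the same route as the paper's: expand $\partial^\alpha(gh)$ by the Leibniz rule, note that one factor in each term carries at most $k-2$ derivatives (this is where $k\geq 3$ enters), put that factor in $L^\infty_{\op{loc}}$ via the two-dimensional Sobolev embedding, and conclude with H\"older. Your version is slightly more careful about justifying the distributional Leibniz rule and recording the resulting algebra estimate, but the underlying argument is identical.
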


\begin{proof}
We prove that $ (g\cdot h)^{(m)}\in L^p_{\op{loc}}(\R^2)$ for any $0\leq m \leq k$. Let $\Omega\subset \R^2$ be a domain with smooth boundary and compact closure. For $m=0$, we have $g,h\in C^0(\Omega)$\footnote{More precisely, $g|_\Omega,h|_\Omega$ are in $C^0(\Omega)$; by abuse of notation we often write $g,h$ for $g|_\Omega,h|_\Omega$.} by the Sobolev embedding theorem, which states that $\|f\|_{C^r(\Omega)}\leq C_\Omega\|f\|_{W^{k,p}(\Omega)}$ for some constant $C_\Omega$ if $k-\frac{n}{p}=k-\frac{2}{p}>r$.  Hence $g\cdot h\in C^0(\Omega)$ and $g\cdot h \in L^p(\Omega)$. For $m>0$, we have:
$$\|(g\cdot h)^{(m)}\|_{L^p(\Omega)}\leq \sum_{i=0}^m \binom{m}{i}\|g^{(i)}\cdot h^{(m-i)}\|_{L^p(\Omega)}.$$
Observe that one of $i$ or $m-i$ is $\leq \frac{m}{2}\leq \frac{k}{2}$.  This implies that $g^{(i)}\in C^0(\Omega)$ or $h^{(m-i)}\in  C^0(\Omega)$ since $g,h\in C^{k-1}(\Omega)$ by the Sobolev embedding theorem. Hence $\|(g\cdot h)^{(m)}\|_{L^p(\Omega)}<\infty.$
\end{proof}

\begin{lemma} \label{g(h)}
Suppose that $p=2$ and $k\geq 3$. If $G\in C^{\infty}(\mathbb R^2, \mathbb R)$ and $H\in W^{k,p}_{\op{loc}}(\mathbb R^2, \mathbb R^2)$,
then $G\circ H\in W^{k,p}_{\op{loc}}(\mathbb R^2, \mathbb R)$.
\end{lemma}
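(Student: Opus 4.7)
The plan is to control each weak derivative $D^m(G\circ H)$ of order $0\le m\le k$ in $L^p_{\op{loc}}$ using the Fa\`a di Bruno formula, reducing everything to Sobolev embedding in dimension $2$ plus the observation that most factors can be made locally bounded. First I would apply the Sobolev embedding $W^{k,p}_{\op{loc}}(\R^2)\hookrightarrow C^{k-2}_{\op{loc}}(\R^2)$, valid since $p=2$ and $k\ge 3$ give $k-n/p=k-1>k-2$, so $H$ is continuous and locally bounded. Composing with the smooth $G$ (and applying this to all derivatives of $G$) then shows that $(D^jG)(H)$ is continuous and locally bounded on $\R^2$ for every $j$.

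Second, Fa\`a di Bruno writes
\begin{equation*}
D^m(G\circ H) \;=\; \sum_{j,\vec{\imath}}\; c_{j,\vec{\imath}}\,(D^jG)(H)\cdot \prod_{l=1}^{j}D^{i_l}H,
\end{equation*}
where each multi-index $\vec{\imath}=(i_1,\dots,i_j)$ satisfies $i_l\ge 1$ and $\sum_l i_l=m$. The key arithmetic observation is that, for $m\le k$ and $k\ge 3$, at most one factor $D^{i_l}H$ in each monomial can satisfy $i_l\ge k-1$: indeed, if two did, then $\sum_l i_l\ge 2(k-1)\ge k+1>m$, a contradiction.

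Third, for every factor with $i_l\le k-2$, we have $D^{i_l}H\in W^{k-i_l,p}_{\op{loc}}\subset W^{2,p}_{\op{loc}}\subset C^0_{\op{loc}}$ by Sobolev embedding with $k-i_l\ge 2>1=n/p$, so these factors are locally bounded; the at-most-one remaining factor with $i_l\in\{k-1,k\}$ lies in $W^{k-i_l,p}_{\op{loc}}\subset L^p_{\op{loc}}$. Since a locally bounded function times an $L^p_{\op{loc}}$ function stays in $L^p_{\op{loc}}$, and the prefactor $(D^jG)(H)$ is locally bounded by step one, each monomial lies in $L^p_{\op{loc}}$, and summing finitely many such monomials gives $D^m(G\circ H)\in L^p_{\op{loc}}$.

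The main subtlety I expect is justifying the Fa\`a di Bruno identity in the weak sense for $H\in W^{k,p}_{\op{loc}}$ rather than for smooth $H$. I would handle this by the standard mollification argument: approximate $H$ by smooth $H_n\to H$ in $W^{k,p}_{\op{loc}}$, apply the classical chain rule to $G\circ H_n$, and pass to the limit in $L^p_{\op{loc}}$ using precisely the estimates above (the locally bounded factors converge uniformly on compacts since $H_n\to H$ in $C^{k-2}_{\op{loc}}$, while the single $L^p_{\op{loc}}$ factor converges in $L^p_{\op{loc}}$). This identifies the distributional derivatives of $G\circ H$ with the limits and yields the claim.
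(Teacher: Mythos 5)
Your argument is correct, and it is carried out in noticeably more detail than the paper's. The paper proves the lemma by a one-step reduction: it notes $G(H)\in L^p(\Omega)$ by continuity, writes $(G(H))'=G'(H)\cdot H'$, observes that $G'(H)$ has the same regularity as $G(H)$ ``by the same argument,'' and then invokes the product lemma (Lemma~\ref{fg}) to place $(G(H))'$ in $W^{k-1,p}$, implicitly iterating. You instead expand all derivatives up to order $k$ at once via Fa\`a di Bruno, use the counting observation that at most one factor $D^{i_l}H$ in each monomial can have order $\geq k-1$ (so all other factors land in $C^0_{\op{loc}}$ by Sobolev embedding while the exceptional one stays in $L^p_{\op{loc}}$), and justify the weak chain rule by mollification. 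What your route buys is that it bypasses the product lemma entirely and makes the ``loss of derivatives'' bookkeeping explicit; what the paper's route buys is brevity, since Lemma~\ref{fg} has already done the multiplicative estimates. Two small points in your favor: you use the correct embedding exponent $W^{k,2}(\Omega)\subset C^{k-2}(\Omega)$ in dimension two (the paper writes $C^{k-1}$, which is off by one, though its argument survives after the correction), and you explicitly address the validity of the chain rule for Sobolev maps, which the paper takes for granted.
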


\begin{proof}
By the Sobolev embedding theorem, $H\in C^{k-1}(\Omega,\R^2)$, where $\Omega$ is as before, so $G(H)=G\circ H\in C^{k-1}(\Omega)\subset W^{k-1,p}(\Omega)$. This in particular implies that $G(H)\in L^p(\Omega)$. By the same argument, we also have $G'(H)\in W^{k-1,p}(\Omega)$.  Hence $(G(H))'=G'(H)\cdot H' \in W^{k-1,p}(\Omega)$ by Lemma \ref{fg}.  The lemma then follows.
%
%
%
%
%
\end{proof}

We assume that $p=2$ and $k\geq 4$ (see Remark~\ref{rmk: values of k and p}). We focus on a positive puncture $p$ of $\dot F$ around which the maps that we are interested in converge to $\gamma_{+}$. Let $\dot D\subset \dot F$ be an open disk about $p$ and let $(\sigma,\tau)$ be smooth coordinates on $\dot D$. For any $u$ of the class $W^{k+1,p}$ that is sufficiently close to a $J$-holomorphic curve, there is a map $$\phi_u:\dot D\to \mathbb R\times S^1$$ defined by
$$(\sigma,\tau)\mapsto (s,t)=(s\circ u(\sigma, \tau), t\circ u(\sigma, \tau)),$$
where $(\sigma,\tau)$ is a cylindrical holomorphic coordinate of $\dot D$,
$s\circ u$ is the $s$-coordinate of $u$, and $t\circ u$ is the $t$-coordinate of $u$. Then $(s,t)$, viewed as a function of $(\sigma, \tau)$, is of class $W^{k+1,p}_{\op{loc}}.$

\begin{proof}[Proof of Lemma \ref{shenyang}]
It suffices to show that $I(\cdot, s_0)$ is $C^1$ for each $s_0>T'$.

Given $(\mathcal F,u)\in \mathcal B'_{(\mathcal F_0, u_0)}$ and $\xi \in W^{k+1,p}_{\delta}(\dot F, u^{*}T(\mathbb R \times M))$, let $\{u^{\ell}\}_{-\epsilon< \ell<\epsilon}$ be a smooth path in $\mathcal B'_{(\mathcal F_0, u_0)}$ with $u^{0}=u$ and $\frac{d}{d\ell}\big|_{\ell=0}u^\ell =\xi$. Let us decompose 
$\xi=(\xi^{s,t},\xi^{x,y})$ into the $(s,t)$- and $(x,y)$-components and write $\eta^\ell=\eta+\ell\xi^{x,y}$ for the $(x,y)$-component of $u^\ell$.  Then
\begin{align*}
dI((\mathcal F, & u),s_0)(\xi, 0)=\tfrac{d}{d\ell}\big|_{\ell=0}I((\mathcal F, u^\ell),s_0)\\
=& \int_0^{\mathcal{A}(\gamma_+)}\tfrac{d}{d\ell}\big|_{\ell=0}\alpha(t,(\eta^\ell\circ \phi^{-1}_{u^\ell})(s_0,t))(\bdry_t+ d(\eta^\ell\circ \phi^{-1}_{u^\ell})(s_0,t)(\bdry_t)) dt.
\end{align*}

Recalling that
$$\alpha=(c(\gamma)+Q(x,y))dt+ \sum_{i=1}^n (x_idy_i-y_idx_i)$$
near $\R\times \gamma_+$, where $Q$ is quadratic in $x$ and $y$, we see that the terms in the integrand (after the term $\tfrac{d}{d\ell}\big|_{\ell=0}$) are quadratic in $\eta^\ell\circ \phi^{-1}_{u^\ell}$ or bilinear in $(\eta^\ell\circ \phi^{-1}_{u^\ell}, \nabla(\eta^\ell\circ \phi^{-1}_{u^\ell}))$.  Since we can use the Sobolev inequality, $C^0$-bounds in the integrand are sufficient and we obtain
\begin{align*}
|dI((\mathcal F, u),s_0)(\xi,0)|\leq \varphi(\|u\|_{W^{k+1,2}})\cdot \|\xi\|_{W^{k+1,2}},
\end{align*}
where $k\geq 4$ (i.e., we are allowed two derivatives) and $\varphi(\|u\|_{W^{k+1,2}})$ is a continuous function that depends on $\|u\|_{W^{k+1,2}}$.
Hence $I$ is differentiable. Similarly, we can show that $I$ is $C^1$. \cb
\end{proof}


In order to prove Lemma \ref{lemma: in Wkp and C1}, we first treat the case $\chi(\dot F)<0$. 
To simplify the notation in Equation~\eqref{f tilde}, we write $\widetilde{f}(s,t)$ for $\widetilde{f}^{\gamma_{\pm,i}}_j(s,t)$, $\rho_{s_{+}}(s)$ for $\frac{\partial \beta^\pm_{s_{\pm,i}}}{\partial s}(s)$ (recall $\rho_{s_+}(s)$ has compact support), and $f(t)$ for $f^{\gamma_{\pm,i}}_j(t)$.
With respect to the coordinates $s,t,x_1,\dots,x_n,y_1,\dots,y_n$,
Equation~\eqref{f tilde} can be written as
\begin{align} \label{section}
\widetilde{f}(s,t):=\rho_{s_{+}}(s)f(t)\otimes \pi_j (ds-idt).
\end{align}

First observe that $\rho_{s_{+}}(s)$ and $f(t)$ 
are smooth functions of $(s,t)$, so viewed as functions of $(\sigma, \tau)$, they are in class $W^{k+1,p}_{\op{loc}}$ by Lemma \ref{g(h)}. Next
the projection $\pi_j$ is smooth, so $\pi_j(ds-idt)\in W_{\op{loc}}^{k+1,p}(\dot F, \wedge^{0,1}T^{*}\dot F)$. Hence $\widetilde f\in W^{k+1,p}_{\op{loc}}(\dot F, \wedge^{0,1}u^{*}T\widehat W)$ by Lemma \ref{fg}, and this proves the first part of Lemma \ref{lemma: in Wkp and C1}, namely $ E^{\ell,\varepsilon}_{(\F,u)}\subset \mathcal{E}_{(\F,u)}$ for all $(\F,u)\in \mathcal{N}(K)$.

For $(j, v)\in \mathcal N(K)$ with $v:\dot F\to \widehat W$ smooth, there exists a small neighborhood $\mathcal D=\mathcal V \times \mathcal W$ of $(j, v)$ inside $\mathcal N(K)$ with $\mathcal V\subset \widetilde{\mathcal U}$ and $\mathcal W\subset \{ \exp (v,\xi)~|~ \xi \in W_\delta^{k+1,p}(\dot F, v^{*}T\widehat W)\}$.
We can trivialize $\mathcal E|_{\mathcal D}$ by identifying
$$\mathcal E_{(j',u)}=W^{k,p}_\delta (\dot F, \wedge^{0,1}_{j'} u^{*}T\widehat W)$$
with
$$\mathcal E_{(j,v)}=W^{k,p}_\delta (\dot F, \wedge ^{0,1}_j v^{*}T\widehat W)$$
for any $(j',u)\in \mathcal D$ via the exponential map. More precisely, we define
$$\Psi: W^{k,p}_\delta (\dot F, \wedge^{0,1}_{j'} u^{*}T\widehat W) \to W^{k,p}_\delta (\dot F, \wedge ^{0,1}_j v^{*}T\widehat W)$$
$$\zeta=\eta \otimes \alpha  \mapsto \op{Par}_{\xi} \eta \otimes \pi_j \alpha,$$
where $\xi=(\exp_v)^{-1} u$, $\eta\in W^{k,p}_\delta(\dot F, u^{*}T\widehat W)$, $\alpha\in W^{k,p}_\delta(\dot F, \wedge^{0,1}T^{*}F)$,
$$\op{Par}_\xi: W^{k,p}_\delta(\dot F, u^{*}T\widehat W)\to W^{k,p}_\delta(\dot F, v^{*}T\widehat W)$$ is the parallel transport along the path $\{ \exp(v,(1-t)\xi)\}_{0\leq t \leq 1}$, and $$\pi_j: W^{k,p}_\delta (\dot F, \wedge^{1}T^{*}F)\to W^{k,p}_\delta (\dot F, \wedge^{0,1}_jT^{*}F)$$
is the projection with respect to $g_{(\dot F,j)}$.

Let $\widetilde f$ be the section of $\mathcal E|_{\mathcal{N}(K)} \to \mathcal N(K)$ with
$$\widetilde f (j',u)(s,t)=\rho_{s_{+}}(s)f(t)\otimes \pi_{j'} (ds-idt),$$
where $(s,t)$ are viewed as coordinates of $\dot F$ via $\phi_u$ as usual.   We show that $\widetilde f$ is a $C^1$-section of $\mathcal E|_{\mathcal N(K)}\to \mathcal N(K)$. In the next several paragraphs we calculate the derivative $d\widetilde f(j',u)$.

We first calculate $d\widetilde f (j',u)(0,\xi)$, where $\{u_{l_1}\}_{-\epsilon\leq l_1 \leq \epsilon}$ is a smooth path in $\mathcal W$ with $u_0=u$ and $\xi= \frac{d}{dl_1}|_{l_1=0}u_{l_1}$:
\begin{align} \label{eqn: part 1}
d\widetilde f (j',u)(0,\xi)=&  \tfrac{d}{dl_1}|_{l_1=0} \widetilde f (j',u_{l_1}) \cb \\
\label{eqn: part 1A}=& \tfrac{d}{dl_1} \rho_{s_{+}(l_1)}(s_{l_1})|_{l_1=0} f(t) \otimes \pi_{j'}(ds-idt)\\
\label{eqn: part 1B} &+\rho_{s_+(0)}(s) \tfrac{d}{dl_1} f(t_{l_1})|_{l_1=0}\otimes \pi_{j'}(ds-idt)\\
\label{eqn: part 1C} &+\rho_{s_+(0)}(s)f(t)\otimes \pi_{j'}\tfrac{d}{dl_1}(ds_{l_1}-idt_{l_1})|_{l_1=0},
\end{align}
where $(s_{l_1}, t_{l_1})=\phi_{u_{l_1}}(\sigma,\tau)$.

Until the end of the Appendix, $W^{k,p}_\delta$ means $W^{k,p}_\delta$ with domain $\dot D$.  We claim that $d\widetilde f(j',u)(0,\xi)\in W^{k,p}_\delta$. Note that the weight $\delta$ can be ignored since $d\widetilde f(j',u)(0,\xi)$ has compact support. Since $\xi \in W^{k+1,p}_\delta$,
\begin{itemize}
\item $\tfrac{d}{dl_1}s_{l_1}|_{l_1=0},\tfrac{d}{dl_1}t_{l_1}|_{l_1=0}$, $\tfrac{d}{dl_1} f(t_{l_1})|_{l_1=0}\in W^{k+1,p}_\delta$; and
\item $\tfrac{d}{dl_1}(ds_{l_1}-idt_{l_1})|_{l_1=0}\in W^{k,p}_\delta$.
\end{itemize}
Next we consider the term $\tfrac{d}{dl_1} \rho_{s_{+}(l_1)}(s_{l_1})|_{l_1=0}$.  If we write $\rho_{s_+}(s)$ as $\rho(s_+,s)$, then $\rho(\cdot, \cdot)$ is smooth in both variables. Observe that
$$\tfrac{d}{dl_1} \rho(s_{+}(l_1),s_{l_1})|_{l_1=0}=\tfrac{\partial \rho}{\partial s_+}(s_+(0),s)\tfrac{d}{dl_1} s_+(l_1)|_{l_1=0}+\tfrac{\partial \rho}{\partial s}(s_+(0),s)\tfrac{d}{dl_1} s_{l_1} |_{l_1=0}.$$

\begin{lemma} \label{s_+C1}
$s_+:\mathcal N(K)\to \mathbb R$ is a $C^1$-map.
\end{lemma}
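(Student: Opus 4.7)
The plan is to express $s_+$ as a composition of $C^1$-maps, with the only nontrivial ingredient being an application of the implicit function theorem to pick out the unique ``angle'' on the cusp boundary that corresponds to the asymptotic marker.

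First I would show that the cusp boundary depends smoothly on the complex structure. By Teichm\"uller theory, the complete finite-volume hyperbolic metric $g_{(\dot F,j)}$ depends smoothly on $j\in\widetilde{\mathcal U}$, and for $\varepsilon>0$ fixed and sufficiently small, the $\varepsilon$-thin cusp $C_{+,i}$ and its boundary circle $\partial C_{+,i}$ vary smoothly with $j$. Choose, once and for all, a smooth family of parametrizations $\gamma_{j}:\R/\Z\hookrightarrow \dot F$ of $\partial C_{+,i}$.

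Next, by the Sobolev embedding discussed in Remark~\ref{rmk:sobolev embedding} (with $k\geq 3$, $p=2$), every $u\in\mathcal N(K)$ is of class $C^r$ with $r\geq 1$, and the evaluation map
$$\op{ev}:\widetilde{\mathcal U}\times\mathcal N(K)\times\R/\Z\to \widehat W,\qquad (j,u,\theta)\mapsto u(\gamma_j(\theta)),$$
is of class $C^1$ (with derivative in the $u$-direction given by $\xi\mapsto \xi(\gamma_j(\theta))$, which is bounded thanks to the $C^0$-embedding). Near the puncture we write $u(\gamma_j(\theta))=(s_u(j,\theta),t_u(j,\theta),\eta_u(j,\theta))$ in simple coordinates, after lifting to the $m(\gamma)$-fold cover so that $t=0$ corresponds to the asymptotic marker $r_{+,i}$. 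Set
$$F:\widetilde{\mathcal U}\times \mathcal N(K)\times \R/\Z\to \R/\mathcal A_\alpha(\gamma)\Z,\qquad F(j,u,\theta):=t_u(j,\theta);$$
this map is $C^1$. Because $u|_{\partial C_{+,i}}$ is $\kappa$-close in the $C^0$-norm (in fact, by the estimates of \cite{HWZ1}, in $C^1$) to the trivial cover $\theta\mapsto(s_0,m(\gamma)\theta,0)$, we have $\partial_\theta F(j,u,\theta)\neq 0$ for all $(j,u)\in\widetilde{\mathcal U}\times\mathcal N(K)$ once $\kappa,\varepsilon$ are small.

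The implicit function theorem (the partial derivative in the one-dimensional $\theta$-factor is nonzero, so no invertibility of an operator on a Banach space is needed) then yields a unique $C^1$-function
$$\theta=\theta(j,u):\widetilde{\mathcal U}\times \mathcal N(K)\to\R/\Z$$
in the neighborhood of the sector selected by the marker $r_{+,i}$ such that $F(j,u,\theta(j,u))=0$, and we finally set
$$s_+(j,u)=s_u(j,\theta(j,u)),$$
which is $C^1$ as a composition of $C^1$-maps. The only real subtlety is the dependence on $u\in W^{k+1,p}_\delta$: the main obstacle is to check that evaluation at a point is an honest $C^1$-map on $W^{k+1,p}_\delta$, which reduces to the Sobolev embedding $W^{k+1,p}\hookrightarrow C^1$ (for $k\geq 3$, $p=2$) and the continuity of point evaluation from $C^0$ into the target.
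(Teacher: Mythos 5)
Your argument for $\chi(\dot F)<0$ is sound and is essentially the paper's argument with the two defining constraints solved in the opposite order: you parametrize the level set $\{r_{j'}=\varepsilon\}$ (the cusp boundary) by $\theta\in\R/\Z$ and then locate $\{t\circ u=0\}$ via the implicit function theorem, whereas the paper parametrizes the arc $\{t\circ u=0\}$ by $s$ via $\phi_u^{-1}(\cdot,0)$ and then locates $\{r_{j'}=\varepsilon\}$. Either way it is the same one-variable IFT application, and both rely on the two ingredients you identify: nondegeneracy of the relevant partial derivative (furnished by the $C^1$-proximity of $u$ to a trivial half-cylinder near the end), and $C^1$-regularity of point evaluation on $W^{k+1,2}_\delta$ with $k\geq 3$.

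The gap is that Lemma~\ref{s_+C1} must also cover $\chi(\dot F)=0$ or $1$, and your argument does not. In that case the hyperbolic metric defining the cusp boundary is $g_{(\ddot F,j)}$ on $\ddot F=\dot F-\mathbf q$, and the auxiliary punctures $\mathbf q$ are themselves functions of $u$, determined by $s'_+(u)$ via (*) of Section~\ref{subsubsection: nonnegative Euler char}. Your fixed family $\gamma_j$ of boundary-circle parametrizations would therefore have to be a two-parameter family $\gamma_{j,u}$, and its $C^1$-dependence on $u$ requires first showing that $\mathbf q=\mathbf q(u)$ is $C^1$ in $u$; the paper's proof does this by reducing to Lemma~\ref{shenyang} (differentiability of the integral $I$, hence of $s'_+$). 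Without that additional step your proof establishes the lemma only when $\chi(\dot F)<0$.
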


The proof of this lemma will be postponed until the very end. By Lemma \ref{s_+C1}, $\tfrac{d}{dl_1} \rho(s_{+}(l_1),s_{l_1})|_{l_1=0}\in W^{k+1,p}_\delta$.  It follows that $d\widetilde f (j',u)(0,\xi)\in W^{k,p}_\delta$.

Next we calculate $d\widetilde f (j',u)(\mathfrak j,0)$ with $\mathfrak j\in T_{j'}\widetilde{ \mathcal U}$. Let $\{j_{l_2}\}_{-\epsilon\leq l_2\leq \epsilon}$ be a smooth path in $\widetilde{\mathcal U}$ with $j_0=j'$ and $\frac{d}{dl_2}|_{l_2=0}j_{l_2}=\mathfrak j$. Then
\begin{align} \label{eqn: part 2}
d\widetilde f (j',u)(\mathfrak j,0)=& \tfrac{d}{dl_2}|_{l_2=0} \widetilde f (j_{l_2},u)\\
\label{eqn: part 2A} =&\tfrac{d}{dl_2} \rho_{s_{+}(l_2)}(s)|_{l_2=0} f(t) \otimes \pi_{j'}(ds-idt)\\
\label{eqn: part 2B} &+\rho_{s_{+}(0)}(s) f(t) \otimes \tfrac{d}{dl_2} \pi_{j_{l_2}}(ds-idt)|_{l_2=0}.
\end{align}
Since $\pi_j$ depends smoothly on $j$, we have $\tfrac{d}{dl_2}\pi_{j_{l_2}}(ds-idt)|_{l_2=0}\in W^{k,p}_{\op{loc}}$. Again we write $\rho_{s_{+}}(s)$ as $\rho(s_+,s)$ and obtain
$$\tfrac{d}{dl_2} \rho(s_{+}(l_2),s)|_{l_2=0}=\tfrac{\partial \rho}{\partial s_+}(s_+(0), s)\tfrac{d}{dl_2} s_+(l_2)|_{l_2=0}.$$
By Lemma \ref{s_+C1}, $\tfrac{d}{dl_2} \rho(s_{+}(l_2),s)|_{l_2=0}\in W^{k+1,p}_\delta$. Hence $d\widetilde f (j',u)(\mathfrak j,0)\in W^{k,p}_\delta$.

We now bound the $W^{k,p}_\delta$-norm of each term in Equations~\eqref{eqn: part 1} and \eqref{eqn: part 2}:
\begin{itemize}
\item \eqref{eqn: part 1A} is bounded above by $C\|\xi\|_{W^{k+1,p}}\cdot C(j')\cdot \|gu\|_{W^{k+1,p}}$;
\item \eqref{eqn: part 1B} by $C\|\xi\|_{W^{k+1,p}}\cdot C(j')\cdot \|gu\|_{W^{k+1,p}}$;
\item \eqref{eqn: part 1C} by $C \cdot C(j') \cdot \|\xi\|_{W^{k+1,p}}$;
\item \eqref{eqn: part 2A} by $C\|\mathfrak j\|\cdot C(j')\cdot \|gu\|_{W^{k+1,p}}$; and
\item \eqref{eqn: part 2B} by $C\|\mathfrak j\|\cdot \|gu\|_{W^{k+1,p}}$.
\end{itemize}
Here $C(j')$ is a positive continuous function of $j'$ and $g$ is a smooth cutoff function which equals one on $s_+(v)-c\leq s\leq s_+(v)+c$ for some $c>0$ and has compact support on $\dot D$.
Hence
$$\|d \widetilde f (j',u)(\mathfrak j,\xi)\|_{W^{k,p}_\delta} \leq C(C(j') + C(j') \cdot \|gu\|_{W^{k+1,p}}+\|gu\|_{W^{k+1,p}}) (\| \mathfrak j\| + \|\xi\|_{W^{k+1,p}_\delta}),$$
which implies that $\widetilde f$ is a differentiable section of $\mathcal E|_{\mathcal{N}(K)}\to \mathcal N(K)$. Almost the same calculation shows that $\widetilde f$ is in $C^1$.

\begin{proof}[Proof of Lemma~\ref{s_+C1}]
Let $r_{j'}:\dot F\to \mathbb R$ be the injectivity radius map, i.e, $r_{j'}$ maps $x\in \dot F$ to the injectivity radius $\op{inj}(x)$ with respect to $g_{(\dot F,j)}$. The map $r_{j}$ is smooth on the ends of $\dot F$ and also smoothly depends on $j'$.  For any $(j',u)\in \mathcal D$, consider the map
\begin{equation}
R_{(j',u)}: [R,\infty) \to \mathbb R^+, \quad s\mapsto r_{j'}(\phi^{-1}_u(s,0)),
\end{equation}
where $R\gg 0$ and depends on $(j',u)$.
Note that $R_{(j',u)}$ is invertible near the positive end.

When $\chi(\dot F)<0$, we have
\begin{align} \label{s_+ explicit}
s_+(j',u)=R_{(j',u)}^{-1}(\varepsilon).
\end{align}
Differentiating the equation
\begin{equation}
r_{j'}(\phi^{-1}_{u_{l_1}}(s_+(j',u_{l_1}),0))=\varepsilon
\end{equation}
with respect to $l_1$ and rearranging, we obtain:
\begin{align}
ds_+(j',u)(0,\xi)=\tfrac{d}{dl_1}s_+(j',u_{l_1})|_{l_1=0}= -\frac{d r_{j'}(z)\circ (d\phi_u (z))^{-1} (\xi^{s,t}(z)) }{dr_{j'}(z)\circ (d\phi_u(z))^{-1} (\partial_s) },
\end{align}
where $\partial_s$ is a vector field on $\mathbb R^+ \times S^1$, $\xi^{s,t}(z)$ is the $(s,t)$-component of $\xi$ at $z$,  and $z=\phi_u^{-1}(s_+(j',u),0).$  Here $\frac{d}{dl_1} \phi^{-1}_{u_{l_1}}(s_+(j',u),0))|_{l_1=0}=(d\phi_u (z))^{-1} (\xi^{s,t}(z))$.  Note that all terms are evaluated at the point $z$ and $z$ continuously depends on $(j',u)$, so $d r_{j'}(z)\circ (d\phi_u (z))^{-1}$ and $dr_{j'}(z)\circ (d\phi_u(z))^{-1} (\partial_s)$ depend continuously on $(j',u)$.  Hence $ds_+(j',u)(0,\cdot)$ is a bounded linear map from $W_\delta^{k+1,p}(\dot F, v^{*}T\widehat W)$ to $\mathbb R$ which depends continuously on $(j',u)$.

Next, differentiating the equation
\begin{equation}
r_{j_{l_2}}(\phi^{-1}_{u}(s_+(j_{l_2},u),0))=\varepsilon
\end{equation}
with respect to $l_2$, we obtain
\begin{align}
ds_+(j',u)(\mathfrak j,0)=\tfrac{d}{dl_2}s_+(j_{l_2},u)|_{l_2=0}= -\frac{{d}{dl_2} r_{j_{l_2}}(z)|_{l_2=0}}{dr_{j'}(z)\circ(d\phi_u(z))^{-1}(\bdry_s)}.
\end{align}
Since $\frac{d}{dl_2} r_{j_{l_2}}(z)|_{l_2=0}$ depends continuously on $(j',u)$,  $ds_+(j',u)(\cdot,0)$ is a bounded linear map from $T_{j'}\mathcal V$ to $\mathbb R$ which depends continuously on $(j',u)$. This proves that $s_+$ is a $C^1$-map when $\chi(\dot F)<0$.

When $\chi(\dot F)=0$ or $1$, $s_+$ is still given by Equation~\eqref{s_+ explicit}, but the metric $g_{(\ddot F,j')}$ is now a function of $u$. More precisely, for any $u=(s,t,\eta)$, define
$$I_u:[R,\infty)\to \mathbb R^+,\quad s' \mapsto \mathcal{A}_\alpha((t,\eta(s,t))|_{s=s'})$$
with $R\gg 0$ and then
$$s_+'(u)=I_u^{-1}(\varepsilon').$$
\cb Recall that the set $\mathbf{q}=(q_1,\dots,q_{2m(\gamma_{+})})$ of additional punctures that we put on $\dot F$ satisfies
$$q_k=\phi_u^{-1}\big(s_+', \tfrac{k}{2m(\gamma_{+})}\mathcal A_\alpha(\gamma_{+})\big)\in \dot F.$$
Then $g_{(\ddot F,j')}$ is smooth function of $\mathbf q$.  An argument similar to that of the $\chi(\dot F)<0$ case shows that $s_+$ is a $C^1$-map.
\end{proof}

\end{document}